\newtheorem*{thm*}{Theorem}
\theoremstyle{remark}
\newtheorem*{rmk*}{Remark}
\newtheorem*{lem*}{Lemma}
\theoremstyle{definition}
\newtheorem*{defi*}{Definition}
\newtheorem*{cor*}{Corollary}
\theoremstyle{definition}
\newtheorem*{examples*}{Examples}
\newtheorem{prop*}{Proposition}
\theoremstyle{plain}
\newtheorem{thm}{Theorem}[section]
\theoremstyle{plain}
\newtheorem{prop}[thm]{Proposition}
\theoremstyle{remark}
\newtheorem{rmk}[thm]{Remark}
\theoremstyle{plain}
\newtheorem{lem}[thm]{Lemma}
\theoremstyle{plain}
\newtheorem{cor}[thm]{Corollary}
\theoremstyle{definition}
\newtheorem{defi}[thm]{Definition}
\theoremstyle{definition}
\newtheorem{examples}[thm]{Examples}
\newcommand{\ca}{\mathcal}
\newcommand{\caa}{\mathbb} 
\newcommand{\Hom}{\ensuremath{\mathrm{Hom}}}
\newcommand{\HOM}{\textrm{\scshape{Hom}}}
\newcommand{\Comod}{\ensuremath{\mathbf{Comod}}}
\newcommand{\Mod}{\ensuremath{\mathbf{Mod}}}
\newcommand{\Vect}{\ensuremath{\mathbf{Vect}}}
\newcommand{\Alg}{\ensuremath{\mathbf{Alg}}}
\newcommand{\Coalg}{\ensuremath{\mathbf{Coalg}}}
\newcommand{\Mon}{\ensuremath{\mathbf{Mon}}}
\newcommand{\Comon}{\ensuremath{\mathbf{Comon}}}
\newcommand{\ob}{\ensuremath{\mathrm{ob}}}
\newcommand{\Cart}{\ensuremath{\mathrm{Cart}}}
\newcommand{\Cocart}{\ensuremath{\mathrm{Cocart}}}
\newcommand{\ps}{\mathscr}
\newcommand{\B}{\mathbf}
\newcommand{\Gr}{\mathfrak}
\newcommand{\Mat}{\mathbf{Mat}}
\newcommand{\pullbackcorner}[1][dr]{\save*!/#1+1.2pc/#1:(1,-1)@^{|-}\restore}
\DeclareMathOperator*\colim{colim}
\newcommand{\op}{\mathrm{op}}
\newcommand{\lcong}{\rotatebox[origin=c]{90}{$\cong$}}
\newcommand{\MMat}{\mathbf{\caa{M}at}}
\newcommand{\id}{\mathrm{id}}
\numberwithin{equation}{chapter}
\numberwithin{section}{chapter}
\renewcommand{\l@chapter}{\@tocline{0}{12pt}{0pt}{}{\bfseries}}
\renewcommand{\l@subsection}{\@tocline{2}{0pt}{2pc}{5pc}{}}
\begin{document}

\frontmatter

\thispagestyle{empty}

\begin{center}
\mbox{}
{\Large{\bf GENERALIZATION OF ALGEBRAIC OPERATIONS VIA ENRICHMENT}}
\vskip 1in
{\Large{Christina Vasilakopoulou}}
\vskip 0.7in
{\small Trinity College \\
and \\
Department of Pure Mathematics and Mathematical Statistics \\
University of Cambridge}
\vskip 1.8in
This dissertation is submitted for the degree of \\
\emph{Doctor of Philosophy}
\vskip 0.8in
March 2014
\end{center}

\pagestyle{fancy}
\fancyhf{} 
\renewcommand{\headrulewidth}{0pt}
\pagenumbering{roman}

\newpage
\mbox{}
\vskip 2in
\begin{quote}
This dissertation is the result of my own work and includes nothing that is the outcome of work done in collaboration except 
where specifically indicated in the text.

This dissertation is not substantially the same as any that I have
submitted for a degree or diploma or any other qualification at any
other university.
\vskip 1in

\hfill Christina Vasilakopoulou

\hfill 27 March 2014

\end{quote}

\newpage
\begin{center}
{\large{\bfseries Generalization of Algebraic Operations via Enrichment\\}}
\vskip 0.1in
Christina Vasilakopoulou
\vskip 0.3in
{\sc Summary}
\vskip 0.1in
\end{center}
\begin{quote}
\begin{singlespace}

In this dissertation we examine enrichment relations between 
categories of dual structure and we sketch
an abstract framework where the theory
of fibrations and enriched category theory are appropriately united.

We initially work in the context of a monoidal category, where we study
an enrichment of the category of monoids in the category of 
comonoids under certain assumptions. This is induced by 
the existence of the \emph{universal measuring comonoid},
a notion originally defined by Sweedler in \cite{Sweedler}
in vector spaces. We then consider 
the fibred category of modules over arbitrary monoids, and 
we establish its enrichment in the opfibred category of 
comodules over arbitrary comonoids. This is now exhibited via 
the existence of the \emph{universal measuring comodule},
introduced by Batchelor in \cite{Batchelor}.

We then generalize these results to their `many-object'
version. In the setting of the bicategory of $\ca{V}$-enriched 
matrices (see \cite{KellyLack}), we investigate an enrichment 
of $\ca{V}$-categories in $\ca{V}$-\emph{co\-ca\-te\-go\-ries} as 
well as of $\ca{V}$-modules in $\ca{V}$-\emph{comodules}. 
This part constitutes the core 
of this treatment, and the theory 
of fibrations and adjunctions between them plays a 
central role in the development. 
The newly constructed categories are described in detail,
and they appropriately fit in a picture of duality,
enrichment and fibrations as in the previous case.

Finally, we introduce the concept of an \emph{enriched fibration},
ai\-med to provide a formal description for the above examples.
Related work in this direction, though from a different perspective
and with dissimilar outcomes, has been realized by 
Shulman in \cite{Enrichedindexedcats}.
We also discuss an abstraction of this picture 
in the environment of double categories, concerning 
categories of \emph{monoids} and \emph{modules} therein.
Relevant ideas can be found in \cite{Monadsindoublecats}.

\end{singlespace}
\end{quote}

\newpage
\mbox{}
\vskip 1in
\begin{center}
{\sc Acknowledgements}
\vskip 0.1in
\end{center}
I would like to begin by thanking my supervisor Professor
Martin Hyland for 
his patient guidance and encouragement throughout 
my studies. His continuous support and inspiration 
have been extremery valuable to me.
I would also like to thank Dr. Ignacio Lopez Franco,
for all the stimulating discussions and constructive
suggestions, several of which were decisive
in the formation of this thesis.

I would like to offer my special thanks to Professor Panagis
Karazeris for all his help and motivation during and after 
my undergraduate studies at the University of Patras, 
as well as Professor Georgios Dassios
for his support and advice.

This thesis would not have been possible
without the mathematical and non-mathematical assistance of the 
Category Theory `gang': Achilleas, Guilherme, Ta\-ma\-ra 
and the rest of my academic family. 
Also, to all my fellow PhD students
and friends, Gabriele, Giulio, John, Liyan, Micha\l, Beth, Anastasia, Richard,
Robert, Jerome, Rachel, Eleni and particularly Stefanos, special thanks.
You all made Cambridge such a wonderful place to be,
full of happy moments.

I am grateful to Trinity College, EPSRC and DPMMS for
financially supporting my studies.   
I would also like to thank the Propondis
Foundation and the Leventis Foundation
for providing me with PhD scholarships 
that gave me the opportunity to realize 
my research project.

Special thanks must be attributed to my dear friends
Elpida, Ioanna, Artemis, Irida, Aggeliki, Evi and Andreas,
who have proved companions for life. 
I would not have made it that far 
if it weren't for you.

Finally, I owe my deepest gratitude to my beloved family: 
my parents Giorgos and Magda and my brother Paris 
for their unconditional loyalty and support,
as well as Dimitris for his love and dedication.
Thank you for always being there for me.
\tableofcontents

\pagestyle{headings}
\pagenumbering{arabic}

\chapter{Introduction} \label{introduction}
Algebras and their modules, as well as coalgebras 
and their comodules, are amongst the simplest 
and most fundamental structures in abstract mathematics. 
Formally, algebras are dual to coalgebras and modules 
are dual to comodules, 
but in practice that point of view is very limited. 
The initial motivation for the material included in
the present thesis was a more 
striking relation between these notions: in 
natural circumstances, the mere category of 
algebras is enriched in the category of coalgebras,
and that of modules in comodules. 
These enrichments encapsulate some very rich algebraic structure,
that of the so-called measuring coalgebras and comodules.

More specifically, the notion of the \emph{universal measuring coalgebra} 
$P(A,B)$ was first 
introduced by Sweedler in \cite{Sweedler}, and has been employed
as a way of giving sense to an idea of generalized maps between algebras.
Examples of this point of view and applications are 
given by Marjorie Batchelor in \cite{MeasuringCoalgebras}
and \cite{Differenceoperators}.
It was
Gavin Wraith in the 1970's, who first suggested that 
this coalgebra gives an enrichment
of the category of algebras in the category of coalgebras,
however for a long time there was no explicit treatment of Wraith's idea in 
the literature. Furthermore, this idea can be appropriately extended
to give an enrichment of a global category of modules in 
a global category of comodules, via the \emph{universal measuring comodule}
$Q(M,N)$ introduced by Batchelor in \cite{Batchelor}. These objects
have also found applications on their own, analytically presented
in the provided references.

Independently of questions of enrichment, there is a well-known
fibration of the global category of modules over algebras 
in addition to an opfibration of the comodules over coalgebras. 
This extra structure seems to point torwards a picture 
that integrates the two classical notions, enrichment and fibration, 
which generally do not go well together. One of the basic objectives
of this thesis is to successfully describe what could 
be called an \emph{enriched fibration}. 

Inspired by the above, we are led to consider the `many-object' 
generalization of the previous situation. Since an algebra 
is evidently a (linear) category with one object,  
the categories of interest on this next step 
are naturally those of enriched categories and 
enriched modules, on the one hand. For the analogues of coalgebras
and comodules, we proceed to the definitions of 
an \emph{enriched cocategory} and \emph{enriched comodule}. After setting 
up the theory of these new categories and exploring
some of their more pertinent properties, we establish an 
enrichment of $\ca{V}$-categories in $\ca{V}$-cocategories,
and of $\ca{V}$-modules in $\ca{V}$-comodules. The similarities with 
the base case of (co)algebras and (co)modules are expressed primarily 
by the methodology and the series of arguments followed. However,
this generalization reveals more advanced ideas and certain patterns
of expected behaviour of the categories involved.
This newly acquired perspective urges us to 
develop a theoretic frame in which 
a general machinery, certain aspects of which were
described in detail for the two particular cases, would
always result in the speculated enriched fibration picture.  

Thus, another central aim of this dissertation is
to identify
this abstract framework which leads to instances of the 
enriched fibration notion, with starting point a monoidal bicategory
or even more closely related, a monoidal \emph{pseudo double category}.
In fact, the longer term goal of such a development was its possible
application to different contexts, 
and in particular to the theory of operads. In more detail,
if we replace the bicategory of $\ca{V}$-matrices
(which is the starting point for the duality and enrichment relations
for $\ca{V}$-categories and $\ca{V}$-modules) 
with the bicategory of $\ca{V}$-\emph{symmetries}
(see also \cite{GambinoJoyal}), 
there is strong evidence that we can 
establish an analogous enriched fibration which merges 
symmetric $\ca{V}$-operads
and operad modules and their duals. Moreover, both
coloured and non-coloured versions 
can be included in this plan. 
This indicates a fruitful area for future work.

The thesis is divided in two parts: the material
in Part I is mostly well-known, serving as the background
for the development that follows, while 
the material in Part II 
is mostly new.
We assume familiarity with the basic theory of categories,
as in the standard textbook \cite{MacLane} by MacLane.

In Chapter \ref{bicategories}, we review the basic definitions and 
features of the theory of bicategories and 2-categories,
with particular emphasis on the concepts of monads/comonads
and their modules/comodules in this abstract setting. 
Classic references on the main notions
are \cite{Benabou,GrayFormalCategoryTheory,
FibrationsinBicats,Handbook1,Review}. Coherence for bicategories,
very briefly mentioned here, is discussed
in \cite{CoherenceTricats,MacLane-Pare,Ageneralcoherenceresult},
and of course MacLane's coherence theorem for monoidal
categories preceded it (\cite{natass&comm,OnMacLanesconditions,
BraidedTensorCats}). 
Monads in a 2-category have been widely studied, 
with basic reference Ross Street's \cite{FormalTheoryMonadsI}.
Categories of modules, more commonly referred to as algebras especially 
in the 2-category $\B{Cat}$, are formed as 
categories of Eilenberg-Moore algebras on the hom-categories
$\ca{K}(A,B)$ of a bicategory $\ca{K}$.
\nocite{Carmody,2-catcompanion}
\nocite{Handbook2,2-dimmonadtheory}

Chapter \ref{monoidalcategories} summarizes basic concepts 
related to monoidal categories, following some 
of the many standard 
references such as 
\cite{MacLane,BraidedTensorCats,Quantum}.
Categories of monoids and modules
will play a very important role for the development of 
this dissertation, hence extra attention has been given 
to the presentation of their properties. In particular,
questions regarding the existence of the free monoid and 
the cofree comonoid constructions have been of primary 
interest. Certain papers by Hans Porst \cite{MonComonBimon,
FundConstrCoalgCorComod,AdjAlgCoalg}
have addressed this issue from a particular point of view, in the context
of locally presentable categories (see \cite{LocallyPresentable}). 
Specific methods, 
especially the ones related to local presentability 
of the categories of dual objects, are 
carefully exhibited here and in some cases generalized a bit further.
\nocite{Closedcategories,Doctrinal,Sweedler}

The main definitions and elementary features of the 
theory of enriched categories are summarized in Chapter \ref{enrichment}, 
with standard references
\cite{Kelly,Closedcategories}. Since enriched modules
are essential for the generalization of the monoids and modules
correlation to a $\ca{V}$-categories and $\ca{V}$-modules one,
we devote a section to some of their aspects needed for 
our purposes, see \cite{Distributeurs,Lawvereclosedcats}.
In the last part, we recall parts of the theory of actions
of monoidal categories on ordinary categories, which lead to 
a particular enrichment,
as described also in Janelidze and Kelly's \cite{AnoteonActions}.
In fact, this constitutes a special case of a more general result
discussed in \cite{enrthrvar}, namely that 
there is an equivalence
between the 2-category of tensored $\ca{W}$-categories and 
the 2-category of closed $\ca{W}$-representations, for $\ca{W}$
a right-closed bicategory.
\nocite{Carmody,GarnerShulman,Monoidalbicats&hopfalgebroids}

In Chapter \ref{fibrations}, the key material about fibred category theory is 
reviewed. Central notions and results are presented, including 
the correspondence between cloven fibrations and indexed categories
due to Grothendieck. The notion of a fibration was first introduced
in \cite{Grothendieckcategoriesfibrees},
and suitable references on the subject are 
\cite{Grayfibredandcofibred,Jacobs,Elephant2} and Hermida's work 
as can be found in, for example, \cite{hermidaphd,FibredAdjunctions}.
Finally, we move to the topic of fibred adunctions and 
fibrewise limits, where the main constructions and ideas
can be found in \cite{FibredAdjunctions} and \cite{Handbook2}.
Presently, we develop the issue a bit further: we examine conditions
not only for adjunctions between fibrations over the same basis,
but also for general fibred adjunctions, \emph{i.e.}
between fibrations over arbitrary bases. This slightly 
generalizes results which exist in the literature currently. 
This was not done 
aimlessly: Theorem \ref{totaladjointthm}
constitutes an extremely valuable tool for the 
establishment of the pursued enrichments later in the thesis.

Chapter \ref{enrichmentofmonsandmods} describes in 
detail the enrichment of monoids and modules, 
which is the motivating case for what
follows. In fact, the results of this chapter in a 
somewhat more restricted version previously appeared
in \cite{mine}, and have already been of 
use to a certain extent, see for example \cite{AnelJoyal}.
Explicitly, we identify the more general categorical
ideas underlying the existence of Sweedler's 
measuring coalgebra $P(A,B)$ of \cite{Sweedler,MeasuringCoalgebras} and 
prove its existence in a much broader context. Its defining
equation is in particular also provided in \cite{AdjAlgCoalg}
and observed in \cite{BarrCoalgebras}. 
Combined with the theory 
of actions of monoidal categories, we show 
how these $P(A,B)$ for any two monoids $A$ and $B$ 
induce an enrichment of the category of monoids $\Mon(\ca{V})$
in the category of comonoids $\Comon(\ca{V})$,
under specific assumptions on $\ca{V}$.
Subsequently, the `global' categories of modules
and comodules $\Mod$ and $\Comod$ are defined,
fibred and opfibred respectively over monoids
and comonoids. These categories have nice properties, and 
in particular, as hinted by Wischnewsky at the end 
of \cite{LinearReps}, $\Comod$ is comonadic over 
$\ca{V}\times\Comon(\ca{V})$, a fact which clarifies its structure.
Via the existence of an adjoint of a functor between the 
global categories, the universal measuring 
comodule $Q(M,N)$ is constructed,
as a variation of the notion in
\cite{Batchelor} in our general setting. Again
through a specific action functor, we obtain an enrichment
of $\Mod$ in $\Comod$, induced by these $Q(M,N)$ for 
any two modules $M$ and $N$ as the enriched hom-objects. Parts
of this work were accomplished in collaboration with
Prof. Martin Hyland and Dr. Ignacio Lopez Franco.
The diagram which roughly depicts the above is the following:
\begin{displaymath}
 \xymatrix @C=.6in @R=.6in
{\Mod\ar@{-->}[r]^-{\mathrm{enriched}}
\ar[d]_-{\mathrm{fibred}} & \Comod
\ar[d]^-{\mathrm{opfibred}} \\
\Mon(\ca{V})\ar@{-->}[r]_-{\mathrm{enriched}} &
\Comon(\ca{V}).}
\end{displaymath}
\nocite{Quantum,HopfAlg,Species}

Chapter \ref{VCatsVCocats} moves up a level, aiming to estabish 
essentially the same results as in the previous chapter
but for the `many-object' case of (co)monoids
and (co)modules as explained earlier.
The bicategory of $\ca{V}$-matrices is the base on which
the categories of 
enriched (co)categories and (co)modules are formed,
following until a certain point the development of 
\cite{VarThrEnr} and \cite{KellyLack}. The former
in fact examines categories enriched 
in bicategories via matrices enriched in bicategories,
but for our purposes we restrict to the one-object
case, that of monoidal categories. This approach
of employing matrices presents certain advantages:
it leads to more unified results 
such as existence of limits and colimits, monadicity relations,
local presentability for the categories of $\ca{V}$-graphs,
$\ca{V}$-categories and $\ca{V}$-modules, avoiding
explicit formulas if they are not desired. Regarding this, 
Wolff's much earlier
\cite{Wolff} contains many important explicit constructions
for $\ca{V}$-$\B{Grph}$ and $\ca{V}$-$\B{Cat}$, for a 
symmetric monoidal closed category $\ca{V}$.
In the same underlying framework of $\ca{V}$-matrices,
the category $\ca{V}$-$\B{Cocat}$ of enriched cocategories is 
described (Definition \ref{cocategory}). 
Except from generalizing the concept of comonoids
for a monoidal category, $\ca{V}$-cocategories appear to
have important applications in their own right.
In papers of Lyubashenko, Keller and others (e.g.
\cite{Ainfinitycategories,Ainfinityalgebras,
Equalizerscocompletecocategories}) $A_\infty$-categories,
which are natural generalizations of
$A_\infty$-algebras arising in connection with Floer homology 
and related to mirror symmetry,
are defined as a special kind of differential graded cocategories.
The category of $\ca{V}$-comodules is also accordingly defined, 
and the diagram which summarizes the main results of 
the chapter is
\begin{displaymath}
 \xymatrix @C=.6in @R=.6in
{\ca{V}\text{-}\Mod\ar@{-->}[r]^-{\mathrm{enriched}}
\ar[d]_-{\mathrm{fibred}} & \ca{V}\text{-}\Comod
\ar[d]^-{\mathrm{opfibred}} \\
\ca{V}\text{-}\B{Cat}\ar@{-->}[r]_-{\mathrm{enriched}} &
\ca{V}\text{-}\B{Cocat}.}
\end{displaymath}
Notice that both enrichments are established
via adjoint functors to actions, making use of the fibrational
and opfibrational structure of the categories involved
(though for the bottom one, the hom-functor can be obtained directly 
via an adjoint functor theorem). The same holds for the simpler
case of the previous chapter, for the global category of modules
and comodules.
This is precisely why general fibred adjunctions in 
Part I prove to be essential for the study of the 
particular examples analyzed in this thesis.   

Finally, in Chapter \ref{abstractframework} we utilize 
the results and theoretical patterns 
of the previous two chapters in
order to move `from special to general': we
formulate a definition of an enriched fibration 
and sketch how it is possible to obtain such a formation
in the context of a bicategory or double category.
The structures of importance here are the categories
of \emph{monoids} and \emph{comonoids}, 
\emph{modules} and \emph{comodules} of a 
(pseudo) double category.
We note that the enriched
fibration concept, originally mentioned in \cite{GouzouGrunig}, 
has been studied from an admittedly different point of view by 
Mike Shulman in \cite{Enrichedindexedcats} and also independently
in \cite{Bunge}. However, the main definitions and constructions
diverge from the ones presented here. Other particular references for notions 
employed, such as monoidal bicategories (or monoidal 2-categories)
and pseudomonoids therein, are for example 
\cite{Carmody,CoherenceTricats,Gurski} and
\cite{Monoidalbicats&hopfalgebroids,Marmolejopseudomonads}.
The fundamental definition of a monoidal fibration
was first introduced in \cite{Framedbicats}.
Appropriate references for the theory of pseudo double categories 
for our purposes are 
\cite{Limitsindoublecats,Adjointfordoublecats,ConstrSymMonBicats,Monadsindoublecats}, 
and the original concept of a double category, \emph{i.e.}
a category (weakly) internal in $\B{Cat}$, is traced 
back to \cite{Ehresmanndouble}.
This last part of the dissertation is not as 
detailed as it could be, due to limitations of
the current treatment. 
In the double categories section, most definitions and proofs
are only outlined, 
whereas enrichment in the setting of fibrations 
could be the starting point of an entire
enriched fibred category theory.
The principal function of this final chapter is to 
clarify the occurrence of the main results of 
this work in an abstract environment,
and serve as a guide for future applications.

\part*{PART I}
\chapter{Bicategories}\label{bicategories}
The purpose of this chapter is to provide 
the reader with the necessary
background material 
regarding the theory of bicategories. In this way, 
the related constructions and results used later in the thesis 
can be readily referred to herein.

The original 
definition of a bicategory and a lax functor (`morphism') between
bicategories can be found in B{\'e}nabou's 
\cite{Benabou}. Other references, including the definitions
of transformations and modifications are 
\cite{Categoricalstructures,Handbook1}. 
2-categorical notions 
are here presented as `strictified' versions of the bicategorical ones,
whereas in later chapters the $\B{Cat}$-enriched view
is also addressed. 
Due to coherence for bicategories, we are often able to use
2-categorical machinery and operations such as pasting and mates 
correspondence, directly in the weaker context.
Categories of (co)monads and (co)modules
in bicategories are carefully presented in this chapter,
in order to later be employed as the appropriate
formalization for specific categories of interest.
Regarding 2-category theory, 
see the indicative \cite{Review,2-catcompanion},
whereas \cite{FormalTheoryMonadsI} presents the formal theory of 
monads in 2-categories.

With respect to the notation followed in 
this chapter, note that the multiplication for monads
is denoted by the letter ``$m$'' rather than the usual 
``$\mu$'', since the latter is employed 
for the monad action on their modules. Similarly,
we use ``$\Delta$'' for comultiplication of comonads
and ``$\delta$'' for the coaction on comodules. We also 
prefer the term `(co)module' from the more common 
`(co)algebra' for a (co)monad.

\section{Basic definitions}\label{bicatbasicdefinitions}
\begin{defi}\label{bicategory}
A \emph{bicategory} $\ca{K}$ is specified
by the following data:

$\bullet$ A collection of objects $A,B,C,...$, also called \emph{0-cells}.

$\bullet$ For each pair of objects $A,B$, a category $\ca{K}(A,B)$ whose
objects are called morphisms or \emph{1-cells} and whose arrows
are called \emph{2-cells}. The composition
is called \emph{vertical composition} of $2$-cells
and is denoted by
\begin{displaymath}
\xymatrix @C=.8in
{A\ar @/^4ex/[r]^-f 
\ar[r]|-g
\ar @/_4ex/[r]_-h
\rtwocell<\omit>{<-2>\alpha}
\rtwocell<\omit>{<2>\;\alpha'} &
B}=
\xymatrix @!=.5in {A\rtwocell^f_h{\;\;\;\alpha'\cdot\alpha} & B.}
\end{displaymath}
The identity 2-cell for this composition is
\begin{displaymath}
\xymatrix @!=.5in {A\rtwocell^f_f{\;1_f} & B.}
\end{displaymath}

$\bullet$ For each triple of objects $A,B,C$, a functor
\begin{displaymath}
\circ:\ca{K}(B,C)\times\ca{K}(A,B)\longrightarrow
\ca{K}(A,C)
\end{displaymath}
called \emph{horizontal composition}. It maps a pair
of 1-cells $(g,f)$ to $g\circ f=gf$ and a pair
of 2-cells $(\beta, \alpha)$ to 
$\beta*\alpha$, depicted by
\begin{displaymath}
\xymatrix @!=.5in {A\rtwocell^f_u{\alpha} & 
B\rtwocell^g_v{\beta} & C}=
\xymatrix @!=.5in {A\rtwocell<\omit>{\;\;\;\beta*\alpha}
\ar @/^3ex/[r]^-{gf}
\ar @/_3ex/[r]_{vu} & C.}
\end{displaymath}

$\bullet$ For each object $A\in\ca{K}$, a
$1$-cell $1_A:A\to A$ called the \emph{identity
1-cell} of $A$.

$\bullet$ Associativity constraint:
for each quadruple of objects $A,B,C,D$ of $\ca{K}$, 
a natural isomorphism
\begin{displaymath}
\xymatrix @C=.8in @R=.5in
{\ca{K}(C,D)\times\ca{K}(B,C)\times\ca{K}(A,B)
\ar[r]^-{1\times\circ}\ar[d]_-{\circ\times1} &
\ca{K}(C,D)\times\ca{K}(A,C)\ar[d]^-{\circ}\\
\ca{K}(B,D)\times\ca{K}(A,B)\ar[r]_-{\circ} &
\ca{K}(A,D)\ultwocell<\omit>{\alpha}}
\end{displaymath}
called the \emph{associator}, with components invertible 2-cells
\begin{displaymath}
\alpha_{h,g,f}:(h\circ g)\circ f\stackrel{\sim}
{\longrightarrow}
h\circ(g\circ f).
\end{displaymath}

$\bullet$ Identity constraints: for each pair of objects $A,B$ in $\ca{K}$,
natural isomorphisms
\begin{displaymath}
\xymatrix @R=.7in @C=.1in
{& \B{1}\times\ca{K}(A,B)\cong\ca{K}(A,B)\times\B{1}
\dtwocell<\omit>{<-8>\lambda}
\ar[d]|-{\stackrel{\;}{\cong}} \ar @/_4ex/[dl]_-{I_A \times1}
\ar @/^4ex/[dr]^-{1\times I_B}
 & \\
\ca{K}(A,A)\times\ca{K}(A,B)\ar[r]_-{\circ} &
\ca{K}(A,B)\utwocell<\omit>{<-8>\rho}
& \ca{K}(A,B)\times\ca{K}(B,B)
\ar[l]^-{\circ}}
\end{displaymath}
called the \emph{unitors}, with components
invertible 2-cells
\begin{displaymath}
\lambda_f:1_B\circ f\stackrel{\sim}
{\longrightarrow}f,\quad
\rho_f:f\circ 1_A\stackrel{\sim}{\longrightarrow}f.
\end{displaymath}
Notice that the functor $I_A:\B{1}\to\ca{K}(A,A)$ is 
given by $1_A$ on objects and $1_{1_A}$ on arrows.

The above are subject to the coherence conditions expressed
by the following axioms: for 1-cells
$A\xrightarrow{f}B\xrightarrow{g}C\xrightarrow{h}D\xrightarrow{k}E$, the diagrams
\begin{equation}\label{bicataxiom1}
\xymatrix @R=.4in
{((k\circ  h)\circ g)\circ f\ar[d]_-{\alpha_{kh,g,f}}
\ar[rr]^-{\alpha_{k,h,g}*1_f} && (k\circ(h\circ g))\circ f\ar[d]^-{\alpha_{k,hg,f}}
 \\
(k\circ h)\circ(g\circ f)\ar[dr]_-{\alpha_{k,h,gf}} && k\circ((h\circ g)\circ f)
 \ar[dl]^-{1_k*\alpha_{h,g,f}}\\
& k\circ(h\circ(g\circ f)) &}
\end{equation}
\begin{equation}\label{bicataxiom2}
\xymatrix
{(g\circ 1_B)\circ f\ar[rr]^-{\alpha_{g,1_B,f}}
\ar[dr]_-{\rho_g*1_f} && g\circ(1_B\circ f)
\ar[dl]^-{1_g*\lambda_f} \\
& g\circ f &}
\end{equation}
commute.
\end{defi}
It follows from the functoriality
of the horizontal composition
that for any composable
$1$-cells $f$ and $g$ we have the equality
\begin{displaymath}
 \xymatrix @C=.5in {A\rtwocell^f_f{1_f} & 
B\rtwocell^g_g{1_g} & C}=
\xymatrix @C=.6in {A\rtwocell^{gf}_{vu}{\;\;1_{gf}}
 & C}
\end{displaymath}
and for any $2$-cells $\alpha,\alpha',\beta,\beta'$
as below we have the equality
\begin{displaymath}
 \xymatrix @C=.5in
{{}\ruppertwocell{\alpha}\rlowertwocell{\alpha'}
\ar[r] & \ruppertwocell{\beta}\rlowertwocell{\beta'}
\ar[r] & }=
\xymatrix @C=.5in @R=.5in
{{}\rruppertwocell{\;\;\;\beta*\alpha}\rrlowertwocell{\;\quad\beta'*\alpha'}
\ar[rr] && }
\end{displaymath}
also known as the \emph{interchange law}. 
The above equalities
can also be written
\begin{align*}
 1_g\circ 1_f&=1_{g\circ f}, \\
(\beta'\cdot\beta)*(\alpha'\cdot\alpha)&=
(\beta'*\alpha')\cdot(\beta*\alpha).
\end{align*}

Given a bicategory $\ca{K}$, we may reverse the $1$-cells but not
the $2$-cells and form the bicategory $\ca{K}^\mathrm{op}$,
with $\ca{K}^\mathrm{op}(A,B)=\ca{K}(B,A)$. We may
also reverse only the $2$-cells and 
form the bicategory $\ca{K}^\mathrm{co}$
with $\ca{K}^\mathrm{co}(A,B)=\ca{K}(A,B)^\mathrm{op}$.
Reversing both $1$-cells and $2$-cells yields 
a bicategory $(\ca{K}^\mathrm{co})^\mathrm{op}=
(\ca{K}^\mathrm{op})^\mathrm{co}$.
\begin{examples}\hfill\label{examplesbicat}
\begin{enumerate}
 \item For any category $\ca{C}$ with chosen pullbacks, 
there is the bicategory of spans
$\B{Span}(\ca{C})$. This has the same objects
as $\ca{C}$ and hom-categories
$\B{Span}(X,Y)$ with objects spans
$X\leftarrow A\rightarrow Y$
and arrows $\alpha:A\Rightarrow B$ commutative diagrams
\begin{displaymath}
\xymatrix @R=.05in @C=.4in
{& A\ar[dl] \ar[dr] \ar[dd]_-\alpha & \\
X & & Y \\
& B\ar[ur] \ar[ul] &}
\end{displaymath}
with obvious (vertical) composition.
The horizontal composition is given by pullbacks,
and their universal property defines the constraints
$\alpha,\rho,\lambda$. 
\item Suppose $\ca{C}$ is a regular category,
\emph{i.e.} any morphism factorizes as a strong epimorphism
followed by a monomorphism, and strong
epimorphisms are closed under pullbacks.
The bicategory of relations
$\B{Rel}(\ca{C})$ is defined as $\B{Span}(\ca{C})$,
but its 1-cells are spans $X\leftarrow R\rightarrow Y$
with jointly monic legs, or equivalently relations 
$R\rightarrowtail X\times Y.$ The factorization system is 
required in order to define composition $X\to Y\to Z$, since 
the resulting map from the pullback to $X\times Z$ is 
not necessarily monic.
\item In the bicategory of bimodules $\B{BMod}$
objects are rings, 1-cells from $R$ to $S$ are 
$(R,S)$-bimodules (\emph{i.e.} abelian groups which 
have a left $R$-action and a right
$S$-action that commute with each other), and
2-cells are bimodule maps. The horizontal
composition$\SelectTips{eu}{10}\xymatrix@C=.2in
{R\ar[r]|-{\object@{|}} & S\ar[r]|-{\object@{|}} & T}$is given by 
tensoring over $S$, constructed as in Section 
\ref{Categoriesofmodulesandcomodules}. 
This generalizes to the
bicategory $\ca{V}$-$\B{BMod}$ of 
$\ca{V}$-categories and $\ca{V}$-bimodules,
described in Section \ref{Vbimodulesandmodules}.
\item The bicategory of matrices $\B{Mat}$ has
sets as objects, $X\times Y$-indexed
families of sets as 1-cells from $X$ to $Y$ 
and families of functions as 2-cells. Composition
is given by `matrix multiplication': if $A=(A_{xy}):X\to Y$
and $B=(B_{yz}):Y\to Z$, their composite is given 
by the family of sets $(AB)_{xy}=
\sum_{y}{\big(A_{xy}\times B_{yz}\big)}$.
The enriched version of this
bicategory, $\ca{V}$-$\B{Mat}$, is going
to be extensively employed for the needs of this thesis.
\end{enumerate}
\end{examples}
\begin{defi}\label{laxfunctor}
Given bicategories $\ca{K}$ and $\ca{L}$, a 
\emph{lax functor} $\ps{F}:\ca{K}\to\ca{L}$
consists of the following data:
 
$\bullet$ For any object $A\in\ca{K}$, an object $\ps{F}A\in\ca{L}$.

$\bullet$ For every pair of objects $A,B\in\ca{K}$, a functor
$\ps{F}_{A,B}:\ca{K}(A,B)\to\ca{L}(\ps{F}A,\ps{F}B).$

$\bullet$ For every triple of objects $A,B,C\in\ca{K}$, 
a natural transformation 
\begin{equation}\label{delta}
\xymatrix @C=.8in @R=.57in
{\ca{K}(B,C)\times\ca{K}(A,B)\ar[r]^-\circ
\ar[d]_-{\ps{F}_{B,C}\times\ps{F}_{A,B}} & \ca{K}(A,C)
\ar[d]^-{\ps{F}_{A,C}}\\
\ca{L}(\ps{F}B,\ps{F}C)\times\ca{L}(\ps{F}A,\ps{F}B)
\ar[r]_-\circ & \ca{L}(\ps{F}A,\ps{F}C)
\ultwocell<\omit>{\delta}}
\end{equation} 
with components $\delta_{g,f}:(\ps{F}g)\circ(\ps{F}f)\to
\ps{F}(g\circ f)$, for 1-cells $g:B\to C$ and $f:A\to B.$

$\bullet$ For every object $A\in\ca{K}$, a natural transformation
\begin{equation}\label{gamma}
\xymatrix @C=.7in
{1\ar[r]^-{I_A} \ar @/_2ex/[dr]_-{I_{\ps{F}A}} &
\ca{K}(A,A)\ar[d]^-{\ps{F}_{A,A}} \\
& \ca{L}(\ps{F}A,\ps{F}A)
\ultwocell<\omit>{<1>\gamma}}
\end{equation}
with components $\gamma_A:1_{\ps{F}A}\to\ps{F}(1_A).$

The natural transformations $\gamma$ and $\delta$ have to satisfy
the following coherence axioms: for 1-cells
$A\xrightarrow{f}B\xrightarrow{g}C\xrightarrow{h}D$, 
the diagrams
\begin{equation}\label{laxcond1}
\xymatrix @C=.6in @R=.4in
{(\ps{F}h\circ\ps{F}g)\circ\ps{F}f\ar[r]^-{\delta_{h,g}*1}
\ar[d]_-{\alpha} & \ps{F}(h\circ g)\circ\ps{F}f\ar[d]^-{\delta_{hg,f}} \\
\ps{F}h\circ(\ps{F}g\circ\ps{F}f) \ar[d]_-{1*\delta_{g,f}} & 
\ps{F}((h\circ g)\circ f)\ar[d]^-{\ps{F}\alpha} \\
\ps{F}h\circ\ps{F}(g\circ f)\ar[r]_-{\delta_{h,gf}} &
\ps{F}(h\circ(g\circ f)),}
\end{equation}

\begin{equation}\label{laxcond2}
\xymatrix @C=.5in @R=.4in
{1_{\ps{F}B}\circ\ps{F}f\ar[r]^-{\gamma_B*1}
\ar[d]_-\lambda & \ps{F}(1_B)\circ\ps{F}f
\ar[d]^-{\delta_{1_B,f}} \\
\ps{F}f & 
\ps{F}(1_B\circ f)\ar[l]^-{\ps{F}\lambda}},\quad
\xymatrix @C=.5in @R=.4in
{\ps{F}f\circ 1_{\ps{F}A}\ar[r]^-{1*\gamma_A}
\ar[d]_-\rho & \ps{F}f\circ\ps{F}(1_A)
\ar[d]^-{\delta_{f,1_A}} \\
\ps{F}f & 
\ps{F}(f\circ 1_A)\ar[l]^-{\ps{F}\rho}}
\end{equation}
commute.
\end{defi}
If $\gamma$ and $\delta$ are natural isomorphisms (respectively
identities), then $\ps{F}$ is called a \emph{pseudofunctor}
or \emph{homomorphism} (respectively \emph{strict functor})
of bicategories. Similarly, we can define a \emph{colax 
functor} of bicategories by reversing the direction of $\gamma$ and $\delta$,
sometimes also called \emph{oplax}.
All these kinds of functors between bicategories
can be composed, and this composition obeys 
strict associativity and identity laws. Thus
we obtain categories $\B{Bicat}_l$, $\B{Bicat}_c$,
$\B{Bicat}_{ps}$, $\B{Bicat}_s$ with the same objects 
and arrows lax, colax, pseudo and strict functors respectively.
\begin{defi}\label{laxnattrans}
Consider two lax functors
$\ps{F},\ps{G}:\ca{K}\to\ca{L}$ between
bicategories. A \emph{lax natural transformation}
$\tau:\ps{F}\Rightarrow\ps{G}$ consists of the 
following data:

$\bullet$ For each object 
$A\in\ca{K}$, a morphism $\tau_A:\ps{F}A\to\ps{G}A$ in $\ca{L}$.

$\bullet$ For any pair of objects $A,B\in\ca{K}$,
a natural transformation
\begin{equation}\label{nattranslax}
\xymatrix @C=.4in @R=.4in
{\ca{K}(A,B)\ar[r]^-{\ps{F}_{A,B}}\ar[d]_-{\ps{G}_{A,B}} &
\ca{L}(\ps{F}A,\ps{F}B)\ar[d]^-{\ca{L}(1,\tau_B)} \\
\ca{L}(\ps{G}A,\ps{G}B)\ar[r]_-{\ca{L}(\tau_A,1)} &
\ca{L}(\ps{F}A,\ps{G}B)\ultwocell<\omit>{\tau}}
\end{equation}
with components, for any $f:A\to B$, 2-cells
\begin{equation}\label{tau_f}
\xymatrix @C=.4in @R=.4in
{\ps{F}A\ar[r]^-{\ps{F}f}
\ar[d]_-{\tau_A} & \ps{F}B
\ar[d]^-{\tau_B} \\
\ps{G}A\ar[r]_-{\ps{G}f} & 
\ps{G}B.\ultwocell<\omit>{\tau_f}}
\end{equation}
These data are subject to following axioms:
given any pair of arrows $A\xrightarrow{f}B\xrightarrow{g}C$
in $\ca{K}$, the component $\tau_{g\circ f}$ 
relates to the 2-cells
$\tau_f$, $\tau_g$ by the equality
\begin{equation}\label{compatcomp1}
\xymatrix @C=.4in
{\hole \\
\ps{F}A\ar[r]^-{\ps{F}f}\ar[d]_-{\tau_A}
\ar @/^8ex/[rr]^{\ps{F}(g\circ f)} &
\ps{F}B\ar[r]^-{\ps{F}g}\ar[d]_-{\tau_B} &
\ps{F}C\ar[d]^-{\tau_C}
\lltwocell<\omit>{<4>\delta_{g,f}\;\;} \\
\ps{G}A\ar[r]_-{\ps{G}f} & \ps{G}B\ar[r]_-{\ps{G}g}
\ultwocell<\omit>{\tau_f} & \ps{G}C
\ultwocell<\omit>{\tau_g}}\;
\xymatrix @!=.2in
{\hole \\ =}\;
\xymatrix @C=.4in
{\ps{F}A\ar[rr]^-{\ps{F}(g\circ f)}\ar[d]_-{\tau_A} &&
\ps{F}C\ar[d]^-{\tau_C} \\
\ps{G}A\ar[rr]_-{\ps{G}(g\circ f)}\ar @/_2ex/[dr]_-{\ps{G}f}
&& \ps{G}C\ulltwocell<\omit>{\tau_{g\circ f}\quad}
\lltwocell<\omit>{<-5>\delta'_{g,f}\;\;} \\
& \ps{G}B \ar @/_2ex/[ur]_-{\ps{G}g} & \\
\hole}
\end{equation}
expressing the compatibility of $\tau$
with composition. Also, for any object $A\in\ca{K}$
we have the equality
\begin{equation}\label{compatunit1}
\xymatrix @R=.4in @C=.7in
{\ps{F}A\ar[r]^-{\ps{F}1_A}\ar[d]_-{\tau_A}&
\ps{F}A\ar[d]^-{\tau_A} \\
\ps{G}A\ar[r]^-{\ps{G}1_A}
\ar @/_5ex/[r]_-{1_{\ps{G}A}} & \ps{G}A
\ltwocell<\omit>{<-2.5>\gamma'_A} \ultwocell
<\omit>{\tau_{1_A}\quad}}
\;
\xymatrix @R=.4in
{\hole \\
=}
\;
\xymatrix  @R=.4in @C=.7in
{\ps{F}A\ar @/^5ex/[r]^-{\ps{F}1_A}
\ar[r]_-{1_{\ps{F}A}}
\ar[dr]_-{\tau_A}\ar[d]_-{\tau_A}
\ar@<+4ex>@{}[dr]_(.6)\cong
\ar@<-2ex>@{}[dr]_(.35)\cong
& \ps{F}A\ar[d]^-{\tau_A}
\ltwocell<\omit>{<2.5>\gamma_A\;} \\
\ps{G}A\ar[r]_-{1_{\ps{G}A}} & \ps{G}A}
\end{equation}
expressing the compatibility of $\tau$ with units.
\end{defi}
\begin{rmk*}\hfill

(1) The naturality for the transformation (\ref{nattranslax}) 
can be expressed by the equality
\begin{displaymath}
\xymatrix @C=.6in @R=.5in
{\ps{F}A\ruppertwocell<3>^{\ps{F}g}{\omit}\rlowertwocell<-3>_{\ps{F}f}{\omit}
\rtwocell<\omit>{'\qquad \ps{F}\alpha}\ar[d]_-{\tau_A}
& \ps{F}B \ltwocell<\omit>\ar[d]^-{\tau_B}\\
\ps{G}A \ar[r]_-{\ps{G}f} & \ps{G}B\ultwocell<\omit>{<-1>\tau_f}}\; 
\xymatrix @R=.25in {\hole \\
=}\;
\xymatrix @C=.6in @R=.5in
{\ps{F}A\ar[r]^-{\ps{F}g}\ar[d]_-{\tau_A} &
\ps{F}B\ar[d]^-{\tau_B}\\
\ps{G}A\ruppertwocell<3>^{\ps{G}g}{\omit}\rlowertwocell<-3>_{\ps{G}f}{\omit}
\rtwocell<\omit>{'\qquad \ps{G}\alpha} & \ps{G}B\ltwocell<\omit>
\ultwocell<\omit>{<1>\tau_g}}
\end{displaymath}
for any 2-cell $\alpha:f\Rightarrow f$.

(2) Using pasting operations properties
(see Section \ref{2cats}),
the equality (\ref{compatcomp1})
can be expressed by the commutativity of
\begin{displaymath}
\xymatrix @R=.25in @C=.5in
{\ps{G}g\circ(\ps{G}f\circ \tau_A) \ar[r]^-{\ps{G}g*\tau_f}
\ar[d]_-{\alpha^{-1}} & \ps{G}g\circ(\tau_B\circ \ps{F}f) \ar[r]^-{\alpha^{-1}} &
(\ps{G}g\circ\tau_B)\circ \ps{F}f \ar[d]^-{\tau_g* \ps{F}f} \\
(\ps{G}g\circ \ps{G}f)\circ\tau_A\ar[d]_-{\delta'_{g,f}*\tau_A}
&& (\tau_C\circ \ps{F}g)\circ \ps{F}f\ar[d]^-\alpha \\
\ps{G}(g\circ f)\circ\tau_A\ar[dr]_-{\tau_{g\circ f}} &&
\tau_C\circ(\ps{F}g\circ \ps{F}f)\ar[dl]^-{\tau_C*\delta_{g,f}} \\
& \tau_C\circ \ps{F}(g\circ f) &}
\end{displaymath}
inside the hom-category $\ca{L}(\ps{F}A,\ps{G}C)$.

(3) Similarly, the equality (\ref{compatunit1}) can
be expressed by the commutativity of
\begin{displaymath}
\xymatrix @C=.6in @R=.15in
{1_{\ps{G}A}\circ\tau_A\ar[r]^-{\gamma'_A*\tau_A}
\ar[d]_-{\lambda} & \ps{G}(1_A)\circ\tau_A\ar[dd]^-{\tau_{1_A}} \\
\tau_A\ar[d]_-{\rho^{-1}} & \\
\tau_A\circ 1_{\ps{F}A}\ar[r]_-{\tau_A*\gamma_A} 
& \tau_A\circ\ps{F}(1_A).}
\end{displaymath}
\end{rmk*}
A lax natural transformation $\tau$ is a 
\emph{pseudonatural} transformation (respectively
\emph{strict}) when all the 2-cells $\tau_f$ 
as in (\ref{tau_f}) are isomorphisms (respectively identities).
Also, a \emph{colax} (or \emph{oplax}) natural transformation
is equipped
with a natural transformation in the opposite direction
of (\ref{nattranslax}). Note that 
between either lax or colax functors 
$\ps{F},\ps{G}:\ca{K}\to\ca{L}$
of bicategories, we can consider both lax
and colax natural transformations.
\begin{defi}\label{modification}
Consider lax functors $\ps{F},\ps{G}:\ca{K}\to\ca{L}$
between bicategories, and 
$\tau,\sigma:\ps{F}\Rightarrow\ps{G}$ two lax natural transformations.
A \emph{modification} $m:\tau\Rrightarrow\sigma$ is 
a family of 2-cells
\begin{displaymath}
 \xymatrix{\ps{F}A\rrtwocell^{\tau_A}_{\sigma_A}{\quad m_A} && 
\ps{G}A}
\end{displaymath}
for every object $A$ of $\ca{K}$, such that
\begin{equation}\label{modificcond}
\xymatrix @C=.6in @R=.6in
{\ps{F}A\ar[r]^-{\ps{F}f}
\dtwocell^{\;\;\sigma_A}_{\tau_A}{\omit}
& \ps{F}B\ar[d]^-{\sigma_B} \\
\ps{G}A\ar[r]_-{\ps{G}f}
\utwocell<\omit>{m_A} & \ps{G}B
\ultwocell<\omit>{\sigma_f}}\;
\xymatrix @!=.2in {\hole \\
=}\;
\xymatrix @C=.6in @R=.6in
{\ps{F}A\ar[r]^-{\ps{F}f}\ar[d]_-{\tau_A} &
\ps{F}B\dtwocell^{\;\;\sigma_B}_{\tau_B}{\omit} \\
\ps{G}A\ar[r]_-{\ps{G}f} & \ps{G}B.\ultwocell
<\omit>{\tau_f}\utwocell<\omit>{m_B}}
\end{equation}
\end{defi}

It is not hard to define composition
of natural transformations and modifications,
and respective identities. Therefore,
for any two bicategories $\ca{K},\ca{L}$ 
there is a functor bicategory 
$\B{Lax}(\ca{K},\ca{L})=\B{Bicat}_l(\ca{K},\ca{L})$
of lax functors, lax natural transformations and modifications,
and it has a sub-bicategory $\B{Hom}(\ca{K},\ca{L})=
\B{Bicat}_{ps}(\ca{K},\ca{L})$ of pseudofunctors,
pseudonatural transformations and modifications.
In fact, the \emph{tricategory} $\B{Hom}$ is a very 
important 3-dimensional category of bicategories 
(see \cite{CoherenceTricats,Gurskitricats}).
Notice that $\B{Hom}(\ca{K},\ca{L})$ is a strict bicategory,
\emph{i.e.} 2-category when $\ca{L}$
is a 2-category.

\section{Monads and modules in bicategories}\label{monadsinbicats}
\begin{defi}\label{monadbicat}

A \emph{monad} in a bicategory $\ca{K}$
consists of an object $B$ together with an endomorphism
$t:B\to B$ and 2-cells $\eta:1_B\Rightarrow t$,
$m:t\circ t\Rightarrow t$ called the 
\emph{unit} and \emph{multiplication} respectively,
such that the diagrams
\begin{displaymath}
\xymatrix @R=.25in
{(t\circ t)\circ t\ar[rr]^-{\alpha_{t,t,t}}
\ar[d]_-{m\circ1} && t\circ(t\circ t)\ar[d]^-{1\circ m} \\
t\circ t\ar[dr]_-m && t\circ t\ar[ld]^-m \\
& t &}\qquad\mathrm{and}\qquad
\xymatrix @R=.65in @C=.5in
{1_B\circ t\ar[r]^-{\eta\circ 1}
\ar[dr]_-{\lambda_t} & t\circ t
\ar[d]^-{m} & t\circ1_B \ar[l]_-{1\circ\eta}
\ar[ld]^-{\rho_t} \\
& t &}
\end{displaymath}
commute.
\end{defi}
Equivalently,
a monad in a bicategory $\ca{K}$ is a 
lax functor $\ps{F}:\B{1}\to\ca{K}$, where
$\B{1}$ is the terminal bicategory with 
a unique 0-cell $\star$ (one 1-cell
and one 2-cell). This amounts to
an object $\ps{F}(\star)=B\in\ca{K}$
and a functor 
\begin{displaymath}
\ps{F}_{\star,\star}:\B{1}(\star,\star)
\to\ca{K}(B,B)
\end{displaymath}
which picks up an endoarrow $t:B\to B$
in $\ca{K}$. The natural transformations 
$\delta$ and $\gamma$
of the lax functor give
the multiplication and the unit of $t$
\begin{displaymath}
m\equiv\delta_{1_\star,1_\star}:t\circ t\to t
\quad\textrm{and}\quad
\eta\equiv\gamma_{\star}:1_B\to t
\end{displaymath}
and the axioms for $\ps{F}$ 
give the monad axioms for $(t,m,\eta)$.
\begin{rmk}\label{laxfunctorspreservemonads}
As mentioned earlier, lax functors
between bicategories compose. Therefore
if $\ps{G}:\ca{K}\to\ca{L}$ is a lax functor 
between bicategories, the composite
\begin{displaymath}
\B{1}\xrightarrow{\;\ps{F}\;}\ca{K}\xrightarrow{\;\ps{G}\;}\ca{L}
\end{displaymath}
is itself a lax functor from $\B{1}$
to $\ca{L}$, hence
defines a monad. In other words,
if $t:B\to B$ is a monad in the bicategory
$\ca{K}$, then $\ps{G}t:\ps{G}B\to\ps{G}B$
is a monad in the bicategory $\ca{L}$, \emph{i.e.}
lax functors preserve monads.
\end{rmk}
For an object $B$ in the bicategory $\ca{K}$ and
a monad $t:B\to B$, there is an induced ordinary
monad (\emph{i.e.} in $\B{Cat}$) on the 
hom-categories, namely `post-composition with $t$'. 
Explicitly, for any 0-cell $A$ we have an endofunctor 
\begin{displaymath}
 \ca{K}(A,t):\ca{K}(A,B)\longrightarrow\ca{K}(A,B)
\end{displaymath}
which is the mapping
\begin{displaymath}
 \xymatrix
 {A\rtwocell^f_g{\alpha} & B
 \ar @{|->}[r] & A\rtwocell^f_g{\alpha}
 & B\ar[r]^-t & B}
\end{displaymath}
for objects and morphisms in $\ca{K}(A,B)$.
The multiplication and unit of the monad
$\bar{m}$ and $\bar{\eta}$,
are natural transformations with components,
for each $f:A\to B$ in $\ca{K}(A,B)$,
\begin{displaymath}
 \xymatrix @R=.02in
 {\hole \\ 
 \bar{m}_f\quad\textrm{=}} 
 \xymatrix @R=.02in
 {&& B \ar @/^/[dr]^-t & \\
 A\ar[r]^-f & B \rrtwocell<\omit>{m}
 \ar @/^/[ur]^-t \ar@/_4ex/[rr]_-t && B,}\quad
 \xymatrix @R=.02in
 {\hole \\
 \bar{\eta}_f\quad\textrm{=}}
 \xymatrix @R=.03in
 {\hole \\
 A \rtwocell<\omit>{\quad\;\rho_f^{-1}}
 \ar @/^4ex/[rr]^-f
 \ar @/_/[dr]_-f && B. \\
 & B \ar @/_2ex/[ur]_-t \ar @/^2ex/[ur]^-{1_B}
 \urtwocell<\omit>{\eta} &}
\end{displaymath}
Now, consider the Eilenberg-Moore category
$\ca{K}(A,B)^{\ca{K}(A,t)}$ of
$\ca{K}(A,t)$-algebras. It has as objects
1-cells $f:A\to B$ equipped with an \emph{action}
$\mu:\ca{K}(A,t)(f)\Rightarrow f$, 
\emph{i.e.} a 2-cell
\begin{equation}\label{actionbicat}
 \xymatrix @R=.02in
 {& B\ar@/^/[dr]^-t & \\
 A\ar@/^/[ur]^-f\ar@/_3ex/[rr]_-f
 \rrtwocell<\omit>{\mu} && B}
\end{equation}
compatible with the 
multiplication and unit
of the monad $\ca{K}(A,t)$:
\begin{equation}\label{axiomslefttmodule}
 \xymatrix @R=.03in
 {&B\ar@/^/[r]^-t \rrtwocell<\omit>{<2>m}
 \ar@/_3ex/[drr]_-t &
 B \ar @/^/[dr]^-t & \\
 A\ar@/^/[ur]^-{f}\rrtwocell<\omit>{<1>\mu}
 \ar@/_4ex/[rrr]_-t &&& B}\quad
\xymatrix @R=.03in {\hole \\
\textrm{=}}\quad
 \xymatrix @R=.03in
 {&B\ar@/^/[r]^-t &
 B \ar @/^/[dr]^-t & \\
 A \ar@/_3ex/[urr]_-f
 \rrtwocell<\omit>{<-1>\mu}
 \ar@/^/[ur]^-{f}
 \ar@/_4ex/[rrr]_-t & \rrtwocell<\omit>{<1>\mu}
 && B,}
\end{equation}
\begin{displaymath}
 \xymatrix @R=.03in @C=.45in
 { & B\ar@/^1.5ex/[dr]^-{1_B}
 \ar@/_1.5ex/[dr]_-t
 \drtwocell<\omit>{\eta} & \\
 A\ar@/^/[ur]^-f
 \ar@/_3ex/[rr]_-f
 \rrtwocell<\omit>{\mu} &&
 B\quad\textrm{=}}\quad
  \xymatrix @R=.03in
 { & B\ar@/^/[dr]^-{1_B} & \\
 A\ar@/^/[ur]^-f
 \ar@/_3ex/[rr]_-f
 \rrtwocell<\omit>{\;\;\rho_f} &&
 B.}\qquad
\end{displaymath}
Such an 1-cell $f$ together with an action $\mu$
is called a \emph{$t$-module} or \emph{$t$-algebra}.
An arrow $(f,\mu)\xrightarrow{\tau}(g,\mu')$
is a 2-cell $\tau:f\Rightarrow g$ in $\ca{K}$
compatible with the actions, \emph{i.e.} such that
\begin{equation}\label{axiomlefttmodulemorphism}
\xymatrix @R=.03in
{& B\ar@/^/[dr]^-t & \\
A\urtwocell^f_g{\tau}
\ar@/_6ex/[rr]_g 
\rrtwocell<\omit>{<2>\mu'}&& B\quad\textrm{=}}
\quad
\xymatrix @R=.03in
{& B\ar@/^/[dr]^-t & \\
A\ar@/^/[ur]^-f 
\ar@/_0.5ex/[rr]_-f 
\rrtwocell<\omit>{<-1>\mu}
\ar@/_6ex/[rr]_-g
\rrtwocell<\omit>{<4>{\tau}} && B,}
\end{equation}
called a \emph{morphism of $t$-modules}.
\begin{defi}\label{lefttmodules}
 The category of Eilenberg-Moore algebras
 $\ca{K}(A,B)^{\ca{K}(A,t)}$ for
 $t:B\to B$ a monad in the bicategory $\ca{K}$
 is the \emph{category of left $t$-modules} with 
 domain $A$, denoted by ${^A_t}\Mod$.
\end{defi}
We may similarly 
define the category $\Mod{^B_s}$ of \emph{right $s$-modules}
with codomain $B$. It is the category of Eilenberg-Moore algebras
$\ca{K}(A,B)^{\ca{K}(s,B)}$ for $s:A\to A$ a monad
in the bicategory $\ca{K}$, where $\ca{K}(s,B)$ is the 
monad `pre-composition with $s$'. Moreover,
the above endofunctors combined define the monad
\begin{displaymath}
\ca{K}(s,t):
\xymatrix@R=.02in
{\ca{K}(A,B)\ar[r] & \ca{K}(A,B) \\
(A\xrightarrow{f}B)\ar@{|->}[r]
& (A\xrightarrow{s}A\xrightarrow{f}B\xrightarrow{t}B)}
\end{displaymath}
on $\ca{K}(A,B)$, and the category of algebras
$\ca{K}(A,B)^{\ca{K}(s,t)}$ is now called the 
category of \emph{right $s$/left $t$-bimodules},
${_t}\Mod{_s}$.
\begin{rmk}\label{modswithdom1}
In the classical case where $\ca{K}$=$\B{Cat}$, 
the term left (respectively right) 
`$t$-algebra' is more commonly restricted
to those with domain (respectively codomain)
the unit category $\B{1}$.
A left $t$-module with domain $\B{1}$, \emph{i.e.} 
a functor $f:\B{1}\to B$, is then
identified with the corresponding object $X$ in
the category $B$, and the actions $\mu:t(X)\to X$
and maps $\tau:X\to Y$ are morphisms in $B$.
The category $\ca{K}(\B{1},B)^{\ca{K}(\B{1},t)}$
is then denoted by $B^t$.

Notice that in the above presentation,
there is a certain circularity in the definition
of modules for a monad in an arbitrary bicategory $\ca{K}$.
More precisely, the Eilenberg-Moore category of algebras 
which is used in the very
definition of the category of modules in this 
abstract setting (Definition \ref{lefttmodules}),
is in reality a particular 
example of a category of modules for 
a monad in $\ca{K}=\B{Cat}$.
However, this could be easily avoided: in 
Kelly-Street's \cite{Review}, an action
of a monad $t$ in a 2-category 
is defined to be a 2-cell as in (\ref{actionbicat})
satisfying the specified axioms, and maps
are defined accordingly. Hence, 
the fact that we now identify from the beginning 
this structure with the Eilenberg-Moore category
for an ordinary monad does not affect the 
level of generality.  
\end{rmk}
Dually to the above, we have the following
definitions.
\begin{defi}\label{comonadbicat}
A \emph{comonad} in a bicategory $\ca{K}$
consists of an object $A$ together
with an endoarrow $u:A\to A$ and 2-cells
$\Delta:u\Rightarrow u\circ u$, $\varepsilon:u\Rightarrow 1_A$
called the \emph{comultiplication} and \emph{counit} respectively,
such that the diagrams 
\begin{displaymath}
\xymatrix @R=.25in
{& u\ar[dr]^-\Delta
\ar[dl]_-\Delta & \\
u\circ u \ar[d]_-{\Delta\circ1} &&
u\circ u\ar[d]^-{1\circ\Delta} \\
(u\circ u)\circ u\ar[rr]_-{\alpha_{u,u,u}} &&
u\circ(u\circ u),}\qquad
\xymatrix @R=.65in @C=.5in
{1_A\circ u \ar[dr]_-{\lambda_u} & u\circ u
\ar[l]_-{\varepsilon\circ 1}
\ar[r]^-{1\circ\varepsilon} 
& u\circ1_A 
\ar[ld]^-{\rho_u} \\
& u\ar[u]_-{\Delta} &}
\end{displaymath}
commute. 
\end{defi}
Notice that a comonad in the bicategory $\ca{K}$
is precisely a monad in the bicategory $\ca{K}^\mathrm{co}$.
Similarly to before, for an object A and a comonad
$u:A\to A$ in a bicategory $\ca{K}$, there is an induced
comonad in $\B{Cat}$ between hom-categories
\begin{displaymath}
 \ca{K}(u,B):\ca{K}(A,B)\longrightarrow\ca{K}(A,B)
\end{displaymath}
which precomposes objects and arrows in $\ca{K}(A,B)$
with the 1-cell $u:A\to A$. The axioms for a comonad
follow again from those of $u$, 
hence we can form the category
of coalgebras $\ca{K}(A,B)^{\ca{K}(u,B)}$. Its objects
are 1-cells $h:A\to B$ equipped with a \emph{coaction}
$\delta:h\Rightarrow\ca{K}(u,B)(h)$, \emph{i.e.} a 2-cell
\begin{displaymath}
 \xymatrix @R=.03in
 {A \ar@/^3ex/[rr]^-h
 \ar@/_/[dr]_-u
 \rrtwocell<\omit>{\delta} && B \\
 & A \ar@/_/[ur]_-h &}
\end{displaymath}
compatible with the comultiplication
and counit of $\ca{K}(u,B)$, and arrows
$\sigma:(h,\delta)\to(k,\delta')$
are 2-cells $\sigma:h\Rightarrow k$
compatible with the coactions $\delta$ and $\delta'$.
\begin{defi}\label{rightucomodules}
 The category of Eilenberg-Moore coalgebras
 $\ca{K}(A,B)^{\ca{K}(u,B)}$ for 
 a comonad $u:A\to A$ in the bicategory $\ca{K}$
 is the \emph{category of right $u$-comodules}
or \emph{coalgebras} with codomain $B$, denoted by $\Comod{^B_u}$.
\end{defi}
Similarly, for a comonad $v:B\to B$ we can define the category
${^A_v}\Comod$ of \emph{left $v$-comodules} with domain $A$
as the category
$\ca{K}(A,B)^{\ca{K}(A,v)}$ as well as the category 
of \emph{right $u$/left $v$-bicomodules}
${_v}\Comod{_u}$
as the category of coalgebras of the comonad
`pre-composition with $u$ and post-composition with $v$',
$\ca{K}(u,v)$, on $\ca{K}(A,B)$.
\begin{rmk*}
As mentioned in Remark \ref{modswithdom1},
for the classical case $\ca{K}=\B{Cat}$
the term `$v$-coalgebra' is more
commonly restricted to the case that
the domain of a left $v$-comodule
(or respectively the codomain of 
a right $u$-comodule) is the unit
category $\B{1}$. The coalgebra $h:\B{1}\to B$
is then identified with the object $Z$
of the category which is picked out by 
the functor $h$, and we denote
$\ca{K}(\B{1},B)^{\ca{K}(\B{1},v)}$ for a comonad $v$
as $B^v$.
\end{rmk*}
\begin{defi}\label{monadfunctor}
A \emph{(lax) monad functor}
between two monads $t:B\to B$
and $s:C\to C$ in a bicategory
consists of an 1-cell
$f:B\to C$ between the 0-cells of the monads
together with a 2-cell
\begin{displaymath}
 \xymatrix
{B\ar[r]^-f \ar[d]_-t 
\drtwocell<\omit>{\psi}
& C\ar[d]^-s \\
B\ar[r]_-f & C}
\end{displaymath}
satisfying compatibility conditions with 
multiplications and units:
\begin{displaymath}
 \xymatrix @R=.02in @C=.5in
{& C\ar@/^/[dr]^-s && \\
B\ar@/^/[ur]^-f \ar@/_/[dr]^-t \rrtwocell<\omit>{\psi}
\ar@/_5ex/[ddrr]_-t \ddrrtwocell<\omit>{<2.8>m} &&
C\ar[r]^-s & C\\
& B\ar@/_/[ur]^-f \ar@/_/[dr]^-t \rrtwocell<\omit>{\psi} && \\
&& B \ar@/_/[uur]_-f &}
\;\xymatrix @R=.02in @C=.5in
{\hole \\ =}\;
\xymatrix @R=.02in @C=.5in
{&& C\ar@/^/[dr]^-s & \\
B \rrtwocell<\omit>{<3.5>{\psi}}
\ar[r]^-f \ar@/_/[ddr]_-t & C\ar@/^/[ur]^-s 
\ar@/_3ex/[rr]_-s  \rrtwocell<\omit>{\;m'} && C \\
\hole \\
& B\ar@/_3ex/[uurr]_-f &&}
\end{displaymath}
\begin{displaymath}
 \xymatrix @R=.03in @C=.5in
{ & C\ar@/^/[dr]^-{1_C} \ar@/_2ex/[dr]_-s
\drtwocell<\omit>{<.5>\eta'} & \\
B\ar@/^/[ur]^-f \ar@/_/[dr]_-t \rrtwocell<\omit>{\psi} && C \\
& B\ar@/_/[ur]_f &}
\xymatrix @R=.02in @C=.5in
{\hole \\ =}
\xymatrix @R=.02in @C=.5in
{\hole \\
B\rtwocell^{1_B}_t{\eta} & B\ar[r]^f & C}
\end{displaymath}
\end{defi}
If the 2-cell $\psi$ is in the opposite
direction, and the diagrams are accordingly
modified, we have a \emph{colax} monad functor
(or monad \emph{opfunctor}) between two monads.
There are also appropriate 
notions of monad
natural transformations for monads
in bicategories, not essential for
the purposes of this thesis,
which can be found in detail in \cite{FormalTheoryMonadsI}.
Because of the correspondence between
monads and lax functors from the terminal
bicategory, we obtain a bicategory 
$\B{Mnd}(\ca{K})\equiv[\B{1},\ca{K}]_l$.

In search of an induced functor 
between categories of modules,
we will need some well-known results
related to maps of monads on ordinary categories. The 
following definition is just a special case 
of the above definition for $\ca{K}=\B{Cat}$.
\begin{defi}\label{mapsofmonads}
 Let $T=(T,m,\eta)$ be a monad
on a category $\ca{C}$ and $T'=(T',m',\eta')$
a monad on a category $\ca{C}'$. A \emph{lax map
of monads} $(\ca{C},T)\to(\ca{C}',T')$
is a functor $F:\ca{C}\to\ca{C}'$ together with 
a natural transformation
\begin{displaymath}
 \xymatrix @C=.4in @R=.4in
{\ca{C}\ar[r]^-T\ar[d]_-F & \ca{C}\ar[d]^-F \\
\ca{C}'\ar[r]_-{T'} & \ca{C}' \ultwocell<\omit>{\psi} }
\end{displaymath}
making the diagrams
\begin{displaymath}
\xymatrix
{T'T'F\ar[r]^-{T'\psi}\ar[d]_-{m'F} &
T'FT\ar[r]^-{\psi T} & FTT\ar[d]^-{Fm} \\
T'F\ar[rr]_-\psi && FT,}\quad\quad
\xymatrix @C=.4in
{F\ar[rr]^-{F\eta}\ar[dr]_-{\eta'F} && FT \\
& T'F\ar[ur]_-\psi &}
\end{displaymath}
commute. A \emph{strong} or \emph{pseudo} (respectively \emph{strict})
map of monads is a lax
map $(F,\psi)$ in which $\psi$ is an isomorphism
(respectively the identity). 
\end{defi}
A very important property of lax maps
of monads is that they give rise to maps
between categories of algebras: 
a lax map
$(F,\psi):(\ca{C},T)\to(\ca{C}',T')$
induces a functor
\begin{displaymath}
 F_*:\xymatrix @R=.02in
{\ca{C}^T\ar[r] & \ca{C}'^{T'} \\
(X,a) \ar@{|->}[r] & (FX,Fa\circ \psi_X)}
\end{displaymath}
which means that if $TX\xrightarrow{a}X$ is the action
of the $T$-algebra $X$, then 
$T'FX\xrightarrow{\psi_X}FTX\xrightarrow{Fa}FX$ is
the action which makes $FX$ into a $T'$-algebra. 
In fact, there is a bijection
between the two structures.
\begin{lem}\label{mapsbetweenalgs}
 Let $T$ and $T'$ be monads on categories
$\ca{C}$ and $\ca{C}'$. There is a one-to-one correspondence
betweeen lax maps of monads $(\ca{C},T)\to(\ca{C}',T')$ and 
pairs of functors $(K,F)$ such that the square
\begin{displaymath}
 \xymatrix @R=.4in @C=.4in
{\ca{C}^T\ar[r]^-K\ar[d]_-U & \ca{C}'^{T'}\ar[d]^-{U'} \\
\ca{C}\ar[r]_-F & \ca{C}'}
\end{displaymath}
commutes, where $U,U'$ are the forgetful functors.
Explicitly, a lax map $(F,\psi)$ corresponds bijectively
to the pair $(F_*,F)$.
\end{lem}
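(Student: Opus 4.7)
The plan is to verify that the map $(F,\psi)\mapsto (F_*,F)$ is a bijection by constructing an inverse explicitly and checking the two round trips. The forward direction is essentially done already: given $(F,\psi)$, one checks that the assignment $F_*(X,a)=(FX,Fa\circ\psi_X)$ extends to a functor on algebra maps (a morphism $f:(X,a)\to(Y,b)$ in $\ca{C}^T$ is sent to $Ff$, which is a $T'$-algebra map because of the naturality of $\psi$), and by construction $U'F_*=FU$.

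For the inverse direction, suppose $K:\ca{C}^T\to\ca{C}'^{T'}$ satisfies $U'K=FU$. Applying $K$ to the free $T$-algebra $(TX,m_X)$ yields a $T'$-algebra on the underlying object $FTX$; write its action as $\bar a_X:T'FTX\to FTX$. I would then define
\begin{equation*}
\psi_X:=\bar a_X\circ T'F\eta_X\colon T'FX\longrightarrow FTX,
\end{equation*}
natural in $X$ because the free functor $X\mapsto(TX,m_X)$ is functorial, $K$ preserves algebra maps, and $\eta$ is natural. The unit axiom for $\psi$ reduces, after using the unit axiom of the $T'$-algebra $K(TX,m_X)$, to the identity $\eta'_{FTX}\circ$(action)$=\text{id}$; the multiplication axiom follows by applying $K$ to the algebra map $m_X:(T^2X,m_{TX})\to(TX,m_X)$ and combining with the associativity of the $T'$-action on $K(TX,m_X)$.

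The key observation for the round trip $K\mapsto\psi\mapsto F_*\stackrel{?}{=}K$ is that every $T$-algebra $(X,a)$ fits in a split situation with the free algebra $(TX,m_X)$: the action $a$ is itself a $T$-algebra map $(TX,m_X)\to(X,a)$, split by $F\eta_X$ in the underlying category since $a\circ\eta_X=\text{id}_X$. Because $K$ sends this algebra map to a $T'$-algebra map with underlying arrow $Fa$, the $T'$-action on $K(X,a)$ must satisfy $a'_{K(X,a)}\circ T'Fa=Fa\circ\bar a_X$; precomposing with $T'F\eta_X$ and using $a\circ\eta_X=\text{id}$ yields $a'_{K(X,a)}=Fa\circ\psi_X$, so $K(X,a)=F_*(X,a)$. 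The opposite round trip $(F,\psi)\mapsto F_*\mapsto\psi'$ amounts to computing $\psi'_X=\bar a_X\circ T'F\eta_X$ where $\bar a_X$ is now $Fm_X\circ\psi_{TX}$; the monad unit axiom for $\psi$ (together with naturality of $\psi$ at $\eta_X$) collapses this back to $\psi_X$.

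The main obstacle I expect is the verification of the multiplication axiom for the constructed $\psi$, since it requires combining the $T'$-action axioms on $K(TX,m_X)$ with the functoriality of $K$ on the algebra map $m_X$ in just the right order; once that is established, both round-trip identities are short diagram chases, and well-definedness on morphisms is immediate from the formula $F_*f=Ff$ together with the naturality already verified.
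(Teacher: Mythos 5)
Your proposal is correct and complete. The paper states this lemma as a known fact and gives no proof of its own, so there is nothing to compare against; your argument is the standard one (recover $\psi_X$ as the $T'$-action on $K(TX,m_X)$ precomposed with $T'F\eta_X$, then use that $K$ sends the algebra maps $m_X$ and $a$ to $T'$-algebra maps to verify the two monad-map axioms and the round trip $K=F_*$), and all the diagram chases you outline do go through — in particular the multiplication axiom follows exactly as you predict, from $Fm_X\circ\bar a_{TX}=\bar a_X\circ T'Fm_X$ together with $m_X\circ\eta_{TX}=\mathrm{id}$ and associativity of $\bar a_X$. Two cosmetic slips worth fixing: the splitting of $a:(TX,m_X)\to(X,a)$ in $\ca{C}$ is $\eta_X$ (it becomes $F\eta_X$ only after applying $F$), and the second round trip needs only naturality of $\psi$ at $\eta_X$ plus the monad identity $m_X\circ T\eta_X=\mathrm{id}$, not the unit axiom for $\psi$.
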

We can apply this lemma to obtain functors
between the categories of modules for a monad in 
a bicategory as described above. More specifically,
by Remark \ref{laxfunctorspreservemonads}
lax functors between bicategories preserve monads, and
this in a
sense carries over to the categories of their modules.
\begin{prop}\label{laxfunctorbetweenmodules}
 If $\ps{F}:\ca{K}\to\ca{L}$ is a lax functor between
two bicategories and $t:B\to B$ a monad in $\ca{K}$, 
there is an induced functor
\begin{displaymath}
 \Mod(\ps{F}_{A,B}):\ca{K}(A,B)^{\ca{K}(A,t)}\longrightarrow
\ca{L}(\ps{F}A,\ps{F}B)^{\ca{L}(\ps{F}A,\ps{F}t)}
\end{displaymath}
between the category of left $t$-modules
in $\ca{K}$ and the category of 
left $\ps{F}t$-modules in $\ca{L}$, which maps
a $t$-module $f:A\to B$ to the $\ps{F}t$-module 
$\ps{F}f:\ps{F}A\to\ps{F}B$.
Moreover,
the following diagram commutes:
\begin{equation}\label{diagthis}
 \xymatrix @C=.9in @R=.4in
 {\ca{K}(A,B)^{\ca{K}(A,t)}\ar[r]^-{\Mod(\ps{F}_{A,B})}
 \ar[d]_-{U} & \ca{L}(\ps{F}A,\ps{F}B)^{\ca{L}(\ps{F}A,\ps{F}t)}
 \ar[d]^-U \\
 \ca{K}(A,B)\ar[r]_-{\ps{F}_{A,B}} & \ca{L}(\ps{F}A,\ps{F}B) }
\end{equation}
\end{prop}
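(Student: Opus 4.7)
The strategy is to realize the claimed functor as coming from a lax map of monads, via Lemma \ref{mapsbetweenalgs}. By Remark \ref{laxfunctorspreservemonads} the lax functor $\ps{F}$ takes the monad $t:B\to B$ in $\ca{K}$ to a monad $\ps{F}t:\ps{F}A\to\ps{F}B$ in $\ca{L}$, so on hom-categories we obtain the ordinary monads $\ca{K}(A,t)$ on $\ca{K}(A,B)$ and $\ca{L}(\ps{F}A,\ps{F}t)$ on $\ca{L}(\ps{F}A,\ps{F}B)$. The plan is to equip the functor $\ps{F}_{A,B}$ with a 2-cell making it into a lax map of monads between these, and then to invoke Lemma \ref{mapsbetweenalgs}; the square (\ref{diagthis}) then follows automatically from the bijection in that lemma, with $\Mod(\ps{F}_{A,B})$ defined as $(\ps{F}_{A,B})_*$ acting on the underlying modules-as-algebras.

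First I would construct the comparison transformation $\psi$. For $f:A\to B$, one side of the intended square evaluates to $\ps{F}(t\circ f)=\ps{F}_{A,B}(\ca{K}(A,t)(f))$ and the other to $\ps{F}t\circ\ps{F}f=\ca{L}(\ps{F}A,\ps{F}t)(\ps{F}_{A,B}(f))$. The natural transformation $\delta$ from Definition \ref{laxfunctor} provides a component $\delta_{t,f}:\ps{F}t\circ\ps{F}f\to\ps{F}(t\circ f)$, and its naturality in $f$ (for fixed $t$) gives the required naturality of $\psi$ as a transformation between the two composite endofunctors on $\ca{K}(A,B)\to\ca{L}(\ps{F}A,\ps{F}B)$.

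Next, I would check the two lax-map-of-monads axioms from Definition \ref{mapsofmonads}. The multiplication axiom unfolds to the equality of two 2-cells $\ps{F}t\circ\ps{F}t\circ\ps{F}f\to\ps{F}(t\circ f)$: one path goes $\delta_{t,t}*1$ followed by $\delta_{tt,f}$ followed by $\ps{F}(m*1)$, and the other uses associativity followed by $1*\delta_{t,f}$, $\delta_{t,tf}$, and $\ps{F}(1*\alpha^{-1})\cdot\ps{F}(m*1)$ appropriately. This is exactly (after inserting the associator) the hexagon (\ref{laxcond1}) postcomposed with $\ps{F}m$, together with naturality of $\delta$ applied to the 2-cell $m:t\circ t\Rightarrow t$. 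The unit axiom similarly reduces to the left-hand square of (\ref{laxcond2}) applied to $f$, combined with naturality of $\delta$ applied to the unit $\eta:1_B\Rightarrow t$.

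Having verified that $(\ps{F}_{A,B},\psi)$ is a lax map of monads, Lemma \ref{mapsbetweenalgs} produces a functor $\Mod(\ps{F}_{A,B}):=(\ps{F}_{A,B})_*$ between the Eilenberg--Moore categories, which by Definitions \ref{lefttmodules} is precisely the claimed functor between module categories, sending a $t$-module $f:A\to B$ with action $\mu:t\circ f\Rightarrow f$ to the $\ps{F}t$-module $\ps{F}f$ with action $\ps{F}\mu\cdot\delta_{t,f}$, and a module morphism $\tau$ to $\ps{F}\tau$. The commuting square (\ref{diagthis}) is the defining property of the lifted functor in Lemma \ref{mapsbetweenalgs}. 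The main nuisance, as usual with such lax coherence arguments, will be book-keeping the associators and unitors in the two hexagonal/pentagonal pastings for the multiplication axiom; in a strict 2-category $\ca{L}$ this collapses immediately, and by coherence for bicategories the general case follows by invisibly inserting the appropriate structure isomorphisms.
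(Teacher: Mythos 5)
Your proposal is correct and follows essentially the same route as the paper: both exhibit $(\ps{F}_{A,B},\psi)$ with $\psi_f=\delta_{t,f}$ as a lax map of monads from $\ca{K}(A,t)$ to $\ca{L}(\ps{F}A,\ps{F}t)$ and then invoke Lemma \ref{mapsbetweenalgs} to obtain $\Mod(\ps{F}_{A,B})=(\ps{F}_{A,B})_*$ together with the commuting square. The only difference is that you spell out the verification of the two lax-map-of-monads axioms via (\ref{laxcond1}), (\ref{laxcond2}) and naturality of $\delta$, which the paper leaves implicit.
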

\begin{proof}
The endofunctor
$\ca{K}(A,t)$ is an ordinary monad on the hom-category
$\ca{K}(A,B)$, and since $\ps{F}t$ is a monad
in $\ca{L}$, the endofunctor
$\ca{L}(\ps{F}A,\ps{F}t)$
is also a monad on the hom-category
$\ca{L}(\ps{F}A,\ps{F}B)$. 
 
In order to apply Lemma \ref{mapsbetweenalgs},
we need to exhibit a map of monads
as in Definition \ref{mapsofmonads}. In fact,
we have a functor 
\begin{displaymath}
 \ps{F}_{A,B}:\ca{K}(A,B)\to\ca{L}(\ps{F}A,\ps{F}B)
\end{displaymath}
and also 
a natural transformation
\begin{displaymath}
 \xymatrix@R=.6in @C=.7in
{\ca{K}(A,B)\ar[r]^-{\ca{K}(A,t)}\ar[d]_-{\ps{F}_{A,B}}
& \ca{K}(A,B)\ar[d]^-{\ps{F}_{A,B}} \\
\ca{L}(\ps{F}A,\ps{F}B)\ar[r]_-{\ca{L}(\ps{F}A,\ps{F}t)} 
& \ca{L}(\ps{F}A,\ps{F}B) \ultwocell<\omit>{\psi} }
\end{displaymath}
with components, for any 1-cell $f:A\to B$,
\begin{displaymath}
 \xymatrix @R=.05in
 {& \ps{F}B \ar@/^/[dr]^-{\ps{F}t} & \\
 \ps{F}A\ar@/^/[ur]^-{\ps{F}f} 
 \ar@/_3ex/[rr]_-{\ps{F}(t\circ f)}
 \rrtwocell<\omit>{\quad\delta_{t,f}} 
 && FB}
\end{displaymath}
where $\ps{F}_{A,B}$ and $\delta$ come
from the definition of a lax functor.
Hence, we do have a map of monads
\begin{displaymath}
 (\ps{F}_{A,B},\psi):(\ca{K}(A,B),\ca{K}(A,t))\longrightarrow
 (\ca{L}(\ps{F}A,\ps{F}B),\ca{L}(\ps{F}A,\ps{F}t))
\end{displaymath}
which induces a functor
between the categories of algebras
\begin{displaymath}
 (\ps{F}_{A,B})_*\equiv \B{Mod}(\ps{F}_{A,B})
\end{displaymath}
such that the diagram (\ref{diagthis}) commutes.
\end{proof}
In a completely dual way, we can verify that colax functors
between bicategories preserve comonads, and that 
they also induce
functors between the corrresponding categories 
of comodules. 

\section{2-categories}\label{2cats}

A (strict) \emph{2-category} is a bicategory
in which all constraints are identities, \emph{i.e.} $\alpha,\rho,\lambda=1$.
In this case, the horizontal composition
is strictly associative and unitary and the
axioms (\ref{bicataxiom1}), (\ref{bicataxiom2})
hold automatically. Consequently, 
the collection of 0-cells and 1-cells
form a category on its own. 
\begin{examples*}\hfill
\begin{enumerate}
 \item The collection of all (small) categories, functors and 
natural transformations forms the 2-category $\B{Cat}$, 
which is a leading example in category theory.
\item Monoidal categories, 
(strong) monoidal functors and monoidal natural tra\-nsfo\-rma\-tions
form the 2-category $\Mon\B{Cat}$ (see Chapter \ref{monoidalcategories}).
\item If $\ca{V}$ is a monoidal category, $\ca{V}$-enriched categories, 
$\ca{V}$-functors and $\ca{V}$-na\-tu\-ral tra\-nsfo\-rma\-tions
form the 2-category $\ca{V}$-$\B{Cat}$ (see Chapter \ref{enrichment}).
\item Fibrations and opfibrations over $\caa{X}$, 
(op)fibred functors and (op)fibred natural transformations form the 
2-categories $\B{Fib}(\caa{X})$ and $\B{OpFib}(\caa{X})$ 
(see Chapter \ref{fibrations}).
\item Suppose $\caa{E}$ is a category with finite limits. There is a 
2-category $\B{Cat}(\caa{E})$ with objects categories internal to $\caa{E}$,
which have an $\caa{E}$-object of objects and an $\caa{E}$-object of morphisms.
Instances of this
are ordinary categories ($\caa{E}=\B{Set}$), double categories
($\caa{E}=\B{Cat}$) and crossed modules ($\caa{E}=\B{Grp}$).
\end{enumerate}
\end{examples*}
A (strict) \emph{2-functor} is a strict functor between
2-categories, whereas a (strict) \emph{2-natural transformation}
is a strict natural transformation between 2-functors.

Since a 2-category is a special case
of a bicategory, all kinds of functors (and natural
transformations) described in Section \ref{bicatbasicdefinitions}
can be defined in this context. They now give
rise to categories $\B{2\text{-}Cat},
\B{2\text{-}Cat}_{ps},\B{2\text{-}Cat}_l,
\B{2\text{-}Cat}_c$. Moreover, for $\ca{K},\ca{L}$ 2-categories,
there are various kinds of functor 2-categories:
$[\ca{K},\ca{L}]$ with 2-functors, 2-natural transformations and modifications,
$\B{Lax}(\ca{K},\ca{L})_s$ 
with lax functors, strict 2-natural transformations and modifications,
$[\ca{K},\ca{L}]_\mathrm{ps}$ with pseudofunctors,
pseudonatural transformations and modifications etc.
Evidently, this implies 
that all flavours of categories with objects 2-categories
are in reality 2-categories themselves, and moreover
$\B{2\textrm{-}Cat}$ is a paradigmatic example
of a \emph{3-category}.
\begin{rmk}\label{icons}
We saw earlier how bicategories and lax/colax/pseudo functors
form ordinary categories, and also how structures
like $\B{Lax}(\ca{K},\ca{L})$ or $\B{Hom}(\ca{K},\ca{L})$ 
of appropriate functors, natural transformations
and modifications are in fact bicategories
themselves (or functor 2-categories in the strict case like above). 
However bicategories, lax functors
and (co)lax natural transformations fail to form a 2-category.
Even restricting from bicategories to 2-categories and 
from lax functors to 2-functors does not suffice in order
to form a 2-dimensional structure with a weaker notion
of natural transformation. This is due to problems arising
regarding the vertical and horizontal composition of 2-cells.

The above is thoroughly discussed in Lack's \cite{Icons}, where 
\emph{icons} are employed so that 
bicategories and lax functors can be the objects 
and 1-cells of a 2-category
$\B{Bicat}_2$.
More precisely, the 2-cells $\tau:\ps{F}\Rightarrow\ps{G}$
are colax natural transformations (see Definition \ref{laxnattrans})
whose components $\tau_A:\ps{F}A\to\ps{G}A$
are identities, hence the name \emph{I}denitity \emph{C}omponent 
\emph{O}plax \emph{N}atural transformation. That reduces the 
natural transformation in the opposite direction of 
(\ref{nattranslax}) to the simpler
\begin{displaymath}
 \xymatrix @C=1in
{\ca{K}(A,B)\rtwocell^{\ps{F}_{A,B}}_{\ps{G}_{A,B}}{\tau} &
\ca{L}(\ps{F}A,\ps{F}B)}
\end{displaymath}
which satisfies accordingly simplified axioms.
Icons were firstly 
introduced in \cite{2nervesbicats} and they allow the study 
of bicategories in a plain 2-dimensional setting, with
applications in various contexts.
\end{rmk}
In many cases, various concepts used in ordinary category
theory are special instances of abstract notions
defined in an arbitrary 2-category or bicategory.
For example, the usual notion of equivalence of categories is 
just a special case of the following notion
of (internal) equivalence in any bicategory, applied to $\B{Cat}$. 
\begin{defi*}
 A 1-cell $f:A\to B$
in a bicategory $\ca{K}$ is an \emph{equivalence}
when there exist another 1-cell $g:B\to A$ and 
invertible 2-cells
\begin{displaymath}
 \xymatrix @R=.05in
{& B\ar@/^/[dr]^-g & \\
A\ar@/^/[ur]^-f \ar@/_3ex/[rr]_{1_A}
\rrtwocell<\omit>{\cong} && A,}
\qquad
\xymatrix @R=.05in
{& A\ar@/^/[dr]^-f & \\
B\ar@/^/[ur]^-g \ar@/_3ex/[rr]_{1_B}
\rrtwocell<\omit>{\cong} && B}
\end{displaymath}
\emph{i.e.} isomorphisms
$gf\cong 1_A$ and $fg\cong1_B$ in $\ca{K}$. We write $f\simeq g$.
\end{defi*}
Just as the notion of equivalence of categories 
can be internalized in any 2-category,
the notion of equivalence for 2-categories
can be internalized in any 3-category in an appropriate
way, hence we obtain the following definition for $\B{2\textrm{-}Cat}$.
\begin{defi*}
A 2-functor $T:\ca{K}\to\ca{L}$ between two 2-categories 
$\ca{K}$ and $\ca{L}$ is a \emph{(strict) 2-equivalence}
if there is some 2-functor $S:\ca{L}\to\ca{K}$ and 
isomorphisms $\B{1}\cong TS$, 
$ST\cong\B{1}$.
We write $\ca{K}\backsimeq\ca{L}$.
\end{defi*}
There is a well-known proposition
which gives conditions for a 2-functor to 
be a 2-equivalence.
\begin{prop}\label{knownequivalenceprop}
The 2-functor $T:\ca{K}\to\ca{L}$ is an equivalence if 
and only if $T$ is fully faithful, \emph{i.e.}
$T_{A,A'}:\ca{A}(A,A')\to\ca{B}(TA,TA')$ is 
an isomorphism of categories for every $A,A'\in\ca{A}$,
and essentially
surjective on objects, \emph{i.e.} 
every object $B\in\ca{L}$ is isomorphic 
to $TA$ for some $A\in\ca{A}$.
\end{prop}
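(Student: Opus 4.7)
The plan is to prove the two directions separately, using the given data $(S,\eta,\varepsilon)$ for necessity and explicitly constructing $S$ from choices of quasi-inverse objects for sufficiency.

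For necessity, suppose $T$ is a strict 2-equivalence with 2-natural isomorphisms $\eta:1_{\ca{K}}\cong ST$ and $\varepsilon:TS\cong 1_{\ca{L}}$. Essential surjectivity on objects is immediate: each $B\in\ca{L}$ is isomorphic to $T(SB)$ via $\varepsilon_B$. For local fully faithfulness, I would exhibit an explicit inverse $U_{A,A'}:\ca{L}(TA,TA')\to\ca{K}(A,A')$ to $T_{A,A'}$, sending a 1-cell $g:TA\to TA'$ to the conjugate $\eta_{A'}^{-1}\circ Sg\circ\eta_A$, and analogously on 2-cells. The 2-naturality of $\eta$ supplies $Sf\circ\eta_A=\eta_{A'}\circ f$ for any $f:A\to A'$ in $\ca{K}$, from which $U_{A,A'}\circ T_{A,A'}=\id$ follows; a symmetric argument using $\varepsilon$ yields $T_{A,A'}\circ U_{A,A'}=\id$, so $T_{A,A'}$ is an isomorphism of categories.

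For sufficiency, assume $T$ is locally an isomorphism of hom-categories and essentially surjective. For each $B\in\ca{L}$, choose an object $SB\in\ca{K}$ together with an isomorphism $\varepsilon_B:TSB\xrightarrow{\sim}B$. Given a 1-cell $g:B\to B'$, the composite $\varepsilon_{B'}^{-1}\circ g\circ\varepsilon_B$ lies in $\ca{L}(TSB,TSB')$, and the isomorphism $T_{SB,SB'}$ provides a unique 1-cell $Sg:SB\to SB'$ with $T(Sg)$ equal to it; the same isomorphism defines $S$ on 2-cells. By construction, the $\varepsilon_B$ assemble into a 2-natural isomorphism $TS\cong 1_{\ca{L}}$, since $\varepsilon_{B'}\circ T(Sg)=g\circ\varepsilon_B$ is built into the definition of $Sg$. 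For the other direction, define $\eta_A:A\to STA$ as the unique 1-cell satisfying $T\eta_A=\varepsilon_{TA}^{-1}$, which exists by local fully faithfulness; invertibility of $\eta_A$ follows because the isomorphism $T_{A,STA}$ reflects invertible 1-cells.

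The main obstacle, and the reason the strict local-isomorphism hypothesis is indispensable, is verifying that $S$ is strictly 2-functorial and that $\eta$ is strictly 2-natural rather than merely invertible up to coherent isomorphism. Both reduce via $T_{-,-}$ to equations already satisfied on the nose in $\ca{L}$: preservation of horizontal composition by $S$ becomes, after applying $T$, the identity
\begin{equation*}
\varepsilon_{B''}^{-1}\circ(g\circ h)\circ\varepsilon_B=(\varepsilon_{B''}^{-1}\circ g\circ\varepsilon_{B'})\circ(\varepsilon_{B'}^{-1}\circ h\circ\varepsilon_B),
\end{equation*}
which is trivial, and 2-naturality of $\eta$ at $f:A\to A'$ collapses after application of $T$ to the already-established 2-naturality of $\varepsilon^{-1}$ at the 1-cell $Tf$. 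Uniqueness in the defining property of $Sg$ and $\eta_A$ then promotes each such equality under $T$ to the strict equality in $\ca{K}$, completing the construction of the strict 2-inverse.
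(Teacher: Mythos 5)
The paper states this proposition as a well-known fact and gives no proof, so there is nothing internal to compare against; I am judging your argument on its own terms. Your sufficiency direction is the standard construction and is sound: choosing $SB$ and $\varepsilon_B\colon TSB\xrightarrow{\sim}B$ (note this silently invokes the axiom of choice, consistent with the paper's own remark about the bicategorical analogue), defining $S$ on 1- and 2-cells through the inverses of the hom-isomorphisms, and then transporting every required equation — 2-functoriality of $S$, 2-naturality and invertibility of $\eta$ — along $T$, where it reduces to an identity already holding on the nose in $\ca{L}$, is exactly right; and you correctly identify that the strictness of the local isomorphisms is what lets these equalities be reflected back into $\ca{K}$.

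The necessity direction, however, contains one step that fails as written. With $U_{A,A'}(g)=\eta_{A'}^{-1}\circ Sg\circ\eta_A$, the 2-naturality of $\eta$ does give $U_{A,A'}\circ T_{A,A'}=\id$, but the ``symmetric argument using $\varepsilon$'' does not give $T_{A,A'}\circ U_{A,A'}=\id$: computing with the 2-naturality of $\varepsilon$ one finds $T(U(g))=\theta_{A'}^{-1}\circ g\circ\theta_{A}$, where $\theta_A=\varepsilon_{TA}\circ T\eta_A\colon TA\to TA$, and this equals $g$ only when the triangle identities hold — which the paper's definition of a (strict) 2-equivalence does not assume, since $\eta$ and $\varepsilon$ are arbitrary invertible 2-natural transformations rather than the data of an adjoint equivalence. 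The conclusion is still reachable with a small repair: $T_{A,A'}\circ U_{A,A'}$ is conjugation by the invertible 1-cells $\theta_A,\theta_{A'}$, hence an isomorphism of categories, and a functor admitting a strict left inverse together with an invertible composite on the other side is itself invertible; alternatively, one can run the classical route, using $U\circ T=\id$ to get injectivity of $T_{A,A'}$, the analogous identity $\varepsilon_{B'}\circ T(-)\circ\varepsilon_B^{-1}\circ S=\id$ to get injectivity of $S$ on hom-categories, and then the equation $S(Tf)=Sg$ for $f=U(g)$ to conclude $Tf=g$. As stated, though, the claim $T_{A,A'}\circ U_{A,A'}=\id$ is not justified and should be corrected.
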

The appropriate weaker version 
for the notion of equivalence in the context of 
bicategories is the following.
\begin{defi*}
 A \emph{biequivalence} between bicategories $\ca{K}$ and $\ca{L}$ 
consists of two pseudofunctors $\ps{F}:\ca{K}\to\ca{L}$
and $\ps{G}:\ca{L}\to\ca{K}$ and pseudonatural
transformations $\ps{G}\ps{F}\to1_\ca{K}$,
$1_\ca{L}\to\ps{F}\ps{G}$ which are invertible
up to isomorphism. Equivalently,
$\ps{F}:\ca{K}\to\ca{L}$ is a biequivalence if and only if
it is locally an equivalence, \emph{i.e.} each $\ps{F}_{A,B}:\ca{K}(A,B)\to
\ca{L}(\ps{F}A,\ps{F}B)$ is an equivalence of categories,
and every $B\in\ca{L}$ is equivalent to $\ps{F}A$ for some $A$.
\end{defi*}
Notice that the second statement in fact is equivalent 
to the first, only if the axiom of choice is assumed.
This has to do with the fact that in general, there exist
notions of \emph{strong} and \emph{weak} equivalence 
between categories, and every weak equivalence being 
a strong one is equivalent to the axiom of choice.

The coherence theorems for bicategories 
and their homomorphisms are of 
great importance, and have been fundamental for
the development of higher category theory. 
In particular, it is asserted that certain
diagrams involving the constraint isomorphisms
of bicategories will always commute. Coherence allows
us to replace any bicategory with an
appropriate strict 2-category, so that various
situations are greatly simplified. This ensures
for example that the pasting diagrams, commonly used when
working with 2-categories, can also be used for bicategories.
\begin{thm}\label{coherenceforbicats}
 Every bicategory is biequivalent to a 2-category.
\end{thm}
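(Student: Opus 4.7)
The plan is to give a Yoneda-style embedding of $\ca{K}$ into a suitable strict 2-category and show that it is locally an equivalence and essentially surjective onto its image. The target 2-category will be (a full sub-bicategory of) $\B{Hom}(\ca{K}^\op,\B{Cat})$, which, as noted after Definition~\ref{modification}, is genuinely a 2-category since the codomain $\B{Cat}$ is a 2-category: composition and units there are strict, so associators and unitors of the functor bicategory reduce to identities.

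First I would define the bicategorical Yoneda pseudofunctor
\[
Y:\ca{K}\longrightarrow\B{Hom}(\ca{K}^\op,\B{Cat}),\qquad A\longmapsto \ca{K}(-,A),
\]
sending a $1$-cell $f:A\to B$ to the pseudonatural transformation $\ca{K}(-,f)$ given by post-composition, and a $2$-cell $\alpha$ to the evident modification. The comparison constraints $\gamma$ and $\delta$ of Definition~\ref{laxfunctor} are built from the associators and unitors of $\ca{K}$; verifying the coherence axioms~(\ref{laxcond1}) and~(\ref{laxcond2}) is a direct (if tedious) diagram chase using the pentagon~(\ref{bicataxiom1}) and the triangle~(\ref{bicataxiom2}).

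Next I would check that $Y$ is \emph{locally an equivalence}: for each pair $A,B\in\ca{K}$ the functor
\[
Y_{A,B}:\ca{K}(A,B)\longrightarrow\B{Hom}(\ca{K}^\op,\B{Cat})\bigl(\ca{K}(-,A),\ca{K}(-,B)\bigr)
\]
is an equivalence of categories. This is the content of the bicategorical Yoneda lemma: a pseudonatural transformation $\tau:\ca{K}(-,A)\Rightarrow\ca{K}(-,B)$ is determined up to isomorphism by its value $\tau_A(1_A)\in\ca{K}(A,B)$, and the assignment is functorial in modifications. Concretely I would construct a quasi-inverse $\tau\mapsto \tau_A(1_A)$, use the pseudonaturality $2$-cells $\tau_f$ to reconstruct $\tau$ from this value, and verify unit/counit isomorphisms using (\ref{compatcomp1}) and (\ref{compatunit1}); this is where the main technical work lies, since one must carefully distinguish up-to-iso data from strict data.

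Finally, let $\ca{K}'$ be the full sub-bicategory of $\B{Hom}(\ca{K}^\op,\B{Cat})$ on the objects of the form $Y(A)$ for $A\in\ca{K}$. Since it sits inside a 2-category, $\ca{K}'$ is itself a 2-category, and by construction $Y$ factors through $\ca{K}'$ being essentially surjective on objects and, by the previous step, locally an equivalence. By the characterization of biequivalences recalled before Theorem~\ref{coherenceforbicats}, the corestriction $\ca{K}\to\ca{K}'$ is a biequivalence, which proves the theorem. The only subtle point beyond the Yoneda calculation is ensuring that the composition of representables in $\ca{K}'$ inherits strict associativity and unitality from $\B{Cat}$, which is automatic from the remark on functor bicategories valued in 2-categories.
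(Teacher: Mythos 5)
Your proposal is correct and follows essentially the same route as the paper, which likewise invokes the bicategorical Yoneda embedding $A\mapsto\ca{K}^\op(A,-)$ into $\B{Hom}(\ca{K}^\op,\B{Cat})$, notes that this functor bicategory is a genuine 2-category because $\B{Cat}$ is, and concludes that $\ca{K}$ is biequivalent to the full sub-2-category on the representables. The only difference is one of detail: the paper leaves the local-equivalence step to Street's reference, whereas you sketch the quasi-inverse $\tau\mapsto\tau_A(1_A)$ explicitly.
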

The proof is based on a 
bicategorical generalization of the Yoneda
Lemma (see Street's \cite{FibrationsinBicats}), 
which states that the embedding
\begin{displaymath}
 \xymatrix @R=.05in @C=.5in
{\ca{K}\ar[r] & \B{Hom}(\ca{K}^\op,\B{Cat}) \\
A\ar@{|->}[r] & \ca{K}^\op(A,-)}
\end{displaymath}
is locally an equivalence, hence any bicategory $\ca{K}$
is biequivalent to a full sub-2-category of
$\B{Hom}(\ca{K}^\op,\B{Cat})$.

Using the notion of \emph{category enriched graph},
which is a particular case of a $\ca{V}$-graph studied in 
detail in Section \ref{Vgraphs} and originates from 
Wolff's \cite{Wolff}, we can actually construct a 
strict functor of bicategories between $\ca{K}$
and a 2-category, which is a biequivalence.
Hence the coherence theorem can be stated in the 
following more
conventional way. 
\begin{thm}[Coherence for Bicategories]\label{alldiagscommute}
 In a bicategory, every 2-cell diagram made up
of expanded instances of $\alpha,\lambda,\rho$ and 
their inverses must commute.
\end{thm}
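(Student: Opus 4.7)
The plan is to deduce Coherence for Bicategories (Theorem \ref{alldiagscommute}) from the structural coherence Theorem \ref{coherenceforbicats}, which provides, for every bicategory $\ca{K}$, a biequivalence to a $2$-category $\ca{K}'$. In fact, as mentioned in the paragraph preceding the statement, one can refine this to a strict homomorphism (\emph{i.e.} a pseudofunctor) $\ps{F}:\ca{K}\to\ca{K}'$ which is locally an equivalence of categories. This is the piece of technology I would take as input.

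Given that, the strategy is to reduce the commutativity question in $\ca{K}$ to the trivial commutativity question in $\ca{K}'$. Suppose we are given two parallel composite $2$-cells $\Phi,\Psi: f_1\circ(f_2\circ\cdots\circ f_n)\Rightarrow \text{(some bracketing)}$ in $\ca{K}$, each assembled by horizontal and vertical pasting from expanded instances of $\alpha,\lambda,\rho$ and their inverses. Applying $\ps{F}$ and using its coherence data $\delta$ and $\gamma$ (which are isomorphisms, since $\ps{F}$ is a pseudofunctor), the axioms (\ref{laxcond1}) and (\ref{laxcond2}) ensure that $\ps{F}\alpha_{h,g,f}$, $\ps{F}\lambda_f$, $\ps{F}\rho_f$ are identified, via conjugation by appropriate composites of $\delta$'s and $\gamma$'s, with the corresponding constraints of $\ca{K}'$. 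But in the $2$-category $\ca{K}'$ all of $\alpha,\lambda,\rho$ are identities, so any such pasting reduces to the identity $2$-cell. Therefore $\ps{F}\Phi=\ps{F}\Psi$ in $\ca{K}'$.

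Since $\ps{F}$ is locally an equivalence, each functor $\ps{F}_{A,B}:\ca{K}(A,B)\to\ca{K}'(\ps{F}A,\ps{F}B)$ is in particular faithful, and so $\Phi=\Psi$ in $\ca{K}$. This completes the deduction.

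The main obstacle, and the point I have silently invoked, is the bookkeeping that shows $\ps{F}$ sends an arbitrary pasting of constraints to a pasting of conjugates of identities. Rigorously, one should set this up by induction on the formation of the composite $2$-cells: for the three generators $\alpha,\lambda,\rho$ the claim is exactly the axioms (\ref{laxcond1})–(\ref{laxcond2}), and for horizontal/vertical composition one uses functoriality of $\circ$ and the interchange law, together with the fact that the isomorphisms $\delta,\gamma$ appearing on the boundary cancel against their inverses whenever two constraint cells are pasted. This is the routine but technical ``string of $\delta$'s and $\gamma$'s'' calculation that the appeal to Theorem \ref{coherenceforbicats} lets us avoid spelling out in full.
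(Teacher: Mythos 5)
Your argument is correct in outline, but it takes a harder road than the one the paper gestures at. The paper does not spell out a proof of Theorem \ref{alldiagscommute}: it derives it from the remark, just before the statement, that via category-enriched graphs one can realize the biequivalence of Theorem \ref{coherenceforbicats} by a \emph{strict} functor $\ps{F}:\ca{K}\to\ca{K}'$ into a $2$-category. With $\delta$ and $\gamma$ identities, axioms (\ref{laxcond1})--(\ref{laxcond2}) say literally that $\ps{F}\alpha_{h,g,f}=\alpha'_{\ps{F}h,\ps{F}g,\ps{F}f}$, $\ps{F}\lambda=\lambda'$, $\ps{F}\rho=\rho'$, which are identities in $\ca{K}'$; so $\ps{F}$ sends any pasting of expanded constraint instances to an identity on the nose, and local faithfulness finishes the proof with no conjugation bookkeeping at all. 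Your version, which only assumes a pseudofunctor, is the standard argument (as in MacLane--Par\'e or Gordon--Power--Street) and is also valid, but the ``string of $\delta$'s and $\gamma$'s'' step you defer is genuinely the technical heart: one must check that the canonical comparison cell from a bracketed composite of the $\ps{F}f_i$ to $\ps{F}$ of the bracketed composite is independent of the order in which the $\delta$'s are applied, which is itself a small coherence statement for pseudofunctors proved by induction from (\ref{laxcond1})--(\ref{laxcond2}). Two minor points: your phrase ``strict homomorphism (\emph{i.e.}\ a pseudofunctor)'' conflates two distinct notions in the paper's terminology (Definition \ref{laxfunctor} reserves \emph{strict} for the case where $\gamma,\delta$ are identities), and it is precisely the strict case that lets you skip the induction; and, as with all ``every diagram commutes'' formulations, the statement should really be read as applying to diagrams that are images of diagrams in the free bicategory, a caveat the paper itself also suppresses.
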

A more detailed description of coherence for 
bicategories and homomorphisms and further references
can be found in
\cite{MacLane-Pare,CoherenceTricats,Gurskitricats}.
Also, the approach of Joyal-Street in \cite{BraidedTensorCats}
for monoidal categories can be modified to show
the above result.

We now turn to composition of 2-cells in a general 2-category.
Additionally to the usual vertical and horizontal composition,
we consider a special case of horizontal composition
which acts 
on a 1-cell and a 2-cell 
and produces a 2-cell. Explicitly, if we
identify any morphism $f:A\to B$
with its identity 2-cell $1_f$, we can
form the composite 2-cell
\begin{displaymath}
 \xymatrix
{A\ar[r]^-f & B\rtwocell<\omit>{\alpha}\ar@/^3ex/[r]^-g \ar@/_3ex/[r]_-h 
& C\ar[r]^-k & D}
\equiv
\xymatrix @C=.5in
{A\rtwocell<\omit>{\;1_f}\ar@/^3ex/[r]^-f \ar@/_3ex/[r]_-f& 
B\rtwocell<\omit>{\alpha}\ar@/^3ex/[r]^-g \ar@/_3ex/[r]_-h &
C\rtwocell<\omit>{\;1_k}\ar@/^3ex/[r]^-k \ar@/_3ex/[r]_-k & D}
\end{displaymath}
called \emph{whiskering} $\alpha$ by $f$ and $k$.
It is denoted by $k\alpha f:kgf\Rightarrow khf$
and really is the horizontal composite
$1_k*\alpha*1_f$.

The various kinds of composition 
can be combined to give a
more general operation of \emph{pasting} 
(see \cite{Benabou,Review,Quantum}). 
The two basic situations are
\begin{displaymath}
\xymatrix @R=.4in @C=.5in
{A\ar[rr]^-f\ar[dr]_-h
\rrtwocell<\omit>{<3>\alpha} && \ar[dr]^-g & \\
& \ar[rr]_-k\ar[ur]_-l
\rrtwocell<\omit>{<-3>\beta} && B}
\quad\mathrm{and}\quad
\xymatrix @R=.4in @C=.5in
{ & \ar[rr]^-r\ar[dr]^-u
\rrtwocell<\omit>{<3>\delta} && D \\
C\ar[rr]_-s\ar[ur]^-p
\rrtwocell<\omit>{<-3>\gamma} &&\ar[ur]_-t &}
\end{displaymath}
For the first, we can first whisker $\alpha$
by $g$ and also $\beta$ by $h$,
\begin{displaymath}
\xymatrix @C=.5in
{A\rtwocell<\omit>{\alpha}\ar@/^3ex/[r]^-f \ar@/_3ex/[r]_-{lh} 
&\ar[r]^g & B}
\quad\mathrm{and}\quad
\xymatrix @C=.5in
{A\ar[r]^-h &\rtwocell<\omit>{\beta}\ar@/^3ex/[r]^-{gl} \ar@/_3ex/[r]_-k 
& B}
\end{displaymath}
in order to obtain two vertically composed
2-cells
\begin{displaymath}
\xymatrix @C=1in
{A \ar @/^6ex/[r]^-{gf}
\ar[r]|-{glh}
\ar @/_6ex/[r]_-{kh}
\rtwocell<\omit>{<-3>\;g\alpha}
\rtwocell<\omit>{<3>\;\beta h}
& B}=\xymatrix @C=.7in
{A \ar @/^4ex/[r]^-{gf}
\ar @/_4ex/[r]_-{kh}
\rtwocell<\omit>{\quad\;\;\beta h\cdot g\alpha}
& B}
\end{displaymath}
which is called the \emph{pasted composite} of the 
original diagram. Following a similar procedure,
we can deduce that the pasted composite of the 
second diagram is the 2-cell
\begin{displaymath}
\xymatrix @C=.7in
{C \ar @/^3ex/[r]^-{rp}
\ar @/_3ex/[r]_-{ts}
\rtwocell<\omit>{\quad\;t\gamma\cdot\delta p}
& D.}
\end{displaymath}
One can generalize the pasting operation
further, in order to compute multiple composites like
\begin{displaymath}
\xymatrix @C=.6in
{&\rrtwocell<\omit>{<3>} &&\ar[r] &\ar[dr]\ar[dd] &\\
A\ar @/^/[urrr] \ar[rr]
\ar[dr]\rrtwocell<\omit>{<2>} &&\ar[ur]
\ar @/_/[urr] \rrtwocell<\omit>{<-4>}\rrtwocell<\omit>
{<3>}&&\rtwocell<\omit>{}& \\
&\ar[rrr]\ar[ur]&&&\ar[ur] &}
\end{displaymath}
It is a general fact that the result of
pasting is independent of the choice of 
the order in which the composites are taken, \emph{i.e.}
of the way it is broken down into basic
pasting operations. This is clear in simple cases,
and can be proved inductively in the general case,
after an appropriate formalization in terms of
polygonal decompositions of the disk.
A formal 2-categorical pasting theorem, showing 
that the operation is 
well-defined using Graph Theory, can be found in Power's 
\cite{Pastingtheorem}. 

We finish this section with some classical
notions in 2-categories and their properties,
which are going to be of use later in the thesis.
\begin{defi}\label{adjunction2cat}
An \emph{adjunction} in a 2-category $\ca{K}$ 
consists of 0-cells $A$ and $B$, 
1-cells $f:A\to B$ and $g:B\to A$
and 2-cells $\eta:1_A\Rightarrow g\circ f$ and 
$\varepsilon:f\circ g\Rightarrow 1_B$ subject
to the usual triangle equations:
\begin{gather*}
 \xymatrix @R=.08in @C=.45in
{& A \ar@/^/[dr]^-{1_A}
\ar@/^/[dd]_-f & \\
B \ar@/^/[ur]^-g \ar@/_/[dr]_{1_B}
\rtwocell<\omit>{\varepsilon} &
\rtwocell<\omit>{\eta} & A \\
& B\ar@/_/[ur]_-g &}
\xymatrix @R=.08in @C=.45in
{\hole \\
= \\
\hole}
\xymatrix @R=.08in @C=.6in
{\hole \\
B\rtwocell^g_g{\;1_g} & A,\\
\hole} \\
\xymatrix @R=.08in @C=.45in
{& B \ar@/^/[dr]^-{f} & \\
A \ar@/^/[ur]^-{1_A} \ar@/_/[dr]_{f}
\rtwocell<\omit>{\eta} &
\rtwocell<\omit>{\varepsilon} & B \\
& B\ar@/_/[ur]_-{1_B}
\ar@/_/[uu]^-g &}
\xymatrix @R=.08in @C=.45in
{\hole \\
= \\
\hole}
\xymatrix @R=.08in @C=.6in
{\hole \\
A\rtwocell^f_f{\;1_f} & B\\
\hole}
\end{gather*}
which can be written as
$(g\varepsilon)\cdot(\eta g)=1_g$ and 
$(\varepsilon f)\cdot(f\eta)=1_f$.
\end{defi}
The standard notation for an adjunction
is $f\dashv g:B\to A$. The same definition 
applies in case $\ca{K}$
is a bicategory, with the associativity and identity
constraints suppressed because of coherence.
\begin{rmk*}
 Suppose that $f\dashv g$ is an adjunction
in a 2-category (or bicategory) $\ca{K}$ and
$\ps{F}:\ca{K}\to\ca{L}$ is a pseudofunctor.
Then $\ps{F}f\dashv\ps{F}g$ in $\ca{L}$,
with unit
\begin{displaymath}
 1_{\ps{F}A}\cong\ps{F}(1_A)\xrightarrow{\ps{F}\eta}
\ps{F}(gf)\cong\ps{F}g\circ\ps{F}f
\end{displaymath}
and counit
\begin{displaymath}
 \ps{F}f\circ\ps{F}g\cong
\ps{F}(fg)\xrightarrow{\ps{F}\varepsilon}
\ps{F}(1_B)\cong1_{\ps{F}B}
\end{displaymath}
where the isomorphisms are components
of the constraints $\gamma$ and $\delta$ of the 
pseudofunctor $\ps{F}$.
In other words, pseudofunctors
preserve adjunctions.

In particular, we can apply the representable 2-functor
$\ca{K}(X,-):\ca{K}\to\B{Cat}$ for any $0$-cell $X$ and 
obtain an adjunction in $\B{Cat}$
\begin{displaymath}
 \xymatrix @C=.5in
 {\ca{K}(X,A) \ar @<+.8ex>[r]^-{f\circ\text{-}}
 \ar@{}[r]|-\bot
 & \ca{K}(X,B) \ar @<+.8ex>[l]^-{g\circ\text{-}}}
 \end{displaymath} 
with bijections $\phi_{h,k}:\ca{K}(X,B)(f\circ h,k)
\cong\ca{K}(X,A)(h,g\circ k)$ natural in $h$ and $k$.
We can also apply the contravariant 
representable 2-functor $\ca{K}(-,X):\ca{K}^\op\to\B{Cat}$
which produces an (ordinary) adjunction 
$(\text{-}\circ g)\dashv(\text{-}\circ f)$.
This is sometimes called the \emph{local
approach} to adjunctions, and of course by usual Yoneda lemma 
arguments we can reobtain the \emph{global approach}
of Definition \ref{adjunction2cat}.
\end{rmk*}
\begin{defi}\label{mapofadunctions}
 Suppose that $f\dashv g:B\to A$ and 
$f'\dashv g':B'\to A'$ are two adjunctions
in a 2-category $\ca{K}$. 
A \emph{map of adjunctions} from $(f\dashv g)$
to $(f'\dashv g')$ consists of a pair
of 1-cells $(h:A\to A',k:B\to B')$
such that both squares
\begin{displaymath}
 \xymatrix @C=.4in @R=.4in
{A\ar[r]^-f\ar[d]_-h & B\ar[d]^-k\ar[r]^-g & A\ar[d]^-h \\
A'\ar[r]_-{f'} & B'\ar[r]_-{g'} & A'}
\end{displaymath}
commute, and $h\eta=\eta' h$ or equivalently
$k\varepsilon=\varepsilon' k$ for the units and counits of the adjunctions.
\end{defi}
The equivalence of the two conditions becomes
evident as a particular case of the mate correspondence
described below.
\begin{prop}\label{mates}
Let 
$f\dashv g:A\to B$ and 
$f'\dashv g':B'\to A'$ be two adjunctions in a 2-category
(or bicategory) $\ca{K}$, 
and $h:A\to A'$, $k:B\to B'$ 1-cells. There is a natural 
bijection between 2-cells
\begin{displaymath}
\xymatrix @C=.4in @R=.4in
{A\ar[r]^-{h}\ar[d]_-{f}
\drtwocell<\omit>{m}
& A'\ar[d]^-{f'}\\
B\ar[r]_-{k} & B'}\quad \textrm{and}\quad
\xymatrix @C=.4in @R=.4in
{B\drtwocell<\omit>{\nu}
\ar[d]_-{k}\ar[r]^-g & A\ar[d]^-{h}\\
B'\ar[r]_-{g'} & A'}
\end{displaymath}
where $\nu$ is given by the composite 
\begin{displaymath}
\xymatrix @C=.5in
{B\ar[r]^-g \ar@/_2ex/[dr]_-{1_B}
\drtwocell<\omit>{<-1>\varepsilon} & A\ar[d]^-f
\ar[r]^-h \drtwocell<\omit>{m} & A'\ar[d]^-{f'}
\ar@/^2ex/[dr]^-{1_{A'}}
\drtwocell<\omit>{<1>{\eta'}} & \\
& B\ar[r]_-k & B'\ar[r]_-{g'} & A'}
\end{displaymath}
and $m$ is given by the composite
\begin{displaymath}
\xymatrix @C=.5in
{A\ar[r]^-{1_A} \ar@/_2ex/[dr]_-f & A\ar[r]^-h & 
A' \ar@/^2ex/[dr]^-{f'} & \\
\urtwocell<\omit>{\eta} & B
\urtwocell<\omit>{\nu}
\ar[r]_-k \ar[u]_-g & 
B'\urtwocell<\omit>{\varepsilon'}
\ar[r]_-{1_{B'}} \ar[u]_-{g'}
 & B'.}
\end{displaymath}
\end{prop}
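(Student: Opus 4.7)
The plan is to verify that the two assignments $\Phi\colon m\mapsto\nu$ and $\Psi\colon\nu\mapsto m$ defined by the stated pasting diagrams are mutually inverse. By the pasting theorem for 2-categories (or, in a bicategory, by the coherence Theorem \ref{alldiagscommute}, which lets us suppress the associator and unitor), each pasting is an unambiguous 2-cell, so I may write them as the vertical composites
\[
\Phi(m) = (g'k\varepsilon)\cdot(g'mg)\cdot(\eta' hg),\qquad
\Psi(\nu) = (\varepsilon' kf)\cdot(f'\nu f)\cdot(f'h\eta),
\]
writing juxtaposition for whiskering throughout.

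To show $\Psi\circ\Phi=\mathrm{id}$, substitute to get
\[
\Psi(\Phi(m))=(\varepsilon'kf)\cdot(f'g'k\varepsilon f)\cdot(f'g'mgf)\cdot(f'\eta'hgf)\cdot(f'h\eta).
\]
A first application of the interchange law slides $\varepsilon'$ past $k\varepsilon f$, rewriting the first two factors as $(k\varepsilon f)\cdot(\varepsilon'kfgf)$; a second application slides it past $mgf$ to give $(mgf)\cdot(\varepsilon' f'hgf)$. The middle of the resulting string is then $(\varepsilon' f'hgf)\cdot(f'\eta' hgf) = \bigl((\varepsilon' f')\cdot(f'\eta')\bigr)(hgf)$, which equals $1_{f'hgf}$ by the triangle identity $(\varepsilon' f')\cdot(f'\eta')=1_{f'}$ for $f'\dashv g'$. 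A further interchange between $m$ and $\eta$ rewrites $(mgf)\cdot(f'h\eta)$ as $(kf\eta)\cdot m$, after which the triangle identity $(\varepsilon f)\cdot(f\eta)=1_f$ for $f\dashv g$ collapses $(k\varepsilon f)\cdot(kf\eta)$ to $1_{kf}$, leaving exactly $m$.

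The opposite round-trip $\Phi\circ\Psi=\mathrm{id}$ is entirely symmetric, using instead the triangle identities $(g\varepsilon)\cdot(\eta g)=1_g$ and $(g'\varepsilon')\cdot(\eta' g')=1_{g'}$. Naturality in the variable 2-cells $m,\nu$ is automatic, since $\Phi$ and $\Psi$ act on them only by vertical composition with a fixed boundary of whiskered units and counits, which by associativity of the various compositions commutes with any further vertical composition in the middle. The main obstacle in the argument is the careful bookkeeping of the interchange manoeuvres, ensuring that each step is set up to expose one of the four triangle identities; once this rearrangement is in hand, the cancellations themselves are immediate.
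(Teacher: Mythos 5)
Your verification is correct. The paper itself states Proposition \ref{mates} without proof (it is treated as a classical fact, with only the explicit composites (\ref{mate1}) and (\ref{mate2}) recorded afterwards), so there is no argument in the text to compare against; your computation supplies the standard one. The chain of interchange manoeuvres is set up correctly: each slide of $\varepsilon'$ (respectively of $\eta$) isolates a whiskered instance of one of the triangle identities $(\varepsilon' f')\cdot(f'\eta')=1_{f'}$ and $(\varepsilon f)\cdot(f\eta)=1_f$ from Definition \ref{adjunction2cat}, and the appeal to coherence to suppress associators and unitors in the bicategorical case is legitimate. The only imprecise point is the closing remark on naturality: the bijection is not ``natural in $m$ and $\nu$'' (these are the elements being matched) but rather natural in the 1-cells $h$ and $k$, i.e.\ compatible with vertical composition by 2-cells $h''\Rightarrow h$ and $k\Rightarrow k''$; this follows by the same interchange argument but deserves to be stated in that form.
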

We call the 2-cells
\emph{mates} under the adjunctions 
$f\dashv g$ and $f'\dashv g'$. 
In particular, for $h=k=1$, there is 
a bijection between 2-cells
$\mu:f\Rightarrow f'$ and $\nu:g\Rightarrow g'$.

Using pasting operation, we can deduce
that the 2-cells above are explicitly
given by the composites 
\begin{equation}\label{mate1}
\xymatrix 
{\nu:hg\ar@{=>}[r]^-{\eta'hg} &
g'f'hg\ar@{=>}[r]^-{g'\mu g} &
g'kfg\ar@{=>}[r]^-{g'k\varepsilon} & g'k,}
\end{equation}
\begin{equation}\label{mate2}
\xymatrix  
{\mu:f'h\ar@{=>}[r]^-{f'h\eta} &
f'hgf\ar@{=>}[r]^-{f'\nu f} &
f'g'kf\ar@{=>}[r]^-{\varepsilon'kf} & kf.}
\end{equation}

In Section \ref{monadsinbicats}
we studied monads and modules in bicategories.
In the special case when $\ca{K}$ is the 2-category  
$\B{2\textrm{-}Cat}$, the monad $t$ is usually called
a \emph{doctrine} (or 2-\emph{monad}) and consists of a
2-functor $D:\ca{B}\to\ca{B}$ with 2-natural transformations
$\eta:1_\ca{B}\to D$, $m:D^2\to D$ satisfying the usual axioms.
A $D$-algebra is considered in the strict sense, 
although most often
the 2-functor has domain $\B{1}$ so it is identified
with an object $A$ in $\ca{B}$, 
as explained in Remark \ref{modswithdom1}.
For morphisms of $D$-algebras, however, the lax
ones are the more usual to appear in nature.

Explicitly, for $D$-algebras $(A,\mu)$ and $(A',\mu')$, 
a \emph{lax morphism} (or \emph{lax $D$-functor})
is a pair $(f,\bar{f})$ where $f:A\to A'$ is a morphism
in $\ca{B}$ and $\bar{f}$ is a 2-cell
\begin{displaymath}
 \xymatrix
{DA\ar[r]^-\mu \ar[d]_-{Df} & A\ar[d]^-f \\
DA'\ar[r]_-{\mu'} & A'\ultwocell<\omit>{\bar{f}}}
\end{displaymath}
satisfying compatibility axioms with the multiplication
and unit of $D$. If $\bar{f}$ is an isomorphism,
then this is a \emph{strong} morphism of $D$-algebras,
whereas if $\bar{f}$ is the identity then we have 
\emph{strict} morphism which coincides with the `$D$-modules
morphism' as defined in the previous section.
If we reverse the direction of $\bar{f}$ and accordingly
in the axioms, we have a \emph{colax} 
morphism. Clearly a strong morphism of $D$-algebras
is both lax and colax.

With appropriate notions of $D$-natural transformations,
we can form 2-categories $D$-$\B{Alg}_l$ with lax, 
$D$-$\B{Alg}_c$ with colax,
$D$-$\B{Alg}_s$ with strong and $D$-$\B{Alg}\equiv\ca{B}^D$
with strict morphisms. All the above can be found in detail
in \cite{Review,2-dimmonadtheory}, and the main results come
from the so-called \emph{doctrinal adjunction}.
\begin{thm}\label{laxoplaxadjoints}
 Let $f\dashv g$ be an adjunction in a 2-category $\ca{C}$ 
and let $D$ be a 2-monad on $\ca{C}$. There is a bijective
correspondance between 2-cells $\bar{g}$ which make $(g,\bar{g})$
into a lax $D$-morphism and 2-cells $\bar{f}$ which make
$(f,\bar{f})$ into a colax $D$-morphism.
\end{thm}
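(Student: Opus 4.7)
The plan is to deduce the bijection directly from the mate correspondence of Proposition~\ref{mates}. Since $D$ is a 2-functor, it preserves the adjunction $f \dashv g$, so we obtain a second adjunction $Df \dashv Dg$ in $\ca{C}$ (with unit $D\eta$ and counit $D\varepsilon$). Now the data of a lax structure $\bar{g}$ on $g$ is a 2-cell filling the square with horizontal edges the algebra structures $\mu'\colon DA'\to A'$ and $\mu\colon DA\to A$ and vertical edges $Dg$ and $g$; dually, a colax structure $\bar{f}$ on $f$ is a 2-cell filling the transposed square with vertical edges $Df$ and $f$ in the reversed direction. Proposition~\ref{mates}, applied to the adjunctions $Df \dashv Dg$ and $f \dashv g$, yields a bijection between these two families of 2-cells by the explicit formulas (\ref{mate1}) and (\ref{mate2}).

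Next I would verify that this bijection restricts to the subsets satisfying the $D$-morphism axioms. The lax axioms for $(g,\bar{g})$ assert the equality of two pasting diagrams built out of $\bar{g}$, the multiplication $m\colon D^2\to D$ and the unit $\eta\colon 1\to D$ of the 2-monad (together with $\mu,\mu'$). The colax axioms for $(f,\bar{f})$ are the corresponding pasting equalities in the reversed direction. The key observation is a standard functoriality property of mates: given two horizontally composable adjunctions (in particular the adjunctions $D^2f\dashv D^2g$, $Df\dashv Dg$, $f\dashv g$, linked by $m$ and $\mu,\mu'$), the mate of a horizontal paste of 2-cells equals the horizontal paste of the mates, and similarly mates respect whiskering by 1-cells and vertical composition with identity 2-cells of 1-cells lying within an adjoint pair. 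Applying this to each of the two axioms in turn translates the lax equation for $\bar{g}$ into the colax equation for its mate $\bar{f}$, and vice versa.

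More concretely, for the multiplication axiom I would take each side of the lax equation for $\bar{g}$, form the mate under $Df\dashv Dg$ on one edge and $f\dashv g$ on the other, and use the zig-zag identities together with 2-naturality of $m$ (which gives $m \cdot DDg = Dg \cdot m$ as a \emph{strict} equality, since $m$ is a 2-natural transformation and $g$ is an ordinary 1-cell) to reassemble the result as the colax equation for $\bar{f}$. The unit axiom is handled analogously using 2-naturality of $\eta$ in place of $m$. Both reductions are symmetric in $\bar{g}$ and $\bar{f}$, so the restricted correspondence is still a bijection.

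The main obstacle is bookkeeping: checking that the mate of each pasting diagram in the lax axiom is exactly the corresponding pasting diagram in the colax axiom requires careful use of the triangle identities for $f \dashv g$ and $Df \dashv Dg$ to cancel units and counits introduced by the mate formulas (\ref{mate1}) and (\ref{mate2}), together with the 2-naturality of $m$ and $\eta$. No individual step is deep, but the diagrams are large; the argument is cleanest if one first records as a lemma the compatibility of mates with pasting (so that the axiom translation becomes a formal consequence), rather than expanding everything out cell by cell.
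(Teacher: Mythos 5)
Your proposal is correct and follows essentially the same route as the paper, which likewise identifies $\bar f$ and $\bar g$ as mates under the adjunctions $Df\dashv Dg$ and $f\dashv g$ and defers the verification that the lax and colax axioms correspond to the compatibility of mates with pasting (citing \cite{Doctrinal} for the details you spell out). The only difference is one of completeness: you sketch the axiom-translation step that the paper omits entirely.
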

\begin{prop}\label{proplaxoplaxadjoints}
 There is an adjunction $(f,\bar{f})\dashv(g,\bar{g})$ in the 2-category
$D$-$\B{Alg}_l$ if and only if $\;f\dashv g\;$ in the 2-category $\ca{C}$
and $\bar{f}$ is invertible.
\end{prop}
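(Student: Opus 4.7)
The plan is to exploit the doctrinal adjunction of Theorem \ref{laxoplaxadjoints}: given $f\dashv g$ in $\ca{C}$, lax $D$-structures on $g$ correspond bijectively to colax $D$-structures on $f$ via mates under $f\dashv g$ and $Df\dashv Dg$ (the latter coming from 2-functoriality of $D$). In both directions of the proposition, the idea is to convert between structures on $f$ and on $g$ through this bijection, and to exploit the fact that for a \emph{lax} $\bar{f}$ to participate in an adjunction in $D\text{-}\B{Alg}_l$ whose right adjoint carries a lax $\bar{g}$, the lax $\bar{f}$ must be inverse to the colax mate of $\bar{g}$.

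For the ``if'' direction, I would start from $f\dashv g$ in $\ca{C}$ with $\bar{f}$ invertible. Then $\bar{f}^{-1}\colon f\circ\mu\Rightarrow\mu'\circ Df$ is a colax $D$-structure on $f$, and Theorem \ref{laxoplaxadjoints} produces a lax $D$-structure $\bar{g}$ on $g$ as its mate. The mate formulas (\ref{mate1})--(\ref{mate2}), combined with the doctrinal compatibility axioms, are precisely what is needed to make $\eta$ and $\varepsilon$ into 2-cells of $D\text{-}\B{Alg}_l$ between the relevant composite lax morphisms. Since the forgetful $U_l\colon D\text{-}\B{Alg}_l\to\ca{C}$ is locally faithful on 2-cells, the triangle identities valid in $\ca{C}$ lift automatically to $D\text{-}\B{Alg}_l$, producing the adjunction $(f,\bar{f})\dashv(g,\bar{g})$.

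For the ``only if'' direction, I would first apply the strict 2-functor $U_l$, which preserves adjunctions, to obtain $f\dashv g$ in $\ca{C}$ with the same unit and counit. To prove $\bar{f}$ invertible, form the colax $D$-structure
\[
\bar{f}^c \;=\; (\varepsilon\,\mu'\,Df)\circ (f\,\bar{g}\,Df)\circ (f\,\mu\,D\eta)
\]
on $f$, obtained as the mate of $\bar{g}$ under $f\dashv g$ and $Df\dashv Dg$. I would then verify $\bar{f}\circ\bar{f}^c=\id_{f\,\mu}$ and $\bar{f}^c\circ\bar{f}=\id_{\mu'\,Df}$ by substitution and pasting. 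The first equation reduces, via the $D$-algebra 2-cell axiom for $\varepsilon$, namely $(\varepsilon\,\mu')\circ(f\,\bar{g})\circ(\bar{f}\,Dg)=\mu'\,D\varepsilon$, together with one triangle identity, to the identity on $f\,\mu$; the second is dual, using the corresponding compatibility of $\eta$ and the other triangle identity.

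The hard part will be this final pasting verification, which demands careful bookkeeping of whiskerings alongside 2-naturality (arising from the 2-functoriality of $D$), the coherence axioms for the lax $D$-structures, and the triangle identities in concert. Conceptually, however, the statement is a clean instance of a standard principle: a left adjoint in a 2-category of lax algebras is necessarily a strong (pseudo) morphism, with its pseudo-structure forced to be the mate of the right adjoint's lax structure.
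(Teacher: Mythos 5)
Your proposal is correct and follows essentially the same route the paper indicates: it invokes the mates correspondence of Theorem \ref{laxoplaxadjoints} under $f\dashv g$ and $Df\dashv Dg$, identifies the inverse of $\bar{f}$ with the mate of $\bar{g}$, and checks that the compatibility axioms for $\eta$ and $\varepsilon$ as 2-cells of $D$-$\B{Alg}_l$ together with the triangle identities force the two composites to be identities. This is precisely the argument the paper sketches and attributes to Kelly's doctrinal adjunction.
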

The inverse of $\bar{f}$ is in fact the mate of $\bar{g}$,
and both proofs rely solely on the properties of the mates correspondence.
More precisely, 2-cells of the form
\begin{displaymath}
 \xymatrix
 {DB\ar[r]^-\beta \ar[d]_-{Dg} & B
 \ar[d]^-g\\
 DA\ar[r]_-\alpha & A\ultwocell<\omit>{\bar{g}}}
 \quad\mathrm{and}\quad
 \xymatrix
 {DA\ar[r]^-\alpha \ar[d]_-{Df}
 \drtwocell<\omit>{\bar{f}} & A\ar[d]^-f \\
 DB\ar[r]_-\beta & B}
\end{displaymath}
which are mates under the adjunctions
$Df\dashv Dg$ and $f\dashv g$ are considered, and
all details can be found in \cite{Doctrinal}.

An application of these facts is going to be exhibited
in the next chapter, for the 2-monad $D$ on $\B{Cat}$ which gives rise to 
monoidal categories.

\chapter{Monoidal Categories}\label{monoidalcategories}
This chapter presents the basic theory of 
monoidal categories, with particular emphasis on
the categories of monoids/comonoids and modules/comodules.
These structures are of central importance for our purposes,
since ultimately they form a first example 
of the enriched fibration notion (see Chapter \ref{enrichmentofmonsandmods}).
Key references are \cite{BraidedTensorCats,Quantum,MonComonBimon},
and the monoidal category 
$\ca{V}=\Mod_R$ of $R$-modules and 
$R$-linear maps for a commutative ring $R$
serves as a motivating illustration of our results. 

A recurrent process in this treatment is the 
establishment of the existence of certain adjoints 
for various purposes, such as monoidal closed structures,
free monoid and cofree comonoid constructions, enriched hom-functors
etc. This also justifies the significance of locally 
presentable categories (see \cite{LocallyPresentable})
in our context, since their properties allow
the application of adjoint functor theorems
in a straightforward way.
Below we quote some relevant, well-known 
results which will be employed throughout the thesis, 
so that we do not interrupt the main progress.

The following simple adjoint functor theorem
which can be found in Max Kelly's 
\cite[5.33]{Kelly}
ensures that any cocontinuous functor
with domain a locally presentable category has 
a right adjoint.
\begin{thm}\label{Kelly}
If the cocomplete $\ca{C}$ has a small dense subcategory, every
cocontinuous $S:\ca{C}\to\ca{B}$ has a right adjoint.
\end{thm}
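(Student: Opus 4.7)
The approach is to exploit density in order to invoke a representability argument: for a cocomplete $\ca{C}$ with a small dense subcategory $\ca{A}$, a continuous presheaf $\ca{C}^\op\to\B{Set}$ that is completely determined---through density---by its restriction to $\ca{A}$ turns out to be representable. I would then apply this to the presheaf $\ca{B}(S-,B):\ca{C}^\op\to\B{Set}$, which is continuous precisely because $S$ is cocontinuous, to produce the value $RB$ of the sought right adjoint at each $B\in\ca{B}$.

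Write $J:\ca{A}\hookrightarrow\ca{C}$ for the inclusion. Density provides the canonical colimit presentation $C\cong\colim_{(J\downarrow C)}JA$ and makes the restricted Yoneda functor $N_J=\ca{C}(J-,-):\ca{C}\to[\ca{A}^\op,\B{Set}]$ fully faithful. Combined with cocompleteness of $\ca{C}$ and smallness of $\ca{A}$, this yields a left adjoint $L_J$ for $N_J$, sending a presheaf $P$ to the weighted colimit $P\star J$ in $\ca{C}$, so that $(L_J\dashv N_J)$ exhibits $\ca{C}$ as a reflective subcategory of a presheaf category. For each $B\in\ca{B}$ I would form the presheaf $P_B:=\ca{B}(SJ-,B)\in[\ca{A}^\op,\B{Set}]$ and declare the candidate right adjoint to be $RB:=L_J(P_B)$; functoriality of $R$ in $B$ is clear from the universal property of $L_J$.

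Verifying the adjunction $\ca{C}(C,RB)\cong\ca{B}(SC,B)$, natural in both variables, then proceeds by combining two ingredients: first, cocontinuity of $S$ transports the canonical presentation of $C$ to $SC\cong\colim SJA$, identifying $\ca{B}(SC,B)$ with the limit $\lim_{(J\downarrow C)}P_B(A)$ in $\B{Set}$; second, the reflection adjunction $L_J\dashv N_J$, together with full faithfulness of $N_J$, identifies $\ca{C}(C,RB)=\ca{C}(C,L_JP_B)$ with natural transformations $N_J(C)\to N_JL_J(P_B)$, which by the Yoneda lemma and the density presentation of $N_J(C)$ is the same limit. The main obstacle is the comparison buried in this second step: one must show that $P_B$ lies in the essential image of $N_J$, equivalently that the reflection unit $P_B\to N_JL_J(P_B)$ is invertible. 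This is precisely where continuity of the ambient presheaf $\ca{B}(S-,B)$ on $\ca{C}^\op$ interacts with density of $\ca{A}$ to force the desired fixing, and once it is in place the chain of natural isomorphisms assembles into the adjunction $S\dashv R$.
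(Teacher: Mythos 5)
The paper gives no proof of this statement---it is simply quoted from Kelly---so your argument has to stand on its own. Your overall strategy (embed $\ca{C}$ as a reflective full subcategory of $[\ca{A}^\op,\B{Set}]$ via the restricted Yoneda functor $N_J$, set $RB:=L_J(P_B)$ for $P_B=\ca{B}(SJ-,B)$, and reduce the adjunction isomorphism to two limit computations) is the right one, and you have correctly located where all the work lies. But that is exactly where the proof stops: you assert that the unit $\eta_{P_B}\colon P_B\to N_JL_J(P_B)$ is invertible because ``continuity of $\ca{B}(S-,B)$ interacts with density,'' and no argument is given. This is not a routine verification. Continuity of $\ca{B}(S-,B)$ together with density does yield $\ca{B}(SC,B)\cong[\ca{A}^\op,\B{Set}](N_JC,P_B)$ for every $C$ (your first ingredient), but converting the right-hand side into a hom-set $\ca{C}(C,RB)$ genuinely requires $P_B$ to lie in the essential image of $N_J$, and nothing you have written forces that; maps \emph{into} the colimit $L_JP_B$ are not controlled by the universal property you have at your disposal.

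The standard way to close the gap is to bring in one more adjunction. The composite $S\circ L_J\colon[\ca{A}^\op,\B{Set}]\to\ca{B}$ is cocontinuous, and a cocontinuous functor out of a presheaf category on a small category has a right adjoint, namely $B\mapsto\ca{B}(S L_J\B{y}(-),B)\cong\ca{B}(SJ-,B)=P_B$, using $L_J\B{y}\cong J$. Since $N_J$ is fully faithful, the counit of $L_J\dashv N_J$ is invertible, so $L_J$---and hence $S\circ L_J$---inverts every unit $\eta_P$; consequently $[\ca{A}^\op,\B{Set}](\eta_P,P_B)\cong\ca{B}(SL_J\eta_P,B)$ is a bijection for all $P$, which says precisely that $P_B$ is a local object for the reflection, i.e.\ that $\eta_{P_B}$ is invertible. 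With that in place your chain of isomorphisms does assemble into $S\dashv R$. Without an argument of this kind (or an equivalent direct verification), the proposal is incomplete at its decisive step.
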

The standard way of determining adjunctions via 
representing objects is connected
with the following `Adjunctions with
a parameter' theorem (see \cite[Theorem IV.7.3]{MacLane}),
which defines the important notion of a \emph{parametrized
adjunction}.
\begin{thm}\label{parametrizedadjunctions}
Suppose that, for a functor of two variables $F:\ca{A}\times\ca{B}\to\ca{C}$,
there exists an adjunction
\begin{equation}\label{fixedparameter}
 \xymatrix @C=.6in
{\ca{A}\ar@<+.8ex>[r]^-{F(-,B)}\ar@{}[r]|-\bot &
\ca{C}\ar@<+.8ex>[l]^-{G(B,-)}}
\end{equation}
for each object $B\in\ca{B}$, with an isomorphism
 $\ca{C}(F(A,B),C)\cong\ca{A}(A,G(B,C)),$
natural in $A$ and $C$. Then, there is 
a unique way to assign an arrow 
\begin{displaymath}
 G(h,1):G(B',C)\longrightarrow G(B,C)
\end{displaymath}
for each $h:B\to B'$ in $\ca{B}$ and $C\in\ca{C}$, 
so that $G$ becomes a functor of two variables
$\ca{B}^\mathrm{op}\times\ca{C}\to\ca{A}$
for which the above bijection 
is natural in all three variables $A$, $B$, $C$.
\end{thm}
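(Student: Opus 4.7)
The plan is to define $G(h,1_C)$ by the mate of a canonical arrow under the parametrised adjunction, then check functoriality and the full naturality via a uniqueness argument, so that the construction is forced.

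First, for $h:B\to B'$ in $\ca{B}$ and $C\in\ca{C}$, we must produce an arrow $G(h,1_C):G(B',C)\to G(B,C)$. Applying the bijection (\ref{fixedparameter}) with parameter $B$ and with $A=G(B',C)$ gives
\[
\ca{A}(G(B',C),G(B,C))\;\cong\;\ca{C}(F(G(B',C),B),C).
\]
On the right-hand side we have the obvious candidate
\[
F(G(B',C),B)\xrightarrow{F(1,h)}F(G(B',C),B')\xrightarrow{\varepsilon^{B'}_{C}}C,
\]
where $\varepsilon^{B'}$ is the counit of $F(-,B')\dashv G(B',-)$; we define $G(h,1_C)$ to be its transpose under the adjunction at parameter $B$. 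Equivalently, one may describe $G(h,1_C)$ as the unique arrow making the appropriate diagram involving units and counits commute, which will be useful for the later verifications.

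Next I would check the two functoriality axioms. For $G(1_B,1_C)=1_{G(B,C)}$ the defining recipe collapses (after using $F(1,1)=1$) to the transpose of $\varepsilon^{B}_{C}:F(G(B,C),B)\to C$, which is $1_{G(B,C)}$ by the triangle identities. For composability $G(h'\circ h,1)=G(h,1)\circ G(h',1)$ where $B\xrightarrow{h}B'\xrightarrow{h'}B''$, one pastes the two defining squares and uses functoriality of $F$ in the second variable together with naturality of the counit in the parameter; the easiest route is to transpose everything back through the bijection and observe both sides correspond to $\varepsilon^{B''}_{C}\circ F(1,h'\circ h)$. To combine with the existing functoriality in $C$, one defines $G(h,g):=G(B,g)\circ G(h,1)$ and checks, using naturality of the $C$-adjunction, that this equals $G(h,1)\circ G(B',g)$; this is exactly the interchange needed to promote $G$ to a genuine functor $\ca{B}^{\op}\times\ca{C}\to\ca{A}$.

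Finally I would verify that with this $G$ the adjunction bijection is natural in $B$. Given $h:B\to B'$, a map $f:F(A,B')\to C$ and its transpose $\tilde f:A\to G(B',C)$, I must show that the transpose of $f\circ F(1,h):F(A,B)\to C$ is $G(h,1_C)\circ\tilde f$; this is a direct pasting computation using the definition of $G(h,1_C)$ and naturality of the $A$-variable bijection. Uniqueness of the assignment $h\mapsto G(h,1_C)$ is essentially automatic: the required naturality in $B$ forces the diagram
\[
\ca{C}(F(A,B'),C)\cong\ca{A}(A,G(B',C)),\qquad \ca{C}(F(A,B),C)\cong\ca{A}(A,G(B,C))
\]
to be intertwined by precomposition with $F(1,h)$ and postcomposition with $G(h,1)$ respectively, and taking $A=G(B',C)$ with the identity on the right-hand side recovers precisely our definition. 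The main (minor) obstacle is bookkeeping the naturalities cleanly; the uniqueness argument at the end essentially removes the need for explicit verification by transposing the constraint directly.
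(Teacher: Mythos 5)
Your construction of $G(h,1_C)$ as the transpose under $F(-,B)\dashv G(B,-)$ of $\varepsilon^{B'}_C\circ F(1,h)$ is exactly the conjugate-transformation formula the paper records in (\ref{parameterdefinining}) (and is Mac Lane's proof, which the paper cites rather than reproves), and your functoriality, naturality and uniqueness checks are the standard correct verifications for it. So the proposal is correct and takes essentially the same approach as the paper.
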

The unique choice of $G(h,-)$ to realize the above, 
coming from the fact that it is a \emph{conjugate}
natural transformation to 
$F(-,h):F(-,B)\Rightarrow F(-,B')$, is given for example
by the commutative 
\begin{equation}\label{parameterdefinining}
 \xymatrix @C=1.1in
{G(B',-)\ar@{-->}[r]^-{G(h,-)}\ar[d]_-{\eta} & 
G(B,-) \\
G(B,F(G(B',-),B))\ar[r]^-{G(1,F(G(1,-),h))} &
G(B,F(G(B',-),B'))\ar[u]_-{G(1,\varepsilon')}}
\end{equation}
where $\eta$ is the unit of $F(-,B)\dashv G(B,-)$ and 
$\varepsilon'$ the counit of $F(-,B')\dashv G(B',-)$.

The first instance of a parametrized adjoint in this chapter
is the internal hom in a monoidal category, which will 
play a decisive role. In \cite{Multivariableadjunctions},
more advanced ideas on multivariable adjunctions are presented.  

\section{Basic definitions}\label{Basicdefinitions}

\begin{defi*}
 A \emph{monoidal
category} $(\ca{V},\otimes,I,a,l,r)$ is a category $\ca{V}$
equipped
with a functor $\otimes:\ca{V}\times\ca{V}\to\ca{V}$ called 
the \emph{tensor product}, an object $I$ of $\ca{V}$ called the 
\emph{unit object}, and natural isomorphisms 
with components
\begin{displaymath}
a_{A,B,C}:(A\otimes B)\otimes C
\xrightarrow{\sim}
A\otimes(B\otimes C), 
\end{displaymath}
\begin{displaymath}
r_A:A\otimes I\xrightarrow{\sim}A,
\quad l_A:I\otimes A\xrightarrow{\sim}A 
\end{displaymath}
called the \emph{associativity} constraint, the \emph{right unit}
constraint and the \emph{left unit} co\-nstraint
respectively, 
subject to two coherence axioms: the following diagrams
\begin{displaymath}
\xy  0;/r.22pc/: 
(0,20)*{(A \otimes B) \otimes(C \otimes D)}="1"; 
(30,0)*{ A \otimes(B \otimes(C \otimes D))}="2"; 
(25,-25)*{\quad A \otimes((B \otimes C) \otimes D),}="3"; 
(-25,-25)*{(A \otimes(B \otimes C)) \otimes D}="4"; 
(-30,0)*{((A \otimes B) \otimes C) \otimes D}="5"; 
{\ar^-{a_{A,B,C\otimes D}} "1";"2"} 
{\ar_-{1 \otimes a_{B,C,D}} "3";"2"} 
{\ar^-{a_{A,B\otimes C,D}} "4";"3"} 
{\ar_-{a_{A,B,C}\otimes 1} "5";"4"} 
{\ar^-{a_{A\otimes B,C,D}} "5";"1"} 
\endxy 
\end{displaymath}
\begin{displaymath}
\xymatrix@C=.35in
{(A\otimes I)\otimes B\ar[rr]^-{a_{A,I,B}}
\ar[dr]_-{r_A\otimes 1} && A\otimes(I\otimes B)\ar[dl]^-{1\otimes l_B}\\
& A\otimes B &}
\end{displaymath}
commute.
\end{defi*}
Given a monoidal category $\ca{V}$, we can define
a bicategory $\ca{K}$ with one object $\star$
by setting $\ca{K}(\star,\star)=\ca{V}$, $\circ_{\star,\star,\star}=\otimes$
and $\alpha$, $\lambda$, $\rho$ given by the constraints
of the monoidal category. 
Conversely, any such
one-object bicategory yields a monoidal category. In fact,
for any object $A$ in a bicategory $\ca{K}$,
the hom-category $\ca{K}(A,A)$ is equipped with a 
monoidal structure induced by the 
horizontal composition of the bicategory:
\begin{equation}\label{tensorcirc}
 \otimes:
\xymatrix @R=.05in
{\ca{K}(A,A)\times\ca{K}(A,A)
\ar[r] & \ca{K}(A,A)\qquad\quad \\
\quad\quad(A\xrightarrow{g}A,\;A\xrightarrow{f}A)
\ar @{|->}[r] & A\xrightarrow{g\otimes f:=g\circ f}A}
\end{equation}
The unit object is the identity 1-cell $I=1_A$
and the associativity and left/right unit constraints
come from the associator and the left/right unitors
of the bicategory $\ca{K}$. The coherence axioms follow
in a straightforward way from those of a bicategory.

Due to this correspondence, various results of 
the previous chapter are of relevance
to the theory of monoidal categories. In particular,
coherence for bicategories 
(Theorems \ref{coherenceforbicats} and 
\ref{alldiagscommute}) ensures that 
monoidal categories are also `coherent'. 
The coherence theorem for monoidal
categories first appeared in Mac Lane's 
\cite{natass&comm}.
A formulation of it states that 
every diagram which consists of arrows obtained by 
repeated applications of the functor $\otimes$ to 
instances of $a, r, l$ and their inverses (the so-called 
`expanded instances') and 1 commutes. This essentially
allows one to work as if $a, r, l$ are all identities.
This is derived from the fact 
that any monoidal category 
is monoidally equivalent (via a strict monoidal functor) 
to a \emph{strict}
monoidal category, where $a, r, l$ are identities.
\nocite{Ignacionotes}

Notice that if $\ca{V}$ is a monoidal category,
then its opposite category $\ca{V}^\mathrm{op}$
is also monoidal with the same tensor product
$\otimes^\mathrm{op}$. Some authors
call `opposite monoidal category' the
\emph{reverse} category $\ca{V}^\mathrm{rev}$,
which is $\ca{V}$ with 
$A\otimes^\mathrm{rev}B=B\otimes A$, 
$a^\mathrm{rev}=a^{-1}$,
$l^\mathrm{rev}=l$ and $r^\mathrm{rev}=r$.

A \emph{braiding} $c$ for a monoidal category
$\ca{V}$ is a natural isomorphism
\begin{displaymath}
 \xymatrix @C=.6in
 {\ca{V}\times\ca{V}\ar[r]^-{\otimes}
 \ar[d]_-{\mathrm{sw}}
 \drtwocell<\omit>{c} & \ca{V} \\
 \ca{V}\times\ca{V} \ar@/_3ex/[ur]_-{\otimes} &}
\end{displaymath}
with components invertible arrows
$c_{A,B}:A\otimes B\xrightarrow{\sim}B\otimes A$
for all $A,B\in\ca{V}$, where $sw$ 
switches the entries of the pair.
These isomorphisms
satisfy the coherence axioms expressed by the commutativity of
\begin{displaymath}
\xymatrix @C=.03in @R=.22in
{& A\otimes(B\otimes C)\ar[rrrr]^-{c_{A,B\otimes C}} &&&& 
(B\otimes C)\otimes A\ar[dr]^-{a_{B,C,A}} &\\
(A\otimes B)\otimes C\ar[ur]^-{a_{A,B,C}}\ar[dr]_-{c_{A,B}\otimes 1} &&&&&& 
B\otimes(C\otimes A)\\
& (B\otimes A)\otimes C\ar[rrrr]_-{a_{B,A,C}} 
&&&& B\otimes(A\otimes C),\ar[ur]_-{1\otimes c_{A,C}} &}
\end{displaymath}
\begin{displaymath}
 \xymatrix @C=.03in @R=.22in
{& (A\otimes B)\otimes C\ar[rrrr]^-{c_{A\otimes B,C}} &&&& 
C\otimes(A\otimes B)\ar[dr]^-{a^{-1}_{C,A,B}} &\\
A\otimes(B\otimes C)\ar[ur]^-{a^{-1}_{A,B,C}}\ar[dr]_-{1\otimes c_{B,C}} &&&&&& 
(C\otimes A)\otimes B\\
& A\otimes(C\otimes B)\ar[rrrr]_-{a^{-1}_{A,C,B}} 
&&&& (A\otimes C)\otimes B.\ar[ur]_-{c_{A,C}\otimes1} &}
\end{displaymath}
A \emph{braided monoidal category}
is a monoidal category with a chosen braiding.
A \emph{symmetry} $s$ for a monoidal category $\ca{V}$ 
is a braiding $s$ with components 
\begin{displaymath}
 s_{A,B}:A\otimes B\xrightarrow{\sim}B\otimes A
\end{displaymath}
which also satisfies the commutativity of 
\begin{displaymath}
\xymatrix @R=.4in
{A\otimes B \ar[rr]^-{=}\ar[dr]_-{s_{A.B}} && A\otimes B \\
& B\otimes A,\ar[ur]_-{s_{B,A}} &}
\end{displaymath}
which expresses that $s^{-1}_{A,B}=s_{B,A}$. Because of
this, only the one hexagon from the definition
of the braiding is needed to define a symmetry.

A monoidal category 
with a chosen symmetry is called \emph{symmetric}. 
Coherence theorems for braided and symmetric monoidal categories 
again state that any (braided) symmetric monoidal 
category is (braided) symmetric monoidally equivalent 
to a \emph{strict} (braided) symmetric monoidal category,
see \cite{BraidedTensorCats}.
\begin{examples*}
 
 $(1)$ A special collection of examples
 called \emph{cartesian monoidal categories}
 is given by considering any category
 with finite products, taking $\otimes=\times$
 and $I=1$ the terminal object. The constraints
 $a,l,r$ are the canonical isomorphisms
 induced by the universal property of products.
 Important particular cases of this are the categories
 $\B{Set}$ of (small) sets, $\B{Cat}$ of categories,
 $\B{Gpd}$ of groupoids, $\B{Top}$ of topological 
 spaces etc. All these examples are in fact
 symmetric monoidal categories. 
 
 $(2)$ The category $\B{Ab}$ of abelian groups 
 and group homomorphisms
 is a symmetric monoidal category with the usual
 tensor product $\otimes$ of abelian groups and 
 the additive group of integers $\mathbb{Z}$ as unit
 object. The associativity and unit constraints
 come from the respective canonical isomorphisms
 for the tensor of abelian groups. Notice that 
 there is also a different symmetric monoidal structure 
 on the cocomplete $\B{Ab}$, namely
 $(\B{Ab},\oplus,0)$ where $\oplus$ is the direct 
 product.

 $(3)$ The category $\Mod_R$ of modules
 over a commutative ring $R$ and $R$-module
 homomorphisms is a symmetric monoidal category 
 with tensor the usual tensor product $\otimes_R$
 of $R$-modules. The unit object is the ring $R$
 and the associativity and unit constraints
 are the canonical ones. The symmetry $s$
 has components the canonical isomorphisms
 $A\otimes_R B\cong B\otimes_R A$. Clearly
 the category of $k$-vector spaces
 and $k$-linear maps $\B{Vect}_k$ for a field $k$ 
 is again a symmetric monoidal category.
 
 $(4)$ For any bicategory $\ca{K}$, the 
 hom-categories $(\ca{K}(A,A),\circ,1_A)$ for any 0-cell A
 are monoidal categories as explained earlier,
 but not necessarily symmetric. 
 As a special case for $\ca{K}=\B{Cat}$,
 the category $\mathrm{End}(\ca{C})$ of endofunctors
 on a category $\ca{C}$ is a monoidal category
 with composition as the tensor product and $1_\ca{C}$
 as the unit.
\end{examples*}
\begin{defi*}
If $\ca{V}$ and $\ca{W}$ are monoidal categories, 
a \emph{lax monoidal functor} between them
consists of a 
functor $F:\ca{V}\to\ca{W}$
together with natural transformations
\begin{equation}\label{laxstructure}
\xymatrix @C=.6in @R=.5in
{\ca{V}\times\ca{V}\ar[r]^-{F\times F}\ar[d]
_-{\otimes}\drtwocell<\omit>{\phi}
& \ca{W}\times\ca{W}\ar[d]^-{\otimes}\\
\ca{V}\ar[r]_-{F} & \ca{W}}\qquad\mathrm{and}\qquad
\xymatrix @C=.7in @R=.5in
{\B{1}\drtwocell<\omit>{<-1>\phi_0}
\ar[r]^-{I_{\ca{V}}}\ar @/_2ex/[dr]_-{I_\ca{W}} 
& \ca{V}\ar[d]^-F\\
& \ca{W}}
\end{equation}
with components
\begin{align*}
\phi_{A,B}:FA\otimes FB&\to F(A\otimes B) \\
\phi_0:I&\to FI
\end{align*}
satisfying the associativity and unitality axioms: the diagrams
\begin{equation}\label{assoc&unitality}
\xymatrix @C=.7in
{FA\otimes FB\otimes FC\ar[r]^-{\phi_{A,B}\otimes 1}
\ar[d]_-{1\otimes\phi_{B,C}} & 
F(A\otimes B)\otimes FC\ar[d]^-{\phi_{A\otimes B,C}}\\
FA\otimes F(B\otimes C)\ar[r]_-{\phi_{A,B\otimes C}} & 
F(A\otimes B\otimes C),}
\end{equation}
\begin{displaymath}
\xymatrix @C=.5in {FA\ar[r]^-{1\otimes \phi_0}
\ar[d]_-{\phi_0\otimes 1}\ar @{-->}[dr]^-{1}
& FA\otimes FI\ar[d]^-{\phi_{A,I}}\\
FI\otimes FA\ar[r]_-{\phi_{I,A}} & FA}
\end{displaymath}
commute, where the constraints $\alpha, l, r$ have been suppressed.
\end{defi*}
In the case where $\phi_{A,B},\phi_0$
are isomorphisms, the functor $F$ is called
\emph{(strong) monoidal}, whereas if they are
identities $F$ is called \emph{strict monoidal}.
Dually, $F$ is a \emph{colax monoidal functor}
when it is equipped with with natural families
in the opposite direction, 
$\psi_{A,B}:F(A\otimes B)\to FA\otimes FB$ and
$\psi_0:FI\to I$.
Notice how these definitions follow from 
Definition \ref{laxfunctor}
for the one-object bicategory case.

A functor $F:\ca{V}\to\ca{W}$ between 
braided monoidal 
categories $\ca{V}$ and $\ca{W}$ is \emph{braided monoidal} if 
it is monoidal and also makes the diagram
\begin{displaymath}
\xymatrix @C=.6in 
{FA\otimes FB \ar[r]^-{c_{FA,FB}} \ar[d]_-{\phi_{A,B}} &
FB\otimes FA\ar[d]^-{\phi_{B,A}}\\
F(A\otimes B)\ar[r]_-{F(c_{A,B})} & F(B\otimes A)}
\end{displaymath}
commute, for all $A,B\in\ca{V}$. If $\ca{V}$
and $\ca{W}$ are symmetric, then $F$ is a 
\emph{symmetric monoidal functor} 
with no extra conditions.
\begin{defi*}
If $F,G:\ca{V}\to\ca{W}$ are lax monoidal functors, 
a \emph{monoidal natural transformation} 
$\tau:F\Rightarrow G$
is an (ordinary) natural transformation such that 
the following two diagrams commute:
\begin{equation}\label{nattransf}
\xymatrix @C=.7in @R=.4in
{FA\otimes FB\ar[r]^-{\phi_{A,B}}\ar[d]_-{\tau_A\otimes\tau_B} 
& F(A\otimes B)
\ar[d]^-{\tau_{A\otimes B}}\\ 
GA\otimes GB\ar[r]_-{\phi'_{A,B}} & G(A\otimes B),} \qquad
\xymatrix @C=.4in @R=.4in
{I\ar[r]^-{\phi_0}\ar[dr]_-{\phi'_0} & FI\ar[d]^-{\sigma_I}\\
& GI.}
\end{equation}
\end{defi*}
A \emph{braided} or \emph{symmetric} monoidal 
natural transformation is just a 
monoidal natural transformation between braided or 
symmetric monoidal functors.

It is not hard to verify that the different
kinds of monoidal functors compose.
Depending on the monoidal
structure that the functors are equipped with, 
we have the 2-categories 
$\B{MonCat}_s$,
$\B{MonCat}$, $\B{MonCat}_l$ and
$\B{MonCat}_c$ of monoidal categories, 
strict/strong/lax/colax monoidal functors and 
monoidal natural transformations. If
the functors are moreover
braided or symmetric, we have different versions
of 2-categories $\B{BrMonCat}$ and
$\B{SymmMonCat}$.

\begin{rmk}\label{monoidalproductofmoncats}
 The category $\B{MonCat}$ is 
 itself a cartesian monoidal category.
 For $\ca{V}$, $\ca{W}$ two monoidal categories,
 their product $\ca{V}\times\ca{W}$ has
 the structure of a monoidal category
 with tensor product the composite
 \begin{displaymath}
\xymatrix @C=.8in @R=.6in
{\ca{V}\times\ca{W}\times\ca{V}\times\ca{W}
\ar @{-->}[r]^-{\otimes_{(\ca{V}\times\ca{W})}} 
\ar[d]_-{1\times\mathrm{sw}\times1}
& \ca{V}\times\ca{W}\\
\ca{V}\times\ca{V}\times\ca{W}\times\ca{W}
\ar@/_/[ur]_-{\otimes_\ca{V}\times\otimes_\ca{W}} &}
\end{displaymath}
and unit the pair $(I_\ca{V},I_\ca{W})$. 
On objects, the above operation
explicitly gives
\begin{displaymath}
 ((A,B),(A',B'))\mapsto(A\otimes A',B\otimes B').
\end{displaymath}
Similarly
$F\times G$ is a monoidal functor when $F$ and $G$ 
are. The terminal
category $\B{1}$ is the unit monoidal category,
hence $(\B{MonCat},\times,\B{1})$ is 
in fact a monoidal category.
\end{rmk}
\begin{defi*}
The monoidal category $\ca{V}$ is said to be
\emph{(left) closed} when, for each 
$A\in\ca{V}$, the functor $-\otimes A:\ca{V}\to\ca{V}$ 
has a right adjoint $[A,-]:\ca{V}\to\ca{V}$ 
with a bijection
\begin{equation}\label{internalhomadj}
\ca{V}(C\otimes A,B)\cong \ca{V}(C,[A,B]).
\end{equation}
natural in $C$ and $B$. We call $[A,B]$ the 
\emph{(left) internal hom} of $A$ and $B$.
\end{defi*}
If also every $A\otimes -$ has a right adjoint $[A,-]'$,
we say that the monoidal category $\ca{V}$ is \emph{right
closed}. When $\ca{V}$ is a braided monoidal category,
each left internal hom gives a right internal hom
$[A,B]=[A,B]'$. A monoidal category is called
\emph{closed} (or biclosed)
when it is left and right closed. 

For example, the symmetric monoidal category $\Mod_R$
is a monoidal closed category, by the well-known 
adjunction 
\begin{displaymath}
\xymatrix @C=.6in
{\Mod_R\ar @<+.8ex>[r]^-{-\otimes_R M}
\ar@{}[r]|-\bot
& \Mod_R \ar @<+.8ex>[l]^-{\Hom_R(M,-)}}
\end{displaymath}
where $\Hom_R$ is the linear hom functor.

By `adjunctions with a parameter'
theorem \ref{parametrizedadjunctions},
the definition of the internal hom
for a monoidal closed category $\ca{V}$ implies
that there is a unique way of making it into a
functor of two variables
\begin{displaymath}
[-,-]:\ca{V}^\mathrm{op}\times\ca{V}\longrightarrow\ca{V}
\end{displaymath}
such that the bijection (\ref{internalhomadj})
is natural in all three variables.
Explicitly, if 
$f:C\to A$ and $g:B\to D$ are arrows of $\ca{V}$,
there is a unique arrow 
$[f,g]:[A,B]\to[C,D]$ such that the diagram 
\begin{displaymath}
\xymatrix{[A,B]\otimes C\ar[rr]^-{[f,g]\otimes1}\ar[d]_-{1\otimes f} &&
[C,D]\otimes C\ar[d]^-{\mathrm{ev}^C_D}\\
[A,B]\otimes A\ar[r]_-{\mathrm{ev}^A_B} & B\ar[r]_-{g} & D}
\end{displaymath}
commutes, where $\mathrm{ev}^A$ 
is the counit of the adjunction $-\otimes A\dashv[A,-]$
usually called the \emph{evaluation}.
In other words,
the internal hom bifunctor $[-,-]$ is the 
\emph{parametrized adjoint}
of the tensor bifunctor $(-\otimes -)$.

Notice that in any parametrized
adjunction as in (\ref{fixedparameter})
with natural isomorphisms 
$\ca{C}((F(A,B),C)\cong\ca{A}(A,G(B,C)),$
the counit is a collection of components
\begin{displaymath}
 \varepsilon_A^B:F(G(B,A),B)\longrightarrow A
\end{displaymath}
which is natural in $A$ and also
\emph{dinatural} or \emph{extranatural} in $B$. 
This is expressed by the 
commutativity of 
\begin{equation}\label{dinaturality}
 \xymatrix @R=.4in @C=.4in
{F(G(B',A),B)\ar[r]^-{F(1,f)}
\ar[d]_-{F(G(f,1),1)} & F(G(B',A),B')
\ar[d]^-{\varepsilon_A^{B'}} \\
F(G(B,A),B)\ar[r]_-{\varepsilon_A^B} & A}
\end{equation}
for any arrow $f:B\to B'$. Dinaturality
is discussed in detail in \cite[IX.4]{MacLane}.

Finally, in any symmetric monoidal closed category 
$\ca{V}$ we also have an adjunction
\begin{equation}\label{adjunctionopinthom}
 \xymatrix @=.6in
{\ca{V}\ar@<+.8ex>[r]^-{[-,A]^\op}
\ar@{}[r]|-\bot & \ca{V}^\op
\ar@<+.8ex>[l]^-{[-,A]} }
\end{equation}
with a natural isomorphism $\ca{V}^\op([V,A],W)\cong\ca{V}(V,[W,A])$,
explicitly given by the following bijective correspondences:
\begin{displaymath}
 \xymatrix @R=.02in
{\qquad\qquad W\ar[rr] && [V,A] 
\qquad\qquad& \mathrm{in}\;\ca{V} \\ 
\ar@{-}[rr] &&& \\  
\qquad\qquad\quad W\otimes V\ar@{}[d]|-{\quad\qquad\qquad\lcong} \ar[rr] 
&& A\qquad\qquad 
& \mathrm{in}\;\ca{V} \\
\qquad\qquad\quad V\otimes W\ar[urr] &&&& \\
\ar@{-}[rr] &&& \\
\qquad\qquad V\ar[rr] && [W,A]\qquad\qquad
 & \mathrm{in}\;\ca{V}.} 
\end{displaymath}

\section{Doctrinal adjunction for monoidal categories}

As mentioned briefly 
at the end of Section \ref{2cats},
monoidal categories are (strict) algebras
for a specific
2-monad $D$ on $\B{Cat}$, which arise
from \emph{clubs}. 
Details 
of these facts and structures
can be found
in \cite{AbstrApproachCoherence,
Doctrinal,ClubsDoctrines,WeberGenericMorphisms}.
In this context, lax morphisms of $D$-algebras turn out
to be lax monoidal functors and
$D$-natural transformations are monoidal
natural transformations. Therefore,
by doctrinal adjunction we can see how
lax and colax monoidal structures
on adjoint functors between monoidal
categories relate to each other.

Depending on which 2-category 
of monoidal categories we are 
working in, Definition \ref{adjunction2cat}
gives us different notions of 
\emph{monoidal adjunctions}. 
For example, an adjunction in 
the 2-category $\B{MonCat}_l$ is an adjunction 
between monoidal categories 
\begin{displaymath}
\xymatrix @C=.5in
{\ca{C}\ar @<+.8ex>[r]^-{F}\ar@{}[r]|-\bot 
& \ca{D}\ar @<+.8ex>[l]^-{G}}
\end{displaymath}
where $F$ and $G$ 
are lax monoidal functors 
and the unit and the counit are monoidal 
natural transformations.

Now, suppose that $F\dashv G$ is an ordinary adjunction
between two monoidal categories 
$\ca{C}$ and $\ca{D}$, where the left adjoint $F$ 
has the structure of a colax monoidal functor, 
\emph{i.e.} it is equipped with 2-cells 
$\psi,\psi_0$ in the opposite direction 
of (\ref{laxstructure}). 
Consider the diagram
\begin{displaymath}
\xymatrix @C=.6in 
{\ca{C}\times\ca{C}\ar @<+.8ex>[r] ^-{F\times F}
\ar@{}[r]|-\bot \ar[d]_-\otimes
& \ca{D}\times\ca{D}\ar @<+.8ex>[l]^-{G\times G}\ar[d]^-{\otimes}\\
\ca{C}\ar @<+.8ex>[r]^-{F}\ar@{}[r]|-\bot & 
\ca{D}\ar @<+.8ex>[l]^-{G}}
\end{displaymath}
which illustrates two adjunctions and two functors
between the categories involved.
Then, by Proposition \ref{mates}
which gives the mate correspondance, 
the 2-cell $\psi$ corresponds
uniquely to a 2-cell $\phi$ via
\begin{equation}\label{lax1}
\xymatrix @R=2pc { & \ca{D}\times\ca{D}\ar @/_7pt/[dl]
_-{G\times G}\ar[d]^-{1}\dtwocell<\omit>{^<4>\varepsilon\times\varepsilon}\\
\ca{C}\times\ca{C}\ar[r]^-{F\times F}\ar[d]_-{\otimes}  
& \ca{D}\times\ca{D}\ar[d]^-{\otimes}\dtwocell
<\omit>{^<6>\psi}\\
C\ar[r]^-{F}\ar[d]_-{1} & 
\ca{D}\ar @/^7pt/[dl]^-{G}\\
\ca{C}\utwocell<\omit>{<4>\eta}&}
\quad
\xymatrix{{\hole}\\{\hole}\\ {\hole}\\ \hole \ar@{}[uuu]-|{\displaystyle{=}}}
\quad
\xymatrix{{\hole}&\\
\ca{D}\times\ca{D}\ar[r]^-{\otimes}\ar[d]_-{G\times G}
& \ca{D}\ar[d]^-{G}\\
\ca{C}\times\ca{C}\ar[r]^-{\otimes} & \ultwocell<\omit>{<0>\phi}\ca{C}.}
\end{equation}
In terms of components via pasting, $\phi_{A,B}$ is expressed
as the composite	
\begin{displaymath}
GA\otimes GB\xrightarrow{\eta_{GA\otimes GB}}
GF(GA\otimes GB)\xrightarrow{G\psi_{GA,GB}}
G(FGA\otimes FGB)\xrightarrow{G(\varepsilon_A\otimes\varepsilon_B)}
G(A\otimes B).
\end{displaymath} 
Similarly, the 2-cell $\psi_0$ 
corresponds uniquely to a 2-cell $\phi_0$ via
\begin{equation}\label{lax2}
\xymatrix @C=.4in 
{\B{1}\ar[r]^-{1}\ar[d]_-{I_\ca{C}}  & \B{1}\ar[d]^-{I_\ca{D}}
\dtwocell<\omit>{^<5>\psi_0}\\
\ca{C}\ar[r]^-{F}\ar[d]_-{1} &\ca{D}\ar @/^2ex/[dl]^-{G}\\
\ca{C}\utwocell<\omit>{<4>\eta}&}
\xymatrix
{\hole \\
=}
\xymatrix @C=.4in
{\B{1}\ar[r]^-{I_\ca{D}}\ar[dd]_-{I_\ca{C}} & 
\ca{D}\ar @/^2ex/[ddl]^-{G}\\
\hole \\
\ca{C} \uutwocell<\omit>{<3>\phi_0}&}
\end{equation}
and in terms of components, $\phi_0$ is the composite
\begin{displaymath}
I\xrightarrow{\eta_I}GFI
\xrightarrow{G\psi_0}GI.
\end{displaymath}
Moreover, the arrows $\phi_{A,B}$ and $\phi_0$ turn out to satisfy the 
axioms (\ref{assoc&unitality}) thus they constitute
a \emph{lax monoidal structure} for the right adjoint $G$.

On the other hand, if we start with a lax monoidal structure 
$(\phi,\phi_0)$ on $G$, 
again due to the bijective correspondance of mates 
we end up with 
a colax structure $(\psi,\psi_0)$
on the left adjoint $F$, given
by the composites
\begin{equation}\label{colax}
\xymatrix @C=.3in
{F(A\otimes B)\ar@{-->}@/_/[drr]_-{\scriptscriptstyle{\psi_{A,B}}}
\ar[r]^-{\scriptscriptstyle{F(\eta\otimes\eta)}} &
F(GFA\otimes GFB)\ar[r]^-{\scriptscriptstyle{F\phi}} &
FG(FA\otimes FB)\ar[d]^-{\scriptscriptstyle{\varepsilon}} \\
&& FA\otimes FB,}\;\;
\xymatrix @C=.3in
{FI\ar[r]^-{\scriptscriptstyle{F\phi_0}}
\ar@{-->}@/_/[dr]_-{\scriptscriptstyle{\psi_0}}
& FGI\ar[d]^-{\scriptscriptstyle{\varepsilon}} \\
& I.}
\end{equation}
The above establish the following result.
\begin{prop} 
Suppose we have two (ordinary) adjoint functors $F\dashv G$ 
between monoidal categories. Then, colax monoidal structures on 
the left adjoint $F$ correspond bijectively, via mates, 
to lax monoidal structures on 
the right adjoint $G$.
\end{prop}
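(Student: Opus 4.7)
The cleanest strategy is to recognize this proposition as an instance of doctrinal adjunction (Theorem \ref{laxoplaxadjoints}) applied to the 2-monad $D$ on $\B{Cat}$ whose strict algebras are (strict) monoidal categories and whose lax/colax morphisms are lax/colax monoidal functors. Once this identification is made, the bijection between lax structures on $G$ and colax structures on $F$ is precisely the content of Theorem \ref{laxoplaxadjoints}, so nothing further is needed. However, since the excerpt has already displayed the explicit formulas (\ref{lax1}), (\ref{lax2}), (\ref{colax}) for passing from one structure to the other, I would present a direct proof along these lines, which makes the mate mechanism transparent.

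First, I would invoke Proposition \ref{mates} twice: once for the pair of adjunctions $F\times F\dashv G\times G$ and $F\dashv G$ with the 1-cells $\otimes_\ca{C},\otimes_\ca{D}$ placed vertically, and once for the pair of adjunctions $1\dashv 1$ (on $\B{1}$) and $F\dashv G$ with $I_\ca{C},I_\ca{D}$ placed vertically. Each application gives a natural bijection between 2-cells in the two possible directions, and the explicit formulas (\ref{lax1})--(\ref{lax2}) and (\ref{colax}) realize these bijections. This is automatic from the mate correspondence and handles the underlying $2$-cell data.

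The substantive step is then to check that this bijection restricts to one between the structures satisfying the lax monoidal axioms on $G$ and those satisfying the colax monoidal axioms on $F$. Concretely, I would verify that the associativity pentagon (\ref{assoc&unitality}) for $(\phi,\phi_0)$ on $G$ is equivalent, under mating, to the corresponding colax pentagon for $(\psi,\psi_0)$ on $F$, and similarly for the two unit triangles. The key tool here is the compatibility of mates with pasting: a pasting identity $P=Q$ involving $\phi$'s holds if and only if the pasting identity obtained by mating along $F\dashv G$ (and its cartesian powers $F^n\dashv G^n$) holds for the corresponding $\psi$'s. This uses naturality of the mate correspondence in the 1-cells involved, together with the fact that products of adjunctions are adjunctions whose unit and counit are products of the originals, so taking mates commutes with Cartesian product of adjunctions.

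The main potential obstacle is purely bookkeeping: one must be careful about which adjunction's unit and counit appear when translating each axiom. The pentagon involves three-fold tensor products, so the relevant adjunction is $F^3\dashv G^3$, and one has to check that the mate of the composite 2-cell $\phi\cdot(\phi\otimes 1)$ agrees with the mate of $\phi\cdot(1\otimes\phi)$ under the appropriate mating if and only if the corresponding colax identity for $\psi$ holds. Once this is set up carefully, the verification reduces to triangle identities for $F\dashv G$ together with the interchange law, and no new ideas beyond Proposition \ref{mates} are required. I would conclude by noting that this is exactly the mechanism of doctrinal adjunction, which gives a conceptual explanation: the lax monoidal axioms for $G$ are the $D$-algebra lax morphism axioms, and these are transported to the colax $D$-morphism axioms for $F$ precisely through mates, as in \cite{Doctrinal}.
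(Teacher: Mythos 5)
Your proposal is correct and follows essentially the same route as the paper: the text preceding the proposition constructs the bijection explicitly via the mate correspondence of Proposition \ref{mates} (formulas (\ref{lax1}), (\ref{lax2}) and (\ref{colax})), asserts that the transported data satisfy the lax/colax axioms, and then remarks that the whole statement is the special case of doctrinal adjunction (Theorem \ref{laxoplaxadjoints}) for the $2$-monad on $\B{Cat}$ whose algebras are monoidal categories. Your additional remarks on why the axioms transfer (compatibility of mates with pasting and with cartesian powers $F^n\dashv G^n$) fill in a verification the paper leaves implicit, but they do not constitute a different method.
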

Of course this is a special case of Theorem
\ref{laxoplaxadjoints} for $\ca{K}=\B{Cat}$
and $D$ the 2-monad whose algebras
are monoidal categories. Proposition
\ref{proplaxoplaxadjoints} also applies.
\begin{prop}
A functor $F$ equipped with a lax 
monoidal structure has a right adjoint 
in $\B{MonCat}_l$ if and only if 
$F$ has a right adjoint in $\B{Cat}$ and 
its lax monoidal structure is a strong monoidal structure.
\end{prop}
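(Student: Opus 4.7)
The plan is to realize this proposition as a direct specialization of the doctrinal adjunction results Theorem~\ref{laxoplaxadjoints} and Proposition~\ref{proplaxoplaxadjoints}, applied to the $2$-monad $D$ on $\B{Cat}$ whose strict algebras are (strict) monoidal categories, as mentioned at the end of Section~\ref{2cats} and recalled at the start of this section. Under this dictionary, lax $D$-morphisms are precisely lax monoidal functors, strong morphisms are strong monoidal functors, and $D$-natural transformations are monoidal natural transformations, so that $D\text{-}\B{Alg}_l=\B{MonCat}_l$. By coherence one may freely work up to strictification.

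For the forward implication, suppose $(F,\phi,\phi_0)\dashv(G,\phi^G,\phi^G_0)$ holds in $\B{MonCat}_l$. Applying the forgetful $2$-functor $\B{MonCat}_l\to\B{Cat}$, which is strict and therefore preserves adjunctions, one immediately obtains an ordinary adjunction $F\dashv G$ in $\B{Cat}$. The condition that the unit $\eta$ and counit $\varepsilon$ be monoidal with respect to both lax structures is equivalent, by unfolding the mate formulas (\ref{lax1})--(\ref{lax2}), to the requirement that the colax structure $(\psi,\psi_0)$ on $F$ which is the mate of $(\phi^G,\phi^G_0)$ be a two-sided inverse to $(\phi,\phi_0)$; in particular $(\phi,\phi_0)$ is invertible and $(F,\phi,\phi_0)$ is strong monoidal. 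This is exactly what Proposition~\ref{proplaxoplaxadjoints} asserts in the present context.

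Conversely, assume $F\dashv G$ in $\B{Cat}$ and that $(\phi,\phi_0)$ is invertible. Setting $(\psi,\psi_0):=(\phi^{-1},\phi_0^{-1})$ equips $F$ with a colax monoidal structure, and Theorem~\ref{laxoplaxadjoints} produces from it, via the explicit mate formulas (\ref{lax1}) and (\ref{lax2}), a lax monoidal structure $(\phi^G,\phi^G_0)$ on $G$; the associativity and unit axioms for $\phi^G$ are the routine diagrammatic verifications already outlined in the discussion preceding the proposition. Proposition~\ref{proplaxoplaxadjoints} then promotes $F\dashv G$ to an adjunction in $\B{MonCat}_l$, the monoidality of $\eta$ and $\varepsilon$ being automatic from the mate construction.

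The only step worth flagging is the identification of $\B{MonCat}_l$ with $D\text{-}\B{Alg}_l$ for the appropriate $2$-monad $D$ on $\B{Cat}$, which is classical; once this dictionary is in place the proposition is a transcription of the abstract doctrinal adjunction results, and no further obstacle arises beyond routine bookkeeping with the coherence isomorphisms.
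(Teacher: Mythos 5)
Your argument is correct, and it is the route the paper itself signals is available: immediately before this proposition the text notes that the preceding result ``is a special case of Theorem~\ref{laxoplaxadjoints} for $\ca{K}=\B{Cat}$ and $D$ the 2-monad whose algebras are monoidal categories'' and that ``Proposition~\ref{proplaxoplaxadjoints} also applies.'' The difference is one of level: you quote the abstract doctrinal adjunction results wholesale, identifying $\B{MonCat}_l$ with $D\text{-}\B{Alg}_l$ and reading the proposition off as a transcription, whereas the paper's proof unwinds the same mate correspondence concretely at the level of monoidal categories. In the forward direction the paper explicitly exhibits the colax structure $(\psi,\psi_0)$ induced on $F$ by the lax structure of $G$ via (\ref{colax}) and verifies by diagram chases (naturality of $\phi$, monoidality of $\varepsilon$ as in (\ref{nattransf}), and a triangle identity) that it is a two-sided inverse to $(\phi,\phi_0)$; in the converse direction it constructs the lax structure on $G$ from (\ref{lax1})--(\ref{lax2}) and checks by hand that $\eta$ and $\varepsilon$ are monoidal. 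What your approach buys is brevity and conceptual clarity, at the cost of resting on Theorem~\ref{laxoplaxadjoints} and Proposition~\ref{proplaxoplaxadjoints}, which the paper states without proof (citing \cite{Doctrinal}), and on the classical but unproved-here identification of monoidal categories as strict algebras for a club-induced 2-monad; what the paper's explicit computation buys is a self-contained verification and concrete formulas for the induced structures that are reused later (e.g.\ in the parametrized version, Proposition~\ref{parametdoctrinal}). One cosmetic point: no strictification is needed in your first paragraph, since the 2-monad in question already has arbitrary (non-strict) monoidal categories as its strict algebras.
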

\begin{proof}
`$\Rightarrow$' Suppose $F\dashv G$ is an adjunction 
in $\B{MonCat}_l$ and
$(\phi,\phi_0)$, $(\phi',\phi'_0)$ are the lax structure 
maps of $F$ and $G$. By the above corollary,
the lax monoidal structure of the right adjoint 
$G$ it induces a colax structure $(\psi,\psi_0)$
on the left 
adjoint $F$, given by the composites
(\ref{colax}).

In order for $F$ to be a strong monoidal functor, 
it is enough to show that this colax structure induced 
from $G$ is the two-sided
inverse to the lax structure of $F$.

$\bullet \qquad \psi_{A,B}\circ\phi_{A,B}= 1_{FA\otimes FB}$:
\begin{displaymath}
\xymatrix @C=3pc 
{FA\otimes FB\ar[r]^-{\phi_{A,B}}
\ar @{.>}[dr]
_-{F\eta_A\otimes F\eta_B}\ar @/_10ex/[ddrr]_-{1_{FA\otimes FB}} &
F(A\otimes B)\ar[r]^-{F(\eta_A\otimes\eta_B)} 
\dtwocell<\omit>{'{(i)}}&
F(GFA\otimes GFB)\ar[d]^-{F\phi'_{FA,FB}}\\
& FGFA\otimes FGFB\ar @{.>}[ru]_-{\phi_{GFA,GFB}}
\ar @{.>}[dr]_-{\varepsilon_{FA}\otimes\varepsilon_{FB}} 
\rtwocell<\omit>{'{(ii)}}\dtwocell<\omit>{'{(iii)}}& 
FG(FA\otimes FB)\ar[d]^-{\varepsilon_{FA\otimes FB}}\\
& & FA\otimes FB}
\end{displaymath}
where $(i)$ commutes by naturality of $\phi$, $(ii)$ by the 
fact that $\varepsilon:FG\Rightarrow 1_{\ca{D}}$ 
is a monoidal natural transformation between
lax monoidal functors, 
and $(iii)$ by one of the triangular identities. 

$\bullet\qquad\psi_0\circ\phi_0=1_I$:
\begin{displaymath}
\xymatrix{I\ar[r]^-{\phi_0}\ar[drr]_-{1_I} &
FI\ar[r]^-{F\phi'_0} &
FGI\ar[d]^-{\varepsilon_I}\\
&& I}
\end{displaymath}
which commutes by the axioms (\ref{nattransf}) 
for the monoidal counit $\varepsilon$ of 
the adjunction.

By forming similar diagrams
we can see how 
$\phi_{A,B}\circ\psi_{A,B}=1_{F(A\otimes B)}$ and 
$\psi_0\circ\phi_0=id_I$, hence $F$ is equipped 
with a strong monoidal
structure.

`$\Leftarrow$' Suppose that $F$ has the structure 
of a strong monoidal functor 
($\phi,\phi_0$) and it has an ordinary right adjoint
$G$. Clearly $F$ has a lax monoidal structure and 
a colax monoidal structure ($\phi^{-1}$, $\phi_0^{-1}$). 
Therefore it induces a lax monoidal structure 
on the right adjoint $G$ given
by the composites (\ref{lax1}), (\ref{lax2}).

What is left to show is that the unit $\eta$ 
and the counit $\varepsilon$ 
of the adjunction are monoidal natural transformations,
\emph{i.e.} they satisfy the commutativity of the diagrams 
(\ref{nattransf}).
For example, the first diagram for 
$\eta:1_{\ca{C}}\Rightarrow GF$ 
becomes
\begin{displaymath} 
\xymatrix @C=.6in @R=.4in
{A\otimes B\ar[r]^-{\eta_{A\otimes B}}
\ar[d]_-{\eta_A\otimes\eta_B} 
\drtwocell<\omit>{'{(i)}} &
GF(A\otimes B)\ar @{.>}[ddl]^-{GF(\eta_A\otimes\eta_B)} 
\ddtwocell<\omit>{'{(ii)}} & \\
GFA\otimes GFB\ar[d]_-{\eta_{GFA\otimes GFB}} && 
G(FA\otimes FB)\ar[ul]_-{G\phi_{A,B}}
\ar @/_2ex/@{.>}[ld]_-{G(F\eta_A\otimes F\eta_B)} \\
GF(GFA\otimes GFB)\ar [r]_-{G{\phi^{-1}_{GFA,GFB}}} &
G(FGFA\otimes FGFB)\ar[ur]_-{G(\varepsilon_{FA}\otimes\varepsilon_{FB})}
\ar @/_2ex/@{.>}[l]_{G\phi_{GFA,GFB}} 
& }
\end{displaymath}
where $(i)$ commutes by naturality of $\eta$, and $(ii)$ 
by naturality of $\phi$ and one of the triangular identities. Notice
that the lower composite from $GFA\otimes GFB$ to $GF(A\otimes B)$
is the lax structure map $\phi''_{A,B}$
of the composite lax monoidal functor $GF$. 

The second diagram commutes trivially, and in a very
similar way we can show that $\varepsilon$ 
is also a monoidal natural transformation.
Hence, the adjunction can be lifted in $\B{MonCat}_l$.
\end{proof}
The above propositions generalize
to the case of parametrized adjoints.
For example, if the functor
$F:\ca{A}\times\ca{B}\to\ca{C}$ between monoidal
categories has a colax structure
\begin{gather*}
 \psi_{(A,B),(A',B')}:F(A\otimes A',B\otimes B')\to F(A,B)\otimes F(A',B') \\
 \psi_0:F(I_\ca{A},I_\ca{B})\to I_\ca{C},
\end{gather*}
then its parametrized adjoint 
$G:\ca{B}^\op\times\ca{C}\to\ca{A}$
obtains a lax structure via the composites
\begin{displaymath}
\xymatrix @C=.8in @R=.4in
{G(B,C)\otimes G(B',C')\ar[r]^-{\eta^{B\otimes B'}_{G(B,C)\otimes G(B',C')}}
\ar@{-->}@/_4ex/[ddr]_-{\phi_{(B,C),(B',C')}} & 
G(B\otimes B',F(G(B,C)\otimes G(B',C'),
B\otimes B'))\ar[d]^-{G(1,\psi_{(G(B,C),B),(G(B',C'),B')})} \\
& G(B\otimes B',F(G(B,C),B)\otimes F(G(B',C'),B'))
\ar[d]^-{G(1,\varepsilon^B_C\otimes\varepsilon^{B'}_{C'})} \\
& G(B\otimes B',C\otimes C'),}
\end{displaymath}
 \begin{displaymath}
  \xymatrix @C=.7in @R=.4in
{I_\ca{A}\ar[r]^-{\eta^{I_\ca{B}}_{I_\ca{A}}}
\ar@{-->}@/_2ex/[dr]_-{\phi_0} & 
G(I_\ca{B},F(I_\ca{A},I_\ca{B}))
\ar[d]^-{G(1,\psi_0)} \\
& G(I_\ca{B},I_\ca{C}).}
 \end{displaymath}
The respective axioms are satisfied by naturality
and dinaturality of the unit and counit
$\eta$, $\varepsilon$ of the parametrized adjunction
and the axioms for $(\psi,\psi_0)$ of $F$.
\begin{prop}\label{parametdoctrinal}
 Suppose $F:\ca{A}\times\ca{B}\to\ca{C}$
 and $G:\ca{B}^\mathrm{op}\times\ca{C}\to\ca{A}$
 are parametrized adjoints between
 monoidal categories, \emph{i.e.} 
 $F(-,B)\dashv G(B,-)$
 for all $B\in\ca{B}$. Then, colax monoidal structures on 
 $F$ correspond bijectively to lax monoidal
 structures on $G$.
\end{prop}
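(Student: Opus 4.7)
The plan is to imitate the proof of the preceding proposition on ordinary monoidal adjunctions, using a parametrised version of the mates correspondence of Proposition \ref{mates}. For each pair $B,B'\in\ca{B}$, mating the three adjunctions $F(-,B)\dashv G(B,-)$, $F(-,B')\dashv G(B',-)$ and $F(-,B\otimes B')\dashv G(B\otimes B',-)$ against the tensor functors of $\ca{C}$ and $\ca{A}$ produces a bijection between 2-cells
\begin{displaymath}
\psi_{(-,B),(-,B')}:F(-\otimes -,B\otimes B')\Longrightarrow F(-,B)\otimes F(-,B')
\end{displaymath}
and 2-cells
\begin{displaymath}
\phi_{(B,-),(B',-)}:G(B,-)\otimes G(B',-)\Longrightarrow G(B\otimes B',-\otimes -).
\end{displaymath}
The explicit formula for $\phi$ in terms of $\psi$ is already displayed just above the statement; the reverse construction is dual, inserting first $F(\eta^B\otimes\eta^{B'},1)$, then $F$ applied to $\phi$, then $\varepsilon^{B\otimes B'}$. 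The unit pair $(\psi_0,\phi_0)$ is treated analogously starting from $F(-,I_\ca{B})\dashv G(I_\ca{B},-)$.

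The first task is to verify that the lax pentagon and unit triangles \eqref{assoc&unitality} for $(\phi,\phi_0)$ correspond, under this bijection, to the colax counterparts for $(\psi,\psi_0)$. I would do this by pasting the defining composite of $\phi$ three times and reducing both sides of the pentagon to a common form. The required ingredients are: the colax pentagon for $\psi$; naturality of the unit $\eta^B_A$ in its $\ca{A}$-variable; functoriality of $G$ in both arguments; and critically the dinaturality square \eqref{dinaturality} of the counit $\varepsilon^B_C$ in $B$, which permits sliding 2-cells between adjunctions with parameter $B$, $B'$ and $B\otimes B'$. The unit triangles are dealt with in the same spirit via $\phi_0\leftrightarrow\psi_0$.

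Mutual inverseness of the two assignments follows from the parametrised triangular identities, exactly as in the standard mates yoga: composing the two passages forces composites of the form $\varepsilon^B_C\circ F(\eta^B_A,1)$ (and their $G$-duals) to appear inside the defining diagrams, and these collapse to identities. Naturality of $\phi$ in all four of its arguments then requires one further application of the dinaturality \eqref{dinaturality} together with functoriality of $G$ in its contravariant slot.

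The principal obstacle is the ingredient just highlighted: the $\ca{B}$-dependence of the parametrised adjunction forces one to compare 2-cells attached to \emph{distinct} adjunctions, and the dinaturality hexagon \eqref{dinaturality} of $\varepsilon^B_C$ in $B$ is what makes that comparison go through. Once that observation is absorbed, the argument becomes a routine extension of the preceding unparametrised proof, which is itself recovered as the special case $\ca{B}=\B{1}$.
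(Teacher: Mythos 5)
Your proposal is correct and follows essentially the same route as the paper: the paper defines $\phi$ and $\phi_0$ by exactly the unit--$\psi$--counit composites you describe (displayed just before the statement) and asserts that the lax axioms follow from naturality and dinaturality of $\eta$, $\varepsilon$ together with the colax axioms for $(\psi,\psi_0)$, which is precisely your argument. The paper leaves the verification and the mutual inverseness as a remark rather than a written-out proof, so your additional detail on the triangle identities and the role of the dinaturality hexagon is a faithful elaboration rather than a divergence.
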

As an application, consider the case
of a symmetric monoidal closed 
category $\ca{V}$, with symmetry $s$. 
The tensor product functor
$\otimes:\ca{V}\times\ca{V}\to\ca{V}$ from the 
monoidal $\ca{V}\times\ca{V}$ 
(see Remark \ref{monoidalproductofmoncats})
is equipped
with a strong monoidal structure,
namely 
\begin{align*} 
\phi_{(A,B),(A',B')}:A\otimes B\otimes
A'\otimes B'&\xrightarrow{\;1\otimes s_{B,A'}\otimes1\;}
A\otimes A'\otimes
B\otimes B', \\
\phi_0:I&\xrightarrow{\;r_I^{-1}\;}I\otimes I.
\end{align*}
Therefore Proposition \ref{parametdoctrinal} applies
and its parametrized adjoint 
obtains the structure of a lax monoidal functor.
\begin{prop}\label{laxinthom}
In a symmetric monoidal closed category $\ca{V}$, the 
internal hom functor 
$[-,-]:\ca{V}^\mathrm{op}\otimes\ca{V}\to\ca{V}$
has the structure of a lax monoidal functor, with 
structure maps
\begin{displaymath}
\chi_{(A,B),(A',B')}:[A,B]\otimes[A',B']\to[A\otimes A',B\otimes B'],
\end{displaymath}
\begin{displaymath} 
\chi_0:I\to[I,I]
\end{displaymath}
which correspond, under the adjunction $-\otimes A\dashv[A,-]$, 
to the morphisms 
\begin{gather*}
[A,B]\otimes[A',B']\otimes A\otimes A'\xrightarrow{1\otimes s\otimes1}
[A,B]\otimes A\otimes[A',B']\otimes A'
\xrightarrow{\mathrm{ev}\otimes\mathrm{ev}}B\otimes B', \\
I\otimes I\xrightarrow{l_I=r_I}I.
\end{gather*}
\end{prop}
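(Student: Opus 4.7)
The plan is to invoke Proposition \ref{parametdoctrinal} applied to the parametrized adjunction $-\otimes A\dashv[A,-]$. The excerpt just above the statement already verifies that the tensor product $\otimes:\ca{V}\times\ca{V}\to\ca{V}$ carries a strong monoidal structure (from the product monoidal structure on $\ca{V}\times\ca{V}$ of Remark \ref{monoidalproductofmoncats}), with components $\phi_{(A,B),(A',B')}=1\otimes s_{B,A'}\otimes 1$ and $\phi_0=r_I^{-1}$. A strong monoidal structure is in particular colax, with colax components $\psi=\phi^{-1}=1\otimes s_{A',B}\otimes 1$ and $\psi_0=r_I$. Since $[-,-]$ is the parametrized right adjoint of $\otimes$, Proposition \ref{parametdoctrinal} immediately endows $[-,-]:\ca{V}^{\op}\times\ca{V}\to\ca{V}$ with a lax monoidal structure $(\chi,\chi_0)$, and the associativity and unit axioms come for free from that general result.

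It then remains to identify the resulting $\chi_{(A,B),(A',B')}$ and $\chi_0$ via the tensor–hom adjunction. Substituting $F=\otimes$, $G=[-,-]$, $\varepsilon=\mathrm{ev}$ and $\psi$ as above into the explicit formula for $\phi_{(B,C),(B',C')}$ stated just before Proposition \ref{parametdoctrinal}, one reads off $\chi_{(A,B),(A',B')}$ as the composite
\[
[A,B]\otimes[A',B']\xrightarrow{\eta}[A\otimes A',([A,B]\otimes[A',B'])\otimes(A\otimes A')]
\xrightarrow{[1,1\otimes s\otimes 1]}
\]
\[
[A\otimes A',([A,B]\otimes A)\otimes([A',B']\otimes A')]
\xrightarrow{[1,\mathrm{ev}\otimes\mathrm{ev}]}[A\otimes A',B\otimes B'].
\]
Transposing under $-\otimes(A\otimes A')\dashv[A\otimes A',-]$, the leading unit $\eta$ is cancelled by the triangular identity, leaving exactly $(\mathrm{ev}\otimes\mathrm{ev})\circ(1\otimes s\otimes 1)$ as the adjunct of $\chi_{(A,B),(A',B')}$, which is the claimed expression. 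Similarly, for $\chi_0:I\to[I,I]$ one gets the composite $I\xrightarrow{\eta}[I,I\otimes I]\xrightarrow{[1,\psi_0]}[I,I]$, whose transpose is $\psi_0\circ l_I=r_I\circ l_I^{-1}\circ l_I=l_I=r_I$ (using $l_I=r_I$ on the unit).

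There is no real obstacle; the coherence axioms that would otherwise require manual verification are absorbed into Proposition \ref{parametdoctrinal}, and the only manual work is the transpose computation, which is a short triangular-identity chase using the naturality of $\eta$ and $\mathrm{ev}$.
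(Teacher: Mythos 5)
Your proposal is correct and follows exactly the paper's route: the paper also obtains Proposition \ref{laxinthom} as an immediate application of Proposition \ref{parametdoctrinal} to the strong (hence colax) monoidal structure $(1\otimes s\otimes 1,\ r_I^{-1})$ on $\otimes:\ca{V}\times\ca{V}\to\ca{V}$, with the explicit form of $\chi$ read off by transposing the general formula under $-\otimes(A\otimes A')\dashv[A\otimes A',-]$. Your added triangular-identity computation identifying the adjuncts is the only part the paper leaves implicit, and it is carried out correctly.
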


\section{Categories of monoids and comonoids}\label{Categoriesofmonoidsandcomonoids}

A \emph{monoid} in a monoidal category $\ca{V}$
is an object $A$ equipped with arrows
\begin{displaymath}
m:A\otimes A\to A\qquad\mathrm{and}\qquad
\eta:I\to A
\end{displaymath}
called the \emph{multiplication}
and the \emph{unit}, satisfying the
associativity and identity conditions: the diagrams
\begin{equation}\label{monoidaxioms}
\xymatrix @R=.4in @C=.4in
{A\otimes A\otimes A\ar[r]^-{1\otimes m}
\ar[d]_-{m\otimes1} & A\otimes A\ar[d]^-m \\
A\otimes A\ar[r]_-m & A}\qquad\mathrm{and}\qquad
\xymatrix @R=.4in @C=.4in
{I\otimes A\ar[dr]_-{l_A}\ar[r]^-{\eta\otimes1}
& A\otimes A\ar[d]_-m & A\otimes I\ar[l]_-{1\otimes\eta}
\ar[dl]^-{r_A}\\ 
& A &}
\end{equation}
commute, where the associativity constraint is 
suppressed from the first diagram. A \emph{monoid morphism} between 
two monoids $(A,m,\eta)$ and $(A',m',\eta')$
is an arrow $f:A\to A'$ in $\ca{V}$ such that
the diagrams
\begin{equation}\label{monoidarrowaxioms}
\xymatrix @R=.4in @C=.4in
{A\otimes A\ar[r]^-m \ar[d]_-{f\otimes f} &
A\ar[d]^-f \\
A'\otimes A'\ar[r]_-{m'} & A'}\qquad\mathrm{and}\qquad
\xymatrix @R=.4in @C=.4in
{I\ar[r]^-\eta \ar[dr]_-{\eta'} & A\ar[d]^-f \\
& A'}
\end{equation}
commute. We obtain a category $\Mon(\ca{V})$
of monoids and monoid morphisms.
Furthermore, a 2-cell $\alpha:f\Rightarrow g$
is defined to be an arrow $\alpha:I\to B$
such that 
\begin{equation}\label{monoid2cells}
\xymatrix{A\ar[r]^-{\alpha\otimes f}
\ar[d]_-{g\otimes\alpha} 
& B\otimes B\ar[d]^-m\\
B\otimes B\ar[r]_-m & B}
\end{equation}
commutes, thus $\Mon(\ca{V})$
is a 2-category.

Dually, there is a 2-category
of \emph{comonoids}
$\Comon(\ca{V})$ with
objects triples $(C,\Delta,\epsilon)$ 
where $C$ is an object in $\ca{V}$, 
$\Delta:C\to C\otimes C$ is the \emph{comultiplication} 
and $\epsilon:C\to I$ is the \emph{counit}, such that 
dual diagrams to (\ref{monoidaxioms}) commute.
\emph{Comonoid morphisms}
$(C,\Delta,\epsilon)\to(C',\Delta',\epsilon')$ 
are arrows $g:C\to C'$ in 
$\ca{V}$ such that the dual of (\ref{monoidarrowaxioms})
commutes, and 2-cells $\beta:f\Rightarrow g$ 
are arrows $\beta:C\to I$ satisfying
dual diagrams to (\ref{monoid2cells}). 

For the purposes of this dissertation, 
the 2-dimensional structure of the categories
of monoids and comonoids (and modules and comodules later) 
will not be employed. 
Notice that as categories, 
$\Comon(\ca{V})=\Mon(\ca{V}^\mathrm{op})^\mathrm{op}$.
\begin{rmk}\label{monadsaremonoids}
We saw in Section \ref{Basicdefinitions}
how, for any object $B$ in a bicategory $\ca{K}$,
the hom-category $\ca{K}(B,B)$ obtains the structure
of a monoidal category, with tensor product
the horizontal composition and unit the identity 1-cell.
From this viewpoint, the data that define the notion
of a monad $t:B\to B$ in a bicategory (Definition \ref{monadbicat})
equivalently define a monoid in the monoidal
category $(\ca{K}(B,B),\circ,1_B)$. Dually, a comonad
$u:A\to A$ in a bicategory $\ca{K}$ as in
Definition \ref{comonadbicat} is precisely
a comonoid in the monoidal $\ca{K}(A,A)$. 
\end{rmk}
If the monoidal category $\ca{V}$ is 
braided, we can define a monoid
structure on the tensor product $A\otimes B$
of two monoids $A$, $B$ via
\begin{align*}
A\otimes B\otimes A\otimes B\xrightarrow{1\otimes c\otimes1}&
A\otimes A\otimes B\otimes B\xrightarrow{m\otimes m}A\otimes B \\
 I\xrightarrow{r_I^{\textrm{-}1}}&
I\otimes I\xrightarrow{\eta\otimes\eta}A\otimes B \notag
\end{align*}
where the constraints are again suppressed.
This induces a monoidal structure on the
category $\Mon(\ca{V})$, such that
the forgetful functor to $\ca{V}$
is a strict monoidal functor.
The braiding/symmetry of $\ca{V}$ lifts
to its category of monoids,
so $\Mon(\ca{V})$ is a braided/symmetric
monoidal category when $\ca{V}$ is. This happens
because $\Mon(\ca{V})\to\ca{V}$ always reflects isomorphisms.
Dually, $\Comon(\ca{V})$ also inherits the
monoidal structure from $\ca{V}$, via
\begin{displaymath}
 C\otimes D\xrightarrow{\delta\otimes\delta}
C\otimes C\otimes D\otimes D\cong
C\otimes D\otimes C\otimes D, \quad C\otimes D
\xrightarrow{\epsilon\otimes\epsilon}I\otimes I\cong I.
\end{displaymath}
The monoidal unit in both cases is $I$, with trivial 
monoid and comonoid structure via $r_I$.

For example, the category of monoids 
in the symmetric monoidal category 
$(\B{Ab},\otimes,\mathbb{Z})$ is the category of rings
$\B{Rng}$, and in the symmetric
cartesian monoidal category $(\B{Cat},\times,\B{1})$
it is the category of 
strict monoidal categories $\B{MonCat}_{st}$.
Also, the category of monoids
in the symmetric monoidal category $\Mod_R$ for
a commutative ring $R$ is the category 
of $R$-algebras $\Alg_R$
and the category of comonoids is the category
of $R$-coalgebras $\Coalg_R$.

An important property of lax monoidal 
functors is that they map
monoids to monoids. More precisely, if 
$F:\ca{V}\to\ca{W}$ is a lax monoidal functor 
between monoidal categories $\ca{V}$ and $\ca{W}$, 
there is an induced functor
\begin{equation}\label{MonF} 
\Mon(F):
\xymatrix @R=.05in
{\Mon(\ca{V})\ar[r] &
\Mon(\ca{W})\\
(A,m,\eta)\ar @{|->}[r] &
(FA,m',\eta')}
\end{equation}
which gives $FA$ the structure of a monoid in $\ca{W}$,
with multiplication and unit
\begin{gather*}
m':FA\otimes FA\xrightarrow{\phi_{A,A}}F(A\otimes A)
\xrightarrow{Fm}FA \\
\eta':I\xrightarrow{\phi_0}FI
\xrightarrow{F\eta}FA
\end{gather*}
where $\phi_{A,A}$ and $\phi_0$ are the structure
maps of $F$. The associativity and identity conditions
are satisfied because of naturality of
$\phi$, $\phi_0$ and the fact that $A$ is a monoid. 
Dually, if $G:\ca{V}\to\ca{W}$ is colax monoidal functor,
it maps comonoids to comonoids via an induced functor
\begin{displaymath}
 \Comon(F):
\xymatrix @R=.05in
{\Comon(\ca{V})\ar[r] & \Comon(\ca{W}) \\
(C,\delta,\epsilon)\ar@{|->}[r] & 
(GC,\psi\circ G\delta,\psi\circ G\epsilon).}
\end{displaymath}

For example, in a symmetric monoidal
closed category $\ca{V}$,
the internal hom functor 
$[-,-]:\ca{V}^\mathrm{op}\times\ca{V}\to\ca{V}$
is lax monoidal by Proposition \ref{laxinthom}.
The category of monoids of the monoidal
category $\ca{V}^\mathrm{op}\times\ca{V}$ is
\begin{displaymath}
\Mon(\ca{V}^\mathrm{op}\times\ca{V})\cong\Mon
(\ca{V}^\mathrm{op})\times\Mon(\ca{V})
\cong\Comon(\ca{V})^\mathrm{op}\times\Mon(\ca{V}),
\end{displaymath}
so there is an induced functor betweem the categories of monoids
\begin{equation}\label{defMon[]} 
\Mon[-,-]:
\xymatrix @R=.05in
{\Comon(\ca{V})^\mathrm{op}
\times\Mon(\ca{V})\ar[r]
&\Mon(\ca{V})\\
\qquad\;(\;C\;,\;A\;)\;\ar @{|->}[r] & [C,A].}
\end{equation}
The concrete content of this observation is that whenever $C$ 
is a comonoid and $A$ a monoid, the object $[C,A]$ obtains the 
structure of a monoid, with unit $I\to [C,A]$ which is the 
transpose under the adjunction $-\otimes C\dashv [C,-]$ of 
\begin{displaymath}
C\xrightarrow{\;\epsilon\;}I\xrightarrow{\;\eta\;}A
\end{displaymath}
and with multiplication $[C,A]\otimes[C,A]\to[C,A]$ the 
transpose of the composite
\begin{displaymath}
\xymatrix @R=.3in @C=.35in
{[C,A]\otimes[C,A]\otimes C
\ar[r]^-{1\otimes\Delta}
\ar @{-->}[ddrr]
& [C,A]\otimes[C,A]\otimes C\otimes C\ar[r]^-{1\otimes s\otimes 1}
& [C,A]\otimes C\otimes [C,A]\otimes C\ar[d]
^-{\mathrm{ev}\otimes\mathrm{ev}}\\
&& A\otimes A\ar[d]^-{m}\\
&& A.}
\end{displaymath}
\begin{rmk}\label{rmkconvolution}
For the symmetric monoidal closed category
$\Mod_R$, the internal hom
\begin{displaymath}
 [-,-]=\Hom_R(-,-):\Mod_R^\mathrm{op}
 \times\Mod_R\longrightarrow\Mod_R
\end{displaymath}
has the structure of a lax monoidal functor
by Proposition \ref{laxinthom}. Therefore 
it induces a functor
\begin{displaymath} 
\Mon(\Hom_R):
\xymatrix @R=.05in
{\Coalg_R^{\mathrm{op}}\times\Alg_R\ar[r]
 & \Alg_R \\ 
(\;C\;,\;A\;)\;\ar @{|->}[r] & \Hom_R(C,A)}
\end{displaymath}
between the categories of coalgebras and algebras. 
This implies the well-known fact that
for $C$ an $R$-coalgebra and $A$ an $R$-algebra,
the set $\Hom_R(C,A)$ of the linear maps between them 
obtains the structure of an $R$-algebra
under the \emph{convolution} structure
\begin{displaymath}
(f*g)(c)=\sum\limits_{(c)}{f(c_1)g(c_2)}
\quad\mathrm{and}\quad
1=\eta\circ\epsilon
\end{displaymath}
where $*$ is expressed using the `sigma notation' 
for the coalgebra comultiplication $\Delta(c)=\sum_{i}{c_{1i}\otimes
c_{i2}}:=\sum_{(c)}{c_{(1)}\otimes c_{(2)}}$ introduced in \cite{Sweedler}.
\end{rmk}
Another example
of a functor induced between categories
of monoids is the following.
\begin{lem}\label{lemmonlaxfun}
 If $\ps{F}:\ca{K}\to\ca{L}$ is a lax functor between
two bicategories, there is an induced functor
\begin{equation}\label{monlaxfun}
 \Mon\ps{F}_{A,A}:\Mon\ca{K}(A,A)\longrightarrow
\Mon\ca{L}(\ps{F}A,\ps{F}A)
\end{equation}
for each object $A$ in $\ca{K}$, which
is the functor $\ps{F}_{A,A}$ restricted to 
the category of monoids of the monoidal
category $(\ca{K}(A,A),\circ,1_A)$.
\end{lem}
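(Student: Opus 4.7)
The plan is to recognize the lemma as essentially a repackaging of two facts already established in the excerpt. By Remark \ref{monadsaremonoids}, the objects of $\Mon\ca{K}(A,A)$ are precisely the monads on $A$ in $\ca{K}$, and its morphisms are precisely those 2-cells of $\ca{K}(A,A)$ which are compatible with the given multiplications and units. Hence it suffices to check that the ordinary functor $\ps{F}_{A,A}:\ca{K}(A,A)\to\ca{L}(\ps{F}A,\ps{F}A)$ restricts, on the monoid subcategory, to a functor into $\Mon\ca{L}(\ps{F}A,\ps{F}A)$.

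On objects this is already done: Remark \ref{laxfunctorspreservemonads} states that lax functors preserve monads, so for a monoid $(t,m,\eta)$ in $\ca{K}(A,A)$ the endo-1-cell $\ps{F}t$ carries the monad (i.e.\ monoid) structure with multiplication $\ps{F}m\cdot\delta_{t,t}$ and unit $\ps{F}\eta\cdot\gamma_A$, built from the constraints $\delta$ and $\gamma$ of $\ps{F}$ from (\ref{delta}) and (\ref{gamma}). This defines the action of $\Mon\ps{F}_{A,A}$ on objects.

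For morphisms, given a monoid morphism $\alpha:(t,m,\eta)\to(t',m',\eta')$, I would set $\Mon\ps{F}_{A,A}(\alpha):=\ps{F}\alpha$ and verify the two axioms of (\ref{monoidarrowaxioms}) with respect to the induced monoid structures. The multiplication axiom reduces to the equality $\ps{F}m'\cdot\delta_{t',t'}\cdot(\ps{F}\alpha*\ps{F}\alpha)=\ps{F}\alpha\cdot\ps{F}m\cdot\delta_{t,t}$, which I would obtain by pasting the naturality square of $\delta$ on the pair $(\alpha,\alpha)$ with the image under $\ps{F}$ of the monoid-morphism axiom for $\alpha$. The unit axiom is handled analogously, using that $\gamma_A$ is a single 2-cell at $A$ and that $\ps{F}$ preserves the equation $\eta'=\alpha\cdot\eta$. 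Functoriality of $\Mon\ps{F}_{A,A}$ (preservation of identities and of vertical composition of 2-cells) is then inherited from $\ps{F}_{A,A}$, since the two functors agree on underlying 2-cells; equivalently, $\Mon\ps{F}_{A,A}$ sits in an obvious commutative square with $\ps{F}_{A,A}$ under the faithful forgetful functors $\Mon\ca{K}(A,A)\to\ca{K}(A,A)$ and $\Mon\ca{L}(\ps{F}A,\ps{F}A)\to\ca{L}(\ps{F}A,\ps{F}A)$.

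The only step that requires any calculation is the verification of the two compatibility squares for $\ps{F}\alpha$, and even this is pure bookkeeping: each diagram splits into a naturality square for $\delta$ or $\gamma$ and an $\ps{F}$-image of a hypothesis on $\alpha$. There is no real obstacle, and an entirely equivalent route is to extract the lemma as a degenerate case of Proposition \ref{laxfunctorbetweenmodules}, or to observe that the correspondence between monads and lax functors $\B{1}\to\ca{K}$ turns $\Mon\ps{F}_{A,A}$ into the post-composition operation $\ps{F}\circ(-)$ on the appropriate hom-category of $\B{Bicat}_l$.
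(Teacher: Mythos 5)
Your proof is correct and takes essentially the same route as the paper: the monoid structure on $\ps{F}t$ is built from the constraint 2-cells $\delta_{t,t}$ and $\gamma_A$, and the compatibility of $\ps{F}\alpha$ with the multiplications and units reduces to naturality of $\delta$ and $\gamma$ together with the monoid-morphism hypotheses on $\alpha$. The paper merely packages this by first observing that $\ps{F}_{A,A}$ is a lax monoidal functor with structure maps given by $\delta$ and $\gamma_A$ (since $\otimes\equiv\circ$ and $I_{\ca{K}(A,A)}\equiv 1_A$) and then invoking the general construction (\ref{MonF}), which is exactly the computation you carry out by hand.
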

\begin{proof}
Since $\ps{F}$
is a lax functor between bicategories,
we have a functor $\ps{F}_{A,B}:\ca{K}(A,B)\to\ca{L}(\ps{F}A,\ps{F}B)$
between the hom-categories for all $A,B\in\ca{K}$.
In particular, there is a functor 
\begin{displaymath}
\ps{F}_{A,A}:\ca{K}(A,A)\to\ca{L}(\ps{F}A,\ps{F}A)
\end{displaymath}
which maps the 1-cell $f:A\to A$ to $\ps{F}f:\ps{F}A\to\ps{F}A$
and a 2-cell $\alpha:f\Rightarrow g$ to 
$\ps{F}\alpha:\ps{F}f\Rightarrow\ps{F}g$. If we regard 
$\ca{K}(A,A)$ and $\ca{L}(\ps{F}A,\ps{F}A)$ as monoidal 
categories with respect
to the horizontal composition as in (\ref{tensorcirc}), 
$\ps{F}_{A,A}$ has the structure of a lax monoidal functor. 
Indeed, it is equipped with natural transformations with components,
for each $f,g\in\ca{K}(A,A)$,
\begin{displaymath}
 \phi_{f,g}:\ps{F}f\otimes\ps{F}g\to\ps{F}(f\otimes g)
\quad\textrm{and}\quad
\phi_0:I_{\ca{L}(FA,FA)}\to\ps{F}I_{\ca{K}(A,A)}
\end{displaymath}
which are precisely the components 
$\delta_{f,g}$ and $\gamma_A$ of the natural
transformations (\ref{delta}, \ref{gamma}) that the lax
functor $\ps{F}$ is equipped with, since $\otimes\equiv\circ$
and $I_{\ca{K}(A,A)}\equiv1_A$. The axioms
follow from those of $\delta$ and $\gamma$.
Hence a functor (\ref{monlaxfun})
between the categories of monoids is induced.  
\end{proof}
In Remark \ref{monadsaremonoids}
we saw how a monad $t:A\to A$ 
in a bicategory $\ca{K}$ is actually
a monoid in $\ca{K}(A,A)$. The above lemma 
states that if $\ps{F}$
is a lax functor, then $\ps{F}t:\ps{F}A\to\ps{F}A$
is a monoid in $\ca{L}(\ps{F}A,\ps{F}A)$, \emph{i.e.}
$\ps{F}t$ is a monad in the bicategory $\ca{L}$.
Therefore we re-discover the fact that lax functors between 
bicategories preserve monads,
from a different point of view than 
Remark \ref{laxfunctorspreservemonads}, where
a monad was identified with a lax functor from the terminal 
bicategory to $\ca{K}$.

For any monoidal category $\ca{V}$, there are forgetful
functors
\begin{displaymath}
 S:\Mon(\ca{V})\longrightarrow\ca{V}
\quad\textrm{and}\quad
U:\Comon(\ca{V})\longrightarrow\ca{V}
\end{displaymath}
which just discard the (co)multiplication
and the (co)unit. A crucial issue
for our needs is the assumptions
under which these functors have a left or right adjoint
accordingly. In other words,
we are interested in the conditions on $\ca{V}$ that allow
the \emph{free monoid} and the 
\emph{cofree comonoid} construction.

The existence of a free monoid functor is quite frequent, 
since the monoidal structures
that arise in practice may well be closed,
so that the tensor product preserves colimits
in both arguments. In particular, the following is true.
\begin{prop}\label{freemonoidprop}
Suppose that $\ca{V}$ is a monoidal
category with countable coproducts which are
preserved by $\otimes$ on either side.
The forgetful $\Mon(\ca{V})\to\ca{V}$ has a left
adjoint $L$, and the free monoid on an object $X$ 
is given by the `geometric series'
\begin{displaymath}
LX=\coprod_{n\in\mathbb{N}}{X^{\otimes n}}.
\end{displaymath}
\end{prop}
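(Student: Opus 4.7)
The plan is to construct $LX = \coprod_{n\in\mathbb{N}} X^{\otimes n}$ explicitly as a monoid and verify the universal property by exhibiting the adjunction bijection directly.

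First I would use the assumption that $\otimes$ preserves countable coproducts in either variable, together with monoidal coherence (Theorem \ref{alldiagscommute}), to obtain the canonical isomorphism
\[
LX \otimes LX \;\cong\; \coprod_{n,m\in\mathbb{N}} X^{\otimes n}\otimes X^{\otimes m} \;\cong\; \coprod_{n,m\in\mathbb{N}} X^{\otimes(n+m)}.
\]
The multiplication $m\colon LX\otimes LX\to LX$ is then defined as the map whose restriction to the $(n,m)$-summand is the coproduct injection $\iota_{n+m}\colon X^{\otimes(n+m)}\to LX$, and the unit $\eta\colon I\to LX$ is $\iota_0\colon X^{\otimes 0}=I\to LX$. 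Associativity of $m$ decomposes under $LX\otimes LX\otimes LX \cong \coprod_{n,m,k} X^{\otimes(n+m+k)}$ into the associativity of addition on $\mathbb{N}$, and unitality reduces to $0+n=n=n+0$ together with the unit constraints of $\ca{V}$. Coherence is essential here, in that it permits the iterated tensor powers $X^{\otimes n}$ to be treated as unambiguous objects.

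Next, to exhibit the adjunction $L\dashv S$, given a monoid $(A,m_A,\eta_A)$ and a morphism $f\colon X\to A$ in $\ca{V}$, I would define maps $f_n\colon X^{\otimes n}\to A$ recursively by $f_0=\eta_A$, $f_1=f$, and $f_{n+1}=m_A\circ(f_n\otimes f)$, and assemble them into $\widehat{f}\colon LX\to A$ via the coproduct universal property. That $\widehat{f}$ is a monoid morphism reduces to the identity $f_{n+m}=m_A\circ(f_n\otimes f_m)$, which I would establish by induction on $m$ using associativity of $m_A$, with the base case $m=0$ handled by the unit axiom for $A$.

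Uniqueness of $\widehat{f}$ as a monoid morphism whose restriction along $X=X^{\otimes 1}\hookrightarrow LX$ equals $f$ follows inductively: any monoid morphism $g\colon LX\to A$ must satisfy $g\circ\iota_0=\eta_A$ and $g\circ\iota_1=f$, and $g\circ\iota_{n+1}$ is then forced by multiplicativity to equal $m_A\circ(f_n\otimes f)$. Naturality of the bijection in $X$ is immediate from the recursive definition of $f_n$. The main obstacle is the combinatorial bookkeeping arising from the interplay between the associativity and unit constraints of $\ca{V}$ and the distributivity isomorphisms for $\otimes$ over $\coprod$; since both families are natural and mutually compatible, I would appeal to Mac Lane's coherence theorem to reduce the verifications essentially to the strict monoidal case, where the claim literally amounts to induction on $\mathbb{N}$.
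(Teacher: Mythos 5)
Your construction is correct and complete: it is the standard ``geometric series'' argument, with the multiplication given by reindexing $\coprod_{n,m}X^{\otimes(n+m)}$ along addition, the transpose $\widehat{f}$ assembled from the recursively defined $f_n$, and uniqueness forced by multiplicativity together with $g\circ\iota_0=\eta_A$. The paper itself offers no proof of this proposition --- it states the result and defers to the classical references (Kelly, Dubuc, Barr, and Lack's survey on free monoids) --- so your argument simply supplies, correctly, the proof that those sources contain; the appeal to Mac Lane coherence to reduce the bookkeeping to the strict case is exactly the right way to handle the associativity and distributivity constraints.
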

There are various sets of conditions,
stronger or weaker, that 
guarantee the existence of free monoids and 
are connected with the different kinds of 
settings where they apply, such as free monads, free algebras,
free operads etc. There are many classical references
on these constructions, for example by Kelly, Dubuc, Barr
and others, and most are outlined in Lack's \cite{FreeMonoids}.

On the other hand,
the existence of a cofree comonoid
functor is more problematic. In
Sweedler's \cite{Sweedler}, the cofree coalgebra
on a vector space $V$ is constructed as a 
certain subcoalgebra of $T(V^*)^o$, 
where $T$ gives the tensor algebra
of the linear dual of $V$, and $(-)^o$ is the 
dual algebra functor as described later 
in Remark \ref{dualalgebra}. In 
\cite{BlockLerouxcofreecoalgebras}, a new description
of the cofree coalgebra is given, still in $\B{Vect}_k$
for a field $k$. In Barr's \cite{BarrCoalgebras}, it is shown that 
the forgetful $\Coalg_R\to\Mod_R$ for a commutative ring $R$
has a right adjoint, and in Fox's \cite{Foxcofreecoalgebras}
two constructions on the cofree coalgebra on an $R$-module
are presented. Finally in \cite{Hazewinkelcofreecoalgebras},
connections of cofree coalgebras in $\Mod_R$
with the notion of multivariable recursiveness are 
examined.

We are here interested in the generalization from 
$\B{Vect}_k$ and $\Mod_R$ to the existence 
of such cofree objects (comonoids) 
in an arbitrary monoidal category $\ca{V}$.
Hans Porst in a series of papers \cite{CoringsComod,
AdjAlgCoalg,FundConstrCoalgCorComod,MonComonBimon}
studied the categories of monoids and comonoids (also 
the categories of modules and comodules for them) and their various
categorical properties, with emphasis on the local presentability
structure inherited from the initial monoidal category. 
We are going to employ many of those strategies for 
our purposes, so at this point we briefly describe
the most basic parts of this theory. A standard reference
for locally presentable categories is 
Adamek-Rosicky's \cite{LocallyPresentable}. 

Recall that a small full subcategory $\ca{A}$ of a category $\ca{C}$ 
is called \emph{dense} provided that every object of $\ca{C}$
is a canonical colimit of objects of $\ca{A}$, \emph{i.e.}
the colimit of the forgetful 
$(\ca{A}\downarrow C)\to\ca{C}$. Also, an object 
in a category $\ca{C}$ is called 
\emph{$\lambda$-presentable} for $\lambda$ a regular cardinal, 
provided that its hom-functor 
$\ca{C}(C,-)$ preserves $\lambda$-filtered limits. For 
$\lambda=\aleph_0$, we have the notion of a 
\emph{finitely presentable object}.
\begin{defi*} $(1)$ A \emph{locally $\lambda$-presentable} 
category $\ca{C}$ is a cocomplete category
which has a set $\ca{A}$ of $\lambda$-presentable objects, such that
every object is a $\lambda$-filtered colimit of objects from $\ca{A}$.
A category is called \emph{locally presentable} when it is locally
$\lambda$-presentable for some regular cardinal $\lambda$, and 
\emph{locally finitely presentable} for $\lambda=\aleph_0$.
 
$(2)$ A \emph{$\lambda$-accessible} category is a category
with $\lambda$-filtered colimits and a set of 
$\lambda$-presentable objects, such that
every object is a $\lambda$-filtered colimit of those. A
category is called \emph{accessible} if it is 
$\lambda$-accessible for some regular cardinal $\lambda$.
\end{defi*}
Notice that in a locally $\lambda$-presentable category $\ca{C}$, all
$\lambda$-presentable objects have a set of representatives
(with respect to isomorphism). Any such set is denoted by
$\B{Pres}_\lambda\ca{C}$ and is a small dense full subcategory
of $\ca{K}$, hence also a strong generator.
Recall that a \emph{generator} is a family of objects $\ca{G}$ such 
that for pairs $\xymatrix{A\ar@<+.5ex>[r]|-f\ar@<-.5ex>[r]|-g & B}$ with 
$f\neq g$, there exists $G\in\ca{G}$ and $h:G\to A$ with $fh\neq gh$. It is 
\emph{strong} if for any $A$ and a proper subobject, there exists
$G\in\ca{G}$ and $G\to A$ which doesn't factorize through the subobject.

Other useful properties of locally presentable categories are
completeness, well-poweredness and co-wellpoweredness. 
Obviously, an accessible category with all colimits
is locally presentable, but so is an accessible
category with all limits (see \cite[2.47]{LocallyPresentable}).
A functor $F$ between $\lambda$-accessible categories is \emph{accessible}
if it preserves $\lambda$-filtered colimits, whereas a \emph{finitary}
functor in general preserves all filtered colimits.

In \cite{MonComonBimon} the class of \emph{admissible
monoidal categories} is introduced. These are locally 
presentable symmetric monoidal categories $\ca{V}$, 
such that for each object $A$
the functor $A\otimes -$ preserves filtered colimits.
Examples are the category $\Mod_R$ for a 
commutative ring $R$,
every locally presentable category with 
respect to binary products, 
and every monoidal closed category which is 
locally presentable. However, the results exhibited
below also hold for small
variations from the above conditions. For example,
the symmetry can be replaced with $\otimes$
preserving filtered colimits on both entries. 

The notion of
\emph{functor algebras} and
\emph{functor coalgebras} for an endofunctor
are of importance in the proofs below.
Given an endofunctor on any category 
$F:\ca{C}\to\ca{C}$, the category $\Alg F$
of $F$-algebras has objects pairs 
$(A,\alpha:FA\to A)$
and morphisms $(A,\alpha)\to(A',\alpha')$
are arrows $f:A\to A'$ making the diagram
\begin{displaymath}
\xymatrix
{FA\ar[r]^-\alpha \ar[d]_-{Ff} & A\ar[d]^-f \\
FA'\ar[r]^-{\alpha'} & A'}
\end{displaymath}  
commute. The category 
$\Coalg F=(\Alg F^\op)^\op$
is defined dually, with objects pairs
$(C,\beta:C\to FC)$ and arrows
$g:C\to C'$ making the diagram
\begin{displaymath}
\quad\xymatrix{C\ar[r]^-\beta \ar[d]_-{g} & FC\ar[d]^-{Fg} \\
C'\ar[r]^-{\beta'} & FC'}
\end{displaymath}
commute. More about these categories
and their properties can be found in 
\cite{LocallyPresentable,VarietiesCovarieties}.
The most useful facts are the following:
\begin{enumerate}[(i)]
\item The forgetful functor
$\Alg F\to\ca{C}$ creates all limits and and those
colimits which are preserved by $F$.
\item The forgetful functor $\Coalg F\to\ca{C}$
creates all colimits and those
limits which are preserved by $F$.
\item If $\ca{C}$ is locally presentable 
and $F$ preserves filtered colimits,
the categories $\Alg F$ and $\Coalg F$
are locally presentable.
\end{enumerate}
Notably, these categories can be expressed
as specific \emph{inserters} $\Alg F=\mathbf{Ins}(F,\mathrm{id}_\ca{C})$
and $\Coalg F=\mathbf{Ins}(\mathrm{id}_\ca{C},F)$.
Fact $(iii)$ thus follows from the more general `Weighted 
Limit Theorem' by Makkai and Par{\'e} \cite[5.1.6]{MakkaiPare}, 
which in particular
asserts that the above inserters are accessible categories
when $\ca{C}$ and $F$ are accessible. For details about these
constructions, see \cite[Theorem 2.72]{LocallyPresentable}.

In the applications where $\Alg F$ and $\Coalg F$
for specific endofunctors are studied,
they usually turn out to be monadic 
and comonadic respectively over $\ca{C}$.
Since coequalizers of split pairs are absolute
colimits, \emph{i.e.} preserved by any functor,
monadicity and comonadicity are established as soon
as the forgetful functor has a left or right
adjoint respectively.
\begin{prop}~\cite[2.6-2.7]{MonComonBimon}\label{moncomonadm}
Suppose $\ca{V}$ is an admissible category.

$(1)$ $\Mon(\ca{V})$ is finitary monadic over 
$\ca{V}$ and locally presentable.

$(2)$ $\Comon(\ca{V})$ is a locally 
presentable category and comonadic
over $\ca{V}$.
\end{prop}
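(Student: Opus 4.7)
The plan is to realise $\Mon(\ca{V})$ and $\Comon(\ca{V})$ as equationally-defined full subcategories of categories of functor-algebras and functor-coalgebras for suitable finitary endofunctors on $\ca{V}$, and then to invoke the general results on local presentability and Beck (co)monadicity recalled above. Take the signature functor $FX = I + X\otimes X$ and, dually, $F'X = I \times (X\otimes X)$, the latter being well-defined because the locally presentable $\ca{V}$ has finite products. Admissibility gives that each $A\otimes(-)$ preserves filtered colimits, and by symmetry so does $(-)\otimes A$; together with the cofinality of the diagonal $J\to J\times J$ in any filtered $J$, this shows that $X\mapsto X\otimes X$ is finitary, so $F$ and $F'$ are both finitary endofunctors of $\ca{V}$. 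By fact $(iii)$ above, $\Alg F$ and $\Coalg F'$ are consequently locally presentable.

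Now $\Mon(\ca{V})$ and $\Comon(\ca{V})$ sit as full subcategories of $\Alg F$ and $\Coalg F'$ cut out by the (co)associativity and (co)unit axioms, each being an intersection of finitely many equalizers of parallel functors between locally presentable categories. Such subcategories are closed under limits and filtered colimits, so both $\Mon(\ca{V})$ and $\Comon(\ca{V})$ are locally presentable. For part (1), the composite reflective adjunction $\ca{V}\leftrightarrows\Alg F \leftrightarrows\Mon(\ca{V})$ produces a finitary left adjoint to $U:\Mon(\ca{V})\to\ca{V}$ (the free monoid), and Beck's theorem delivers finitary monadicity: $U$ reflects isomorphisms (the inverse in $\ca{V}$ of an invertible monoid morphism is automatically a monoid morphism), it preserves filtered colimits since they are created from $\Alg F$ via the finitariness of $F$, and it creates $U$-split coequalizers, which are absolute.

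For part (2), the critical additional step is producing a right adjoint (the cofree comonoid) to $U:\Comon(\ca{V})\to\ca{V}$. I would show that $U$ in fact creates \emph{all} colimits, not merely the filtered ones: given a diagram $\{C_i\}$ in $\Comon(\ca{V})$ with colimit $C$ in $\ca{V}$ and coprojections $\iota_i$, the maps $(\iota_i\otimes\iota_i)\circ\Delta_i : C_i\to C\otimes C$ form a cocone because each transition map is a comonoid morphism, inducing a unique $\Delta:C\to C\otimes C$; similarly $\epsilon:C\to I$ is induced from the $\epsilon_i$, and the coassociativity and counit axioms on $C$ reduce to their counterparts on each $C_i$. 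Hence $U$ is cocontinuous, and since $\Comon(\ca{V})$ is locally presentable it carries a small dense subcategory, so Theorem~\ref{Kelly} furnishes the required right adjoint $R$. Comonadicity then follows from the dual Beck theorem: reflection of isomorphisms is dual to the monoid case, and creation of $U$-split equalizers is automatic because such equalizers are absolute. The main obstacle is exactly this cocontinuity step: non-filtered colimits cannot be transported through $\otimes$ in the admissible setting, so the argument must define the comonoid structure on $C$ via the universal property of the colimit in $\ca{V}$ rather than through any preservation property of $\otimes$.
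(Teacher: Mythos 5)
Your proposal is correct and follows the paper's strategy in all essentials: the same signature functors $FX=I+X\otimes X$ and $F'X=I\times(X\otimes X)$, finitariness from admissibility, realisation of $\Mon(\ca{V})$ and $\Comon(\ca{V})$ as equationally defined full subcategories of $\Alg F$ and $\Coalg F'$, and the free monoid plus Beck for part $(1)$. Two sub-steps are handled differently, and one of them needs tightening. For local presentability of $\Comon(\ca{V})$ the paper invokes the theorem that an equifier of natural transformations between accessible functors is accessible, and combines accessibility with cocompleteness (closure under \emph{colimits} in $\Coalg F'$, which holds because the forgetful functor from functor-coalgebras creates colimits); your appeal to closure under \emph{limits} and filtered colimits in $\Coalg F'$ is the weak point, since closure under limits there would require the codomain functors of the defining equations --- which involve $\otimes$ --- to preserve limits, and that is precisely what is unavailable on the comonoid side. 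The conclusion is correct, but the equifier/accessibility route is the argument that actually certifies it. For comonadicity, you argue directly that $U:\Comon(\ca{V})\to\ca{V}$ creates all colimits (building the comonoid structure on a colimit out of its universal property, which is sound and arguably cleaner than anything in the paper), obtain the cofree comonoid from Theorem \ref{Kelly}, and apply the dual Beck theorem to $U$; the paper instead factors $U$ through the comonadic $\Coalg F'\to\ca{V}$ and lets the full inclusion create the equalizers of split pairs. Both routes work; yours isolates the genuinely non-dualizable step (the adjoint functor theorem for the cofree comonoid) more transparently, while the paper's equifier argument is the more robust way to get accessibility.
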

\begin{proof}
(Sketch) The idea is to view both categories
of monoids and comonoids as subcategories
of the functor algebras and
functor coalgebras categories,
for specific endofunctors on $\ca{V}$.

Consider the functors
$T_+$ and $T_\times$ on our admissible
category $\ca{V}$ given by
\begin{displaymath}
 T_+(C)=(C\otimes C)+I, \qquad
 T_\times(C)=(C\otimes C)\times I.
\end{displaymath}
These are finitary functors, because
the `$n$-th tensor power' functor 
$T_n=(-)^{\otimes n}$ preserves filtered colimits,
and $(-\times I)$ preserves filtered colimits
for any locally presentable category (where
finite limits commute with filtered colimits).

We deduce that $\Alg T_+$ is finitary
monadic over $\ca{V}$, locally presentable
and contains $\Mon(\ca{V})$
as a full subcategory, and also
$\Coalg T_\times$ is comonadic over $\ca{V}$,
locally presentable and
contains $\Comon(\ca{V})$ as a full subcategory. 
Moreover, the categories
of monoids and comonoids are closed under limits and 
colimits respectively.

The first part of the proposition 
regarding $\Mon(\ca{V})$ follows from general
arguments for monadicity and local presentability
of categories of algebras for a finitary monad (see
\cite[Satz 10.3]{GabrielUlmer}). On the
other hand, these arguments cannot be dualized 
for $\Comon(\ca{V})$. For example,
the dual of a locally presentable category
is not locally presentable (unless it is 
a small complete lattice).

Therefore a different approach is followed,
using the notion of an equifier of a family
of natural transformations. The decisive fact then is that 
if all functors involved are accessible, 
then the equifier is an accessible category 
(see \cite[2.76]{LocallyPresentable}).
\begin{defi*}
 Let $F_1^i,F_2^i:\ca{A}\to\ca{B}_i$ be a family of 
functors, and for each $i\in I$, 
$(\phi^i,\psi^i):F_1^i\to F_2^i$
be a pair of natural transformations. Then, the full
subcategory of $\ca{A}$ spanned by those object $A$ 
which satisfy $\phi^i_A=\psi^i_A$ for all $i$ 
is called the \emph{equifier} 
of the above family of natural transformations, denoted
by
\begin{displaymath}
 \B{Eq}(\phi^i,\psi^i)_{\{i\in I\}}.
\end{displaymath}
\end{defi*}
More explicitly, three pairs $(\phi^i,\psi^i)$
of natural transformations between composites
of the forgetful $\Coalg T_\times\to\ca{V}$ and
the `tensor power functor' $\otimes^n$
are defined, the equality of which give 
precisely the coassociativity
and coidentity conditions of the definition of a comonoid.
Hence $\Comon(\ca{V})=\B{Eq}((\phi^i,\psi^i)_{i=1,2,3}),$
and for $\ca{V}$ admissible this implies that $\Comon(\ca{V})$ 
is locally presentable. 

Now comonadicity of $\Comon(\ca{V})$ over $\ca{V}$
follows: in the commutative triangle
\begin{displaymath}
 \xymatrix @C=.5in @R=.2in
{\Comon(\ca{V})\ar@{-->}[dr]_U\ar@{^(->}[r] & \Coalg F \ar[d] \\
& \ca{V}}
\end{displaymath}
where all categories are locally presentable, both forgetful
functors to $\ca{V}$ have a right adjoint 
by Theorem \ref{Kelly}, since they are cocontinuous.
Moreover, the right leg is comonadic by basic facts for 
functor coalgebras, and the inclusion preserves and reflects
all limits from the complete full subcategory $\Comon(\ca{V})$
to the complete $\Coalg F$. Therefore
it creates equalizers of split pairs and 
so does $U$, which then satisfies the conditions
of Precise Monadicity Theorem. In particular, the 
existence of the \emph{cofree comonoid functor}
$R:\ca{V}\to\Comon(\ca{V})$ is established.
\end{proof}
Another property which $\Comon(\ca{V})$ inherits from 
the monoidal category $\ca{V}$ is monoidal closedness.
\begin{prop}~\cite[3.2]{MonComonBimon}\label{Comonclosed}
If $\ca{V}$ is a symmetric monoidal closed 
category which is locally presentable,
then the category of comonoids 
$\Comon(\ca{V})$ 
is a locally presentable symmetric
monoidal closed category as well.
\end{prop}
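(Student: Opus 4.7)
The plan is to verify the three required properties in sequence: local presentability, existence of a symmetric monoidal structure, and closedness. The first two come essentially for free from results and constructions already discussed, so the real content lies in establishing the closed structure via an adjoint functor theorem.

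First, since $\ca{V}$ is monoidal closed, the tensor product preserves colimits (in particular filtered ones) in each variable; together with local presentability this means $\ca{V}$ satisfies the hypotheses of an admissible monoidal category. Proposition \ref{moncomonadm}(2) then immediately delivers local presentability of $\Comon(\ca{V})$ and comonadicity of the forgetful $U:\Comon(\ca{V})\to\ca{V}$. The symmetric monoidal structure on $\Comon(\ca{V})$ is the one transported from $\ca{V}$ via the tensor-of-comonoids construction exhibited in Section \ref{Categoriesofmonoidsandcomonoids}, with unit the trivial comonoid on $I$ and symmetry lifted from that of $\ca{V}$; by construction $U$ is a strict symmetric monoidal functor.

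For closedness, I would fix a comonoid $C$ and aim to produce a right adjoint to $C\otimes-:\Comon(\ca{V})\to\Comon(\ca{V})$ by appealing to Theorem \ref{Kelly}. Since $\Comon(\ca{V})$ is locally presentable it is cocomplete and has a small dense subcategory (namely its $\lambda$-presentable objects for some $\lambda$), so it suffices to verify that $C\otimes-$ is cocontinuous. The key step is to observe that colimits in $\Comon(\ca{V})$ are computed on the underlying objects in $\ca{V}$: the forgetful $U$ is a left adjoint (its right adjoint being the cofree comonoid functor from Proposition \ref{moncomonadm}), so it preserves colimits, and it also reflects them because $\Comon(\ca{V})$ sits inside $\Coalg\,T_\times$ via an inclusion that creates colimits (as recalled in fact (ii) about functor coalgebras). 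Thus a colimit of comonoids is the colimit of their underlying objects equipped with the canonically induced comonoid structure.

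Combining these, for any diagram $D:\ca{J}\to\Comon(\ca{V})$ one has
\[
U\bigl(C\otimes\colim D\bigr)\;=\;UC\otimes U(\colim D)\;\cong\;UC\otimes\colim(UD)\;\cong\;\colim\bigl(UC\otimes UD\bigr)\;=\;U\bigl(\colim(C\otimes D)\bigr),
\]
where the middle isomorphism uses closedness of $\ca{V}$; reflection of colimits by $U$ then upgrades this to an isomorphism in $\Comon(\ca{V})$. Hence $C\otimes-$ is cocontinuous, and Theorem \ref{Kelly} supplies the desired right adjoint $[C,-]:\Comon(\ca{V})\to\Comon(\ca{V})$. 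The anticipated obstacle is precisely the colimit-creation argument for $U$: one has to be careful that the inclusion $\Comon(\ca{V})\hookrightarrow\Coalg\,T_\times$ (realized as an equifier) really does interact well with colimits, since equifiers generally only guarantee good behaviour for limits. This is resolved by noting that the equations cutting out comonoids are preserved by $T_\times$ applied to any colimit, because $T_\times$ preserves the relevant colimits in $\ca{V}$ by monoidal closedness.
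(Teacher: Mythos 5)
Your proposal is correct and follows essentially the same route as the paper: local presentability and the symmetric monoidal structure are quoted from Proposition \ref{moncomonadm} and the earlier construction, and the closed structure is obtained by applying Theorem \ref{Kelly} to $-\otimes C$ after checking cocontinuity through the forgetful functor $U$. The only difference is cosmetic: where you take a detour through the inclusion into $\Coalg T_\times$ to justify that colimits of comonoids are computed in $\ca{V}$, the paper simply invokes comonadicity of $U$ (established in Proposition \ref{moncomonadm}), which creates all colimits outright.
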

\begin{proof}
The symmetric monoidal structure of $\Comon(\ca{V})$ 
was described earlier. In order 
to prove the existence of a right adjoint to
\begin{equation}\label{tensorcomon}
 -\otimes C:\Comon(\ca{V})\to\Comon(\ca{V})
\end{equation}
for any comonoid $C$ in $\ca{V}$,
we can use the adjoint functor theorem \ref{Kelly}.
The category $\Comon(\ca{V})$ is cocomplete and has a small
dense subcategory, since it is locally presentable
by Proposition \ref{moncomonadm}. Moreover, 
the functor (\ref{tensorcomon}) preserves all 
colimits by the commutativity of 
\begin{displaymath} 
\xymatrix @C=.5in
{\Comon(\ca{V})\ar[r]^-{-\otimes C}
\ar[d]_-U & \Comon(\ca{V})\ar[d]^-U\\
\ca{V}\ar[r]_-{-\otimes UC} & \ca{V}}
\end{displaymath}
where the comonadic forgetful $U$
creates all colimits and $-\otimes UC$
preserves them since $\ca{V}$ is monoidal closed.
Hence we have an adjunction
\begin{displaymath}
\xymatrix @C=.65in
 {\Comon(\ca{V})\ar@<+.8ex>[r]^-{(-\otimes C)}
 \ar@{}[r]|-\bot & \Comon(\ca{V})
 \ar@<+.8ex>[l]^-{\HOM(C,-)}}
\end{displaymath}
where $\HOM$ denotes the internal 
hom of $\Comon(\ca{V})$.
\end{proof}
\begin{cor*}
 For a commutative ring $R$, the category of $R$-algebras
 $\Alg_R$ is monadic over $\Mod_R$ and locally presentable,
 and the category of $R$-coalgebras $\Coalg_R$ is comonadic
 over $\Mod_R$, locally presentable and monoidal closed.
\end{cor*}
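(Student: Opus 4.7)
The plan is to verify that $\ca{V}=\Mod_R$ falls within the scope of the two preceding propositions, so that the corollary becomes a direct specialization. This means checking that $\Mod_R$ is an admissible symmetric monoidal category in the sense used above, and moreover symmetric monoidal closed and locally presentable, after which no further work is needed beyond invoking Propositions \ref{moncomonadm} and \ref{Comonclosed}.

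First I would recall that $(\Mod_R, \otimes_R, R)$ is a symmetric monoidal closed category with internal hom $\Hom_R(-,-)$, as stated in Example (3) of Section \ref{Basicdefinitions}. Next I would note that $\Mod_R$ is a locally finitely presentable category: the finitely presented $R$-modules (in the usual algebraic sense) form a small strong generator consisting of finitely presentable objects, and every $R$-module is a filtered colimit of these. Since $\Mod_R$ is monoidal closed, each endofunctor $-\otimes_R M$ has a right adjoint $\Hom_R(M,-)$ and therefore preserves all colimits; in particular it preserves filtered colimits. This verifies admissibility.

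Having established that $\Mod_R$ is admissible, Proposition \ref{moncomonadm}(1) immediately yields that $\Mon(\Mod_R) = \Alg_R$ is finitary monadic over $\Mod_R$ and locally presentable, and Proposition \ref{moncomonadm}(2) gives that $\Comon(\Mod_R) = \Coalg_R$ is comonadic over $\Mod_R$ and locally presentable. Finally, since $\Mod_R$ is both symmetric monoidal closed and locally presentable, Proposition \ref{Comonclosed} applies and delivers the symmetric monoidal closed structure on $\Coalg_R$, with internal hom $\HOM(C,-)$ right adjoint to $-\otimes C$ for every $R$-coalgebra $C$.

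There is essentially no obstacle: the only non-formal ingredient is recognizing $\Mod_R$ as locally (finitely) presentable with $\otimes_R$ preserving filtered colimits, both of which are standard. All the heavy lifting — the use of functor algebras and coalgebras to get monadicity and comonadicity, the equifier argument for local presentability of $\Comon(\ca{V})$, and the adjoint functor theorem for closedness — has already been done at the level of an arbitrary admissible monoidal category.
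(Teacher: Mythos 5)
Your proof is correct and is exactly the argument the paper intends: the corollary is stated without proof precisely because it is the specialization of Propositions \ref{moncomonadm} and \ref{Comonclosed} to $\ca{V}=\Mod_R$, whose admissibility (local presentability plus monoidal closedness, hence preservation of filtered colimits by $\otimes_R$) the paper has already noted. Nothing is missing.
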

The fact that $\Coalg_R$ is locally presentable
in fact generalizes the \emph{Fundamental Theorem
of Coalgebras}, 
which states that every $k$-coalgebra 
for a field $k$ is a filtered 
colimit of finite dimensional coalgebras, \emph{i.e.}
whose underlying vector space is finite
dimensional (see \cite{Sweedler,HopfAlg}). 
These are precisely the 
finitely presentable objects in $\Coalg_k$, hence 
we obtain an analogous statement for $\Coalg_R$
for a commutative ring $R$.

\section{Categories of modules and comodules}\label{Categoriesofmodulesandcomodules}

If $(A,m,\eta)$ is a monoid in a monoidal
category $\ca{V}$, a \emph{(left) $A$-module}
is an object $M$ of $\ca{V}$ equipped with an
arrow $\mu:A\otimes M\to M$ called
\emph{action}, such that the diagrams
\begin{equation}\label{defmod} 
\xymatrix @C=.45in
{A\otimes A\otimes M\ar[r]^-{m\otimes1}\ar[d]_-{1\otimes\mu} 
& A\otimes M\ar[d]^-{\mu}\\ 
A\otimes M\ar[r]_-{\mu} & M}
\qquad\mathrm{and}\qquad
\xymatrix 
{& A\otimes M\ar[rd]^-{\mu} & \\ 
I\otimes M\ar[rr]_-{l_M}\ar[ur]^-{\eta\otimes1} && M}
\end{equation}
commute, where $a$ is suppressed. 
An \emph{$A$-module morphism} $(M,\mu)\to(M',\mu')$
is an arrow $f:M\to M'$ in $\ca{V}$ such that
the diagram
\begin{equation}\label{defmod2}
 \xymatrix 
{A\otimes M\ar[r]^-{1\otimes
f}\ar[d]_-{\mu} & A\otimes M'\ar[d]^-{\mu'}\\ 
M\ar[r]_-f & M'}
\end{equation}
commutes. Thus for any monoid $A$ in $\ca{V}$,
there is a category $\Mod_\ca{V}(A)$
of left $A$-modules
and $A$-module morphisms.

Dually, a \emph{(right) $C$-comodule}
for $(C,\Delta,\epsilon)$ a comonoid in $\ca{V}$
is an object $X$ in $\ca{V}$ 
together with the \emph{coaction}
$\delta:X\to X\otimes C$, satisfying compatibility 
conditions with the comultiplication and counit.
A \emph{$C$-comodule morphism} $(X,\delta)\to(X',\delta')$
is a arrow $g:X\to X'$ in $\ca{V}$ which respects the
coactions. There is a category of right $C$-comodules
$\Comod_\ca{V}(C)$ for every comonoid $C$ in a 
monoidal category $\ca{V}$. 

In a very similar way, we can define categories of 
\emph{right $A$-modules} and \emph{left $C$-comodules}.
If $\ca{V}$ is a symmetric monoidal category, there
is an obvious 
isomorphism between categories of left and right
$A$-modules and left and right $C$-comodules, so 
usually there
is no distinction in the notation
between left and right
modules and comodules.

For example, in the monoidal category
of abelian groups $\B{Ab}$, the category
of modules for a ring $R\in\Mon(\B{Ab})$
is precisely the category of $R$-modules
$\Mod_R$. Moreover,
for $\ca{V}=\Mod_R$ itself, we denote
by $\Mod_A$ the category of those $R$-modules
which are equipped with the structure of an 
$A$-module for an $R$-algebra $A\in\Mon(\Mod_R)$. 
Similarly, $\Comod_C$ is the category of $C$-comodules
for an $R$-coalgebra $C\in\Comon(\Mod_R)$.

Recall how, when a monoidal category $\ca{V}$ is viewed 
as the hom-category
$\ca{K}(\star,\star)$ of a bicategory $\ca{K}$ with one object $\star$,
a monoid $A$ in $\ca{V}$ is precisely a monad in $\ca{K}$
(Remark \ref{monadsaremonoids}). This 
analogy carries over to modules for a monoid in $\ca{V}$.
In Definition \ref{lefttmodules}, the category of left
$t$-modules for a monad $t$ in the bicategory $\ca{K}$
was defined to be the category of Eilenberg-Moore algebras
for the monad `post-composition with $t$'. For the
one-object case, since the tensor product
of $\ca{K}(\star,\star)$ is just horizontal composition,
the following well-known fact is immediately implied.
\begin{prop}\label{modulesmonadic}
For any monoid $A$ and any comonoid $C$
in a monoidal category $\ca{V}$, the categories
of $A$-modules $\Mod_\ca{V}(A)$ and $C$-comodules
$\Comod_\ca{V}(C)$ are respectively monadic and comonadic
over $\ca{V}$.
\end{prop}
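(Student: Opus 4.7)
The plan is to invoke the framework from Section \ref{monadsinbicats} directly, by viewing $\ca{V}$ as the unique hom-category of a one-object bicategory $\ca{K}$. Under this identification (recalled in Remark \ref{monadsaremonoids}), a monoid $A$ in $\ca{V}$ is precisely a monad $t\colon\star\to\star$ in $\ca{K}$, so Definition \ref{lefttmodules} applies and produces the category of left $t$-modules with domain $\star$.

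First, I would unpack what the induced ordinary monad $\ca{K}(\star,t)$ on $\ca{K}(\star,\star)=\ca{V}$ becomes in this setting. Since horizontal composition in $\ca{K}$ is the tensor product of $\ca{V}$, post-composition with $t$ is the endofunctor $A\otimes-\colon\ca{V}\to\ca{V}$, with multiplication $m\otimes 1$ and unit $\eta\otimes 1$ inherited from the monoid structure of $A$; the monad axioms follow at once from (\ref{monoidaxioms}).

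Next, I would observe that an Eilenberg--Moore algebra for this monad is an object $M$ equipped with a 2-cell $\mu$ as in (\ref{actionbicat}) satisfying (\ref{axiomslefttmodule}), which, read in the one-object bicategory, is exactly an arrow $\mu\colon A\otimes M\to M$ making the diagrams (\ref{defmod}) commute. Similarly, a morphism of modules as in (\ref{axiomlefttmodulemorphism}) coincides with an arrow $f\colon M\to M'$ satisfying (\ref{defmod2}). This yields an isomorphism of categories $\Mod_\ca{V}(A)\cong\ca{V}^{A\otimes-}$ commuting with the forgetful functors to $\ca{V}$, whence $\Mod_\ca{V}(A)$ is monadic over $\ca{V}$ with left adjoint $A\otimes(-)$.

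The comonoid case is dual: a comonoid $C$ is a comonad in $\ca{K}$, the induced comonad on $\ca{V}$ is the endofunctor $-\otimes C$ (with comultiplication $1\otimes\Delta$ and counit $1\otimes\epsilon$), and the same unpacking shows that $\Comod_\ca{V}(C)$ is isomorphic to the Eilenberg--Moore category of coalgebras for $-\otimes C$, hence comonadic over $\ca{V}$. There is no real obstacle here: everything reduces to a dictionary translation between the abstract bicategorical definitions and the concrete diagrams (\ref{defmod})--(\ref{defmod2}), once one has confirmed that the tensor functor $A\otimes-$ (respectively $-\otimes C$) is a monad (respectively comonad) on $\ca{V}$.
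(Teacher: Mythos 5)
Your proposal is correct and follows essentially the same route as the paper: the text preceding and following the proposition identifies a monoid with a monad in the one-object bicategory, notes that the category of left $t$-modules from Definition \ref{lefttmodules} is the Eilenberg--Moore category for ``post-composition with $t$'', and observes that in the one-object case this monad is exactly $(A\otimes -,\eta\otimes -,m\otimes -)$, with the dual translation giving the comonad $(-\otimes C,-\otimes\epsilon,-\otimes\Delta)$ for comodules. Your unpacking of the action axioms against (\ref{defmod})--(\ref{defmod2}) is the same dictionary translation the paper relies on.
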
 
Explicitly, the category
of (left) modules for a monoid $(A,m,\eta)$
is the category of algebras
for the monad $(A\otimes -,\eta\otimes -,m\otimes -)$
on $\ca{V}$,
and the category of (right) comodules for
a comonoid $(C,\Delta,\epsilon)$
is the category of coalgebras for the comonad
$(-\otimes C,-\otimes\epsilon,-\otimes\Delta)$
on $\ca{V}$.

In the previous section, it was demonstrated 
how a lax monoidal functor between monoidal
categories $F:\ca{V}\to\ca{W}$ induces a functor
$\Mon F$ between their categories of monoids, 
as in (\ref{MonF}). Furthermore, for any 
monoid $A$ in $\ca{V}$, there is an induced
functor between the categories of modules
\begin{equation}\label{ModF}
\Mod F:\xymatrix @R=.02in
{\Mod_\ca{V}(A)\ar[r]
& \Mod_\ca{W}(FA)\\ 
(M,\mu)\ar @{|->}[r] & (FM,\mu')}
\end{equation}
where the object $FM$ in $\ca{W}$ obtains
the structure of a $FA$-module via the action
\begin{displaymath} 
\mu':FA\otimes FM\xrightarrow{\phi_{A,M}}
F(A\otimes M)\xrightarrow{F\mu}FM
\end{displaymath}
with $\phi_{A.M}$ the lax structure map of $F$. 

As an application, consider the internal hom
functor $[-,-]:\ca{V}^\mathrm{op}\times\ca{V}\to\ca{V}$
in a symmetric monoidal closed category $\ca{V}$.
By Proposition \ref{laxinthom} it is lax monoidal, as the parametrized adjoint of 
the strong monoidal $(-\otimes-)$, and it
induces the functor $\Mon[-,-]$ as in (\ref{defMon[]}).
Now, a monoid in $\ca{V}^\mathrm{op}\times\ca{V}$ is a pair
$(C,A)$ where $C$ is a comonoid and $A$ a monoid, and also
\begin{displaymath}
 \Mod_{\ca{V}^\mathrm{op}\times\ca{V}}((C,A))\cong
\Mod_{\ca{V}^\mathrm{op}}(C)\times\Mod_\ca{V}(A)
 \cong
\Comod_\ca{V}(C)^\mathrm{op}\times\Mod_\ca{V}(A).
\end{displaymath}
Hence the induced functor (\ref{ModF}) in this case is
\begin{equation} \label{defMod[]}
\Mod[-,-]:
\xymatrix @R=.05in
{\Comod_\ca{V}(C)^\mathrm{op}\times\Mod_\ca{V}(A)\ar[r]
& \Mod_\ca{V}([C,A])\\ 
\qquad(\;(X,\delta)\;,\;(M,\mu)\;)\;\ar
@{|->}[r] & ([X,M],\mu').}
\end{equation} 
This concretely means that whenever $X$ is a $C$-comodule
and $M$ is an $A$-module, the object $[X,M]$ obtains the structure 
of a $[C,A]$-module, with action 
\begin{displaymath}
 \mu':[C,A]\otimes[X,M]\to [X,M]
\end{displaymath}
which is the transpose under $-\otimes X\dashv [X,-]$ of 
the composite
\begin{equation}\label{lala} 
\xymatrix @R=.2in @C=.25in
{[C,A]\otimes
[X,M]\otimes X\ar[r]^-{1\otimes\delta}\ar @{-->}[ddrr] &
[C,A]\otimes [X,M]\otimes X\otimes
C\ar[r]^-{1\otimes s} &
[C,A]\otimes C\otimes
[X,M]\otimes X\ar[d]^-{\mathrm{ev}\otimes
\mathrm{ev}}\\ 
&& A\otimes
M\ar[d]^-{\mu}\\ 
&& M.}
\end{equation}
\begin{cor*}
For $A$ an $R$-algebra and $C$ an $R$-coalgebra
for a commutative ring $R$, there is an induced map
\begin{displaymath}
\Mod(\Hom_R):
\xymatrix @R=.05in
{\Comod_C^\mathrm{op}\times\Mod_A \ar[r] &
\Mod_{\Hom_R(C,A)} \\
\quad(\;X\;,\;M\;)\;\ar @{|->}[r] & \Hom_R(X,M)}
\end{displaymath}
which endows the $R$-module of linear
maps between $X$ and $M$ with the structure of a
$\Hom_R(C,A)$-module.
\end{cor*}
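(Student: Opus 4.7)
The plan is to deduce this corollary as a direct specialization of the general construction (\ref{defMod[]}) to the symmetric monoidal closed category $\ca{V}=\Mod_R$, so essentially no new content is needed beyond recognizing the right identifications. First I would record the standard correspondences $\Mon(\Mod_R)=\Alg_R$ and $\Comon(\Mod_R)=\Coalg_R$: an $R$-algebra $A$ is precisely a monoid in $\Mod_R$ and an $R$-coalgebra $C$ is precisely a comonoid. Under Proposition \ref{modulesmonadic}, the abstractly defined $\Mod_{\Mod_R}(A)$ is the Eilenberg--Moore category of the monad $A\otimes_R(-)$, which coincides on the nose with the usual $\Mod_A$; dually $\Comod_{\Mod_R}(C)=\Comod_C$.

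Next I would invoke Proposition \ref{laxinthom} to endow $\Hom_R(-,-):\Mod_R^{\op}\times\Mod_R\to\Mod_R$ with its canonical lax monoidal structure, whose components and axioms are the ones exhibited in Remark \ref{rmkconvolution}. The induced functor $\Mon[-,-]$ of (\ref{defMon[]}) then recovers the convolution $R$-algebra structure on $\Hom_R(C,A)$, so the target $\Mod_{\Hom_R(C,A)}$ is a well-defined instance of $\Mod_{\Mod_R}([C,A])$.

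With these identifications in place, the desired functor is exactly (\ref{defMod[]}) read off in this context: it sends $(X,M)$ to $\Hom_R(X,M)$, equipped with the $\Hom_R(C,A)$-action obtained as the transpose, under $-\otimes_R X\dashv\Hom_R(X,-)$, of the composite displayed in diagram (\ref{lala}). To make the formula concrete, I would unpack this transpose using sigma notation for the coaction $\delta(x)=\sum_{(x)}x_{(0)}\otimes x_{(1)}$ and the $A$-action $\mu$ on $M$, arriving at
\[
(f\cdot g)(x)=\sum_{(x)} f(x_{(1)})\cdot g(x_{(0)}),
\]
which is the $\Hom_R(C,A)$-module structure on $\Hom_R(X,M)$ that generalizes the convolution formula of Remark \ref{rmkconvolution}.

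There is no substantive obstacle, since all the work is discharged by the preceding general results; the only thing worth explicitly verifying is that the abstract identifications $\Mod_A=\Mod_{\Mod_R}(A)$ and $\Comod_C=\Comod_{\Mod_R}(C)$ interact correctly with $\Mod[-,-]$, which is immediate by the naturality and functoriality of the constructions $\Mon[-,-]$ and $\Mod[-,-]$ in the preceding section.
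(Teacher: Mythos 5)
Your proposal is correct and follows exactly the paper's route: the corollary is stated there as the immediate specialization of the functor $\Mod[-,-]$ of (\ref{defMod[]}) to $\ca{V}=\Mod_R$, with the action given by the transpose of the composite (\ref{lala}), and your identifications $\Mon(\Mod_R)=\Alg_R$, $\Comod_{\Mod_R}(C)=\Comod_C$, etc., are precisely what is implicit in the text. Your explicit sigma-notation formula $(f\cdot g)(x)=\sum_{(x)}f(x_{(1)})\cdot g(x_{(0)})$ is a correct unpacking of diagram (\ref{lala}) and a harmless addition beyond what the paper records.
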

In the previous section, it turned out
that for the class of admissible monoidal
categories, the categories of monoids
and comonoids had very useful properties
(see Proposition \ref{moncomonadm}).
As far as the categories of modules and comodules
are concerned, $\Comod_\ca{V}(C)$ is again more
particular than $\Mod_\ca{V}(A)$ and
similar techniques as for $\Comon(\ca{V})$
can be used. The following generalizes the results
for comodules over a coalgebra in $\ca{V}=\Mod_R$
of \cite{CoringsComod}.
\begin{prop}\label{comodlocpresent}
Suppose $\ca{V}$ is a locally presentable
monoidal category, such that $\otimes$ preserves
filtered colimits in both variables. Then
\begin{enumerate}
\item $\Mod_\ca{V}(A)$ for a monoid A is 
finitary monadic over $\ca{V}$ and so locally presentable.
\item $\Comod_\ca{V}(C)$ for a comonoid C is a locally
presentable category.
\end{enumerate}
\end{prop}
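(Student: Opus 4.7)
My plan is to mirror the proof of Proposition \ref{moncomonadm} for modules and comodules, using Proposition \ref{modulesmonadic} as the starting point. For part (1), the work is quick: $\Mod_\ca{V}(A)$ is already the Eilenberg--Moore category of the monad $T = A \otimes -$ on $\ca{V}$, and the hypothesis on $\otimes$ makes $T$ finitary. Applying the same classical result (\cite[Satz 10.3]{GabrielUlmer}) cited in the proof of Proposition \ref{moncomonadm}(1)---that the Eilenberg--Moore category of a finitary monad on a locally presentable category is locally presentable---gives finitary monadicity together with local presentability of $\Mod_\ca{V}(A)$ in one stroke.

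For part (2), I would parallel the equifier argument for $\Comon(\ca{V})$ but with the simpler endofunctor $S = -\otimes C \colon \ca{V} \to \ca{V}$. By hypothesis $S$ preserves filtered colimits, so by the functor-coalgebra fact (iii) recalled in the text, $\Coalg S$ is locally presentable, and by fact (ii) its forgetful to $\ca{V}$ creates all colimits. I would then exhibit $\Comod_\ca{V}(C)$ as the full subcategory of $\Coalg S$ cut out by the coassociativity equation $(\delta \otimes 1_C)\circ\delta = (1_X \otimes \Delta)\circ\delta$ and the counit equation $(1_X \otimes \epsilon)\circ\delta = r_X^{-1}$. Each side of each equation is a component of a natural transformation between accessible functors $\Coalg S \to \ca{V}$, built from the accessible forgetful $U$ and from tensoring with $C\otimes C$ or with $I$, so $\Comod_\ca{V}(C)$ arises as an equifier of a small family of such natural transformations, hence is accessible by the Weighted Limit Theorem cited in the excerpt.

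The remaining obligation is cocompleteness of $\Comod_\ca{V}(C)$, after which accessibility upgrades to local presentability. Given a diagram $(X_i,\delta_i)$ of comodules with colimit cocone $\iota_i \colon X_i \to X$ in $\ca{V}$, the cocone $X_i \xrightarrow{\delta_i} X_i \otimes C \xrightarrow{\iota_i \otimes 1_C} X \otimes C$ induces by universal property a coaction $\delta \colon X \to X \otimes C$, and the coassociativity and counit equations for $(X,\delta)$ can be verified by precomposing with each $\iota_i$ and reducing to the equations already satisfied by $(X_i,\delta_i)$, using only functoriality of $\otimes$ and the universal property of $X$. The main obstacle I expect is precisely this last step, since the filtered-colimit hypothesis does not make $-\otimes C$ a left adjoint and so the naive comonadic transport of arbitrary colimits is unavailable; the argument sketched above---performed in the ambient $\Coalg S$ rather than in $\Comod_\ca{V}(C)$ directly, and exploiting the equational nature of the comodule axioms exactly as Porst does for $\Comon(\ca{V})$---is what circumvents this.
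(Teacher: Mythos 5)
Your proposal is correct and follows essentially the same route as the paper: part (1) via Gabriel--Ulmer for the finitary monad $A\otimes-$, and part (2) by realizing $\Comod_\ca{V}(C)$ as an equifier of accessible natural transformations inside $\Coalg(-\otimes C)$ (with exactly the two pairs of transformations the paper uses) and then combining accessibility with cocompleteness. Your explicit hands-on verification that the comodule axioms pass to colimits is just an unwinding of the paper's remark that $\Comod_\ca{V}(C)$ is closed under colimits in $\Coalg F_C$ (and in fact the forgetful functor from coalgebras of a comonad always creates colimits, so your worry about the unavailability of ``comonadic transport'' is unfounded, though harmless).
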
 
\begin{proof}
By Proposition \ref{modulesmonadic}, the endofunctor on 
$\ca{V}$ which induces the monad for which the algebras
are (left) $A$-modules is $(A\otimes -)$, which is finitary
by assumptions. 

Similarly, the endofunctor which gives rise to the comonadic 
$\Comod_\ca{V}(C)$ over $\ca{V}$
is $F_C=-\otimes C$, which is
also finitary. Imitating 
the proof of Proposition \ref{moncomonadm}, consider the
category of functor $F_C$-coalgebras which
contains $\Comod_\ca{V}(C)$ as its full subcategory,
closed under formation of colimits. Then $\Coalg F_C$
is comonadic over $\ca{V}$ and locally presentable itself.
Now define pairs of natural transformations
\begin{displaymath}
 \phi^1,\psi^1:
\xymatrix@C=.5in
{\Coalg F_C\rrtwocell^{U}_{F_CF_CU} && \ca{V}},\quad
\phi^2,\psi^2:
\xymatrix@C=.5in
{\Coalg F_C\rrtwocell^{U}_{(-\otimes I)U} && \ca{V}}
\end{displaymath}
with components
\begin{displaymath}
 \phi^1_X:X\xrightarrow{\beta}X\otimes C\xrightarrow{\beta\otimes1}
X\otimes C\otimes C
\qquad\mathrm{and}\qquad
\phi^2_X:X\xrightarrow{\beta}X\otimes C\xrightarrow{1\otimes\epsilon}
X\otimes I
\end{displaymath}
\begin{displaymath}
 \psi^1_X:X\xrightarrow{\beta}X\otimes C\xrightarrow{1\otimes\Delta}
X\otimes C\otimes C
\qquad\qquad\qquad 
\psi^2_X:X\xrightarrow{r^{-1}}X\otimes I\qquad\qquad
\end{displaymath}
where $\beta:X\to X\otimes C$ is the structure
map of the functor $F_C$-coalgebra $X$, and $\Delta,\epsilon$
are the comultiplication and counit
of the comonoid $C$. Since all categories and 
functors involved are accessible, the equifier of this
family of natural transformations is accessible as well.
It is not hard to see that
\begin{displaymath}
 \B{Eq}((\phi^i,\psi^i)_{i=1,2})=\Comod_\ca{V}(C)
\end{displaymath}
so the category of comodules is accessible
and moreover cocomplete, thus locally presentable.
\end{proof}
The above proposition indicates the structure 
that finitary monadic and finitary 
comonadic categories over locally presentable categories
inherit. We note that Gabriel and Ulmer's result in \cite{GabrielUlmer}
for algebras of finitary monads
does not seem to dualize, but by following a similar approach 
to Ad{\'a}mek and Rosick{\'y}'s `Locally presentable and accessible categories',
we obtain the following result.
\begin{thm}\label{Monadiccomonadicpresentability}
Suppose that $\ca{C}$ is a locally presentable category.
\begin{itemize}
\item If $(T,m,\eta)$ is a finitary monad on $\ca{C}$, the category 
of algebras $\ca{C}^T$ is locally presentable.
\item If $(S,\Delta,\epsilon)$ is a finitary comonad on $\ca{C}$, the category
of coalgebras $\ca{C}^S$ is locally presentable.
\end{itemize}
\end{thm}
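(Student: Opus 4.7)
The plan is to realize the Eilenberg-Moore categories as equifiers inside the locally presentable categories of functor (co)algebras, in the style of the proof of Proposition \ref{moncomonadm}. Since $T$ and $S$ are finitary and $\ca{C}$ is locally presentable, fact (iii) recalled in the excerpt already gives that $\Alg T$ and $\Coalg S$ are locally presentable, hence in particular accessible, so the weighted-limit machinery will be available.

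For the monad case, I would embed $\ca{C}^T$ as the full subcategory of $\Alg T$ cut out by the monad axioms. Writing $U\colon\Alg T\to\ca{C}$ for the forgetful functor and $\alpha\colon TU\Rightarrow U$ for the tautological natural transformation whose component at $(A,a)$ is $a$ itself, the unit axiom is the equality of the two natural transformations $U\Rightarrow U$ given by $\alpha\circ(\eta U)$ and $1_U$, and the associativity axiom is the equality of the two natural transformations $T^2U\Rightarrow U$ given by $\alpha\circ T\alpha$ and $\alpha\circ(mU)$. All functors involved are accessible (since $T$ is), so by the Weighted Limit Theorem of Makkai and Par\'e the equifier $\B{Eq}((\phi^i,\psi^i)_{i=1,2})=\ca{C}^T$ is an accessible category. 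The forgetful functor $\ca{C}^T\to\ca{C}$ creates all limits (which is the classical fact about Eilenberg-Moore categories, also inherited from the inclusion into $\Alg T$ via fact (i)), so $\ca{C}^T$ is complete. An accessible complete category is locally presentable by 2.47 of Ad\'amek-Rosick\'y (already cited in the excerpt), which handles the first claim.

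The comonad case is dual: $\ca{C}^S$ sits inside $\Coalg S$ as the full subcategory of those $c\colon C\to SC$ that make the coassociativity and counit diagrams commute. Writing $V\colon\Coalg S\to\ca{C}$ for the forgetful functor and $\beta\colon V\Rightarrow SV$ for the tautological natural transformation, the counit axiom becomes equality of $(\epsilon V)\circ\beta$ and $1_V$ as natural transformations $V\Rightarrow V$, and coassociativity becomes equality of $(S\beta)\circ\beta$ and $(\Delta V)\circ\beta$ as natural transformations $V\Rightarrow S^2V$. The Weighted Limit Theorem again produces an accessible equifier $\ca{C}^S$, and now the forgetful $\ca{C}^S\to\ca{C}$ creates all colimits (inherited from fact (ii) for $\Coalg S\to\ca{C}$), making $\ca{C}^S$ cocomplete. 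Since an accessible cocomplete category is locally presentable, we conclude.

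The main obstacle I anticipate is not any single step but the asymmetry the excerpt warns about: Gabriel-Ulmer style arguments for monads rely on rank and do not dualize, so one must not try to produce the comonad half by formal duality in the locally presentable setting. The equifier description circumvents this, because it treats both cases uniformly via the accessibility of $\Alg T$ and $\Coalg S$ together with the creation-of-(co)limits properties of their forgetful functors; one only needs to be honest that for the monad case we finish with ``accessible $+$ complete'' while for the comonad case we finish with ``accessible $+$ cocomplete''.
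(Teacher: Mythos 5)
Your proposal is correct and follows essentially the same route as the paper: both realize $\ca{C}^T$ and $\ca{C}^S$ as equifiers of natural transformations between accessible functors inside the locally presentable categories $\Alg T$ and $\Coalg S$, and then conclude via ``accessible $+$ complete'' for the monad case and ``accessible $+$ cocomplete'' for the comonad case. Your explicit pairs of natural transformations for the comonad half coincide with those written in the paper.
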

\begin{proof}
The category of Eilenberg-Moore algebras $\ca{C}^T$ is always a 
full subcategory of the locally presentable
category of endofunctor algebras $\Alg T$ (see previous section).
More precisely, it is expressed as an equifier of natural transformations
between accessible functors $\Alg T\to\ca{C}$
hence is accessible as in \cite[2.78]{LocallyPresentable}, 
and by default is also complete.

On the other hand, the category of coalgebras $\ca{C}^S$ 
is a full subcategory of the locally presentable 
category of endofunctor coalgebras $\Coalg T$, expressed 
as the equifier $\ca{C}^S=\mathbf{Eq}\big((\phi^t,\psi^t)_{t=1,2}\big)$ for
\begin{displaymath}
\xymatrix@C=.8in
{\Coalg S\rtwocell<\omit>{\;\quad\scriptscriptstyle{\phi^1,\psi^1}}
\ar@/^3ex/[r]^-U \ar@/_3ex/[r]_-{SSU} & \ca{C}}\qquad\mathrm{with}\quad
\xymatrix @R=.03in
{\phi^1_{(C,\beta)}:C\ar[r]^-\beta & SC\ar[r]^-{S\beta} & SSC \\
\psi^1_{(C,\beta)}:C\ar[r]^-\beta & SC\ar[r]^-{\Delta_C} & SSC}
\end{displaymath}
\begin{displaymath}
\xymatrix@C=.8in
{\Coalg S\rtwocell<\omit>{\;\quad\scriptscriptstyle{\phi^2,\psi^2}}
\ar@/^3ex/[r]^-U \ar@/_3ex/[r]_-U & \ca{C}}\qquad\mathrm{with}\quad
\xymatrix @R=.03in
{\phi^2_{(C,\beta)}:C\ar[r]^-\beta & SC\ar[r]^-{\epsilon_C} & C \\
\psi^2_{(C,\beta)}:C\ar[rr]^-{1_C} && C.}
\end{displaymath}
All categories and functors involved are accessible, hence $\ca{C}^S$ 
is an accessible category, with all colimits created from $\ca{C}$.
\end{proof}

Proposition \ref{comodlocpresent} could directly
be established from the above. Notice that the assumptions on $\ca{V}$
could of course be changed to `locally presentable,
symmetric monoidal category, such that $B\otimes -$
preserves filtered colimits', \emph{i.e.} admissible monoidal
category. As mentioned earlier, symmetry allows us to 
identify in a sense the categories of left and right modules and comodules,
without distinguishing cases in the respective proofs.
Even in the non-symmetric case though, the results hold
for all four cases separately.
\begin{cor*}
If $A$ is an $R$-algebra and $C$ an $R$-coalgebra
for a commutative ring $R$,
the categories $\Mod_A$ and $\Comod_C$
are locally presentable. 
\end{cor*}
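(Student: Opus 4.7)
The plan is to deduce both statements directly from Proposition~\ref{comodlocpresent} by verifying its hypotheses for $\ca{V}=\Mod_R$ and then identifying $\Mod_A$ with $\Mod_\ca{V}(A)$ and $\Comod_C$ with $\Comod_\ca{V}(C)$.

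First I would recall that $\Mod_R$ is a locally finitely presentable symmetric monoidal category: it is locally presentable (the finitely presented $R$-modules form a small strong generator of $\aleph_0$-presentable objects, and the category is cocomplete), and the symmetric monoidal structure $(\otimes_R, R)$ has been recorded in Section~\ref{Basicdefinitions}. Moreover, $\Mod_R$ is monoidal closed via the internal hom $\Hom_R(-,-)$, so for every $M\in\Mod_R$ the functor $M\otimes_R(-)\cong(-)\otimes_R M$ is a left adjoint and hence cocontinuous; in particular it preserves filtered colimits in either variable. Thus $\Mod_R$ satisfies the hypotheses of Proposition~\ref{comodlocpresent} (indeed it is an admissible monoidal category in the sense of \cite{MonComonBimon}).

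Next I would match the terminology. By definition, an $R$-algebra is precisely a monoid in $(\Mod_R,\otimes_R,R)$ and an $R$-coalgebra is a comonoid therein, so $A\in\Mon(\Mod_R)$ and $C\in\Comon(\Mod_R)$. The category $\Mod_A$ of left $A$-modules in the classical sense coincides with the category $\Mod_{\ca{V}}(A)$ of left modules over the monoid $A$ in $\ca{V}=\Mod_R$, since the defining diagrams \eqref{defmod}–\eqref{defmod2} reduce to the usual $R$-linear action axioms. Similarly $\Comod_C=\Comod_\ca{V}(C)$.

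Applying part~(1) of Proposition~\ref{comodlocpresent} yields that $\Mod_A$ is finitary monadic over $\Mod_R$ and locally presentable, while part~(2) gives that $\Comod_C$ is locally presentable. No step poses a real obstacle here; the only point to be slightly careful about is that the proposition is stated in a form that subsumes both the left and right variants even without symmetry, but since $\Mod_R$ is symmetric monoidal the distinction disappears, so the conclusion covers both $\Mod_A$ and $\Comod_C$ uniformly.
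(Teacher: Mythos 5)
Your proposal is correct and is exactly the argument the paper intends: the corollary is stated without proof as an immediate instance of Proposition~\ref{comodlocpresent} applied to $\ca{V}=\Mod_R$, whose hypotheses (local presentability, $\otimes_R$ finitary in each variable via monoidal closedness) you verify, together with the identifications $\Mod_A=\Mod_\ca{V}(A)$ and $\Comod_C=\Comod_\ca{V}(C)$ already set up in Section~\ref{Categoriesofmodulesandcomodules}. Nothing is missing.
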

Notably, many useful properties and 
constructions for $\Comod_C$ in the category 
$\Mod_R$ are included in 
Wischnewsky's \cite{LinearReps}.

So far we have studied categories of modules
and comodules for fixed monoids and comonoids
in a monoidal category $\ca{V}$. Since a (co)module
is just an object in $\ca{V}$ with extra structure,
relative to some (co)monoid, it could be expected
that the same object is possible to be endowed
with (co)module 
structures relating it with different (co)monoids.

Suppose that $A,B$ are two monoids
in the monoidal category $\ca{V}$.
Each monoid morphism $f:A\to B$ between them
determines a functor 
\begin{equation}\label{defres}
f^*:\Mod_\ca{V}(B)\longrightarrow\Mod_\ca{V}(A)
\end{equation}
which makes every $B$-module $(N,\mu)$ into an $A$-module
$f^* N$ via the action
\begin{displaymath} 
A\otimes N\xrightarrow{f\otimes1}
B\otimes N\xrightarrow{\mu}N.
\end{displaymath}
This functor is 
sometimes called \emph{restriction of scalars} along $f$.
Also, each $B$-module arrow becomes
an $A$-module arrow (\emph{i.e.} commutes with the $A$-actions), 
and so we have a commutative triangle of categories
and functors
\begin{equation}\label{tr1}
\xymatrix @R=.22in
{\Mod_\ca{V}(B)\ar[rr]^-{f^*}\ar[dr] &&
\Mod_\ca{V}(A)\ar[dl] \\
 & \ca{V}. &}
\end{equation}
On the other hand, if $C$ and $D$ are two
comonoids in $\ca{V}$,
each comonoid arrow $g:C\to D$
induces a functor
\begin{equation}\label{defcores}
g_!:\Comod_\ca{V}(C)\longrightarrow\Comod_\ca{V}(D)
\end{equation}
which makes every $C$-comodule $(X,\delta)$ into a $D$-comodule
$g_!X$ via the coaction
\begin{displaymath}
X\xrightarrow{\delta}X\otimes C\xrightarrow{1\otimes g}X\otimes D,
\end{displaymath}
called \emph{corestriction of scalars}
along $g$. The respective commutative triangle is
\begin{equation}\label{tr2}
\xymatrix @R=.22in
{\Comod_\ca{V}(C)\ar[rr]^-{g_!}\ar[dr] &&
\Comod_\ca{V}(D)\ar[dl]\\
 & \ca{V}. &}
\end{equation}
Notice that by the above triangles, where the legs are monadic 
and comonadic respectively, 
$f^*$ is a continuous functor and 
$g_!$ is a cocontinuous functor when $\ca{V}$ is (co)complete.

It is often of interest to deduce the existence
of adjoints of the functors $f^*$ and $g_!$.
This is why the last part of this section
is a digression, devoted to the identification of 
certain assumptions 
on the monoidal category
$\ca{V}$ which permit the explicit construction
of such adjoints. Most of the constructions
are well-known in particular categories, like 
$\ca{V}$=$\B{Ab}$ for the categories 
of modules for rings, which is also 
our motivating example.

If $A,B$ are two monoids in $\ca{V}$,
define a \emph{left $A$/right $B$-bimodule}
$M$ to be an object in $\ca{V}$ with 
a left $A$-action $A\otimes M\xrightarrow{\lambda}M$
and a right $B$-action 
$M\otimes B\xrightarrow{\rho}B$ such that the actions commute, 
and denote it
by $_AM_B$. In a dual way,
we can define a \emph{left $C$/right $D$-bicomodule}
$_CX_D$.
\begin{enumerate}[i)]
 \item In an arbitrary monoidal
category $\ca{V}$, the \emph{tensor product} of
the bimodules $_AM_B$, $_BN_{A'}$ over $B$
is the coequalizer
\begin{equation}\label{tensor}
\xymatrix @C=.5in
{M\otimes B\otimes N\ar @<+.8ex>[r]^-{1\otimes\lambda_N}
\ar @<-.8ex>[r]_-{\rho_M\otimes 1} &
M\otimes N\ar @{->>}[r] &
M\otimes_B N}
\end{equation}
where $\rho_M$ is the right $B$-action on $M$
and $\lambda_N$ is the left $B$-action on $N$.
Dually, the \emph{cotensor product}
for bicomodules $_CX_D$, $_DY_{C'}$ over $D$
is the equalizer
\begin{displaymath}
\xymatrix
{X\square_DY\, \ar @{>->}[r] &
X\otimes Y \ar @<+.8ex>[r]^-{r_X\otimes 1}
\ar @<-.8ex>[r]_-{1\otimes l_Y} &
X\otimes D\otimes Y}
\end{displaymath}
where $r_X$ is the right $D$-coaction on $X$ and 
$l_Y$ is the left $D$-coaction on $Y$.
\item In a symmetric monoidal closed category $\ca{V}$,
we can form $\Hom_A(M,N)$ for two $A$-modules
$M,N$ as the equalizer
\begin{displaymath}
\xymatrix
{\Hom_A(M,N)\,\, \ar @{>->}[r] &
[M,N]\ar @<+.8ex>[r]^-{t} \ar @<-.8ex>[r]_-{k} &
[A,[M,N]]}
\end{displaymath}
where $t$ corresponds under $-\otimes X\dashv[X,-]$ to
\begin{displaymath}
[M,N]\otimes A\otimes M\xrightarrow{1\otimes s}
[M,N]\otimes M\otimes A\xrightarrow{\mathrm{ev}\otimes1}
N\otimes A\xrightarrow{\rho_N}N
\end{displaymath}
and $k$ corresponds to
\begin{displaymath}
[M,N]\otimes A\otimes M\xrightarrow{1\otimes\lambda_M}
[M,N]\otimes M\xrightarrow{\mathrm{ev}}N.
\end{displaymath}
\end{enumerate}
\begin{prop}\label{f*leftadjoint}
Suppose that the monoidal category 
$\ca{V}$ has coequalizers and the functor
$B\otimes -$ preserves them for any monoid $B$.
Then the functor $f^*$ has a left adjoint,
for any monoid morphism $f$. Dually, 
if $\ca{V}$ has equalizers and the functor
$-\otimes C$ preserves them for any
comonoid $C$, then $g_!$ has a right adjoint
for any comonoid morphism $g$.
\end{prop}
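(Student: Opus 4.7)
The plan is to construct the left adjoint $f_!\dashv f^*$ explicitly as the classical ``extension of scalars'' functor, realised via a relative tensor product. Given a monoid morphism $f:A\to B$, the object $B$ becomes a right $A$-module via the action $B\otimes A\xrightarrow{1\otimes f}B\otimes B\xrightarrow{m_B}B$, while retaining its left $B$-module structure from $m_B$. For a left $A$-module $(M,\mu_M)$, define $f_!M:=B\otimes_A M$ as the coequalizer
\begin{equation*}
\xymatrix@C=.6in
{B\otimes A\otimes M\ar@<+.6ex>[r]^-{\rho_B\otimes1}\ar@<-.6ex>[r]_-{1\otimes\mu_M}
& B\otimes M\ar@{->>}[r]^-{q_M} & B\otimes_A M,}
\end{equation*}
which exists by the hypothesis on $\ca{V}$. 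Functoriality in $M$ follows from the universal property.

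The next step is to endow $f_!M$ with a left $B$-module structure. Applying $B\otimes-$ to the coequalizer above yields another coequalizer by the preservation hypothesis, and the multiplication of $B$ induces a morphism $B\otimes B\otimes M\to B\otimes M$ which coequalizes the corresponding parallel pair (using associativity of $m_B$); hence it factors through $q_M$ to give an action $m_B\otimes_A 1:B\otimes f_!M\to f_!M$. The module axioms for $f_!M$ follow from the module axioms for $B$ and the universality of $q_M$. Moreover each $A$-module map $M\to M'$ induces a $B$-module map between the coequalizers, so we obtain a functor $f_!:\Mod_\ca{V}(A)\to\Mod_\ca{V}(B)$.

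To establish the adjunction $f_!\dashv f^*$, I would show that a $B$-module map $\varphi:f_!M\to N$ corresponds bijectively to an $A$-module map $\tilde\varphi:M\to f^*N$. Given $\varphi$, set $\tilde\varphi$ to be the composite $M\cong I\otimes M\xrightarrow{\eta_B\otimes 1}B\otimes M\xrightarrow{q_M}f_!M\xrightarrow{\varphi}N$; compatibility with the $A$-actions follows from the defining coequalizer and the fact that $f$ is a monoid morphism. Conversely, an $A$-module morphism $\psi:M\to f^*N$ gives a morphism $B\otimes M\xrightarrow{1\otimes\psi}B\otimes N\xrightarrow{\mu_N}N$ which coequalizes the two parallel arrows (by the compatibility of $\psi$ with the $A$-actions together with the $B$-module axioms of $N$), hence factors uniquely through $q_M$ to produce $\bar\psi:f_!M\to N$, and $\bar\psi$ is easily seen to be a $B$-module map. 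Naturality of this bijection in $M$ and $N$ is routine, yielding $f_!\dashv f^*$.

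The dual statement is established by formally reversing arrows. For a comonoid morphism $g:C\to D$, one constructs the right adjoint $g^!:\Comod_\ca{V}(D)\to\Comod_\ca{V}(C)$ by equipping $C$ with a left $D$-coaction via $C\xrightarrow{\Delta_C}C\otimes C\xrightarrow{g\otimes 1}D\otimes C$ (keeping its right $C$-coaction from $\Delta_C$), and setting $g^!Y:=Y\square_D C$ as the equalizer from the definition preceding the proposition; the hypothesis that $-\otimes C$ preserves equalizers ensures that this cotensor product carries a right $C$-coaction, and the adjunction $g_!\dashv g^!$ is produced by the dual argument using the universal property of the equalizer. The main technical point throughout is keeping track of which side each (co)action lives on and verifying, via the preservation hypothesis, that the resulting (co)equalizer inherits a (co)module structure so that the bijection of hom-sets restricts correctly.
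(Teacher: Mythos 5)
Your proposal is correct and follows essentially the same route as the paper: both construct $f_!M=B\otimes_A M$ via the coequalizer defining the tensor product over $A$ (with $B$ viewed as a left $B$/right $A$-bimodule through restriction of scalars along $f$), use the preservation hypothesis to induce the left $B$-action on the coequalizer by universality, and dualize with the cotensor product $-\square_D C$ for the comonoid case. Your write-up merely spells out the hom-set bijection in more detail than the paper does.
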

\begin{proof}
Firstly notice that any monoid $A$
can be considered as a left and right
$A$-module via multiplication, and any comonoid
$C$ is a left and right $C$-comodule
via comultiplication.

When $B$ is viewed as a left $B$/right $A$-bimodule
via restriction of scalars along $f:A\to B$, there
exists a natural bijection
\begin{displaymath}
\Mod_\ca{V}(B)(B\otimes_AM,N)\cong\Mod_\ca{V}(A)(M,f^* N)
\end{displaymath}
for any left $A$-module M and left $B$-module N, which establishes
an adjunction
\begin{displaymath}
\xymatrix @C=.6in
{\Mod_\ca{V}(A)\ar @<+.8ex>[r]^-{B\otimes_A-}\ar@{}[r]|-\bot
& \Mod_\ca{V}(B).\ar @<+.8ex>[l]^-{f^*}}
\end{displaymath}
Notice that the left $B$-action on
$B\otimes_AM$ is induced by universality
of the top coequalizer, since $B\otimes-$ preserves them:
\begin{displaymath}
\xymatrix @R=.07in @C=.1in
{B\otimes B\otimes A\otimes M \ar @/^/[rr]^-{1\otimes1\otimes\lambda_M}
\ar @/_/[dr]_-{1\otimes1\otimes f\otimes1} \ar[ddd]_-{m\otimes1\otimes1} &&
B\otimes B\otimes M\ar @{->>}[rr] 
\ar[ddd]^-{m\otimes1} &&
B\otimes B\otimes_AM \ar @{-->}[ddd]^-{\exists!\lambda_{B\otimes_A M}}\\
& B\otimes B\otimes B\otimes M \ar @/_/[ru]_-{1\otimes m\otimes1} &&& \\
&&&&& \\
B\otimes A\otimes M \ar @/^/[rr]^-{1\otimes\mu}
\ar @/_/[dr]_-{1\otimes f\otimes1} &&
B\otimes M\ar @{->>}[rr] &&
B\otimes_AM.\\
& B\otimes B\otimes M \ar @/_/[ru]_-{m\otimes1} &&&}
\end{displaymath}
Dually, for a comonoid arrow $g:C\to D$ we have
the adjunction
\begin{displaymath}
\xymatrix @C=.6in
{\Comod_\ca{V}(C)\ar @<+.8ex>[r]^-{g_!}\ar@{}[r]|-\bot 
& \Mod_\ca{V}(D)\ar @<+.8ex>[l]^-{-\square_DC}}
\end{displaymath}
when $C$ is viewed as a left $D$-comodule
via corestriction along $g$.
\end{proof}
\begin{rmk*}
By the adjoint lifting theorem (see for example \cite[1.1.3]{Elephant1}),
we can deduce the sheer existence of a left adjoint for $f^*$ 
and a right adjoint for $g_!$ if $\Mod_\ca{V}(B)$ and 
$\Comod_\ca{V}(C)$ have (co)equalizers (of (co)reflexive 
pairs) accordingly. This happens because 
the legs of the triangles (\ref{tr1}, \ref{tr2})
are respectively monadic and comonadic. Of course, this agrees
with the assumptions of the above proposition, since
$B\otimes\textrm{-}$ and $\textrm{-}\otimes C$ are the monad
and comonad which give rise to the (co)monadic categories
of modules and comodules.
\end{rmk*}
\begin{prop}\label{f*rightadjoint}
If $\ca{V}$ is a symmetric monoidal
closed category with equalizers, then 
$f^*$ has a right adjoint for any monoid
arrow $f$. Dually, if $\ca{V}$ has coequalizers
and $\ca{V}^\mathrm{op}$ is monoidal
closed, then $g_!$ has a left adjoint for
any comonoid arrow $g$.
\end{prop}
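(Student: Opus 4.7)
The plan is to construct the right adjoint to $f^*$ explicitly as the coinduction functor $\Hom_A(B,-)$, dual to the induction functor $B\otimes_A -$ of Proposition~\ref{f*leftadjoint}. Since $\ca{V}$ is symmetric monoidal closed, the internal hom $[-,-]$ exists, and the existence of equalizers allows one to form $\Hom_A(X,Y)$ for any left $A$-modules $X,Y$ as the equalizer described just before that proposition.

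First, I would observe that $B$ carries a canonical $(A,B)$-bimodule structure: a left $A$-action $A\otimes B\xrightarrow{f\otimes 1}B\otimes B\xrightarrow{m}B$ via $f$, and the right $B$-action given by the multiplication $m$, the two commuting by associativity. For any left $A$-module $M$, I would equip $\Hom_A(B,M)$ with a left $B$-module structure as follows: the right $B$-action on $B$ induces, via the symmetry of $\ca{V}$, a left $B$-action on $[B,M]$ whose transpose under $-\otimes B\dashv[B,-]$ is
\[
B\otimes[B,M]\otimes B\xrightarrow{s\otimes 1}[B,M]\otimes B\otimes B\xrightarrow{1\otimes s}[B,M]\otimes B\otimes B\xrightarrow{1\otimes m}[B,M]\otimes B\xrightarrow{\mathrm{ev}}M.
\]
By the universal property of the defining equalizer, together with the commutation of the left $A$- and right $B$-actions on $B$, this left $B$-action restricts along the monomorphism $\Hom_A(B,M)\rightarrowtail[B,M]$, providing the desired structure.

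The adjunction bijection
\[
\Mod_\ca{V}(B)(N,\Hom_A(B,M))\cong\Mod_\ca{V}(A)(f^*N,M)
\]
is then produced in the usual way. In one direction, a $B$-linear $\psi:N\to\Hom_A(B,M)$, composed with the inclusion into $[B,M]$, transposes under $-\otimes B\dashv[B,-]$ to $\widetilde\psi:N\otimes B\to M$; precomposing with $N\cong N\otimes I\xrightarrow{1\otimes\eta_B}N\otimes B$ yields a morphism $\overline\psi:N\to M$, whose $A$-linearity follows by combining the $B$-linearity of $\psi$ with the fact that each component $\psi(n)$ is itself $A$-linear. Conversely, an $A$-linear $\varphi:f^*N\to M$ transposes the composite $N\otimes B\xrightarrow{s}B\otimes N\xrightarrow{\mu_N}N\xrightarrow{\varphi}M$ to a map $N\to[B,M]$ which one verifies factors through $\Hom_A(B,M)$ and is $B$-equivariant. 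The two constructions are mutually inverse by the triangle identities for the internal hom adjunction together with the unit axiom for $N$ as a $B$-module.

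The dual statement is obtained by applying the first half inside $\ca{V}^\op$: comonoids, comodules and corestriction of scalars in $\ca{V}$ correspond to monoids, modules and restriction of scalars in $\ca{V}^\op$, a left adjoint in $\ca{V}$ becomes a right adjoint in $\ca{V}^\op$, and coequalizers in $\ca{V}$ correspond to equalizers in $\ca{V}^\op$, so the hypotheses align precisely. The hardest part will be the bookkeeping: verifying that the $B$-action defined on $[B,M]$ genuinely restricts to $\Hom_A(B,M)$ and satisfies the left $B$-module axioms, and that the two maps in the adjunction bijection are mutually inverse and natural. Both of these reduce, via the universal property of the equalizer and coherence, to routine diagram chases using the bimodule structure on $B$ and the module axioms on $M$ and $N$.
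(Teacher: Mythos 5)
Your proposal is correct and follows essentially the same route as the paper: the right adjoint is the coinduction functor $\Hom_A(B,-)$ formed as the equalizer described before Proposition~\ref{f*leftadjoint}, with the left $B$-module structure induced on the equalizer by universality from the right regular $B$-action on the bimodule ${}_AB_B$, and the adjunction bijection obtained by transposing under $-\otimes B\dashv[B,-]$; the dual statement is likewise handled by passing to $\ca{V}^\op$. The paper's proof is just a terser version of the same argument, leaving the action map and the bijection as the content of one displayed equalizer diagram.
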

\begin{proof}
 There is a natural bijection
\begin{displaymath}
 \Mod_\ca{V}(A)(f^* M,N)\cong\Mod_\ca{V}(B)(M,\Hom_A(B,N))
\end{displaymath}
for any $B$-module $M$, $A$-module $N$ and $f:A\to B$ monoid morphism.
Thus we have an adjunction
\begin{displaymath}
\xymatrix @C=.6in
{\Mod_\ca{V}(B)\ar @<+.8ex>[r]^-{f^*}
\ar@{}[r]|-\bot
& \Mod_\ca{V}(A)\ar @<+.8ex>[l]^-{\Hom_A(B,-)}.}
\end{displaymath}
The $B$-action on $\Hom_A(B,N)$ is the 
unique map induced 
by universality of the bottom equalizer
\begin{displaymath}
\xymatrix
{B\otimes\Hom_A(B,M)\,\, \ar @{>->}[r] 
\ar @{-->}[d]_-{\exists!\lambda_{\Hom_A(B,M)}} &
B\otimes[B,M]\ar @<+.8ex>[r]^-{1\otimes t} 
\ar @<-.8ex>[r]_-{1\otimes k} \ar[d]_-u
&
B\otimes[A,[B,M]]\ar[d]^-v\\
\Hom_A(B,M)\,\, \ar @{>->}[r] &
[B,M]\ar @<+.8ex>[r]^-t \ar @<-.8ex>[r]_-k &
[A,[B,M]],}
\end{displaymath}
where $u$ and $v$ are adjuncts to composites of multiplication
of $B$ and evaluation. The left adjoint of $g_!$ is constructed dually.
\end{proof}
Obviously, the above sufficient conditions for the existence of adjoints for the 
corestriction of scalars are much less common to appear than the ones 
for the restriction. After all, for most interesting monoidal categories $\ca{V}$, 
their opposite $\ca{V}^\mathrm{op}$ is not monoidal closed.

In particular, for $\ca{V}=\Mod_R$ where $R$ is a commutative ring,
the situation is as follows.
\begin{prop}
The functor $f^*$ for any $R$-algebra morphism $f:A\to B$ has
a pair of adjoints
\begin{displaymath}
\xymatrix @C=.5in
{\Mod_B \ar[rr]|-{f^*} &&
\Mod_A. \ar @/_4ex/[ll]_-{B\otimes_A -}
^-{\bot}
\ar @/^4ex/[ll]^-{\Hom_A(B,-)}
_-{\bot}}
\end{displaymath}
On the other hand, the functor $g_!$ has a right adjoint 
for any $R$-coalgebra morphism $g:C\to D$, and also a left adjoint
in certain cases, e.g. if $g$ is between two finitely 
presentable projective $R$-coalgebras.
\end{prop}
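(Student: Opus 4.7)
For the two adjoints of $f^*$, the plan is to invoke the explicit existence propositions established above. Since $\Mod_R$ is a locally presentable symmetric monoidal closed category, it admits all small limits and colimits, and for any $R$-algebra $B$ the endofunctor $B\otimes_R(-)$ preserves coequalizers (being the left adjoint of $\Hom_R(B,-)$). Hence Proposition \ref{f*leftadjoint} applies and furnishes the left adjoint $B\otimes_A(-)$. Symmetrically, closedness together with the existence of equalizers in $\Mod_R$ are exactly the hypotheses of Proposition \ref{f*rightadjoint}, which produces the right adjoint $\Hom_A(B,-)$.

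For the right adjoint of $g_!$, rather than invoking the dual of Proposition \ref{f*leftadjoint} --- whose hypothesis that $(-)\otimes_R C$ preserve equalizers is tied to flatness of $C$ --- the cleanest route is to observe that $\Comod_D$ is locally presentable by the corollary to Proposition \ref{comodlocpresent}, while $g_!$ is cocontinuous since both slanted legs of the triangle (\ref{tr2}) are comonadic and hence create colimits from $\Mod_R$. Theorem \ref{Kelly} then delivers the desired right adjoint.

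The main obstacle is the left adjoint of $g_!$, because the dual of Proposition \ref{f*rightadjoint} would require $\Mod_R^\op$ to be monoidal closed, which generally fails. The strategy is to exploit the classical linear duality for finitely generated projective coalgebras: when $C$ is finitely presentable and projective as an $R$-module, the linear dual $C^*:=\Hom_R(C,R)$ inherits the structure of an $R$-algebra from the comultiplication and counit of $C$, and the canonical map $M\otimes_R C^*\to\Hom_R(C,M)$ is an isomorphism, inducing an equivalence of categories $\Comod_C\simeq\Mod_{C^*}$. A coalgebra morphism $g:C\to D$ between two such coalgebras then dualizes to an algebra morphism $g^*:D^*\to C^*$, and a direct computation on actions shows that under these equivalences the corestriction $g_!$ is naturally identified with the restriction of scalars $(g^*)^*:\Mod_{C^*}\to\Mod_{D^*}$. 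The first part of the proposition then supplies a left adjoint $C^*\otimes_{D^*}(-)$ for $(g^*)^*$, which transports back through the equivalences to produce the required left adjoint of $g_!$.
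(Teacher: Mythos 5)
Your proposal is correct, and for the first two claims (the two adjoints of $f^*$, and the right adjoint of $g_!$ via local presentability of $\Comod_C$ together with cocontinuity of $g_!$ and Theorem \ref{Kelly}) it follows the paper's proof essentially verbatim. Where you genuinely diverge is the left adjoint of $g_!$. The paper stays inside the comodule world: it quotes from \cite{LinearReps} that $\Comod_C$ is complete, well-powered and has a cogenerator, observes that when $C$ and $D$ are dualizable the functors $-\otimes C$ and $-\otimes D$ preserve limits so that the comonadic legs of the triangle (\ref{tr2}) create them and $g_!$ is continuous, and then invokes the special adjoint functor theorem. You instead pass through the classical duality $\Comod_C\simeq\Mod_{C^*}$ available for finitely generated projective $C$, identify $g_!$ with restriction of scalars along the dual algebra map $g^*:D^*\to C^*$, and import the left adjoint $C^*\otimes_{D^*}(-)$ from the first part of the proposition. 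Both arguments hinge on the same dualizability hypothesis, but they buy different things: the paper's route is a pure existence statement resting on external input about $\Comod_C$ (completeness, cogenerator) and gives no formula, whereas yours is more self-contained relative to the results already proved in the chapter and yields an explicit description of the left adjoint, at the cost of having to verify the comodule--module equivalence and the compatibility of $g_!$ with $(g^*)^*$, which the paper only gestures at in Remark \ref{dualalgebra}. Your identification of the two functors under the equivalence is correct: the $D^*$-action obtained from the corestricted coaction $X\to X\otimes C\to X\otimes D$ is precisely the $C^*$-action restricted along $\psi\mapsto\psi\circ g$.
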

\begin{proof}
The symmetric monoidal closed category $\Mod_R$ has 
all limits and colimits, therefore Propositions 
\ref{f*leftadjoint} and \ref{f*rightadjoint} for the
restriction of scalars apply 
and the respective adjoints are constructed as above.
 
Regarding the corestriction of scalars, 
the functor $C\otimes-$ does not in general preserve equalizers
in $\Mod_R$ for any $R$-coalgebra $C$ (except for 
flat coalgebras). Also $\Mod_R^\mathrm{op}$ is not 
a monoidal closed category, thus the above propositions do not apply
in this case. 
However, since $g_!$ is cocontinuous
and $\Comod_C$ is a locally presentable category,
Theorem \ref{Kelly} can be applied instead, to give the existence
of a right adjoint for any $g_!$. In particular, when $C$ is 
a flat coalgebra, we can construct this adjoint as above:
\begin{displaymath}
\xymatrix
{\Comod_C \ar @/^2ex/[rr]^-{g_!}
\ar @{}[rr]|-\bot &&
\Comod_D. \ar @/^2ex/[ll]^-{-\square_D C}}
\end{displaymath}
Moreover, $\Comod_C$ is complete, well-powered and has a cogenerator as
shown in \cite{LinearReps}. We can then apply the special adjoint functor theorem
to obtain a right adjoint only when $g_!$ preserves all limits. For example,
if the coalgebras $C$ and $D$ have duals in $\Mod_R$, the functors 
$-\otimes C$ and $-\otimes D$ preserve limits.
Hence in the commutative triangle (\ref{tr2}), the comonadic legs create
all limits that the comonads preserve, hence $g_!$ is continuous.
\end{proof}

\chapter{Enrichment}\label{enrichment}
This chapter begins by presenting the most basic definitions
and structures related to enriched category theory, largely 
following the standard book on the subject by 
Kelly \cite{Kelly}. 

Then, a brief introduction 
to enriched bimodules is given, intended to clarify
certain essential concepts of Chapter \ref{VCatsVCocats}. The theory 
of bimodules (or distributors or profunctors) has 
been widely studied, and the notion of a 
distributor was first introduced by Lawvere.
Here we restrict to the parts relevant to
what follows, hence more emphasis is given on one-sided
modules. Appropriate references are 
\cite{Distributeurs,Handbook1,Monoidalbicats&hopfalgebroids},
and also \cite{GarnerShulman} where a theory 
of modules not between enriched categories 
but between \emph{enriched bicategories} is developed.

In the last section, we give 
the definition of an action of a monoidal category
on an ordinary category and we demonstrate in detail
how a $\ca{V}$-representation may give rise to a
$\ca{V}$-enriched category. This forms
one direction of a correspondence between
categories with an action from $\ca{V}$ with a certain adjoint
and tensored $\ca{V}$-categories, for $\ca{V}$ a right closed
monoidal category. In fact, the adjoint gives the hom-objects
and the action gives the tensor of the enriched category. 
The main references are \cite{enrthrvar,AnoteonActions},
and for example in \cite{Mccruddencoalgebroidsreps} the 
structure of the 2-category of $\ca{V}$-\emph{actegories}
(\emph{i.e.} $\ca{V}$-representations)
$\ca{V}$-$\B{Act}$ is explored.

\section{Basic definitions}\label{basicdefienrichment}
Suppose that $(\ca{V},\otimes,I,a,l,r)$ is a monoidal category. A
$\ca{V}$-\emph{enriched category} $\ca{A}$ 
consists of a set ob$\ca{A}$ of objects, 
a hom-object $\ca{A}(A,B)\in\ca{V}$ 
for each pair of objects of $\ca{A}$, a composition law
\begin{equation}\label{compositionlawplain}
 M:\ca{A}(B,C)\otimes\ca{A}(A,B)\to\ca{A}(A,C)
\end{equation}
for each triple of objects, and 
an identity element $j_A:I\to\ca{A}(A,A)$ for each object, subject 
to the associativity and unit axioms expressed by the commutativity of
\begin{displaymath}
\xymatrix @C=.01in @R=.5in
{(\ca{A}(C,D)\otimes \ca{A}(B,C))\otimes 
\ca{A}(A,B)\ar[rr]^-{a}\ar[d]_-{M\otimes
1} &&
\ca{A}(C,D)\otimes(\ca{A}(B,C)\otimes \ca{A}(A,B))\ar[d]^-{1\otimes
M}\\ \ca{A}(B,D)\otimes \ca{A}(A,B)\ar[dr]_M &&
\ca{A}(C,D)\otimes \ca{A}(A,C)\ar[dl]^M\\ & \ca{A}(A,D),}
\end{displaymath}
\begin{displaymath}
\xymatrix @C=.6in
{\ca{A}(B,B)\otimes \ca{A}(A,B) \ar[r]^-{M} &
\ca{A}(A,B) & \ca{A}(A,B)\otimes \ca{A}(A,A) \ar[l]_-{M}\\ I\otimes \ca{A}(A,B)
\ar[u]^-{j_B\otimes 1} \ar[ur]_-{l} && \ca{A}(A,B)\otimes I.\ar[ul]^-r
\ar[u]_-{1\otimes j_A}}
\end{displaymath}
For example, $\B{Set}$-enriched categories
are ordinary small categories,
$\B{Ab}$-categories are additive categories,
$\B{Vect}_k$-categories are $k$-linear categories 
and $\B{Cat}$-enriched categories are 2-categories.
The latter gives a different perspective of 2-category 
theory from the one presented in Chapter \ref{bicategories}. 
Thus, in order to deal with 2-categories
we can either employ the theory of bicategories 
or the theory of enriched categories.

Notice how in all the examples above,
the \emph{base} $\ca{V}$ of the enrichment
is in fact enriched over itself: $\B{Set}$
is an ordinary category, $\B{Ab}$ is an additive category,
$\B{Vect}_k$ is a $k$-linear category and $\B{Cat}$ 
is a 2-category. This is due to the fact that
the base monoidal categories are closed, and 
the internal hom functor in any monoidal
closed category $\ca{V}$ 
\begin{displaymath}
 [-,-]:\ca{V}^\mathrm{op}\times\ca{V}\longrightarrow\ca{V}
\end{displaymath}
induces an enrichment
of the category over itself: the hom-object
for $A,B\in\ca{V}$ is $[A,B]$, the composition
law $M:[B,C]\otimes[A,B]\to[A,C]$
corresponds under the adjunction $-\otimes X\dashv[X,-]$ to
the composite
\begin{displaymath}
 [B,C]\otimes[A,B]\otimes A\xrightarrow{1\otimes\mathrm{ev}}[B,C]\otimes B
\xrightarrow{\mathrm{ev}}C
\end{displaymath}
and the identity $I\to[A,A]$ corresponds to
$ I\otimes A\xrightarrow{l_A}A.$
It is a straightforward
verification that these data indeed exhibit $\ca{V}$
as a $\ca{V}$-category.

If $\ca{A}$ is a $\ca{V}$-category for a 
symmetric monoidal category $\ca{V}$, then 
$\ca{A}^\mathrm{op}$ 
is also a $\ca{V}$-category called
the \emph{opposite $\ca{V}$-category}, with the same objects 
ob$\ca{A}^\mathrm{op}$~=~ob$\ca{A}$, 
and hom-objects $\ca{A}^\mathrm{op}(A,B):=\ca{A}(B,A)$. 
The composition law 
$\ca{A}^\mathrm{op}(B,C)\otimes\ca{A}^\mathrm{op}(A,B)
\to\ca{A}^\mathrm{op}(A,C)$ is
\begin{displaymath}
\ca{A}(B,C)\otimes\ca{A}(B,A)\xrightarrow{s}\ca{A}(B,A)\otimes\ca{A}(C,B)
\xrightarrow{M}\ca{A}(C,A)
\end{displaymath} 
and the identity elements
$I\to\ca{A}^\mathrm{op}(A,A)$ are 
the same as in $\ca{A}$.

For $\ca{V}$-categories $\ca{A}$ and $\ca{B}$, a 
$\ca{V}$-\emph{functor}
$F:\ca{A}\to\ca{B}$ between them consists of a 
function $F:\mathrm{ob}\ca{A}\to
\mathrm{ob}\ca{B}$ together with a map 
\begin{equation}\label{classicVfunctor}
F_{AB}:\ca{A}(A,B)\to
\ca{B}(FA,FB)
\end{equation}
for each pair of 
objects in $\ca{A}$,
subject to the commutativity of
\begin{equation}\label{Venrichedfunctordiagrams}
\xymatrix @R=.5in @C=.5in
{\ca{A}(B,C)\otimes\ca{A}(A,B)\ar[r]^-{M}
\ar[d]_-{F_{BC}\otimes F_{AB}}
& \ca{A}(A,C)\ar[d]^-{F_{AC}}\\
\ca{B}(FB,FC)\otimes\ca{B}(FA,FB)\ar[r]_-{M} & \ca{B}(FA,FC),}\qquad
\xymatrix @R=.5in @C=.5in
{I\ar[r]^-{j_A}\ar[dr]_-{j_{FA}} & 
\ca{A}(A,A)\ar[d]^-{F_{AA}}\\
&\ca{B}(FA,FA)}
\end{equation}
expressing the compatibility of $F$ with
composition and identities.

The notion of an enriched functor in the context
of the examples above becomes respectively
an ordinary functor, an additive functor,
a $k$-linear functor and a 2-functor. 
Clearly the composite of two composable
$\ca{V}$-functors is again a
$\ca{V}$-functor, and the composition
is associative and unital with $\B{1}_\ca{A}$
the identity $\ca{V}$-functor. 

If $\ca{V}$ is symmetric monoidal closed, then
for any $\ca{V}$-category $\ca{A}$ the assignment 
$(A,B)\mapsto\ca{A}(A,B)$ is in fact the 
object function of a $\ca{V}$-functor
of two variables
\begin{equation}\label{2enrichedhomfunctor}
\Hom_{\ca{A}}:\ca{A}^\mathrm{op}\otimes\ca{A}\longrightarrow\ca{V}
\end{equation}
where $\ca{V}$ is regarded as a $\ca{V}$-category via
the internal hom.
Its partial functors are the 
covariant and the contravariant \emph{representable}
$\ca{V}$-functors $\Hom_{\ca{A}}(A,-):\ca{A}\to\ca{V}$,
$\Hom_{\ca{A}}(-,B):\ca{A}^\mathrm{op}\to\ca{V}$.
For example, the former sends $B\in\ob\ca{A}$
to $\ca{A}(A,B)\in\ca{V}$, and on hom-objects it
consists of arrows
\begin{displaymath}
 \Hom_\ca{A}(A,-)_{BC}:\ca{A}(B,C)\to[\ca{A}(A,B),\ca{A}(A,C)]
\end{displaymath}
which correspond to the composition arrows under 
$(-\otimes A)\dashv[A,-]$.

For $\ca{V}$-functors $F,G:\ca{A}\to\ca{B}$, a 
$\ca{V}$-\emph{natural 
transformation} $\tau:F\Rightarrow G$ consists
of an ob$\ca{A}$-indexed family of components
$\tau_A:I\to\ca{B}(FA,GA)$ satisfying
the $\ca{V}$-naturality condition 
expressed by the commutativity of
\begin{displaymath}
\xymatrix @C=.15in
{& I\otimes\ca{A}(A,B)\ar[rrr]^-{\tau_B\otimes F_{AB}} 
&&& \ca{B}(FB,GB)\otimes
\ca{B}(FA,FB)\ar[dr]^-{M} &\\
\ca{A}(A,B)\ar[ur]^-{l^{-1}}\ar[dr]_-{r^{-1}} &&&&& \ca{B}(FA,GB).\\
& \ca{A}(A,B)\otimes I\ar[rrr]_-{G_{AB}\otimes\tau_A} &&& \ca{B}(GA,GB)\otimes
\ca{B}(FA,GA)\ar[ur]_-{M} &}
\end{displaymath}
It is not hard to see that $\ca{V}$-natural transformations
compose both vertically and horizontally, in a very
similar way to ordinary natural transformations. 
Thus (small) $\ca{V}$-categories, $\ca{V}$-functors and 
a $\ca{V}$-natural transformations constitute a 
2-category, which is denoted by $\ca{V}$-$\B{Cat}$.

When $\ca{V}$ is a symmetric monoidal category, 
we can define a tensor product in $\ca{V}$-$\B{Cat}$: 
$\ca{A}\otimes\ca{B}$ has 
objects ob$\ca{A}$ $\times$ ob$\ca{B}$, 
hom-objects 
\begin{displaymath}
(\ca{A}\otimes\ca{B})
((A,B),(A',B')):=\ca{A}(A,A')\otimes\ca{B}(B,B'),
\end{displaymath}
the composition law is the composite
\begin{displaymath}
 \xymatrix @C=.5in @R=.5in
{\ca{A}(A',A'')\otimes\ca{B}(B',B'')\otimes\ca{A}(A,A')
\otimes\ca{B}(B,B'')\ar[d]_-{1\otimes s\otimes1}
\ar@{-->}[r] & \ca{A}(A,A'')\otimes\ca{B}(B,B'') \\
\ca{A}(A',A'')\otimes\ca{A}(A,A')\otimes\ca{B}(B',B'')
\otimes\ca{B}(B,B'')
\ar[ur]_-{M\otimes M} & }
\end{displaymath}
and the identity element is
\begin{displaymath}
 I\xrightarrow{\sim}I\otimes I\xrightarrow{j_A\otimes j_B}
\ca{A}(A,A)\otimes\ca{B}(B,B).
\end{displaymath}
The axioms are satisfied so $\ca{A}\otimes\ca{B}$
is a $\ca{V}$-category.
The tensor product of $\ca{V}$-functors
and $\ca{V}$-natural transformations can also be defined accordingly, 
so that we obtain a 2-functor 
$\otimes:\ca{V}\text{-}\B{Cat}\times\ca{V}\text{-}\B{Cat}
\to\ca{V}\text{-}\B{Cat}$.
The unit $\ca{I}$ is the \emph{unit $\ca{V}$-category}
with one object $*$ and $\ca{I}(*,*)=I$.
Hence with the appropriate constraints,
$\ca{V}$-$\B{Cat}$ is a \emph{monoidal 2-category}
(for a formal definition, see for example \cite{BaezNeuchl}).
Also, it has a symmetry 
$s_{\ca{A},\ca{B}}:\ca{A}\otimes\ca{B}\cong\ca{B}\otimes\ca{A}$
which renders it a symmetric monoidal 2-category.

There is the so-called `underlying category functor'
\begin{displaymath}
(-)_0:\ca{V}\textrm{-}\B{Cat}\to\B{Cat}
\end{displaymath}
which maps the $\ca{V}$-category 
$\ca{A}$ to the ordinary category 
$\ca{A}_0=\ca{V}$-$\B{Cat}(\ca{I},\ca{A})$,
the \emph{underlying category} of the enriched $\ca{A}$.
Explicitly, $\ca{A}_0$ has the same objects 
as the $\ca{V}$-enriched $\ca{A}$, 
while a map $f:A\to B$ in $\ca{A}_0$ is an \emph{element}
$f:I\to\ca{A}(A,B)$ of $\ca{A}(A,B)$, \emph{i.e.}
$\ca{A}_0(A,B)=\ca{V}(I,\ca{A}(A,B))$
as sets. There are appropriate
definitions for the underlying
$\ca{V}$-functor and underlying
$\ca{V}$-natural transformation.
The amount of information lost in the
passage from enriched categories
to their underlying categories depends 
very much on the base $\ca{V}$. In particular,
how much information about $\ca{A}$ 
is retained by the underlying $\ca{A}_0$ depends on 
`how faithful' the functor $\ca{V}(I,-)$ is.

We saw earlier that if
$\ca{A}$ is enriched in
a symmetric monoidal
closed category $\ca{V}$, there is a 
$\ca{V}$-functor $\Hom_\ca{A}$ as in (\ref{2enrichedhomfunctor})
which gives the hom-objects of the enrichment.
There is also an ordinary functor between the
underlying categories
\begin{equation}\label{enrichedhomfunctor}
 \ca{A}(-,-):
 \xymatrix @R=.02in
 {\ca{A}_0^\mathrm{op}\times\ca{A}_0\ar[r] & \ca{V} \\
 (A,B)\ar@{|->}[r] & \ca{A}(A,B)}
\end{equation}
sometimes called the \emph{enriched hom-functor},
which maps a pair of arrows 
$(A'\xrightarrow{f}A,B\xrightarrow{g}B')$
in $\ca{A}_0^\mathrm{op}\times\ca{A}_0$ to the 
top arrow
\begin{displaymath}
 \xymatrix
 {\ca{A}(A,B)\ar@{-->}[rr]^-{\ca{A}(f,g)} 
 \ar[d]_-{r^{-1}} && \ca{A}(A',B'). \\
 \ca{A}(A,B)\otimes I\ar[d]_-{1\otimes f} && 
 \ca{A}(B,B')\otimes\ca{A}(A',B)\ar[u]_-M \\\
 \ca{A}(A,B)\otimes\ca{A}(A',A)\ar[r]_-M &
 \ca{A}(A',B)\ar[r]_-{l^{-1}} & 
 I\otimes\ca{A}(A',B)\ar[u]_-{g\otimes 1}}
\end{displaymath}
This functor is evidently the composite
\begin{displaymath}
 \ca{A}_0^\mathrm{op}\times\ca{A}_0\longrightarrow
 (\ca{A}^\mathrm{op}\otimes\ca{A})_0
 \xrightarrow{(\Hom_\ca{A})_0}\ca{V}_{(0)}
\end{displaymath}
where the first arrow is a canonical functor
relating the two underlying categories.
Notice how this functor
$\ca{A}(-,-)$, unlike $\Hom_\ca{A}$, 
can be defined for categories enriched in any 
monoidal category $\ca{V}$, without
further conditions on it.

Speaking loosely, we say that an ordinary category $\ca{C}$
is enriched in a monoidal category $\ca{V}$ when we have a 
$\ca{V}$-enriched category $\ca{A}$ and an isomorphism
$\ca{A}_0\cong\ca{C}$. Consequently, \emph{to be enriched 
in} $\ca{V}$ is not a property, 
but additional structure. Of course, 
a given ordinary category may be enriched 
in more than one 
monoidal categories: that is evident in view of the 
change of base described below. But also, a category 
$\ca{C}$ may be enriched in $\ca{V}$ in more than one way, so that 
there may be many different 
$\ca{V}$-categories with the same underlying
ordinary category.
\begin{prop}\label{changeofbase}
Suppose $F:\ca{V}\to\ca{W}$ is a lax
monoidal functor between two monoidal categories. There 
is an induced 2-functor 
\begin{displaymath}
\tilde{F}:\ca{V}\textrm{-}\B{Cat}\longrightarrow\ca{W}\textrm{-}\B{Cat}
\end{displaymath}
between the 2-categories of $\ca{V}$ and $\ca{W}$-enriched categories, 
which maps any $\ca{V}$-category $\ca{A}$ to a 
$\ca{W}$-category with the same objects as $\ca{A}$ and hom-objects 
$F\ca{A}(A,B)$.
\end{prop}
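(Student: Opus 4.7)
The plan is to construct $\tilde{F}$ on objects, 1-cells and 2-cells, and then verify that these data assemble into a 2-functor. All of the work is essentially a careful bookkeeping exercise that exploits the lax structure maps $\phi_{A,B} : FA \otimes FB \to F(A \otimes B)$ and $\phi_0 : I \to FI$ of $F$ (see \eqref{laxstructure}).

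First I would define $\tilde{F}\ca{A}$ for a $\ca{V}$-category $\ca{A}$ to have the same set of objects and hom-objects $\tilde{F}\ca{A}(A,B) := F\ca{A}(A,B)$. The composition law is the composite
\begin{displaymath}
F\ca{A}(B,C) \otimes F\ca{A}(A,B) \xrightarrow{\;\phi\;} F\big(\ca{A}(B,C) \otimes \ca{A}(A,B)\big) \xrightarrow{\;FM\;} F\ca{A}(A,C),
\end{displaymath}
and the identity element at $A$ is $I \xrightarrow{\phi_0} FI \xrightarrow{Fj_A} F\ca{A}(A,A)$. To verify associativity of composition in $\tilde{F}\ca{A}$, I would paste together three squares: two naturality squares for $\phi$, the associativity axiom \eqref{assoc&unitality} for $F$, and the image under $F$ of the associativity axiom for $\ca{A}$. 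The two unit axioms are analogous, combining the naturality of $\phi$ with the unitality axiom of the lax structure and the image under $F$ of the unit axioms of $\ca{A}$.

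Next I would send a $\ca{V}$-functor $G : \ca{A} \to \ca{B}$ to the $\ca{W}$-functor $\tilde{F}G$ with the same object assignment and hom-components $FG_{AB} : F\ca{A}(A,B) \to F\ca{B}(GA,GB)$. The two diagrams in \eqref{Venrichedfunctordiagrams} for $\tilde{F}G$ follow by pasting naturality of $\phi$ with $F$ applied to the corresponding diagrams for $G$. For a $\ca{V}$-natural transformation $\tau : G \Rightarrow G'$ with components $\tau_A : I \to \ca{B}(GA, G'A)$, I would define $(\tilde{F}\tau)_A$ as the composite
\begin{displaymath}
I \xrightarrow{\;\phi_0\;} FI \xrightarrow{\;F\tau_A\;} F\ca{B}(GA, G'A);
\end{displaymath}
the $\ca{W}$-naturality hexagon reduces to $F$ applied to the $\ca{V}$-naturality hexagon for $\tau$, wedged between two instances of $\phi$ naturality and one unit axiom for $\phi_0$.

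Finally, 2-functoriality amounts to checking that $\tilde{F}$ preserves identity $\ca{V}$-functors and $\ca{V}$-natural transformations (immediate), composition of $\ca{V}$-functors (immediate since composition of components is just composition in $\ca{W}$ after applying $F$), and both vertical and horizontal composition of $\ca{V}$-natural transformations. Vertical composition is routine; horizontal composition will require combining naturality of $\phi$, the unit coherence for $\phi_0$, and functoriality of $F$. The main obstacle is simply the verbose diagram chase for associativity of composition in $\tilde{F}\ca{A}$ and for the horizontal composition of 2-cells; once the pasting is set up correctly, every square commutes either by naturality of $\phi$, by an axiom of the lax structure, or by $F$ applied to an axiom of the $\ca{V}$-category or $\ca{V}$-functor being transported.
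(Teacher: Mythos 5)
Your construction is exactly the one in the paper: same objects, hom-objects $F\ca{A}(A,B)$, composition $FM\circ\phi$, identities $Fj_A\circ\phi_0$, functors sent to $FK_{AB}$, and natural transformations to $F\tau_A\circ\phi_0$, with the axioms verified by pasting naturality of $\phi,\phi_0$ against the lax-monoidal coherence and the image under $F$ of the $\ca{V}$-level axioms. The paper leaves these diagram chases to the reader just as you outline them, so your proposal matches its proof essentially verbatim.
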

\begin{proof}
Given a $\ca{V}$-category $\ca{A}$, 
the $\ca{W}$-category $\tilde{F}\ca{A}$
has objects ob$(\tilde{F}\ca{A})=$ob$\ca{A}$ and hom-objects
$(\tilde{F}\ca{A})(A,B)=F(\ca{A}(A,B))\in\ca{W}$. 
The composition and the identities are given by
\begin{displaymath}
\xymatrix @C=.7in @R=.5in
{F\ca{A}(B,C)\otimes F\ca{A}(A,B)\ar[d]_-{\phi_{\ca{A}(B,C),\ca{A}(A,B)}}
\ar @{-->}[r] & F\ca{A}(A,C)\\
F(\ca{A}(B,C)\otimes\ca{A}(A,B))\ar[ru]_-{FM}}\qquad
\xymatrix @C=.7in @R=.5in
{I_\ca{W}\ar[d]_-{\phi_0}\ar @{-->}[r] & F\ca{A}(A,A)\\
FI_\ca{V}\ar[ur]_-{Fj_A} &}
\end{displaymath}
where $\phi$, $\phi_0$ are the structure maps 
of the lax monoidal functor $F$. 
It can be checked that the diagrams of 
associativity and identities commute,
therefore $\tilde{F}\ca{A}$ is a $\ca{W}$-category.

If $K:\ca{A}\to\ca{B}$ is a $\ca{V}$-functor with
maps $K_{AB}:\ca{A}(A,B)\to\ca{B}(KA,KB)$ in 
$\ca{V}$ for every pair of objects in $\ca{A}$, we
can form a $\ca{W}$-functor 
\begin{displaymath}
\tilde{F}K:\tilde{F}\ca{A}\to\tilde{F}\ca{B}
\end{displaymath}
with the same function on objects, and for every 
pair of objects in $\tilde{F}\ca{A}$ a map
\begin{displaymath}
(\tilde{F}K)_{AB}:F\ca{A}(A,B)\xrightarrow{F(K_{AB})}F\ca{B}
(KA,KB)
\end{displaymath}
in $\ca{W}$, such that the axioms 
of a $\ca{W}$-functor are satisfied.

If $\tau:K\Rightarrow L$ is 
a $\ca{V}$-natural transformation
between $\ca{V}$-functors $K,L:\ca{A}\to\ca{B}$, 
its image $\tilde{F}\tau:\tilde{F}K\Rightarrow
\tilde{F}L$ consists of an 
ob($\tilde{F}\ca{A}$)-indexed family of components
\begin{displaymath}
\xymatrix @C=.7in @R=.5in
{I_\ca{W}\ar @{-->}[r]^-{\tilde{F}\tau_A}
\ar[d]_-{\phi_0} &
F\ca{B}(KA,LA)\\
FI_\ca{V}\ar[ur]_-{F\tau_A} &}
\end{displaymath}
in $\ca{W}$, which satisfy the $\ca{W}$-naturality condition
in a straightforward way.
\end{proof}
Another standard example of enrichment
is the usual functor category between two
$\ca{V}$-categories, which 
under specific assumptions on 
$\ca{V}$ obtains a $\ca{V}$-enriched
structure itself. Explicitly, when $\ca{V}$ is a symmetric
monoidal closed category with all limits, 
we can define the \emph{enriched functor category}
$[\ca{A},\ca{B}]$
with objects $\ca{V}$-functors 
$\ca{A}\to\ca{B}$,
and hom-object $[\ca{A},\ca{B}](F,G)$ for any two
$\ca{V}$-functors $F$, $G$ the following end
\begin{displaymath}
\xymatrix @C=.3in
 {\int_{A\in\ca{A}}\ca{B}(FA,GA)\;\;\ar@{>->}[r]
& \prod_{\scriptstyle{A\in\ca{A}}}{\ca{B}(FA,GA)}
\ar@<+.8ex>[r] \ar@<-.7ex>[r] & \prod_{\scriptstyle{A,A'\in\ca{A}}}
{[\ca{A}(A,A'),\ca{B}(FA,GA')]}}
\end{displaymath}
constructed in detail in \cite[2.1]{Kelly}.
It is not hard to define a composition law
and identity elements for the functor category, 
and the axioms which make $[\ca{A},\ca{B}]$ into 
$\ca{V}$-category follow from the corresponding
axioms for $\ca{B}$.

It can be deduced that,
when $\ca{V}$ has the above mentioned
properties, the functor
\begin{displaymath}
-\otimes\ca{A}:\ca{V}\text{-}\B{Cat}\to\ca{V}\text{-}\B{Cat}
\end{displaymath}
in the monoidal category $\ca{V}$-$\B{Cat}$
has $[\ca{A},-]$ as its right adjoint, 
with a (2-)natural isomorphism
\begin{displaymath}
 \ca{V}\text{-}\B{Cat}(\ca{A}\otimes\ca{B},\ca{C})
\cong\ca{V}\text{-}\B{Cat}(\ca{A},[\ca{B},\ca{C}])
\end{displaymath}
for any $\ca{V}$-categories $\ca{A}$, $\ca{B}$ and $\ca{C}$.
Therefore the symmetric monoidal 2-category $\ca{V}$-$\B{Cat}$
has also a closed structure. 

\section{$\ca{V}$-enriched bimodules and modules}\label{Vbimodulesandmodules}

As mentioned in Examples \ref{examplesbicat} of bicategories, 
there is a 
bicategory of bimodules $\B{BMod}$ 
where 1-cells are abelian groups $M$
which are left $R$-modules and right $S$-modules
for rings $R,S$, such that the two actions
are compatible. More explicitly,
these actions yield group homomorphisms
\begin{displaymath}
R\otimes M\xrightarrow{\cdot}M, \quad
M\otimes S\xrightarrow{\cdot}M
\end{displaymath}
such that $(r\cdot m)\cdot s=r\cdot(m\cdot s)$
for all $r\in R$, $s\in S$ and $m\in M$.
Morphisms between them are group homomorphisms
$f:M\to N$ which respect the $R$ and $S$-actions. 
These data define a category of $(R,S)$-bimodules
and bimodule maps between them, which is 
furthermore an additive category, \emph{i.e.}
each $_R\Hom_S(M,N)$ is an abelian group. 

There is another equivalent formulation of these 
definitions, which make it easier to obtain a
generalization to $\ca{V}$-enriched modules.
Recall that a ring is essentially the same 
as an $\B{Ab}$-category with only one object,
in the sense that the underlying additive group
of the ring is the single hom-object 
and the multiplication
of the ring is composition law.
Then an $(R,S)$-bimodule can be regarded as
an additive functor
\begin{displaymath}
 S^\op\otimes R\to\B{Ab}
\end{displaymath}
where the opposite ring $S^\op$ has 
reversed multiplication.
Equivalently,
this amounts to an additive functor
\begin{displaymath}
 R\to[S^\op,\B{Ab}].
\end{displaymath}
In these terms, a bimodule map is an 
additive natural transformation between the 
respective additive functors. 

The next step, since $\B{Rng}\text{=}\Mon(\B{Ab})$,
would be to consider bimodules for monoids
in an arbitrary monoidal category $\ca{V}$.
The definitions that arise are precisely
the ones which were employed
in Section \ref{Categoriesofmodulesandcomodules}
in order to study the existence of 
adjoints for the restriction of scalars. By analogy
with ring bimodules,
an $(A,B)$-bimodule $M$ for
monoids $A$ and $B$ in a symmetric monoidal closed
$\ca{V}$ can 
also be expressed as a $\ca{V}$-functor
\begin{displaymath}
 M:B^\op\otimes A\to\ca{V}
\end{displaymath}
where the monoids $A$ and $B$ are 
viewed as one-object $\ca{V}$-categories,
in the same way as rings were viewed as one-object
additive categories.

Even more generally, we 
can consider left $\ca{A}$-/right $\ca{B}$-bimodules
for general $\ca{V}$-categories 
$\ca{A}$, $\ca{B}$. Hence, define
a \emph{$\ca{V}$-bimodule} $M$ to be a $\ca{V}$-functor
\begin{equation}\label{Vbimodulefunctor}
 \ca{B}^\op\otimes\ca{A}\to\ca{V}
\end{equation}
for $\ca{V}$ a symmetric monoidal closed category,
where the opposite category $\ca{B}^\op$ is
$\ca{V}$-enriched by symmetry of $\ca{V}$, the 
tensor product in $\ca{V}$-$\B{Cat}$ was defined 
in the previous section and $\ca{V}$ is enriched
in itself via the internal hom.
Equivalently, if $\ca{V}$ is moreover
complete, a $(\ca{A},\ca{B})$-bimodule
is a $\ca{V}$-functor $\ca{A}\to[\ca{B}^\op,\ca{V}]$
using the monoidal closed structure of $\ca{V}\text{-}\B{Cat}$. 
Bimodules (enriched in $\B{Set}$) are also called
\emph{profunctors} or \emph{distributors}.
Maps of enriched bimodules are evidently 
defined as $\ca{V}$-natural transformations
and are called \emph{$\ca{V}$-bimodule maps}.

There is an alternative, more intuitive formulation
of the definition of a $\ca{V}$-module (see \cite{Lawvereclosedcats}) 
which is closer
to the initial notion of ordinary bimodules. 
More specifically, an $(\ca{A},\ca{B})$-bimodule
$M$ is given by a family of objects $M(B,A)\in\ca{V}$
for all $(B,A)\in\ob\ca{A}\times\ob\ca{B}$,
together with arrows
\begin{gather*}
\ca{A}(A,A')\otimes M(B,A)\to M(A',B) \\
M(B,A)\otimes\ca{B}(B',B) \to M(A,B')
\end{gather*}
in $\ca{V}$, which satisfy usual axioms
and are compatible
with each other.
A detailed presentation of the diagrams involved 
can be found for example in \cite{Carmody} and 
\cite{GarnerShulman},
and the equivalence between these two definitions 
of $\ca{V}$-bimodules is easily verified.

Regarding the maps between them, a $\ca{V}$-bimodule
map $\alpha:M\to M'$ between two left 
$\ca{A}$-/right $\ca{B}$ bimodules
consists of a family of arrows
\begin{displaymath}
 \alpha_{A,B}:M(B,A)\to M'(B,A)
\end{displaymath}
in $\ca{V}$ for all $A\in\ca{A}$, $B\in\ca{B}$,
which respect the $\ca{A}$
and $\ca{B}$-actions. These can be 
composed in an evident way,
thus we have a category of $(\ca{A},\ca{B})$-bimodules
for any $\ca{V}$-categories $\ca{A}$ and $\ca{B}$,
denoted by $\ca{V}$-$\B{BMod}(\ca{A},\ca{B})$ or
$\ca{V}$-${_\ca{A}\Mod_\ca{B}}$.
Notice that for the second characterization of 
$\ca{V}$-bimodules, we do not need any extra 
assumptions on the monoidal category $\ca{V}$.

Back to the example of 
ordinary bimodules, an important 
feature is the fact that there is a
`composition' operation, by taking the tensor product
of bimodules over a ring.
More precisely, given an $(R,S)$-bimodule $M$ and 
a $(S,T)$-bimodule $N$, their tensor product
$M\otimes_SN$ obtains a structure of a 
$(R,T)$-bimodule. 
Having in mind that bimodules constitute
the 1-cells in the bicategory $\B{BMod}$, 
if we denote them as$\SelectTips{eu}{10}\xymatrix@C=.2in
{M:R\ar[r]|-{\object@{|}} & S}$and
$\SelectTips{eu}{10}\xymatrix@C=.2in
{N:S\ar[r]|-{\object@{|}} & T}$so that they are 
also distinguished from ring homomorphisms, 
this 
process can be written
\begin{displaymath}
NM=M\otimes_SN:\SelectTips{eu}{10}\xymatrix
{R\ar[r]^-M|-{\object@{|}} & S
\ar[r]^-N|-{\object@{|}} & T},
\end{displaymath}
and by the canonical isomorphisms
\begin{displaymath}
(M\otimes_SN)\otimes_TL\cong 
M\otimes_S(N\otimes_TL),\quad M\otimes_SS\cong M
\end{displaymath}
for a $(T,V)$-bimodule $L$,
it is clear that
this tensor product satisfies associativity
and identity laws up to isomorphism.

In order to generalize the composition 
of ordinary bimodules to the enriched case, 
we will use the expression of the tensor 
product of modules over
a ring as a coequalizer.
For bimodules in a general monoidal
category $\ca{V}$, this is expressed precisely
by the construction (\ref{tensor})
of the tensor product of a right $B$-bimodule
and a left $B$-bimodule over a monoid $B$.

For this operation to be accordingly 
defined in the level of enriched bimodules,
and for the collection of $\ca{V}$-bimodules and
bimodule morphisms between two $\ca{V}$-categories to
obtain the structure of a $\ca{V}$-enriched category
itself,
the base category $\ca{V}$ is requested to be 
a complete and cocomplete symmetric
monoidal closed category.
Based on the above idea,
for two $\ca{V}$-bimodules$\SelectTips{eu}{10}\xymatrix@C=.2in
{M:\ca{A}\ar[r]|-{\object@{|}} & \ca{B}}$and$\SelectTips{eu}{10}\xymatrix@C=.2in
{N:\ca{B}\ar[r]|-{\object@{|}} & \ca{C}}$we 
define their composite$\SelectTips{eu}{10}\xymatrix@C=.2in
{N\circ M:\ca{A}\ar[r]|-{\object@{|}} & \ca{C}}$by specifying
its components by the following coequalizer:
\begin{displaymath}
\xymatrix @C=.4in
{\scriptstyle{\sum_{B,B'\in\ca{B}}{M(B',A)\otimes\ca{B}(B,B')
\otimes N(C,B)}}\ar@<+.5ex>[r] \ar@<-.5ex>[r] &
\scriptstyle{\sum_{B\in\ca{B}}{M(B,A)\otimes N(C,B)}}
\ar@{->>}[r] & 
\scriptstyle{(N\circ M)(C,A)}.}
\end{displaymath}
The parallel arrows come from the $\ca{B}$-actions
on $M$ and $N$.
This definition in fact exhibits the composite as the coend
\begin{displaymath}
N\circ M=\int_{B\in\ca{B}}M(B,-)\otimes N(-,B),
\end{displaymath}
which inherits a
left $\ca{A}$-action and a right $\ca{C}$-action,
and we also write $(N\circ M)(C,A)=M(B,A)\otimes_\ca{B}N(C,B)$.
This operation can be verified to be associative and 
unitary up to isomorphism by the
associativity, left and right unit
constraints of the monoidal category $\ca{V}$.
So the above data
indeed define a bicategory $\ca{V}$-$\B{BMod}$
(or $\ca{V}$-$\B{Dist}$ or $\ca{V}$-$\B{Prof}$)
with objects $\ca{V}$-categories, 1-cells $\ca{V}$-bimodules
and 2-cells $\ca{V}$-bimodule maps.

Bimodules can also be thought of as generalized
$\ca{V}$-functors between $\ca{V}$-categories, 
considered as `$\ca{V}$-valued relations' between them
(as in Lawvere's \cite{Lawvereclosedcats}). 
In particular, every $\ca{V}$-functor $F:\ca{A}\to\ca{B}$
gives rise to bimodules$\SelectTips{eu}{10}\xymatrix@C=.2in
{F_*:\ca{A}\ar[r]|-{\object@{|}} & \ca{B}}$and
$\SelectTips{eu}{10}
\xymatrix@C=.2in
{F^*:\ca{B}\ar[r]|-{\object@{|}} & \ca{A}}$
defined by
\begin{equation}\label{Fstars}
 F_*(B,A)=\ca{B}(B,FA),\quad F^*(A,B)=\ca{B}(FA,B).
\end{equation}
This structure implies that $\ca{V}$-$\B{BMod}$ fits in the 
context of the final Section \ref{doublecatssetting}.

For the purposes of this thesis, we are more interested
in the categories of one-sided modules, \emph{i.e.}
left $\ca{A}$-modules or right $\ca{B}$-modules
for $\ca{V}$-categories $\ca{A}$ or $\ca{B}$. We follow
the second formulation of the definition 
of $\ca{V}$-modules, which
does not require extra conditions on $\ca{V}$.
\begin{defi}\label{leftAmodule}
 A \emph{left $\ca{A}$-module} $\Psi$, also
written as$\SelectTips{eu}{10}\xymatrix@C=.2in
{\Psi:\ca{A}\ar[r]|-{\object@{|}} & \ca{I}}$,
is given by objects $\Psi A$
in $\ca{V}$ for each $A\in\ca{A}$ and
arrows
\begin{displaymath}
 \mu:\ca{A}(A,A')\otimes\Psi A\to\Psi A'
\end{displaymath}
in $\ca{V}$ for each $A,A'\in\ca{A}$,
subject to the commutativity of
\begin{displaymath}
\xymatrix @C=.35in @R=.4in
{\ca{A}(A',A'')\otimes\ca{A}(A,A')\otimes\Psi A
\ar[r]^-{1\otimes\mu} \ar[d]_-{M\otimes1} &
\ca{A}(A',A'')\otimes \Psi A'\ar[d]^-{\mu} \\
\ca{A}(A,A'')\otimes\Psi A \ar[r]_-{\mu} &
\Psi A'',}
\xymatrix @C=.1in @R=.4in
{& \ca{A}(A,A)\otimes\Psi A\ar[dr]^-{\mu} & \\
\Psi A\ar[ur]^-{j_A} \ar[rr]_-{1_{\Psi A}} &&
\Psi A.}
\end{displaymath}
The arrows $M$ and $j_A$ are the composition and identity element 
in $\ca{V}$, and the associativity and identity
constraints are suppressed.
If$\SelectTips{eu}{10}\xymatrix@C=.2in
{\Xi:\ca{A}\ar[r]|-{\object@{|}} & \ca{I}}$
is another left $\ca{A}$-module, then a 
\emph{left module morphism} $\alpha:\Psi\to\Xi$
is given by an $\ob\ca{A}$-indexed family
\begin{displaymath}
 \alpha_A:\Psi A\to\Xi A
\end{displaymath}
of arrows in $\ca{V}$, satisfying the commutativity
of
\begin{displaymath}
\xymatrix @C=.6in @R=.4in
 {\ca{A}(A,A')\otimes\Psi A \ar[r]^-{1\otimes\alpha} 
\ar[d]_-\mu & \ca{A}(A,A')\otimes\Xi A
\ar[d]^-{\mu} \\
\Psi A'\ar[r]_-\alpha & \Xi A'}
\end{displaymath}
for all $A,A'\in\ca{A}$.
\end{defi}
The category of left $\ca{A}$-modules
is denoted by $\ca{V}$-$\Mod(A)$ or
$\ca{V}$-$_\ca{A}\Mod$. Dually, we can 
define the category of \emph{right
$\ca{B}$-modules} $\ca{V}$-$\Mod_\ca{B}$,
with objects $\SelectTips{eu}{10}\xymatrix@C=.2in
{\ca{I}\ar[r]|-{\object@{|}} & \ca{B}}$.

We could define a left
$\ca{A}$-module $\Psi$ to be a $\ca{V}$-functor
$\Psi:\ca{A}\to\ca{V}$ and a right
$\ca{B}$-module $\Xi$ to be a $\ca{V}$-functor
$\Xi:\ca{B}^\op\to\ca{V}$. This agrees with 
(\ref{Vbimodulefunctor}),
since of course $\ca{A}\otimes\ca{I}\cong\ca{A}$
and $\ca{B}^\op\otimes\ca{I}\cong\ca{B}^\op$
for the unit $\ca{V}$-category $\ca{I}$,
but that would require extra structure on $\ca{V}$
as clarified earlier. We would then be able to 
identify the categories
$\ca{V}$-$_\ca{A}\Mod$ and $\ca{V}$-$\Mod_\ca{B}$
with the presheaf categories
$[\ca{A},\ca{V}]$ and $[\ca{B}^\op,\ca{V}]$
of $\ca{V}$-functors and $\ca{V}$-natural transformations.
Via this presentation, many useful properties are 
inherited from $\ca{V}$, such as completeness,
cocompleteness (obtained pointwise)
and local presentability: for any locally 
$\lambda$-presentable category $\ca{C}$
and small category $\ca{A}$, the functor
category $\ca{C}^\ca{A}=[\ca{A},\ca{C}]$ 
is locally $\lambda$-presentable
itself by \cite[1.54]{LocallyPresentable}.

Notice that the above concepts are
evidently associated with the general notion of a module
(or bimodule) for a monad in a bicategory,
as described in Section \ref{monadsinbicats}. This relation
will be illustrated at the last sections
of Chapter \ref{VCatsVCocats}, in the formal 
context of the bicategory
of $\ca{V}$-matrices $\ca{V}$-$\Mat$.

\section{Actions of monoidal categories and enrichment}\label{actions}

We now recall some parts of the general theory of actions of
monoidal categories, leading to
specific enrichments. We largely follow \cite{AnoteonActions}
by Janelidze and Kelly, adding some details.

An \emph{action} of a monoidal category
$\ca{V}={(\ca{V},\otimes,I,a,l,r)}$ on an ordinary
category $\ca{D}$ is given
by a functor 
\begin{displaymath}
*:\xymatrix @R=.02in
{\ca{V}\times\ca{D}\ar[r] & \ca{D} \\
(X,D)\ar@{|->}[r] & X*D}
\end{displaymath}
with a natural isomorphism with components
${\chi}_{XYD}:(X\otimes Y)*D\stackrel{\sim}{\longrightarrow} X*(Y*D)$
and a natural isomorphism with components
${\nu}_D :I*D\stackrel{\sim}{\longrightarrow} D$, 
satisfying the commutativity of the diagrams
\begin{equation}\label{actiondiag} 
\xymatrix{((X\otimes Y)\otimes
Z)*D\ar[r]^-{\chi}\ar[d]_-{a*1} & (X\otimes Y)*(Z*D)\ar[r]^-{\chi}
& X*(Y*(Z*D))\\ (X\otimes(Y\otimes Z))*D\ar[rr]_-{\chi} &&
X*((Y\otimes Z)*D),\ar[u]_-{1*\chi}}
\end{equation}
\begin{displaymath}
\xymatrix@R=.3in{(I\otimes X)*D\ar[rr]^-{\chi}\ar[dr]_-{l*1} 
&& I*(X*D)\ar[dl]^-{\nu}\\ &
X*D, &}
\end{displaymath}
\begin{displaymath} 
\xymatrix@R=.3in{(X\otimes I)*D\ar[rr]^-{\chi}\ar[dr]_-{r*1} 
&& X*(I*D)\ar[dl]^-{1*{\nu}}\\
& X*D. &}
\end{displaymath}
Notice that if $*$ is an action, then the opposite
functor $*^\op:\ca{V}^\mathrm{op}\times\ca{D}^\mathrm{op}
\longrightarrow\ca{D}^\mathrm{op}$
is still an action: the corresponding
natural isomorphisms
are $\chi^{-1}$ and $\nu^{-1}$
and the action axioms
follow from those for $*$.

For example, any monoidal category $\ca{V}$ has a 
canonical action on itself, by taking 
\begin{displaymath}
*=\otimes:\ca{V}\times\ca{V}\to\ca{V}
\end{displaymath}
and $\chi=a$, $\nu=l$ the monoidal constraints. 
This is sometimes called the 
\emph{regular representation} of $\ca{V}$.
\begin{rmk}\label{rmkpseudoaction}

$(i)$ In B{\'e}nabou's \cite{Benabou},
a very inspiring characterization of actions is provided,
connecting the notion with a bicategorical construction.
More specifically, it is asserted that a (left) action 
of a monoidal category $\ca{V}$ (\emph{multiplicative} category 
in the terminology therein) on any category $\ca{A}$
can be identified with a bicategory $\ca{K}$
with only two objects $\{0,1\}$ and hom-categories
\begin{displaymath}
 \ca{K}(0,0)=\B{1},\;\ca{K}(1,0)=\emptyset,\;\ca{K}(0,1)=\ca{A},\;\ca{K}(1,1)=\ca{V}.
\end{displaymath}
The horizontal composition for the possible combinations
of the objects $0,1$ gives the tensor product 
$\otimes=\circ_{1,1,1}$ of $\ca{V}$
and the action $*=\circ_{0,1,1}$ on $\ca{A}$,
the associativity and identity constraints give the 
monoidal constraints and the action structure transformations,
whereas the coherence conditions correspond to the appropriate axioms.

In particular, the canonical action of any monoidal 
category $\ca{V}$ on itself gives rise to a bicategory 
$\ca{M}\ca{V}$ with two objects as above, and hom-categories
$\ca{M}\ca{V}(0,0)=\B{1}$, $\ca{M}\ca{V}(1,0)=\emptyset$, 
$\ca{M}\ca{V}(0,1)=\ca{M}\ca{V}(1,1)=\ca{V}$.
This bicategory will be used for a certain description
of global
categories of modules and comodules in Chapter \ref{enrichmentofmonsandmods}.

$(ii)$ A \emph{pseudomonoid} in a monoidal category
is an object with multiplication and unit
as defined in Section 
\ref{Categoriesofmonoidsandcomonoids},
for which the diagrams (\ref{monoidaxioms})
commute up to coherent isomorphism.
For example, a pseudomonoid in the 
cartesian monoidal
category $(\B{Cat},\times,\B{1})$
is precisely a monoidal category $\ca{V}$,
with multiplication being the tensor product
and unit picking the unit object.

Furthermore, a \emph{pseudomodule} for a 
pseudomonoid in a monoidal
category is again defined as in Section 
\ref{Categoriesofmodulesandcomodules},
where the diagrams (\ref{defmod}) commute
up to isomorphism, satisfying coherence axioms.
From this point of view, the action
of a monoidal category described above is
a \emph{pseudoaction} of a 
pseudomonoid on an object of the monoidal
$\B{Cat}$, \emph{i.e.} $\ca{D}$ is a $\ca{V}$-pseudomodule.
More on this viewpoint will be discussed in the final chapter.
\end{rmk}
Another example of an action which will be used repeatedly
is the following.
\begin{lem}\label{inthomaction}
Suppose $\ca{V}$ is a symmetric monoidal closed category.
The internal hom 
\begin{displaymath}
 [-,-]:\ca{V}^\mathrm{op}\times\ca{V}\longrightarrow\ca{V}
\end{displaymath}
constitutes an action of the monoidal category
$\ca{V}^\mathrm{op}$ on the category $\ca{V}$,
via the standard natural isomorphisms
\begin{align*}
\chi_{XYZ}:[X\otimes Y,D]
&\xrightarrow{\sim}[X,[Y,Z]] \\
\nu_D:[I,D]&\xrightarrow{\sim}D.
\end{align*}
Moreover, the induced functor
\begin{displaymath}
 \Mon[-,-]:\Comon(\ca{V})^\mathrm{op}\times
\Mon(\ca{V})\longrightarrow\Mon(\ca{V})
\end{displaymath}
is an action of the monoidal
category $\Comon(\ca{V})^\mathrm{op}$
on the category $\Mon(\ca{V})$.
\end{lem}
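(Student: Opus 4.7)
For the first claim, the plan is to build $\chi$ and $\nu$ directly from the parametrized adjunction $-\otimes X\dashv[X,-]$ via the Yoneda lemma, and then verify the three action axioms as instances of coherence in the closed structure. Concretely, $\chi_{X,Y,D}$ is the unique isomorphism corresponding under $\ca{V}(A,-)$ to the composite of natural bijections
\begin{displaymath}
\ca{V}(A,[X\otimes Y,D])\cong\ca{V}(A\otimes(X\otimes Y),D)\cong\ca{V}((A\otimes X)\otimes Y,D)\cong\ca{V}(A,[X,[Y,D]]),
\end{displaymath}
where the middle bijection is transport along $a$, and $\nu_D$ corresponds to transport along $r_A$. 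Since the monoidal structure on $\ca{V}^{\op}$ has associator $a^{-1}$ and left unitor $r$, the hexagonal and triangular diagrams in the definition of an action translate (via Yoneda, testing against arbitrary $A\in\ca{V}$) to instances of the pentagon and triangle for $(\ca{V},\otimes,I,a,l,r)$; so the axioms (\ref{actiondiag}) follow from Mac Lane coherence.

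For the second claim, the strategy is to apply the $\Mon$ construction (which is functorial on lax monoidal functors and takes monoidal natural transformations to natural transformations) to the data of the first part. By Proposition \ref{laxinthom}, $[-,-]:\ca{V}^{\op}\times\ca{V}\to\ca{V}$ is lax monoidal, hence induces $\Mon[-,-]$ as in (\ref{defMon[]}) after the identification $\Mon(\ca{V}^{\op}\times\ca{V})\cong\Comon(\ca{V})^{\op}\times\Mon(\ca{V})$. What remains is to lift $\chi$ and $\nu$ to this setting, i.e.\ to exhibit, for comonoids $C,C'$ and a monoid $A$, that the component $\chi_{C,C',A}:[C\otimes C',A]\xrightarrow{\sim}[C,[C',A]]$ is an isomorphism of monoids (with convolution structures on both sides), and similarly that $\nu_A:[I,A]\xrightarrow{\sim}A$ is an isomorphism of monoids (where $I$ carries the trivial comonoid structure via $r_I$). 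Once this is done, the three diagrams (\ref{actiondiag}) for $\Mon[-,-]$ are obtained by applying $\Mon$ to the already-established diagrams for $[-,-]$, since monoid morphisms are detected in the underlying category $\ca{V}$ (the forgetful $\Mon(\ca{V})\to\ca{V}$ is faithful and reflects isomorphisms).

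The main obstacle will therefore be the two lifting verifications, i.e.\ showing that $\chi$ and $\nu$ are monoidal natural transformations between the appropriate lax monoidal functors $\ca{V}^{\op}\times\ca{V}^{\op}\times\ca{V}\rightrightarrows\ca{V}$ and $\B{1}\times\ca{V}\rightrightarrows\ca{V}$. In components this amounts to checking compatibility of $\chi_{C,C',A}$ with the convolution multiplication: one side uses the comonoid structure on $C\otimes C'$ (built from $\Delta_C,\Delta_{C'}$ and the symmetry) evaluated against the multiplication of $A$, while the other iterates the convolution structure on $[C',A]$ and then on $[C,[C',A]]$. The proof is a diagram chase transposing everything across $-\otimes(C\otimes C')\dashv[C\otimes C',-]$ and using the definition (\ref{lala}) of the actions in terms of evaluations, together with naturality of the symmetry $s$ and the pentagon/hexagon axioms; the corresponding check for $\nu_A$ is a shorter diagram using only $r_I$, $\epsilon_I=1_I$, and the unit axiom for $A$. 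Nothing deeper than coherence and the parametrized-adjunction naturality (\ref{parameterdefinining}) is needed, but the calculation is the essential content beyond the formal apparatus.
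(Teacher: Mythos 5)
Your proof is correct and follows essentially the same route as the paper: the action axioms for $[-,-]$ are checked by transposing across $-\otimes Y\dashv[Y,-]$, and the monoid-level statement is obtained by restriction together with the fact that the forgetful functor $\Mon(\ca{V})\to\ca{V}$ is faithful and conservative. The only difference is one of emphasis: you explicitly flag the verification that $\chi_{C,C',A}$ and $\nu_A$ are monoid morphisms for the convolution structures, a step the paper's proof leaves implicit in the phrase ``just a restriction of $[-,-]$''.
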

\begin{proof}
The isomorphisms $\chi_{XYZ},\nu_D$ can be verified 
to satisfy the axioms 
(\ref{actiondiag}) using the transpose diagrams 
under the adjunction $(-\otimes Y)\dashv[Y,-]$.
Moreover, the functor $\Mon[-,-]$ 
induced between the categories
of comonoids and monoids 
as in (\ref{defMon[]})
is just a restriction of $[-,-]$. Hence the
natural isomorphisms in $\Mon(\ca{V})$, 
reflected by the conservative forgetful
functor $S:\Mon(\ca{V})\to\ca{V}$, 
render $\Mon[-,-]$ into an action too.
Recall that 
$\Comon(\ca{V})$ and its opposite are monoidal since $\ca{V}$
is symmetric. 
\end{proof}
For our purposes, it is a very important fact 
that given a category $\ca{D}$ along with an action
of a monoidal category $\ca{V}$ with a 
parametrized adjoint, we obtain a $\ca{V}$-enriched category. 
In fact, this follows 
from a much stronger result of \cite{enrthrvar}
regarding categories enriched in bicategories, 
as mentioned in the introduction.
\begin{thm}\label{actionenrich}
Suppose that $\ca{V}$ is a monoidal category which acts on 
a category $\ca{D}$ via a functor 
$*:\ca{V}\times\ca{D}\to\ca{D}$ such that $-*D$ has
a right adjoint $F(D,-)$ for every $D\in\ca{D}$, with a
natural isomorphism
\begin{equation}\label{adjact}
\ca{D}(X*D,E)\cong\ca{V}(X,F(D,E)).
\end{equation}
Then we can enrich $\ca{D}$ in $\ca{V}$, in the sense that 
there is a $\ca{V}$-category
with the same objects as $\ca{D}$, hom-objects
$F(A,B)$ for $A,B\in\ob\ca{D}$
and underlying category $\ca{D}$.
\end{thm}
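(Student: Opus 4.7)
The plan is to exhibit $\ca{D}$ as a $\ca{V}$-enriched category by taking the objects to be those of $\ca{D}$, hom-objects $F(A,B)$, and defining composition and identities by transposing natural maps built out of the action coherence $\chi,\nu$ and the counit of the parametrized adjunction. First I would invoke Theorem~\ref{parametrizedadjunctions} to upgrade the assignment $(D,E)\mapsto F(D,E)$ to a functor of two variables $F\colon\ca{D}^\op\times\ca{D}\to\ca{V}$, natural so that the bijection (\ref{adjact}) is natural in all three arguments; the counits $\mathrm{ev}^D_E\colon F(D,E)*D\to E$ are then natural in $E$ and dinatural in $D$ in the sense of (\ref{dinaturality}).

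Next I would construct the composition $M\colon F(B,C)\otimes F(A,B)\to F(A,C)$ as the transpose under $-*A\dashv F(A,-)$ of the composite
\begin{displaymath}
(F(B,C)\otimes F(A,B))*A\xrightarrow{\chi}F(B,C)*(F(A,B)*A)
\xrightarrow{1*\mathrm{ev}^A_B}F(B,C)*B\xrightarrow{\mathrm{ev}^B_C}C,
\end{displaymath}
and the identity element $j_A\colon I\to F(A,A)$ as the transpose of $\nu_A\colon I*A\to A$. Naturality of $M$ and $j$ in their arguments follows from naturality and dinaturality of the evaluations together with naturality of $\chi,\nu$.

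The heart of the proof, and the main obstacle, will be verifying the associativity pentagon and the two unit triangles that make $(\ca{D},F,M,j)$ into a $\ca{V}$-category. The clean way is to transpose each axiom under $-*A\dashv F(A,-)$ so that it becomes an equality of morphisms out of $((F(C,D)\otimes F(B,C))\otimes F(A,B))*A$ in $\ca{D}$; then the first hexagon of (\ref{actiondiag}) rewrites the two sides of the pentagon into the same iterated evaluation $F(C,D)*(F(B,C)*(F(A,B)*A))\to D$, and one uses naturality of $\chi$ to commute the $\otimes$/$*$ rebracketing past $M*1$ and $1*M$. The unit axioms reduce similarly to the two triangular diagrams of (\ref{actiondiag}) relating $\chi$ and $\nu$, together with one triangular identity of the adjunction $-*A\dashv F(A,-)$.

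Finally, to identify the underlying ordinary category with $\ca{D}$, I would compute, for any $A,B\in\ob\ca{D}$,
\begin{displaymath}
\ca{D}_0(A,B)=\ca{V}(I,F(A,B))\cong\ca{D}(I*A,B)\xrightarrow{\ca{D}(\nu_A^{-1},1)}\ca{D}(A,B),
\end{displaymath}
using the adjunction (\ref{adjact}) and the isomorphism $\nu$. A short check that this bijection sends $j_A$ to $1_A$ and is compatible with the composition $M$ built above—again via one triangular identity and the unit coherence for the action—shows that the underlying category is isomorphic to $\ca{D}$, completing the enrichment.
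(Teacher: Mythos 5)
Your proposal is correct and follows essentially the same route as the paper's proof: composition and identities are defined as transposes of the evaluation/action composites, the axioms are verified by transposing into $\ca{D}$ and invoking the action coherence together with naturality of $\chi$, $\nu$ and the counit, and the underlying category is recovered via $\ca{V}(I,F(A,B))\cong\ca{D}(I*A,B)\cong\ca{D}(A,B)$. No substantive differences to report.
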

\begin{proof}
Suppose we have an adjunction
\begin{equation}\label{adjact2}
\xymatrix @C=.6in
{\ca{V}\ar@<+.8ex>[r]^-{-*D}\ar@{}[r]|-\bot &
\ca{D}\ar@<+.8ex>[l]^-{F(D,-)}} 
\end{equation}
for every object $D$ in $\ca{D}$, where
$*$ is the action of $\ca{V}$ on $\ca{D}$. This implies 
that there is a unique way
of defining a functor of two variables
\begin{displaymath}
 F:\ca{D}^\mathrm{op}\times\ca{D}\longrightarrow\ca{V}
\end{displaymath}
such that the isomorphism (\ref{adjact})
is natural in all three variables, \emph{i.e.}
$F$ is the parametrized adjoint of $(-*-)$.
We are going to show in detail
how these data induce an enrichment
of $\ca{D}$ in $\ca{V}$, with 
enriched hom the functor $F$.

The composition law is the arrow
$M:F(B,C)\otimes F(A,B)\to F(A,C)$
which corresponds uniquely under the adjunction
$-*D\dashv F(D,-)$ to the composite
\begin{equation}\label{compositionunderadjunction} 
\xymatrix @C=.6in
{(F(B,C)\otimes F(A,B))*A\ar[d]_-{\chi_{F(B,C),F(A,B),A}} 
\ar @{-->}[r]
& C \\
F(B,C)*(F(A,B)*A)\ar[r]_-{1*\varepsilon^A_B}
& F(B,C)*B\ar[u]_-{\varepsilon^B_C}}
\end{equation}
where $\varepsilon$ is the counit of the adjunction 
(\ref{adjact2}).
The identity element is the morphism 
$j_A:I\to F(A,A)$
which corresponds uniquely to the isomorphism
\begin{equation}\label{identityunderadjunction}
 I*A\xrightarrow{\nu_A}A.
\end{equation}
The associativity axiom
diagram translates under the adjunction
to the following diagram in $\ca{D}$
\begin{displaymath}
 \xymatrix @C=.01in @R=.27in
{((F(C,D)\otimes F(B,C))\otimes
F(A,B))*A\ar[rr]^-{a*1}\ar[d]_-\chi &&
(F(C,D)\otimes(F(B,C)\otimes F(A,B)))*A\ar[d]^-\chi\\
(F(C,D)\otimes
F(B,C))*(F(A,B)*A)\ar[d]_-{1*\varepsilon^A_B}\ar @{.>}[drr]^-\chi &&
F(C,D)*((F(B,C)\otimes F(A,B))*A)\ar[d]^-{1*\chi}\\
(F(C,D)\otimes F(B,C))*B\ar[d]_-\chi &&
F(C,D)*(F(B,C)*(F(A,B)*A))\ar[d]^-{1*(1*\varepsilon^A_B)}\\
F(C,D)*(F(B,C)*B)\ar @{.>}[rr]^-{=}\ar[d]_-{1*\varepsilon^B_C} &&
F(C,D)*(F(B,C)*B)\ar[d]^-{1*\varepsilon^B_C}\\
F(C,D)*C\ar[dr]_-{\varepsilon^C_D} && F(C,D)*C\ar[dl]^-{\varepsilon^C_D} \\
& D &}
\end{displaymath}
which commutes due to naturality 
of $\chi$ and $\varepsilon$.
The identity axioms correspond to 
the diagrams
\begin{displaymath}
\xymatrix @C=.01in @R=.35in
{ & (F(B,B)\otimes F(A,B))*A\ar[rr]^-\chi &&
F(B,B)*(F(A,B)*A)\ar[d]^-{1*\varepsilon^A_B} \\
(I\otimes F(A,B))*A\ar@/^/[ur]^-{(j_B\otimes1)*1}
\ar@/_/[dr]_-{l*1} \ar[r]^-\chi &
I*(F(A,B)*A) \ar[d]^-\nu \ar[urr]_-{j_B*1}
\ar[r]_-{1*\varepsilon^A_B} &
I*B\ar[dr]_-\nu \ar[r]_-{j_B*1} &
F(B,B)*B\ar[d]^-{\varepsilon^B_B} \\
& F(A,B)*A \ar[rr]_-{\varepsilon^A_B}
&& B,}
\end{displaymath}
\begin{displaymath}
\xymatrix @C=.15in @R=.35in
{& (F(A,B)\otimes F(A,A))*A\ar[r]_-\chi & 
F(A,B)*(F(A,A)*A)\ar[d]^-{1*\varepsilon^A_A} \\
(F(A,B)\otimes I)*A\ar@/^/[ur]^-{(1\otimes j_A)*1}
\ar[r]^-\chi \ar@/_/[dr]_-{r*1} &
F(A,B)*(I*A)\ar[ur]_-{1*(j_A*1)}
\ar[r]_-{1*\nu} \ar[d]_-{1*\nu} &
F(A,B)*A\ar[d]^-{\varepsilon^A_B}\\ 
& F(A,B)*A\ar[r]_-{\varepsilon^A_B} & B}
\end{displaymath}
in $\ca{D}$, which commute again
for evident reasons. 

Therefore we obtain a $\ca{V}$-enriched category $\ca{A}$,
with objects $\ob\ca{A}=\ob\ca{D}$
and hom-objects $\ca{A}(A,B)=F(A,B)$. 
The underlying category of $\ca{A}$ is
precisely $\ca{D}$:
\begin{displaymath}
\ca{A}_o(A,B)=\ca{V}(I,\ca{A}(A,B))=\ca{V}(I,F(A,B))\cong
\ca{D}(I*A,B)\cong\ca{D}(A,B)
\end{displaymath}
\begin{displaymath} \Rightarrow \ca{A}_o\cong \ca{D}
\end{displaymath}
using the isomorphisms (\ref{adjact}) and $\nu_A$.
\end{proof}
As a straightforward application, 
we recover the well-known fact that 
the internal hom in a 
monoidal closed category $\ca{V}$ induces
an enrichment of $\ca{V}$ in itself with hom-objects 
$[A,B]$, as 
mentioned in Section \ref{basicdefienrichment}.
This is the case, since the canonical action $*=\otimes$
has as parametrized adjoint the functor $[-,-]$.

As shown in detail in \cite{AnoteonActions},
when $\ca{V}$ is a monoidal closed category
the natural isomorphism (\ref{adjact})
lifts to a $\ca{V}$-natural isomorphism
\begin{displaymath}
\ca{A}(X*D,E)\cong[X,\ca{A}(D,E)]. 
\end{displaymath}
The existence of a $\ca{V}$-enriched representation
of $[X,\ca{A}(D,-)]$ is expressed by
saying that the $\ca{V}$-category $\ca{A}$
is \emph{tensored}, with $X*D$ as 
the tensor product of $X$ and $D$.
Furthermore, 
the case when not only the functor
$(-*D)$ but also $(X*-)$ has 
a right adjoint $G(X,-)$ is addressed 
when $\ca{V}$ is moreover symmetric.
Together with the natural isomorphism (\ref{adjact})
we get
\begin{displaymath}
 \ca{D}(D,G(X,E))\cong\ca{D}(X*D,E)\cong\ca{V}(X,F(D,E)).
\end{displaymath}
The bottomline is that the above assumptions result in the existence
of a tensored and \emph{cotensored} $\ca{V}$-enriched category,
with underlying category $\ca{D}$, tensor product $X*D$ 
and cotensor product $G(X,E)$.

As a special case of the above theorem,
suppose that there is an action of
a monoidal category $\ca{V}$ on the opposite
of a category $\ca{D}=\ca{B}^\mathrm{op}$
via a bifunctor
\begin{displaymath}
*:\ca{V}\times\ca{B}^\mathrm{op}\to\ca{B}^\mathrm{op}. 
\end{displaymath}
If we have an adjunction as in (\ref{adjact2}),
the parametrized adjoint 
$F:\ca{B}\times\ca{B}^\mathrm{op}\to\ca{V}$
of $*$ can be denoted as
\begin{displaymath}
 P:\ca{B}^\mathrm{op}\times\ca{B}\longrightarrow\ca{V}
\end{displaymath}
by switching the entries of 
a pair in the cartesian product.
The natural isomorphism (\ref{adjact}) then becomes
\begin{displaymath}
\ca{B}(A,X*B)\cong\ca{V}(X,P(A,B))
\end{displaymath}
for $X\in\ca{V}$ and $A,B\in\ca{B}$.
\begin{cor}\label{importcor1}
If $\;*:\ca{V}\times\ca{B}^\mathrm{op}
\to\ca{B}^\mathrm{op}$ is an action 
of the monoidal closed $\ca{V}$ 
on the category $\ca{B}^\mathrm{op}$
along with an adjunction $(-*B) 
\dashv P(-,B)$ 
for each $B\in\ca{B}$, then $\ca{B}^\mathrm{op}$  
is tensored $\ca{V}$-enriched with hom-objects
$\ca{B}^\mathrm{op}(A,B)=P(B,A)$. 
\end{cor}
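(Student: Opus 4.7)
The plan is to show that this corollary is essentially a direct specialization of Theorem \ref{actionenrich} to the category $\ca{D}=\ca{B}^{\mathrm{op}}$, combined with a simple re-labelling of the two variables of the parametrized adjoint.

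First I would invoke Theorem \ref{actionenrich} with $\ca{D}:=\ca{B}^{\mathrm{op}}$. The hypotheses transfer verbatim: $\ca{V}$ is monoidal (and in fact monoidal closed, which will be needed for the tensored part), the bifunctor $*:\ca{V}\times\ca{B}^{\mathrm{op}}\to\ca{B}^{\mathrm{op}}$ is given, and for each object $B\in\ca{B}=\ob\ca{B}^{\mathrm{op}}$ the functor $(-*B)$ has a right adjoint $P(-,B)$, with natural bijection
\begin{displaymath}
\ca{B}^{\mathrm{op}}(X*B,A)\;\cong\;\ca{V}(X,P(B,A))
\end{displaymath}
which in the notation of the corollary reads $\ca{B}(A,X*B)\cong\ca{V}(X,P(A,B))$. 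By Theorem \ref{parametrizedadjunctions} this extends uniquely to a functor of two variables, which in the general setting of Theorem \ref{actionenrich} is called $F:\ca{D}^{\mathrm{op}}\times\ca{D}\to\ca{V}$. In our situation $\ca{D}^{\mathrm{op}}\times\ca{D}=\ca{B}\times\ca{B}^{\mathrm{op}}$, and by swapping arguments we identify this functor with $P:\ca{B}^{\mathrm{op}}\times\ca{B}\to\ca{V}$ via $F(A,B)=P(B,A)$ for $A,B\in\ca{B}^{\mathrm{op}}$.

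Next I would quote the conclusion of Theorem \ref{actionenrich}: this produces a $\ca{V}$-enriched category $\ca{A}$ with the same objects as $\ca{D}=\ca{B}^{\mathrm{op}}$, with hom-objects $\ca{A}(A,B)=F(A,B)$, composition law determined by the adjunct of (\ref{compositionunderadjunction}), identity element determined by (\ref{identityunderadjunction}), and with underlying category $\ca{A}_0\cong\ca{B}^{\mathrm{op}}$. Transporting the re-labelling $F(A,B)=P(B,A)$ yields exactly the stated enrichment $\ca{B}^{\mathrm{op}}(A,B)=P(B,A)$.

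Finally, for the tensored assertion, I would appeal to the remark following Theorem \ref{actionenrich}: when $\ca{V}$ is monoidal closed, the natural isomorphism (\ref{adjact}) lifts to a $\ca{V}$-natural isomorphism exhibiting $X*B$ as the tensor of $X\in\ca{V}$ and $B\in\ca{B}^{\mathrm{op}}$. Since our hypothesis is precisely that $\ca{V}$ is monoidal closed and $*$ is an action on $\ca{B}^{\mathrm{op}}$ with the required parametrized right adjoint, $\ca{B}^{\mathrm{op}}$ becomes a tensored $\ca{V}$-category with tensor $X*B$. There is no genuine obstacle here; the only mildly subtle point is keeping track of the direction-reversal, namely making sure that $P$ is regarded as $\ca{B}^{\mathrm{op}}\times\ca{B}\to\ca{V}$ (not $\ca{B}\times\ca{B}^{\mathrm{op}}\to\ca{V}$) so that the hom-object $P(B,A)$ depends covariantly on the codomain $B$ and contravariantly on the domain $A$ of the enriched category $\ca{B}^{\mathrm{op}}$.
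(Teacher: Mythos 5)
Your proof is correct and follows exactly the paper's route: the corollary is presented there as an immediate specialization of Theorem \ref{actionenrich} to $\ca{D}=\ca{B}^{\mathrm{op}}$, with the relabelling $F(A,B)=P(B,A)$ of the parametrized adjoint and the tensoredness coming from the remark on monoidal closed $\ca{V}$ following that theorem. One small notational slip: since the adjunction is $(-*B)\dashv P(-,B)$, your first display should read $\ca{B}^{\mathrm{op}}(X*B,A)\cong\ca{V}(X,P(A,B))$ rather than $\ca{V}(X,P(B,A))$ — you restate it correctly in the very next clause, so nothing downstream is affected.
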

Moreover, if $\ca{V}$ is symmetric then 
the opposite of a $\ca{V}$-enriched category is still
enriched in $\ca{V}$. Hence
in the situation above, 
there is an induced enrichment of 
$\ca{B}=(\ca{B}^\mathrm{op})^\mathrm{op}$
in $\ca{V}$
with the same objects and 
hom-objects 
$\ca{B}(A,B)=\ca{B}^\mathrm{op}(B,A)$.
\begin{cor}\label{importcor2}
Let $\ca{V}$ be a symmetric monoidal closed category acting
on $\ca{B}^\mathrm{op}$.
If for each object $B$, the action functor
$-*B:\ca{V}\to\ca{B}^{\mathrm{op}}$ has a right
adjoint $P(-,B):\ca{B}^\mathrm{op}\to\ca{V}$, then
the parametrized adjoint
\begin{displaymath}
P(-,-):\ca{B}^\mathrm{op}\times\ca{B}\longrightarrow\ca{V}
\end{displaymath}
of the action provides the hom-objects of a 
cotensored $\ca{V}$-enriched category 
with underlying ordinary category $\ca{B}$
and $X*B$ the cotensor product of $X$ and $B$.
If furthermore $X*-:\ca{B}^\op\to\ca{B}^\op$
has a right adjoint, the $\ca{V}$-category is also tensored. 
\end{cor}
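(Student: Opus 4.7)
The plan is to reduce everything to the previous corollary together with the opposite-category construction for enriched categories, since the hypotheses are set up exactly to make this reduction work.

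First, I would invoke Corollary \ref{importcor1} directly: the action $*:\ca{V}\times\ca{B}^\mathrm{op}\to\ca{B}^\mathrm{op}$ with the adjunctions $(-*B)\dashv P(-,B)$ produces a tensored $\ca{V}$-enriched category structure on $\ca{B}^\mathrm{op}$, whose hom-objects are given by $\ca{B}^\mathrm{op}(A,B)=P(B,A)$, whose underlying ordinary category is $\ca{B}^\mathrm{op}$, and in which $X*B$ is the tensor of $X$ with $B$. Next, since $\ca{V}$ is symmetric monoidal, the opposite $\ca{V}$-category construction from Section \ref{basicdefienrichment} applies, producing a $\ca{V}$-enriched category on the objects of $\ca{B}$ with hom-objects
\[
\ca{B}(A,B) := \ca{B}^\mathrm{op}(B,A) = P(A,B),
\]
and the underlying category is recovered as $(\ca{B}^\mathrm{op})_0^\mathrm{op}\cong \ca{B}$ by the analogous calculation in the proof of Theorem \ref{actionenrich}.

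The conceptual heart of the argument is that tensors in $\ca{A}$ correspond bijectively to cotensors in $\ca{A}^\mathrm{op}$: the defining $\ca{V}$-natural isomorphism
\[
\ca{B}^\mathrm{op}(X*B,C)\cong [X,\ca{B}^\mathrm{op}(B,C)],
\]
when rewritten using $\ca{B}(-,-)=\ca{B}^\mathrm{op}(-,-)^\mathrm{op}$, becomes
\[
\ca{B}(C,X*B)\cong [X,\ca{B}(C,B)],
\]
which is precisely the universal property characterizing $X*B$ as the cotensor of $X$ with $B$ in the $\ca{V}$-enriched $\ca{B}$. This establishes the first assertion.

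For the final clause, suppose further that $X*-:\ca{B}^\mathrm{op}\to\ca{B}^\mathrm{op}$ admits a right adjoint $G(X,-)$. Then, as discussed in the paragraph following Theorem \ref{actionenrich}, the $\ca{V}$-category $\ca{B}^\mathrm{op}$ becomes both tensored and cotensored, with $G(X,B)$ providing the cotensor of $X$ with $B$ in $\ca{B}^\mathrm{op}$; by the same opposite-category transfer, $G(X,B)$ then becomes the tensor of $X$ with $B$ in $\ca{B}$, so $\ca{B}$ is tensored as well. The main bookkeeping obstacle is ensuring that the symmetry of $\ca{V}$ enters correctly when turning tensors into cotensors under the opposite construction, but this is already built into the definition of $\ca{A}^\mathrm{op}$ via the symmetry $s$ and demands no additional verification beyond routine transcription of the natural isomorphisms.
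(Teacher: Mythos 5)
Your proposal is correct and follows exactly the route the paper intends: Corollary \ref{importcor1} gives the tensored enrichment of $\ca{B}^\mathrm{op}$, the symmetry of $\ca{V}$ allows passage to the opposite $\ca{V}$-category $\ca{B}$ with hom-objects $P(A,B)$, tensors become cotensors under this passage, and the final clause follows from the discussion of $G(X,-)$ after Theorem \ref{actionenrich}. The paper offers no separate proof beyond this surrounding discussion, so your argument matches it in substance and detail.
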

We are interested in these variations of Theorem
\ref{actionenrich}, because our examples in the following 
chapters fall into these precise formulations.

\chapter{Fibrations and Opfibrations}\label{fibrations}
This chapter begins with a detailed 
review of the basic concepts of 
the theory of fibrations and opfibrations, 
which plays a central role in the
development of this thesis. Our presentation follows mainly 
\cite{Handbook2,Jacobs,Elephant2}.
The notion of a fibred category, which arose from 
the work of Grothendieck in algebraic geometry, successfully
captures the concept of a category varying over 
(or indexed by) a different category.
There has also been a connection of fibrations with
foundations for category theory, investigated in 
\cite{BenabouFibered}.

Inside the total category of a fibration,
the cartesian morphisms which are universally characterized
incorporate a coherent structure: that of an indexed 
category, \emph{i.e.} a certain pseudofunctor. This is 
best understood via the Grothendieck construction
(see Theorem \ref{maintheoremfibr}) originally in 
\cite{Grothendieckcategoriesfibrees}, 
which demonstrates the essential equivalence between 
these two concepts.  
In fact, the coherent structure maps 
of an indexed category, whose existence is only implicit
in fibrations, show that an indexed category has a structure
whereas a fibration has a property (which determines
such structure when a cleavage is chosen).
Despite their correspondence, fibrations 
are technically often more convenient than indexed categories.

In the last section, we turn our attention to 
the fibrewise limits and adjunctions between 
fibred categories. Following the 
terminology and results of \cite{FibredAdjunctions,Jacobs},
we slightly extend the existing theory by examining 
under which assumptions a fibred 1-cell between fibrations
over different bases has a (fibred) adjoint. Hermida in 
his thesis \cite{hermidaphd} had already established
the factorization of general fibred adjunctions in 
terms of cartesian fibred adjunctions and fibred adjunctions,
suggesting an important characterization of fibred completeness.
However, we follow a different approach to related problems.\vspace{-.1in}

\section{Basic definitions}\label{fibrationsbasicdefinitions}

Consider a functor $P:\ca{A}\to\caa{X}$. 
A morphism $\phi:A\to B$ in $\ca{A}$ over 
a morphism $f=P(\phi):X\to Y$ in $\caa{X}$
is called ($P$-)\emph{cartesian} if and only if, for all 
$g:X'\to X$ in $\caa{X}$ and $\theta:A'\to B$ in 
$\ca{A}$ with 
$P\theta=f\circ g$, there exists a unique arrow $\psi:A'\to A$ 
such that $P\psi=g$ and $\theta=\phi\circ\psi$:
\begin{displaymath}
\xymatrix @R=.1in @C=.6in
{A'\ar [drr]^-{\theta}\ar @{-->}[dr]_-{\exists!\psi} 
\ar @{.>}@/_/[dd] &&& \\
& A\ar[r]_-{\phi} \ar @{.>}@/_/[dd] & 
B \ar @{.>}@/_/[dd] & \textrm{in }\ca{A}\\
X'\ar [drr]^-{f\circ g=P\theta}\ar[dr]_-g &&&\\
& X\ar[r]_-{f=P\phi} & Y & \textrm{in }\caa{X}}
\end{displaymath}
For $X\in\ob\caa{X}$, the \emph{fibre
of $P$ over $X$} written $\ca{A}_X$, 
is the subcategory of $\ca{A}$ 
which consists of objects $A$
such that $P(A)=X$ and morphisms $\phi$ with 
$P(\phi)=1_X$, called
($P-$)\emph{vertical} morphisms. 

The functor $P:\ca{A}\to\caa{X}$ 
is called a \emph{fibration} if and only if, for all $f:X\to Y$ in
$\caa{X}$ and $B\in\ca{A}_Y$, there is a cartesian morphism
$\phi$ with codomain $B$ above $f$, \emph{i.e.} $\phi:A\to B$ with $P(\phi)=f$.
We call such an $\phi$ a \emph{cartesian lifting} of $B$ along
$f$. The category $\caa{X}$ is then called the \emph{base} of the fibration,
and $\ca{A}$ its \emph{total} category.

Dually to the above, suppose we have 
a functor $U:\ca{C}\to\caa{X}$.
A morphism $\beta:C\to D$ is \emph{cocartesian}
over a morphism $U\beta=f:X\to Y$ in $\caa{X}$ if, for
all $g:Y\to Y'$ in $\caa{X}$ and all
$\gamma:C\to D'$ with $U\gamma=g\circ f$, 
there exists a unique morphism $\delta:D\to D'$ such that
$U\delta=g$ and $\gamma=\delta\circ\beta$:
\begin{displaymath}
\xymatrix @R=.1in @C=.6in
{&& D'\ar @{.>}@/_/[dd] &&\\
C\ar[r]_-{\beta} \ar @{.>}@/_/[dd]
\ar[urr]^-{\gamma} & 
D \ar @{.>}@/_/[dd] \ar @{-->}[ur]_-{\exists! \delta}
&& \textrm{in }\ca{C}\\
&& Y' &&\\
X\ar[r]_-{f=U\beta} \ar[urr]^-{g\circ f=U\gamma}
 & Y \ar[ur]_-g && \textrm{in }\caa{X}}
\end{displaymath}
The functor $U:\ca{C}\to\caa{X}$ is an \emph{opfibration} 
if $U^\mathrm{op}$ is a fibration, \emph{i.e.} for every $C\in\ca{C}_X$
and $f:X\to Y$ in $\caa{X}$, there is a cocartesian morphism
with domain $C$ above $f$, called the \emph{cocartesian lifting}
of $C$ along $f$. 
If $U$ is both a fibration
and an opfibration, it is called a \emph{bifibration}.
\begin{rmk*}
 What was above called `cartesian' is sometimes
called `hypercartesian' instead. In that case, a 
cartesian morphism $\phi:A\to B$ would satisfy the property
that for any $\theta:A'\to B$ with 
$P(\phi)=P(\theta)$, there is a unique vertical arrow
$\psi:A'\to A$ with $\phi\circ\psi=\theta$:
\begin{displaymath}
 \xymatrix @C=.4in
{A'\ar[drr]^-\theta\ar@{-->}[d]_-{\exists!\psi} && \\
A\ar[rr]_-\phi && B & \textrm{in }\ca{A}.}
\end{displaymath}
If we were to use this definition, we would have 
to add the requirement that cartesian arrows are 
closed under composition, in order to define a fibration. 
\end{rmk*}
\begin{examples*}
$(1)$ Every category $\ca{C}$ gives
rise to the \emph{family fibration}
\begin{displaymath}
 f(\ca{C}):Fam(\ca{C})\longrightarrow\B{Set}
\end{displaymath}
over the category of sets. The category
$Fam(\ca{C})$ has objects indexed families
of objects in $\ca{C}$, $\{ X_i \}_{i\in I}$
for a set $I$, and morphisms 
\begin{displaymath}
 (\{f_i\}_{i\in I},u):\{ X_i \}_{i\in I}\longrightarrow
\{ Y_j \}_{i\in J}
\end{displaymath}
are pairs which consist of a function 
$u:I\to J$ and a family of morphisms
$f_i:X_i\to Y_{u(i)}$ in $\ca{C}$ for all $i$'s. 
The functor $f(\ca{C})$ just takes a 
family of objects to its indexing set and 
a morphism to its function part.
The cartesian arrows are pairs for which
every $f_i$ is an isomorphism,
so a cartesian lifting of $\{Y_j\}_{j\in J}$
above $u:I\to J$ is
\begin{displaymath}
\xymatrix @C=.4in @R=.4in
{\{Y_{u(i)}\}_{i\in I} \ar[rr]^-{(1,u)}
\ar @{.>}[d]_-{f(\ca{C})} && 
\{Y_j\}_{j\in J}\ar @{.>}[d]^-{f(\ca{C})} & 
\textrm{in }Fam(\ca{C}) \\
I\ar[rr]_-u && J & \textrm{in }\B{Set}.}
\end{displaymath}
$(2)$ Consider the `codomain' functor
for any category $\ca{A}$
\begin{equation}\label{cod}
 cod:\ca{A}^\B{2}\longrightarrow\ca{A}
\end{equation}
where 
$\ca{A}^\B{2}=[\B{2},\ca{A}]$
is the \emph{category of arrows} of
$\ca{A}$, \emph{i.e.} the functor category from
$\B{2}={(0\to1)}$ with two objects
and one non-identity arrow to $\ca{A}$.
This functor takes a morphism $f:A\to B$
in $\ca{A}$ to its codomain $B$, and 
a commutative square
\begin{displaymath}
 \xymatrix
 {A\ar[r]^-f\ar[d]_-h & B\ar[d]^-k \\
 C\ar[r]_-g & D}
\end{displaymath}
which expresses an arrow from $f$ to $g$,
to $k:B\to D$. Now, a $cod$-cartesian arrow
of $C\xrightarrow{g}D$ above
$k:B\to D$ is the pullback square
\begin{displaymath}
\xymatrix
{\bullet \pullbackcorner[ul] \ar[r] \ar[d] & B\ar[d]^-k \\
C\ar[r]_-g & D}
\end{displaymath}
therefore if $\ca{A}$ has pullbacks, $cod$
is a fibration. Since this allows 
one to consider $\ca{A}$ as fibred
over itself and this is central
for developing category theory over
$\ca{A}$, we call $cod$
the \emph{fundamental fibration} 
of $\ca{A}$. The fibre over an 
object $A$ is simply the
slice category $\ca{A}/A$.
Dually we have the `domain opfibration'
with pushouts as cocartesian morphisms.
\end{examples*}
As an immediate consequence of the definition
of cartesian and cocartesian morphisms, we have 
that if $g$ and $f$ are composable (co)cartesian
arrows, their composite $g\circ f$ 
is again a (co)cartesian arrow. Also if 
$g\circ f$ and $g$ are (co)cartesian arrows, then 
so is $f$. For example,
for the fundamental fibration this is
the standard result that if $A$ and 
$B$ as in 
\begin{displaymath}
 \xymatrix @C=.4in @R=.4in
{\bullet\ar[r]\ar[d]\drtwocell<\omit>{'A} & 
\bullet\ar[r]\ar[d]\drtwocell<\omit>{'B} &
\bullet\ar[d] \\
\bullet\ar[r] & \bullet\ar[r] & \bullet}
\end{displaymath}
are pullbacks, then the pasted square is a pullback.
Moreover, if the outer square and $B$ are 
pullbacks, then so is $A$.

If $P:\ca{A}\to\caa{X}$ is a fibration, assuming the 
axiom of choice we may select a 
cartesian arrow over each $f:X\to Y$ in $\caa{X}$
and $B\in\ca{A}_Y$, denoted by 
\begin{displaymath}
\Cart(f,B):f^*(B)\longrightarrow B. 
\end{displaymath}
Such a choice of cartesian liftings is called a 
\emph{cleavage} for $P$, which is then
called a \emph{cloven} fibration. Any fibration can 
be turned into a cloven one, using the axiom of 
choice to obtain a cleavage.
Thus, henceforth we can assume that the 
fibrations we deal with are
cloven.
Dually, if $U$ is an opfibration, for any $C\in\ca{C}_X$
and $f:X\to Y$ in $\caa{X}$ we can choose a cocartesian
lifting of $C$ along $f$
\begin{displaymath}
 \Cocart(f,C):C\longrightarrow f_!(C).
\end{displaymath}

As a result of the above definitions,
any arrow $\theta$ in the total category 
above $f$ in a cloven 
fibration $P:\ca{A}\to\caa{X}$
factorizes uniquely into a
vertical morphism
followed by a cartesian one. Dually, 
any arrow $\gamma$ in the total
category above $f$ in the base category
of a cloven opfibration $U:\ca{C}\to\caa{X}$ 
factorizes uniquely into
a cocartesian arrow followed by a vertical one:
\begin{displaymath}
\xymatrix @C=.4in @R=.4in
{A\ar[rr]^\theta\ar @{-->}[d]_-{\psi} && B 
\ar @{.>}[dd]^P &&\\
f^*B\ar[urr]_-{\;\Cart(f,B)}
\ar @{.>}[d]_P &&& \textrm{in }\ca{A} \\
X\ar[rr]_-{f=P\theta} && Y & \textrm{in }\caa{X},}\quad
\xymatrix @C=.4in @R=.4in
{C \ar @{.>}[dd]_U
\ar[rr]^\gamma \ar[drr]_-{\Cocart(f,C)} && D &\\
&& f_!C \ar @{-->}[u]_-{\delta}
\ar @{.>}[d]^U & \textrm{in }\ca{C} \\
X\ar[rr]_-{f=U\gamma} && Y & \textrm{in }\caa{X}.}
\end{displaymath}
\begin{rmk*}
Cartesian liftings
of $B\in\ca{A}_Y$ along $f:X\to Y$ in 
$\caa{X}$ are unique up to 
vertical isomorphism:
\begin{displaymath}
\xymatrix @R=.35in @C=.6in
{A'\ar [drr]^-{\psi=\Cart(f,B)}\ar @/_/@{-->}[d]_-{\alpha} 
&&& \\
A\ar[rr]_-{\phi=\Cart(f,B)}  
\ar @/_/@{-->}[u]_-{\beta}
\ar @{.>}[d]_-P &&
B \ar @{.>}[dd]^-P & \textrm{in }\ca{A}\\
X\ar [drr]^-f \ar[d]_-{1_X} && \\
X\ar[rr]_-{f=P(\phi)} && Y & \textrm{in }\caa{X}}
\end{displaymath}
If $\phi$ and $\psi$ are both cartesian morphisms,
there exist unique $\alpha:A'\to A$ and 
unique $\beta:A\to A'$ vertical arrows 
such that $\phi\circ\alpha
=\psi$ and $\psi\circ\beta=\phi$ respectively.
It follows that $\alpha\circ\beta=1_A$
and $\beta\circ\alpha=1_{A'}$.
Dually, cocartesian arrows are unique
up to vertical isomorphism.
\end{rmk*}
A cleavage for a fibration $P:\ca{A}\to\caa{X}$ induces,
for every morphism $f:X\to Y$ in $\caa{X}$, 
a so-called \emph{reindexing functor} 
between the fibre categories
\begin{equation}\label{reindexing}
 f^*:\ca{A}_Y\longrightarrow\ca{A}_X.
\end{equation}
This maps each $B\in\ca{A}_Y$ to 
$f^*(B)$, the domain of the
cartesian lifting along $f$ given by the cleavage, 
and each $\phi:B\to B'$ in the fibre $\ca{A}_Y$ to
$f^*(\phi):f^*(B)\to f^*(B')$, the unique
vertical arrow making the diagram
\begin{equation}\label{deff*phi}
\xymatrix @C=.7in @R=.4in
{f^*(B)\ar[r]^-{\Cart(f,B)}
\ar @{-->} [d]_-{f^*(\phi)} & B\ar[d]^-{\phi}\\
f^*(B') \ar[r]_-{\Cart(f,B')} & B'}
\end{equation}
commute. Explicitly, since the composite $\phi\circ\Cart(f,B)$ has
codomain $B'$ in the total category $\ca{A}$,
it uniquely factorizes through the chosen cartesian 
lifting of $B'$ along $f$ by universal property
of cartesian arrows.

The uniqueness of the factorization
through a chosen cartesian arrow implies
immediately that $f^*(\psi)\circ
f^*(\phi)=f^*(\psi\circ\phi)$, \emph{i.e.}
that $f^*$ is a functor:
\begin{displaymath}
\xymatrix @C=.4in @R=.4in
{f^*(B)\ar[rr]^-{\Cart(f,B)}
\ar @{-->} [d]^-{f^*(\phi)}\ar @/_7ex/@{-->}[dd]_-{f^*(\psi
\circ\phi)} && B\ar[d]^-{\phi} &\\
f^*(B') \ar @{-->}[d]^-{f^*(\psi)} \ar[rr]_-{\Cart(f,B')} && 
B'\ar[d]^-{\psi} &\\
f^*(B'')\ar @{.>}[d]_-P
\ar[rr]_-{\Cart(f,B'')} && B''\ar @{.>}[d]^-P &\textrm{in }\ca{A}\\
X\ar[rr]_-{f} && Y & \textrm{in }\caa{X}}
\end{displaymath}
Dually, if $U:\ca{C}\to\caa{X}$
is a cloven opfibration, for every $f:X\to Y$
in $\caa{X}$ we get a reindexing functor between the fibres
\begin{displaymath}
 f_!:\ca{C}_X\longrightarrow\ca{C}_Y
\end{displaymath}
mapping each object
$C\in\ca{C}_X$ to the codomain $f_!(C)$ of the chosen
cocartesian lifting along $f$, and vertical morphisms 
$\gamma:C\to C'$ to
the unique $f_!(\gamma)$ defined 
dually to (\ref{deff*phi}).

Notice that the opfibration $P^\op$
for a fibration $P:\ca{A}\to\caa{X}$ 
has cocartesian liftings
\begin{displaymath}
\xymatrix @C=.15in @R=.4in
{A\ar[rrr]^-{\Cocart(f,A)}
\ar @{.>}[d]_-{P^\op} &&& 
f_!A\ar @{.>}[d]^-{P^\op} & 
\textrm{in }\ca{A}^\op \\
X\ar[rrr]_-f &&& X' & \textrm{in }\caa{X}^\op}
\;\xymatrix @R=.1in
{\hole \\
\equiv}\;\;
\xymatrix @C=.15in @R=.4in
{f^*A\ar[rrr]^-{\Cart(f,A)}
\ar @{.>}[d]_-{P} &&& 
A\ar @{.>}[d]^-{P} & 
\textrm{in }\ca{A} \\
X'\ar[rrr]_-f &&&  X & \textrm{in }\caa{X}}
\end{displaymath}
and reindexing functors 
$f_!\equiv(f^*)^\op:\ca{A}^\op_X\longrightarrow\ca{A}^\op_{X'}$.
\begin{rmk}\label{rmkforadjointintexingbifr}
Due to the unique factorization of an 
arrow in a fibration and an opfibration through
cartesian and cocartesian liftings respectively, 
we can deduce that a fibration $P:\ca{A}\to\caa{X}$
is also an opfibration (consequently a bifibration)
if and only if, for every $f:X\to Y$
the reindexing $f^*:\ca{A}_Y\to\ca{A}_X$ has
a left adjoint, namely $f_!:\ca{A}_X\to\ca{A}_Y$ 
(e.g. \cite[Proposition 1.2.7]{hermidaphd}). 
\end{rmk}
In general, for composable
maps $f:X\to Y$ and $g:Y\to Z$ in the base category $\caa{X}$
of a fibration, it
is not true that $f^*\circ g^*=(g\circ f)^*$. 
However, these functors are canonically
isomorphic, as demonstrated by the following diagram:
\begin{displaymath}
\xymatrix @C=.6in @R=.1in
{f^*g^*A \ar[dr]^-{\Cart(f,g^*A)}
\ar @{-->} [dd]_-{\delta_A} &&&\\
& g^*A \ar@{.}[ddd]
\ar[dr]^-{\Cart(g,A)} && \\
(g\circ f)^*A\ar @{.}[d]
\ar[rr]_-{\Cart(g\circ f,A)}
&& A \ar @{.}[ddd] & \textrm{in }\ca{A}\\
X\ar[dr]^-f \ar[dd]_-{1_X} &&&\\
& Y\ar[dr]^-g &&\\
X\ar[rr]_-{g\circ f} && Z & 
\textrm{in }\caa{X}.}
\end{displaymath}
Since the composition of two
cartesian arrows is again cartesian,
$\delta_A$ is the unique vertical
isomorphism which makes the above diagram commute.
Thus we obtain a natural isomorphism
\begin{equation}\label{deltaiso}
\delta^{f,g}:f^*\circ g^*\xrightarrow{\;\sim\;}
(g\circ f)^*
\end{equation}
with components vertical isomorphisms 
$\delta^{f,g}_A:f^*g^*A\cong(g\circ f)^*A$ for any
$A\in\ca{A}$.

Moreover, the identity morphism $1_A:A\to A$ 
for an object $A$ above $X$ is cartesian over 
$1_X:X\to X$, and so there
exists a unique vertical
isomorphism $\gamma^X_A:A\cong (1_X)^*A$
making the top diagram
\begin{displaymath}
\xymatrix @C=.9in @R=.25in
{A\ar[dr]^-{1_A} \ar @{-->}[d]_-{\gamma_A} && \\
(1_X)^*A\ar@{.}[d] \ar[r]_-{\Cart(1_X,A)} & A\ar@{.}[d]
 &\textrm{in }\ca{A}\\
X\ar[r]_-{1_X} & X &\textrm{in }\caa{X}}
\end{displaymath}
commute. These
morphisms are the components of a natural isomorphism
\begin{equation}\label{gammaiso}
\gamma^X:1_{\ca{A}_X}\xrightarrow{\;\sim\;}
(1_X)^*
\end{equation}
where $1_{\ca{A}_X}$ is the identity functor
on the fibre $\ca{A}_X$. The natural
transformations $\delta$ and $\gamma$ will 
play an important part for the equivalence between
fibrations and indexed categories 
described in the next section.

In a completely analogous way, for an opfibration 
$U:\ca{C}\to\caa{X}$
there is a natural isomorphism
\begin{equation}\label{qiso}
q^{f,g}:(g\circ f)_!\xrightarrow{\;\sim\;}
g_!\circ f_!
\end{equation}
between the reindexing functors for composable
arrows $f$ and $g$,
with components vertical isomorphisms
$q^{f,g}_C:(g\circ f)_!C\cong
g_!f_!C$ induced by universality of
cocartesian arrows, and also
a natural isomorpism
\begin{displaymath}
p^X:(1_X)_!\xrightarrow{\;\sim\;}
1_{\ca{C}_X}
\end{displaymath} 
with components vertical isomorphisms
$p^X_C:(1_X)_!C\cong C$.

Nevertheless, a functorial choice of
cartesian liftings is possible
in specific situations. 
We usually assume, without loss
of generality, that the cleavage is 
\emph{normalized}
in the sense that $\Cart(1_X,A)=1_A$, in which case
the isomorphisms $\gamma_A$ are equalities. 
If also $\Cart(g\circ f,A)
=\Cart(f,A)\circ \Cart(g,f^*(A))$, and so 
$\delta_A$ are equalities, 
the cleavage of the fibration is called a 
\emph{splitting}, and a fibration
 endowed with a split cleavage
is called a \emph{split} fibration. Dually,
we have the notion of a \emph{split opfibration}.

We now turn to the appropriate notions 
of 1-cells and 2-cells for fibrations.
A morphism of fibrations
$(S,F):P\to Q$ between $P:\ca{A}\to\caa{X}$
and
$Q:\ca{B}\to\caa{Y}$
is given by a commutative square of functors
and categories
\begin{equation}\label{commutativefibredcell}
\xymatrix @C=.4in @R=.4in
{\ca{A}\ar[r]^-S \ar[d]_-P &
\ca{B}\ar[d]^-Q \\
\caa{X}\ar[r]_-F &
\caa{Y}}
\end{equation}
where $S$ preserves cartesian arrows, meaning
that if $\phi$ is $P$-cartesian, then
$S\phi$ is $Q$-cartesian. The pair $(S,F)$ is called
a \emph{fibred 1-cell}.
In particular, when $P$ and $Q$ are fibrations over
the same base category $\caa{X}$, we may
consider fibred 1-cells of the form 
$(S,1_{\caa{X}})$ displayed by
commutative triangles
\begin{displaymath}
\xymatrix @C=.2in
{\ca{A}\ar[rr]^-S \ar[dr]_-P
 && \ca{B}\ar[dl]^-Q\\
 & \caa{X} &}
\end{displaymath}
which are just cartesian functors $S$
such that $Q\circ S=P$.
Then $S$ is called
a \emph{fibred functor}.

Dually, we have the notion of an \emph{opfibred 1-cell}
$(K,F)$ and \emph{opfibred functor} $(K,1_\caa{X})$ 
between opfibrations over arbitrary bases or the same 
base respectively, where $K$ preserves cocartesian arrows.
\begin{rmk}\label{functorsbetweenfibres}
Any fibred or opfibred 1-cell determines
a collection of functors
$\{ S_X:\ca{A}_X\to\ca{B}_{FX} \}$
between the fibre
categories 
for all $X\in\mathrm{ob}\caa{X}$:
\begin{equation}\label{inducedfunfibres}
S_X:
\xymatrix @R=.05in @C=.6in
{\ca{A}_X\ar[r]^-{S|_X} &
\ca{B}_{FX} \\
A\ar @{|.>}[r] \ar[dd]_-f & SA
\ar[dd]^-{Sf}\\
\hole \\
A' \ar @{|.>}[r] & SA'}
\end{equation}
This functor is well-defined, since $Q(SA)
=F(PA)=FX$ by commmutativity of 
(\ref{commutativefibredcell}),
so $SA,SA'$ are in the fibre $\ca{B}_{FX}$.
Also $Q(Sf)=F(Pf)=F(1_X)=1_{FX}$ since
$F$ is a functor, so $Sf$ is an arrow in $\ca{B}_{FX}$.
\end{rmk}
The following well-known proposition gives
a way, given a fibration and a different functor to its 
base, to construct a new fibration over the 
domain of the given functor. A non-elementary proof
(not as the one below) can be found in 
\cite{Grayfibredandcofibred}.
\begin{prop}[Change of Base]\label{changeofbasefibr}
Given a fibration $Q:\ca{B}\to\caa{Y}$ and an arbitrary 
functor $F:\caa{X}\to\caa{Y}$, the pullback diagram
\begin{displaymath}
\xymatrix @R=.5in @C=.5in
{F^*(\ca{B}) \pullbackcorner[ul] \ar[r]^-{\pi} 
\ar[d]_-{F^*Q} & \ca{B}\ar[d]^-Q\\
\caa{X}\ar[r]_-F & \caa{Y}}
\end{displaymath}
exhibits $F^*Q:F^*(\ca{B})\to\caa{X}$ as a fibration and $(\pi,F)$ as a 
fibred 1-cell, \emph{i.e.} $\pi$ preserves cartesian arrows.
\end{prop}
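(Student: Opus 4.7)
The plan is to work explicitly with the pullback $F^*(\ca{B})$ whose objects are pairs $(X,B)$ with $FX = QB$ and whose morphisms $(X,B)\to(X',B')$ are pairs $(f,\phi)$ with $f:X\to X'$ in $\caa{X}$ and $\phi:B\to B'$ in $\ca{B}$ satisfying $Ff = Q\phi$; the functor $F^*Q$ is the first projection and $\pi$ the second. Everything then reduces to combining the cartesian liftings supplied by $Q$ with this very concrete description.

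First I would construct the cartesian liftings for $F^*Q$. Given $f:X\to X'$ in $\caa{X}$ and $(X',B')\in F^*(\ca{B})$, apply $F$ to obtain $Ff:FX\to FX'=QB'$ and take the chosen $Q$-cartesian lifting $\Cart(Ff,B'):(Ff)^*B'\to B'$ in $\ca{B}$. Since $Q((Ff)^*B')=FX$, the pair $(X,(Ff)^*B')$ lives in $F^*(\ca{B})$, and the pair $\bigl(f,\Cart(Ff,B')\bigr)$ is a morphism there above $f$. I would denote this candidate lifting by $\widetilde{\Cart}(f,(X',B'))$.

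Next I would verify the universal property. Suppose $(g,\theta):(X'',B'')\to(X',B')$ in $F^*(\ca{B})$ factors in the base as $g=f\circ h$ for some $h:X''\to X$. Then $Q\theta = Fg = Ff\circ Fh$, so by $Q$-cartesianness of $\Cart(Ff,B')$ there is a unique $\psi:B''\to(Ff)^*B'$ with $Q\psi=Fh$ and $\Cart(Ff,B')\circ\psi = \theta$. The equation $Q\psi=Fh$ says precisely that $(h,\psi)$ is a morphism in $F^*(\ca{B})$ above $h$, and the factorization/uniqueness transfer from $\ca{B}$ to $F^*(\ca{B})$ verbatim. Hence $\widetilde{\Cart}(f,(X',B'))$ is $F^*Q$-cartesian, so $F^*Q$ is a fibration.

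Finally I would check that $\pi$ preserves cartesian arrows. The candidate liftings above are mapped by $\pi$ to the $Q$-cartesian arrows $\Cart(Ff,B')$, so these are preserved on the nose. For an arbitrary $F^*Q$-cartesian morphism $(f,\phi)$ with codomain $(X',B')$, uniqueness of cartesian liftings up to vertical iso (Remark in Section \ref{fibrationsbasicdefinitions}) gives a vertical iso in $F^*(\ca{B})$ connecting it to $\widetilde{\Cart}(f,(X',B'))$; applying $\pi$ yields a $Q$-vertical iso (over $F(1_X)=1_{FX}$) between $\phi$ and $\Cart(Ff,B')$, so $\phi$ is $Q$-cartesian. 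I do not expect any genuine obstacle: the only mildly delicate point is remembering that vertical morphisms in $F^*(\ca{B})$ project under $\pi$ to $Q$-vertical morphisms precisely because $F$ preserves identities, but this is immediate.
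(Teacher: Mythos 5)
Your proposal is correct and follows essentially the same route as the paper: describe the pullback category explicitly, lift $f$ by pairing it with the chosen $Q$-cartesian lifting $\Cart(Ff,B')$, and observe that $\pi$ sends these chosen liftings to $Q$-cartesian arrows (the paper leaves the universal-property check and the vertical-iso argument for arbitrary cartesian arrows implicit, but your filled-in details are exactly what is intended).
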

\begin{proof}
By construction of pullbacks in the complete
category $\B{Cat}$, 
objects in $F^*(\ca{B})$ are pairs
$(B,X)\in\ob\ca{B}\times\ob\caa{X}$ 
such that $QB=FX$, and morphisms are 
$(h,k):(B,X)\to(B',X')$ with $h:B\to B'$ in
$\ca{B}$, $k:X\to X'$ in $\caa{X}$ and $Qh=Fk$ in $\caa{Y}$. 
The functors $\pi$ and $F^*Q$ 
are the respective projections.

It can be easily verified, since
$Q$ is a fibration, that cartesian
morphisms in $F^*(\ca{B})$ exist and are of the form
\begin{displaymath}
\xymatrix @C=.5in @R=.5in
{((Ff)^*B,Z)\ar[rr]^-{(\Cart(Ff,B),f)}
\ar @{.>}[d]_-{F^*Q} && (B,X)\ar @{.>}[d]^-{F^*Q} & 
\textrm{in }\caa{F^*(\ca{B})}\\
Z\ar[rr]^-f && X & \textrm{in }\caa{X}}
\end{displaymath}
where $\Cart(Ff,B)$ is the $Q$-cartesian
lifting of $B$ along $Ff$.
The projection $\pi$ obviously 
preserves cartesian arrows
by the choice of cleavage.
\end{proof} 
The same construction applies
to opfibrations. 
We say that the fibration
$P=F^*Q$ is obtained from
$Q$ \emph{by change of base along
$F$}.
Notice also that for every object 
$X\in\ob\caa{X}$, we have an isomorphism 
${F^*(\ca{B})}_X\cong\ca{B}_{FX}$ of
the fibre categories which is given by 
$S_X$, the induced functor between the fibres
\begin{equation}\label{Sxiso}
\xymatrix @C=0.5in @R=.02in
{F^*(\ca{B})_X\ar[r]^-{S_X}
& \ca{B}_{FX}\\
(B,X)\ar @{|.>}[r] \ar[dd]_-{(f,1_X)}& B
\ar[dd]^-{f}\\
\hole \\
(B',X) \ar @{|.>}[r] & B'.}
\end{equation}

Going back to properties
of fibred 1-cells, if we unravel the 
definition of a cartesian functor
it is easy to deduce the following well-known result.
\begin{lem}\label{cartfunctcommute}
Suppose we have two fibrations
$P:\ca{A}\to\caa{X}$, $Q:\ca{B}\to\caa{Y}$ and 
a fibred 1-cell $(S,F)$ between them
\begin{displaymath}
\xymatrix @C=.4in @R=.4in
{\ca{A}\ar[r]^-S \ar[d]_-P &
\ca{B}\ar[d]^-Q \\
\caa{X}\ar[r]_-F &
\caa{Y}.}
\end{displaymath}
Then the reindexing functors commute up to
isomorphism with the induced functors
between the fibres. In other words, there
is a natural isomorphism 
\begin{equation}\label{reindexcommute}
\xymatrix @C=.5in @R=.5in
{\ca{A}_Y \ar[d]_-{f^*} \ar[r]^-{S_Y}
\drtwocell<\omit>{'\stackrel{\tau^f}{\cong}} &
\ca{B}_{FY} \ar[d]^-{(Ff)^*} \\
\ca{A}_X \ar[r]_-{S_X}
& \ca{B}_{FX}}
\end{equation}
for every $f:X\to Y$ in $\caa{X}$.
\end{lem}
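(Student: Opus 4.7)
\bigskip

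The plan is to construct the natural isomorphism $\tau^f$ componentwise by comparing two cartesian liftings of $SB$ along $Ff$, then verify naturality by appealing to the universal property of cartesian morphisms. The commutativity of the square $QS=FP$ is what makes the two candidate liftings comparable in the first place.

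First, I would fix $B\in\ca{A}_Y$ and consider the chosen cartesian lifting $\mathrm{Cart}(f,B):f^*B\to B$ in $\ca{A}$ above $f:X\to Y$. Applying $S$ yields a morphism $S(\mathrm{Cart}(f,B)):S(f^*B)\to SB$ which lies above $F(f)$ (because $QS=FP$) and which is $Q$-cartesian (because $S$ is a fibred 1-cell). On the other hand, the cleavage of $Q$ provides another $Q$-cartesian lifting $\mathrm{Cart}(Ff,SB):(Ff)^*SB\to SB$ of $SB$ along $Ff$. Since cartesian liftings of a given object above a given arrow are unique up to vertical isomorphism (the remark before equation (\ref{reindexing})), there is a unique vertical isomorphism
\begin{displaymath}
\tau^f_B:S(f^*B)\xrightarrow{\;\sim\;}(Ff)^*(SB)
\end{displaymath}
in $\ca{B}_{FX}$ characterized by $\mathrm{Cart}(Ff,SB)\circ\tau^f_B=S(\mathrm{Cart}(f,B))$. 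Note that under the identification (\ref{inducedfunfibres}), the left-hand side is $(S_X\circ f^*)(B)$ and the right-hand side is $((Ff)^*\circ S_Y)(B)$.

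Next I would check naturality in $B$. Given a vertical morphism $\phi:B\to B'$ in $\ca{A}_Y$, I need to show that the square
\begin{displaymath}
\xymatrix @C=.6in @R=.4in
{S(f^*B)\ar[r]^-{\tau^f_B}\ar[d]_-{S_X(f^*\phi)} & (Ff)^*(SB)\ar[d]^-{(Ff)^*(S_Y\phi)} \\
S(f^*B')\ar[r]_-{\tau^f_{B'}} & (Ff)^*(SB')}
\end{displaymath}
commutes. Since both composites are vertical arrows with codomain $(Ff)^*(SB')$, it suffices by universality of $\mathrm{Cart}(Ff,SB')$ to show they agree after post-composition with this cartesian lifting. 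Using the defining equation of $\tau^f$ together with the characterizing square (\ref{deff*phi}) for $f^*\phi$ and $(Ff)^*(S\phi)$, both composites reduce to $S(\phi\circ\mathrm{Cart}(f,B))=S\phi\circ S(\mathrm{Cart}(f,B))$, so naturality follows.

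The argument is essentially routine once one sets it up correctly; the only subtle point, and the one I would treat most carefully, is the invocation of $QS=FP$ to ensure $S(\mathrm{Cart}(f,B))$ genuinely lies above $Ff$, so that it is comparable with the chosen $Q$-cartesian lifting given by the cleavage. Everything else is forced by the universal property. This also explains why $\tau^f$ typically fails to be the identity even for split fibrations: the cleavages of $P$ and $Q$ were chosen independently, and $S$ need not respect them on the nose.
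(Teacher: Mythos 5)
Your proof is correct and follows essentially the same route as the paper's: both compare the image $S(\mathrm{Cart}(f,B))$ (which is $Q$-cartesian above $Ff$ because $S$ is cartesian and $QS=FP$) with the chosen lifting $\mathrm{Cart}(Ff,SB)$, extract the unique vertical isomorphism, and verify naturality via the universal property of cartesian arrows. The only cosmetic difference is that the paper orients $\tau^f_B$ in the opposite direction, from $(Ff)^*(SB)$ to $S(f^*B)$, which is immaterial since it is invertible.
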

\begin{proof}
Consider a $P$-cartesian lifting $\Cart(f,A):f^*A\to A$
of $A\in\ca{A}_Y$ along $f:X\to Y$ in $\caa{X}$.
The functor $S$ maps cartesian arrows to
cartesian arrows, so
the morphism 
\begin{displaymath}
\xymatrix @C=.3in
{S(f^*A)\ar[rr]^-{S\Cart(f,A)}
\ar @{.>}[d] && SA \ar @{.>}[d] &\textrm{in }\ca{B} \\
FX\ar[rr]_-{Ff} && FY & \textrm{in }\caa{Y}}
\end{displaymath}
is $Q$-cartesian above 
$Ff$ with codomain $SA$.
On the other hand, the canonical
choice of a $Q$-cartesian lifting of $SA$ along $Ff$
is $\Cart(Ff,SA):(Ff)^*(SA)\to SA.$

Since cartesian arrows are unique up
to vertical isomorphism, 
there exists a unique vertical
isomorphism 
$\tau^f_A:(Ff)^*(SA)\xrightarrow{\;\sim\;}S(f^*A)$
in the fibre $\ca{B}_{FX}$, such that the diagram
\begin{displaymath}
\xymatrix @C=.7in @R=.4in
{(Ff)^*(SA) \ar[d]_-{\tau^f_A}
\ar[dr]^-{\Cart(Ff,SA)} &\\
S(f^*A)\ar[r]_-{S\Cart(f,A)} & SA}
\end{displaymath}
commutes in the total category $\ca{B}$.
The family of invertible arrows $\tau^f_A$
in fact determines a natural isomorphism $\tau^f$
as in (\ref{reindexcommute}).
To establish naturality, for an arrow
$m:A\to A'$ in the fibre
$\ca{A}_Y$ we can form the following diagram:
\begin{displaymath}
 \xymatrix @R=.4in @C=.5in
 {(Ff)^*(SA) \ar @/_10ex/[ddd]_-{(Ff)^*(Sm)}
 \ar[d]_-{\cong}^-{\tau^f_A}
 \ar[drr]^-{\Cart(Ff,SA)} &&& \\
 S(f^*A)\ar[rr]_-{S\Cart(f,A)} 
 \ar[d]_-{(**)\quad}^-{S(f^*m)} && SA\ar[d]^-{Sm} & \\
 S(f^*A')\ar[rr]^-{S\Cart(f,A')}
 \ar[d]_-{\cong}^-{{\tau^f_{A'}}^{-1}} &&
 SA'\ar @{.}[dd] & \textrm{in }\ca{B} \\
 (Ff)^*(SA')\ar[urr]_-{\Cart(Ff,SA')} \ar @{.}[d]
 &&& \\
 FX\ar[rr]^-{Ff} && FY &\textrm{in }\caa{X}}
\end{displaymath}
The outer diagram commutes by definition
of the mapping of $(Ff)^*$
on the arrow $Sm$, the right three inner diagrams
commute for obvious reasons, hence the part (**)
commutes as well, establishing naturality of $\tau^f$.
\end{proof}
In particular, when $S$ is a fibred functor 
between fibrations over the
same base category $\caa{X}$, the isomorphism
(\ref{reindexcommute}) is written
\begin{equation}\label{reindexcommute2}
 \xymatrix @R=.5in @C=.5in
{\ca{A}_Y \ar[d]_-{f^*} \ar[r]^-{S_Y}
\drtwocell<\omit>{'\stackrel{\tau^f}{\cong}} &
\ca{B}_Y \ar[d]^-{f^*} \\
\ca{A}_X \ar[r]_-{S_X}
& \ca{B}_X.}
\end{equation}
This lemma is relevant to the 
correspondence between fibrations and indexed categories,
on the level of structure-preserving functors
appropriate for these concepts. This will become 
clearer in the next section. 

Now given two fibred 1-cells $(S,F),(T,G):P\rightrightarrows Q$
between fibrations $P:\ca{A}\to\caa{X}$ and 
$Q:\ca{B}\to\caa{Y}$, 
a \emph{fibred 2-cell} from $(S,F)$ to
$(T,G)$ is a pair of natural transformations 
($\alpha:S\Rightarrow T,\beta:F\Rightarrow G$)
with $\alpha$ above $\beta$, \emph{i.e.} $Q(\alpha_A)
=\beta_{PA}$ for all $A\in\ca{A}$. We can 
display a fibred 2-cell $(\alpha,\beta)$ 
between two fibred 1-cells as
\begin{equation}\label{fibred2cell}
\xymatrix @C=.8in @R=.5in
{\ca{A}\rtwocell^S_T{\alpha}\ar[d]_-P
& \ca{B}\ar[d]^-Q \\
\caa{X}\rtwocell^F_G{\beta} & \caa{Y}.}
\end{equation}
In particular, when $P$ and $Q$ are 
fibrations over the same base category
$\caa{X}$, we may consider 
fibred 2-cells of the form
$(\alpha,1_{1_{\caa{X}}}):(S,1_{\caa{X}})
\Rightarrow(T,1_\caa{X})$ between
the fibred functors $S$ and $T$,
displayed as
\begin{displaymath}
\xymatrix @C=.3in @R=.5in
{\ca{A}\rrtwocell^S_T{\alpha}\ar[dr]_-P
 && \ca{B}\ar[dl]^-Q\\
 & \caa{X} &}
\end{displaymath}
which
are in fact just natural transformations
$\alpha:S\Rightarrow T$
such that $Q(\alpha_A)=1_{PA}$, \emph{i.e.} whose components
are vertical arrows.
A 2-cell like this is called a \emph{fibred 
natural transformation}.
Dually, we have the notion of an \emph{opfibred 2-cell}
and \emph{opfibred natural transformation}
between opfibred 1-cells and functors respectively.

In this way, we obtain a
2-category $\B{Fib}$ of
fibrations over arbitrary base categories,
fibred 1-cells and fibred 2-cells, with 
the evident compositions coming from $\B{Cat}$. 
In particular, there is a 2-category 
$\B{Fib}(\caa{X})$
of fibrations over a fixed base category $\caa{X}$,
fibred functors and fibred natural transformations.
We also have the 2-categories $\B{Fib}_{\mathrm{sp}}$
and $\B{Fib}(\caa{X})_{\mathrm{sp}}$ of 
split fibrations and morphisms which preserve 
the splitting on the nose (\emph{i.e.}
up to equality and not 
only up to isomorphism).
Dually, we have the 2-categories
$\B{OpFib}$ and $\B{OpFib(\caa{X})}$ of opfibrations
over arbitrary base categories and over a fixed base
category $\caa{X}$ accordingly, as 
well as $\B{OpFib}_{\mathrm{sp}}$ and 
$\B{OpFib}(\caa{X})_{\mathrm{sp}}$ for the split cases.

As a matter of fact, $\B{Fib}$ and $\B{OpFib}$
are (non-full) sub-2-categories of the 2-category 
$\B{Cat}^{\B{2}}=[\B{2},\B{Cat}]$
of `arrows in  $\B{Cat}$',
with objects plain functors
between categories, morphisms commutative squares
of categories and functors as in 
(\ref{commutativefibredcell}) and 2-cells 
pairs of natural transformations as in 
(\ref{fibred2cell}). 
Also
$\B{Fib}(\caa{X})$ and $\B{OpFib}(\caa{X})$
are sub-2-categories of the slice 2-category
$\B{Cat}/\caa{X}$.

These 2-categories form part of fibrations
themselves: we already know that the functor
$cod:\B{Cat}^{\B{2}}\to\B{Cat}$
is the fundamental fibration (\ref{cod}), 
so consider its restriction to 
$\B{Fib}$. Proposition \ref{changeofbasefibr}
implies that this functor
\begin{displaymath}
 cod|_{\B{Fib}}:\B{Fib}\longrightarrow\B{Cat}
\end{displaymath}
which sends a fibration to its base
is still a fibration, with cartesian morphisms
pullback squares and fibre categories
$\B{Fib}(\caa{X})$ for a category 
$\caa{X}\in\B{Cat}$. Also,
the restricted functor
\begin{displaymath}
 cod|_\B{OpFib}:\B{OpFib}\longrightarrow\B{Cat}
\end{displaymath}
is again a fibration, with fibres 
$\B{OpFib}(\caa{X})$ for each category $\caa{X}$.

\section{Indexed categories and the Grothendieck construction}\label{indexedcats}

Given a category
$\caa{X}$, a \emph{$\caa{X}$-indexed category}
is a pseudofunctor 
$$\ps{M}:\caa{X}^\mathrm{op}\to\B{Cat}$$ 
which, by Definition
\ref{laxfunctor}, amounts to the following
data: a category $\ps{M}X$ for every object 
$X\in\ob\caa{X}$ and a functor 
$\ps{M}f:\ps{M}Y\to\ps{M}X$ for each arrow 
$f:X\to Y$, 
together with natural isomorphisms
$\delta_{f,g}:\ps{M}f\circ\ps{M}g
\cong\ps{M}(g\circ f)$ for each composable
pair of arrows and
$\gamma_X:1_{\ps{M}X}\cong\ps{M}(1_X)$ for
each object in $\caa{X}$, satisfying
associativity and identity laws
(\ref{laxcond1}, \ref{laxcond2}). 
The categories $\ps{M}X$ are usually
called \emph{fibres} and the functors $\ps{M}f$
are called \emph{reindexing} and are sometimes
denoted by $f^*$. The terminology already 
indicates the relation with fibrations.

If $\ps{M}$ and $\ps{H}$ are $\caa{X}$-indexed
categories, a $\caa{X}$-\emph{indexed functor}
$\tau:\ps{M}\to\ps{H}$
is a pseudonatural
transformation  
\begin{displaymath}
 \xymatrix @C=.6in
{\caa{X}^\mathrm{op}
\rtwocell^{\ps{M}}_{\ps{H}}{\tau} & 
\B{Cat}}. 
\end{displaymath}
By Definition \ref{laxnattrans},
this means that for each object $X$ of $\caa{X}$
there is a functor $\tau_X:\ps{M}X\to\ps{H}X$
and for each arrow $f:X\to Y$ there is a natural
isomorphism
\begin{displaymath}
\xymatrix
{\ps{M}X\ar[r]^-{\ps{M}f}
\ar[d]_-{\tau_X} & \ps{M}Y
\ar[d]^-{\tau_Y} \\
\ps{H}X\ar[r]_-{\ps{H}f} & 
\ps{H}Y\ultwocell<\omit>{'{\stackrel{\tau_f}{\cong}}}}
\end{displaymath}
subject to the compatibility
conditions with the $\delta_{f,g}$ and $\gamma_X$
expressed by (\ref{compatcomp1}, \ref{compatunit1}).

If $\tau,\sigma:\ps{M}\to\ps{H}$
are $\caa{X}$-indexed functors, a 
$\caa{X}$-\emph{indexed natural transformation}
$m:\tau\to\sigma$ is a modification,
which by
Definition \ref{modification} consists of a 
family $m_X:\tau_X\Rightarrow\sigma_X$ of natural transformations
for every object $X\in\ob\caa{X}$ subject
to compatibility conditions with the coherence
isomorphisms $\tau_f$ and $\sigma_f$
expressed by (\ref{modificcond}).

Notice that in the above definitions,
the ordinary category $\caa{X}$ is regarded
as a 2-category with no non-trivial 2-cells. 
As discussed in Section \ref{bicatbasicdefinitions},
the above data form a 2-category 
$[\caa{X}^\mathrm{op},\B{Cat}]_{\mathrm{ps}}$
of $\caa{X}$-indexed categories,
$\caa{X}$-indexed functors and $\caa{X}$-indexed natural 
transformations, also denoted as $\B{ICat}(\caa{X})$.

The following establishes a correspondence  
between cloven fibration and
indexed categories, due to Grothendieck, which
amounts to an equivalence between the 2-categories
$\B{Fib}(\caa{X})$ and $\B{ICat}(\caa{X})$
for a category $\caa{X}$.
\begin{thm}\label{maintheoremfibr} \hfill
\begin{enumerate}[(i)]
\item Every cloven fibration $P:\ca{A}\to\caa{X}$
gives rise to a $\caa{X}$-indexed category
$\ps{M}_P:\caa{X}^\mathrm{op}\to\B{Cat}$.
\item \emph{[Grothendieck construction]}
Every indexed category $\ps{M}:\caa{X}^\mathrm{op}
\to\B{Cat}$ gives rise to a cloven
fibration $P_\ps{M}:\Gr{G}\ps{M}\to
\caa{X}$.
\item
The above correspondences yield an equivalence of
2-categories
\begin{equation}\label{equivalencegroth}
\B{ICat}(\caa{X})\simeq\B{Fib}(\caa{X})
\end{equation}
so that $\ps{M}_{P_\ps{M}}\cong\ps{M}$ and $P_{\ps{M}_P}\cong
P$.
\end{enumerate}
\end{thm}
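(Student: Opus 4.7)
The plan is to address the three parts in sequence, building up from the object-level construction to the full $2$-equivalence. For part (i), given a cloven fibration $P\colon \ca{A}\to\caa{X}$, I would set $\ps{M}_P X := \ca{A}_X$ on objects and $\ps{M}_P f := f^*$ on arrows, using the reindexing functor (\ref{reindexing}). The coherence data is already in hand: the natural isomorphisms $\delta^{f,g}\colon f^*\circ g^*\xrightarrow{\sim}(g\circ f)^*$ of (\ref{deltaiso}) and $\gamma^X\colon 1_{\ca{A}_X}\xrightarrow{\sim}(1_X)^*$ of (\ref{gammaiso}) serve as the pseudofunctor constraints. The associativity pentagon (\ref{laxcond1}) and the unit triangles (\ref{laxcond2}) follow from the fact that both sides of each diagram, applied to a component, produce vertical arrows sitting in a diagram of cartesian liftings: the uniqueness part of the universal property of cartesian arrows forces them to agree.

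For part (ii), the Grothendieck construction, I define $\Gr{G}\ps{M}$ to have objects pairs $(X,A)$ with $X\in\caa{X}$ and $A\in\ps{M}X$, and morphisms $(X,A)\to(Y,B)$ consisting of pairs $(f,\phi)$ with $f\colon X\to Y$ in $\caa{X}$ and $\phi\colon A\to \ps{M}f(B)$ in $\ps{M}X$. Composition of $(f,\phi)\colon (X,A)\to(Y,B)$ with $(g,\psi)\colon (Y,B)\to(Z,C)$ is defined as $(g\circ f,\,\delta^{f,g}_C\circ \ps{M}f(\psi)\circ\phi)$, and the identity on $(X,A)$ uses $\gamma^X_A$. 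Associativity and the unit laws follow precisely from the pseudofunctor axioms of $\ps{M}$. The projection $P_\ps{M}\colon \Gr{G}\ps{M}\to\caa{X}$ sending $(X,A)\mapsto X$ is then seen to be a fibration by exhibiting, for each $f\colon X\to Y$ and $(Y,B)$, the cartesian lifting $(f,1_{\ps{M}f(B)})\colon (X,\ps{M}f(B))\to(Y,B)$; the universal property of cartesianness is a direct consequence of the bijection between morphisms $(g,\theta)$ into $(Y,B)$ over $f\circ h$ and morphisms into $(X,\ps{M}f(B))$ over $h$, once one rewrites $\theta$ using $\delta^{h,f}$.

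For part (iii), I extend both constructions to $1$-cells and $2$-cells and show they are pseudo-inverse $2$-functors. Given a fibred functor $S\colon P\to Q$ over $\caa{X}$, Lemma \ref{cartfunctcommute} in the form (\ref{reindexcommute2}) supplies precisely the coherence isomorphisms $\tau^f\colon (f^*)\circ S_Y\cong S_X\circ(f^*)$ needed to turn the family $\{S_X\colon \ca{A}_X\to\ca{B}_X\}$ of (\ref{inducedfunfibres}) into a pseudonatural transformation between the associated indexed categories; compatibility with $\delta$ and $\gamma$ follows once more from uniqueness of factorizations. A fibred natural transformation has components which are already vertical, so it restricts fibrewise to a modification. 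Conversely, an indexed functor $\tau\colon \ps{M}\to\ps{H}$ yields a functor $\Gr{G}\tau\colon \Gr{G}\ps{M}\to\Gr{G}\ps{H}$ by $(X,A)\mapsto(X,\tau_X A)$ and $(f,\phi)\mapsto(f,\tau_f\circ\tau_X(\phi))$, which preserves the canonical cartesian liftings up to vertical isomorphism; an indexed natural transformation extends analogously.

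Finally I verify the two round trips. On the indexed category side, $\ps{M}_{P_\ps{M}}X=(\Gr{G}\ps{M})_X$ is isomorphic to $\ps{M}X$ by sending $(X,A)\mapsto A$, and the reindexing $f^*$ in $\Gr{G}\ps{M}$ is defined exactly via $\ps{M}f$, giving an indexed equivalence $\ps{M}_{P_\ps{M}}\cong\ps{M}$. On the fibration side, $P_{\ps{M}_P}\cong P$ sends $(X,A)$ (with $A\in\ca{A}_X$) to $A$, and cartesian morphisms in $\Gr{G}\ps{M}_P$ correspond to the chosen cartesian liftings in $\ca{A}$. The main obstacle I anticipate is the bookkeeping for the $2$-dimensional part: checking that the equivalence is appropriately pseudo-natural in both directions, and that the coherence constraints of the Grothendieck construction match up with those of the original indexed category rather than introducing spurious mismatches. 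The rest consists of systematic diagram chases whose commutativity rests on either the uniqueness property of cartesian liftings or the pseudofunctor axioms of $\ps{M}$.
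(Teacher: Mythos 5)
Your proposal is correct and follows essentially the same route as the paper: parts (i) and (ii) are identical, using $\delta^{f,g}$ and $\gamma^X$ from (\ref{deltaiso}) and (\ref{gammaiso}) as the pseudofunctor constraints and the canonical lifting $(1_{(\ps{M}f)B},f)$ as the cartesian arrow. The only divergence is in part (iii): you construct explicit pseudo-inverse $2$-functors in both directions and verify the two round trips, whereas the paper builds only the Grothendieck-construction $2$-functor $\Gr{P}:\B{ICat}(\caa{X})\to\B{Fib}(\caa{X})$ and invokes Proposition \ref{knownequivalenceprop}, checking that $\Gr{P}_{\ps{M},\ps{H}}$ is an isomorphism of hom-categories and that every fibration is isomorphic to $P_{\ps{M}_P}$; the two certifications involve the same underlying computations (your passage from a fibred functor to a pseudonatural transformation via Lemma \ref{cartfunctcommute} is exactly the inverse of the paper's hom-isomorphism), so this is a difference of packaging rather than of substance.
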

\begin{proof}
(i) Let $P:\ca{A}\to\caa{X}$ be 
a cloven fibration. We can define a pseudofunctor 
$\ps{M}_P:\caa{X}^\mathrm{op}\to\B{Cat}$ as follows:
\begin{itemize}
\renewcommand{\labelitemi}{$\cdot$}
\item Each object $X\in\caa{X}$ is mapped
to the fibre category over this object, \emph{i.e.} 
$\ps{M}_P(X)=\ca{A}_X$.
\item Each morphism $f:X\to Y$ in $\caa{X}$ is mapped to
the reindexing functor
$\ps{M}_P(f)=f^*:\ca{A}_Y\to\ca{A}_X$ as in (\ref{reindexing}).
\item Given $g:Y\to Z$ and $A\in\ca{A}_Z$,
there is a natural isomorphism
$\delta^{f,g}:\ps{M}_P(f)\circ\ps{M}_P(g)\xrightarrow{\sim}
\ps{M}_P(g\circ f)$, explicitly described above (\ref{deltaiso}).
\item For any object $A\in\ca{A}_X$, 
there is a natural isomorphism
$\gamma^X:1_{\ps{M}_P(X)}\xrightarrow{\sim}\ps{M}_P(1_A)$ 
described in detail above (\ref{gammaiso}).
\end{itemize}
It is straightforward to check that these natural isomorphisms
$\delta$ and $\gamma$ satisfy the coherence conditions
for a pseudofunctor as described in the Definition \ref{laxfunctor}.

(ii) Let $\caa{X}$ be a category and $\ps{M}:\caa{X}^\mathrm{op}
\to\B{Cat}$ an indexed category over $\caa{X}$. The
\emph{Grothendieck category}  $\Gr{G}\ps{M}$
of $\ps{M}$ is defined as follows: 
objects are pairs $(A,X)$ where $X\in\ob\caa{X}$
and $A\in\ob(\ps{M}X)$, and morphisms 
$(A,X)\to (B,Y)$ are pairs $(\phi,f)$ where
$f:X\to Y$ is an arrow in $\caa{X}$ and 
$\phi:A\to(\ps{M}f)B$
is an arrow in $\ps{M}X$.
This can also be written as 
\begin{equation}\label{grmorphism}
\begin{cases} 
A\xrightarrow{\phi}(\ps{M}f)B &\text{in }\ps{M}X\\
X\xrightarrow{f}Y &\text{in }\caa{X}.
\end{cases}
\end{equation}
The composite of two arrows in this category 
$(A,X)\xrightarrow{(\phi,f)}
(B,Y)\xrightarrow{(\psi,g)}(C,Z)$ is
$(\theta,g\circ f):(A,X)\to(C,Z)$,
where $\theta$ is the composite
\begin{displaymath}
A\xrightarrow{\phi}(\ps{M}f)B\xrightarrow{(\ps{M}f)\psi}
(\ps{M}f\circ\ps{M}g)C\xrightarrow{(\delta_{f,g})_C}
[\ps{M}(g\circ f)]C
\end{displaymath}
for $\delta$ is the natural isomorphism as in 
(\ref{delta}) of the pseudofunctor $\ps{M}$.
The coherence axiom (\ref{laxcond1}) 
for $\delta_{g,f}$ ensures
the associativity of this composition.
Notice how we employ the components of
the 2-cell $\delta_{f,g}$, since in this case it 
is actually a natural transformation 
(the codomain of the 
pseudofunctor is $\B{Cat}$).

Moreover, the identity arrow for each
$(A,X)\in\Gr{G}\ps{M}$ is 
$(i,1_X):(A,X)\to(A,I)$, 
where $i$ is the composite
\begin{displaymath}
 A\xrightarrow{1_A}(1_{\ps{M}X})A\xrightarrow{(\gamma_X)_A}
\ps{M}(1_X)A
\end{displaymath}
where $\gamma$ is the natural isomorphism
as in (\ref{gamma}). Again, the identity laws follow from
the coherence conditions (\ref{laxcond2}) of the
pseudofunctor $\ps{M}$, and so $\Gr{G}\ps{M}$
is a category.

In fact, the projection functor 
\begin{displaymath}
 P_\ps{M}:\Gr{G}\ps{M}\longrightarrow\caa{X}
\end{displaymath}
which maps each object $(A,X)$ to $X$ and each
morphism $(\phi,f)$ to $f$ is a cloven 
fibration: for each arrow
$f:X\to Y$ of the base category $\caa{X}$ and an object 
$(B,Y)$ over $Y$, we can choose the following top arrow
\begin{displaymath}
\xymatrix @R=.4in @C=.8in
{((\ps{M}f)B,X)\ar[r]^-{(1_{(\ps{M}f)B},f)} 
\ar @{.>}[d]_-{P_\ps{M}} & 
(B,Y) \ar @{.>}[d]^-{P_\ps{M}} & \textrm{in }\Gr{G}\ps{M}\\
X\ar[r]_-{f} & Y & \textrm{in }\caa{X}}
\end{displaymath}
to be the cartesian lifting $\Cart(f,(B,Y))$.
Notice that the fibres $(\Gr{G}\ps{M})_X$ of the
fibration $P_\ps{M}$
over $X\in\ob(\caa{X})$ are isomorphic 
to the categories $\ps{M}X$,
due to the isomorphism 
$A\equiv(1_{\ps{M}X})A\cong\ps{M}(1_X)A$.

(iii) By Proposition \ref{knownequivalenceprop},
in order to exhibit an equivalence
between two 2-categories, it is enough 
to construct a fully faithful 
and essentially surjective on objects
2-functor between them.
Hence, we will demonstrate how 
the `Grothendieck construction' mapping 
on objects 
$\ps{M}\mapsto(P_\ps{M}:\Gr{G}\ps{M}\to\caa{X})$
extends to a 2-functor
\begin{displaymath}
\Gr{P}:[\caa{X}^\mathrm{op},\B{Cat}]_{\mathrm{ps}}
\longrightarrow\B{Fib}(\caa{X})
\end{displaymath}
with the following two properties:

$\bullet$ If $\ps{M},\ps{H}:\caa{X}^\mathrm{op}\to
\B{Cat}$ are two $\caa{X}$-indexed categories, 
there is an isomorphism
\begin{equation}\label{isohomcats}
\Gr{P}_{\ps{M},\ps{H}}:
[\caa{X}^\mathrm{op},\B{Cat}]_{\mathrm{ps}}(\ps{M},\ps{H})
\cong\B{Fib}(\caa{X})(P_\ps{M},P_\ps{H})
\end{equation}
between the category of pseudonatural transformations
and modifications and the category of fibred functors
and fibred natural transformations accordingly.

$\bullet$ Every fibration $P:\ca{A}\to\caa{X}$ is 
isomorphic to a fibration $P_\ps{M}:\Gr{G}\ps{M}\to\caa{X}$
arising from a pseudofunctor $\ps{M}:\caa{X}^\mathrm{op}\to\B{Cat}$.

Consider a pseudonatural transformation
$\tau:\ps{M}\Rightarrow\ps{H}$, consisting of
functors $\tau_X:\ps{M}X\to\ps{H}X$ for all $X\in\ob\caa{X}$
and natural isomorphisms 
$\tau^f:\tau_X\circ\ps{M}f\xrightarrow{\sim}\ps{H}f\circ\tau_Y$ 
for all arrows $f:X\to Y$.
Then, define  
\begin{displaymath}
 \Gr{P}_{\ps{M},\ps{H}}(\tau):\Gr{G}\ps{M}
\longrightarrow\Gr{G}\ps{H}
\end{displaymath}
to be the functor
which maps an object $(A,X)\in\Gr{G}\ps{M}$ to
$(\tau_XA,X)\in\Gr{G}\ps{H}$ and an arrow
$(\phi,f):(A,X)\to(B,Y)$ in $\Gr{G}\ps{M}$ like
(\ref{grmorphism}) to
\begin{displaymath}
 \begin{cases} 
\tau_XA\xrightarrow{\psi}(\ps{H}f)(\tau_YB) &\text{in }\ps{H}X\\
X\xrightarrow{f}Y &\text{in }\caa{X}
\end{cases}
\end{displaymath}
in $\Gr{G}\ps{H}$,
where $\psi$ is the composite
\begin{displaymath}
 \tau_XA\xrightarrow{\tau_X(\phi)}(\tau_X\circ\ps{M}f)B
 \xrightarrow{\tau^f_B}(\ps{H}f\circ\tau_Y)B.
\end{displaymath}
The fact that $\Gr{P}_{\ps{M},\ps{H}}(\tau)$ is a functor follows
from the axioms of the pseudonatural transformation $\tau$,
and it can be easily shown that it preserves
cartesian liftings, via the isomorphisms $\tau^f_B$
for all $B$.
The triangle 
\begin{displaymath}
 \xymatrix @C=.5in @R=.5in
 {\Gr{G}\ps{M}\ar[rr]^-{\Gr{P}_{\ps{M},\ps{L}}(\tau)} 
 \ar[dr]_{P_\ps{M}} && \Gr{G}\ps{H}\ar[dl]^-{P_\ps{H}} \\
 & \caa{X} &}
\end{displaymath}
commutes trivially, since
the object of $\caa{X}$ which is projected 
by the fibrations remains unchanged, therefore 
 $\Gr{P}_{\ps{M},\ps{H}}(\tau)$ is a fibred functor.

Now consider a modification 
$m:\tau\Rrightarrow\sigma$
between pseudonatural 
transformations $\tau,\sigma:\ps{M}\Rightarrow\ps{H}$,
given by a family of 
natural transformations $m_X:\tau_X\Rightarrow\sigma_X$.
We can then define a natural transformation
\begin{displaymath}
 \Gr{P}_{\ps{M},\ps{H}}(m):\Gr{P}_{\ps{M},\ps{H}}(\tau)
\Rightarrow \Gr{P}_{\ps{M},\ps{H}}(\sigma)
\end{displaymath}
by setting its components, for each $(A,X)$
in $\Gr{G}\ps{M}$, to be
$((m_X)_A,1_X):(\tau_XA,X)\to(\sigma_XA,X)$.
The conditions which
make 
\begin{displaymath}
 \xymatrix @C=.7in @R=.7in
{\Gr{G}\ps{M}\rrtwocell<\omit>
{\qquad\quad\;\Gr{P}_{\ps{M},\ps{H}}(m)}
\ar @/^2.5ex/[rr]^-{\Gr{P}_{\ps{M},\ps{H}}(\tau)}
\ar @/_2.5ex/[rr]_-{\Gr{P}_{\ps{M},\ps{H}}(\sigma)}
 \ar[dr]_-{P_\ps{M}} && 
\Gr{G}\ps{H}\ar[dl]^-{P_\ps{H}} \\
 & \caa{X} &}
\end{displaymath}
into a fibred natural
transformation are satisfied by the coherence axioms for
the modification $m$.

The above data define a 2-functor in a 
straightforward way, and moreover
the functor $\Gr{P}_{\ps{M},\ps{H}}$
is an isomorphism of categories, 
since the mappings above are bijective.
Therefore an isomorphism (\ref{isohomcats})
is established.

For the second property, 
the goal is to show that every
fibration $P:\ca{A}\to\caa{X}$ is
specifically isomorphic to $P_{\ps{M}_P}$
in $\B{Fib}(\caa{X})$. The latter fibration arises 
by applying
the Grothendieck construction to the
induced pseudofunctor $\ps{M}_P$
as constructed
at part $(i)$ of the proof.
Indeed, there exists an invertible
fibred functor
\begin{displaymath}
 \xymatrix @C=.3in @R=.4in
 {\ca{A}\ar[rr]^-F\ar[dr]_-P && \Gr{G}\ps{M}_P
 \ar[dl]^-{P_{\ps{M}_P}} \\
 & \caa{X} &}
\end{displaymath}
which maps an object $A$ in $\ca{A}$
to the pair $(A,PA)$ in the Grothendieck
category $\Gr{G}\ps{M}_P$, and a morphism 
$\phi:A\to B$ to
$(\theta,P\phi):(A,PA)\to (B,PB)$, where
$\theta$ is the unique vertical arrow
of the $P$-factorization of $\phi$:
\begin{displaymath}
 \xymatrix @C=.5in
 {A\ar[r]^-\phi \ar@{-->}[d]_-\theta &
 B \\
 (P\phi)^*B\ar[ur]_-{\Cart(P\phi,B)} &}
\end{displaymath}
Functoriality follows from uniqueness of 
cartesian liftings, and $F$ is evidently
bijective on 
objects and on arrows. Also,
it preserves cartesian arrows and
commutes with the fibrations $P$,
$P_{\ps{M}_P}$ hence it is 
an isomorphism of fibrations
over $\caa{X}$.
\end{proof}
Notice that in the Grothendieck construction above,
we write the pairs in the opposite 
way from the standard notation.
The same will apply for the form 
of objects
and morphisms of all fibred categories
studied later on.

The equivalence (\ref{equivalencegroth})
clearly restricts to one between 
split fibrations over $\caa{X}$ and `strict'
$\caa{X}$-indexed categories, \emph{i.e.} functors
from $\caa{X}^\mathrm{op}$ to $\B{Cat}$:
\begin{displaymath}
[\caa{X}^\mathrm{op},\B{Cat}]
\simeq\B{Fib}(\caa{X})_\mathrm{sp}
\end{displaymath}
Dually, we have an analogous result relating opfibrations
$U:\ca{C}\to\caa{X}$ and `covariant indexed categories',
\emph{i.e.} pseudofunctors $\ps{F}:\caa{X}\to\B{Cat}$.
\begin{thm}\label{maintheoremopfibr}
There is an equivalence of 2-categories
\begin{displaymath}
[\caa{X},\mathbf{Cat}]_{\mathrm{ps}}\simeq\B{OpFib}(\caa{X}).
\end{displaymath}
In particular, every opfibration $U:\ca{C}\to\caa{X}$
is isomorphic to $U_\ps{F}:\Gr{G}\ps{F}\to\caa{X}$ 
arising from a pseudofunctor
$\ps{F}:\caa{X}\to\B{Cat}$, and there is an isomorphism
of categories
\begin{displaymath}
[\caa{X},\B{Cat}](\ps{F},\ps{G})
\cong\B{OpFib}(\caa{X})(U_\ps{F},U_\ps{G})
\end{displaymath}
for any two pseudofunctors
$\ps{F},\ps{G}:\caa{X}\to\B{Cat}$.
\end{thm}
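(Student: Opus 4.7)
The plan is to imitate the three-step proof of Theorem \ref{maintheoremfibr}, systematically replacing cartesian liftings with cocartesian ones and reversing the direction of morphisms in the Grothendieck category. Much of the argument is formally dual, but since an indexed category of the form $\caa{X}\to\B{Cat}$ is covariant rather than contravariant, care is needed about where the coherence isomorphisms of a pseudofunctor $\ps{F}$ are inserted.

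First, given a cloven opfibration $U:\ca{C}\to\caa{X}$, I would define a pseudofunctor $\ps{F}_U:\caa{X}\to\B{Cat}$ by $X\mapsto\ca{C}_X$ on objects and $f\mapsto f_!$ on morphisms. The coherence isomorphisms $q^{f,g}:(g\circ f)_!\xrightarrow{\sim}g_!\circ f_!$ of (\ref{qiso}) and $p^X:(1_X)_!\xrightarrow{\sim}1_{\ca{C}_X}$ constructed just above supply the required pseudofunctor structure (after inversion to match the lax direction of Definition \ref{laxfunctor}), and the coherence axioms follow from the uniqueness of cocartesian factorizations exactly as in the fibration case.

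Second, given a pseudofunctor $\ps{F}:\caa{X}\to\B{Cat}$ with structure 2-cells $\delta_{g,f}:\ps{F}g\circ\ps{F}f\xrightarrow{\sim}\ps{F}(g\circ f)$ and $\gamma_X:1_{\ps{F}X}\xrightarrow{\sim}\ps{F}(1_X)$, I would construct the Grothendieck category $\Gr{G}\ps{F}$ with objects pairs $(C,X)$ where $C\in\ob(\ps{F}X)$, and morphisms $(C,X)\to(D,Y)$ pairs $(\gamma,f)$ with $f:X\to Y$ in $\caa{X}$ and $\gamma:(\ps{F}f)C\to D$ in $\ps{F}Y$ — note that this direction is the reverse of the one used in (\ref{grmorphism}). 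Composition of $(C,X)\xrightarrow{(\gamma,f)}(D,Y)\xrightarrow{(\delta,g)}(E,Z)$ is the pair $(\theta,g\circ f)$ where $\theta$ is the composite
\begin{displaymath}
\ps{F}(g\circ f)C\xrightarrow{(\delta_{g,f})_C^{-1}}(\ps{F}g)(\ps{F}f)C\xrightarrow{(\ps{F}g)\gamma}(\ps{F}g)D\xrightarrow{\delta}E,
\end{displaymath}
and identities are built analogously using $\gamma_X^{-1}$; associativity and identity laws reduce to the axioms (\ref{laxcond1}) and (\ref{laxcond2}) for $\ps{F}$. The projection $U_\ps{F}:\Gr{G}\ps{F}\to\caa{X}$ is then a cloven opfibration whose cocartesian lifting of $(C,X)$ along $f:X\to Y$ is $(1_{(\ps{F}f)C},f):(C,X)\to((\ps{F}f)C,Y)$, and the fibres recover the $\ps{F}X$ up to canonical isomorphism.

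Finally, these assignments extend to a 2-functor $\Gr{P}:[\caa{X},\B{Cat}]_{\mathrm{ps}}\to\B{OpFib}(\caa{X})$: a pseudonatural transformation $\tau:\ps{F}\Rightarrow\ps{G}$ is sent to the opfibred functor carrying $(C,X)$ to $(\tau_X C,X)$ and a morphism $(\gamma,f)$ to one built from $\tau_X(\gamma)$ together with the pseudonaturality constraint $\tau^f$, while a modification $m:\tau\Rrightarrow\sigma$ goes to the opfibred natural transformation with components $((m_X)_C,1_X)$. The claimed isomorphism of hom-categories and essential surjectivity on objects (so that every opfibration $U$ is isomorphic to $U_{\ps{F}_U}$ over $\caa{X}$) then follow by the same arguments as in part (iii) of Theorem \ref{maintheoremfibr}, invoking Proposition \ref{knownequivalenceprop}. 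The only genuine obstacle is keeping track of the direction of $\delta_{g,f}$ and $\gamma_X$ in the composition law of $\Gr{G}\ps{F}$; once this bookkeeping is in place, all coherence and functoriality verifications reduce immediately to their fibred counterparts.
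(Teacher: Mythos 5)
Your proposal is correct and is exactly the dualization the paper intends: the paper states this theorem without proof, leaving it as the formal dual of Theorem \ref{maintheoremfibr}, and your tracking of where $(\delta_{g,f})^{-1}$ and $\gamma_X^{-1}$ enter the composition law of $\Gr{G}\ps{F}$ is precisely the one point where the dualization is not purely mechanical. One small index to fix: since a morphism $(\gamma,f):(C,X)\to(D,Y)$ has its component $\gamma:(\ps{F}f)C\to D$ living in the fibre $\ps{F}Y$ over the \emph{target}, the image of $\gamma$ under a pseudonatural $\tau$ should be $\tau_Y(\gamma)$ (pasted with the constraint $\tau^f$) rather than $\tau_X(\gamma)$, in contrast with the fibration case where the component lives over the source.
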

The above theorems show how 
$\caa{X}$-indexed
categories are `essentially the same as' cloven
fibrations over $\caa{X}$, and covariant
indexed categories as opfibrations, 
hence we are able
to freely pass from the one structure to the other
depending on our needs. Via this process, we can also 
transfer properties and state them in the fibrational
or indexed categories language at will.

As an example of how indexed and covariant
indexed categories can be convenient means of
studying fibrations and opfibrations, 
consider the following situation. 
If $K:\ca{C}\to\ca{D}$ is an opfibred functor
between $U:\ca{C}\to\caa{X}$ and $V:\ca{D}\to\caa{X}$,
by the dual of Lemma 
\ref{cartfunctcommute} 
there is a natural isomorphism 
\begin{equation}\label{commutewithreindex}
\xymatrix
{\ca{C}_X\ar[r]^-{f_!}
\ar[d]_-{K_X} \drtwocell<\omit>{'\cong}
& \ca{C}_{Y}\ar[d]^-{K_Y}
\\
\ca{D}_X\ar[r]_-{f_!} & \ca{D}_{Y}}
\end{equation}
for any arrow $f:X\to Y$ in $\caa{X}$.
We can also deduce this as follows.
By Theorem \ref{maintheoremopfibr},
the opfibrations $U$, $V$ correspond to
pseudofunctors $\ps{F},\ps{G}:\caa{X}\to\B{Cat}$,
in the sense that 
$U$ is isomorphic to
$U_\ps{F}:\Gr{G}\ps{F}\to\caa{X}$
and $V$ is isomorphic to
$U_\ps{G}:\Gr{G}\ps{G}\to\caa{Y}$.
In particular $\ca{C}_X\cong\ps{F}X$,
$\ca{D}_X\cong\ps{G}X$ and 
the reindexing functors are
$\ps{F}f$ and $\ps{G}f$ respectively. Now, 
the opfibred functor $K$ 
corresponds uniquely to an 
$\caa{X}$-indexed functor $\tau:\ps{F}\Rightarrow\ps{G}$, which is 
a pseudonatural transformation
equipped with 
an (ordinary) natural isomorphism with components
\begin{displaymath} 
\xymatrix
{\ps{F}X\ar[r]^-{\ps{F}f}
\ar[d]_-{\tau_X} \drtwocell<\omit>{'\stackrel{\tau_f}{\cong}}
 & \ps{F}Y\ar[d]^-{\tau_{Y}}\\
\ps{G}X\ar[r]_-{\ps{G}f} & \ps{G}Y}
\end{displaymath}
for every $f:X\to Y$ in $\caa{X}$. This diagram
corresponds uniquely to an isomorphism
exactly like (\ref{commutewithreindex}).
This is evident after the realization
that the functors $K_X$ induced between the fibres
as in Remark \ref{functorsbetweenfibres}
are precisely $\tau_X$.

As another example, suppose that 
$Q:\ca{B}\to\caa{Y}$ is a fibration
which corresponds uniquely to 
the pseudofunctor 
$\ps{H}:\caa{Y}^\mathrm{op}\to\B{Cat}$.
Then, if $F:\caa{X}\to\caa{Y}$ is a functor,
the fibration $F^*Q$
obtained from $Q$ by change of base
along $F$
\begin{displaymath}
\xymatrix
{F^*(\ca{B}) \pullbackcorner[ul] 
\ar[r]^-K \ar[d]_-{F^*Q} & \ca{B}\ar[d]^-Q\\
\caa{X}\ar[r]_-F & \caa{Y}}
\end{displaymath}
as in Proposition \ref{changeofbasefibr},
corresponds to the 
composite pseudofunctor
\begin{displaymath}
\caa{X}^\mathrm{op}\xrightarrow{\;F^\mathrm{op}\;}
\caa{Y}^\mathrm{op}\xrightarrow{\;\ps{H}\;}\B{Cat}.
\end{displaymath}
This is evident by part $(i)$ of the
proof of the above theorem, since 
its mapping on objects is
\begin{displaymath}
\ps{H}(FX)\cong\ca{B}_{FX}\cong F^*(\ca{B})_X
\end{displaymath}
by $\ca{B}\cong\Gr{G}\ps{H}$ and the isomorphism
(\ref{Sxiso}). On arrows, for $f:Z\to X$ in $\caa{X}$
and $B$ above $FX$ we have
\begin{displaymath}
(\ps{H}(Ff)B,X)\cong((Ff)^*B,X)=f^*(B,X)
\end{displaymath}
for $f^*$ the reindexing functor
of the fibration $F^*Q$.

The 2-categories of the form
$\B{ICat}(\caa{X})$ for each $\caa{X}$, sometimes also
denoted as $\B{Cat}_\caa{X}$, turn out to also be 
fibres of a fibration, like their equivalent $\B{Fib(\caa{X})}$.
Explicitly, there is a 2-category $\B{ICat}$
with objects indexed categories 
$\ps{M}:\caa{X}^\mathrm{op}\to\B{Cat}$ for
arbitrary categories $\caa{X}$. A morphism
from $\ps{M}$ to $\ps{H}:\caa{Y}^\mathrm{op}\to\B{Cat}$
is given by a functor $F:\caa{X}\to\caa{Y}$
and an $\caa{X}$-indexed functor
\begin{displaymath}
\xymatrix @C=.6in @R=.25in
{\caa{X}^\mathrm{op} \ar[r]^-{\ps{M}} \ar[d]_-{F^\mathrm{op}}
\rtwocell<\omit>{<2>\tau} & \B{Cat} \\
\caa{Y}^\mathrm{op} \ar @/_2ex/[ur]_-{\ps{H}} & }
\end{displaymath}
and we write $(F,\tau):\ps{M}\to\ps{H}$.
Notice the direct relation with the indexed expression
of pullbacks described above.
A 2-cell $(F,\tau)\to(G,\sigma)$ is given
by a natural transformation
$\beta:F\Rightarrow G$ and a modification
\begin{displaymath}
 \xymatrix @C=.5in
{\caa{X}^\mathrm{op} \ar[r]^-{F^\mathrm{op}}
\ar @/^5ex/[rr]^-{\ps{M}} \rrtwocell<\omit>{<-3>\tau}
\ar @/_4ex/[r]_-{G^\mathrm{op}}
\rtwocell<\omit>{<2>\quad\beta^\mathrm{op}} & 
\caa{Y}^\mathrm{op} \ar[r]^-{\ps{H}} & 
\B{Cat}}\stackrel{m}{\Rrightarrow}
\xymatrix @C=.3in
{\caa{X}^\mathrm{op} 
\ar @/^3ex/[rr]^-{\ps{M}} \rrtwocell<\omit>{\sigma}
\ar @/_3ex/[rr]_-{\ps{H}\circ G^\mathrm{op}} && \B{Cat}.}
\end{displaymath}
Compositions and identities are defined
using those in $\B{Cat}$ and $\B{ICat}(-)$.
Hence, there is a (2-)functor
\begin{displaymath}
 base:\B{ICat}\longrightarrow\B{Cat}
\end{displaymath}
which maps an indexed category to its domain
and a morphism to its first component. This is 
a split fibration, with fibres $\B{ICat}(\caa{X})$ 
above $\caa{X}$
and reindexing functors
precomposition with $F^\mathrm{op}$
for each $F:\caa{X}\to\caa{Y}$ in $\B{Cat}$.
For more details,
we refer the reader to \cite{hermidaphd} or 
\cite{Jacobs}.
\begin{thm}
 There is a (2-)equivalence in the 
2-category $\B{Fib}(\B{Cat})$
\begin{displaymath}
 \xymatrix
{\B{ICat}\ar[rr]^-{\simeq} \ar[dr]_-{base} &&
\B{Fib}\ar[dl]^-{cod} \\
& \B{Cat}. &}
\end{displaymath}
\end{thm}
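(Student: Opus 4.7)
The plan is to upgrade the pointwise equivalence $\B{ICat}(\caa{X})\simeq\B{Fib}(\caa{X})$ of Theorem \ref{maintheoremfibr} to a global 2-functor $\Gr{P}\colon\B{ICat}\to\B{Fib}$, and then verify that it lies in $\B{Fib}(\B{Cat})$ (i.e.\ commutes with the projections $base$ and $cod$ and preserves cartesian morphisms) and is a 2-equivalence there. The three main constructions to make are: (a) extend Grothendieck's mapping on objects to a genuine 2-functor on all of $\B{ICat}$; (b) check the resulting functor preserves cartesian 1-cells; (c) deduce it is a 2-equivalence from the fibrewise version already in hand.

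On 1-cells, given $(F,\tau)\colon\ps{M}\to\ps{H}$ with $F\colon\caa{X}\to\caa{Y}$ and $\tau\colon\ps{M}\Rightarrow\ps{H}\circ F^{\op}$ a pseudonatural transformation, I define $\Gr{P}(F,\tau)=(S,F)$ where $S\colon\Gr{G}\ps{M}\to\Gr{G}\ps{H}$ sends $(A,X)\mapsto(\tau_X A,FX)$, and an arrow $(\phi,f)\colon(A,X)\to(B,Y)$ (with $\phi\colon A\to(\ps{M}f)B$ in $\ps{M}X$ and $f\colon X\to Y$) to the pair with base component $Ff$ and fibre component
\[
\tau_X A\xrightarrow{\tau_X(\phi)}\tau_X(\ps{M}f)B\xrightarrow{(\tau_f)_B}\ps{H}(Ff)(\tau_Y B),
\]
exactly as in part (iii) of Theorem \ref{maintheoremfibr} but with the extra data of $F$ and the coherent isomorphisms $\tau_f$ now contributing non-trivially to the base component. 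Functoriality of $S$ follows from the hexagon axiom (\ref{compatcomp1}) for $\tau$, and the square $P_\ps{H}\circ S=F\circ P_\ps{M}$ is immediate. On 2-cells, a modification $(\beta,m)\colon(F,\tau)\Rightarrow(G,\sigma)$ is sent to the pair of natural transformations with components $((m_X)_A,\beta_X)\colon(\tau_X A,FX)\to(\sigma_X A,GX)$; the modification axiom (\ref{modificcond}) is precisely what ensures this defines a fibred 2-cell.

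Next I verify that $cod\circ\Gr{P}=base$ on the nose, so $\Gr{P}$ is a 1-cell in $\B{Fib}(\B{Cat})$ between the two objects $base$ and $cod$. I then check $\Gr{P}$ preserves cartesian morphisms. Because $base$ is a split fibration with reindexing given by precomposition with $F^{\op}$, the cartesian lifting of $\ps{H}\colon\caa{Y}^{\op}\to\B{Cat}$ along $F\colon\caa{X}\to\caa{Y}$ is $(F,\id)\colon\ps{H}\circ F^{\op}\to\ps{H}$; applying $\Gr{P}$ to this produces precisely the pullback square
\[
\xymatrix @R=.35in @C=.5in{\Gr{G}(\ps{H}\circ F^{\op})\pullbackcorner[ul]\ar[r]\ar[d] & \Gr{G}\ps{H}\ar[d]^-{P_\ps{H}}\\ \caa{X}\ar[r]_-F & \caa{Y}}
\]
which is the chosen $cod$-cartesian lifting of $P_\ps{H}$ along $F$ by Proposition \ref{changeofbasefibr}. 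Thus $\Gr{P}$ is a morphism in $\B{Fib}(\B{Cat})$.

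Finally, to deduce a 2-equivalence in $\B{Fib}(\B{Cat})$ I use the standard principle that a morphism between fibrations over a common base is an equivalence in the 2-category of such fibrations as soon as it is fibrewise an equivalence (the pseudo-inverse and the invertible 2-cells are then assembled fibrewise). The fibres of $base$ and $cod$ over $\caa{X}\in\B{Cat}$ are $\B{ICat}(\caa{X})$ and $\B{Fib}(\caa{X})$, and $\Gr{P}$ restricts on each fibre to the 2-functor $\Gr{P}_\caa{X}$ of Theorem \ref{maintheoremfibr}, which is an equivalence by the isomorphisms (\ref{isohomcats}) and the essential surjectivity proved there. The one delicate point, which I expect to be the main technical obstacle, is the bookkeeping needed to make the global hom-category isomorphism transparent: namely, showing that $\caa{X}$-indexed functors $\ps{M}\Rightarrow\ps{H}\circ F^{\op}$ correspond bijectively (and 2-naturally in modifications) to fibred 1-cells $(S,F)\colon P_\ps{M}\to P_\ps{H}$, by matching the coherence isomorphisms $\tau_f$ of a pseudonatural transformation with the canonical isomorphisms $\tau^f$ relating reindexing and induced fibre-functors supplied by Lemma \ref{cartfunctcommute}. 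Once that bijection is made precise, the pseudo-inverse $\B{Fib}\to\B{ICat}$ is the one assigning to a fibration its indexed category of fibres and to a fibred 1-cell the pseudonatural family of functors between fibres, and the required invertible 2-cells in $\B{Fib}(\B{Cat})$ are obtained fibrewise from part (iii) of Theorem \ref{maintheoremfibr}.
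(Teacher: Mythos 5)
Your proposal is correct, but note that the paper does not actually prove this theorem: it is stated without proof, with the reader referred to Hermida's thesis and Jacobs. Your argument is the natural way to fill that gap, and it is built entirely from ingredients the paper does supply: the fibrewise 2-equivalence $\Gr{P}_{\caa{X}}$ of Theorem \ref{maintheoremfibr}(iii), the identification of the change-of-base fibration $F^*Q$ with the Grothendieck construction on $\ps{H}\circ F^{\op}$ (discussed right after Theorem \ref{maintheoremopfibr}, via Proposition \ref{changeofbasefibr}), and the matching of the coherence data $\tau_f$ with the canonical isomorphisms of Lemma \ref{cartfunctcommute}. Two small points deserve a word in a polished write-up. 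First, applying $\Gr{P}$ to the chosen $base$-cartesian lifting $(F,\id)\colon\ps{H}\circ F^{\op}\to\ps{H}$ yields the pullback square only up to the evident isomorphism of $\Gr{G}(\ps{H}\circ F^{\op})$ with $F^*(\Gr{G}\ps{H})$; this is harmless because being a pullback is a universal property stable under isomorphism of cones, and preserving cartesian arrows only requires images of cartesian arrows to be cartesian, not that chosen cleavages match. Second, the ``standard principle'' you invoke at the end --- that a cartesian functor over a fixed base which is fibrewise an equivalence is an equivalence in $\B{Fib}$ of that base --- should be justified or cited: it follows from the unique factorization of arrows into a vertical arrow followed by a cartesian one (which upgrades fibrewise full faithfulness, via the isomorphisms (\ref{isohomcats}), to full faithfulness on total categories) together with fibrewise essential surjectivity, after which a quasi-inverse can be chosen cartesian and over the base. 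With those two remarks made explicit, the proof is complete.
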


\section{Fibred adjunctions and fibrewise limits}\label{fibredadjunctions}

The notions of fibred and opfibred adjunction
come from Definition \ref{adjunction2cat}
applied to the 2-categories $\B{Fib}$ and 
$\B{OpFib}$.
\begin{defi}\label{generalfibredadjunction}
Given fibrations $P:\ca{A}\to\caa{X}$ and 
$Q:\ca{B}\to\caa{Y}$,
a \emph{general fibred adjunction} is given by
a pair of fibred 1-cells $(L,F):P\to Q$ and 
$(R,G):Q\to P$ together with
fibred 2-cells $(\zeta,\eta):(1_\ca{A},1_\caa{X})\Rightarrow
(RL,GF)$ and 
$(\xi,\varepsilon):(LR,FG)\Rightarrow
(1_\ca{B},1_\caa{Y})$
such that $L\dashv R$ via $\zeta,\xi$
and $F\dashv G$ via $\eta,\varepsilon$. This
is displayed as
\begin{displaymath}
\xymatrix @C=.7in @R=.4in
{\ca{A} \ar[d]_-P 
\ar @<+.8ex>[r]^-L\ar@{}[r]|-\bot
& \ca{B} \ar @<+.8ex>[l]^-{R} \ar[d]^-Q \\
\caa{X} \ar @<+.8ex>[r]^-F\ar@{}[r]|-\bot
& \caa{Y} \ar @<+.8ex>[l]^-G}
\end{displaymath} 
and we write $(L,F)\dashv(R,G):Q\to P$. 
In particular, a \emph{fibred adjunction}
is an adjunction in the 2-category $\B{Fib}(\caa{X})$,
displayed as
\begin{equation}\label{fibredadjunction}
\xymatrix @C=.3in @R=.4in
{\ca{A} \ar[dr]_-P 
\ar @<+.8ex>[rr]^-L\ar@{}[rr]|-\bot
&& \ca{B} \ar @<+.8ex>[ll]^-R \ar[dl]^-Q \\
& \caa{X}. & }
\end{equation}
\end{defi}
Notice that since $(\zeta,\eta)$
and $(\xi,\varepsilon)$ are fibred 2-cells, by definition
$\zeta$ is above $\eta$ and $\xi$ is above $\varepsilon$,
which makes $(P,Q)$ into a map of adjunctions (see Definition
\ref{mapofadunctions}).

Dually, we have the notions of \emph{general opfibred 
adjunction} and \emph{opfibred adjunction} for adjunctions
in the 2-categories $\B{OpFib}$ and $\B{OpFib}(\caa{X})$
respectively.
Moreover, for the 2-categories
$\B{Fib}_\mathrm{sp}$, $\B{Fib}(\caa{X})_\mathrm{sp}$,
$\B{OpFib}_\mathrm{sp}$ and $\B{OpFib}(\caa{X})_\mathrm{sp}$
they are called \emph{general split (op)fibred 
adjunction} and \emph{split (op)fibred adjunction}.
Then, the functors $L$ and $R$ are required to preserve
the cleavages of the split (op)fibrations on the nose. 

Since a basic aim in this section is to identify 
conditions under which (op)fibred functors 
and (op)fibred 1-cells have left or right adjoints, 
we recall the following well-known 
important fact (e.g. see 
\cite[4.5]{Winskel}).
\begin{lem}\label{Winskellemma}
 Right adjoints in the 2-category
$\B{Cat}/\caa{X}$ preserve cartesian arrows and dually
left adjoints in $\B{Cat}/\caa{X}$ preserve cocartesian arrows. The
same holds for adjoints in the 2-category $\B{Cat}^\B{2}$.
\end{lem}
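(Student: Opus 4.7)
The plan is to prove the statement for right adjoints in $\B{Cat}/\caa{X}$ directly, and then indicate how the argument carries over to the other three cases (left adjoints in $\B{Cat}/\caa{X}$, and both handedness in $\B{Cat}^\B{2}$) by formal duality and straightforward adaptation. So suppose $P:\ca{A}\to\caa{X}$ and $Q:\ca{B}\to\caa{X}$ are objects of $\B{Cat}/\caa{X}$ and $L\dashv R$ is an adjunction in this 2-category with unit $\eta$ and counit $\varepsilon$; the defining condition of this 2-category forces $QL=P$, $PR=Q$, $P\eta=1_P$ and $Q\varepsilon=1_Q$, \emph{i.e.} the components of $\eta$ and $\varepsilon$ are vertical arrows. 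Let $\phi:B\to B'$ be a $Q$-cartesian morphism over $f:X\to X'$; since $PR\phi=Q\phi=f$, the arrow $R\phi$ at least lies above $f$, and the task is to verify its universal property.

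The key move is transposition under the adjunction. Given any $g:Y\to X$ in $\caa{X}$ and $\theta:A\to RB'$ in $\ca{A}$ with $P\theta=f\circ g$, I would form the transpose $\hat\theta:=\varepsilon_{B'}\circ L\theta:LA\to B'$; verticality of $\varepsilon$ together with $QL=P$ gives $Q\hat\theta=P\theta=f\circ g$. Cartesian-ness of $\phi$ then produces a unique $\psi':LA\to B$ with $Q\psi'=g$ and $\phi\circ\psi'=\hat\theta$. Transposing back, I would set $\psi:=R\psi'\circ\eta_A:A\to RB$; verticality of $\eta$ and $PR=Q$ yield $P\psi=g$, while a short computation using naturality of $\varepsilon$ (equivalently, one triangle identity applied as $R\varepsilon_{B'}\circ\eta_{RB'}=1_{RB'}$) gives $R\phi\circ\psi=\theta$.

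For uniqueness, I would argue that the correspondence $\psi\leftrightarrow\hat\psi=\varepsilon_B\circ L\psi$ between morphisms $A\to RB$ and $LA\to B$ is a bijection natural in everything; thus any competing $\psi_1,\psi_2:A\to RB$ satisfying $P\psi_i=g$ and $R\phi\circ\psi_i=\theta$ have transposes $\hat\psi_i$ satisfying $Q\hat\psi_i=g$ and $\phi\circ\hat\psi_i=\varepsilon_{B'}\circ LR\phi\circ L\psi_i=\varepsilon_{B'}\circ L\theta=\hat\theta$, where I have used naturality of $\varepsilon$ at $\phi$. Uniqueness from the cartesian-ness of $\phi$ then forces $\hat\psi_1=\hat\psi_2$ and hence $\psi_1=\psi_2$.

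The dual statement for left adjoints and cocartesian arrows in $\B{Cat}/\caa{X}$ is obtained by applying what has just been proved to $L^{\op}\dashv R^{\op}$ in $\B{Cat}/\caa{X}^{\op}$ (viewed through $(-)^\op$), or by mirroring the argument above. For $\B{Cat}^\B{2}$, the situation is the same except that $P$ and $Q$ now sit over different bases $\caa{X},\caa{Y}$ and the adjunction is given by pairs $(L,F)\dashv(R,G)$ in the sense of Definition \ref{generalfibredadjunction}; verticality of $\eta,\varepsilon$ is replaced by the conditions $P\eta=\eta' P$ and $Q\varepsilon=\varepsilon' Q$ coming from them being 2-cells in $\B{Cat}^\B{2}$, and the same transposition argument goes through with $g:Y\to GX$ and $f\circ Gg$ playing the roles of $g$ and $f\circ g$. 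The only delicate point, which I expect to be the main technical obstacle to writing out cleanly, is keeping track of these base-level identities so that the transposed cartesian-filling in $\ca{B}$ lies over the correct morphism in $\caa{Y}$; otherwise the argument is a direct application of the universal property of cartesian lifts combined with the triangle identities.
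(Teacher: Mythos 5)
Your argument is correct and complete: the paper states this lemma without proof (citing Winskel), and what you have written is precisely the standard adjoint-transposition argument that such a reference would give. The verification that the transpose $\hat\theta=\varepsilon_{B'}\circ L\theta$ lies over $f\circ g$ (using $Q\varepsilon=1$ and $QL=P$), the construction $\psi=R\psi'\circ\eta_A$, and the uniqueness via the bijectivity of transposition together with naturality of $\varepsilon$ at $\phi$ are all exactly right. You also correctly identify the only genuine subtlety in the $\B{Cat}^\B{2}$ case, namely that $Q\hat\theta$ lands over $f\circ(\varepsilon'_{QB}\circ Fg)$ rather than over $f\circ g$ on the nose, so the cartesian lift must be taken along the transposed base arrow; once that bookkeeping is done the same computation with the triangle identity for $F\dashv G$ closes the argument.
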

This will prove very useful, since for example if a fibred
functor has an ordinary right adjoint between the total categories
which commutes with the fibrations,
then the adjoint is necessarily fibred too. 

It is clear that a fibred adjunction as in (\ref{fibredadjunction})
induces fibrewise adjunctions 
\begin{displaymath}
\xymatrix @C=.6in
 {\ca{A}_X \ar @<+.8ex>[r]^-{L_X}\ar@{}[r]|-\bot
  & \ca{B}_X \ar @<+.8ex>[l]^-{R_X}}
\end{displaymath}
between the fibre categories for each $X$ in $\caa{X}$.
In the converse direction,
we have the following result, see for example
\cite[8.4.2]{Handbook2} or \cite[1.8.9]{Jacobs}.
\begin{prop}\label{Borceuxfibrewise}
 Suppose $S:Q\to P$ is a fibred functor between
fibrations $Q:\ca{B}\to\caa{X}$ and $P:\ca{A}\to\caa{X}$.
Then $S$ has a fibred
left adjoint $L$ if and only if for each $X\in\caa{X}$ 
we have $L_X\dashv S_X$, and the adjunct arrows
\begin{equation}\label{chi}
\chi_A:(L_X\circ f^*)A\longrightarrow(f^*\circ L_Y)A
\end{equation}
described below are isomorphisms for all $A\in\ca{A}_Y$
and $f:X\to Y$. Similarly, $S$ has a fibred right adjoint $R$
iff $S_X\dashv R_X$ and $(f^*\circ R_Y)B\cong(R_X\circ f^*)B$.
\end{prop}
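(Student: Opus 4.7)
The plan is to invoke the mate correspondence (Proposition \ref{mates}) to link the fibrewise adjunctions with the natural isomorphisms expressing compatibility with reindexing. Since $S$ is a fibred functor between fibrations over the same base $\caa{X}$, Lemma \ref{cartfunctcommute} (equation \ref{reindexcommute2}) furnishes natural isomorphisms $\tau^f: f^*\circ S_Y \xrightarrow{\sim} S_X\circ f^*$ for every $f:X\to Y$ in $\caa{X}$. Given fibrewise adjunctions $L_X\dashv S_X$ and $L_Y\dashv S_Y$, the mate of $\tau^f$ under these adjunctions is precisely the 2-cell $\chi^f:L_X\circ f^*\to f^*\circ L_Y$ of the statement. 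Because the mate correspondence is bijective and preserves invertibility, $\chi^f$ is an isomorphism if and only if $\tau^f$ is. This observation frames both directions of the equivalence.

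For the forward direction, assume $L\dashv S$ in $\B{Fib}(\caa{X})$ with fibred unit $\zeta:1_\ca{A}\Rightarrow SL$ and counit $\xi:LS\Rightarrow 1_\ca{B}$. Both are vertical by definition of fibred natural transformation, so restricting to the fibre over each $X$ yields adjunctions $L_X\dashv S_X$ with unit $\zeta|_X$ and counit $\xi|_X$. Moreover, since $L$ is itself a fibred functor, Lemma \ref{cartfunctcommute} applied to $L$ provides an iso $L_X\circ f^*\cong f^*\circ L_Y$; by uniqueness of the mate this is forced to agree with $\chi^f$, which therefore is invertible.

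For the converse, suppose we are given fibrewise adjunctions $L_X\dashv S_X$ with units $\eta^X$ and counits $\epsilon^X$ and that all mates $\chi^f$ are isomorphisms. I would build a fibred functor $L:\ca{A}\to\ca{B}$ by the rule $LA:=L_{PA}(A)$ on objects and, for a morphism $\phi:A\to A'$ over $f:X\to Y$ factored uniquely as $\phi=\Cart(f,A')\circ\bar\phi$ with $\bar\phi:A\to f^*A'$ vertical, by setting
\begin{displaymath}
L\phi \;:\; L_X A \xrightarrow{L_X\bar\phi} L_X f^*A' \xrightarrow{\chi^f_{A'}} f^* L_Y A' \xrightarrow{\Cart(f,L_Y A')} L_Y A'.
\end{displaymath}
A cartesian arrow corresponds to the case $\bar\phi=1$, whence $L\phi$ becomes the composite of the iso $\chi^f_{A'}$ with a cartesian arrow, hence cartesian; so $L$ preserves cartesian morphisms. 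The unit $\zeta_A:=\eta^{PA}_A$ and counit $\xi_B:=\epsilon^{QB}_B$ are vertical by construction, hence fibred, and the triangle identities hold because they do so fibrewise.

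The principal obstacle is the verification that $L$ is functorial, which amounts to checking that the family $(\chi^f)$ is coherent with the pseudofunctorial constraints $\delta^{f,g}$ and $\gamma^X$ of the indexed categories $\ps{M}_Q$ and $\ps{M}_P$ arising from $Q$ and $P$. Equivalently, $(L_X,\chi^f)$ must assemble into a pseudonatural transformation $\ps{M}_P\Rightarrow \ps{M}_Q$ in the sense of Section \ref{indexedcats}. This coherence follows formally from the fact that the mates construction is compatible with composition and pasting of adjunctions, applied to the coherence already enjoyed by $(S_X,\tau^f)$ (which is a pseudonatural transformation because $S$ is fibred). Once this is in hand, Theorem \ref{maintheoremfibr} delivers $L$ as a genuine fibred functor with $L\dashv S$ in $\B{Fib}(\caa{X})$, and the right adjoint case is entirely dual.
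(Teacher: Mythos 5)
Your construction of $L$ is essentially the paper's own: define $LA:=L_{PA}A$, factor an arbitrary $\phi$ through a cartesian lifting, and set $L\phi=\Cart(f,L_YA')\circ\chi^f_{A'}\circ L_X\bar\phi$; cartesianness of $L$ then follows because $\chi^f$ is invertible, and the mate-theoretic reading of $\chi^f$ as the mate of $\tau^f$ is exactly what the paper records in Remark \ref{Kock}. However, there is one genuinely false claim at the heart of your framing: you assert that ``the mate correspondence is bijective and preserves invertibility, [so] $\chi^f$ is an isomorphism if and only if $\tau^f$ is.'' Mates do \emph{not} preserve invertibility in general. Since $\tau^f$ is \emph{always} invertible for a cartesian $S$ (Lemma \ref{cartfunctcommute}), your claim would force every $\chi^f$ to be invertible whenever the fibrewise adjoints exist, rendering the hypothesis of the proposition vacuous and reducing it to ``fibrewise left adjoints always assemble into a fibred left adjoint,'' which is false. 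The entire point of this Beck--Chevalley-type condition is that invertibility of a 2-cell is not inherited by its mate; it must be imposed. Fortunately this claim is not load-bearing in your converse direction (where you correctly take invertibility of $\chi^f$ as a hypothesis), but it must be deleted, and in the forward direction you should instead argue, as you begin to, that the canonical isomorphism coming from cartesianness of $L$ coincides with the mate $\chi^f$ --- which requires checking that the reindexing pair $(f^*,f^*)$ is a (pseudo-)map of the adjunctions $L_Y\dashv S_Y$ and $L_X\dashv S_X$, not merely an appeal to ``uniqueness of the mate.''

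A secondary gap: having defined $\zeta$ and $\xi$ fibrewise, you verify the triangle identities fibrewise but do not address naturality of $\zeta$ and $\xi$ with respect to morphisms that are \emph{not} vertical. The paper flags this explicitly (``we can also show that $\eta$ and $\varepsilon$ are natural with respect to all morphisms and not just those in the fibres''); it follows from the definition of $L$ on non-vertical arrows together with the universal property of cartesian liftings, but it does need to be said, since an adjunction in $\B{Fib}(\caa{X})$ requires genuine natural transformations on the total categories.
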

\begin{rmk*}
An equivalent formulation of the above, coming from the
correspondent notion of indexed adjunctions (\emph{i.e.}
adjunction in the 2-category $\B{ICat(\caa{X})}$),
appears in \cite{hermidaphd}: a fibred adjunction $L\dashv R$
amounts to a family of adjunctions 
$\{L_X\dashv R_X:\ca{B}_X\to\ca{A}_X\}_{X\in\caa{X}}$
such that for every $f:Y\to X$, $({f^*}^P,{f^*}^Q)$
is a pseudo-map of adjunctions.
\end{rmk*}
\begin{proof}
Since $S$ is cartesian,
the image of a cartesian lifting 
\begin{displaymath}
 S_X(f^*L_YA)\xrightarrow{S\Cart(f,L_YA)}S_Y(L_YA)
\end{displaymath}
is again a cartesian arrow above $f$ in the total
category $\ca{A}$, for any $A\in\ca{A}_Y$. 
Therefore the composite
top arrow below factorizes uniquely through it
via an isomorphism:
\begin{equation}\label{defpiA}
 \xymatrix @C=.4in @R=.4in
{f^*A\ar[rr]^-{\Cart(f,A)}
\ar @{-->} [d]_-{\exists!\pi_A} && 
A\ar[d]^-{\eta^Y_A} & \\
S_X(f^*L_YA)\ar@{.>}[d] \ar[rr]_-{S\Cart(f,L_YA)} && 
S_YL_YA\ar@{.>}[d] & \textrm{in }\ca{A}\\
X\ar[rr]_-{f} && Y & \textrm{in }\caa{X}}
\end{equation}
The arrow $\chi_A$ in (\ref{chi})
which we require to be
an isomorphism is the one that 
corresponds under $L_X\dashv S_X$
to $\pi_A$:
\begin{displaymath}
 \xymatrix @R=.02in
{\qquad\qquad\pi_A: f^*A \ar[rr] && S_Xf^*L_YA 
\qquad\qquad& \mathrm{in}\;\ca{A}_X\\ 
\ar@{-}[rr] &&& \\  
\qquad\qquad\chi_A:L_Xf^*A\ar[rr] && f^*L_YA\qquad\qquad 
& \mathrm{in}\;\ca{B}_X} 
\end{displaymath}
Then, these $L_X$ assemble into
a fibred left adjoint $L:\ca{A}\to\ca{B}$:
on objects we define $LA:=L_YA$ for $A\in\ca{A}_Y$,
and on arrows we define
$L(\phi)$ for 
\begin{displaymath}
 \xymatrix @C=.4in @R=.2in
 {C\ar[rr]^\phi\ar[d]_-{\theta} && A 
 \ar @/^/@{.>}[dd] &&\\
 f^*A\ar[urr]_-{\;\Cart(f,A)}
 \ar @{.>}[d] &&& \textrm{in }\ca{A} \\
 X\ar[rr]_-{f} && Y & \textrm{in }\caa{X}}
\end{displaymath}
to be the composite
\begin{equation}\label{Lonarrows}
 \xymatrix @C=.4in @R=.25in
 {L_XC\ar @{-->}[rr]^-{L\phi} \ar[d]_-{L_X\theta} && 
 L_YA \ar @/^/@{.>}[ddd] &\\
 L_Xf^*A\ar[d]_-{\chi_A} &&& \textrm{in }\ca{B}\\
 f^*L_YA \ar[uurr]_-{\;\Cart(f,L_YA)}
 \ar @{.>}[d] &&&  \\
 X\ar[rr]_-{f} && Y & \textrm{in }\caa{X}.}
\end{equation}
Functoriality of $L$ follows, and also
we can directly verify that it is a cartesian functor. 
Using the fibrewise adjunctions we can also
show that $\eta$ and $\varepsilon$ are natural
with respect to all morphisms
and not just those in the fibres.
\end{proof}
\begin{rmk}\label{Kock}
There is an equivalent and perhaps more intuitive
way of phrasing the condition
that the transpose $\chi_A$
of $\pi_A$ defined in (\ref{defpiA})
is an isomorphism, as in \cite{Jacobs} or \cite{KockKock}.
We require that the \emph{Beck-Chevalley condition} holds, 
\emph{i.e.}
the mate 
\begin{displaymath}
\xymatrix @R=.4in @C=.4in
{\ca{A}_Y \ar[d]_-{f^*}
\ar[r]^-{L_Y} &
\ca{B}_Y \ar[d]^-{f^*}  \\
\ca{A}_X\ar[r]_-{L_X} & \ca{B}_X
\ultwocell<\omit>{\chi}}
\end{displaymath}
of the canonical invertible 2-cell 
\begin{displaymath}
\xymatrix @R=.4in @C=.4in
{\ca{B}_Y \ar[d]_-{f^*}
\ar[r]^-{S_Y} &
\ca{A}_Y \ar[d]^-{f^*}  \\
\ca{B}_X\ar[r]_-{S_X} & \ca{A}_X
\ultwocell<\omit>{'\cong}}
\end{displaymath}
as in (\ref{reindexcommute2})
which comes with the cartesian functor $S:\ca{B}\to\ca{A}$,
is invertible as well.
Using the mates correspondence 
of Proposition \ref{mates},
we can explicitly compute the
component $\chi_A$
as the composite
\begin{displaymath}
 \xymatrix @C=.55in
{L_Xf^*A\ar[r]^-{L_Xf^*\eta_A}
\ar@{-->}[drr]_-{\chi_A} & 
L_Xf^*S_YL_YA\ar[r]^-{L_X\tau_{L_YA}} &
L_XS_Xf^*L_YA\ar[d]^-{\varepsilon_{f^*L_YA}} \\
&& f^*L_YA}
\end{displaymath}
by applying (\ref{mate2})
for the adjunctions $L_Y\dashv S_Y$
and $L_X\dashv S_X$.

Similarly for the existence of a right fibred adjoint,
the mate
\begin{displaymath}
\xymatrix @R=.4in @C=.4in
{\ca{A}_Y \drtwocell<\omit>{\omega}
\ar[d]_-{f^*}
\ar[r]^-{R_Y} &
\ca{B}_Y \ar[d]^-{f^*}  \\
\ca{A}_X\ar[r]_-{R_X} & \ca{B}_X}
\end{displaymath}
under the fibrewise adjunctions $S_{(-)}\dashv R_{(-)}$
is requested to be an isomorphism.
\end{rmk}
Notice that in order to just define an ordinary 
left adjoint $L:\ca{A}\to\ca{B}$ of the 
fibred functor $S$ between the 
total categories, the adjunction between 
the fibres and the components of the mate
$\chi$ are sufficient, as can be seen from the defining
diagram (\ref{Lonarrows}).
The supplementary fact that $\chi$
should be an isomorphism ensures that this adjoint
is also cartesian, therefore constitutes a fibred
adjoint of $K$. On the other hand,
for the existence of a right adjoint of $S$, 
the natural transformation $\omega$ being an isomorphism 
is required for the very construction of $R$,
since the components $\omega_A$ initially go to 
the opposite direction than the one needed.

Similarly, there is a dual
result concerning fibrewise adjunctions
between opfibrations over a fixed base.
\begin{prop}\label{Borceuxfibrewise2}
 Suppose that $K:U\to V$ is an opfibred
functor between opfibrations $U:\ca{C}\to\caa{X}$
and $V:\ca{D}\to\caa{X}$.
It has a right opfibred adjoint 
$R:\ca{D}\to\ca{C}$ (respectively left opfibred
adjoint $L$) if and only if
it has a fibrewise adjoint $K_X\dashv R_X$
(respectively $L_X\dashv K_X$)
and the mate of the 
isomorphism $\sigma:K_Y\circ f_!\cong f_!\circ K_X$,
given by the components
\begin{equation}\label{omegaD}
 \xymatrix @C=.55in @R=.3in
{f_!R_XD\ar[r]^-{\eta_{f_!R_XD}}
\ar@{-->}[drr] & 
R_YK_Yf_!R_XD\ar[r]^-{R_Y\sigma_{R_XD}} &
R_Yf_!K_XR_XD\ar[d]^-{R_Yf_!\varepsilon_D} \\
&& R_Yf_!D}
\end{equation}
(respectively the mate $L_Yf_!\Rightarrow
f_!L_Y$) for any $D\in\ca{D}_X$ is also invertible.
\end{prop}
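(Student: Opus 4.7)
(Sketch of plan) The statement is the two-fold dual of Proposition~\ref{Borceuxfibrewise}: we replace the fibration $P$ with the opfibration $V$, the cartesian functor $S$ with the cocartesian functor $K$, and we reverse the role of left and right adjoints. The plan is therefore to mirror that argument, replacing cartesian liftings with cocartesian liftings and factorizations-from-the-left with factorizations-from-the-right, and checking that the direction of the relevant mate is consistent with the construction.

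For the direction $(\Rightarrow)$ I would start from an opfibred adjunction $(K\dashv R,1_\caa{X})$ in $\B{OpFib}(\caa{X})$. Restricting the unit and counit to each fibre yields fibrewise adjunctions $K_X\dashv R_X$; and since $R$ is itself cocartesian, the dual of Lemma~\ref{cartfunctcommute} supplies a canonical natural isomorphism $R_Y\circ f_!\cong f_!\circ R_X$ for each $f:X\to Y$. A routine application of the mates correspondence of Proposition~\ref{mates}, using the triangle identities for $K_X\dashv R_X$ and $K_Y\dashv R_Y$ together with the naturality of $\sigma$, identifies this canonical isomorphism with the mate in formula~(\ref{omegaD}); in particular $\omega$ is invertible. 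The argument for $L\dashv K$ is entirely analogous, with the roles of cocartesian liftings and components reversed.

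For the direction $(\Leftarrow)$, suppose the fibrewise adjunctions exist and the mate $\omega_D:f_!R_XD\to R_Yf_!D$ is invertible for all $D,f$. On objects set $RD:=R_XD$ for $D\in\ca{D}_X$. Given an arbitrary morphism $\phi:D\to D'$ in $\ca{D}$ above $f:X\to Y$, factor it uniquely through the cocartesian lifting as $\phi=\theta\circ\Cocart(f,D)$ with $\theta:f_!D\to D'$ vertical in $\ca{D}_Y$, and define $R\phi$ to be the composite
\begin{displaymath}
\xymatrix @C=.45in
{R_XD\ar[r]^-{\Cocart(f,R_XD)} & f_!R_XD\ar[r]^-{\omega_D} & R_Yf_!D\ar[r]^-{R_Y\theta} & R_YD'.}
\end{displaymath}
Functoriality of $R$ is established by uniqueness of the cocartesian factorization together with naturality of $\omega$ and functoriality of each $R_Y$, in exact analogy with~(\ref{Lonarrows}). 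The fact that $R$ preserves cocartesian arrows (i.e.\ is a map in $\B{OpFib}(\caa{X})$) is where the invertibility of $\omega$ is essential: when $\phi=\Cocart(f,D)$ (so $\theta=1_{f_!D}$), the arrow $R\phi$ reduces to $\omega_D\circ\Cocart(f,R_XD)$, which is cocartesian precisely because $\omega_D$ is an isomorphism and cocartesian morphisms are closed under composition with vertical isomorphisms. Finally, one assembles the fibrewise units and counits into global natural transformations $\eta:1_\ca{C}\Rightarrow RK$ and $\varepsilon:KR\Rightarrow 1_\ca{D}$; naturality with respect to non-vertical morphisms follows by the usual diagram chase, factoring through cocartesian liftings and invoking the defining property of the mate $\omega$.

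The main obstacle I anticipate is precisely the last step, the verification of naturality of $\eta$ and $\varepsilon$ with respect to morphisms that are not fibrewise: this requires a careful pasting argument combining the universal property of the cocartesian liftings, the naturality of $\sigma$ (from $K$ being cocartesian), the naturality of $\omega$, and the triangle identities of the fibrewise adjunctions. Once these are in place, the triangle identities for $(\eta,\varepsilon)$ follow from the fibrewise ones by the uniqueness clause in the cocartesian factorization. The statement about $L\dashv K$ is obtained by the evident dual construction, factoring arrows in $\ca{D}$ through cocartesian liftings in $\ca{C}$ via the inverse of the mate $L_Yf_!\Rightarrow f_!L_X$.
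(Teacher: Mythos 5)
Your proposal is correct and follows essentially the same route as the paper, which states this proposition as the dual of Proposition~\ref{Borceuxfibrewise} and whose explicit construction of the right adjoint via $R\phi=R_Y\theta\circ\omega_D\circ\Cocart(f,R_XD)$ reappears verbatim in the build-up to Theorem~\ref{totaladjointthm}. Your observations about where invertibility of the mate is actually used (only for cocartesianness of $R$ in the right-adjoint case, but already for the construction of $L$ on arrows in the left-adjoint case) match the paper's own remarks following Remark~\ref{Kock}.
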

These results give rise to questions concerning
adjunctions between fibrations over two different bases 
rather than the same as above.
In this direction, Theorem \ref{totaladjointthm} below is a 
generalization whose special case coincides
with the above proposition. In what follows, 
we emphasize more on the existence of a total adjoint
(induced only by its mapping on objects) 
and then we proceed to its full description.
We initially look at 
opfibrations because of the nature of the examples that arise
later.
\begin{lem}\label{totaladjointlem}
Suppose $(K,F):U\to V$ is an opfibred 1-cell
given by the commutative square
\begin{displaymath}
\xymatrix @C=.4in
{\ca{C}\ar[r]^-K \ar[d]_-U
& \ca{D}\ar[d]^-V\\
\caa{X}\ar[r]_-F & \caa{Y}}
\end{displaymath} 
and there is an 
adjunction between the base categories
\begin{equation}\label{baseadjunction}
 \xymatrix @C=.4in @R=.3in
 {\caa{X} \ar @<+.8ex>[r]^-F
\ar@{}[r]|-\bot
& \caa{Y}.\ar @<+.8ex>[l]^-G}
\end{equation}
with counit $\varepsilon$. If, for each $Y\in\caa{Y}$, the composite
functor
\begin{equation}\label{specialfunctor}
\ca{C}_{GY}\xrightarrow{K_{GY}}\ca{D}_{FGY}
\xrightarrow{(\varepsilon_Y)_!}\ca{D}_Y
\end{equation}
has a right adjoint $R_Y$, 
then $K:\ca{C}\to\ca{D}$ between 
the total categories 
has a right adjoint, with $R_{(-)}$ 
its mapping on objects.
\end{lem}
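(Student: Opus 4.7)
The plan is to construct the right adjoint $R:\ca{D}\to\ca{C}$ by first defining it on objects via the given data, then extracting a natural bijection $\ca{D}(KC,D)\cong\ca{C}(C,RD)$ using the transposition under $F\dashv G$ together with the cocartesian factorization of arrows in $\ca{C}$ and $\ca{D}$. The functorial action on morphisms will then be forced by the universal property, in the usual way one builds a right adjoint from an object assignment and a natural isomorphism.

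For the object assignment, given $D\in\ca{D}$ with $VD=Y$, set $RD:=R_YD$, which lies in $\ca{C}_{GY}$ by the hypothesis on the fibrewise right adjoints. The main step is to establish, for every $C\in\ca{C}$ with $UC=X$, a bijection $\ca{D}(KC,D)\cong\ca{C}(C,RD)$, natural in $C$ and $D$. Since $F\dashv G$ with counit $\varepsilon$ and unit $\eta$, the classical transposition gives a bijection between arrows $f:FX\to Y$ in $\caa{Y}$ and arrows $\tilde f:X\to GY$ in $\caa{X}$, with $f=\varepsilon_Y\circ F\tilde f$. I would therefore parametrize the desired bijection fibre by fibre: show that arrows $\phi:KC\to D$ above a fixed $f:FX\to Y$ correspond bijectively to arrows $\psi:C\to RD$ above $\tilde f:X\to GY$, and then take the coproduct over all $f$'s (equivalently all $\tilde f$'s).

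Fix $f:FX\to Y$ and set $\tilde f$ as above. Factoring $f$ as $\varepsilon_Y\circ F\tilde f$ and using the cocartesian liftings in $V$, the unique factorization of any $\phi:KC\to D$ above $f$ through cocartesian arrows produces a vertical map $(\varepsilon_Y)_!(F\tilde f)_!KC\to D$ in $\ca{D}_Y$. Since $K$ is an opfibred $1$-cell, it preserves cocartesian liftings, so $K(\Cocart(\tilde f,C)):KC\to K(\tilde f_!C)$ is cocartesian over $F\tilde f$, giving a canonical vertical isomorphism $(F\tilde f)_!KC\cong K_{GY}(\tilde f_!C)$. Combining, arrows $KC\to D$ above $f$ correspond bijectively to vertical arrows
\[
(\varepsilon_Y)_!\,K_{GY}(\tilde f_!C)\longrightarrow D
\]
in $\ca{D}_Y$. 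By the fibrewise adjunction $(\varepsilon_Y)_!\circ K_{GY}\dashv R_Y$, these in turn correspond bijectively to vertical arrows $\tilde f_!C\to R_YD$ in $\ca{C}_{GY}$. Finally, the universal property of $\Cocart(\tilde f,C):C\to\tilde f_!C$ in $\ca{C}$ turns these into arrows $C\to RD$ above $\tilde f$. Assembling these bijections over all $f$ yields the desired $\ca{D}(KC,D)\cong\ca{C}(C,RD)$, and $R$ is then promoted to a functor in the standard way, with the bijection automatically natural in $C$ by construction and natural in $D$ by compatibility of the fibrewise adjunctions with the vertical part of morphisms in $\ca{D}$.

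The main obstacle I anticipate is bookkeeping rather than substance: one must verify that the chain of bijections is genuinely natural, which requires compatibility between (i) the transposition under $F\dashv G$, (ii) the pseudofunctorial coherence isomorphisms $q^{f,g}$ for the cocartesian liftings in $U$ and $V$, (iii) the natural isomorphism $\sigma:K_Y\circ f_!\cong f_!\circ K_X$ coming from $K$ being opfibred, and (iv) the triangle identities for $F\dashv G$ when comparing transposes. All of these naturalities are standard and hold essentially by uniqueness of (co)cartesian factorizations, but they have to be written out carefully; after that the construction of $R$ on morphisms and the confirmation that the counit/unit of the total adjunction are genuine natural transformations is automatic from the universal bijection.
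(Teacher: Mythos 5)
Your proposal is correct and follows essentially the same route as the paper's proof: both decompose an arrow by its base component, transpose under $F\dashv G$, and identify $\tilde{f}_!(KC)$ with $(\varepsilon_Y)_!K_{GY}(\tilde{f}_!C)$ via the cocartesianness isomorphism for $K$ together with the coherence isomorphism $q$ for composite cocartesian liftings, before invoking the fibrewise adjunction. The only cosmetic difference is that you run the chain of bijections from $\ca{D}(KC,D)$ to $\ca{C}(C,RD)$ whereas the paper writes the inverse direction explicitly.
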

\begin{proof}
The adjunction
$(\varepsilon_Y)_!K_{GY}\dashv R_Y$
comes with a natural isomorphism
\begin{equation}\label{specialcaseadjun}
\ca{D}_Y([(\varepsilon_Y)_!\circ K_{GY}](Z), D)
\cong\ca{C}_{GY}(Z,R_Y(D))
\end{equation}
for any $Z\in\ca{C}_{GY}$, $D\in\ca{D}_Y$.
We claim that this induces
a bijective correspondence
\begin{equation}\label{naturalbijcor}
\ca{D}(KC,D)\cong\ca{C}(C,R_YD)
\end{equation}
for any $C\in\ca{C}_X$ and 
$D\in\ca{D}_Y$, which is natural in $C$. 
In other words, there is a representation
of the functor $\ca{D}(K-,D)$ with representing
object $R_YD$.
Then, by adjunctions via representations,
there is a unique way to define a 
functor 
\begin{displaymath}
 R:\ca{D}\longrightarrow\ca{C}
\end{displaymath}
with object functions
$R_{(-)}$ depending on the fibre of
the objects,
such that (\ref{naturalbijcor})
is natural also in $D$ thus gives
an adjunction $K\dashv R$.

An element of the 
left hand side of (\ref{naturalbijcor}) 
is an arrow $m:KC\to D$ in the total category $\ca{D}$,
which can be encoded by 
\begin{equation}\label{leftside}
\begin{cases}
f_!(KC)\xrightarrow{k}D &\text{in }\ca{D}_Y\\
FX\xrightarrow{f}Y &\text{in }\caa{Y}
\end{cases}
\end{equation}
where $k$ is the unique vertical arrow 
of the factorization
$m=k\circ\Cocart(f,KC)$.

An element of the right hand side of (\ref{naturalbijcor})
is an arrow $n:C\to R_YD$
in the total category $\ca{C}$, \emph{i.e.}
\begin{displaymath}
\begin{cases}
g_!C\xrightarrow{l}R_YD &\text{in }\ca{C}_{GY}\\
X\xrightarrow{g}GY &\text{in }\caa{X}
\end{cases}
\end{displaymath}
where $n=l\circ\Cocart(g,C)$.
By the natural isomorphism (\ref{specialcaseadjun}) and
the adjunction (\ref{baseadjunction}), 
this pair of arrows corresponds bijectively 
to a pair
\begin{displaymath}
\begin{cases}
[(\varepsilon_Y)_!\circ K _{GY}]
(g_!C)\xrightarrow{\hat{l}}D &\text{in }\ca{D}_Y\\
\qquad\qquad\quad\; FX\xrightarrow{\tilde{g}}Y &\text{in }\caa{Y}
\end{cases}
\end{displaymath}
where $\hat{l}$ is the adjunct of $l$
under (${\varepsilon_Y}_!K_{GY}\dashv R_Y$)
and $\tilde{g}$ is the adjunct of $g$ 
under $F\dashv G$, hence it satisfies
$\tilde{g}=Fg\circ\varepsilon_Y$.

In order for this pair to 
actually constitute an arrow
$KC\to D$ in $\ca{D}$ as in (\ref{leftside}),
it is enough to show that 
\begin{displaymath}
 [(\varepsilon_Y)_!K_{GY}](g_!C)\cong\tilde{g}_!(KC)
\end{displaymath}
in the fibre $\ca{D}_Y$.
For that, observe that the diagram
\begin{equation}\label{diagimportantproof}
\xymatrix @C=.7in @R=.5in
{\ca{C}_X\ar[r]^-{g_!}\ar[d]_-{K_X} 
&\ca{C}_{GY}\ar[r]^-{K_{GY}} &
\ca{D}_{FGY}\ar[d]^-{(\varepsilon_Y)_!} \\
\ca{D}_{FX}\ar[rr]_-{\tilde{g}_!}
\ar @{-->}[urr]_-{(Fg)_!} &&
\ca{D}_Y}
\end{equation}
commutes up to isomorphism: the
left part is an
isomorphism for any cocartesian 
functor $K$, dual to 
(\ref{reindexcommute}),
and the right part is
the isomorphism 
\begin{displaymath}
q^{Fg,\varepsilon_Y}:
\tilde{g}_!\xrightarrow{\sim}(Fg)_!\circ
(\varepsilon_Y)_!
\end{displaymath}
from the uniqueness of cartesian
liftings, as in (\ref{qiso}).

In other words, the bijective correspondence
(\ref{naturalbijcor}) 
is formally induced by a mapping
$\ca{C}(C,R_YD)\to\ca{D}(KC,D)$
explicitly given by
\begin{displaymath}
\begin{cases}
g_!C\xrightarrow{l}RD &\text{in }\ca{C}_{GY}\\
X\xrightarrow{g}GY &\text{in }\caa{X}
\end{cases}
\mapsto
\begin{cases}
(\varepsilon Fg)!KC\stackrel{q}{\cong}
\varepsilon_!(Fg)_!KC\stackrel{\varepsilon_!\sigma^g}{\cong}
\varepsilon_!Kg_!C\stackrel{\theta(l)}{\to}D
&\text{in }\ca{D}_Y\\
FX\xrightarrow{\tilde{g}}Y &\text{in }\caa{Y}
\end{cases}
\end{displaymath}
where $\theta$ is the natural bijection 
(\ref{specialcaseadjun}) and $\varepsilon$ is short for $\varepsilon_Y$. 
Naturality
in $C$ can be checked, so a right
adjoint $R$ of $K$ between the 
total categories can be defined.
\end{proof}
Since this result in essence generalizes 
Proposition \ref{Borceuxfibrewise2}, it is 
reasonable to explore
the appropriate conditions 
in order for this right adjoint $R$ to 
be cocartesian and thus to establish a general
opfibred adjunction. Initially we are interested
in adjusting Remark \ref{Kock} on this case.

If we call $\sigma^f$ the isomorphism induced by
cocartesianness of the functor $K$
employed in the above proof,
for some $h:Y\to W$ in $\caa{Y}$ in particular 
we have a natural isomorphism
\begin{displaymath}
 \xymatrix @C=.5in @R=.5in
{\ca{C}_{GY}\ar[d]_-{(Gh)_!}\ar[r]^-{K_{GY}}
\drtwocell<\omit>{'\stackrel{\sigma^{Gh}}{\cong}} &
\ca{D}_{FGY}\ar[d]^-{(FGh)_!} \\
\ca{C}_{GW}\ar[r]_-{K_{GW}} & \ca{D}_{FGW}.} 
\end{displaymath}
Also, by sheer naturality of $\varepsilon$,
we have an isomorphism
\begin{displaymath}
 \nu:(\varepsilon_W)_!(FGh)_!\stackrel{q}{\cong}
(\varepsilon_W\circ FGh)_!=(h\circ \varepsilon_Y)_!
\stackrel{q}{\cong}h_!(\varepsilon_Y)_!.
\end{displaymath}
We can now form an invertible composite
2-cell
\begin{equation}\label{KGh*nu}
 \xymatrix @C=.9in @R=.75in
{\ca{C}_{GY}\ar[d]_-{(Gh)_!}\ar[r]^-{K_{GY}}
\drtwocell<\omit>{'\stackrel{\sigma^{Gh}}{\cong}} &
\ca{D}_{FGY}\ar[d]^-{(FGh)_!}
\ar[r]^-{(\varepsilon_Y)_!} & 
\ca{D}_Y \ar[d]^-{h_!} \\
\ca{C}_{GW}\ar[r]_-{K_{GW}} &
\ca{D}_{FGW}\ar[r]_-{(\varepsilon_W)_!} &
\ca{D}_W. \ultwocell<\omit>{'\stackrel{\nu}{\cong}} }
\end{equation}
Its mate $\omega$ under the adjunctions
$(\varepsilon_Y)_!K_{GY}\dashv R_Y$ and
$(\varepsilon_W)_!K_{GW}\dashv R_W$
has components, by (\ref{mate1}),
\begin{equation}\label{omegaD2}
 \xymatrix @C=.5in @R=.6in
{(Gh)_!R_YD\ar[r]^-{\bar{\eta}^W}\ar@{-->}[drr]_-{\omega_D} &
\big(R_W((\varepsilon_W)_!K_{GW})\big)(Gh)_!R_YD \ar[r]^-{R_W(\sigma^{Gh}*\nu)}
& R_W\big(h_!(\varepsilon_E)_!K_{GY}\big)R_YD\ar[d]^-{R_Wh_!\bar{\varepsilon}^Y} \\
&& (R_Wh_!)D}
\end{equation}
where $\bar{\eta}$ and $\bar{\varepsilon}$ are the unit and counit
of the adjunctions ${\varepsilon_{(-)}}_!K_{G(-)}\dashv R_{(-)}$. 
These arrows $\omega_D$ which generalize
the composites (\ref{omegaD}), are essential for 
the explicit construction of $R$.

In a dual way to Proposition 
\ref{Borceuxfibrewise}, $R$
maps an arrow
\begin{displaymath}
\xymatrix @C=.6in @R=.3in
{D \ar @{.}[dd]
\ar[rr]^k \ar[drr]_-
{\Cocart(h,D)} && E 
&\\
&& h_!D \ar[u]_-{\psi}
\ar @{.}[d] & \textrm{in }\ca{D} \\
Y\ar[rr]_-h && W & \textrm{in }\caa{Y}}
\end{displaymath}
to the composite

\begin{displaymath}
\xymatrix @C=.6in @R=.12in
{R_YD \ar @{.}[ddd]
\ar @{-->}[rr]^{Rk} \ar[ddrr]_-{\Cocart(Gh,R_WD)\;\;\;}
&& R_WE &\\
&& R_W(h_!D) \ar[u]_-{R_W\psi} & \textrm{in }\ca{C}\\
&& (Gh)_!R_YD \ar[u]_-{\omega_D} \ar@{.}[d]
&  \\
GY\ar[rr]_-{Gh} && GW & \textrm{in }\caa{X}}
\end{displaymath}
where $\omega_D$ are the arrows (\ref{omegaD2}).
It is now not hard to see that
by construction 
of $R$, the square of categories 
and functors
\begin{displaymath}
\xymatrix @R=.3in @C=.4in
{\ca{C}\ar[d]_-U & \ca{D}\ar[l]_-R
\ar[d]^-V \\
\caa{X} & \caa{Y}\ar[l]^-G}
\end{displaymath}
commutes. Moreover, if 
$(\zeta,\xi)$ is the unit and counit
of $K\dashv R$, the pairs
$(\zeta,\eta)$ and $(\xi,\varepsilon)$
are above each other. Consequently
$(K,F)\dashv(R,G)$ is already
an adjunction in $\B{Cat}^\B{2}$.
Finally, if we request that the $\omega_D$'s
are isomorphisms, putting
$k=\Cocart(g,D)$ in the 
mapping above exhibits the cocartesianness
of $R$.
\begin{thm}\label{totaladjointthm}
Suppose $(K,F):U\to V$ is an opfibred 1-cell
and $F\dashv G$ is an adjunction between 
the bases of the fibrations, 
as in
\begin{displaymath}
\xymatrix @C=.6in
{\ca{C}\ar[r]^-K\ar[d]_-U & \ca{D}\ar[d]^-V \\
\caa{X}\ar @<+.8ex>[r]^-F
\ar@{}[r]|-\bot
& \caa{Y}. \ar @<+.8ex>[l]^-G}
\end{displaymath}
If the composite (\ref{specialfunctor}) has 
a right adjoint for each $Y\in\caa{Y}$, then $K$
has a right adjoint $R$ between the total categories,
with $(K,F)\dashv(R,G)$ in $\B{Cat^2}$. 
If the mate
\begin{displaymath}
\quad\xymatrix @R=.3in @C=.3in
{\ca{D}_Y\ar[r]^-{R_Y}\ar[d]_-{h_!}
\drtwocell<\omit>{\omega} & \ca{C}_{GY}\ar[d]^-{(Gh)_!} \\
\ca{D}_W\ar[r]_-{R_W} & \ca{C}_{GW}}
\end{displaymath}
of the composite
invertible 2-cell (\ref{KGh*nu})
is moreover an isomorphism for any
$h:Z\to W$ in $\caa{Y}$,
then $R$ is cocartesian and so
\begin{displaymath}
\xymatrix @C=.6in
{\ca{C} \ar[d]_-U \ar @<+.8ex>[r]^-K
\ar@{}[r]|-\bot
& \ca{D}\ar @<+.8ex>[l]^-R \ar[d]^-V \\
\caa{X}\ar @<+.8ex>[r]^-F
\ar@{}[r]|-\bot
& \caa{Y}\ar @<+.8ex>[l]^-G}
\end{displaymath}
is a general opfibred adjunction. Conversely,
if $(K,F)\dashv (R,G)$ in $\B{OpFib}$, then evidently
$F\dashv G$, $K\dashv R$, $R$ is cocartesian, 
and moreover for every $Y\in\caa{Y}$ there
is an adjunction $(\varepsilon_Y)_!K_{GY}\dashv R_Y$
between the fibres.
\end{thm}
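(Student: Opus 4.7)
The bulk of the machinery is already in place via Lemma \ref{totaladjointlem} and the discussion around diagram \eqref{KGh*nu}, so my plan is to assemble these into a proof, filling in the verifications that were only sketched.

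First, for the existence of $R:\ca{D}\to\ca{C}$ with $K\dashv R$ between the total categories, I would invoke Lemma \ref{totaladjointlem} directly. This gives the object assignment $D\in\ca{D}_Y\mapsto R_YD\in\ca{C}_{GY}$ together with the natural bijection (\ref{naturalbijcor}) in $C$. To upgrade this to a functor and a genuine adjunction, I would use the standard ``adjunction via representations'' recipe: there is a unique way to define $R$ on morphisms so that (\ref{naturalbijcor}) becomes natural in $D$ as well. Concretely, for a morphism $k:D\to E$ in $\ca{D}$ I would factor it as $k=\psi\circ\Cocart(h,D)$ with $\psi$ vertical in $\ca{D}_W$, and then define $Rk$ by the explicit formula given immediately after \eqref{omegaD2}, namely the composite along $\Cocart(Gh,R_YD)$ followed by the vertical arrow $R_W\psi\circ\omega_D$, where $\omega_D$ is the mate \eqref{omegaD2}. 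The fact that the square $V\circ R=G\circ U$ commutes is then immediate from this definition, since $R$ acts on base data by $G$. Checking that the unit $\zeta$ and counit $\xi$ of $K\dashv R$ lie over the unit $\eta$ and counit $\varepsilon$ of $F\dashv G$ — which is what is needed for $(K,F)\dashv(R,G)$ in $\B{Cat}^\B{2}$ — amounts to unravelling the bijection (\ref{naturalbijcor}) on identity morphisms, using the triangle identities for $F\dashv G$ and for each $(\varepsilon_Y)_!K_{GY}\dashv R_Y$.

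For the second part, I would show that $R$ is cocartesian precisely when each $\omega_D$ in \eqref{omegaD2} is an isomorphism. For this, I would take a cocartesian lifting $\Cocart(h,D):D\to h_!D$ in $\ca{D}$ and compute $R(\Cocart(h,D))$ from the formula above: here $\psi=1_{h_!D}$ is the identity, so $R(\Cocart(h,D))=\omega_D\circ\Cocart(Gh,R_YD)$. Under the chosen opcleavage this is cocartesian in $\ca{C}$ if and only if $\omega_D$ is a vertical isomorphism, by the standard fact that cocartesian arrows are closed under composition with vertical isomorphisms and are unique up to such. Since this was assumed, $R$ preserves the opcleavage up to isomorphism, hence is an opfibred 1-cell, and together with $F\dashv G$ and $K\dashv R$ with compatible units/counits this is exactly a general opfibred adjunction in $\B{OpFib}$.

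For the converse, suppose $(K,F)\dashv(R,G)$ in $\B{OpFib}$. Then $F\dashv G$ in $\caa{Y}$ and $K\dashv R$ in $\B{Cat}$ are automatic by applying the forgetful 2-functors $\B{OpFib}\to\B{Cat}^\B{2}\to\B{Cat}$ (twice). Cocartesianness of $R$ is by assumption. To extract the fibrewise adjunctions $(\varepsilon_Y)_!K_{GY}\dashv R_Y$, I would restrict the global adjunction to the fibre over $Y\in\caa{Y}$: for $Z\in\ca{C}_{GY}$ and $D\in\ca{D}_Y$, the arrows $KZ\to D$ in $\ca{D}$ lying over $\varepsilon_Y:FGY\to Y$ correspond under cocartesian factorisation to vertical arrows $(\varepsilon_Y)_!K_{GY}Z\to D$, while the arrows $Z\to RD$ in $\ca{C}$ lying over $1_{GY}$ are precisely the vertical arrows $Z\to R_YD$; the global natural bijection $\ca{D}(KZ,D)\cong\ca{C}(Z,RD)$ restricts to the required fibrewise bijection because the counit of $K\dashv R$ lies over the counit of $F\dashv G$. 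The main obstacle in the whole argument is the bookkeeping in the second part — verifying that the somewhat intricate mate $\omega_D$ from \eqref{omegaD2} indeed controls cocartesianness of $R$ — but this reduces to an unwinding of the mate correspondence \eqref{mate1} applied to the invertible composite 2-cell \eqref{KGh*nu}, together with the diagram \eqref{diagimportantproof} already established in the proof of Lemma \ref{totaladjointlem}.
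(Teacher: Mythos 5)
Your proposal is correct and follows essentially the same route as the paper: the first part is Lemma \ref{totaladjointlem} plus the explicit definition of $R$ on morphisms via the mate $\omega_D$, cocartesianness is checked exactly by evaluating $R$ on a cocartesian lifting (where $\psi$ is an identity), and the converse is obtained by restricting the global hom-bijection to the fibres via cocartesian factorization. The only (harmless) addition is your observation that cocartesianness of $R$ is in fact \emph{equivalent} to invertibility of $\omega_D$, which is stronger than what the theorem asserts but correct.
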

\begin{proof}
The first part is just Lemma \ref{totaladjointlem} and
the process that follows. For the converse,
start with some $f:C\to R_YD$ in $\ca{C}_{GY}$.
There is a bijective correspondence
\begin{displaymath}
 \xymatrix @R=.02in @C=.2in
{& (C,GY)\ar[r]^-{(f,1_{GY})} & (R_YD,GY)\equiv R(D,Y) && \mathrm{in}\;\ca{C}\\ 
\ar@{-}[rrr] &&&& \\  
& K(C,GY)\equiv (K_{GY}C,FGY) \ar[r]^-{(\bar{f},\varepsilon_Y)} 
& (D,Y) && \mathrm{in}\;\ca{D}}
\end{displaymath}
since $K\dashv R$, but the latter morphism is uniquely determined
by the vertical arrow 
$\bar{\bar{f}}:(\varepsilon_Y)_!K_{GY}C\to D$
in $\ca{D}_Y$ because of the factorization of any arrow
through the cocartesian lifting.
Hence the required fibrewise adjunction is established.
\end{proof}
Dually, we get the following version
about adjunctions between fibrations.
\begin{thm}\label{totaladjointthm2}
 Suppose $(S,G):Q\to P$ is a fibred 1-cell
between two fibrations
and $F\dashv G$ is an adjunction between
the bases, as shown in the diagram
\begin{displaymath}
 \xymatrix @C=.6in
{\ca{A}\ar[d]_-P & \ca{B}\ar[l]_-S \ar[d]^-Q \\
\caa{X}\ar @<+.8ex>[r]^-F
\ar@{}[r]|-\bot
& \caa{Y}. \ar @<+.8ex>[l]^-G}
\end{displaymath}
If, for each $X\in\caa{X}$, the composite functor
\begin{displaymath}
 \ca{B}_{FX}\xrightarrow{S_{FX}}\ca{A}_{GFX}\xrightarrow{\eta_X^*}
\ca{A}_X
\end{displaymath}
has a left adjoint $L_X$, then $S$ 
has a left adjoint $L$ between the total categories, 
with $(L,F)\dashv(S,G)$ in $\B{Cat}^\B{2}$.
Furthermore, if the mate
\begin{displaymath}
 \quad\xymatrix @C=.3in @R=.3in
{\ca{A}_Z\ar[r]^-{L_Z}\ar[d]_-{f^*}
 & \ca{B}_{FZ}\ar[d]^-{(Ff)^*} \\
\ca{A}_X\ar[r]_-{L_X} & \ca{B}_{FX}
\ultwocell<\omit>{ }}
\end{displaymath}
of the composite isomorphism
\begin{displaymath}
 \xymatrix @C=.65in @R=.45in
{\ca{B}_{FZ}\ar[d]_-{(Ff)^*}\ar[r]^-{S_{FZ}}
\drtwocell<\omit>{'\stackrel{\tau^{Ff}}{\cong}} &
\ca{A}_{GFZ}\ar[d]^-{(GFf)^*}
\ar[r]^-{(\eta_Z)^*} & 
\ca{A}_Z \ar[d]^-{f^*} \\
\ca{B}_{FX}\ar[r]_-{S_{FX}} &
\ca{A}_{GFX}\ar[r]_-{(\eta_X)^*} &
\ca{A}_X \ultwocell<\omit>{'\stackrel{\kappa}{\cong}}}
\end{displaymath}
is invertible for any $f:X\to Z$ in $\caa{X}$,
then
\begin{displaymath}
 \xymatrix @C=.6in
{\ca{A} \ar[d]_-P \ar @<+.8ex>[r]^-L
\ar@{}[r]|-\bot
& \ca{B}\ar @<+.8ex>[l]^-S \ar[d]^-Q \\
\caa{X}\ar @<+.8ex>[r]^-F
\ar@{}[r]|-\bot
& \caa{Y}\ar @<+.8ex>[l]^-G}
\end{displaymath}
is a general fibred adjunction.
Conversely, if $(L,F)\dashv(S,G)$ is 
an adjunction in $\B{Fib}$, we have adjunctions
$L_X\dashv\eta_X^* S_{FX}$ for all $X\in\caa{X}$.
\end{thm}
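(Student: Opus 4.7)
The plan is to dualize the proof of Theorem \ref{totaladjointthm} step by step. First I would construct, for each $A\in\ca{A}_X$ and $B\in\ca{B}_Y$, a natural bijection $\ca{A}(A,SB)\cong\ca{B}(LA,B)$ with $LA:=L_XA$, and then show it is natural in $A$ so as to induce the functor $L$ via adjunctions through representations. An arrow $A\to SB$ in $\ca{A}$ factorizes uniquely as a vertical $l:A\to g^*(SB)$ in $\ca{A}_X$ followed by $\Cart(g,SB)$, for some $g:X\to GY$. Transposing $g$ under $F\dashv G$ to $\tilde g:FX\to Y$ with $g=G\tilde g\circ\eta_X$, and combining the pseudofunctorial iso $\delta^{\eta_X,G\tilde g}$ with the Lemma \ref{cartfunctcommute} isomorphism $\tau^{G\tilde g}$ applied to the cartesian functor $S$, yields a canonical vertical isomorphism
\begin{displaymath}
g^*(SB)\;\cong\;\eta_X^*\,(G\tilde g)^*\,S_YB\;\cong\;\eta_X^*\,S_{FX}(\tilde g^*B)
\end{displaymath}
in $\ca{A}_X$. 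The fibrewise adjunction $L_X\dashv\eta_X^*S_{FX}$ then gives $\ca{A}_X(A,g^*SB)\cong\ca{B}_{FX}(L_XA,\tilde g^*B)$, and the right hand side encodes exactly the arrows $L_XA\to B$ in $\ca{B}$ lying above $\tilde g$ via the cartesian factorization. This is the sought bijection.

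Next, by adjunctions via representing objects there is a unique way of extending $A\mapsto L_XA$ to a functor $L:\ca{A}\to\ca{B}$ so that the bijection becomes natural in $B$ as well, giving $L\dashv S$. Dually to formula (\ref{Lonarrows}), for an arrow $\phi:A\to A'$ above $f:X\to X'$ with vertical part $\theta:A\to f^*A'$, one reads off
\begin{displaymath}
L\phi\;=\;\bigl(L_XA\xrightarrow{L_X\theta}L_Xf^*A'\xrightarrow{\kappa_{A'}}(Ff)^*L_{X'}A'\xrightarrow{\Cart(Ff,L_{X'}A')}L_{X'}A'\bigr),
\end{displaymath}
where $\kappa_{A'}$ is the component at $A'$ of the mate described in the statement. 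The construction shows directly that the square $QL=FP$ commutes and that the unit $\zeta$ and counit $\xi$ of $L\dashv S$ sit above $\eta$ and $\varepsilon$ respectively, so $(L,F)\dashv(S,G)$ already holds in $\B{Cat}^\B{2}$, proving the first half.

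For the second half, the explicit formula for $L\phi$ makes clear that $L$ preserves cartesian arrows iff each $\kappa_{A'}$ is invertible: taking $\phi=\Cart(f,A')$ forces $\theta$ to be an identity, so $L\phi$ equals $\Cart(Ff,L_{X'}A')$ precisely when $\kappa_{A'}$ is an isomorphism, which delivers the general fibred adjunction in $\B{Fib}$. The converse is immediate, exactly as in Theorem \ref{totaladjointthm}: an adjunction $(L,F)\dashv(S,G)$ in $\B{Fib}$ yields $F\dashv G$, $L\dashv S$ and cartesianness of $L$; restricting the transposition $\ca{B}(LA,B)\cong\ca{A}(A,SB)$ to $A\in\ca{A}_X$ and $B\in\ca{B}_{FX}$ and using the cartesian factorization through $\Cart(\eta_X,SB)$ isolates the required fibrewise adjunction $L_X\dashv\eta_X^*S_{FX}$. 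The main obstacle will be the careful bookkeeping of coherence isomorphisms $\delta$, $\gamma$, $\tau$ in the first bijection so that naturality in both variables (and thus well-definedness of $L$ on morphisms) is genuine rather than merely plausible; once the dualization scheme is in place, however, these verifications proceed as in the opfibred case.
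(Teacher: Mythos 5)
Your proposal is correct and takes essentially the same approach as the paper, which obtains this theorem simply by dualizing Lemma \ref{totaladjointlem} and the discussion following it: your bijection $\ca{A}(A,SB)\cong\ca{B}(L_XA,B)$, built from the factorization over $g=G\tilde{g}\circ\eta_X$ together with the coherence isomorphisms $\delta$ and $\tau$, is the exact dual of the argument around diagram (\ref{diagimportantproof}), and your formula for $L\phi$, the role of the mate $\kappa$, and the converse all match the paper's treatment of the opfibred case. The only cosmetic point is that when $\kappa_{A'}$ is invertible, $L(\Cart(f,A'))$ is a cartesian arrow (a chosen cartesian lifting composed with a vertical isomorphism) rather than literally the chosen lifting, which is all that cartesianness of $L$ requires.
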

In the above composite 2-cell, 
the 2-isomorphism $\tau^{Ff}$ comes from the cartesian functor
$S$ as in (\ref{reindexcommute}) 
and $\kappa$ from naturality of $\eta$,
the unit of the base adjunction.

We finish this section with some 
general results concerning fibrewise completeness
and cocompleteness. In fact, Hermida's work on
fibred adjunctions was mainly motivated by its
applications on the existence of 
fibred limits and colimits. For us though,
the establishment of general (op)fibred adjunctions
serves different purposes.

For any small category $\ca{J}$,
we say that a fibration $P:\ca{A}\to\caa{X}$ 
has \emph{fibred $\ca{J}$-limits} (respectively
\emph{colimits})
if and only if the fibred functor
$\hat{\Delta}_\ca{J}:\ca{A}\to\Delta^*([\ca{J},\ca{A}])$
uniquely determined by the diagram below has 
a fibred right (respectively left) adjoint:
\begin{equation}\label{fibredlimits}
 \xymatrix @R=.15in @C=.3in
{\ca{A}\ar @/^3ex/[drrr]^-{\tilde{\Delta}_\ca{J}} 
\ar @/_3ex/[dddr]_-P
\ar @{-->} [dr] ^-{\hat{\Delta}_\ca{J}} &&&\\
& \Delta_\ca{J}^*[\ca{J},\ca{A}]\pullbackcorner[ul]
\ar[rr]^-\pi \ar[dd]^-{\Delta_\ca{J}^*[\ca{J},P]}_-{\star\quad} && 
[\ca{J},\ca{A}]\ar[dd]^-{[\ca{J},P]}\\
\hole \\
&\caa{X}\ar[rr]^-{\Delta_\ca{J}} && [\ca{J},\caa{X}]}
\end{equation}
where $\Delta_\ca{J}$ and $\tilde{\Delta}_\ca{J}$
are the constant diagram functors. Notice that $[\ca{J},P]$
is a fibration when $P$ is, where cartesian morphisms
are formed componentwise. 
We write 
$\big(\hat{\mathrm{lim}_\ca{J}}\dashv\hat{\Delta}_\ca{J}
\dashv\hat{\mathrm{colim}_\ca{J}}\big)$
when the fibration $P$ has fibred limits and colimits.
Dually we can define \emph{opfibred $\ca{J}$-colimits}
and \emph{limits} for an opfibration $U$.
\begin{prop}\label{reindexcont}
 A fibration $P:\ca{A}\to\caa{X}$ has all fibred
$\ca{J}$-limits (colimits)
if and only if every fibre has $\ca{J}$-limits
(colimits) and the reindexing functors $f^*$
preserve them, for any arrow $f$.
\end{prop}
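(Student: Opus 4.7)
The plan is to read off the result as a direct application of Proposition \ref{Borceuxfibrewise} to the fibred functor $\hat{\Delta}_\ca{J}$. First I would identify the fibres and the reindexing functors of the pullback fibration $\Delta_\ca{J}^*[\ca{J},\ca{A}]\to\caa{X}$. By the isomorphism (\ref{Sxiso}) arising from change of base, its fibre over $X$ is isomorphic to the fibre of $[\ca{J},P]:[\ca{J},\ca{A}]\to[\ca{J},\caa{X}]$ over the constant diagram $\Delta_\ca{J}(X)$. An object of that latter fibre is a functor $D:\ca{J}\to\ca{A}$ with $P\circ D=\Delta_\ca{J}(X)$, i.e.\ every $D(j)$ lies in $\ca{A}_X$ and every $D(\alpha)$ is vertical; such a $D$ is exactly an object of the ordinary functor category $[\ca{J},\ca{A}_X]$. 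Under this identification, the fibrewise component $(\hat{\Delta}_\ca{J})_X$ becomes the usual constant-diagram functor $\Delta:\ca{A}_X\to[\ca{J},\ca{A}_X]$.

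Next I would determine the reindexing of $\Delta_\ca{J}^*[\ca{J},P]$ along $f:X\to Y$. Since $[\ca{J},P]$ is a fibration whose cartesian arrows are formed pointwise, its reindexing functor along $\Delta_\ca{J}(f):\Delta_\ca{J}(X)\to\Delta_\ca{J}(Y)$ is exactly $[\ca{J},f^*]:[\ca{J},\ca{A}_Y]\to[\ca{J},\ca{A}_X]$; composition with the pullback identification shows that the reindexing of $\Delta_\ca{J}^*[\ca{J},P]$ is again $[\ca{J},f^*]$. Moreover, the canonical comparison $\tau^f$ of (\ref{reindexcommute2}) for the fibred functor $\hat{\Delta}_\ca{J}$ is a \emph{strict} equality, since both composites send $A\in\ca{A}_Y$ to the constant diagram on $f^*A$ in $[\ca{J},\ca{A}_X]$.

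Now apply Proposition \ref{Borceuxfibrewise} to $S=\hat{\Delta}_\ca{J}$ seeking a fibred right adjoint. The fibrewise existence of a right adjoint to $(\hat{\Delta}_\ca{J})_X=\Delta:\ca{A}_X\to[\ca{J},\ca{A}_X]$ is, by standard category theory, equivalent to $\ca{A}_X$ having all $\ca{J}$-indexed limits, in which case the right adjoint is $\mathrm{lim}_\ca{J}$. The Beck–Chevalley condition of Remark \ref{Kock}, namely that the mate
\[
\omega:f^*\circ\mathrm{lim}_\ca{J}\longrightarrow\mathrm{lim}_\ca{J}\circ[\ca{J},f^*]
\]
of the (here identity) comparison $\tau^f$ be invertible, is precisely the statement that the reindexing functor $f^*:\ca{A}_Y\to\ca{A}_X$ preserves $\ca{J}$-indexed limits. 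Combining these two observations yields the equivalence claimed in the proposition; the case of colimits is entirely dual, using the dual statement of Proposition \ref{Borceuxfibrewise} for a fibred left adjoint.

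The only real obstacle is the identification of the fibres and reindexing of the pullback fibration $\Delta_\ca{J}^*[\ca{J},\ca{A}]\to\caa{X}$ with $[\ca{J},\ca{A}_X]$ and $[\ca{J},f^*]$ respectively; once that bookkeeping is done, the statement reduces to the tautology that a right adjoint to $\Delta$ computes limits, together with the Beck–Chevalley translation of $f^*$ preserving them.
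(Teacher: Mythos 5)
Your proposal is correct and follows essentially the same route as the paper's proof: both apply Proposition \ref{Borceuxfibrewise} to $\hat{\Delta}_\ca{J}$, identify the fibre of the pullback fibration over $X$ with $[\ca{J},\ca{A}_X]$ and the fibrewise component with the constant-diagram functor, and translate the mate/Beck--Chevalley condition into preservation of $\ca{J}$-limits by $f^*$. The extra bookkeeping you supply (the reindexing being $[\ca{J},f^*]$ and $\tau^f$ being an identity) is left implicit in the paper but is exactly what its argument relies on.
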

\begin{proof}
 By Proposition \ref{Borceuxfibrewise}, the fibred functor
$\hat{\Delta}_\ca{J}$ has a fibred right adjoint $R$ if 
and only if there is an adjunction between the fibres
$(\hat{\Delta}_\ca{J})_X\dashv R_X$ and
we have isomorphisms 
$(R_X f^*)C\cong(f^*R_Y)C$ for any $f:X\to Y$ and $C\in\ca{C}_Y$.
The first condition is equivalent to each fibre $\ca{A}_X$
being $\ca{J}$-complete, since 
\begin{displaymath}
(\Delta_\ca{J}^*[\ca{J},\ca{A}])_X\cong[\ca{J},\ca{A}]_{\Delta_{\ca{J}}X}=
[\ca{J},\ca{A}_X]
\end{displaymath}
by construction of the pullback fibration, and 
$(\hat{\Delta}_\ca{J})_X:\ca{A}_X\to[\ca{J},\ca{A}_X]$ 
is the constant diagram functor.
If we call this fibrewise adjoint
$R_X=\mathrm{lim}_X$, the second condition becomes
\begin{displaymath}
(\mathrm{lim}_X\circ[\ca{J},f^*])F\cong(f^*\circ\mathrm{lim}_Y)F
\end{displaymath}
for any functor $F:\ca{J}\to\ca{A}_Y$, which means precisely
that any $f^*$
preserves limits between the fibre categories. Dual
arguments apply for the existence of colimits.
\end{proof}
There is an equivalent definition of a
fibred $\ca{J}$-complete
fibration $P:\ca{A}\to\caa{X}$. 
In \cite[8.5.1]{Handbook2}, it is
stated that $P$ has all $\ca{J}$-limits 
when the (outer) fibred 1-cell 
$(\tilde{\Delta}_\ca{J},\Delta_\ca{J})$
given by (\ref{fibredlimits})
has a fibred right adjoint. The difference
relates to whether we require an adjunction between fibrations
over the same bases or not, since 
the factorization through the pullback 
is a tool which permits the restriction
of the problem
from $\B{Fib}$ to $\B{Fib}(\caa{X})$. The following result
illustrates the latter.\vspace{.05in}
\begin{thm}~\cite[3.2.3]{hermidaphd}\label{hermidathm}
Given $P:\ca{A}\to\caa{X}$, $Q:\ca{B}\to\caa{Y}$,
$F\dashv G:\caa{Y}\to\caa{X}$ via $\eta$, $\varepsilon$
and a fibred 1-cell $(S,F):P\to Q$ as shown
in the following diagram
\begin{displaymath}
 \xymatrix @C=.6in
{\ca{A}\ar[d]_-P \ar[r]^-S
& \ca{B}\ar[d]^-Q \\
\caa{X}\ar @<+.8ex>[r]^-F
\ar@{}[r]|-\bot
& \caa{Y},\ar @<+.8ex>[l]^-G}
\end{displaymath}
let $\hat{S}:P\to F^*Q$ in $\B{Fib}(\caa{X})$
be the unique mediating functor in
\begin{displaymath}
 \xymatrix @R=.1in @C=.35in
{\ca{A}\ar @/^4ex/[drrr]^-S 
\ar @/_4ex/[dddr]_-P
\ar @{-->} [dr] ^-{\hat{S}} &&&\\
& F^*\ca{B}\pullbackcorner[ul]
\ar[rr]^-{\pi} \ar[dd]_-{F^*Q} && 
\ca{B}\ar[dd]^-Q\\
\hole \\
&\caa{X}\ar[rr]_-F && \caa{Y}.}
\end{displaymath}
Then, the following statements are
equivalent:

$i)$ There exists $R:\ca{B}\to\ca{A}$ such that
$S\dashv R$ in $\B{Cat}$ and $(S,F)\dashv(R,G)$
in $\B{Fib}$.

$ii)$ There exists $\hat{R}:F^*Q\to P$ such that 
$\hat{S}\dashv\hat{R}$ in $\B{Fib}(\caa{X})$.
\end{thm}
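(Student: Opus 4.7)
The plan is to give explicit mutually inverse constructions between the two kinds of adjoint data, with cartesian reindexing along the unit $\eta$ and counit $\varepsilon$ of $F\dashv G$ acting as the bridge between adjoints over $G$ and adjoints over $1_{\caa{X}}$.

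For (i)$\Rightarrow$(ii), given a fibred right adjoint $(R,G)$ to $(S,F)$, I would define $\hat{R}\colon F^{*}\ca{B}\to\ca{A}$ on objects by
\[
\hat{R}(B,X):=\eta_X^{*}(RB),
\]
which lies in $\ca{A}_X$ because $QB=FX$ forces $RB\in\ca{A}_{GFX}$. Extension to morphisms of $F^{*}\ca{B}$ is forced by the universal property of the cartesian liftings $\Cart(\eta_X,RB)$ together with cartesianness of $R$, and cartesianness of $\hat R$ over $\caa{X}$ then follows from these together with the cleavage coherence isomorphisms $\delta$. To establish the adjunction $\hat{S}\dashv\hat{R}$ in $\B{Fib}(\caa{X})$, I translate the bijection $\ca{B}(SA,B)\cong\ca{A}(A,RB)$: a morphism $(f,g)\colon\hat{S}A=(SA,PA)\to(B,X)$ in $F^{*}\ca{B}$ consists of $f\colon SA\to B$ and $g\colon PA\to X$ with $Qf=Fg$; transposing under $S\dashv R$ gives $h\colon A\to RB$ above $\eta_X\circ g$, which factors uniquely through $\Cart(\eta_X,RB)$ as a morphism $A\to\hat{R}(B,X)$ above $g$. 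This yields the desired natural isomorphism $F^{*}\ca{B}(\hat{S}A,(B,X))\cong\ca{A}(A,\hat{R}(B,X))$ over $1_{\caa{X}}$.

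For (ii)$\Rightarrow$(i), I reverse the construction: for $B\in\ca{B}_Y$ set $RB:=\hat{R}(\varepsilon_Y^{*}B,GY)\in\ca{A}_{GY}$, and extend to morphisms via the canonical assignment $\ca{B}\to F^{*}\ca{B}$ sending $f\colon B\to B'$ above $h\colon Y\to Y'$ to the pair $(\tilde f,Gh)$ obtained from the naturality square $h\circ\varepsilon_Y=\varepsilon_{Y'}\circ FGh$ and the universal property of $\Cart(\varepsilon_{Y'},B')$. Cartesianness of $R$ above $G$ follows from that of $\hat R$ together with the cleavage coherence, and the adjunction $(S,F)\dashv(R,G)$ in $\B{Fib}$ is recovered by reading the bijection of the previous paragraph backwards and composing with the projection $\pi\colon F^{*}\ca{B}\to\ca{B}$.

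The principal technical obstacle I expect is verifying that these two passages are mutually inverse up to canonical isomorphism: starting from $R$ and applying both constructions yields $\eta_{GY}^{*}R(\varepsilon_Y^{*}B)$, which must be identified with $RB$. This reduces, via cartesianness of $R$, to $(G\varepsilon_Y\circ\eta_{GY})^{*}RB$, whereupon the triangle identity $G\varepsilon_Y\circ\eta_{GY}=1_{GY}$ gives the result; the other direction uses the other triangle identity $\varepsilon_{FX}\circ F\eta_X=1_{FX}$ interpreted inside the pullback $F^{*}\ca{B}$. All remaining verifications — naturality of units and counits, compatibility of the hom-set bijection with morphisms in both variables — are routine diagram chases using the universal property of cartesian arrows together with the coherence isomorphisms $\delta$ and $\gamma$ of Section~\ref{indexedcats}.
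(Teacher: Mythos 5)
Your constructions are correct, and they line up with where the paper points: the text gives no proof of its own, citing Hermida and remarking only that the key is to factor $(S,F)$ as $(\pi,F)\circ(\hat S,1_{\caa{X}})$ and to use that change of base along a left adjoint yields a \emph{cartesian fibred adjunction}, i.e.\ $(\pi,F)$ has a right adjoint in $\B{Fib}$ sending $B\in\ca{B}_Y$ to $(\varepsilon_Y^*B,GY)$. Your explicit formulas are exactly what one obtains by unwinding that factorization: in (ii)$\Rightarrow$(i) your $RB:=\hat R(\varepsilon_Y^*B,GY)$ is the composite of $\hat R$ with that cartesian right adjoint, and in (i)$\Rightarrow$(ii) your $\hat R(B,X):=\eta_X^*(RB)$ is the transposition the other way. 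So the route is the same idea made concrete; what your version buys is a self-contained hom-set bijection (note that the step where the transpose of $f$ lies over $\eta_X\circ g$ uses precisely that $(P,Q)$ is a map of adjunctions, i.e.\ the fibredness of the 2-cells, not merely $S\dashv R$ in $\B{Cat}$), while the factorization argument buys a one-line proof once the cartesian fibred adjunction is established. One small remark: the mutual-inverseness check you flag as the principal obstacle is not actually needed for the stated biconditional --- only the two existence implications are --- though your triangle-identity computation is the right one if you want the two passages to be inverse up to canonical isomorphism.
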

This theorem uses the fact that change of base along $F$ 
as in Proposition \ref{changeofbasefibr} yields
a so-called \emph{cartesian fibred adjunction}
when $F$ has a right adjoint, meaning $(\pi,F)$ has
an adjoint in $\B{Fib}$. Therefore,
by performing change of base along a left
adjoint functor, we can factorize a general
fibred adjunction into a cartesian and 
`vertical' fibred adjunction, hence `reduce' 
a general fibred adjunction to a fibred adjunction.
Dually, this can be done for a general opfibred adjunction
accordingly.

Using the above theorem, we can deduce 
fibrewise completeness conditions from the total
category of the fibration and vice versa.
\begin{cor}~\cite[3.3.6]{hermidaphd}\label{AhasPpreserves}
Let $\ca{J}$ be a small category and 
$P:\ca{A}\to\caa{X}$ be a fibration such that
the base category $\caa{X}$ has all $\ca{J}$-limits.
Then the fibration $P$ has all 
fibred $\ca{J}$-limits if and only if $\ca{A}$
has and $P$ strictly preserves (chosen)
$\ca{J}$-limits.
\end{cor}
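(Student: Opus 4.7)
The plan is to deduce this corollary as a direct application of Hermida's Theorem \ref{hermidathm} to the canonical fibred $1$-cell $(\tilde{\Delta}_\ca{J},\Delta_\ca{J}):P\to[\ca{J},P]$ which is built into the definition (\ref{fibredlimits}) of fibred $\ca{J}$-limits. First I would unpack the definition: $P$ has fibred $\ca{J}$-limits iff the mediating fibred functor $\hat{\Delta}_\ca{J}:P\to\Delta_\ca{J}^{*}[\ca{J},P]$ in $\B{Fib}(\caa{X})$ has a fibred right adjoint. By Theorem \ref{hermidathm}, applied with $F=\Delta_\ca{J}$ and $G=\lim_\ca{J}^{\caa{X}}$ (whose existence is exactly the assumption that $\caa{X}$ has $\ca{J}$-limits, giving the base adjunction $\Delta_\ca{J}\dashv\lim_\ca{J}^{\caa{X}}$), this is in turn equivalent to the existence of an ordinary right adjoint $R$ to $\tilde{\Delta}_\ca{J}:\ca{A}\to[\ca{J},\ca{A}]$ such that $(\tilde{\Delta}_\ca{J},\Delta_\ca{J})\dashv(R,\lim_\ca{J}^{\caa{X}})$ holds as an adjunction in $\B{Fib}$.

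Next I would read off what this outer adjunction amounts to. The existence of an ordinary right adjoint $R$ to $\tilde{\Delta}_\ca{J}$ is precisely the statement that $\ca{A}$ has all $\ca{J}$-limits, with $R=\lim_\ca{J}^{\ca{A}}$ a chosen $\ca{J}$-limit functor. The further condition that $(P,[\ca{J},P])$ constitutes a map of adjunctions in the sense of Definition \ref{mapofadunctions} forces the strict commutativity
\[
P\circ\lim\nolimits_\ca{J}^{\ca{A}} \;=\; \lim\nolimits_\ca{J}^{\caa{X}}\circ[\ca{J},P],
\]
which (together with the obvious $P\circ\tilde{\Delta}_\ca{J}=\Delta_\ca{J}\circ P$ for the left adjoints) is exactly the assertion that $P$ strictly preserves the chosen $\ca{J}$-limits. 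Finally, the stipulation that $R$ is cartesian, needed for $(R,\lim_\ca{J}^{\caa{X}})$ to live in $\B{Fib}$ and not merely $\B{Cat}^\B{2}$, is automatic by Lemma \ref{Winskellemma}: once the commutative square exhibits $(\tilde{\Delta}_\ca{J},\Delta_\ca{J})\dashv(R,\lim_\ca{J}^{\caa{X}})$ as an adjunction in $\B{Cat}^\B{2}$, the right adjoint preserves cartesian arrows for free.

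Combining these two reductions yields both directions at once. For $(\Leftarrow)$, assuming $\ca{A}$ has and $P$ strictly preserves $\ca{J}$-limits, the chosen $\lim_\ca{J}^{\ca{A}}$ gives a right adjoint to $\tilde{\Delta}_\ca{J}$, strict preservation by $P$ upgrades this to an adjunction in $\B{Cat}^\B{2}$, Lemma \ref{Winskellemma} makes it live in $\B{Fib}$, and Theorem \ref{hermidathm} then produces the desired fibred right adjoint to $\hat{\Delta}_\ca{J}$. For $(\Rightarrow)$, existence of a fibred right adjoint to $\hat{\Delta}_\ca{J}$ is, via Theorem \ref{hermidathm}, precisely the package consisting of an ordinary right adjoint of $\tilde{\Delta}_\ca{J}$ (so $\ca{A}$ has $\ca{J}$-limits) together with the strict commutativity above (so $P$ strictly preserves them).

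The only step requiring real care is the identification of the map-of-adjunctions condition with strict preservation of \emph{chosen} limits; this is where the word ``strictly'' in the statement enters, and it amounts to the equality (rather than mere coherent isomorphism) of the two composites $P\circ\lim_\ca{J}^{\ca{A}}$ and $\lim_\ca{J}^{\caa{X}}\circ[\ca{J},P]$ dictated by Definition \ref{mapofadunctions}. Once this bookkeeping is made explicit, the corollary is essentially Theorem \ref{hermidathm} applied in a single specific instance, with no further calculation needed.
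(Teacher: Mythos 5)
Your proposal is correct and follows essentially the same route the paper indicates: it reduces the corollary to Theorem \ref{hermidathm} applied to the outer fibred $1$-cell $(\tilde{\Delta}_\ca{J},\Delta_\ca{J})$ of diagram (\ref{fibredlimits}), identifies the resulting general fibred adjunction $(\tilde{\Delta}_\ca{J},\Delta_\ca{J})\dashv(\lim_\ca{J}^{\ca{A}},\lim_\ca{J}^{\caa{X}})$ with the existence and strict preservation of chosen $\ca{J}$-limits, and invokes Lemma \ref{Winskellemma} to get cartesianness of the right adjoint for free. This matches the paper's own (sketched) argument, so no further comparison is needed.
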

The proof relies on Lemma \ref{Winskellemma} 
and essentially
constructs a general fibred adjunction
$(\tilde{\Delta}_\ca{J},\Delta_\ca{J})
\dashv(\mathrm{\tilde{lim}}_\ca{J},\mathrm{lim}_\ca{J})$
for the outer diagram (\ref{fibredlimits}).
Dually, we obtain fibred colimits 
for an opfibration with a cocomplete base, from colimits
in the total category which are strictly preserved
by the opfibration.
\begin{rmk*}
In essence, Theorems \ref{totaladjointthm} and 
\ref{totaladjointthm2} relate to very similar questions
as Theorem \ref{hermidathm}, namely
the assumptions under which we obtain general 
fibred and opfibred adjunctions (starting with an (op)fibred
1-cell). However, they actually respond to the exact
opposite problems: Theorem \ref{totaladjointthm2} provides
with a \emph{left} adjoint between the total functors,
whereas Theorem \ref{hermidathm} reduces the existence 
of a \emph{right} fibred 1-cell adjoint to a right
fibred adjoint. This connection
should perhaps be further explored. For example,
we could use the new results to study 
fibred cocompleteness of fibrations and fibred 
completeness of opfibrations.
\end{rmk*}

\part*{PART II}
\chapter{Enrichment of Monoids and Modules}\label{enrichmentofmonsandmods}
\section{Universal measuring comonoid and enrichment}\label{Universalmeasuringcomonoid}

The notion of the universal measuring coalgebra
was first introduced by Sweedler \cite{Sweedler}
in the context of vector spaces over a field $k$.
The question that motivated the definition 
of measuring coalgebras is under which 
conditions, for $A,B$ $k$-algebras and $C$ a 
$k$-coalgebra, the linear map 
$\rho\in\Hom_k(A,\Hom_k(C,B))$ corresponding 
under the usual tensor-hom adjunction to 
$\sigma\in\Hom_k(C\otimes_k A,B)$ in $\Vect_k$
is actually an algebra map.

More explicitly, the natural bijective
correspondence defining the adjunction
$(-\otimes_k C)\dashv\Hom_k(C,-)$
is given by the invertible mapping
\begin{displaymath} 
\xymatrix
@R=.02in
{\Vect_k(A,\Hom_k(C,B))\ar[r] & 
\Vect_k(A\otimes C,B)\\ 
A\xrightarrow{\rho}\Hom_k(C,B)\ar @{|->}[r] &
A\otimes C\xrightarrow{\bar{\rho}}B\qquad \\ 
& \quad a\otimes c\mapsto[\rho(a)](c)}
\end{displaymath} 
where of course $\Vect_k(-,-)=\Hom_k$.
If $C$ is a $k$-coalgebra and 
$B$ a $k$-algebra, it is well-known
that $\Hom_k(C,B)$ obtains the structure of 
a $k$-algebra via convolution, 
also by Remark \ref{rmkconvolution}.
Hence if $A$ is also a $k$-algebra,
we may ask under which conditions on 
$\bar{\rho}$, the corresponding linear 
map $\rho$ is a $k$-algebra homomorphism. 
This resulted in the following definition.
\begin{defi*}
 If $A,B$ are $k$-algebras, $C$ a $k$-coalgebra and
$\sigma:C\otimes_k A\to B$ a linear map, 
we say that $(\sigma,C)$ \emph{measures} $A$ to $B$ when 
$\sigma$ satisfies:
\begin{align*}
\sigma(c\otimes aa')&=
\sum\limits_{(c)}{\sigma(c_{(1)}\otimes a)
\sigma(c_{(2)}\otimes a')} \\
\sigma(c\otimes 1)&=\epsilon (c)1
\end{align*}
where the sum comes from the sigma notation for
the comultiplication of $C$, and $\epsilon$
is the counit.
\end{defi*}
There is a category of \emph{measuring coalgebras}
and it has a terminal object $P(A,B)$, 
equivalently 
defined by the following one-to-one correspondences
\begin{align}\label{existencePABinitial}
\Alg_k(A,\Hom_k(C,B))&\cong
\{\sigma\in\Hom_k(C\otimes A,B)|\sigma 
\;\mathrm{measures}\} \\
&\cong\Coalg_k(C,P(A,B))\notag
\end{align}
where the first isomorphism comes
from the definition of measuring,
and the second expresses the 
universal property of $P(A,B)$.
This object is called the \emph{universal measuring
coalgebra}, and in \cite[Theorem 7.0.4]{Sweedler}
is constructed as the sum of
certain subcoalgebras of the cofree coalgebra
on the vector space $\Hom_k(A,B)$.

As illustrated in Section
\ref{Categoriesofmonoidsandcomonoids}, 
$\Alg_k$ and $\Coalg_k$
are the categories of monoids and comonoids and 
$\Hom_k$ is the internal hom
in the symmetric monoidal
closed category $\Vect_k$ of $k$-vector spaces
and $k$-linear maps.
The aim is to obtain a generalization of 
$P(A,B)$ in a broader setting,
by identifying the appropriate assumptions on 
a monoidal category $\ca{V}$ in place of 
$\Vect_k$ which allow its existence.

Consider a symmetric monoidal closed category
$\ca{V}$. The lax monoidal internal hom 
functor induces a functor 
between the categories of comonoids and monoids
as in (\ref{defMon[]}),
\begin{displaymath}
 \Mon[-,-]:
\xymatrix @R=.05in
{\Comon(\ca{V})^\op
\times\Mon(\ca{V})\ar[r]
&\Mon(\ca{V})\\
\qquad\;(\;C\;,\;A\;)\;\ar @{|->}[r] & [C,A]}
\end{displaymath}
which is in fact just the restriction of 
the internal hom on $\Comon(\ca{V})^\op
\times\Mon(\ca{V})$.
If we call this functor of two variables $H$,
in order to generalize the isomorphism 
(\ref{existencePABinitial})
it is enough to prove that the functor
\begin{displaymath}
H(-,B)^\op:\Comon(\ca{V})
\longrightarrow\Mon(\ca{V})^\op
\end{displaymath} 
for a fixed monoid $B$ has a right adjoint.
Because of the useful properties of the 
categories of monoids and comonoids in 
admissible categories discussed in Section 
\ref{Categoriesofmonoidsandcomonoids},
and since $\Vect_k$ is itself an example of
such a category, we continue in this direction.
\begin{prop}\label{measuringcomonoidprop}
Suppose that $\ca{V}$ is a locally presentable
symmetric monoidal closed category. 
There is an adjunction 
$H(-,B)^\op\dashv P(-,B)$
with a natural isomorphism
\begin{equation}\label{meascomon}
\Mon(\ca{V})(A,[C,B])\cong
\Comon(\ca{V})(C,P(A,B))
\end{equation}
for any monoids $A$, $B$ and comonoid $C$.
\end{prop}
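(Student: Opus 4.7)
The plan is to establish the desired adjunction via Kelly's adjoint functor theorem (Theorem \ref{Kelly}). Concretely, I would regard the sought natural isomorphism in the form
\[\Mon(\ca{V})^{\mathrm{op}}\bigl(H(C,B),A\bigr)\cong\Comon(\ca{V})\bigl(C,P(A,B)\bigr),\]
so that $P(-,B)$ is to be produced as the right adjoint of the functor $H(-,B)^{\mathrm{op}}\colon\Comon(\ca{V})\to\Mon(\ca{V})^{\mathrm{op}}$. Since $\ca{V}$ is locally presentable and symmetric monoidal closed, Proposition \ref{moncomonadm} tells me that $\Comon(\ca{V})$ is locally presentable, hence in particular cocomplete with a small dense subcategory. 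Kelly's theorem then reduces the problem to verifying that $H(-,B)^{\mathrm{op}}$ is cocontinuous, equivalently that $H(-,B)\colon\Comon(\ca{V})^{\mathrm{op}}\to\Mon(\ca{V})$ preserves all small limits.

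Next I would prove this preservation of limits by reducing to the underlying category $\ca{V}$. Limits in $\Mon(\ca{V})$ are created by the monadic forgetful $S\colon\Mon(\ca{V})\to\ca{V}$ (Proposition \ref{moncomonadm}(1)); dually, colimits in $\Comon(\ca{V})$ are created by the comonadic forgetful $U\colon\Comon(\ca{V})\to\ca{V}$ (Proposition \ref{moncomonadm}(2)), so limits in $\Comon(\ca{V})^{\mathrm{op}}$ are computed as colimits of underlying objects. Because $H(-,B)$ is by construction just the restriction to (co)monoids of the internal hom $[-,B]\colon\ca{V}^{\mathrm{op}}\to\ca{V}$ (see \eqref{defMon[]}), the claim boils down to showing that $[-,B]\colon\ca{V}^{\mathrm{op}}\to\ca{V}$ preserves limits, i.e.\ sends colimits in $\ca{V}$ to limits in $\ca{V}$. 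This follows from the adjunction \eqref{adjunctionopinthom}, or equivalently from the fact that $-\otimes B$ is cocontinuous in a monoidal closed category.

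Once this is in place, given a limit cone $C=\lim_j C_j$ in $\Comon(\ca{V})^{\mathrm{op}}$ (a colimit in $\Comon(\ca{V})$), I would assemble the comparison map $[C,B]\to\lim_j[C_j,B]$ in $\Mon(\ca{V})$. On underlying objects it becomes $[\,\colim_j C_j,B]\cong\lim_j[C_j,B]$ in $\ca{V}$ by the previous step, so it is an isomorphism in $\ca{V}$, and since the monadic forgetful $S$ reflects isomorphisms this produces an isomorphism in $\Mon(\ca{V})$. Naturality, and the fact that the comparison is a monoid morphism, will be inherited from the fact that the monoid structures on both sides come from the same lax monoidal internal hom (Proposition \ref{laxinthom}); this is where I expect the only routine but delicate work to lie, amounting to a diagram chase using the explicit convolution-type multiplication on $[C,B]$ described after \eqref{defMon[]}.

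The main conceptual obstacle, and the reason the local presentability hypothesis on $\ca{V}$ is really being used, is ensuring that $\Comon(\ca{V})$ has the smallness property needed to apply Theorem \ref{Kelly}; without it, the functor $H(-,B)^{\mathrm{op}}$ is still cocontinuous but one has no a priori guarantee of a solution set. With this in hand, applying Kelly's theorem immediately yields the right adjoint $P(-,B)\colon\Mon(\ca{V})^{\mathrm{op}}\to\Comon(\ca{V})$ and the natural bijection \eqref{meascomon}, producing the universal measuring comonoid in this generality.
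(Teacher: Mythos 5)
Your proposal is correct and follows essentially the same route as the paper: local presentability of $\Comon(\ca{V})$ from Proposition \ref{moncomonadm}, continuity of $H(-,B)$ deduced from the commutative square with the monadic $S$ and comonadic $U$ together with continuity of $[-,SB]$ via (\ref{adjunctionopinthom}), and then Kelly's Theorem \ref{Kelly} plus the parametrized adjunction theorem. The only difference is cosmetic: the explicit comparison-map and monoid-morphism check you anticipate is already subsumed by the fact that the monadic $S$ \emph{creates} limits, so no separate diagram chase is needed.
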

\begin{proof}
A monoidal category $\ca{V}$ with these properties
belongs to the class of admissible categories, therefore
Proposition \ref{moncomonadm} applies. 
As a result, the category of comonoids
$\Comon(\ca{V})$ is a locally presentable category,
and in particular cocomplete with a small
dense subcategory.
Moreover, there is a commutative diagram
\begin{displaymath}
\xymatrix @C=.7in @R=.5in
 {\Comon(\ca{V})^\op
\ar[r]^-{H(-,B)}\ar[d]_-{U^\op} &
\Mon(\ca{V})\ar[d]^-S\\
\ca{V}^\op\ar[r]_-{[-,SB]} &\ca{V}}
\end{displaymath} 
where the forgetful functors $U$, $S$ are respectively
comonadic and monadic. The bottom functor $[-,SB]$ is continuous 
as the right adjoint
of $[-,SB]^\op$ as in (\ref{adjunctionopinthom}), 
thus the diagram
exhibits $H(-,B)$ as a  
continuous functor.
Hence by Theorem \ref{Kelly}, the cocontinuous $H(-,B)^\op$
has a right adjoint $P(-,B)$
with an isomorphism as in (\ref{meascomon}).
Since this is natural in $A$ and $C$,
there is a unique way to define a functor of two variables
\begin{equation}\label{defP}
P(-,-):\Mon(\ca{V})^\op\times\Mon(\ca{V})
\longrightarrow\Comon(\ca{V})
\end{equation}
which is the parametrized adjoint of $H^\op(-,-)$
by `adjunctions with 
a parameter' Theorem \ref{parametrizedadjunctions}.
\end{proof}
The object $P(A,B)$ for monoids $A$, $B$ is called
the \emph{universal measuring comonoid},
and the functor $P$ is called the universal measuring
comonoid functor or \emph{Sweedler hom} in \cite{AnelJoyal}.
Notice that in fact, a parametrized adjoint for $H^\op$
should have domain $\Mon(\ca{V})\times\Mon(\ca{V})^\op$,
but it is more natural to work with an essentially
identical functor which is contravariant on the first entry,
just by switching the cartesian product in our notation.  

In particular, for the admissible monoidal
closed $\Mod_R$ 
for a commutative ring $R$,
there is a natural isomorphism
\begin{equation}\label{meascoal}
\Coalg_R(C,P(A,B))\cong\Alg_R(A,\Hom_R(C,B))
\end{equation}
defining the \emph{universal measuring coalgebra}
 $P(A,B)$. This is also given by
\cite[Proposition 4]{AdjAlgCoalg}.
\begin{rmk}\label{dualalgebra}
It is a well-known fact that the
\emph{dual} $C^*=\Hom_k(C,k)$
of a $k$-coalgebra, where $k$ is viewed as an algebra
over itself, has a natural structure of 
an algebra.
On the other hand, if $A$ is a $k$-algebra,
its dual 
$A^*=\Hom_k(A,k)$ in general fails to
be a coalgebra, unless
for example it is finite dimensional
as a $k$-vector space. This is 
due to the failure of the canonical
linear map
\begin{displaymath}
 V^*\otimes_k W^*\to(V\otimes_k W)^*
\end{displaymath}
which gives the lax monoidal structure on $\Hom_k$,
to always be invertible.
However, we can define the subspace 
\begin{displaymath}
A^0=\{g\in A^*|\exists\; \textrm{ideal }I\subset\textrm{ker}g\;
\textrm{s.t.}\;(\textrm{ker}g/I)\;\textrm{f.d.}\}
\end{displaymath}
of $A^*$ which turns out to have the structure of a coalgebra. 
Then, the \emph{dual algebra functor}  
$\Hom_k(-,k)=( - )^*$ is adjoint to $( - )^0$
via the classical isomorphism
\begin{displaymath}
\Coalg_k(C,A^0)\cong\Alg_k(A,C^*).
\end{displaymath} 
This is a special case of
(\ref{meascoal}) for $R=k$, hence Proposition
\ref{measuringcomonoidprop} in fact 
generalizes the dual algebra functor adjunction
to $\Mod_R$, but also in a sense to a 
more general monoidal category $\ca{V}$,
with $(-)^*\cong[-,I]$ and $(-)^0\cong P(-,I)$.
\nocite{Quantum,HopfAlg}
\end{rmk}
We now proceed to the statement and proof of a 
lemma which connects the
adjunction (\ref{meascomon}) with the usual 
$(-\otimes C)\dashv[C,-]$ defining the internal hom.
\begin{lem}\label{lemma} 
Suppose we have a monoid arrow $f:A\to
[C,B]$ for $A$, $B$ monoids, $C$ a comonoid
in a locally presentable symmetric monoidal closed category
$\ca{V}$. 
If this arrow corresponds to 
$\bar{f}:A\otimes C\to B$ in $\ca{V}$ 
under $(-\otimes C)\dashv
[C,-]$ and to $\hat{f}:C\to P(A,B)$ 
in $\Comon(\ca{V})$ under
$H(-,B)^\op\dashv P(-,B)$, then
the two transposes are connected via
\begin{equation}\label{relationlemma}
 \bar{f}=(\varepsilon\otimes\hat{f})\circ\mathrm{ev}
\end{equation}
where $\mathrm{ev}$ is the
evaluation and $\varepsilon$ the counit of the 
universal measuring comonoid adjunction.
\end{lem}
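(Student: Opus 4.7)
The plan is to express both transposes via the standard unit/counit formulas for the two relevant adjunctions, and then connect them using the defining property of the evaluation map. First I would unpack the adjunction $H(-,B)^{\op} \dashv P(-,B)$ from Proposition \ref{measuringcomonoidprop}: its counit at the monoid $A$ is a monoid morphism $\varepsilon_A : A \to [P(A,B), B]$, and so for $f : A \to [C,B]$ in $\Mon(\ca{V})$ with transpose $\hat f : C \to P(A,B)$ in $\Comon(\ca{V})$, the general adjoint transposition formula yields
\[
f \;=\; [\hat f, B] \circ \varepsilon_A \;:\; A \longrightarrow [P(A,B),B] \longrightarrow [C,B].
\]
This is an equality of monoid morphisms, and hence also of morphisms in $\ca{V}$ via the faithful forgetful $S : \Mon(\ca{V}) \to \ca{V}$.

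Next I would use the standard formula for the transpose of $f$ under the internal hom adjunction $(-\otimes C) \dashv [C,-]$, whose counit is the evaluation, to write $\bar f = \mathrm{ev}^C_B \circ (f \otimes 1_C)$. Substituting the expression just obtained gives
\[
\bar f \;=\; \mathrm{ev}^C_B \circ ([\hat f, B] \otimes 1_C) \circ (\varepsilon_A \otimes 1_C).
\]
The key computational step is then the defining property of the internal hom acting on morphisms of $\ca{V}^{\op}$: by construction (cf.\ the defining square in \eqref{parameterdefinining} specialised to this setting, i.e.\ the dinaturality \eqref{dinaturality} of $\mathrm{ev}$ in the exponent variable), the morphism $[\hat f, B]$ is characterised by the identity
\[
\mathrm{ev}^C_B \circ ([\hat f, B] \otimes 1_C) \;=\; \mathrm{ev}^{P(A,B)}_B \circ (1_{[P(A,B),B]} \otimes \hat f).
\]
Substituting this into the previous expression and using functoriality of $\otimes$ to merge the two tensor factors yields
\[
\bar f \;=\; \mathrm{ev}^{P(A,B)}_B \circ (\varepsilon_A \otimes \hat f),
\]
which is precisely the stated relation \eqref{relationlemma}.

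The argument is essentially routine once these pieces are aligned, so no serious obstacle is expected. The only mild subtlety is bookkeeping: $\varepsilon_A$ arises as the counit in $\Mon(\ca{V})$ while the evaluation adjunction lives in $\ca{V}$, and the comparison takes place in $\ca{V}$. This is handled transparently by the fact that $S$ is faithful and strict monoidal, so both sides of the equation can be interpreted unambiguously in $\ca{V}$, and the multiplicative structure plays no role in verifying the identity.
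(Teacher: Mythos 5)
Your proof is correct and follows essentially the same route as the paper's: expressing $f$ as $[\hat{f},B]\circ\varepsilon_A$ via the counit of $H(-,B)^{\op}\dashv P(-,B)$, writing $\bar f=\mathrm{ev}\circ(f\otimes 1)$, and then applying the dinaturality \eqref{dinaturality} of the evaluation to slide $\hat f$ across. The paper presents this as a single commutative diagram rather than a chain of equations, but the content and the key step (dinaturality of $\mathrm{ev}$ in the exponent variable) are identical.
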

\begin{proof}
Consider the following diagram
\begin{displaymath}
\xymatrix @C=.4in @R=.5in {& [P(A,B),B]\otimes
C\ar[r]^-{1\otimes\hat{f}}\ar[ddr]^-{[\hat{f},1]\otimes1} &
[P(A,B),B]\otimes P(A,B)\ar[dr]^-{\mathrm{ev}_B} &\\ A\otimes
C\ar[ur]^-{\varepsilon_A\otimes1}\ar[drr]_-{f\otimes1}\ar @{-->}
@/_3ex/[rrr]_{\bar{f}} && & B\\ && [C,B]\otimes
C\ar[ur]_-{\mathrm{ev}_B} &}
\end{displaymath}
where the bottom composite defines $\bar{f}$.
Notice that the counit $\varepsilon$ in reality 
has components $H(P(A,B),B)^\op\to A$ in $\Mon(\ca{V})^\op$.

The left part of the diagram gives $f$ from its transpose map $\hat{f}$ 
under $H(-,B)^\op\dashv P(-,B)$. The right part commutes by
dinaturality as in (\ref{dinaturality}) of the counit
$\mathrm{ev}_D^E:[D,E]\otimes D\to E$ of the parametrized adjunction
$(-\otimes -)\dashv [-,-]$. 
Therefore the diagram commutes
and the relation (\ref{relationlemma}) holds.
\end{proof}
We can now combine the existence of the universal
measuring comonoid $P(A,B)$ with the theory 
of actions of monoidal categories
in Section \ref{actions}, in order to
establish an enrichment of $\Mon(\ca{V})$ 
in the symmetric monoidal closed $\Comon(\ca{V})$.
Recall that for any symmetric monoidal
closed category $\ca{V}$, the internal hom 
\begin{displaymath}
 [-,-]:\ca{V}^\op\times\ca{V}\longrightarrow\ca{V}
\end{displaymath}
is an action of the monoidal category
$\ca{V}^\op$ on the category $\ca{V}$,
as explained in Lemma \ref{inthomaction}. Furthermore,
the restricted functor on the categories
of comonoids and monoids
$H=\Mon[-,-]$ is an action too, by the same lemma.
Finally, the opposite
functor of an action is still an action.
Therefore, for the action
\begin{equation}\label{Hopaction}
 H^\op:\Comon(\ca{V})\times\Mon(\ca{V})^\op
\longrightarrow\Mon(\ca{V})^\op
\end{equation}
of the symmetric monoidal closed 
category $\Comon(\ca{V})$ (see Proposition \ref{Comonclosed})
on the ordinary category $\Mon(\ca{V})^\op$, Corollaries
\ref{importcor1} and \ref{importcor2} apply. 
\begin{thm}\label{MonVenrichedinComonV}
 Let $\ca{V}$ be a locally presentable symmetric
monoidal closed category and $P$ the 
Sweedler hom functor.
\begin{enumerate}
 \item The opposite category of monoids 
$\Mon(\ca{V})^\op$ is enriched in 
the category of comonoids $\Comon(\ca{V})$, with hom-objects
\begin{displaymath}
\Mon(\ca{V})^\op(A,B)=P(B,A)
\end{displaymath}
where the $\Comon(\ca{V})$-enriched category is denoted 
by the same name.
\item The category of monoids $\Mon(\ca{V})$
is a tensored and cotensored $\Comon(\ca{V})$-enriched 
category, with hom-objects
\begin{displaymath}
 \Mon(\ca{V})(A,B)=P(A,B)
\end{displaymath}
and cotensor products $[C,B]$ for any comonoid $C$ and monoid $B$.
\end{enumerate}
\end{thm}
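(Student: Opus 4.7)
The plan is to instantiate Corollaries \ref{importcor1} and \ref{importcor2} with the action
$H^\op:\Comon(\ca{V})\times\Mon(\ca{V})^\op\to\Mon(\ca{V})^\op$
of (\ref{Hopaction}), which is an action of a symmetric monoidal closed category (by Proposition \ref{Comonclosed}) by Lemma \ref{inthomaction}. The pointwise adjunction $H(-,B)^\op\dashv P(-,B)$ from Proposition \ref{measuringcomonoidprop} supplies the remaining hypothesis needed to apply both corollaries, and the parametrised functor $P$ of (\ref{defP}) will provide the enriched hom-objects.

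For part (1), Corollary \ref{importcor1} applies directly and exhibits $\Mon(\ca{V})^\op$ as a tensored $\Comon(\ca{V})$-enriched category with hom-objects $\Mon(\ca{V})^\op(A,B)=P(B,A)$ and tensors $X\ast B=[X,B]$; no further work is needed here.

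For part (2), Corollary \ref{importcor2} applied to the same data immediately endows $\Mon(\ca{V})$ with a cotensored $\Comon(\ca{V})$-enriched structure whose hom-objects are $P(A,B)$ and whose cotensor of $X\in\Comon(\ca{V})$ with $B\in\Mon(\ca{V})$ is $[X,B]$. To also secure the tensored structure, the second clause of Corollary \ref{importcor2} demands a right adjoint for $X\ast-:\Mon(\ca{V})^\op\to\Mon(\ca{V})^\op$ for every comonoid $X$; equivalently, it demands a left adjoint for the endofunctor $[X,-]:\Mon(\ca{V})\to\Mon(\ca{V})$.

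The main step I expect to carry out is the construction of this left adjoint, and it will be the principal obstacle. First I would note that $[X,-]$ is continuous: the forgetful $U:\Mon(\ca{V})\to\ca{V}$ is monadic by Proposition \ref{moncomonadm} and so creates limits, and the identity $U\circ[X,-]=[UX,-]\circ U$, combined with the continuity of the right adjoint $[UX,-]:\ca{V}\to\ca{V}$, forces $[X,-]$ to preserve all limits. Since $\Mon(\ca{V})$ is locally presentable by the same proposition, the existence of a left adjoint will then follow from the adjoint functor theorem for locally presentable categories, provided accessibility of $[X,-]$ is established. This in turn reduces, via the finitarity of $U$, to accessibility of $[UX,-]:\ca{V}\to\ca{V}$, which holds for any regular cardinal $\lambda$ large enough that $UX$ is $\lambda$-presentable in $\ca{V}$ and compatible with $\otimes$; the technical subtlety is to verify that such a $\lambda$ can indeed be chosen for each fixed $X$, which follows from standard facts about locally presentable symmetric monoidal closed categories.
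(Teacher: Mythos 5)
Your proposal is correct and follows the same overall skeleton as the paper: both instantiate Corollaries \ref{importcor1} and \ref{importcor2} with the action $H^\op$ and the parametrized adjunction of Proposition \ref{measuringcomonoidprop}, and both reduce the only nontrivial remaining point --- the tensored structure on $\Mon(\ca{V})$ --- to producing a left adjoint for $H(C,-)=[C,-]:\Mon(\ca{V})\to\Mon(\ca{V})$. Where you diverge is in how that left adjoint (the Sweedler product $C\triangleright-$) is obtained. The paper applies Dubuc's Adjoint Triangle Theorem to the square $S\circ H(C,-)=[C,-]\circ S$: since $S$ is monadic, $\Mon(\ca{V})$ has coequalizers, and $[C,-]:\ca{V}\to\ca{V}$ already has the left adjoint $-\otimes C$, the lift is immediate. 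You instead verify that $H(C,-)$ is continuous (via creation of limits by the monadic $S$) and accessible (via finitarity of $S$ and accessibility of $[UC,-]$, which holds because it is a right adjoint between locally presentable categories), and then invoke the adjoint functor theorem for locally presentable categories. Both arguments are valid under the stated hypotheses; the adjoint-triangle route is lighter, needing only coequalizers and the monadic forgetful functor, whereas your route leans on the full locally presentable machinery but has the advantage of not requiring any lifting theorem beyond Theorem \ref{Kelly}-type adjoint functor results already used throughout the paper. The accessibility step you flag as the "technical subtlety" is indeed the only place needing care, but it is covered by the standard fact that a right adjoint between locally presentable categories is accessible, so no genuine gap remains.
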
 
\begin{proof}
By Proposition \ref{measuringcomonoidprop}, there is an 
adjunction 
\begin{displaymath}
 \xymatrix @C=.6in
{\Comon(\ca{V})\ar@<+.8ex>[r]^-{H(-,B)^\op} 
\ar@{}[r]|-{\bot} &
\Mon(\ca{V})^\op\ar@<+.8ex>[l]^-{P(-,B)}} 
\end{displaymath}
which defines the bifunctor $P$ (\ref{defP})
as the parametrized adjoint of the bifunctor
$H^\op$. The latter is an action,
thus an enrichment of the category acted on is induced,
as well as of its opposite category $\Mon(\ca{V})$
since the monoidal category $\Comon(\ca{V})$
is symmetric. 

In particular, since $\Comon(\ca{V})$ is closed, 
the action $[-,-]$ which induces the enrichment
of $\Mon(\ca{V})^\op$ renders it a tensored 
$\Comon(\ca{V})$-category, hence 
its opposite enriched category is cotensored.
On the other hand, $\Mon(\ca{V})$ is also a tensored $\Comon(\ca{V})$-category
because the functor
\begin{displaymath}
 H(C,-)^\op:\Mon(\ca{V})^\op\longrightarrow\Mon(\ca{V})^\op
\end{displaymath}
has a right adjoint for every comonoid $C$. This follows from
the Adjoint Triangle Theorem (see \cite{AdjointTriangles}) 
applied to the commutative diagram
\begin{displaymath}
 \xymatrix @C=.5in
{\Mon(\ca{V})\ar[r]^-{H(C,-)} \ar[d]_-S & \Mon(\ca{V})\ar[d]^-S \\
 \ca{V}\ar[r]_-{[C,-]} & \ca{V}.}
\end{displaymath}
The forgetful $S$ is monadic, the locally presentable
$\Mon(\ca{V})$ has coequalizers and
$[C,-]$ has a left adjoint $(-\otimes C)$. 
Therefore $H(C,-)$ has a left adjoint $C\triangleright -$
for all $C$'s and so there is 
a unique way to define a bifunctor
\begin{equation}\label{deftriangle}
 \triangleright:\Comon(\ca{V})\times\Mon(\ca{V})\longrightarrow\Mon(\ca{V}).
\end{equation}
In \cite{AnelJoyal}, this functor is called the \emph{Sweedler product}.
\end{proof}

\section{Global categories of modules and comodules}\label{globalcats}

In Section \ref{Categoriesofmodulesandcomodules}, 
the categories $\Mod_\ca{V}(A)$ and $\Comod_\ca{V}(C)$
of $A$-modules and $C$-comodules 
for a monoid $A$ and a comonoid $C$
in a monoidal category $\ca{V}$ were defined. The idea here 
is that there exist global categories of modules and comodules, 
which contain all these `fixed (co)monoids' categories, 
with appropriate arrows between
modules and comodules of actions and coactions
from different sources. These global categories 
are central for the development
of this thesis, and their construction
is interrelated with the
theory of fibrations and opfibrations.
\begin{defi}\label{defComod}
The \emph{global category of comodules} $\Comod$
is the category of all $C$-comodules $X$ for
any comonoid $C$, denoted by $X_C$.
A morphism $k_g:X_C\to Y_D$
for $X$ a $C$-comodule and $Y$ a $D$-comodule
consists of a comonoid morphism $g:C\to D$
and an arrow $k:X\to Y$ in $\ca{V}$ which makes
the diagram
\begin{displaymath}
 \xymatrix @C=.45in @R=.5in
{X\ar[r]^-\delta \ar[d]_-k & 
X\otimes C\ar[r]^-{1\otimes g} &
X\otimes D\ar[d]^-{k\otimes1} \\
Y\ar[rr]_-\delta && Y\otimes D}
\end{displaymath}
commute. Dually, the \emph{global category
of modules} $\Mod$ has as objects all
$A$-modules $M$ for any monoid $A$, and
morphisms are $p_f:M_A\to N_B$
where $f:A\to B$ is a monoid morphism
and $p:M\to N$ makes the dual diagram
\begin{displaymath}
 \xymatrix @C=.45in @R=.5in
{A\otimes M\ar[rr]^-\mu\ar[d]_-{1\otimes p} &&
M\ar[d]^-p \\
A\otimes N\ar[r]_-{f\otimes1} & B\otimes N\ar[r]_-\mu &
N}
\end{displaymath}
commute. Conventially, unless otherwise stated
the modules considered will be left and the comodules
considered will be right. 
\end{defi}
There are obvious forgetful functors
\begin{equation}\label{forgetGV}
 G:\Mod\longrightarrow\Mon(\ca{V})\quad\textrm{and}\quad
V:\Comod\longrightarrow\Comon(\ca{V})
\end{equation}
which simply map any module $M_A$/comodule $X_C$ to its
monoid $A$/comonoid $C$ and the morphisms to
their monoid/comonoid part respectively. In fact,
$G$ is a split fibration and $V$ is a
split opfibration:
the descriptions of the global categories 
agree with the Grothendieck categories 
for specific (strict)
functors
\begin{displaymath}
\xymatrix @R=.05in @C=.4in
{\Mon(\ca{V})^\op\ar[r]^-{\Mod_\ca{V}}
& \B{Cat} \\
A\ar@{|.>}[r]\ar[dd]_-f & \Mod_\ca{V}(A) \\
\hole \\
B\ar@{|.>}[r] & \Mod_\ca{V}(B)
\ar[uu]_-{f^*}}
\qquad
\xymatrix @R=.05in @C=.4in
{\Comon(\ca{V})\ar[r]^-{\Comod_\ca{V}} & \B{Cat} \\
C\ar@{|.>}[r]\ar[dd]_-g & \Comod_\ca{V}(C)
\ar[dd]^-{g_!} \\
\hole \\
D\ar@{|.>}[r] & \Comod_\ca{V}(D)}
\end{displaymath}
where $f^*$ and $g_!$ are the restriction
and corestriction of scalars as in 
(\ref{defres}) and (\ref{defcores}).
\begin{rmk*}
Under the assumptions of Proposition \ref{f*leftadjoint}, 
the functor $f^*$ has
a left adjoint and the functor $g_!$ has a right adjoint. 
Thus by Remark \ref{rmkforadjointintexingbifr},
when $\ca{V}$ has and $A\otimes-$ preserves 
coequalizers for any monoid $A$, the fibration
$G$ is a bifibration. 
Dually, when $\ca{V}$ has and $-\otimes C$ preserves 
equalizers for any comonoid $C$, the opfibration
$V$ is a bifibration.
\end{rmk*}
If we unravel the Grothendieck 
construction of Theorem \ref{maintheoremfibr}, 
we have the following equivalent
characterization of, for example, $\Comod$:

$\cdot$ Objects are pairs $(X,C)$ with $C\in\Comon(\ca{V})$
and $X\in\Comod_\ca{V}(C)$.

$\cdot$ Morphisms are pairs $(k,g):(X,C)\to(Y,D)$ with
\begin{displaymath}
\begin{cases}
g_!X\xrightarrow{k}Y & \text{in }\Comod_\ca{V}(D)\\
\quad C\xrightarrow{g}D & \text{in }\Comon(\ca{V}).
\end{cases}
\end{displaymath}

$\cdot$ Composition $X_C\xrightarrow{(k,g)}Y_D\xrightarrow{(l,h)}Z_E$
is given by 
\begin{displaymath}
\begin{cases}
(hg)_!X\xrightarrow{\theta}Z & \text{in }\Comod_\ca{V}(E)\\
\qquad C\xrightarrow{hg}E & \text{in } \Comon(\ca{V})
\end{cases}
\end{displaymath}
where $\theta$ is the composite
$(hg)_!X=h_!g_!X\xrightarrow{h_!l}h_!Y\xrightarrow{k}Z$.

$\cdot$ The identity morphism is 
\begin{displaymath}
\begin{cases}
X\xrightarrow{1_X}X &\text{in }\Comod_\ca{V}(C)\\
C\xrightarrow{1_C}C &\text{in }\Comon(\ca{V})
\end{cases}
\end{displaymath}
since $(1_C)_!X=X$.

By comparing this with Definition \ref{defComod},
we deduce that $\Comod=\Gr{G}(\Comod_\ca{V})$
in a straightforward way. 
Dually $\Mod=\Gr{G}(\Mod_\ca{V})$,
so objects $M_A$ can be seen as pairs
$(M,A)$ with $A\in\Mon(\ca{V})$ and 
$M\in\Mod_\ca{V}(A)$, and morphisms $p_f$
as 
\begin{displaymath}
\begin{cases}
M\xrightarrow{p}f^*N & \text{in }\Mod_\ca{V}(A)\\
A\xrightarrow{f}B & \text{in }\Mon(\ca{V}).
\end{cases}
\end{displaymath}
Since these presentations of 
the global categories are essentially
the same, we can freely use 
the notation which
is more convenient depending on the case.
The fibre categories for $V=U_{\Comod_\ca{V}}$
and $G=P_{\Mod_\ca{V}}$ are respectively $\Comod_\ca{V}(C)$
and $\Mod_\ca{V}(A)$
and the canonical chosen cartesian and cocartesian
liftings are
\begin{align}\label{canoncart}
\Cart(f,N)&:f^*N\xrightarrow{(1_{f^*N},f)}N\textrm{ in }\Mod, \\
\Cocart(g,X)&:X\xrightarrow{(1_{g_!X},g)}g_!X\textrm{ in }\Comod. \notag
\end{align}
\begin{rmk}
 There is another way of viewing the global category of modules $\Mod$
for a monoidal category $\ca{V}$.
It is based on the observation 
that to give a lax functor of bicategories
$\ca{M}\ca{I}\to\ca{M}\ca{V}$
which is identity on objects is to give an object in $\Mod$.
I thank Steve Lack for explaining this point of 
view to me.

The bicategories are constructed
as in the Remark \ref{rmkpseudoaction}(i), arising from
the canonical actions of the monoidal categories $\ca{I},\;\ca{V}$
on themselves via tensor product. For the unit monoidal category,
we of course have that $\ca{M}\ca{I}(0,0)=\ca{M}\ca{I}(0,1)=
\ca{M}\ca{I}(1,1)=\B{1}$ and $\ca{M}\ca{I}(1,0)=\emptyset$.
Such an identity-on-objects lax functor $\ps{F}$ would in particular 
consist of functors
\begin{displaymath}
 \ps{F}_{0,1},\ps{F}_{1,1}:1\rightrightarrows\ca{V}
\end{displaymath}
which pick up two objects $M$ and $A$ in $\ca{V}$. The components of 
the natural transformations $\delta$ as in (\ref{delta}) give arrows 
$\mu:A\otimes M\to A$ and $m:A\otimes A\to A$ in $\ca{V}$,
the components of $\gamma$ as in (\ref{gamma}) give $\eta:I\to A$
and the axioms ensure that $(A,m,\eta)$ is a monoid in $\ca{V}$
and $(M,\mu)$ is an $A$-module.

Then, morphisms in $\Mod$ are icons, as described in 
Remark \ref{icons}: if $M_A$, $N_B$ are two identity-on-objects
lax functors between $\ca{M}\ca{I}$ and $\ca{M}\ca{V}$,
an icon between them consists in particular of natural
transformations 
\begin{displaymath}
 \xymatrix@C=.5in{\B{1}\rtwocell<\omit>{f}
\ar@/^3ex/[r]^-A\ar@/_3ex/[r]_-B & \ca{V}}\;\;\mathrm{and}\;\;
\xymatrix@C=.5in{\B{1}\rtwocell<\omit>{p}
\ar@/^3ex/[r]^-M\ar@/_3ex/[r]_-N & \ca{V}}
\end{displaymath}
which are two arrows $f:A\to B$ and 
$p:M\to N$ in $\ca{V}$, subject to conditions which
coincide with those of Definition \ref{defComod}.

Dually, colax natural transformations $\ca{M}\ca{I}\to\ca{M}\ca{V}$
correspond to comodules over comonoids, and icons then turn out 
to be comodule
morphisms. Therefore we have 
\begin{align*}
\Mod&=\B{Bicat}_2(\ca{M}\ca{I},\ca{M}\ca{V})_l \\
\Comod&=\B{Bicat}_2(\ca{M}\ca{I},\ca{M}\ca{V})_c
\end{align*}
where $\B{Bicat}_2$ is the 2-category of bicategories, lax/colax
functors and icons (see \cite{Icons}).
\end{rmk}
We now explore some of the main properties
of the global categories. First of all,
if $\ca{V}$ is a symmetric monoidal category, 
$\Comod$ and $\Mod$ are symmetric monoidal categories
as well. It is easy to verify that 
if $s$ is the symmetry in $\ca{V}$,
the object $X_C\otimes Y_D$ 
in $\ca{V}$ for $X_C,Y_D\in\Comod$ is a comodule over 
the comonoid $C\otimes D$ via the coaction
\begin{equation}\label{coactioncomod}
X\otimes Y\xrightarrow{\delta_X\otimes\delta_Y}
X\otimes C\otimes Y\otimes D
\xrightarrow{1\otimes s\otimes1} X\otimes Y\otimes C\otimes D. 
\end{equation}
The fact that $\Comon(\ca{V})$ is monoidal 
itself is evidently required, which holds again due to 
symmetry of $\ca{V}$.
Notice that there is no appropriate way
of endowing the fibre categories
$\Comod_\ca{V}(C)$ with a monoidal structure
in general, 
since for example, the tensor product
of two $C$-comodules would end up
as a $C\otimes C$-module by the above.
Similarly, for $M_A,N_B\in\Mod$, the object $M_A\otimes N_B$
is a $A\otimes B$-module via the action
\begin{displaymath}
 A\otimes B\otimes M\otimes N\xrightarrow{1\otimes s\otimes1}
A\otimes M\otimes B\otimes N\xrightarrow{\mu_M\otimes\mu_N}
M\otimes N.
\end{displaymath}
The symmetry of $\Mod$ and $\Comod$ is inherited from $\ca{V}$.
Moreover, in this case the functors 
$V$ and $G$ of (\ref{forgetGV})
have the structure of a strict symmetric monoidal
functor:
\begin{align}\label{strictmonoidalVG}
V(X_C\otimes Y_D)&=C\otimes D=VX_C\otimes VY_D \\
G(M_A\otimes N_B)&=A\otimes B=GM_A\otimes GN_B.\notag
\end{align}
The monoidal unit in both cases is $I$,
with a trivial $I$-action and coaction via $r_I$.

The following result, also mentioned
at the end of \cite{LinearReps} for
$\ca{V}=\Mod_R$, illustrates the structure
of the global categories.
\begin{prop}\label{Comodcomonadic}
The functor $F:\Comod\to\ca{V}
\times\Comon(\ca{V})$ which maps an object
$X_C$ to the pair $(X,C)$ is comonadic.
\end{prop}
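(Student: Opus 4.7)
The strategy is to construct a right adjoint $R$ to $F$ explicitly, and then show that the induced comonad $T=FR$ on $\ca{V}\times\Comon(\ca{V})$ has a category of coalgebras which is literally isomorphic to $\Comod$, via the canonical comparison functor. No Beck-style hypotheses need to be checked because the identification of $T$-coalgebras with comodules will be direct.

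First I would define $R:\ca{V}\times\Comon(\ca{V})\to\Comod$ by sending $(X,C)$ to the cofree $C$-comodule $(X\otimes C)_C$ with coaction $1_X\otimes\Delta_C:X\otimes C\to X\otimes C\otimes C$, and a morphism $(k,g):(X,D)\to(Y,C)$ to the pair $(k\otimes g)_g:(X\otimes D)_D\to(Y\otimes C)_C$, whose compatibility with the cofree coactions is precisely the comonoid-map condition on $g$. The unit and counit are $\eta_{X_C}=(\delta_X)_{1_C}:X_C\to(X\otimes C)_C$ (well-defined as a comodule morphism by coassociativity of the coaction) and $\varepsilon_{(X,C)}=(1_X\otimes\epsilon_C,1_C):(X\otimes C,C)\to(X,C)$; the two triangle identities reduce to the counit law for $\delta_X$ and the counit law for the comonoid $C$. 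This is essentially the global assembly of the fibrewise adjunctions $U_C\dashv(-\otimes C)$ of Proposition \ref{modulesmonadic}.

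Next, I would unpack the induced comonad $T=FR$ on $\ca{V}\times\Comon(\ca{V})$: on objects $T(X,C)=(X\otimes C,C)$, with comultiplication $(1_X\otimes\Delta_C,1_C)$ and counit $(1_X\otimes\epsilon_C,1_C)$. The decisive point is that the second component of $T$ is the identity on $\Comon(\ca{V})$: consequently a $T$-coalgebra structure on $(X,C)$ is a pair of arrows $\alpha=(\alpha_1:X\to X\otimes C,\ \alpha_2:C\to C)$, the counit axiom forces $\alpha_2=1_C$, and the coassociativity and counit axioms on $\alpha_1$ then read verbatim as the axioms (\ref{defmod}-dual) for $(X,\alpha_1)$ to be a right $C$-comodule.

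Finally, a $T$-coalgebra morphism $((X,C),\alpha_1)\to((Y,D),\beta_1)$ is a morphism $(k,g):(X,C)\to(Y,D)$ in $\ca{V}\times\Comon(\ca{V})$ whose compatibility with $\alpha_1,\beta_1$ via $T(k,g)=(k\otimes g,g)$ is precisely the comodule-morphism condition of Definition \ref{defComod}. Thus the comparison functor $K:\Comod\to(\ca{V}\times\Comon(\ca{V}))^T$ is an isomorphism of categories, yielding (strict) comonadicity of $F$. The only subtle point worth expanding is the bookkeeping that ensures the triangle identities and the coalgebra/morphism bijection hold \emph{globally} across varying comonoids—this is exactly what the $\Comon(\ca{V})$-factor in the codomain is there to absorb, since it supplies the second coordinate that keeps track of the comonoid part of comodule morphisms while the cofree construction $X\otimes C$ handles the $\ca{V}$-part.
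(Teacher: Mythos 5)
Your proposal is correct and follows essentially the same route as the paper: it constructs the right adjoint $R(X,C)=(X\otimes C)_C$ with coaction $1\otimes\Delta$, forms the induced comonad with comultiplication $(1\otimes\Delta,1)$ and counit $(1\otimes\epsilon,1)$, and identifies its coalgebras directly with $\Comod$. The only cosmetic difference is that you exhibit the adjunction via unit/counit and the triangle identities, whereas the paper gives the natural hom-set bijection; your extra bookkeeping on the coalgebra identification is a welcome elaboration of a step the paper leaves implicit.
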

\begin{proof}
First notice that $F$ `consists
of' the forgetful functor which discards the comodule
structure from the object of $\ca{V}$, and the
forgetful $V$ which keeps the comonoid. 
Therefore this result
is, in a sense, a generalization
of Proposition \ref{modulesmonadic}.

Define a functor
\begin{displaymath}
R:\xymatrix @R=.02in @C=.4in
{\ca{V}\times
\Comon(\ca{V})\ar[r] & \Comod \\
(A,D)\ar@{|.>}[r]\ar[dd]_-{(l,g)} & 
(A\otimes D)_D \ar[dd]^-{(l\otimes g)_g} \\
\hole \\
(B,E)\ar@{|.>}[r] & (B\otimes E)_E}
\end{displaymath}
where the $D$-action
on the object $A\otimes D$ is
given by
$A\otimes D\xrightarrow{1\otimes\Delta}A\otimes D\otimes D$
with $\Delta$ the comultiplication
of the comonoid $D$. It is not hard
to establish a 
natural bijection
\begin{align*}
(\ca{V}\times\Comon(\ca{V}))((X,C),(A,D))
&\cong\ca{V}(X,A)\times\Comon(\ca{V})(C,D) \\
&\cong\Comod(X_C,(A\otimes D)_D)
\end{align*}
where $(X,C)=F(X_C)$ and $(A\otimes D)_D=R(A,D)$,
so we obtain an adjunction
\begin{displaymath}
\xymatrix @C=.5in
{\Comod\ar @<+.8ex>[r]^-F
\ar@{}[r]|-\bot & \ca{V}\times\Comon(\ca{V}).
\ar @<+.8ex>[l]^-R}
\end{displaymath}
This induces a comonad on $\ca{V}\times\Comon(\ca{V})$,
namely $(FR,F\eta_R,\varepsilon)$, where the comultiplication
and counit have components
\begin{align*}
F\eta_{K(A,D)}&:(A\otimes D,D)\xrightarrow{(1\otimes\Delta,1)}
(A\otimes D\otimes D,D) \\
\varepsilon_{(A,D)}&:(A\otimes D,D)\xrightarrow{(1\otimes\epsilon,1)}
(A,D)
\end{align*}
for the comonoid $(D,\Delta,\epsilon)$.
The category of coalgebras for this comonad
is precisely $\Comod$.
\end{proof} 
This in particular implies that if 
$\ca{V}$ and $\Comon(\ca{V})$ are cocomplete
categories, then $\Comod$ is also cocomplete.
In fact, using results from Section \ref{fibredadjunctions}
concerning fibrewise colimits, we can recover 
this as follows.
\begin{cor}\label{Comodcocomplete}
If $\ca{V}$ and $\Comon(\ca{V})$ have all colimits,
then $\Comod$ has all colimits and 
$V:\Comod\to\Comon(\ca{V})$
strictly preserves them. 
\end{cor}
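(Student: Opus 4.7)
The plan is to apply the fibrewise colimit machinery from Section \ref{fibredadjunctions} to the split opfibration $V:\Comod\to\Comon(\ca{V})$, rather than going through the comonadicity result of Proposition \ref{Comodcomonadic} (which would give the statement more directly but is not the route indicated). Recall that the fibre of $V$ over $C$ is $\Comod_\ca{V}(C)$ and the coreindexing functor along a comonoid morphism $g:C\to D$ is the corestriction-of-scalars functor $g_!$ of (\ref{defcores}), which acts as the identity on the underlying $\ca{V}$-object and postcomposes the coaction with $1\otimes g$.

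First I would verify that each fibre is cocomplete. By Proposition \ref{modulesmonadic}, $\Comod_\ca{V}(C)$ is comonadic over $\ca{V}$ via $-\otimes C$, so the forgetful $\Comod_\ca{V}(C)\to\ca{V}$ creates all colimits; since $\ca{V}$ is cocomplete, so is each fibre. Second, I would show that every $g_!:\Comod_\ca{V}(C)\to\Comod_\ca{V}(D)$ preserves colimits. This follows from the commutative triangle
\begin{displaymath}
\xymatrix @R=.22in
{\Comod_\ca{V}(C)\ar[rr]^-{g_!}\ar[dr] && \Comod_\ca{V}(D)\ar[dl] \\
 & \ca{V} &}
\end{displaymath}
whose legs both create colimits: a colimit cocone in $\Comod_\ca{V}(C)$ is created from the corresponding colimit in $\ca{V}$, and applying $g_!$ does not disturb the underlying $\ca{V}$-object of either the diagram or the cocone, so $g_!$ sends it to the cocone which is in turn created as a colimit in $\Comod_\ca{V}(D)$.

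With these two facts in hand, the dual of Proposition \ref{reindexcont} yields that the opfibration $V$ has all opfibred $\ca{J}$-colimits for every small $\ca{J}$. Finally, I would apply the dual of Corollary \ref{AhasPpreserves}: the base $\Comon(\ca{V})$ is cocomplete by hypothesis, and an opfibration with cocomplete base admits all opfibred colimits if and only if its total category is cocomplete and the opfibration strictly preserves those colimits. Hence $\Comod$ is cocomplete and $V$ strictly preserves all colimits.

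The main technical point to be careful about is the dualisation of Corollary \ref{AhasPpreserves} from fibrations to opfibrations, whose underlying mechanism relies on Lemma \ref{Winskellemma}: one needs the existence of an opfibred left adjoint to the constant-diagram opfibred $1$-cell $(\widetilde{\Delta}_\ca{J},\Delta_\ca{J})$ over the cocomplete base, and this is precisely what combining fibrewise cocompleteness with the preservation property of the $g_!$'s supplies. Once this is in place, the strict preservation of colimits by $V$ drops out of the factorisation of the general opfibred adjunction through its cartesian part, exactly as in the proof of Corollary \ref{AhasPpreserves}.
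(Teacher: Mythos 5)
Your argument is correct and is essentially the paper's own proof: cocompleteness of each fibre from comonadicity over $\ca{V}$, cocontinuity of the corestriction functors $g_!$ from the triangle (\ref{tr2}), then the dual of Proposition \ref{reindexcont} followed by the dual of Corollary \ref{AhasPpreserves}. The extra care you take over dualising Corollary \ref{AhasPpreserves} is sound but not a departure from the paper's route.
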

\begin{proof}
Since every fibre $\Comod_\ca{V}(C)$
of the opfibration $V$ is comonadic over $\ca{V}$, 
it has all colimits for any comonoid $C$. 
Moreover, the reindexing functors
$\Comod_\ca{V}(g)=g_!$ preserve all colimits
by the commutative diagram (\ref{tr2}) for any comonoid
arrow $g$. By Proposition \ref{reindexcont}, 
the opfibration $V:\Comod\to\Comon(\ca{V})$
has all opfibred colimits. Then, by the dual of Corollary
\ref{AhasPpreserves}, this is equivalent to
the total category $\Comod$ being cocomplete and 
$V$ being strictly cocontinuous. 
\end{proof}
Colimits in $\Comod$ are therefore constructed as follows.
If we consider a diagram $D:\ca{J}\to\Comod$, 
the composite functor
\begin{displaymath}
 \ca{J}\xrightarrow{D}\Comod\xrightarrow{V}\Comon(\ca{V})
\end{displaymath}
has a colimiting cocone
$(\tau_j:VD_j\to\colim(VD)\,|\,j\in\ca{J})$
since $\Comon(\ca{V})$ is cocomplete.
Define a new diagram
\begin{displaymath}
\xymatrix @R=.02in @C=.4in
{\ca{J}\ar[r]^-H & \Comod_\ca{V}(\colim VD) \\
 j\ar@{|.>}[r]\ar[dd]_-{\kappa} & 
(\tau_j)_!Dj=(\tau_{j'})_!(VD\kappa)_!Dj
\ar[dd]^-{(\tau_{j'})_!D\kappa} \\
\hole \\
j'\ar@{|.>}[r] & (\tau_{j'})_!Dj'}
\end{displaymath}
which, since the category 
$\Comod_\ca{V}(\colim VD)$ is cocomplete,
also has a colimiting cocone
$(\sigma_j:(\tau_j)_!Dj\to\colim H\,|\,j\in\ca{J})$.
It turns out that
\begin{displaymath}
\big(D_j\xrightarrow{\;(\sigma_j,\tau_j)\;}\colim H\,|\,j\in\ca{J}\big)
\end{displaymath}
is the colimiting cocone
of $D$ in $\Comod$, and of course 
$V\colim D=\colim(VD)$.

Dually to the above results, we obtain the following.
\begin{prop}\label{Modmonadic}
 The global category of modules $\Mod$ is monadic over
the category $\ca{V}\times\Mon(\ca{V})$, and so if
$\ca{V}$ and $\Mon(\ca{V})$ are complete, $\Mod$ has all limits
and $G:\Mod\to\Mon(\ca{V})$ strictly preserves them.
\end{prop}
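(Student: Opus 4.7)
The proof will proceed by dualizing the arguments for Proposition \ref{Comodcomonadic} and Corollary \ref{Comodcocomplete}.

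First, I would exhibit an adjunction between $\Mod$ and $\ca{V}\times\Mon(\ca{V})$. Define the functor $F':\Mod\to\ca{V}\times\Mon(\ca{V})$ by $F'(M_A)=(M,A)$, and propose its \emph{left} adjoint
\begin{displaymath}
L:\xymatrix @R=.02in @C=.4in
{\ca{V}\times\Mon(\ca{V})\ar[r] & \Mod \\
(X,A)\ar@{|.>}[r]\ar[dd]_-{(h,f)} & (A\otimes X)_A\ar[dd]^-{(f\otimes h)_f} \\
\hole \\
(B,Y)\ar@{|.>}[r] & (B\otimes Y)_B}
\end{displaymath}
where $A\otimes X$ carries the free $A$-action $m_A\otimes 1_X$. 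The natural bijection
\begin{displaymath}
\Mod((A\otimes X)_A,N_B)\cong \ca{V}(X,N)\times \Mon(\ca{V})(A,B)
\end{displaymath}
is obtained as follows: a morphism $(p,f):(A\otimes X)_A\to N_B$ consists of a monoid map $f:A\to B$ and a map $p:A\otimes X\to N$ compatible with the $A$- and $B$-actions via $f$; such $p$ is determined uniquely by its restriction $p\circ(\eta_A\otimes 1_X):X\to N$, using the $B$-action on $N$ to reconstruct $p$. This gives the adjunction $L\dashv F'$, inducing the monad $F'L$ on $\ca{V}\times\Mon(\ca{V})$.

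Next I would verify monadicity of $F'$. The functor $F'$ is obviously conservative since a morphism $(p,f):M_A\to N_B$ in $\Mod$ is invertible exactly when both $p\in\ca{V}$ and $f\in\Mon(\ca{V})$ are, and its components are the projections of $F'(p,f)$. Then I would check the condition on coequalizers of $F'$-split (or more generally $U$-split) pairs by lifting them componentwise: in $\ca{V}\times\Mon(\ca{V})$ a split coequalizer splits into a split coequalizer in $\ca{V}$ and one in $\Mon(\ca{V})$; the former provides the underlying object, the latter the monoid of scalars, and the unique action making the resulting object into a module is forced by the split structure together with the given actions, exactly dual to the construction sketched in the proof of Proposition \ref{Comodcomonadic}. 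By Beck's (precise) monadicity theorem, $F'$ is monadic.

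For the second assertion, the quickest route is via the fibrational approach rather than via monadicity. Each fibre $\Mod_{\ca{V}}(A)$ is monadic over $\ca{V}$ by Proposition \ref{modulesmonadic}, hence has all limits whenever $\ca{V}$ does, and these are created from $\ca{V}$. For any monoid morphism $f:A\to B$, the reindexing $f^*:\Mod_{\ca{V}}(B)\to\Mod_{\ca{V}}(A)$ is continuous, since in the commutative triangle (\ref{tr1}) the legs are monadic and the underlying maps of $f^*N$ and $N$ agree. By Proposition \ref{reindexcont}, this means $G:\Mod\to\Mon(\ca{V})$ has all fibred limits; and since $\Mon(\ca{V})$ is complete by assumption, the dual of Corollary \ref{AhasPpreserves} yields that $\Mod$ is complete and $G$ strictly preserves limits. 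The main point to be careful about is the compatibility of limits in fibres with reindexing, but this is immediate from the monadicity over $\ca{V}$ of each fibre together with the strict commutativity of (\ref{tr1}).
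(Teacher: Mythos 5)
Your argument is correct and is exactly what the paper intends: Proposition \ref{Modmonadic} is stated there without proof as the dual of Proposition \ref{Comodcomonadic} and Corollary \ref{Comodcocomplete}, and your free-module adjunction $L\dashv F'$ together with the fibrewise-limits argument dualizes those proofs faithfully. Two cosmetic points only: in the display defining $L$ the target object should read $(Y\otimes B)_Y$ (the monoid is the second component of the pair, so the free module on $(B,Y)$ is $Y\otimes B$ with action $m_Y\otimes 1$), and for the completeness claim you want Corollary \ref{AhasPpreserves} itself rather than its dual, since that corollary is already stated for fibrations and limits.
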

Now suppose that
$\ca{V}$ is a symmetric monoidal closed category.
In Section \ref{Categoriesofmodulesandcomodules}
it was explained how the internal hom bifunctor
induces a functor
\begin{displaymath}
 \Mod_{CA}[-,-]:\Comod_\ca{V}(C)^\op\times\Mod_\ca{V}(A)
\longrightarrow\Mod_\ca{V}([C,A])
\end{displaymath}
as in (\ref{defMod[]}),
which is again the restriction of the internal hom
on the cartesian product of the categories of 
$C$-comodules and $A$-modules.
There is a way to lift this functor on the level
of the global categories of comodules and modules,
in the sense that there is a functor 
between the total categories
\begin{equation}\label{defbarH}
\bar{H}:
\xymatrix @R=0.02in
{\Comod^\op\times\Mod\ar[r] & \Mod\qquad \\
\qquad(\;X_C\;,\;M_A\;)\ar @{|->}[r] & 
[X,M]_{[C,A]}}
\end{equation}
such that $\Mod_{CA}[-,-]$ are the functors induced 
between the fibres (see Remark \ref{functorsbetweenfibres}).
If $(k_g,l_f):(X_C,M_A)\to(Y_D,N_B)$ is a morphism
in the cartesian product, the fact that
$k$ and $l$ commute with the corestricted and
restricted actions accordingly forces the arrow 
$[k,l]:[X,M]\to[Y,N]$ in $\ca{V}$ to satisfy the appropriate
property. Hence
\begin{displaymath}
\begin{cases}
[X,M]\xrightarrow{[k,l]}[g,f]^*[Y,N] & \text{in }\Mod_\ca{V}[C,A]\\
[C,A]\xrightarrow{[g,f]}[D,B] & \text{in }\Mon(\ca{V})
\end{cases}
\end{displaymath}
defines an arrow
$\bar{H}(k,l)_{[g,f]}:[X,M]_{[C,A]}\to[Y,N]_{[D,B]}$
in $\Mod$. In fact,
the pair $(\bar{H},H)$ is a 
fibred 1-cell depicted by the
square
\begin{equation}\label{barHHfibred1cell}
 \xymatrix @C=.6in
{\Comod^\op\times\Mod\ar[r]^-{\bar{H}(-,-)}
\ar[d]_-{V^\op\times G} & \Mod\ar[d]^-G \\
\Comon(\ca{V})^\op\times\Mon(\ca{V})\ar[r]_-{H(-,-)} 
&\Mon(\ca{V}),}
\end{equation}
where of course the cartesian product 
$V^\op\times G$ is treated as a fibration.
Commutativity is clear from the above construction,
which ensures that
\begin{displaymath}
 G([X,N]_{[C,B]})=[VX_C,GN_B]=[C,B].
\end{displaymath}
Moreover $\bar{H}$ is a cartesian functor:
it maps a cartesian arrow of the domain, 
which is a pair of a cocartesian lifting in $\Comod$ 
and a cartesian lifting in $\Mod$,
to the arrow
\begin{displaymath}
\xymatrix @C=.3in @R=.4in
{[g_!Y,f^*N]\ar[rrrr]^-{\bar{H}(\Cocart(g,Y),\Cart(f,N))}
\ar @{.>}[d] &&&& [Y,N] \ar @{.>}[d] &\textrm{in }\Mod \\
[C,A]\ar[rrrr]_-{[g,f]} &&&& [D,B] & \textrm{in }\Mon(\ca{V}).}
\end{displaymath}
By the canonical liftings (\ref{canoncart})
from the Grothendieck construction, that module
arrow is specifically
\begin{displaymath}
\bar{H}((1_{g_!Y},g),(1_{f^*N},f))=([1_{g_!Y},1_{f^*N}],[g,f])=(1_{[g_!Y,f^*N]},[g,f])
\end{displaymath}
by the definition of $\bar{H}$ and 
functoriality of $[-,-]$. On the other hand, 
the canonical cartesian lifting of $[Y,N]$ along $[g,f]$ is
\begin{displaymath}
\xymatrix @C=.5in @R=.4in
{[g,f]^*[Y,N]\ar[rr]^-{(1_{[g,f]^*[Y,N]},[g,f])}
\ar @{.>}[d] && [Y,N] \ar @{.>}[d] &\textrm{in }\Mod \\
[C,A]\ar[rr]_-{[g,f]} && [D,B] & \textrm{in }\Mon(\ca{V}).}
\end{displaymath} 
The above two arrows
in $\Mod$ are essentially identical, both being
$1_{[Y,N]}:[Y,N]\to[Y,N]$ as
morphisms in $\ca{V}$ between the modules,
and the $[C,A]$-actions on $[g_!Y,f^*N]$ and
$[g,f]^*[Y,N]$ can be computed to be the same. 
Therefore $(\bar{H},H)$ is actually a split 
fibred 1-cell.

Finally, suppose $\ca{V}$ is monoidal such that $\otimes$
preserves (filtered) colimits on both sides,
and moreover locally presentable. It is not hard to see that the comonad 
on $\ca{V}\times\Comon(\ca{V})$ whose category of coalgebras 
is $\Comod$ (see Proposition \ref{Comodcomonadic}) is finitary: if 
$(\lambda_j,\tau_j):(X_j,C_j)\to(X,C)$ is a filtered colimiting
cocone, then 
\begin{displaymath}
(\lambda_j\otimes\tau_j,\tau_j):(X_j\otimes C_j,C_j)\longrightarrow
(X\otimes C,C)
\end{displaymath}
is too, since $\otimes$ preserves
colimits on both variables and $\Comon(\ca{V})$ is 
comonadic over $\ca{V}$. Dually, $\Mod$ is finitary monadic
over $\ca{V}\times\Mon(\ca{V})$, since  
$(\lambda_j\otimes\tau_j,\tau_j):(A_j\otimes M_j,A_j)\to
(A\otimes M,A)$ is a filtered colimit when $\lambda_j$ is a colimiting
cocone in $\ca{V}$ and $\tau_j$ in $\Mon(\ca{V})$. This happens 
because the monadic $\Mon(\ca{V}\to\ca{V}$ creates all 
colimits that the finitary monad preserves
(see Proposition \ref{moncomonadm}(1)). 
Since $\ca{V}$, $\Mon(\ca{V})$ and $\Comon(\ca{V})$
are all locally presentable categories under the above assumptions,
we can apply Theorem \ref{Monadiccomonadicpresentability}
for the global categories.
\begin{thm}\label{ModComodlp}
If $\ca{V}$ is a locally presentable monoidal category such that
$(-\otimes-)$ is finitary on both entries,
$\Mod$ and $\Comod$ are locally presentable.
\end{thm}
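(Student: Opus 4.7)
The plan is to apply Theorem \ref{Monadiccomonadicpresentability} directly to the (co)monads whose categories of (co)algebras are $\Mod$ and $\Comod$. Essentially all of the required ingredients have just been assembled in the paragraph preceding the theorem, so the proof reduces to assembling them cleanly.

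First I would verify that the bases $\ca{V}\times\Mon(\ca{V})$ and $\ca{V}\times\Comon(\ca{V})$ are locally presentable. Since $\ca{V}$ is locally presentable by hypothesis and $(-\otimes -)$ is finitary on each variable, Proposition \ref{moncomonadm} applies (the admissibility conditions are exactly what we have assumed here, modulo the symmetry which is only used to identify the two one-sided module categories and plays no essential role in the local presentability argument). Hence $\Mon(\ca{V})$ and $\Comon(\ca{V})$ are locally presentable, and then so are the finite products $\ca{V}\times\Mon(\ca{V})$ and $\ca{V}\times\Comon(\ca{V})$, local presentability being closed under finite products.

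Next I would recall the two (co)monads in question. By Proposition \ref{Comodcomonadic}, $\Comod$ is the category of coalgebras for the comonad $FR$ on $\ca{V}\times\Comon(\ca{V})$ defined by $(A,D)\mapsto(A\otimes D,D)$; by Proposition \ref{Modmonadic}, $\Mod$ is the category of algebras for the analogously defined monad on $\ca{V}\times\Mon(\ca{V})$ with object part $(A,B)\mapsto(B\otimes A,B)$. The finitarity of these (co)monads is the argument already spelled out in the paragraph immediately preceding the theorem statement: a filtered colimiting cocone $(\lambda_j,\tau_j):(X_j,C_j)\to(X,C)$ in $\ca{V}\times\Comon(\ca{V})$ is sent to $(\lambda_j\otimes\tau_j,\tau_j):(X_j\otimes C_j,C_j)\to(X\otimes C,C)$, which remains colimiting because $\otimes$ preserves filtered colimits in each variable separately (hence jointly, via the standard two-variable argument), and because the comonadic $\Comon(\ca{V})\to\ca{V}$ creates all colimits. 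The dual argument for the monad on $\ca{V}\times\Mon(\ca{V})$ uses that the monadic $\Mon(\ca{V})\to\ca{V}$ creates filtered colimits since its monad is finitary.

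Finally I would invoke Theorem \ref{Monadiccomonadicpresentability}: the category of algebras of a finitary monad on a locally presentable category is locally presentable, and likewise for coalgebras of a finitary comonad. Applied to our two (co)monads this gives the local presentability of $\Mod$ and $\Comod$ respectively. The only mildly delicate point, and thus the main (small) obstacle, is the joint finitarity of $\otimes$ from separate finitarity in each variable; this is standard and follows by decomposing a filtered colimit $\colim_j (X_j\otimes C_j)$ along the diagonal of $\ca{J}\times\ca{J}$ and using that filtered colimits commute with themselves. Once this is in hand, the theorem is immediate.
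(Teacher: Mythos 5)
Your proposal is correct and follows essentially the same route as the paper: the argument given in the paragraph immediately preceding the theorem establishes finitarity of the relevant (co)monads on the locally presentable bases $\ca{V}\times\Mon(\ca{V})$ and $\ca{V}\times\Comon(\ca{V})$ and then invokes Theorem \ref{Monadiccomonadicpresentability}, exactly as you do. Your explicit treatment of joint finitarity of $\otimes$ via the diagonal of $\ca{J}\times\ca{J}$ fills in a point the paper leaves implicit, but it is the same proof.
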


\section{Universal measuring comodule and enrichment}\label{Universalmeasuringcomodule}

The notion of a universal measuring comodule
in the category of vector spaces $\Vect_k$
was first introduced by Batchelor in \cite{Batchelor},
where emphasis was given to its applications.
Very similarly to the context of measuring coalgebras, a $k$-linear
map $\psi:X\to\Hom_k(M,N)$ is said to \emph{measure}
if it satisfies
\begin{displaymath}
\psi(x)(am)=\sum_{(x)}{\phi x_{(1)}(a)\psi x_{(0)}(m)}
\end{displaymath}
again using sigma notation. Here
$X$ is a $C$-comodule,
$M$ an $A$-module and $N$ a $B$-module, for $(C,\phi)$ a 
measuring coalgebra
and $A$,$B$ algebras. The pair $(X,\psi)$ is called
\emph{measuring comodule}. The question that gave rise
to this definition is whether the transpose arrow
$\bar{\psi}:M\to\Hom_k(X,N)$ is a map of $A$-modules,
using the symmetry in $\Vect_k$ and the module
structure on $\Hom_k(X,N)$.

There is a category of measuring comodules
for a fixed measuring coalgebra $C$, and it 
has a terminal object $Q(M,N)$ satisfying the property
that there is a correspondence
\begin{equation}\label{meascomodMarj}
\big\{ C\textrm{-comodule maps}\;X\to Q(M,N) \big\}\leftrightarrow
\big\{ A\textrm{-module maps}\;M\to\Hom_k(X,N) \big\}.
\end{equation}
The object $Q(M,N)$ is called \emph{universal measuring comodule}.
Initially, the goal is to extend the 
existence of the universal measuring comodule
in a more general context than $\Vect_k$.

Consider a symmetric monoidal closed category
$\ca{V}$. In the end of the previous section,
we defined a functor of two variables
$\bar{H}:\Comod^\op\times\Mod\to\Mod$
which maps a comodule and a module to 
their internal hom in $\ca{V}$.
Since the aim is a generalization of the 
correspondence (\ref{meascomodMarj}) in order to define the 
universal measuring comodule,
in fact we need a natural isomorphism 
\begin{displaymath}
\Comod(X,Q(M,N))\cong\Mod(M,\bar{H}(X,N))
\end{displaymath}
where $X=X_C$, $M=M_A$, $N=N_B$ and $\bar{H}(X,N)=[X,N]_{[C,B]}$.
Thus it is enough to show that
the functor $\bar{H}(-,N_B)^\op:\Comod\longrightarrow\Mod^\op$
for a fixed $B$-module $N$ has a right adjoint. 

Moreover, we intend to show that $Q(M,N)$
is a comodule over the universal measuring coalgebra, 
hence the assumptions on $\ca{V}$ have to also cover
the existence of $P(A,B)$.
The following result is an application 
of Theorem \ref{totaladjointthm} in the abstract setting
of (op)fibrations. A direct proof can
be found at the end of this chapter.
\begin{prop}\label{propmeasuringcomodule}
 Let $\ca{V}$ be a locally presentable
symmetric monoidal closed category. There is an adjunction
\begin{displaymath}
\xymatrix @C=.6in
{\Comod \ar @<+.8ex>[r]^-{\bar{H}(-,N_B)^\op} 
 \ar@{}[r]|-{\bot} &
\Mod^\op\ar @<+.8ex>[l]^-{Q(-,N_B)}}
\end{displaymath}
between the global categories of modules and comodules,
with a natural isomorphism
\begin{equation}\label{meascomod} 
\Comod(X_C,Q(M,N)_{P(A,B)})\cong\Mod(M_A,[X,N]_{[C,B]}).
\end{equation}
\end{prop}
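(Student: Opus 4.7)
The plan is to invoke Theorem \ref{totaladjointthm} applied to the opfibred 1-cell obtained by suitably dualizing the fibred 1-cell $(\bar H,H)$ of (\ref{barHHfibred1cell}). Fix once and for all a module $N_B$. Then $\bar H(-,N_B): \Comod^{\op}\to\Mod$ sits over $H(-,B):\Comon(\ca V)^{\op}\to\Mon(\ca V)$, and since $(\bar H,H)$ is a split fibred 1-cell, so is this restriction. Taking opposites on the $\Mod$-side (which exchanges the roles of cartesian and cocartesian arrows) yields a commutative square
\begin{displaymath}
\xymatrix @C=.6in @R=.35in{
\Comod\ar[r]^-{\bar H(-,N_B)^{\op}}\ar[d]_-{V} & \Mod^{\op}\ar[d]^-{G^{\op}} \\
\Comon(\ca V)\ar[r]_-{H(-,B)^{\op}} & \Mon(\ca V)^{\op}
}
\end{displaymath}
in which both vertical legs are opfibrations and the top functor preserves cocartesian arrows. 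The base arrow already possesses a right adjoint $P(-,B)$ by Proposition \ref{measuringcomonoidprop}, providing the base adjunction $F\dashv G$ needed in Theorem \ref{totaladjointthm}, whose counit in $\Mon(\ca V)^{\op}$ is the monoid morphism $\varepsilon_A\colon A\to[P(A,B),B]$ of Lemma \ref{lemma}.

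According to Theorem \ref{totaladjointthm}, a right adjoint $Q(-,N_B)$ between total categories is obtained as soon as, for every monoid $A$, the fibrewise composite
\begin{displaymath}
\Phi_A:\Comod_{\ca V}(P(A,B))\xrightarrow{\;\bar H_{P(A,B)}(-,N_B)^{\op}\;}\Mod_{\ca V}([P(A,B),B])^{\op}\xrightarrow{\;(\varepsilon_A^{*})^{\op}\;}\Mod_{\ca V}(A)^{\op}
\end{displaymath}
admits a right adjoint. Unwinding the definitions, $\Phi_A$ sends a $P(A,B)$-comodule $X$ to $[X,N]$ regarded as an $A$-module along $\varepsilon_A$ through the lax monoidal functor $[-,-]$; equivalently, it is the restriction of the parametrized adjoint $[-,N]\colon\ca V\to\ca V$ to the relevant (co)module structures, as guaranteed by (\ref{defMod[]}) and the action structure of $H^{\op}$.

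I would then establish the fibrewise adjoint via Theorem \ref{Kelly}. The domain $\Comod_{\ca V}(P(A,B))$ is locally presentable by Proposition \ref{comodlocpresent}, and in particular cocomplete with a small dense subcategory, so it suffices to verify that $\Phi_A$ is cocontinuous. Because the forgetful functor $\Comod_{\ca V}(P(A,B))\to\ca V$ is comonadic, it creates all colimits; symmetrically the monadic $\Mod_{\ca V}(A)\to\ca V$ creates all limits, so reindexing along $\varepsilon_A$ is continuous; and $[-,N]\colon\ca V\to\ca V$ sends colimits to limits thanks to the symmetric monoidal closed adjunction (\ref{adjunctionopinthom}). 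Assembling these observations, $\Phi_A$ turns colimits in $\Comod_{\ca V}(P(A,B))$ into colimits in $\Mod_{\ca V}(A)^{\op}$, and Theorem \ref{Kelly} supplies the desired right adjoint.

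Theorem \ref{totaladjointthm} then produces a functor $Q(-,N_B)\colon\Mod^{\op}\to\Comod$ with $\bar H(-,N_B)^{\op}\dashv Q(-,N_B)$ in $\B{Cat}$; explicitly, $Q(M_A,N_B)$ will carry the comodule structure over $P(A,B)$ coming from the reindexing $\Cocart(\varepsilon_A,-)$ and the fibrewise right adjoint. Writing out the induced hom-isomorphism yields exactly
\begin{displaymath}
\Comod(X_C,Q(M,N)_{P(A,B)})\cong\Mod^{\op}(\bar H(X,N)^{\op},M^{\op})=\Mod(M_A,[X,N]_{[C,B]}),
\end{displaymath}
which is (\ref{meascomod}). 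The main technical point to be careful about is the identification of the fibrewise functor $\Phi_A$; once one recognises that the $A$-action on $[X,N]$ furnished by restriction along $\varepsilon_A$ coincides with the one obtained from the formula (\ref{lala}) under the adjunction of Lemma \ref{lemma} (this is where Lemma \ref{lemma} plays its role), the cocontinuity verification and the application of Theorem \ref{totaladjointthm} are essentially routine. Checking the Beck--Chevalley-type mate in Theorem \ref{totaladjointthm} for any $h:A\to A'$ in $\Mon(\ca V)$ would upgrade this to a full opfibred adjunction, showing that $Q$ is also cocartesian; this, however, is not needed for the present statement.
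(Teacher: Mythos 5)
Your proposal is correct and follows essentially the same route as the paper's primary proof: form the opfibred 1-cell from $(\bar{H},H)$, use the base adjunction $H(-,B)^\op\dashv P(-,B)$ of Proposition \ref{measuringcomonoidprop}, obtain the fibrewise right adjoint of the composite $(\varepsilon_A)_!\circ\bar{H}(-,N_B)^\op_{P(A,B)}$ via local presentability of $\Comod_\ca{V}(P(A,B))$, cocontinuity, and Theorem \ref{Kelly}, and then apply Theorem \ref{totaladjointthm}. The only cosmetic difference is that the paper verifies cocontinuity of $\bar{H}(-,N_B)^\op$ globally over $\ca{V}\times\Comon(\ca{V})$ and $\ca{V}\times\Mon(\ca{V})$ before restricting to fibres, whereas you argue fibrewise directly; both are sound.
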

\begin{proof}
The pair of bifunctors
$(\bar{H},H)$ depicted as (\ref{barHHfibred1cell})
constitutes a fibred 1-cell between the fibrations
$V^\op\times G$ and $G$, as shown earlier. This implies
that the pair of functors 
$(\bar{H}(-,N_B),H(-,B))$ for a fixed monoid
$B$ and a $B$-module $N$ is again a fibred 1-cell
between $V^\op$ and $G$, and hence the opposite
square
\begin{displaymath}
 \xymatrix @C=.8in @R=.6in
{\Comod\ar[r]^-{\bar{H}(-,N_B)^\op} \ar[d]_-V &
\Mod^\op\ar[d]^-{G^\op} \\
\Comon(\ca{V})\ar[r]_-{H(-,B)^\op} & \Mon(\ca{V})^\op}
\end{displaymath}
is an opfibred 1-cell between the opfibrations
$V$ and $G^\op$. Also, by Proposition 
\ref{measuringcomonoidprop} there is 
an adjunction between the base categories
\begin{displaymath}
 \xymatrix @C=.6in
{\Comon(\ca{V}) \ar @<+.8ex>[r]^-{H(-,B)^\op} 
 \ar@{}[r]|-{\bot} &
\Mon(\ca{V})^\op\ar @<+.8ex>[l]^-{P(-,B)}}
\end{displaymath}
where $P$ is the Sweedler hom functor. 

In order for Lemma \ref{totaladjointlem}
to apply, we need the existence of a right adjoint
of the composite functor
\begin{equation}\label{specialcomposite}
 \Comod_\ca{V}(P(A,B))\xrightarrow{\bar{H}(-,N_B)^\op_{P(A,B)}}
\Mod_\ca{V}^\op([P(A,B),B])\xrightarrow{(\varepsilon_A)_!}
\Mod_\ca{V}^\op(A)
\end{equation}
where 
\begin{displaymath}
\varepsilon_A^B:H(P(A,B),B)\to A \quad\textrm{ in }\quad\Mon(\ca{V})^\op
\end{displaymath}
are the components
of the counit of the parametrized
adjunction $H^\op\dashv P$.
We already know that $\Comod_\ca{V}(C)$ is a locally
presentable category by Proposition
\ref{comodlocpresent}, so cocomplete
with a small dense subcategory, namely
the presentable objects.
Moreover, the reindexing functors are always
cocontinuous as seen in Section 
\ref{Categoriesofmodulesandcomodules},
hence so is $(\varepsilon_A)_!$ of the opfibration
$V^\op$. Finally,
the following commutative diagram
\begin{equation}\label{cocontbarHN}
\xymatrix @C=1in @R=.5in
{\Comod\ar[r]^-
{\bar{H}(-,N_B)^\op}
\ar[d] & \Mod^\op\ar[d]\\
\ca{V}\times\Comon(\ca{V})
\ar[r]_-{[-,N]^\op\times
H(-,B)^\op} & \ca{V}^\op\times\Mon(\ca{V})^\op}
\end{equation} 
implies that $\bar{H}(-,N_B)^\op$ 
preserves all colimits: both functors at the bottom
have right adjoints, and the vertical functors 
create all colimits
by Propositions \ref{Comodcomonadic} and
\ref{Modmonadic}.
Since the fibres of the total categories
$\Comod$ and $\Mod^\op$ are closed under colimits,
the restricted fibrewise functor $\bar{H}(-,N_B)^\op_{P(A,B)}$ 
is cocontinuous too.

Consequently, by Theorem \ref{Kelly}
the composite (\ref{specialcomposite})
has a `fibrewise' right adjoint
\begin{displaymath}
 Q_A(-,N_B):\Mod_\ca{V}(A)^\op\longrightarrow
\Comod_\ca{V}(P(A,B))
\end{displaymath}
and Theorem \ref{totaladjointthm} implies
that this lifts to a functor between 
the total categories $Q(-,N_B):\Mod^\op\longrightarrow\Comod$
such that 
\begin{displaymath}
\xymatrix @R=.6in @C=.8in 
{\Comod \ar @<+.8ex>[r]^-{\bar{H}(-,N_B)^\op}
\ar@{}[r]|-\bot
\ar[d]_-V &
\Mod^\op\ar @<+.8ex>[l]^-{Q(-,N_B)}
\ar[d]^-{G^\op} \\ 
\Comon(\ca{V}) \ar @<+.8ex>[r]^-{H(-,B)^\op} 
\ar@{}[r]|-\bot &
\Mon(\ca{V})^\op \ar @<+.8ex>[l]^-{P(-,B)}}
\end{displaymath}
is an adjunction in $\B{Cat}^\B{2}$.
The isomorphism (\ref{meascomod})
for the adjunction between the total categories, natural in $X_C$ and $M_A$,
makes this adjoint uniquely into a functor of two 
variables
\begin{displaymath}
 Q(-,-):\Mod^\op\times\Mod\longrightarrow\Comod
\end{displaymath}
such that the isomorphism is natural
in all three variables. In other words, 
$Q$ is the parametrized adjoint of 
the bifunctor $\bar{H}^\op$.
\end{proof}
The bifunctor $Q$ is called
the \emph{universal measuring comodule functor}.
By construction of $Q$, the object
$Q(M_A,N_B)$ has the structure of a $P(A,B)$-comodule.

Similarly, we can show that the symmetric
monoidal category $\Comod$ has
a monoidal closed structure.
\begin{prop}\label{Comodclosed}
 The global category of comodules $\Comod$
 for a locally presentable symmetric monoidal
 closed category $\ca{V}$ is a symmetric
 monoidal closed category.
\end{prop}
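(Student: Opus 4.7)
The plan is to mimic the strategy used for Proposition \ref{Comonclosed}, which establishes monoidal closedness of $\Comon(\ca{V})$, but now at the level of the global category of comodules, exploiting the comonadicity and local presentability already established.

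First, I would recall what is already known and needs to be used: by Theorem \ref{ModComodlp} the category $\Comod$ is locally presentable, in particular cocomplete with a small dense subcategory, and by Proposition \ref{Comodcomonadic} the forgetful functor $F\colon\Comod\to\ca{V}\times\Comon(\ca{V})$ is comonadic, hence creates all colimits. The symmetric monoidal structure on $\Comod$ given by (\ref{coactioncomod}) is already in hand, together with the strict symmetric monoidality of $V$ recorded in (\ref{strictmonoidalVG}); similarly $F$ itself is strict symmetric monoidal with respect to the product monoidal structure on $\ca{V}\times\Comon(\ca{V})$.

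Next, to establish closedness I would fix an object $Y_D\in\Comod$ and argue that the functor
\begin{displaymath}
-\otimes Y_D\colon\Comod\longrightarrow\Comod
\end{displaymath}
has a right adjoint via Theorem \ref{Kelly}. Cocompleteness and density are already in place, so the only task is cocontinuity. For this I would contemplate the square
\begin{displaymath}
\xymatrix @C=.7in @R=.5in
{\Comod\ar[r]^-{-\otimes Y_D}\ar[d]_-F & \Comod\ar[d]^-F \\
\ca{V}\times\Comon(\ca{V})\ar[r]_-{(-\otimes Y)\times(-\otimes D)} & \ca{V}\times\Comon(\ca{V}),}
\end{displaymath}
which commutes by construction of the tensor product on $\Comod$ and by strict monoidality of $F$. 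The bottom functor preserves all colimits: $-\otimes Y$ is cocontinuous because $\ca{V}$ is monoidal closed, and $-\otimes D$ is cocontinuous on $\Comon(\ca{V})$ by Proposition \ref{Comonclosed}. Since $F$ creates colimits, the top arrow is cocontinuous as well, and Theorem \ref{Kelly} delivers a right adjoint $\HOM(Y_D,-)$.

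Finally, I would invoke Theorem \ref{parametrizedadjunctions} to lift this pointwise adjunction to a bifunctor $\HOM(-,-)\colon\Comod^{\op}\times\Comod\to\Comod$ making the natural isomorphism $\Comod(X\otimes Y,Z)\cong\Comod(X,\HOM(Y,Z))$ natural in all three variables, and observe that the symmetry of $\Comod$ (inherited from $\ca{V}$) yields the corresponding right closedness, so $\Comod$ is a symmetric monoidal closed category. I anticipate no serious obstacle here: the only subtle point is verifying the commutativity of the square above on morphisms and ensuring that $F$ is genuinely strict monoidal (so that the square commutes on the nose, not merely up to coherent isomorphism), but this is immediate from the explicit formula (\ref{coactioncomod}) for the coaction on the tensor product of comodules.
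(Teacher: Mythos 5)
Your argument is correct, but it is not the route the paper takes: in fact it is precisely the ``alternative approach'' that the author records in the remark immediately following this proposition. The paper's own proof works fibrationally: it first shows that $(-\otimes X_C,\,-\otimes C)$ is an opfibred 1-cell over the opfibration $V\colon\Comod\to\Comon(\ca{V})$, then applies Theorem \ref{Kelly} only \emph{fibrewise}, to the composite $\Comod_\ca{V}(\HOM(C,D))\xrightarrow{-\otimes X_C}\Comod_\ca{V}(\HOM(C,D)\otimes C)\xrightarrow{(\varepsilon_D)_!}\Comod_\ca{V}(D)$, and finally assembles the fibrewise right adjoints into a total adjoint $\overline{\HOM}(X_C,-)$ via the general fibred-adjunction Theorem \ref{totaladjointthm}. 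Your version applies Theorem \ref{Kelly} directly to the total functor $-\otimes Y_D$ on $\Comod$, using local presentability of $\Comod$ (Theorem \ref{ModComodlp}) and cocontinuity read off from the comonadicity square — which is exactly the paper's diagram (\ref{cocontotimesX}), used there for the same purpose. What your shorter argument buys is economy; what it loses is the structural information the fibrational route provides for free, namely that $\overline{\HOM}(X_C,Y_D)$ is specifically a comodule over the comonoid $\HOM(C,D)$, with the adjunction living over the internal-hom adjunction of $\Comon(\ca{V})$ in $\B{Cat}^{\B{2}}$. If you take your route, that identification of the underlying comonoid would need a separate verification. One further small caution: you should make sure the square you draw commutes on the nose (or handle the coherent isomorphism), which, as you note, follows from the explicit coaction formula (\ref{coactioncomod}) and the strictness recorded in (\ref{strictmonoidalVG}).
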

\begin{proof}
By the definition of the symmetric monoidal tensor product
\begin{displaymath}
\otimes:
\xymatrix @R=.05in
{\Comod\times\Comod\ar[r] & \Comod\qquad \\
(\;Y_D\;,\;X_C\;)\ar @{|->}[r] & (Y\otimes X)_{D\otimes C}}
\end{displaymath}
in $\Comod$ as in (\ref{coactioncomod}),
we have a commutative square
\begin{equation}\label{otimesopfibred1cell}
\xymatrix @C=.5in @R=.5in
{\Comod\ar[r]^-{(-\otimes X_C)} \ar[d]_-V &
\Comod\ar[d]^-V \\
\Comon(\ca{V})\ar[r]_-{(-\otimes C)} & \Comon(\ca{V}).}
\end{equation}
Actually, this is an 
opfibred 1-cell: the functor $(-\otimes X_C)$
for a fixed $C$-comodule $X$ maps 
a cocartesian lifting to the right top arrow 
\begin{displaymath}
\xymatrix @C=.2in @R=.15in
{Y\ar[rr]^-{\Cocart(f,Y)}
\ar@{.}[dd] && f_!Y\ar@{.}[dd] &&
Y\otimes X\ar[rr]^-{\Cocart(f,Y)\otimes1}
\ar[drr]_-{\Cocart}\ar@{.}[dd] && f_!Y\otimes X 
& \textrm{in }\Comod \\
&&& {\mapsto} &&& (f\otimes1)_!(Y\otimes X)\ar@{-->}[u]_-{\exists!}
\ar@{.}[d]\\
D\ar[rr]_-f && E && D\otimes C\ar[rr]_-{f\otimes1} && E\otimes C &
\textrm{in }\Comon(\ca{V}).}
\end{displaymath}
The two $(E\otimes C)$-comodules $f_!Y\otimes X$ 
and $(f\otimes1)_!(Y\otimes X)$
are both $Y\otimes X$ as objects in $\ca{V}$, 
and the coactions
induced in both cases are equal. Hence, 
by the canonical choice
of cocartesian liftings
for the opfibration $V:\Comod\to\Comon(\ca{V})$ and
functoriality of the tensor product,
$1_{(f\otimes1)_!(Y\otimes X)}=1_{f_!Y}\otimes1_X$
and so $(-\otimes X_C)$ is a cocartesian functor.

By Proposition \ref{Comonclosed}, the category of comonoids
for such a monoidal category $\ca{V}$ is monoidal
closed with internal hom functor $\HOM$ via an 
adjunction 
\begin{displaymath}
 \xymatrix @C=.65in
 {\Comon(\ca{V})\ar@<+.8ex>[r]^-{(-\otimes C)}
 \ar@{}[r]|-\bot & \Comon(\ca{V})
 \ar@<+.8ex>[l]^-{\HOM(C,-)}}
\end{displaymath}
between the bases of (\ref{otimesopfibred1cell}).
Finally, if $\varepsilon$ is the counit for 
this adjunction, the composite functor
\begin{displaymath}
 \Comod_\ca{V}(\HOM(C,D))\xrightarrow{(-\otimes X_C)}
 \Comod_\ca{V}(\HOM(C,D)\otimes C)\xrightarrow{(\varepsilon_D)_!}
 \Comod_\ca{V}(D)
\end{displaymath} 
has a right adjoint $\overline{\HOM}_D(X_C,-)$.
This follows from the adjoint functor Theorem 
\ref{Kelly}, since $\Comod_\ca{V}(\HOM(C,D))$
is locally presentable and the composite functor
preserves all colimits. This is the case 
because reindexing functors are always cocontinuous, and the 
commutative diagram
\begin{equation}\label{cocontotimesX}
 \xymatrix @C=.85in
{\Comod\ar[r]^-{(-\otimes X_C)}
\ar[d]_-F & \Comod\ar[d]^-F\\
\ca{V}\times\Comon(\ca{V})
\ar[r]^-{(-\otimes X)\times(-\otimes C)} & 
\ca{V}\times\Comon(\ca{V})}
\end{equation}
implies that $(-\otimes X_C)$
preserves all colimits, since the bottom arrow 
does by monoidal closedness of $\ca{V}$ and $\Comon(\ca{V})$,
and $F$ is comonadic.

By Theorem \ref{totaladjointthm}, the
functors $\overline{\HOM}_D(X_C,-)$ between the fibres
assemble into a total adjoint
$\overline{\HOM}(X_C,-):\Comod\to\Comod$
such that
\begin{displaymath}
\xymatrix @R=.5in @C=.8in 
{\Comod \ar @<+.8ex>[r]^-{-\otimes X_C}
\ar@{}[r]|-\bot \ar[d]_-V &
\Comod\ar @<+.8ex>[l]^-{\overline{\HOM}(X_C,-)}
\ar[d]^-V \\ 
\Comon(\ca{V}) \ar @<+.8ex>[r]^-{-\otimes C} 
\ar@{}[r]|-\bot &
\Comon(\ca{V})\ar @<+.8ex>[l]^-{\HOM(C,-)}}
\end{displaymath}
is an adjunction in $\B{Cat}^\B{2}$.
Thus the uniquely defined parametrized adjoint
\begin{equation}\label{defbarHOM}
 \overline{\HOM}:\Comod^\op\times\Comod\longrightarrow\Comod
\end{equation}
of $(-\otimes-)$ is the internal hom
of the global category of comodules $\Comod$.
\end{proof}
\begin{rmk}
An alternative approach for the existence
of the functors $Q$ and $\overline{\HOM}$ 
would be to show that
\begin{align*}
 \bar{H}^\op(-,N_B)&:\Comod\longrightarrow\Mod^\op \\
-\otimes X_C&:\Comod\longrightarrow\Comod
\end{align*}
have right adjoints via an adjoint functor theorem. 
Both functors are cocontinuous 
by diagrams (\ref{cocontbarHN}) and (\ref{cocontotimesX})
respectively, and the domain $\Comod$ is locally presentable
by Theorem \ref{ModComodlp}. Hence Theorem \ref{Kelly}
directly establishes the existence of right adjoints. However, we prefer 
the method which employs the fibrational structure
of the global categories, because it provides with a better 
understanding of the situation. 
For example, the above proposition ensures that 
$\overline{\Hom}(X_C,Y_D)$ is specifically a $\HOM(C,D)$-comodule. 
\end{rmk}
We can now once more combine
the existence of the universal measuring comodule
with the theory of actions of monoidal
categories, in order to show how the functor $Q$ induces
an enrichment of the global category
of modules in the global category of comodules.

For any symmetric monoidal closed category $\ca{V}$,
the functor of two variables
$\bar{H}(-,-):\Comod^\op\times\Mod\longrightarrow\Mod$
defined as in (\ref{defbarH}) is in fact an action of the
symmetric monoidal category $\Comod^\op$ on the 
ordinary category $\Mod$. It is easy to see that there 
exist natural isomorphisms 
\begin{align*}
[X\otimes Y,M]_{[C\otimes D,A]}&\stackrel{\sim}
{\longrightarrow}[X,[Y,M]]_{[C,[D,A]]} \\
[I,M]_{[I,A]}&\stackrel{\sim}{\longrightarrow}M_A\qquad
\end{align*}
for any coalgebras $C$, $D$, algebras $A$, comodules $X_C$, $Y_D$
and modules $M_A$ that satisfy the 
axioms of an action. This follows from the facts
that $[-,-]$ and $H(-,-)$ are actions and 
the monadic functor $\Mod\to\ca{V}\times\Mon(\ca{V})$
reflects isomorphisms.
Therefore the opposite functor
\begin{equation}\label{barHopaction}
 \bar{H}^\op:\Comod\times\Mod^\op\longrightarrow\Mod^\op
\end{equation}
is an action of the symmetric monoidal
$\Comod$ on $\Mod^\op$.

Since we have an adjunction
$\bar{H}(-,N_B)^\op\dashv Q(-,N_B)$
for any module $N_B$ by Proposition \ref{propmeasuringcomodule},
Corollaries \ref{importcor1} and 
\ref{importcor2} apply
and give the following result.
\begin{thm}\label{ModenrichedinComod}
Let $\ca{V}$ be a locally  presentable
symmetric monoidal closed category and $Q$
the universal measuring comodule functor. 
\begin{enumerate}
\item The opposite of the global
category of comodules $\Mod^\op$ is enriched
in the global category of comodules $\Comod$,
with hom-objects 
\begin{displaymath}
\Mod^\op(M_A,N_B)=Q(N,M)_{P(B,A)}
\end{displaymath}
where the $\Comod$-enriched category is denoted
with the same name.
\item The global category of modules
$\Mod$ is a tensored and cotensored $\Comod$-enriched
category,
with hom-objects 
\begin{displaymath}
\Mod(M_A,N_B)=Q(M,N)_{P(A,B)}
\end{displaymath}
and cotensor products $[X,N]_{[C,B]}$ for any
$C$-comodule $X$ and $B$-module $N$.
\end{enumerate}
\end{thm}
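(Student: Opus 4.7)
The plan is to follow the blueprint of Theorem \ref{MonVenrichedinComonV} verbatim, replacing the pair of actions $(H^\op,[-,-])$ on $\Mon(\ca{V})^\op$ by the pair of actions $(\bar{H}^\op,[-,-])$ on $\Mod^\op$, and then invoking Corollaries \ref{importcor1} and \ref{importcor2}. First I would verify that the bifunctor
\begin{displaymath}
\bar{H}^{\op}:\Comod\times\Mod^{\op}\longrightarrow\Mod^{\op}
\end{displaymath}
is indeed an action of the symmetric monoidal category $\Comod$ (Propositions \ref{Comodclosed} and the remark on symmetry preceding (\ref{strictmonoidalVG})) on the ordinary category $\Mod^{\op}$. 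The required associativity and unit isomorphisms were already exhibited just before (\ref{barHopaction}): they are inherited pointwise from the action structure of $[-,-]$ on $\ca{V}^{\op}$ (Lemma \ref{inthomaction}) together with the fact that the monadic forgetful functor $\Mod\to\ca{V}\times\Mon(\ca{V})$ (Proposition \ref{Modmonadic}) reflects isomorphisms, so that all axioms (\ref{actiondiag}) lift from $\ca{V}^{\op}$ to $\Mod^{\op}$.

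Next I would invoke Proposition \ref{propmeasuringcomodule}: for each fixed module $N_B$, the functor $\bar{H}(-,N_B)^{\op}:\Comod\to\Mod^{\op}$ has a right adjoint $Q(-,N_B):\Mod^{\op}\to\Comod$, and Theorem \ref{parametrizedadjunctions} assembles these into the parametrized adjoint $Q:\Mod^{\op}\times\Mod\to\Comod$ of $\bar{H}^{\op}$. Applying Corollary \ref{importcor1} to the action $\bar{H}^{\op}$ with parametrized adjoint $Q$ immediately yields part (1): $\Mod^{\op}$ acquires the structure of a tensored $\Comod$-enriched category with hom-objects $\Mod^{\op}(M_A,N_B)=Q(N,M)_{P(B,A)}$ and underlying ordinary category isomorphic to $\Mod^{\op}$.

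For part (2), since $\Comod$ is a \emph{symmetric} monoidal category (the braiding descending from that of $\ca{V}$), the opposite of the $\Comod$-enriched category $\Mod^{\op}$ is again a $\Comod$-enriched category; this gives the $\Comod$-enrichment of $\Mod$ with hom-objects $\Mod(M_A,N_B)=Q(M,N)_{P(A,B)}$ as stated. The cotensored structure is then the direct content of Corollary \ref{importcor2}: since $\Comod$ is monoidal closed by Proposition \ref{Comodclosed}, the same parametrized adjoint $Q$ exhibits $\Mod$ as cotensored with cotensor product $X_C\pitchfork N_B=\bar{H}^{\op}(X_C,N_B)=[X,N]_{[C,B]}$, matching the formula in the statement.

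The only point requiring genuine work is the remaining tensored structure on $\Mod$. By the last sentence of Corollary \ref{importcor2}, this reduces to showing that for every comodule $X_C$ the functor
\begin{displaymath}
\bar{H}(X_C,-)\colon\Mod\longrightarrow\Mod,\qquad N_B\longmapsto [X,N]_{[C,B]}
\end{displaymath}
admits a \emph{left} adjoint. I plan to argue this by an Adjoint Triangle argument, exactly parallel to the one used at the end of the proof of Theorem \ref{MonVenrichedinComonV} to produce the Sweedler product $\triangleright$ (\ref{deftriangle}). Consider the commutative square
\begin{displaymath}
\xymatrix @C=.6in @R=.4in{
\Mod \ar[r]^-{\bar{H}(X_C,-)} \ar[d] & \Mod \ar[d] \\
\ca{V}\times\Mon(\ca{V}) \ar[r]_-{[X,-]\times H(C,-)} & \ca{V}\times\Mon(\ca{V})}
\end{displaymath}
whose vertical legs are finitary monadic (Proposition \ref{Modmonadic}) and whose bottom row has a left adjoint, namely $(-\otimes X)\times(C\triangleright -)$, using the standard $-\otimes X\dashv [X,-]$ in $\ca{V}$ on the first factor and the Sweedler product adjunction on the second. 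Since $\Mod$ is locally presentable (Theorem \ref{ModComodlp}), in particular it has all coequalizers, so Dubuc's Adjoint Triangle Theorem \cite{AdjointTriangles} applies and produces the required left adjoint to $\bar{H}(X_C,-)$. This is the main technical obstacle; the rest of the proof is structural and is dictated by the corollaries of the action–enrichment correspondence.
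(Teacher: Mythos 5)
Your proposal is correct and follows essentially the same route as the paper: the action structure of $\bar{H}^{\op}$ and the parametrized adjoint $Q$ are fed into Corollaries \ref{importcor1} and \ref{importcor2}, and the only genuinely new step — the tensored structure — is obtained by exactly the same Adjoint Triangle argument over the square with $[X,-]\times H(C,-)$ on the bottom and the Sweedler product supplying the left adjoint of $H(C,-)$. The paper's proof in fact consists only of this last step (it even contains a typo writing $\Mon(\ca{V})^\op$ where $\Mod^\op$ is meant), with the structural portions delegated to the preceding discussion, so your write-up is if anything slightly more complete.
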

\begin{proof}
The only part left to show is that the functor
\begin{displaymath}
\bar{H}(X_C,-)^\op:\Mon(\ca{V})^\op\longrightarrow\Mon(\ca{V})^\op 
\end{displaymath}
has a right adjoint for every comodule $X_C$. Consider 
the commutative square
\begin{displaymath}
 \xymatrix @C=.8in
{\Mod\ar[r]^-{\bar{H}(X_C,-)}\ar[d] & \Mod\ar[d] \\
\ca{V}\times\Mon(\ca{V})\ar[r]_-{[X,-]\times H(C,-)} & \ca{V}\times\Mon(\ca{V})}
\end{displaymath}
where the vertical functors are monadic, $\Mod$ is locally 
presentable by Theorem \ref{ModComodlp}, $[X,-]\vdash(-\otimes X)$ in 
$\ca{V}$ and  
$H(C,-)$ has a left adjoint as in (\ref{deftriangle}).
By Dubuc's Adjoint Triangle Theorem, the top functor
has a left adjoint $X_C\,\overline{\triangleright}-$ for all 
$X_C$'s, inducing a bifunctor
\begin{displaymath}
\overline{\triangleright}:\Comod\times\Mod\longrightarrow\Mod
\end{displaymath}
which gives the tensor products of the $\Comod$-enriched
category $\Mod$.
\end{proof}
We finish this chapter by giving
a direct proof of 
Proposition \ref{propmeasuringcomodule}, which 
can also be found in \cite{mine}. We should note 
here that the proof of the more general Theorem 
\ref{totaladjointthm} in the context of opfibrations
actually relied heavily on this special
case of modules and comodules. These objects' nature
and the effect of the well-behaved
reindexing functors on them illustrate the 
correspondences between the 
hom-sets clearly and give insight 
for the generalized result.
\begin{proof}[Proof 2]
Suppose that $\ca{V}$ is a locally
presentable monoidal closed category,
$P$ is the Sweedler hom as in (\ref{defP})
and $\bar{H}$ is the restricted internal hom
between the global categories as in (\ref{defbarH}).
We are going to explicitly establish a 
bijective correspondence
\begin{equation}\label{comodold} 
\Comod(X,Q_A(M,N))\cong\Mod(M,\bar{H}(X,N))
\end{equation} 
for any $C$-comodule $X$, $A$-module $M$ and 
$B$-module $N$.
The object $Q_{(-)}(M,N)$ arises once more 
from the existence of a `special case adjunction'
\begin{displaymath}
\xymatrix @C=.6in @R=.05in
{& \Mod_\ca{V}^\op([P(A,B),B])\ar@/^/[dr]^-{{\varepsilon_A}_!} & \\
\Comod_\ca{V}(P(A,B))\ar@/^/[ur]^-{\bar{H}(-,N_B)^\op}
\ar@{}[rr]|-{\bot} &&
\Mod_\ca{V}^\op(A)\phantom{ABCD}\ar@/^5ex/[ll]^-{Q_A(-,N_B)}}
\end{displaymath}
with a natural isomorphism for $Z$ a $P(A,B)$-comodule
\begin{equation}\label{specialadj}
\big(\Comod_\ca{V}(P(A,B))\big)(Z,Q_A(M,N))\cong
\big(\Mod_\ca{V}(A)\big)(M,(\varepsilon_A)^*[Z,N]).
\end{equation}
An arbitrary element of
$\Comod(X_C,Q_A(M,N)_{P(A,B)})$
\begin{displaymath}
\begin{cases}
h_!X\xrightarrow{k}Q_A(M,N) &\text{in }\Comod_\ca{V}(P(A,B))\\
\quad C\xrightarrow{h}P(A,B) &\text{in }\Comon(\ca{V})
\end{cases} 
\end{displaymath}
corresponds uniquely to a pair of arrows
\begin{equation}\label{thispair}
\begin{cases}
M\xrightarrow{t}(\varepsilon_A)^*[h_!X,N] &\text{in }\Mod_\ca{V}(A)\\
 A\xrightarrow{\tilde{h}}[C,B] &\text{in }\Mon(\ca{V})
\end{cases} 
\end{equation}
as follows:
the top one is obtained via the 
special case adjunction (\ref{specialadj}) 
since $h_!X$ is a $P(A,B)$-comodule, and the bottom one via 
the adjunction (\ref{meascomon}).
Here, $(\varepsilon_A)^*[h_!X,N]$ is an $A$-module
via the induced $A$-action on $[X,N]$
\begin{displaymath} 
\xymatrix @R=.55in @C=.5in 
{A\otimes
[X,N]\ar[r]^-{\varepsilon_A\otimes1}\ar @{-->}@/_3ex/[drr] &
[P(A,B),B]\otimes [X,N]\ar[r]^-{[h,1]\otimes 1}
& [C,B]\otimes [X,N]\ar[d]^-{\mu} \\ 
&& [X,N]}
\end{displaymath} 
where $\mu$ is the canonical $[C,B]$-action on
$[X_C,N_B]$ given by (\ref{lala}).
By definition of the global category $\Mod$,
$t$ is a morphism $M\to[X,N]$ in $\ca{V}$ 
which is compatible with the respective $A$-actions.
Thus the diagram (\ref{defmod2}) which it has to satisfy
corresponds under the adjunction $(-\otimes X)\dashv[X,-]$ to 
\begin{equation}\label{impdiag1} 
\xymatrix @C=0.05in @R=0.2in 
{A\otimes M\otimes X \ar[dd]_-{\mu\otimes1}
\ar[rr]^-{\scriptscriptstyle{1\otimes t\otimes
\delta}} && A\otimes [X,N]\otimes X\otimes C
\ar[r]^-{\scriptscriptstyle{\varepsilon\otimes1\otimes1}}
\ar @{-->} @/_9ex/[dddr]^(.3){(*)} & 
[P(A,B),B]\otimes [X,N]\otimes X\otimes
C\ar[d]^-{\scriptscriptstyle{1\otimes h}}\\ 
&&& [P(A,B),B]\otimes
[X,N]\otimes X\otimes P(A,B)\qquad\ar[d]^-
{\scriptscriptstyle{1\otimes s}}\\ 
M\otimes X\ar[dddrr]_-{\bar{t}} &&&
[P(A,B),B]\otimes P(A,B)\otimes [X,N]\otimes
X\qquad\ar[d]^-{\scriptscriptstyle{\mathrm{ev}\otimes1}}\\ 
&&& B\otimes [X,N]\otimes
X\ar[d]^-{\scriptscriptstyle{1\otimes\mathrm{ev}}}\\ 
 &&& B\otimes
N\ar[dl]^-{\mu}\\ 
&& N &}
\end{equation}
where $\bar{t}:M\otimes X\to N$ is the adjunct of $t$
in $\ca{V}$.

The goal is to show that the pair (\ref{thispair}) is
actually an element of the set 
$\Mod(M,\bar{H}(X,N))$, which is of the general form
\begin{displaymath}
\begin{cases}
M\to f^*[X,N] &\text{in }\Mod_\ca{V}(A)\\
A\xrightarrow{f}[C,B] &\text{in }\Mon(\ca{V})
\end{cases}
\end{displaymath}
for some $f:A\to[C,B]$,
so that a bijective correspondence
(\ref{comodold}) will be established. For that, it 
is enough to prove
that $t$ coincides with an $A$-module map 
$M\to\tilde{h}^*[X,N]$, since there is already
a monoid arrow $\tilde{h}:A\to[C,B]$.
So the question would be whether $t$ satisfies
the commutativity of a diagram
\begin{displaymath} 
\xymatrix @R=.2in @C=.7in 
{A\otimes M\ar[r]^-{1\otimes t}
\ar[dd]_-{\mu} & A\otimes [X,N]\ar
@{.>}[dd]\ar[rd]^-{\tilde{h}\otimes 1} &\\ 
& & [C,B]\otimes
[X,N]\ar[dl]^-{\mu}\\ 
M\ar[r]_-t & [X,N] &}
\end{displaymath}
which again under the adjunction 
$(-\otimes X)\dashv[X,-]$ translates, by rearranging
the terms appropriately, to the diagram
\begin{equation}\label{impdiag2} 
\xymatrix @C=0.4in @R=0.2in
{A\otimes M\otimes X\ar[r]^-{1\otimes
t\otimes\delta} \ar[ddd]_-{\mu\otimes1}& 
\drtwocell<\omit>{'(**)} A\otimes [X,N]\otimes
X\otimes C\ar[r]^-{\tilde{h}\otimes1}\ar @{-->}
@/_2pc/[ddr] & [C,B]\otimes [X,N]\otimes X\otimes
C\ar[d]^-{\scriptscriptstyle{1\otimes s\otimes1}}\\ 
&& [C,B]\otimes C\otimes [X,N]\otimes
X\ar[d]^-{\scriptscriptstyle{\mathrm{ev}\otimes1\otimes1}}\\ 
&& B\otimes[X,N]\otimes
X\ar[d]^-{\scriptscriptstyle{1\otimes\mathrm{ev}}}\\
M\otimes X\ar[dr]_-{\bar{t}} && B\otimes
N\ar[dl]^-{\mu}\\ 
& N. &}
\end{equation} 
By inspection of
the commutative diagram (\ref{impdiag1}) and
this one (\ref{impdiag2}), it suffices to show that the 
parts $(*)$ and $(**)$ are the same
for the latter to commute as well. 
Since the term $[X,N]$ remains unchanged,
this comes down to the
commutativity of 
\begin{displaymath}
\xymatrix @C=.4in @R=.2in
{& [P(A,B),B]\otimes
C\ar[r]^-{1\otimes h} & [P(A,B),B]\otimes
P(A,B)\ar[dr]^-{\mathrm{ev}} & \\ 
A\otimes C\ar[ur]^-{\varepsilon\otimes1}
\ar[drr]_-{\tilde{h}\otimes1} &&& B.\\ 
&& [C,B]\otimes
C\ar[ur]_-{\mathrm{ev}} &}
\end{displaymath}
This is satisfied by Lemma \ref{lemma},
since $h=\hat{\tilde{h}}$.

Thus a bijection (\ref{comodold}) is established,
and by standard arguments of adjunctions via representing 
objects and Theorem \ref{parametrizedadjunctions}, this
results once again to the existence of a parametrized adjoint 
$Q(-,-)$ of $H^\op(-,-)$.
\end{proof}
In essence, the above proof establishes 
that the $A$-modules 
$(\varepsilon_A)^*[h_!X,N]$ and 
$(\tilde{h})^*[X,N]$ are essentially
the same. 
As objects they are both $[X,N]$, and
their $A$-actions can be verified to coincide, 
when we conveniently translate them under 
the usual tensor-hom adjunction. 
If we compare this with 
the proof of Lemma \ref{totaladjointlem},
the above fact follows from the final
diagram (\ref{diagimportantproof}), where the 
part on the right is actually equality since
we are now dealing with split fibrations 
and opfibrations, and the part on the left follows
from cocartesianess (on the nose) of the functor 
$\bar{H}$ as shown at the end of Section 
\ref{globalcats}. However,
since in the direct proof neither
cocartesianess nor splitness is explicitly used
or mentioned,
Lemma \ref{lemma} incorporates the necessary
information for the proof to be completed.
\nocite{Species,HopfAlg}

\chapter{Enrichment of $\ca{V}$-Categories and $\ca{V}$-Modules}\label{VCatsVCocats}
\section{The bicategory of $\ca{V}$-matrices}\label{bicatVMat}

The bicategory of $\ca{V}$-matrices was
mentioned in Examples \ref{examplesbicat} 
for $\ca{V}=\B{Set}$. We now give a 
detailed description of enriched
matrices and the structure of the
bicategory they form, unravelling
Definition \ref{bicategory} in this specific
case. The main references
here are \cite{VarThrEnr} and
\cite{KellyLack}. In the former, 
the more general bicategory $\ca{W}$-$\Mat$ of
matrices enriched in a bicategory $\ca{W}$
was studied, leading to the theory of
bicategory enriched categories. For the one-object case, 
\emph{i.e.} monoidal categories, the main results
are in works of B{\'e}nabou \cite{Distributeurs} and 
Wolff \cite{Wolff}.

Suppose that $\ca{V}$ is a cocomplete
monoidal category,
such that the functors $A\otimes-$
and $-\otimes A$ preserve colimits,
as is certainly the case if
$\ca{V}$ is monoidal closed.
For sets $X$ and $Y$,
a $\ca{V}$\emph{-matrix}$\SelectTips{eu}{10}
\xymatrix @C=.2in
{S:X\ar[r]|-{\object@{|}} & Y}$from $X$ to $Y$
is a functor $S:Y\times X\to\ca{V}$
given by a family
\begin{displaymath}
 \{S(y,x)\}_{(x,y)\in X\times Y}
\end{displaymath}
of objects in $\ca{V}$, where the set 
$Y\times X$ is viewed as a discrete
category.

The bicategory $\ca{V}$-$\Mat$ consists of 
(small) sets $X,Y$ as objects,
$\ca{V}$-matrices $\SelectTips{eu}{10}
\xymatrix @C=.2in
{S:X\ar[r]|-{\object@{|}} & Y}$as 1-cells
and natural transformations
\begin{displaymath}
 \xymatrix
{Y\times X\rrtwocell^{S}_{S'}{\;\sigma} && \ca{V}}=:
\xymatrix
{X\ar@/^2ex/[rr]|-{\object@{|}}^-S \ar@/_2ex/[rr]|-{\object@{|}}_-{S'}
\rrtwocell<\omit>{\sigma} && Y}
\end{displaymath}
as 2-cells between $\ca{V}$-matrices $S$ and $S'$.
These are given by families of arrows 
\begin{displaymath}
\sigma_{y,x}:S(y,x)\to
S'(y,x)
\end{displaymath}
in $\ca{V}$,
for every $(x,y)\in X\times Y$.
Hence the hom-category for two objects $X$ and $Y$
is the category 
\begin{displaymath}
\ca{V}\textrm{-}\B{Mat}(X,Y)=\ca{V}^{Y\times X}
\end{displaymath}
with (vertical) composition of 2-cells 
being `componentwise' in $\ca{V}$ and
the identity 2-cell $1_S:S\Rightarrow S$
consisting of identity morphisms 
$(1_S)_{x',x}=1_{S(x',x)}$ in $\ca{V}$.
The horizontal composition
\begin{displaymath}
 \circ:\ca{V}\textrm{-}\Mat(Y,Z)\times\ca{V}\textrm{-}\Mat(X,Y)
\to\ca{V}\textrm{-}\Mat(X,Z)
\end{displaymath}
maps two composable $\ca{V}$-matrices$\SelectTips{eu}{10}
\xymatrix @C=.2in
{T:Y\ar[r]|-{\object@{|}} & Z}$and$\SelectTips{eu}{10}
\xymatrix @C=.2in
{S:X\ar[r]|-{\object@{|}} & Y}$to their
composite 1-cell$\SelectTips{eu}{10}
\xymatrix @C=.2in
{T\circ S:X\ar[r]|-{\object@{|}} & Z,}$
given by the family of objects in $\ca{V}$
\begin{equation}\label{horizontalcompositionVmatrices}
(T\circ S)(z,x)=\sum_{y\in Y} T(z,y)\otimes S(y,x)
\end{equation}
for all $z\in Z$ and $x\in X$.
A pair of 2-cells 
$(\tau:T\Rightarrow T',\sigma:S\Rightarrow S')$
is mapped to the 2-cell $\tau*\sigma:T\circ S\Rightarrow T'\circ S'$
with components arrows
\begin{equation}\label{horizontalcompositionVmatricearrows}
 (\tau*\sigma)_{z,x}:\sum_{y\in Y} T(z,y)\otimes S(y,x)
\xrightarrow{\;\sum{\tau_{z,y}\otimes\sigma_{y,x}}\;}
\sum_{y\in Y} T'(z,y)\otimes S'(y,x)
\end{equation}
in $\ca{V}$.
For each set $X$, the 
identity 1-cell is$\SelectTips{eu}{10}
\xymatrix @C=.2in
{1_X:X\ar[r]|-{\object@{|}} & X,}$
which is given by
\begin{displaymath}
1_X(x',x)=\begin{cases}
I,\quad \mathrm{if  }\;x=x'\\
0,\quad \mathrm{ otherwise}
\end{cases}
\end{displaymath}
where $I$ is the unit object in $\ca{V}$
and $0$ is the initial object.

For composable $\ca{V}$-matrices$\SelectTips{eu}{10}
\xymatrix @C=.2in
{X\ar[r]|-{\object@{|}}^-S & 
Y\ar[r]|-{\object@{|}}^-T & Z
\ar[r]|-{\object@{|}}^-R & W,}$
the associator $\alpha$ has components
invertible 2-cells
\begin{displaymath}
\alpha^{R,T,S}:
(R\circ T)\circ S\stackrel{\sim}{\longrightarrow}
R\circ(T\circ S)
\end{displaymath}
in $\ca{V}$-$\B{Mat}$,
given by the family $\{\alpha_{w,x}\}_{w,x}$ of composite
isomorphisms
\begin{displaymath}
\xymatrix @C=.4in @R=.53in
{\sum\limits_{y\in Y}{\big(\sum\limits_{z\in Z}{R(w,z)\otimes T(z,y)}\big)\otimes
S(y,x)}\ar@{-->}[r] \ar[d]_-{\cong} &
\sum\limits_{z\in Z}{R(w,z)\otimes
\big(\sum\limits_{y\in Y}{T(z,y)\otimes S(y,x)}\big)} \\
\sum\limits_{\stackrel{y\in Y}{\scriptscriptstyle{z\in Z}}}
\big((R(w,z)\otimes T(z,y))\otimes S(y,x)\big)
\ar[r]_-{\sum{a}} & 
\sum\limits_{\stackrel{y\in Y}{\scriptscriptstyle{z\in Z}}}
\big(R(w,z)\otimes(T(z,y)\otimes S(y,x))\big)
\ar[u]_-{\cong}}
\end{displaymath}
in $\ca{V}$. The isomorphism $a$ is the associativity constraint of $\ca{V}$
and the vertical invertible arrows express the fact that 
$\otimes$ commutes with colimits. This definition 
clearly makes the horizontal composition 
associative up to isomorphism.
Finally, for each $\ca{V}$-matrix$\SelectTips{eu}{10}
\xymatrix @C=.2in
{S:X\ar[r]|-{\object@{|}} & 
Y,}$ 
the unitors $\lambda, \rho$ have components
invertible 2-cells
\begin{displaymath}
\lambda^S:1_Y\circ S\xrightarrow{\;\sim\;}S,\quad
\rho^S:S\circ 1_X\xrightarrow{\;\sim\;}S
\end{displaymath}
given by families of isomorphisms
\begin{align*}
\lambda^S_{y,x}&:\sum_{y'\in Y}{1_Y(y,y')\otimes S(y',x)}\equiv
I\otimes S(y,x)\xrightarrow{\;l_{S(y,x)}\;} S(y,x) \\
\rho^S_{y,x}&:\sum_{x'\in X}
{S(y,x')\otimes 1_X(x',x)}\equiv
S(y,x)\otimes I\xrightarrow{\;r_{S(y,x)}\;} S(y,x)
\end{align*}
where $l$ and $r$ are the right and left unit constraints
of $\ca{V}$. The respective coherence condition is 
satisfied, thus these data indeed define a bicategory.
Notice that only the existence of coproducts in $\ca{V}$ is enough
for the formation of $\ca{V}$-$\B{Mat}$.

The hom-categories $\ca{V}$-$\B{Mat}(X,X)$ of this 
bicategory for a fixed
set $X$ will play an important role in this chapter.
The following proposition underlines 
some of the properties that these categories
possess, and more specifically the ones
that imply certain results with regard to
categories of monoids and comonoids as seen in Chapter
\ref{monoidalcategories}.
\begin{prop}\label{propVMat}
Let $\ca{V}$ be a cocomplete monoidal category such 
that $\otimes$ preserves colimits on both entries.
The category 
$\ca{V}$-$\B{Mat}(X,X)$ for any set $X$
\begin{enumerate}[(i)]
\item is cocomplete, and has all limits 
that exist in $\ca{V}$;
\item is a monoidal
category, and $\otimes=\circ$
preserves colimits on both entries;
\item
is locally presentable when $\ca{V}$ is; 
\item
is monoidal closed when $\ca{V}$ is monoidal closed
with products.
\end{enumerate}
\end{prop}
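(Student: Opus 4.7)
The plan is to exploit the elementary description $\ca{V}\text{-}\B{Mat}(X,X)=\ca{V}^{X\times X}$ as a functor category whose domain is the discrete (hence small) category $X\times X$. With that in hand, most of the four statements reduce to standard facts about pointwise-computed structure in functor categories.

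For (i), I would simply observe that limits and colimits in a functor category with discrete domain are computed pointwise in the target, so cocompleteness and the transfer of whatever limits exist in $\ca{V}$ are immediate. For (ii), the monoidal structure is the one from (\ref{tensorcirc}), with tensor given by the formula (\ref{horizontalcompositionVmatrices}) for horizontal composition in $\ca{V}\text{-}\B{Mat}$; since colimits in $\ca{V}\text{-}\B{Mat}(X,X)$ are pointwise, and since by hypothesis both $\otimes$ and arbitrary coproducts in $\ca{V}$ are preserved by $(A\otimes-)$ and $(-\otimes A)$ (coproducts being colimits), preservation of colimits on each side by $\circ$ is a routine computation swapping sums and tensors. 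For (iii), I would cite the standard result \cite[1.54]{LocallyPresentable} that $[\ca{A},\ca{C}]$ is locally $\lambda$-presentable whenever $\ca{C}$ is locally $\lambda$-presentable and $\ca{A}$ is small, applied to $\ca{A}=X\times X$ discrete.

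The only part that requires genuine calculation is (iv), the internal hom. The plan there is to guess the formula from an adjointness chase and then verify it. Given $S,T:X\to X$, a 2-cell $R\circ S\Rightarrow T$ amounts, via universal properties of coproducts, to a family of arrows $R(z,y)\otimes S(y,x)\to T(z,x)$ in $\ca{V}$, indexed by $(x,y,z)$. Applying the tensor-hom adjunction of $\ca{V}$ on the variable $x$ and then the universal property of products (which exist in $\ca{V}$ by hypothesis), this is equivalently a family $R(z,y)\to\prod_{x\in X}[S(y,x),T(z,x)]$ indexed by $(y,z)$, i.e.\ a 2-cell $R\Rightarrow[S,T]$ where one defines
\begin{displaymath}
[S,T](z,y)\;:=\;\prod_{x\in X}[S(y,x),T(z,x)].
\end{displaymath}
The bijection is natural in $R$, producing a right adjoint to $(-\circ S)$. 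The dual computation, swapping the roles of the variables and using that $\ca{V}$ is biclosed whenever monoidal closed with products (so the other internal hom also exists), yields a right adjoint to $(S\circ-)$ with $[S,T]'(z,y)=\prod_{x}[S(x,z),T(x,y)]'$ (or the analogous symmetric formula).

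The main obstacle, such as it is, will be nothing deeper than keeping the variance and the indexing sets straight in the string of adjoint manipulations in (iv); the rest is formal transport of properties of $\ca{V}$ along the pointwise structure of $\ca{V}^{X\times X}$. In particular, no separate coherence check is needed, since the monoidal structure is inherited from the bicategory $\ca{V}\text{-}\B{Mat}$ already established.
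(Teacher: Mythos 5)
Your proposal is correct and follows essentially the same route as the paper: pointwise (co)limits in the functor category $[X\times X,\ca{V}]$ for (i), the sum/tensor interchange for (ii), the citation of \cite[1.54]{LocallyPresentable} for (iii), and for (iv) the same internal hom $\prod_{x}[S(y,x),T(z,x)]$ (the paper's $G(T,R)$) verified by the same coproduct/tensor-hom adjunction chase, with right closedness deferred to the dual computation exactly as in the paper. The only caution is your phrase ``biclosed whenever monoidal closed with products'': this is harmless here only because the paper's convention takes ``monoidal closed'' to mean left \emph{and} right closed, so the second internal hom is part of the hypothesis rather than a consequence.
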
 
\begin{proof}
$(i)$
Since $\ca{V}$-$\B{Mat}(X,X)=[X\times X,\ca{V}]$,
all limits and colimits can be formed pointwise
from those in $\ca{V}$.

$(ii)$
The hom-categories $\ca{K}(X,X)$ 
for any bicategory $\ca{K}$ obtain a monoidal structure
via the horizontal composition, as in (\ref{tensorcirc}).
The unit object is the identity $\ca{V}$-matrix $1_X$,
so $(\ca{V}\textrm{-}\Mat(X,X),\circ,1_X)$ is
a monoidal category.

Horizontal composition of $\ca{V}$-matrices
preserves colimits on both entries:
if $(G_j\to G\,|\,j\in\ca{J})$ is a colimiting cocone
for a diagram of shape $\ca{J}$ in 
$\ca{V}$-$\B{Mat}(X,X)$, this means that
for any $x,y\in X$, the arrows $G_j(x,y)\to G(x,y)$
form colimiting cocones in $\ca{V}$. If we apply the functor
\begin{displaymath}
-\circ S:\ca{V}\textrm{-}\B{Mat}(X,X)
\to\ca{V}\textrm{-}\B{Mat}(X,X)
\end{displaymath}
for any $\ca{V}$-matrix$\SelectTips{eu}{10}\xymatrix @C=.2in
{S:X\ar[r]|-{\object@{|}} & X,}$
we obtain a collection of 2-cells
$(G_j\circ S\to G\circ S\,|\,j\in\ca{J})$
in $\ca{V}$-$\Mat$.
For this to be a colimit,
for any $x,z\in X$ the arrows
\begin{displaymath}
\sum_{y\in X}{G_j(x,y)\otimes S(y,z)}\longrightarrow
\sum_{y\in X}{{\mathrm{colim}_j}G_j(x,y)\otimes S(y,z)}
\end{displaymath}
must also form colimiting cocones in $\ca{V}$. Since by
assumptions 
$(-\otimes A)$ preserves colimits for any $A\in\ca{V}$,
we have isomorphisms
\begin{align*}
\sum_{y\in X}{({\mathrm{colim}_j}G_j(x,y))\otimes S(y,z)}
&\cong\sum_{y\in X}{{\mathrm{colim}_j}(G_j(x,y)\otimes S(y,z))} \\
&\cong{\mathrm{colim}_j}(\sum_{y\in X}{G_j(x,y)\otimes S(y,z)}),
\end{align*}
thus $-\circ S$ is cocontinuous. Similarly, $S\circ -$
preserves colimits for any $\ca{V}$-matrix,
since $(A\otimes -)$ does in $\ca{V}$.

$(iii)$
For each locally 
$\lambda$-presentable category $\ca{C}$, it is known
that the functor
category $\ca{C}^\ca{A}=[\ca{A},\ca{C}]$ 
for any small
category $\ca{A}$ is locally $\lambda$-presentable
itself, see \cite[1.54]{LocallyPresentable}.
Hence, for the discrete small category $X\times X$, 
the functor category $\ca{V}^{X\times X}$ is a locally 
presentable category.

$(iv)$
We need to demonstrate a bijective correspondence
between morphisms
\begin{equation}\label{thisthis}
\xymatrix @R=.02in
{\qquad\quad S\circ T \ar[rr] && R\phantom{ABCDE} 
&\mathrm{in}\;\ca{V}\textrm{-}\B{Mat}(X,X)\\ 
\ar@{-}[rr] &&& \\  
\qquad\qquad S \ar[rr] && G(T,R)\phantom{ABC} 
& \mathrm{in}\;\ca{V}\textrm{-}\B{Mat}(X,X).} 
\end{equation} 
We define the $\ca{V}$-matrix $G(T,R)$ from $X$ to $X$ 
to be given by the family of objects in $\ca{V}$
\begin{displaymath}
G(T,R)(x,y):=\prod_{z\in X}{[T(y,z),R(x,z)]}
\end{displaymath}
where $[-,-]$ is the internal hom in $\ca{V}$.
Then, an arrow $\sigma:S\to G(T,R)$
in $\ca{V}\textrm{-}\Mat(X,X)$ is given by a family of arrows
\begin{displaymath}
\sigma_{x,y}:S(x,y)\to\prod_{z\in X}{[T(y,z),R(x,z)]}
\end{displaymath}
in $\ca{V}$, for each $x,y\in X$. Since $\ca{V}$
is monoidal closed, for any fixed
$z$ the arrow $S(x,y)\to[T(y,z),R(x,z)]$ 
corresponds uniquely to
$S(x,y)\otimes T(y,z)\to R(x,z)$, which in turn
gives a unique arrow in $\ca{V}$ from the sum over
all $y$'s in $X$
\begin{displaymath}
\rho_{x,z}:\sum_{y\in X}{S(x,y)\otimes T(y,z)}\to R(x,z).
\end{displaymath} 
These arrows form a family which
defines a 2-cell $\rho:S\circ T\to R$ in 
$\ca{V}\textrm{-}\B{Mat}(X,X)$, thus
the correspondence (\ref{thisthis})
is now established. 

Notice that this actually shows that
$\ca{V}$-$\Mat(X,X)$ is left closed,
but we can repeat the above argument
using the (right) internal hom of the monoidal
closed $\ca{V}$ appropriately, and show that
$\ca{V}$-$\Mat(X,X)$ is (bi)closed.
\end{proof}
Recall that Proposition \ref{moncomonadm} 
presented some very
useful properties for the categories of monoids
and comonoids of admissible categories,
\emph{i.e.} locally presentable symmetric monoidal categories,
such that tensoring on one side preserves all filtered
colimits. However, as was also noted then,
the results are still valid
if we drop the symmetry condition
and ask instead that both $A\otimes -$ 
and $-\otimes A$ preserve
(filtered) colimits.
\begin{cor}\label{cofreecomonVMat}
 If $\ca{V}$ is a locally presentable monoidal
category, where $\otimes$ preserves colimits
in both entries, the forgetful functors
\begin{gather*}
S:\Mon(\ca{V}\textrm{-}\Mat(X,X))\to\ca{V}\textrm{-}\Mat(X,X) \\
U:\Comon(\ca{V}\textrm{-}\Mat(X,X))\to\ca{V}\textrm{-}\Mat(X,X)
\end{gather*}
are monadic and comonadic respectively,
and all categories are locally presentable.
\end{cor}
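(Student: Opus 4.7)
The plan is to reduce the corollary to a direct application of Proposition \ref{moncomonadm} (in its non-symmetric variant, as noted in the remark following that proposition) to the hom-category $\ca{V}\textrm{-}\Mat(X,X)$. That proposition requires the underlying category to be locally presentable monoidal with tensor preserving filtered colimits in both variables; so the whole task is to check that $\ca{V}\textrm{-}\Mat(X,X)$ inherits these properties from $\ca{V}$.

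First, I will invoke Proposition \ref{propVMat} to assemble exactly the hypotheses needed. Part (iii) gives that $\ca{V}\textrm{-}\Mat(X,X) \cong \ca{V}^{X\times X}$ is locally presentable, using the standard fact that presheaf categories on small (discrete) categories inherit local presentability. Part (ii) gives that $\ca{V}\textrm{-}\Mat(X,X)$ is a monoidal category under horizontal composition $\circ$, with unit $1_X$, and that $(-\circ S)$ and $(S\circ -)$ preserve all colimits for every $\ca{V}$-matrix $S$; in particular, they preserve filtered colimits, which is all that Proposition \ref{moncomonadm} needs.

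With these ingredients in place, Proposition \ref{moncomonadm}(1) yields that $\Mon(\ca{V}\textrm{-}\Mat(X,X))$ is finitary monadic over $\ca{V}\textrm{-}\Mat(X,X)$ and locally presentable, so in particular $S$ is monadic. Dually, Proposition \ref{moncomonadm}(2) yields that $\Comon(\ca{V}\textrm{-}\Mat(X,X))$ is comonadic over $\ca{V}\textrm{-}\Mat(X,X)$ and locally presentable, giving the cofree comonoid construction as $U$'s right adjoint.

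There is essentially no obstacle: the only point requiring care is that Proposition \ref{moncomonadm} is stated for admissible (symmetric) monoidal categories, whereas $\ca{V}\textrm{-}\Mat(X,X)$ is generally not symmetric even when $\ca{V}$ is. However, the excerpt explicitly observes after Proposition \ref{moncomonadm} that symmetry can be replaced by the requirement that $\otimes$ preserves filtered colimits on both sides, and this is guaranteed by Proposition \ref{propVMat}(ii). So the proof is a clean two-step reference: quote Proposition \ref{propVMat} to verify the hypotheses, then quote (the non-symmetric version of) Proposition \ref{moncomonadm} to conclude.
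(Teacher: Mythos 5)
Your proposal is correct and follows exactly the route the paper intends: the corollary is stated immediately after Proposition \ref{propVMat} and after the explicit remark that Proposition \ref{moncomonadm} survives dropping symmetry in favour of $\otimes$ preserving (filtered) colimits on both sides, so the paper treats it as the same two-step deduction you give. Your observation that the non-symmetric variant is the crux is precisely the point the paper flags in the paragraph preceding the corollary.
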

The existence of the free monoid and cofree comonoid
functors will be of use in Section \ref{VcatsandVcocats}.
As mentioned again in Chapter \ref{monoidalcategories}, in reality
the free monoid construction requires less assumptions than 
the ones above, \emph{i.e.} existence
of coproducts which are preserved by the tensor product.
Notice that the current setting only differs from the general one of 
Section \ref{Categoriesofmonoidsandcomonoids}, 
in that the categories
of monoids and comonoids of the non-symmetric
$(\ca{V}\text{-}\Mat(X,X),\circ,1_X)$
cannot inherit its monoidal structure.

The bicategory $\ca{V}$-$\B{Mat}$ is in fact 
a \emph{monoidal bicategory} (see \cite{Carmody})
via a pseudofunctor
\begin{displaymath}
\otimes:\ca{V}\textrm{-}\Mat
\times\ca{V}\textrm{-}\Mat\longrightarrow
\ca{V}\textrm{-}\Mat.
\end{displaymath}
This maps any two sets $X$ and $Y$ to their cartesian 
product $X\times Y$, any two matrices 
$\{S(y,x)\}_{y,x}$ and $\{T(z,w)\}_{z,w}$ 
to the $\ca{V}$-matrix
with components
\begin{equation}\label{monoidalVMat}
 (S\otimes T)\big((y,z),(x,w)\big)=S(y,x)\otimes T(z,w)
\end{equation}
and any 2-cells to their pointwise tensor product in $\ca{V}$.
The monoidal unit is the unit 
$\ca{V}$-matrix$\SelectTips{eu}{10}
\xymatrix@C=.2in{\ca{I}:1\ar[r]|-{\object@{|}} & 1}$where 
$1=\{*\}$ is the singleton set, with $\ca{I}(*,*)=I$.
This monoidal structure will be discussed in detail 
in the next chapter (see Proposition \ref{bicatVMatmonoidal}).

We now proceed to the definition of a 
specific lax functor which 
will later give rise to certain very important
mappings for particular enrichment relations we want to 
establish. Intuitively, there is an analogy
with the internal hom functor of our monoidal closed $\ca{V}$ 
in the previous chapter, which induced the mappings 
$H$ and $\bar{H}$ between the categories
of monoids/comonoids and modules/comodules.

Suppose that
$\ca{V}$ is a cocomplete symmetric monoidal closed
category with products. If $\ca{V}$-$\Mat^\textrm{co}$
is the bicategory of $\ca{V}$-matrices with reversed 
2-cells,
define a lax functor of bicategories
\begin{equation}\label{defimportHom}
 \Hom:(\ca{V}\textrm{-}\Mat)^{\textrm{co}}
\times\ca{V}\textrm{-}\Mat\longrightarrow
\ca{V}\textrm{-}\Mat
\end{equation}
as follows: 

$\cdot$
each pair of sets $(X,Y)$ is mapped to
the set $\Hom(X,Y):=Y^X$ of functions
from $X$ to $Y$;

$\cdot$
for all pairs $(X,Y),(Z,W)$
there is a functor
\begin{equation}\label{Hom_}
\xymatrix @R=.02in @C=.9in
{\ca{V}\textrm{-}\Mat(X,Z)^\op\times
\ca{V}\textrm{-}\Mat(Y,W)\ar[r]^-{\Hom_{(X,Y),(Z,W)}} &
\ca{V}\textrm{-}\Mat(Y^X,W^Z) \\
(\;S\;,\;T\;)\ar@{|.>}[r]
\ar[dd]_-{(\sigma,\tau)} & \Hom(S,T)\ar[dd]^-{\Hom(\sigma,\tau)} \\
\hole \\
(\;S'\;,\;T'\;)\ar@{|.>}[r] & \Hom(S',T')}
\end{equation}
where the $\ca{V}$-matrix$\SelectTips{eu}{10}
\xymatrix @C=.2in
{\Hom(S,T):Y^X\ar[r]|-{\object@{|}} & W^Z}$is given by the family 
\begin{equation}\label{Homobjects}
 \Hom(S,T)(q,k):=\prod_{\scriptscriptstyle{\stackrel{z\in Z}{x\in X}}}
{[S(z,x),T(qz,kx)]}
\end{equation}
of objects in $\ca{V}$,
for all $q\in W^Z$ and $k\in Y^X$, where $[-,-]$
is the internal hom in $\ca{V}$.
For $\sigma:S'\Rightarrow S$ and $\tau:T\Rightarrow T'$,
the 2-cell
\begin{equation}\label{Hom2cells}
\xymatrix @C=1.2in
{Y^X\ar @/^3ex/[r]|-{\object@{|}}^-{\Hom(S,T)}
\ar@/_3ex/[r]|-{\object@{|}}_-{\Hom(S',T')}
\rtwocell<\omit>{\qquad\;\;\;\Hom(\sigma,\tau)} & W^Z}
\end{equation}
has components, for every $(q,k)\in  W^Z\times Y^X$,
arrows in $\ca{V}$ 
\begin{displaymath}
\Hom(\sigma,\tau)_{q,k}:
\prod_{(z,x)}
{[S(z,x),T(qz,kx)]}\longrightarrow
\prod_{(z,x)}{[S'(z,x),T'(qz,kx)].}
\end{displaymath}
For fixed
$z,x$, these correspond under the usual tensor-hom adjunction
in $\ca{V}$
to
\begin{displaymath}
\xymatrix @C=.8in
{[S(z,x),T(qz,kx)]\otimes S'(z,x)\ar@{-->}[r]
\ar[d]_-{1\otimes\sigma_{z,x}} &T'(qz,kx) \\
[S(z,x),T(qz,kx)]\otimes S(z,x)\ar[r]_-{\mathrm{ev}_{T(qz,kx)}} &
T(qz,kx)\ar[u]_-{\tau_{qz,kx}}&}
\end{displaymath}
where $\mathrm{ev}$ is the evaluation;

$\cdot$ for all $(X,Y),(Z,W),(U,V)$, there is a natural transformation
$\delta$ with components, for$\SelectTips{eu}{10}
\xymatrix @C=.2in
{(R:Z\ar[r]|-{\object@{|}} & U,}
\SelectTips{eu}{10}
\xymatrix @C=.2in
{O:W\ar[r]|-{\object@{|}} & V)}$and$\SelectTips{eu}{10}
\xymatrix @C=.2in
{(S:X\ar[r]|-{\object@{|}} & Z,}
\SelectTips{eu}{10}
\xymatrix @C=.2in
{T:Y\ar[r]|-{\object@{|}} & W),}$2-cells in $\ca{V}$-$\Mat$
\begin{equation}\label{Homlaxfunctor1}
 \xymatrix @R=.02in @C=.8in
{& W^Z \ar @/^/[dr]|-{\object@{|}}^-{\Hom(R,O)} & \\
Y^X \ar @/^/[ur]|-{\object@{|}}^-{\Hom(S,T)}
\rrtwocell<\omit>{\qquad\qquad\delta_{(S,T),(R,O)}}
\ar @/_3ex/[rr]|-{\object@{|}}_-{\Hom(R\circ S,O\circ T)} && V^U}
\end{equation}
which are given by families of arrows in $\ca{V}$
\begin{displaymath}
\sum_{q\in W^Z}{\Hom(R,O)(t,q)\otimes\Hom(S,T)(q,k)}\xrightarrow{\delta_{t,k}}
\prod_{(u,x)}{[(R\circ S)(u,x),(O\circ T)(tu,kx)]}
\end{displaymath}
for all $(t,k)\in V^U\times Y^X$. These again
can be understood via their transposes
under the tensor-hom adjunction, \emph{i.e.}  
composites of projections,
inclusions, symmetries and evaluations,
using the fact that the tensor product
preserves sums;

$\cdot$
for all pairs of sets $(X,Y)$, there is a natural
transformation $\gamma$ with components
\begin{equation}\label{Homlaxfunctor2}
\xymatrix @C=1in
{Y^X\ar @/^3ex/[r]|-{\object@{|}}^-{1_{Y^X}}
\ar@/_3ex/[r]|-{\object@{|}}_-{\Hom(1_X,1_Y)}
\rtwocell<\omit>{\qquad\;\;\gamma_{(X,Y)}} & Y^X}
\end{equation}
which for $q=k\in Y^X$ and
$x'=x\in X$ consist of the isomorphisms
\begin{displaymath}
 (\gamma_{(X,Y)})_{q,q}:I\longrightarrow[1_X(x,x),1_Y(kx,kx)]=[I,I].
\end{displaymath}
The coherence axioms of Definition \ref{laxfunctor}
are satisfied, therefore $\Hom$
is a lax functor of bicategories.

We now turn to some more technical points
of the bicategory $\ca{V}$-$\Mat$.
Any function $f:X\to Y$ between two sets $X$, $Y$
determines two $\ca{V}$-matrices,$\SelectTips{eu}{10}
\xymatrix @C=.2in
{f_*:X\ar[r]|-{\object@{|}} & Y}$
and$\SelectTips{eu}{10}
\xymatrix @C=.2in
{f^*:Y\ar[r]|-{\object@{|}} & X,}$given by
\begin{equation}\label{f*}
f_*(y,x)=f^*(x,y)=\begin{cases}
I,\quad \mathrm{if  }\;f(x)=y\\
0,\quad \mathrm{ otherwise}
\end{cases}
\end{equation}
for any $x\in X$, $y\in Y$.
It can be easily
verified that there is a 
natural bijection between 2-cells
$f_*\circ S\Rightarrow T$
and $S\Rightarrow f^*\circ T$ for
any $\ca{V}$-matrices$\SelectTips{eu}{10}
\xymatrix @C=.2in
{S:Z\ar[r]|-{\object@{|}} & X}$
and$\SelectTips{eu}{10}
\xymatrix @C=.2in
{T:Z\ar[r]|-{\SelectTips{eu}{}\object@{|}} & W,}$
thus they
form an adjunction
$f_*\dashv f^*$ in 
the bicategory $\ca{V}$-$\B{Mat}$.
The unit and counit of this adjunction are the 2-cells
\begin{displaymath}
\xymatrix @C=.6in
{X \ar @/^2ex/[r]|-{\object@{|}}^-{1_X}
\ar@/_2ex/[r]|-{\object@{|}}_-{f^*\circ f_*}
\rtwocell<\omit>{\;\check{\eta}} & X}
\qquad\mathrm{and}\qquad
\xymatrix @=.6in
{Y \ar @/^2ex/[r]|-{\object@{|}}^-{f_*\circ f^*}
\ar@/_2ex/[r]|-{\object@{|}}_-{1_Y}
\rtwocell<\omit>{\;\check{\varepsilon}} & Y}
\end{displaymath}
with components arrows in $\ca{V}$
\begin{displaymath}
\check{\varepsilon}_{y',y}:(f_*\circ f^*)(y',y)\to 1_Y(y',y)\equiv
\begin{cases}
\sum\limits_{x\in f^{-1}(y)}{I\otimes I}\xrightarrow{r_I}I, & \text{if }y=y' \\
\phantom{\sum\limits_{x\in f^{-1}(y)}}0\xrightarrow{!}0, & \text{if }y\neq y'
\end{cases}
\end{displaymath}
and
\begin{displaymath}
\check{\eta}_{x',x}:1_X(x',x)\to
(f^*\circ f_*)(x',x)\equiv
\begin{cases}
I\xrightarrow{(r_I)^{-1}}I\otimes I, & \text{if }x'=x \\
0\xrightarrow{!}{\begin{cases} I\otimes I, & fx=fx'\\
0, & \text{else}
\end{cases}} & \text{if }x'\neq x
\end{cases}
\end{displaymath}
where $!$
is the unique
arrow from the initial to any object. 
Notice that $\check{\eta}$ and $\check{\varepsilon}$
are isomorphisms if and only if the function
$f$ is a bijection.

These $\ca{V}$-matrices induced by 
functions between sets are of central importance
to constructions in later sections. Below
we show some useful properties.
\begin{lem}\label{isosofstars}
Let $f:X\to Y$ and $g:Y\to Z$ be functions. There
exist isomorphisms
\begin{gather*}
\zeta^{g,f}:g_*\circ f_*\cong(gf)_*:
\SelectTips{eu}{10}\xymatrix
{X\ar[r]|-{\object@{|}} & Z} \\
\xi^{g,f}:f^*\circ g^*\cong(gf)^*:
\SelectTips{eu}{10}\xymatrix
{Z\ar[r]|-{\object@{|}} & X}
\end{gather*}
which are families of invertible arrows
\begin{equation}\label{zeta}
\zeta^{g,f}_{z,x}=\xi^{g,f}_{x,z}:\begin{cases}
I\otimes I\xrightarrow{r_I=l_I}I, &\textrm{if }g(f(x))=z \\
\quad\quad 0\xrightarrow{\;!\;}0, & \textrm{otherwise}
\end{cases}
\end{equation}
for each pair of elements $(x,z)\in X\times Z$.
\end{lem}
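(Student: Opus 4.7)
The plan is to compute the horizontal composites directly from the formula (\ref{horizontalcompositionVmatrices}) and observe that all but one summand vanish. For fixed $(z,x) \in Z \times X$, unfolding the definition gives
\[
(g_* \circ f_*)(z,x) = \sum_{y \in Y} g_*(z,y) \otimes f_*(y,x).
\]
By (\ref{f*}), each factor is either $I$ or the initial object $0$, and since $\otimes$ preserves colimits in each variable (by our standing hypothesis on $\ca{V}$), any summand containing a $0$ is itself $0$. Thus the summand at $y$ is nonzero exactly when $f(x) = y$ and $g(y) = z$ simultaneously; that is, when $y = f(x)$ and moreover $g(f(x)) = z$. In that case the summand equals $I \otimes I$, and the coproduct collapses to the single term $I \otimes I$; otherwise every summand is $0$, and the coproduct is $0$.

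Next I compare this with $(gf)_*(z,x)$, which by (\ref{f*}) equals $I$ when $g(f(x)) = z$ and $0$ otherwise. I define $\zeta^{g,f}$ componentwise: in the nonzero case take the canonical isomorphism $r_I = l_I : I \otimes I \xrightarrow{\sim} I$ (these coincide by coherence applied to the unit), and in the zero case take the unique map $0 \to 0$. Each component is invertible, so $\zeta^{g,f}$ is an isomorphism. Since $\ca{V}$-$\Mat(X,Z) = \ca{V}^{Z \times X}$ is a plain functor category with discrete source, a 2-cell is just a pointwise family of arrows, so no further naturality needs checking.

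For $\xi^{g,f}: f^* \circ g^* \cong (gf)^*$ the argument is entirely symmetric: expand
\[
(f^* \circ g^*)(x,z) = \sum_{y \in Y} f^*(x,y) \otimes g^*(y,z),
\]
observe that the only potentially nonzero summand occurs at $y = f(x)$ and survives precisely when $g(f(x)) = z$, yielding $I \otimes I$, and compare with $(gf)^*(x,z)$ via $r_I = l_I$. This gives precisely the components displayed in (\ref{zeta}).

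No real obstacle arises here: the result is essentially a bookkeeping exercise exploiting the fact that the matrices $f_*$ and $f^*$ are supported on the graph of $f$. The only point requiring a moment's care is the coincidence $r_I = l_I$ on the unit object (a standard consequence of Mac Lane's coherence theorem), which is what allows the single formula in (\ref{zeta}) to describe both $\zeta^{g,f}_{z,x}$ and $\xi^{g,f}_{x,z}$ unambiguously.
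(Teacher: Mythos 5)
Your proposal is correct and follows essentially the same route as the paper: expand the horizontal composite via the coproduct formula, use that $\otimes$ preserves colimits (so tensoring with $0$ kills a summand) to collapse the sum to the single term $I\otimes I$ supported where $g(f(x))=z$, and identify it with $(gf)_*$ (resp.\ $(gf)^*$) via the unit constraint. The paper merely organizes the computation by first evaluating $S\circ f_*$ and $f^*\circ T$ for arbitrary $S$, $T$ before specializing, which is an inessential difference.
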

\begin{proof}
In general,
for any $\ca{V}$-matrix$\SelectTips{eu}{10}
\xymatrix @C=.2in
{S:Y\ar[r]|-{\object@{|}} & 
Z,}$the composite 1-cell
$S\circ f_*$
is computed to be the family
\begin{displaymath}
(S\circ f_*)(z,x)=\sum_{y\in Y}{S(z,y)\otimes
f_*(y,x)}=\sum_{y=fx}S(z,y)\otimes I=S(z,fx)\otimes I
\stackrel{r}{\cong}S(z,fx)
\end{displaymath}
of objects in $\ca{V}$, for any $(z,x)\in Z\times X$.
Similarly, for a $\ca{V}$-matrix $\SelectTips{eu}{10}
\xymatrix @C=.2in
{T:Z\ar[r]|-{\object@{|}} & Y,}$the composite
$\ca{V}$-matrix $f^*\circ T$ is the family 
\begin{displaymath}
(f^*\circ T)(x,z)=\sum_{y\in Y}{f^*(x,y)\otimes T(y,z)}
=I\otimes T(fx,z)\stackrel{l}{\cong}T(fx,z)
\end{displaymath}
of objects in $\ca{V}$, for all $(x,z)\in X\times Z$.

Using the above technique, we can explicitly 
write the families of objects in $\ca{V}$ which
define the $\ca{V}$-matrices
$g_*\circ f_*$ and $f^*\circ g^*$
\begin{displaymath}
(g_*\circ f_*)(z,x)=(f^*\circ g^*)(x,z)=\begin{cases}
I\otimes I, & \textrm{if }g(f(x))=z\\
0, & \textrm{otherwise}
\end{cases}
\end{displaymath}
for any pairs of elements $(x,z)\in X\times Z$.
We can now provide isomorphisms
\begin{displaymath}
\xymatrix @R=.1in @C=.5in
{& Y \ar @/^/[dr]|-{\object@{|}}^-{g_*} & \\
X \ar @/^/[ur]|-{\object@{|}}^-{f_*}
\rrtwocell<\omit>{\quad\zeta^{g,f}}
\ar @/_4ex/[rr]|-{\object@{|}}_-{(gf)_*} && Z}
\quad
\xymatrix @R=.05in{\hole \\ \mathrm{and}}
\quad
\xymatrix @R=.1in @C=.5in
{& Y \ar @/^/[dr]|-{\object@{|}}^-{f^*} & \\
Z \ar @/^/[ur]|-{\object@{|}}^-{g^*}
\rrtwocell<\omit>{\quad\xi^{g,f}}
\ar @/_4ex/[rr]|-{\object@{|}}_-{(gf)^*} && X}
\end{displaymath}
which consist of families of invertible arrows
in $\ca{V}$ exactly the (\ref{zeta}).
\end{proof}
Based on the above formulas, 
it is straightforward
to show that $\zeta$ and $\xi$ satisfy the
following relations, which clarify
how the composition of
three such matrices works.
\begin{lem}\label{corisos}
Consider three composable functions
$X\xrightarrow{f}Y\xrightarrow{g}Z\xrightarrow{h}W$. Then
\begin{displaymath}
\xymatrix @R=.2in
{& Y\ar @/^/[r]|-{\object@{|}}^-{g_*} & 
Z\ar @/^/[dr]|-{\object@{|}}^-{h_*} & \\
X \urrtwocell<\omit>{\quad\zeta^{g,f}}
\ar @/^/[ur]|-{\object@{|}}^-{f_*} 
\ar @/_3ex/[urr]|-{\object@{|}}_-{(gf)_*}
\ar @/_4ex/[rrr]|-{\object@{|}}_-{(hgf)_*}
& \rrtwocell<\omit>{\quad\zeta^{gf,h}} && W}
\;\xymatrix @R=.2in {\hole \\ \textrm{=}}\;
\xymatrix @R=.2in
{& Y \drrtwocell<\omit>{\quad\zeta^{g,h}}
\ar @/_3ex/[drr]|-{\object@{|}}_-{(hg)_*}
\ar @/^/[r]|-{\object@{|}}^-{g_*} & 
Z\ar @/^/[dr]|-{\object@{|}}^-{h_*} & \\
X \rrtwocell<\omit>{\quad\zeta^{f,hg}}
\ar @/^/[ur]|-{\object@{|}}^-{f_*} 
\ar @/_4ex/[rrr]|-{\object@{|}}_-{(hgf)_*}
&&& W}
\end{displaymath}
and 
\begin{displaymath}
\xymatrix @R=.2in
{& Z\ar @/^/[r]|-{\object@{|}}^-{g^*} & 
Y\ar @/^/[dr]|-{\object@{|}}^-{f^*} & \\
W \urrtwocell<\omit>{\quad\xi^{g,h}}
\ar @/^/[ur]|-{\object@{|}}^-{h^*} 
\ar @/_3ex/[urr]|-{\object@{|}}_-{(hg)^*}
\ar @/_4ex/[rrr]|-{\object@{|}}_-{(hgf)^*}
& \rrtwocell<\omit>{\quad\xi^{f,hg}} && X}
\;\xymatrix @R=.2in {\hole \\ \textrm{=}}\;
\xymatrix @R=.2in
{& Z \drrtwocell<\omit>{\quad\xi^{g,f}}
\ar @/_3ex/[drr]|-{\object@{|}}_-{(gf)^*}
\ar @/^/[r]|-{\object@{|}}^-{g^*} & 
Y\ar @/^/[dr]|-{\object@{|}}^-{f^*} & \\
W \rrtwocell<\omit>{\quad\xi^{gf,h}}
\ar @/^/[ur]|-{\object@{|}}^-{h^*} 
\ar @/_4ex/[rrr]|-{\object@{|}}_-{(hgf)^*}
&&& X.}
\end{displaymath}
\end{lem}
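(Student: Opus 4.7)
The plan is to verify both identities componentwise, reducing everything to coherence inside the base monoidal category $\ca{V}$. I first recall, from the proof of Lemma \ref{isosofstars}, the key computational simplifications that whenever a $\ca{V}$-matrix $S$ is post- or pre-composed with a matrix of the form $f_*$ or $f^*$, the sum defining horizontal composition (\ref{horizontalcompositionVmatrices}) collapses to a single summand, with the resulting object in $\ca{V}$ being either $S(z,fx)\otimes I$, or $I\otimes T(fx,z)$, or $0$. Consequently, for any triple of composable functions $X\xrightarrow{f}Y\xrightarrow{g}Z\xrightarrow{h}W$ and any $(w,x)\in W\times X$, each of the $\ca{V}$-matrices $h_*\circ g_*\circ f_*$, $(hg)_*\circ f_*$, $h_*\circ(gf)_*$ and $(hgf)_*$ evaluated at $(w,x)$ is either an iterated tensor product of copies of $I$ (when $hgf(x)=w$), or $0$ (otherwise).

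The case $hgf(x)\ne w$ is trivial: every 2-cell involved has the unique component $0\to 0$, so both composite pasting diagrams reduce to the identity of the zero matrix. For the remaining case $w=hgf(x)$, I would compute the components of each of the five 2-cells appearing in either pasting (the two whiskered instances of $\zeta$, an appropriate associator $\alpha^{h_*,g_*,f_*}$ inserted between the two chosen bracketings of the triple composite, and the vertical composition) using the explicit formulas (\ref{horizontalcompositionVmatrices}) and (\ref{zeta}). Each component turns out to be a canonical morphism, built solely from $r_I$, $l_I$, the associator $a$ of $\ca{V}$, and the canonical isomorphisms expressing the commutation of $\otimes$ with the relevant singleton coproducts (which reduce to identities here). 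Thus both pasted composites, when restricted to the $(w,x)$-entry, amount to two canonical isomorphisms $I\otimes(I\otimes I)\to I$ (or $(I\otimes I)\otimes I\to I$, depending on the bracketing one uses for the triple composite).

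By Mac Lane's coherence theorem for monoidal categories, any two morphisms constructed from $a$, $l$, $r$ and identities between the same iterated tensor products of $I$ agree; equivalently, one can invoke directly the triangle axiom of $\ca{V}$ together with $r_I=l_I$ (which was built into the very definition of $\zeta$). This establishes the first identity. The second identity, involving $\xi$, is proved by the same argument applied to the dual simplification $f^*\circ T$, with the roles of $r$ and $l$ interchanged.

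The principal obstacle is purely bookkeeping: one must make explicit the associator $\alpha^{h_*,g_*,f_*}$ of $\ca{V}$-$\Mat$ that mediates between the two bracketings appearing on either side of the pentagon, and track how whiskering by $h_*$ or $f_*$ distributes over the sum-and-tensor formulas. Once this is unpacked, there is no real content beyond coherence in $\ca{V}$, since the components of $\zeta$ and of the associator are themselves built from the canonical coherence isomorphisms of $\ca{V}$.
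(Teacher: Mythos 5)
Your proposal is correct and follows essentially the same route as the paper, which simply asserts that the identities are "straightforward to show" from the explicit componentwise formulas (\ref{zeta}): in each entry everything is either $0\to 0$ or a canonical coherence morphism between iterated tensor products of $I$, so the two pastings agree by coherence in $\ca{V}$. Your write-up just makes explicit the bookkeeping (the collapsed sums, the associator of $\ca{V}$-$\Mat$, and the appeal to Mac Lane coherence) that the paper leaves implicit.
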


\section{The category of $\ca{V}$-graphs}\label{Vgraphs}

Graphs, with variations on their exact meaning
depending on the mathematical context they arise,
have been of use for a very long time.
For the needs of this thesis,
we study the case of graphs enriched in 
a monoidal category, 
in order to better understand $\ca{V}$-categories.
In this setting, enriched categories are enriched
graphs with extra structure, and 
$\ca{V}$-cocategories will also 
naturally fit in later.

As a primary example, in \cite[11.7]{MacLane}
the notion of a small (directed) graph
consisting of a set of objects and a set of arrows
was employed to describe the free category 
construction, in analogy with the free monoid
construction on a set. 
Moreover, the idea of $O$-graphs with a 
fixed set of objects $O$ inspires the fibrational
view of these categories, which is going
to be explicitly described in the following sections.
For the main results regarding $\ca{V}$-$\B{Grph}$
and $\ca{V}$-$\B{Cat}$ from a more traditional point 
of view, Wolff's \cite{Wolff} is a classic reference
for $\ca{V}$ a symmetric monoidal closed category,
whereas for the description of $\ca{V}$-graphs
in terms of $\ca{V}$-matrices, we again closely follow
\cite{VarThrEnr,KellyLack}.

A (small) $\ca{V}$-\emph{graph} $\ca{G}$
consists of a set of objects $\ob\ca{G}$,
and for every pair of objects $A,B\in\ob\ca{G}$
an object $\ca{G}(A,B)\in\ca{V}$. 
If $\ca{G}$ and $\ca{H}$ are $\ca{V}$-graphs,
a $\ca{V}$-\emph{graph morphism}
$F:\ca{G}\to\ca{H}$ consists of a function 
$f:\ob\ca{G}\to\ob\ca{H}$ between
their sets of objects, together with
arrows in $\ca{V}$
\begin{equation}\label{defvgrapharrow}
F_{A,B}:\ca{G}(A,B)\to\ca{H}(fA,fB)
\end{equation}
for each pair of objects $A,B$ in $\ca{G}$.
These data, with appropriate compositions
and identities, form a category 
$\ca{V}$-$\B{Grph}$.

Notably, the above definition does not require 
any assumptions on the monoidal category $\ca{V}$.
However, the context of the bicategory 
$\ca{V}$-$\Mat$ is
very convenient for connecting relations
between the above mentioned categories
to be exhibited. For this reason,
we proceed to the presentation of
equivalent characterizations in the language
of $\ca{V}$-matrices. Inevitably, 
we have to impose 
appropriate conditions
on $\ca{V}$ as in the previous section, namely cocompleteness
and $\otimes$ preserving colimits on both variables.
 
One can easily deduce that a $\ca{V}$-graph 
$\ca{G}$ as described above is an endoarrow in 
the bicategory $\ca{V}$-$\B{Mat}$, \emph{i.e.} 
a set $X=\ob\ca{G}$ 
together with a $\ca{V}$-matrix$\SelectTips{eu}{10}
\xymatrix @C=.2in
{G:X\ar[r]|-{\object@{|}} & X}$given by a family of objects
$G(x',x)$ in $\ca{V}$, for all $x',x\in X$.
Such a $\ca{V}$-graph will be denoted as
$(G,X)$ or $\ca{G}_X$.
Furthermore, a morphism of $\ca{V}$-graphs between
$(G,X)$ and $(H,Y)$ can be viewed as a function
$f:X\to Y$ between their sets of objects,
equipped with a 2-cell
\begin{displaymath}
\xymatrix @C=.6in
{X\ar @/^2ex/[r]|-{\object@{|}}^-{G}
\ar@/_2ex/[r]|-{\object@{|}}_-{f^*\circ H\circ f_*}
\rtwocell<\omit>{\;\phi} & X}
\end{displaymath}
in $\ca{V}$-$\Mat$, where $f_*$ and $f^*$ are as in 
(\ref{f*}). This is the case,
because the composite $\ca{V}$-matrix
\begin{displaymath}
\SelectTips{eu}{10}
\xymatrix @C=.3in
{X\ar[r]|-{\object@{|}}^-{f_*} & Y
\ar[r]|-{\object@{|}}^-{H} & Y
\ar[r] |-{\object@{|}}^-{f^*} & X}
\end{displaymath}
is given by the family of objects, for all $x',x\in X$,
\begin{align*}
(f^*Hf_*)(x',x) &= \sum_{y\in Y}{f^*(x',y)\otimes(Hf_*)(y,x)}
= I\otimes(Hf_*)(f(x'),x) \\
&= I\otimes\sum_{y\in Y}{H(f(x'),y)\otimes f_*(y,x)}
= I\otimes H(f(x'),f(x))\otimes I \\
&\cong H(f(x'),f(x)).
\end{align*}
Hence the 2-cell $\phi$ has components
arrows in $\ca{V}$ 
\begin{displaymath}
\phi_{x',x}:G(x',x)\longrightarrow I\otimes H(fx',fx)\otimes I\cong H(fx',fx)
\end{displaymath}
for $x',x\in X$.
This is essentially (\ref{defvgrapharrow}), in the sense
that the arrows $F_{x',x}$ and 
$\phi_{x'x,}$ are in bijective correspondence.
We write $F=(\phi,f)$ for this way of viewing 
$\ca{V}$-graph morphisms.
 
In fact, because of the adjunction
$f_*\dashv f^*$, the `mates correspondence'
of Proposition \ref{mates}
gives a bijection
between 2-cells
\begin{equation}\label{mates2cells}
\xymatrix {X\ar[r]|-{\SelectTips{eu}{}\object@{|}}
^-{G}\ar[d]|-{\SelectTips{eu}{}\object@{|}}
_-{f_*} & X\\
Y\ar[r]|-{\SelectTips{eu}{}\object@{|}}_-{H} 
\rtwocell<\omit>{<-4>\phi} & Y,
\ar[u]|-{\SelectTips{eu}{}\object@{|}}
_-{f^*}}
\qquad\textrm{and}\qquad
\xymatrix {X\ar[r]|-{\SelectTips{eu}{}\object@{|}}
^-{G} & X\ar[d]|-{\SelectTips{eu}{}\object@{|}}
^-{f_*}\\
Y\ar[u]|-{\SelectTips{eu}{}\object@{|}}
^-{f^*}\ar[r]|-{\SelectTips{eu}{}\object@{|}}_-{H}
\rtwocell<\omit>{<-4>\psi}
& Y}
\end{equation}
in the bicategory $\ca{V}$-$\Mat$.
By computing as before,
the composite $\ca{V}$-matrix
\begin{displaymath}
\SelectTips{eu}{10}
\xymatrix @C=.3in
{Y\ar[r]|-{\object@{|}}^-{f^*} & X
\ar[r]|-{\object@{|}}^-{G} & X
\ar[r] |-{\object@{|}}^-{f_*} & Y}
\end{displaymath}
is the family of objects in $\ca{V}$, for each $y,y'\in Y$,
\begin{displaymath}
(f_*Gf^*)(y',y)=
\sum_{\scriptscriptstyle{\stackrel{fx'=y'}{fx=y}}}{I\otimes G(x',x)\otimes I}
\cong\sum_{\scriptscriptstyle{\stackrel{fx'=y'}{fx=y}}}{G(x',x).}
\end{displaymath}
So the components of $\psi$ are the arrows in $\ca{V}$
\begin{displaymath}
 \psi_{y',y}:\sum_{\scriptscriptstyle{\stackrel{fx'=y'}{fx=y}}}
{I\otimes G(x',x)\otimes I}
\longrightarrow H(y',y)
\end{displaymath}
which, for fixed $x\in f^{-1}(y)$ and $x'\in f^{-1}(y')$, 
correspond uniquely to the components $\phi_{x',x}$.
Hence, a $\ca{V}$-graph arrow can equivalently
be given as a pair $(\psi,f):(G,X)\to(H,Y)$
where $f:X\to Y$ is a function and $\psi:f_*Gf^*\Rightarrow H$
a 2-cell in $\ca{V}$-$\Mat$.

In the established terminology,
the composition of two $\ca{V}$-graph
morphisms
\begin{displaymath}
\ca{G}_X\xrightarrow{\;F=(\phi,f)\;}\ca{H}_Y
\xrightarrow{\;K=(\chi,k)\;}\ca{J}_Z 
\end{displaymath}
is given by the function
$kf:X\to Y\to Z$ and the 
composite 2-cell
\begin{displaymath}
\xymatrix @C=.7in @R=.4in
{X\ar[r]|-{\object@{|}}^-G
\ar[d]|-{\object@{|}}_-{f_*}
\ar @/_7ex/[dd]|-{\object@{|}}_-{(kf)_*}
\drtwocell<\omit>{\phi} & X \\
Y\ar[r]|-{\object@{|}}_-H
^(1.2)\cong ^(-0.2)\cong
\ar[d]|-{\object@{|}}_-{k_*}
\drtwocell<\omit>{\chi}
& Y\ar[u]|-{\object@{|}}_-{f^*} \\
Z\ar[r]|-{\object@{|}}_-J &
Z\ar[u]|-{\object@{|}}_-{k^*}
\ar @/_7ex/[uu]|-{\object@{|}}
_-{(kf)^*}}
\end{displaymath}
where the isomorphisms are $\xi^{f,k}$ 
and $\zeta^{f,k}$ from Lemma \ref{isosofstars}.
The identity arrow on $(G,X)$
is given by the identity function
$\mathrm{id}_X:X\to X$ on the set, and the 
2-cell
\begin{displaymath}
\xymatrix @C=.35in @R=.35in
{X\ar[r]|-{\object@{|}}^-G
\ar[d]|-{\object@{|}}
_-{(\mathrm{id}_X)_*} & X\\
X\ar[r]|-{\object@{|}}_-G 
\rtwocell<\omit>{<-4>\;i_G} & X
\ar[u]|-{\object@{|}}
_-{(\mathrm{id}_X)^*}}
\end{displaymath}
with components arrows in $\ca{V}$
\begin{displaymath}
 (i_G)_{x',x}:G(x',x)\xrightarrow{\;l^{-1}r^{-1}\;}I\otimes G(x',x)\otimes I
\cong G(x',x),
\end{displaymath}
evidently isomorphic to 
the identity arrows $1_{G(x',x)}:G(x',x)\to G(x',x)$.
We write $1_{(G,X)}=(i_G,\mathrm{id}_X)$.
Notice that in fact, the $\ca{V}$-matrices
$(\mathrm{id}_X)_*,(\mathrm{id}_X)^*$ are the same
as the identity 1-cell$\SelectTips{eu}{10}
\xymatrix @C=.2in
{1_X:X\ar[r]|-{\object@{|}} & X}$on $X$:
\begin{equation}\label{idX=1X}
 (\mathrm{id}_X)_*(x',x)=
(\mathrm{id}_X)^*(x',x)=\begin{cases}
I,\quad \mathrm{if  }\;x=x'\\
0,\quad \mathrm{ otherwise}.
\end{cases}
\end{equation} 

We can encode the above data in the 
following isomorphic
characterization
of the category of $\ca{V}$-graphs and 
$\ca{V}$-graph morphisms.
\begin{defi}\label{charactVGrph}
The category of small $\ca{V}$-graphs
$\ca{V}$-$\B{Grph}$ has objects pairs 
$(G,X)\in\ca{V}\textrm{-}\B{Mat}(X,X)\times\B{Set}$
and arrows (in bijection with) pairs 
$(\phi,f):(G,X)\to(H,Y)$
where 
\begin{displaymath}
\begin{cases}
\phi:G\to f^*Hf_* &\textrm{in }\ca{V}\textrm{-}\Mat(X,X)\\
f:X\to Y & \textrm{in }\B{Set}
\end{cases}
\end{displaymath}
or equivalently pairs $(\psi,f)$
where 
\begin{displaymath}
\begin{cases}
\psi:f_*Gf^*\to H &\textrm{in }\ca{V}\textrm{-}\Mat(Y,Y)\\
f:X\to Y & \textrm{in }\B{Set}.
\end{cases}
\end{displaymath}
\end{defi}
From now on, we will use either description
of $\ca{V}$-graph morphisms according to 
our needs, and the choice will be evident by the context
and notation. In particular, we will usually
denote a $\ca{V}$-graph morphism in the classic sense as 
$F_f:\ca{G}_X\to\ca{H}_Y$ and 
$(\phi,f):(G,X)\to(H,Y)$ in the $\ca{V}$-matrices
view.
There is an evident forgetful functor
$Q:\ca{V}\textrm{-}\B{Grph}\to\B{Set}$
which sends each graph $(G,X)$
to its set of objects $X$, and each arrow $(\phi,f)$ to the
function between the objects $f$.

We now continue with the basic
properties of $\ca{V}$-$\B{Grph}$.
First of all, when $\ca{V}$ is complete, 
it is straightforward to 
construct limits inside $\ca{V}$-$\B{Grph}$. 
Indeed, a diagram of shape $\ca{J}$ in $\ca{V}$-$\B{Grph}$
\begin{displaymath}
D:\xymatrix @R=.01in @C=.6in
{\ca{J}\ar[r] & \ca{V}\text{-}\B{Grph} \\
j\ar@{|.>}[r] \ar[dd]_-\theta & 
(\ca{G}_j)_{X_j}\ar[dd]^-{(F_\theta)_{f_\theta}} \\
\hole \\
k\ar@{|.>}[r] & (\ca{G}_k)_{X_k}}
\end{displaymath}
has as limit the graph $\ca{G}_X$ constructed
as follows. The set of objects is 
the limit $X$ of the composite diagram
\begin{displaymath}
 \ca{J}\xrightarrow{\;D\;}\ca{V}\text{-}\B{Grph}
\xrightarrow{\;Q\;}\B{Set},
\end{displaymath}
thus if $\pi_j$ are the projections from $X$,
we have $\pi_k=f_\theta\pi_j$ in $\B{Set}$
for every $\theta$.
Then, for any $x,x'\in X$ the hom-object $G(x',x)$
is the following limit in $\ca{V}$:
\begin{displaymath}
 \xymatrix @C=.8in @R=.3in
{\ca{G}_j(\pi_jx',\pi_jx)
\ar[d]_-{(F_\theta)_{\pi_jx',\pi_jx}} & 
G(x',x)\ar@{.>}[l]_-{(\Pi_j)_{x',x}} 
\ar@{.>}[dl]^-{(\Pi_k)_{x',x}} \\
G_k(f_\theta\pi_jx',f_\theta\pi_jx). & }
\end{displaymath}
The cocone
$\big(\ca{G}_X\xrightarrow{(\Pi_j)_{\pi_j}}
(\ca{G}_j)_{X_j}\,|\,j\in\ca{J}\big)$
now satisfies the required universal property.

On the other hand, when $\ca{V}$ is cocomplete,
the category $\ca{V}$-$\Mat(X,X)$ for any set $X$ is cocomplete
as well, which leads to the following
construction of colimits in $\ca{V}$-$\B{Grph}$.
\begin{prop}[\cite{KellyLack}]\label{Vgrphcocomplete}
The category $\ca{V}$-$\B{Grph}$ is cocomplete
when $\ca{V}$ is.
\end{prop}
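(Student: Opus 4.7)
The plan is to identify the forgetful functor $Q\colon \ca{V}\text{-}\B{Grph} \to \B{Set}$ as an opfibration with cocomplete base, cocomplete fibres, and cocontinuous reindexing functors, and then to invoke the dual of Corollary~\ref{AhasPpreserves}. First I would verify that $Q$ is an opfibration: given a graph $(G,X)$ and a function $f\colon X \to Y$, set $f_!(G,X) := (f_* G f^*, Y)$ and take the morphism in the second form of Definition~\ref{charactVGrph} with $\psi = 1_{f_* G f^*}$ as the candidate cocartesian lifting. Its universal property reduces, via the mates bijection~(\ref{mates2cells}) and the unit--counit equations of $f_*\dashv f^*$ in $\ca{V}\text{-}\Mat$ together with the isomorphisms $\zeta^{g,f},\xi^{g,f}$ of Lemma~\ref{isosofstars}, to the uniqueness of factorizations of functions through $f$. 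The fibre over $X$ is then $\ca{V}\text{-}\Mat(X,X)$ and the reindexing functor is $f_! = f_*(-)f^*$.

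Second, each fibre $\ca{V}\text{-}\Mat(X,X)\cong \ca{V}^{X\times X}$ is cocomplete by Proposition~\ref{propVMat}(i), with colimits computed pointwise from those in $\ca{V}$. Each reindexing functor $f_!=f_*(-)f^*$ is cocontinuous, since by Proposition~\ref{propVMat}(ii) horizontal composition in $\ca{V}\text{-}\Mat$ preserves colimits in each variable, and hence pre- and post-composition with the fixed $\ca{V}$-matrices $f_*$ and $f^*$ are cocontinuous; their composite is therefore cocontinuous as well.

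Finally, by the dual of Proposition~\ref{reindexcont} the opfibration $Q$ has all opfibred colimits, and since $\B{Set}$ is cocomplete the dual of Corollary~\ref{AhasPpreserves} yields that $\ca{V}\text{-}\B{Grph}$ is cocomplete and that $Q$ strictly preserves the chosen colimits. Unwinding this gives the expected explicit construction: for $D\colon \ca{J}\to\ca{V}\text{-}\B{Grph}$ with $Dj=(\ca{G}_j)_{X_j}$, form $X=\colim(QD)$ with cocone $(\tau_j\colon X_j\to X)$ in $\B{Set}$, take the colimit $G$ in the cocomplete fibre $\ca{V}\text{-}\Mat(X,X)$ of the reindexed diagram $j\mapsto (\tau_j)_* G_j (\tau_j)^*$, and check that the cocone $\bigl((1,\tau_j)\colon (G_j,X_j)\to (G,X)\bigr)$ exhibits $(G,X)$ as $\colim D$. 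The chief routine obstacle is arranging the reindexed diagram into a genuine functor $\ca{J}\to \ca{V}\text{-}\Mat(X,X)$, but this reduces directly to the coherence of $\zeta^{g,f}$ and $\xi^{g,f}$ recorded in Lemma~\ref{corisos}.
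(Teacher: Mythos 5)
Your proof is correct, but it is packaged differently from the paper's. The paper proves Proposition~\ref{Vgrphcocomplete} by a direct construction: it takes the colimit $X$ of the underlying sets, assembles the reindexed diagram $j\mapsto (\tau_j)_*G_j(\tau_j)^*$ in $\ca{V}$-$\B{Mat}(X,X)$ using the isomorphisms of Lemmas~\ref{isosofstars} and~\ref{corisos}, takes its pointwise colimit there, and verifies the universal property by hand --- it never mentions the opfibration structure of $Q$, which is only established later (Proposition~\ref{VGrphbifibr}), and Corollary~\ref{Qfibredcomplete} then runs the implication in the opposite direction, deducing opfibred cocompleteness \emph{from} Proposition~\ref{Vgrphcocomplete}. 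You instead front-load the verification that $Q$ is an opfibration with reindexing $f_!=f_*(-)f^*$, check fibrewise cocompleteness and cocontinuity of reindexing, and then invoke the duals of Proposition~\ref{reindexcont} and Corollary~\ref{AhasPpreserves}; this is legitimate (the opfibration structure of $Q$ does not depend on cocompleteness of $\ca{V}$-$\B{Grph}$, so there is no circularity) and it is exactly the strategy the paper itself adopts for $\Comod$ in Corollary~\ref{Comodcocomplete}. What your route buys is uniformity --- the same three checks (opfibration, cocomplete fibres, cocontinuous reindexing) dispose of $\ca{V}$-$\B{Grph}$, $\ca{V}$-$\B{Cocat}$ and $\Comod$ alike --- at the cost of needing the cocartesian liftings up front; what the paper's direct route buys is independence from the fibrational machinery and an explicit colimiting cocone, which it reuses verbatim in the proof of Proposition~\ref{VCocatcocomplete}. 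As you note, unwinding your argument recovers precisely the paper's construction, so the two proofs agree at the level of the objects produced.
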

\begin{proof}
Suppose $\ca{J}$ is a small category and $F$
is a diagram of shape $\ca{J}$ in $\ca{V}$-$\B{Grph}$
given by
\begin{equation}\label{diagraminVgraph}
 F:\xymatrix @R=.01in @C=.5in
{\ca{J}\ar[r] & \ca{V}\textrm{-}\B{Grph} \\
j\ar@{.>}[r]
\ar[dd]_-\theta & 
(G_j,X_j)\ar[dd]^-{(\psi_\theta,f_\theta)} \\
\hole \\
k\ar@{.>}[r] & (G_k,X_k).}
\end{equation}
By Definition \ref{charactVGrph}, $f_\theta$ is a function
between the sets of objects
and $(f_\theta)_*G_j(f_\theta)^*\stackrel{\psi_\theta}{\Rightarrow}$ 
$G_k$ 
is a 2-cell in $\ca{V}$-$\Mat$.
Again, the composite 
\begin{displaymath}
\ca{J}\xrightarrow{\;F\;}
\ca{V}\text{-}\B{Grph}\xrightarrow{\;Q\;}\B{Set}
\end{displaymath}
has a colimiting
cocone $(\tau_j:X_j\to X\,|\,j\in\ca{J})$
in the cocomplete $\B{Set}$. Notice that, since 
$\tau_j=f_\theta\tau_k$ 
for any $f_\theta:X_j\to X_k$, we have 
isomorphisms of $\ca{V}$-matrices 
\begin{displaymath}
 \xymatrix
{X_j\ar[rr]|-{\object@{|}}^-{(\tau_j)_*}
_-{\stackrel{\stackrel{\phantom{A}}{\zeta}}{\cong}}
\ar[dr]|-{\object@{|}}_-{(f_\theta)_*} && X, \\
& X_k\ar[ur]|-{\object@{|}}_-{(\tau_k)_*} &}\qquad
\xymatrix
{X\ar[rr]|-{\object@{|}}^-{(\tau_j)^*}
_-{\stackrel{\stackrel{\phantom{A}}{\xi}}{\cong}}
\ar[dr]|-{\object@{|}}_-{(\tau_k)^*} && X_j \\
& X_k\ar[ur]|-{\object@{|}}_-{(f_\theta)^*} &}
\end{displaymath}
where $\zeta$ and $\xi$ are defined as in Lemma \ref{zeta}.
Now consider the functor 
\begin{equation}\label{defKdiagram}
K:\xymatrix @C=.6in @R=.02in
{\ca{J}\ar[r]
& \ca{V}\textrm{-}
\B{Mat}(X,X)\qquad\qquad\qquad\qquad\quad \\
j\ar @{|.>}[r] \ar[dd]_-{\theta}& 
(\tau_j)_*G_j(\tau_j)^*
{\scriptstyle{\cong}}(\tau_k)_*(f_\theta)_*G_j
(f_\theta)^*(\tau_k)^*
\ar@<-14ex>
[dd]^-{(\tau_k)_*\psi_\theta(\tau_k)^*}\\
\hole \\
k\ar@{.>}[r] & 
(\tau_k)_*G_k(\tau_k)^*
\phantom{\;\cong(\tau_k)_*(f_\theta)_*G_j
(\tau_k)^*(f_\theta)^*}}
\end{equation}
which explicitly maps an arrow $\theta:j\to k$ in $\ca{J}$
to the composite 2-cell
\begin{equation}\label{Konarrows}
\xymatrix @R=.4in @C=.8in
{X\ar@/_3.5ex/[dr]|-{\object@{|}}_-{(\tau_k)^*}
\drtwocell<\omit>{'\stackrel{\xi}{\cong}}
\ar[r]|-{\object@{|}}^-{(\tau_j)^*} &
 X_j\ar[r]|-{\object@{|}}
^-{G_j} & X_j \ar[d]|-{\object@{|}}
^-{(f_\theta)_*}\ar[r]|-{\object@{|}}^-
{(\tau_j)_*} & X.
\dltwocell<\omit>{'\stackrel{\zeta}{\cong}}\\
 & X_k\ar[u]|-{\object@{|}}
^-{(f_\theta)^*}\ar[r]|-{\object@{|}}_-{G_k}
\rtwocell<\omit>{<-4>\;\psi_\theta} & 
X_k\ar@/_3.5ex/[ur]|-{\object@{|}}_-{(\tau_k)_*} &}
\end{equation}
The colimit of $K$ is formed pointwise in 
$\ca{V}$-$\B{Mat}(X,X)=
[X\times X,\ca{V}]$,
so there is a colimiting cocone 
$(\lambda_j:(\tau_j)_*G_j(\tau_j)^*\to
G\,|\,j\in\ca{J})$.
These data allow us to form a new cocone 
\begin{displaymath}
\big((G_j,X_j)\xrightarrow{(\lambda_j,\tau_j)}
(G,X)\,|\,j\in\ca{J}\big)
\end{displaymath}
for the initial diagram $F$ in $\ca{V}$-$\B{Grph}$, since
$(G,X)$ is an endoarrow in $\ca{V}$-$\Mat$ by construction, 
and also the pairs $(\lambda_j,\tau_j)$ 
commute accordingly with the $(\psi_\theta,f_\theta)$'s. 
This cocone which can be checked
to be colimiting, since
$\tau_j$ and $\lambda_j$ are.
Therefore $(G,X)$
satisfies the universal property of a 
colimit of $F$ in 
$\ca{V}$-$\B{Grph}$.
\end{proof}
The above construction is presented in \cite{VarThrEnr},
again in the more general case of enrichment in a bicategory.
The existence of all colimits in $\ca{V}$-$\B{Grph}$
was also shown in \cite{Wolff} via 
the explicit construction of coproducts and coqualizers.

The category $\ca{V}$-$\B{Grph}$ 
has a monoidal structure inherited from $\ca{V}$:
given two $\ca{V}$-graphs $\ca{G}_X$ and $\ca{H}_Y$,
their tensor product $\ca{G}\otimes\ca{H}$
is defined to be the $\ca{V}$-graph 
with set of objects $X\times Y$
and hom-objects
\begin{displaymath}
(\ca{G}\otimes\ca{H})((z,w),(x,y)):=G(z,x)\otimes H(w,y).
\end{displaymath}
Of course, this comes from the monoidal structure 
of the bicategory $\ca{V}$-$\Mat$ as in 
(\ref{monoidalVMat}).
Similarly, we can define the tensor product
of two $\ca{V}$-graph morphisms: given $\ca{V}$-graph
arrows $F_f:\ca{G}_X\to\ca{H}_Y$, 
$D_d:\ca{G}'_{X'}\to\ca{H}'_{Y'}$,
their tensor product
\begin{displaymath}
 F\otimes D:\ca{G}\otimes\ca{H}\to\ca{G}'\otimes\ca{H}'
\end{displaymath}
is given by the function $f\times d:X\times Y\to X'\times Y'$
between their sets of objects, and for every $x,z\in X$,
$y,w\in Y$, arrows
\begin{displaymath}
(F\otimes D)_{(z,w),(x,y)}:G(z,x)\otimes H(w,y)
\xrightarrow{\;F_{z,x}\otimes D_{w,y}\;}
G(fz,fx)\otimes H(dw,dy)
\end{displaymath}
in $\ca{V}$. The monoidal unit is the unit 
$\ca{V}$-graph $\ca{I}$ with one object, 
and hom-object $\ca{I}(*,*)=I$.
Also, symmetry is also evidently inherited
from $\ca{V}$.

Furthermore, the category of $\ca{V}$-graphs
is a monoidal closed category if we assume
certain extra conditions on $\ca{V}$.
\begin{prop}\label{VGrphclosed}
Suppose $\ca{V}$ is a monoidal closed category with 
small products. The functor
\begin{displaymath}
^g\Hom:\ca{V}\textrm{-}\B{Grph}^\mathrm{op}\times
\ca{V}\textrm{-}\B{Grph}\to\ca{V}\textrm{-}\B{Grph}
\end{displaymath}
which maps a pair $(\ca{G}_X$, $\ca{H}_Y)$ to 
the $\ca{V}$-graph 
${^g}\Hom(\ca{G},\ca{H})_{Y^X}$ with
\begin{displaymath}
^g\Hom(\ca{G},\ca{H})(k,s):=
\prod_{\scriptscriptstyle{\stackrel{x'\in X}{x\in X}}}
{[\ca{G}(x',x),\ca{H}(kx',sx)]}
\end{displaymath}
for $k,s\in Y^X$ is the internal hom of $\ca{V}$-$\B{Grph}$. 
\end{prop}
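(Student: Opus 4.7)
The plan is to exhibit a natural bijection
\[
\ca{V}\textrm{-}\B{Grph}(\ca{K}\otimes\ca{G},\ca{H})\;\cong\;\ca{V}\textrm{-}\B{Grph}(\ca{K},{^g}\Hom(\ca{G},\ca{H}))
\]
for all $\ca{V}$-graphs $\ca{K}_Z$, $\ca{G}_X$, $\ca{H}_Y$, natural in $\ca{K}$ and $\ca{H}$; by adjunctions with a parameter (Theorem \ref{parametrizedadjunctions}), the uniqueness of the parametrized adjoint will then force the object function of $^g\Hom$ to coincide with the formula claimed, making $^g\Hom$ the internal hom. In fact one observes upfront that for endo-$\ca{V}$-matrices $G$ and $H$, the lax functor $\Hom$ of (\ref{defimportHom}) yields an endo-$\ca{V}$-matrix $\Hom(G,H)$ on the set $Y^X$ whose components (\ref{Homobjects}) are precisely the claimed hom-objects; hence ${^g}\Hom(\ca{G},\ca{H})$ is exactly the $\ca{V}$-graph $(\Hom(G,H),Y^X)$, and the only real task is to verify the adjunction.

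First I would unpack both sides using Definition~\ref{charactVGrph}. A morphism $\ca{K}\otimes\ca{G}\to\ca{H}$ is a function $h:Z\times X\to Y$ together with, for each $(z',z,x',x)$, an arrow $K(z',z)\otimes G(x',x)\to H(h(z',x'),h(z,x))$ in $\ca{V}$. A morphism $\ca{K}\to{^g}\Hom(\ca{G},\ca{H})$ is a function $\bar{h}:Z\to Y^X$ together with, for each $(z',z)$, an arrow
\[
K(z',z)\;\longrightarrow\;\prod_{x',x}[G(x',x),H(\bar{h}(z')(x'),\bar{h}(z)(x))]
\]
in $\ca{V}$. The bijection on the underlying functions is the standard currying $Y^{Z\times X}\cong(Y^X)^Z$, which matches $h(z,x)$ with $\bar{h}(z)(x)$. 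With this matching fixed, the remaining data on both sides are indexed by the same set $(z',z,x',x)$ of quadruples, so it suffices to produce a natural bijection between families of arrows
\[
K(z',z)\otimes G(x',x)\to H(h(z',x'),h(z,x))
\quad\longleftrightarrow\quad
K(z',z)\to[G(x',x),H(\bar h(z')(x'),\bar h(z)(x))],
\]
and then collect them into the product. The first correspondence is the internal-hom adjunction of $\ca{V}$ applied componentwise, and the passage to the product on the right uses the universal property of the small product (which exists by hypothesis). Composing these gives the required bijection.

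Naturality in $\ca{H}$ and (contravariantly) in $\ca{K}$ is routine: both halves of the bijection are built from operations---currying of functions, the tensor-hom adjunction of $\ca{V}$, and the universal property of products---each of which is natural in its arguments, and composition of $\ca{V}$-graph morphisms on either side translates under the bijection to the expected action. The only mildly delicate point is checking compatibility of this bijection with composition of $\ca{V}$-graph morphisms, \emph{i.e.} with precomposition by a $\ca{V}$-graph morphism $\ca{K}'\to\ca{K}$ and postcomposition by a $\ca{V}$-graph morphism $\ca{H}\to\ca{H}'$; this reduces to the naturality squares for the hom-tensor adjunction in $\ca{V}$ together with the functoriality of $\prod$ and of currying. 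Once naturality is in place, Theorem~\ref{parametrizedadjunctions} upgrades the fibrewise adjoints $-\otimes\ca{G}\dashv{^g}\Hom(\ca{G},-)$ to a bifunctor ${^g}\Hom:\ca{V}\textrm{-}\B{Grph}^{\op}\times\ca{V}\textrm{-}\B{Grph}\to\ca{V}\textrm{-}\B{Grph}$ as described.

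The main obstacle I expect is the bookkeeping: keeping the two equivalent presentations of $\ca{V}$-graph morphisms of Definition~\ref{charactVGrph} straight, and recognising ${^g}\Hom(\ca{G},\ca{H})$ as $(\Hom(G,H),Y^X)$ via the lax functor (\ref{defimportHom}) so that the associativity and unit isomorphisms for the tensor of $\ca{V}$-graphs line up with those of the bicategory $\ca{V}\text{-}\Mat$ and give a coherent closed structure rather than merely a componentwise bijection.
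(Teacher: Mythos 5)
Your proposal is correct and follows essentially the same route as the paper's proof: unpack both hom-sets via Definition \ref{charactVGrph}, curry the object functions using cartesian closedness of $\B{Set}$, apply the tensor-hom adjunction of $\ca{V}$ componentwise together with the universal property of the small products, and then invoke Theorem \ref{parametrizedadjunctions} to assemble the bifunctor. The observation that ${^g}\Hom(\ca{G},\ca{H})$ is the $\ca{V}$-matrix $\Hom(G,H)$ of (\ref{Homobjects}) is also made in the paper, in the remark immediately following its proof.
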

\begin{proof}
In order to establish an adjunction
$(-\otimes\ca{H}_Y)\dashv{^g\Hom(\ca{H}_Y,-)}$
for any $\ca{V}$-graph $\ca{H}_Y$,
take a $\ca{V}$-graph morphism
$F_f:\ca{G}_X\to{^g\Hom(\ca{H}_Y,\ca{J}_Z)}$.
This consists of a function $f:X\to Z^Y$
between the sets of objects, and arrows 
\begin{displaymath}
F_{x',x}:\ca{G}(x',x)\to\prod_{y,y'\in Y}{[(\ca{H}(y',y),
\ca{J}(f_{x'}y',f_xy))}
\end{displaymath}
in $\ca{V}$ between the hom-objects, where
$f_x=f(x):Y\to Z$, for all $x,x'\in X$. 
These arrows correspond
bijectively, under the tensor-hom adjunction in $\ca{V}$ 
for a fixed pair
of elements $(y,y')\in Y$, to
\begin{displaymath}
\ca{G}(x',x)\otimes\ca{H}(y',y)\to\ca{J}(f_{x'}y',f_xy)
\end{displaymath}
since $\ca{V}$ is monoidal closed.
The category $\B{Set}$ is cartesian closed,
thus the function $f$ corresponds
uniquely to a function $\bar{f}:X\times Y\to Z$.
This function together with the arrows above written as
\begin{displaymath}
\bar{F}_{(x',y'),(x,y)}:
\ca{G}(x',x)\otimes\ca{H}(y',y)\to\ca{J}\big(\bar{f}(x',y'),\bar{f}(x,y)\big)
\end{displaymath}
determines a $\ca{V}$-graph morphism 
$\bar{F}_{\bar{f}}:\ca{G}_X\otimes\ca{H}_Y\to\ca{J}_Z$
which establishes a bijective correspondence
\begin{displaymath}
\ca{V}\textrm{-}\B{Grph}(\ca{G}_X\otimes
\ca{H}_Y,\ca{J}_Z)\cong
\ca{V}\textrm{-}\B{Grph}(\ca{G}_X,
{^g\Hom(\ca{H}_Y,\ca{J}_Z))}.
\end{displaymath}
Moreover, this bijection is natural in $\ca{G}_X$, 
hence $^g\Hom(\ca{H},\ca{J})$ is the object
function of a right adjoint functor 
$^g\Hom(\ca{H},-)$ of $(-\otimes\ca{H})$. 
Hence the induced functor of two variables $^g\Hom$
is the parametrized adjoint of $\otimes$. 

Explicitly, $^g\Hom$ on a pair of $\ca{V}$-arrows
$(F_f:\ca{J}_Z\to\ca{G}_X,D_d:\ca{H}_Y\to\ca{M}_W)$
gives a $\ca{V}$-graph morphism
\begin{equation}\label{defHom(F,D)}
^g\Hom(F,D):{^g\Hom}(\ca{G},\ca{H})_{Y^X}
\longrightarrow{^g\Hom}(\ca{J},\ca{M})_{W^Z}.
\end{equation}
This consists of the function 
`pre-composing with $f$ and post-composing
with $d$' $d^f:Y^X\to W^Z$
between the sets of objects,
and for each pair 
$(k,s)\in Y^X$ an arrow
\begin{align*}
{^g\Hom}(F,D)_{k,s}:&\;{^g\Hom}(\ca{G},\ca{H})(k,s)\longrightarrow
{^g\Hom}(\ca{J},\ca{M})(d^f(k),d^f(s))\equiv\\
&\prod_{x,x'\in X}
{[G(x',x),H(kx',sx)]}\to\prod_{z,z'\in Z}
{[J(z',z),M(dkfz',dsfz)]}.
\end{align*}
For fixed $z,z'\in Z$, the latter corresponds uniquely
under the usual tensor-hom adjunction
to the composite
\begin{displaymath}
\xymatrix @C=.8in @R=.4in
{\prod\limits_{x,x'\in X}{[\ca{G}(x',x),\ca{H}(kx',sx)]}
\otimes\ca{J}(z',z)\ar @{-->}[r]
\ar[d]_-{1\otimes F_{z,z'}} & \ca{M}(dkfz',dsfz). \\
\prod\limits_{x,x'\in X}{[\ca{G}(x',x),\ca{H}(kx',sx)]}
\otimes\ca{G}(fz',fz)\ar[d]_-{\pi_{fz',fz}\otimes 1} & \\
[\ca{G}(fz',fz),\ca{H}(kfz',sfz)]\otimes\ca{G}(fz',fz) \ar[r]^-{\textrm{ev}}
& \ca{H}(kfz',sfz)\ar[uu]_-{D_{kfz',sfz}}}
\end{displaymath}
\end{proof}
In the above proof, there was no need to 
move to the world of $\ca{V}$-matrices. If we did,
however, it would be clear that the mapping of 
the functor $^g\Hom$ on two objects
$(G,X)$ and $(H,Y)$ is in fact the mapping 
of the functor
$\Hom_{(X,Y),(X,Y)}$ (\ref{Hom_}) provided 
by the lax functor of bicategories
$\Hom:(\ca{V}\textrm{-}\Mat)^{\textrm{co}}
\times\ca{V}\textrm{-}\Mat\to
\ca{V}\textrm{-}\Mat$ defined
explicitly in the previous section.
For the mapping on morphisms though,
the definition of $\Hom(\sigma,\tau)$ as in 
(\ref{Hom2cells}) is not sufficient, because 
the morphisms in $\ca{V}$-$\B{Grph}$
are not just between endoarrows
in $\ca{V}$-$\Mat$ with the same set of objects.
Hence, in terms of $\ca{V}$-matrices,
for $F=(\phi,f)$ and $D=(\chi,d)$ as in Definition 
\ref{charactVGrph}, the $\ca{V}$-graph
arrow $^g\Hom((\phi,f),(\chi,d))$ is the pair
$([[\phi,\chi]],d^f)$ where
\begin{equation}\label{def[[]]}
\xymatrix @C=.8in @R=.6in 
{Y^X\ar[r]|-{\object@{|}}
^-{\Hom(G,H)}\ar[d]|-{\object@{|}}
_-{(d^f)_*}\rtwocell<\omit>{<6>\qquad[[\phi,\chi]]} & Y^X\\
W^Z\ar[r]|-{\object@{|}}_-{\Hom(J,M)} & W^Z
\ar[u]|-{\object@{|}}_-{(d^f)^*}}
\end{equation}
has components isomorphic to $^g\Hom(F_f,D_d)_{k,s}$
up to tensoring with $I$'s on both sides 
of the codomain product.

Another important property of $\ca{V}$-$\B{Grph}$
is the fact that it inherits local presentability
from $\ca{V}$. The detailed
arguments and constructions for this result
can be found in \cite{KellyLack}.
\begin{prop}~\cite[4.4]{KellyLack}\label{VGrphlocpresent}
The category $\ca{V}$-$\B{Grph}$ is locally
$\lambda$-presentable when $\ca{V}$ is so.
\end{prop}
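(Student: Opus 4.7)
The plan is to identify $\ca{V}$-$\B{Grph}$ with the total category of a Grothendieck construction over $\B{Set}$ and then apply standard stability of local presentability under such constructions.

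First, using the $\ca{V}$-matrices description in Definition \ref{charactVGrph}, I would view $\ca{V}$-$\B{Grph}$ as $\Gr{G}\ps{F}$ for the pseudofunctor
\[
\ps{F}\colon \B{Set}\longrightarrow \B{Cat},\qquad X\longmapsto \ca{V}\text{-}\B{Mat}(X,X)=\ca{V}^{X\times X},\qquad f\longmapsto f_\bullet:=f_*(-)f^*,
\]
so that a morphism $(\psi,f)\colon(G,X)\to(H,Y)$ is precisely a map $f\colon X\to Y$ in the base together with a 2-cell $\psi\colon f_\bullet G\to H$ in the fibre over $Y$, as in Theorem \ref{maintheoremopfibr}. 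The base $\B{Set}$ is locally $\lambda$-presentable for every regular $\lambda$, and each fibre $\ca{V}^{X\times X}$ is locally $\lambda$-presentable by Proposition \ref{propVMat}(iii). The reindexing functors $f_\bullet$ are cocontinuous: the explicit formula
\[
(f_*Gf^*)(y',y)\;=\;\sum_{fx'=y,\,fx=y}G(x',x)
\]
is a coproduct computed pointwise in $\ca{V}$, and colimits in $\ca{V}^{X\times X}$ are pointwise; hence $f_\bullet$ is in particular $\lambda$-accessible.

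Given these three ingredients (locally $\lambda$-presentable base, locally $\lambda$-presentable fibres, accessible reindexing), the general stability theorem for Grothendieck constructions yields local $\lambda$-presentability of the total category. To make this concrete rather than invoking a black box, I would exhibit an explicit small strong generator of $\lambda$-presentable objects: take a (representative) set of pairs $(G,X)$ with $|X|<\lambda$ and $G\in\B{Pres}_\lambda(\ca{V}^{X\times X})$. These form a set because $\B{Pres}_\lambda\ca{V}^{X\times X}$ does for each $X$ and there are only set-many $\lambda$-small $X$ up to isomorphism. To see that such $(G,X)$ is $\lambda$-presentable in $\ca{V}$-$\B{Grph}$, consider a $\lambda$-filtered diagram $(H_j,Y_j)$ with colimit $(H,Y)$; a map $(G,X)\to(H,Y)$ is a function $X\to Y$ together with a map $G\to f^*Hf_*$ in $\ca{V}^{X\times X}$. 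Since $X$ is finite (or $\lambda$-small) in $\B{Set}$ the function factors through some $Y_j$, and then the construction of colimits in Proposition \ref{Vgrphcocomplete} combined with cocontinuity of the restriction $f^*(-)f_*$ reduces the fibre component to factoring a map out of a $\lambda$-presentable object of $\ca{V}^{X\times X}$ through a $\lambda$-filtered colimit, which succeeds.

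Conversely, every $(H,Y)$ is canonically a $\lambda$-filtered colimit of such generators: write $Y$ as the $\lambda$-filtered colimit of its $\lambda$-small subsets $Y_\alpha\hookrightarrow Y$, and for each $\alpha$ express the restriction $(Y_\alpha\hookrightarrow Y)^\bullet H$ in $\ca{V}^{Y_\alpha\times Y_\alpha}$ as a $\lambda$-filtered colimit of $\lambda$-presentable objects; assembling these using cocompleteness of $\ca{V}$-$\B{Grph}$ (Proposition \ref{Vgrphcocomplete}) exhibits $(H,Y)$ as the required colimit. The main obstacle is purely bookkeeping: one must check that the combined index category is still $\lambda$-filtered and that the two layers of colimits commute appropriately, which comes down to cocontinuity of the reindexings $f_\bullet$ and the pointwise nature of colimits in $\ca{V}^{X\times X}$, both of which are available. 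This is exactly the line of argument carried out in \cite{KellyLack}.
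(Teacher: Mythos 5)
Your argument is correct in outline, but it is not the route taken in the paper (nor, despite your closing sentence, the one in \cite{KellyLack}). The paper's proof exhibits a very small strong generator: the two-object graphs $(\bar{G},2)$ with $\bar{G}(0,0)=G$ ranging over a strong generator of $\ca{V}$ (together with $G=0$) and all other hom-objects $0$; one checks directly that $\ca{V}\text{-}\B{Grph}((\bar{G},2),(H,Y))\cong\coprod_{(y',y)\in Y\times Y}\ca{V}(G,H(y',y))$ preserves $\lambda$-filtered colimits, and that these objects form a strong generator, whence local presentability follows from cocompleteness. You instead exploit the (op)fibration of $\ca{V}$-$\B{Grph}$ over $\B{Set}$ and take as generator \emph{all} graphs with $\lambda$-small object set and $\lambda$-presentable hom-family; this is essentially the accessibility-of-the-Grothendieck-construction argument. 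Both work: the paper's generator makes the presentability and strong-generation checks almost immediate but is tailored to graphs, whereas yours is modular and would transfer verbatim to the total category of any opfibration with locally presentable base and fibres and cocontinuous reindexing (e.g.\ $\caa{D}_1^\bullet$ for other double categories), at the cost of the two-layer colimit bookkeeping you flag -- in particular the point, which deserves to be said explicitly, that when factoring a map $(G,X)\to\colim_j(H_j,Y_j)$ through a stage one must use filteredness in $\B{Set}$ to merge elements $x\neq x'$ of $Y_j$ that become identified in $Y$, since the colimit formula sums over fibres of $\tau_j\times\tau_j$ rather than over diagonal pairs. (Also note the typo $fx'=y$ for $fx'=y'$ in your displayed formula, and that the attribution in your last sentence should be corrected.)
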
 
\begin{proof}(Sketch) Suppose $\ca{V}$ 
is a locally $\lambda$-presentable category.
Then, if the set $\ps{G}$ of objects constitutes
a strong generator of $\ca{V}$, it can be shown that
the set
\begin{displaymath}
 \{(\bar{G},2)\;/\;G\in\ps{G}\;\textrm{or}\;G=0\}
\end{displaymath}
constitutes a strong generator of 
$\ca{V}$-$\B{Grph}$, where
the graph $(\bar{G},2)$ has as set of objects
$2=\{0,1\}$ and consists of the objects
\begin{displaymath}
 \bar{G}(0,0)=G,\;\; \bar{G}(0,1)=
\bar{G}(1,0)=\bar{G}(1,1)=0 
\end{displaymath}
in $\ca{V}$. Also, this set is $\lambda$-presentable,
in the sense that the hom-functors
\begin{displaymath}
 \ca{V}\textrm{-}\B{Grph}((\bar{G},2),-):\ca{V}\textrm{-}\B{Grph}
\to\B{Set}
\end{displaymath}
preserve $\lambda$-filtered colimits.
\end{proof}

\section{$\ca{V}$-categories and $\ca{V}$-cocategories}\label{VcatsandVcocats}

In Chapter \ref{enrichment},
we recalled what it means for a category
$\ca{A}$ to be $\ca{V}$-enriched
for a monoidal category $\ca{V}$. 
In this section, we are going to re-define $\ca{V}$-categories from
a slightly different perspective, 
in the context of $\ca{V}$-matrices.
This is of importance because it allows us,
just by dualizing certain arguments,
to later construct the category of $\ca{V}$-cocategories
in a natural way. Evidently,
the motivation for this
is that enriched categories and cocategories 
generalize monoids and comonoids in a monoidal category,
since for example it is well-known that
a one-object $\ca{V}$-category is precisely
an object in $\Mon(\ca{V})$.

Notice that strictly speaking, composition in the 
bicategory $\ca{V}$-$\Mat$ (\ref{horizontalcompositionVmatrices})
results in the opposite convention (\ref{compositionlaw2}) to 
that preferred by Kelly (\ref{compositionlawplain})
for the composition law in an enriched category.
Similar issues arise regarding $\ca{V}$-modules later.
There seems to be no consistent practice in these matters.

Following once again the approach of \cite{VarThrEnr},
a $\ca{V}$-category is defined to be a monad in
the bicategory $\ca{V}$-$\B{Mat}$. Unravelling 
Definition \ref{monadbicat}, it consists 
of a set $X$ together with an endoarrow$\SelectTips{eu}{10}
\xymatrix @C=.2in
{A:X\ar[r]|-{\object@{|}} & X,}$\emph{i.e.} it 
is a $\ca{V}$-graph with
set of objects $\ob\ca{A}=X$, equipped
with two 2-cells,
the multiplication and the unit
\begin{displaymath}
\xymatrix @R=.1in @C=.4in
{& X \ar[dr]|-{\object@{|}}
^-{A} &\\
X\ar[ru]|-{\object@{|}}^-A
\ar @/_/[rr]|-{\object@{|}}_-A
\rrtwocell<\omit>{<-1.3>\;M} && X}
\quad
\xymatrix @C=.3in @R=.1in
{\hole \\ \textrm{and} }
\quad
\xymatrix @C=.3in @R=.1in
{\hole \\
X \rrtwocell<\omit>{\eta}
\ar @/^2.2ex/ [rr]|-{\object@{|}}^-{1_X}
\ar @/_2.2ex/ [rr]|-{\object@{|}}_-A && X}
\end{displaymath}
satisfying the following axioms:
\begin{displaymath}
\xymatrix @R=.2in
{& X\ar[r]|-{\object@{|}}^-A &
X\ar @/^/[dr]|-{\object@{|}}^-A & \\
X\ar @/^/[ur]|-{\object@{|}}^-A 
\ar @/_2ex/[urr]|-{\object@{|}}_-A
\ar @/_3ex/[rrr]|-{\object@{|}}_-A
\urrtwocell<\omit>{<-.3>\;M}
&\urrtwocell<\omit>{<1.3>\;M} && X}
\quad\xymatrix @R=.2in
{\hole\\
=}\quad
\xymatrix @R=.2in
{\drrtwocell<\omit>{<1.3>\;M} & 
X\ar[r]|-{\object@{|}}^-A 
\ar @/_2ex/[drr]|-{\object@{|}}_-A 
\drrtwocell<\omit>{<-.3>\;M} &
X\ar @/^/[dr]|-{\object@{|}}^-A & \\
X\ar @/^/[ur]|-{\object@{|}}^-A 
\ar @/_3ex/[rrr]|-{\object@{|}}_-A
&&& X,}
\end{displaymath}
\begin{displaymath}
\xymatrix @C=.5in @R=.2in
{& X \ar @/^/[dr]|-{\object@{|}}^-A &\\
X\urrtwocell<\omit>{<1.3>\;M}
\urtwocell<\omit>{\eta}
\ar @/^2ex/[ur]|-{\object@{|}}^-{1_X}
\ar @/_2ex/[ur]|-{\object@{|}}_-A 
\ar @/_2ex/[rr]|-{\object@{|}}_-A
&& X}
\xymatrix @C=.5in @R=.2in
{\hole \\
=}
\xymatrix @C=.5in @R=.2in
{\hole\\
X\rtwocell<\omit>{\;1_A}
\ar @/^2.3ex/[r]|-{\object@{|}}^-A
\ar @/_2.3ex/[r]|-{\object@{|}}_A 
& X}
\xymatrix @C=.5in @R=.2in
{\hole \\
=}
\xymatrix @C=.4in @R=.2in
{\drrtwocell<\omit>{<1.3>\;M}
& X \ar @/^2ex/[dr]|-{\object@{|}}^-{1_X} 
\ar @/_2ex/[dr]|-{\object@{|}}_-A 
\drtwocell<\omit>{\eta} &\\
X \ar @/^/[ur]|-{\object@{|}}^-A
\ar @/_2ex/[rr]|-{\object@{|}}_-A
&& X.}
\end{displaymath}
Notice that in the above diagrams,
the associator and the unitors of the bicategory 
$\ca{V}$-$\Mat$ which
are essential for the domains and codomains
of the equal 2-cells to coincide, are
suppressed. In terms of components, they are given by
\begin{align}\label{compositionlaw2}
M_{z,y,x}&:\sum_{y\in X}{A(z,y)\otimes
A(y,x)}\longrightarrow A(z,y) \\
\eta_x&:I\longrightarrow A(x,x) \notag
\end{align}
which are the usual composition law and identity
elements.
If we also express the above relations that $M$ and $\eta$
have to satisfy in terms of 
components of the 2-cells involved, we re-obtain
the associativity and unit axioms of an enriched category.
Also by Remark \ref{monadsaremonoids}, a monad
in a bicategory is the same as a monoid in 
the appropriate endoarrow hom-category, \emph{i.e.} a $\ca{V}$-category
$\ca{A}$ with set of objects $X$ is a monoid
in the monoidal category
($\ca{V}$-$\Mat(X,X)$,$\circ$,$1_X$). 
Denote a $\ca{V}$-category as a pair $(A,X)$ or $\ca{A}_X$.

A $\ca{V}$-functor $F:\ca{A}\to\ca{B}$ between two
$\ca{V}$-categories $\ca{A}_X$ and $\ca{B}_Y$ 
was again defined in Section \ref{basicdefienrichment},
and in fact is a $\ca{V}$-graph morphism $F_f:\ca{A}_X\to\ca{B}_Y$
(in the classic sense) which respects the composition law and the identities.
In the current context of $\ca{V}$-matrices,
a $\ca{V}$-functor can be defined
to be a morphism of $\ca{V}$-graphs 
$(\phi,f):(A,X)\to(B,Y)$ as in
Definition \ref{charactVGrph},
which satisfies
\begin{align}\label{functaxioms}
\xymatrix @C=.5in @R=.5in
{X\ar[r]|-{\object@{|}}^-A
\ar[d]|-{\object@{|}}_-{f_*}
\drtwocell<\omit>{\hat{\phi}}
& X\ar[r]|-{\object@{|}}^-A
\ar[d]|-{\object@{|}}^-{f_*}
\drtwocell<\omit>{\hat{\phi}}
& X \ar[d]|-{\object@{|}}^-{f_*}\\
Y \ar[r]|-{\object@{|}}_-B
\ar @/_6ex/[rr]|-{\object@{|}}_-B
\rrtwocell<\omit>{<3>\;M}
& Y \ar[r]|-{\object@{|}}_-B
& Y} 
&\xymatrix @R=.2in{\hole \\ = \\ \hole}
\xymatrix @C=.5in @R=.3in
{& X \ar @/^/[dr]|-{\object@{|}}
^-{A} &\\
X\ar[d]|-{\object@{|}}_-{f_*}
\ar @/^/[ru]|-{\object@{|}}^-A
\ar[rr]|-{\object@{|}}_-A
\rrtwocell<\omit>{<-3>\;M}
\drrtwocell<\omit>{\hat{\phi}} && X
\ar[d]|-{\object@{|}}^-{f_*}\\
Y \ar[rr]|-{\object@{|}}_-B && Y,} \\
\xymatrix @C=1.2in @R=.5in
{X\ar @/^5ex/[r]|-{\object@{|}}
^-{1_X} \rtwocell<\omit>{<-2>\eta}
\ar[r]|-{\object@{|}}
_-{A}\ar[d]|-{\object@{|}}
_-{f_*} & X \ar[d]|-{\object@{|}}
^-{f_*} \\
Y\ar[r]|-{\object@{|}}_-{B} 
\rtwocell<\omit>{<-4>\hat{\phi}} & Y}
&\xymatrix{ = \\ \hole}
\xymatrix @C=1.2in  @R=.5in
{X\ar[r]|-{\object@{|}}
^-{1_X}
\ar[d]|-{\object@{|}}_-{f_*}
^{\phantom{ab}\cong}
\ar @{.>}[dr]|-{\object@{|}}^-{f_*}
& X
\ar[d]|-{\object@{|}}^-{f_*}
_{\cong\phantom{ab}} \\
Y\ar[r]|-{\object@{|}}^-{1_Y}
\ar @/_5ex/ [r]|-{\object@{|}}_-B
\rtwocell<\omit>{<2>\eta} & Y.}\notag
\end{align}
Here, the 2-cell $\hat{\phi}:f_*A\Rightarrow Bf_*$ 
corresponds bijectively to $\phi$
via mates correspondence `on the one side',
\emph{i.e.} by pasting the counit $\check{\varepsilon}$
of $f_*\dashv f^*$ on the right.
This description agrees with the standard
$\ca{V}$-functor definition
up to isomorphism again: the 2-cell $\bar{\phi}$
has components
$$\bar{\phi}_{y,x}:I\otimes A(x',x)\to B(fx',fx)\otimes I$$
for $x'\in f^{\text{-1}}y$,
and the equality of the above pasted diagrams
agrees with the commutative diagrams
(\ref{Venrichedfunctordiagrams}) 
up to tensoring the objects
with $I$'s and composing the arrows
with the left and right unit constraints of $\ca{V}$.
\begin{rmk}\label{Vfunct=monadopfunct}
The pair $(f_*,\hat{\phi})$
is a special case of `colax monad functor'
between the monads $(A,X)$ and $(B,Y)$ in 
the bicategory $\ca{V}$-$\B{Mat}$,
as in Definition \ref{monadfunctor}.
However, it is not true 
that any colax monad functor
given by the data
\begin{displaymath}
\xymatrix @C=.45in @R=.45in
{X\ar[r]|-{\object@{|}}^-A
\ar[d]|-{\object@{|}}_-S
\rtwocell<\omit>{<5>\chi}
& X\ar[d]|-{\object@{|}}^-S\\
Y\ar[r]|-{\object@{|}}_-B &
Y}
\end{displaymath}
for some $\ca{V}$-matrix $S$ can be seen 
as a $\ca{V}$-functor, since it is obviously
not true that any$\SelectTips{eu}{10}\xymatrix @C=.2in
{S:X\ar[r]|-{\object@{|}} & Y}$is of the form $f_*$ 
for some function $f:X\to Y$.
This explains why the category $\ca{V}$-$\B{Cat}$
cannot be characterized as $\B{Mnd}(\ca{V}$-$\B{Mat})$,
even if they have the same objects. Similar issues 
were discussed in a bigger depth in \cite{GarnerShulman}, 
employing the theory of \emph{proarrow equipments}.
\end{rmk}
There is a 2-dimensional
aspect for all the basic categories 
we study in this chapter, 
including $\ca{V}$-$\B{Cat}$. However, we choose to omit its description
in this treatment, because it is not of central
importance for our main results. More specifically,
for the enrichment relations and the fibrational structures
we explore, the 2-categorical structure of those categories
is unnecessary.

Since a $\ca{V}$-category with set of 
objects $X$ can be seen as a monoid
in the monoidal category $\ca{V}$-$\Mat(X,X)$,
a similar characterization for $\ca{V}$-functors
could be attempted, in order to obtain
a result analogous to Definition \ref{charactVGrph}
for $\ca{V}$-$\B{Grph}$. The following
is indicative of how to proceed.
\begin{lem}\label{B*monoid}
Let $(B,Y)$ be a $\ca{V}$-category. If 
$f:X\to Y$ is any function, the composite $\ca{V}$-matrix
\begin{displaymath}
\SelectTips{eu}{10}\xymatrix @C=.3in
{X\ar[r]|-{\object@{|}}^-{f_*} &
Y\ar[r]|-{\object@{|}}^-B &
Y\ar[r]|-{\object@{|}}^-{f^*} &
X}
\end{displaymath}
is a monoid in $\ca{V}$-$\B{Mat}(X,X)$, \emph{i.e.}
the pair $(f^*Bf_*,X)$ constitutes a $\ca{V}$-category.
\end{lem}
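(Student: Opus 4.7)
The plan is to induce the monoid structure on $f^*Bf_*$ from the monoid structure on $B$ by exploiting the adjunction $f_* \dashv f^*$ in the bicategory $\ca{V}$-$\Mat$ established in Section \ref{bicatVMat}. This is an instance of a standard bicategorical principle: given any adjunction $L \dashv R: Y \to X$ in a bicategory and a monad $B$ on $Y$, the composite $RBL$ inherits a canonical monad structure on $X$. Here $L = f_*$ and $R = f^*$.

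First I would define the multiplication as the pasting
\[\tilde{M}: (f^*Bf_*)\circ(f^*Bf_*) \;\xrightarrow{\;f^* B \,\check{\varepsilon}\, B f_*\;}\; f^* B\circ 1_Y\circ B f_* \;\cong\; f^*BBf_* \;\xrightarrow{\;f^*Mf_*\;}\; f^*Bf_*,\]
where $\check\varepsilon: f_*f^* \Rightarrow 1_Y$ is the counit of $f_* \dashv f^*$, the middle isomorphism is the unitor of $\ca{V}$-$\Mat$, and $M$ is the multiplication of $B$. Dually, I would define the unit as
\[\tilde{\eta}: 1_X \;\xrightarrow{\;\check{\eta}\;}\; f^*f_* \;\cong\; f^*\circ 1_Y\circ f_* \;\xrightarrow{\;f^*\eta f_*\;}\; f^*Bf_*,\]
using the unit $\check\eta: 1_X \Rightarrow f^*f_*$ of the adjunction together with the unit $\eta:1_Y\to B$ of $B$.

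Next I would verify the monoid axioms. For associativity, pasting two copies of $\tilde M$ yields a diagram in which the two occurrences of $\check\varepsilon$ in the two orders of composition are interchangeable by the interchange law for horizontal composition of 2-cells (Section \ref{bicatbasicdefinitions}); what remains is precisely the associativity of $M: BB\to B$ whiskered by $f^*$ on the left and $f_*$ on the right. For the two unit axioms, I would combine the triangle identities $(\check\varepsilon * f_*)\cdot(f_**\check\eta)=1_{f_*}$ and $(f^**\check\varepsilon)\cdot(\check\eta * f^*)=1_{f^*}$ for the adjunction $f_*\dashv f^*$ with the unit axioms $M\cdot(\eta*B)=1_B=M\cdot(B*\eta)$ for the monad $B$, again whiskered appropriately.

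The only potential obstacle is bookkeeping the associativity and unit constraints $\alpha, \lambda, \rho$ of $\ca{V}$-$\Mat$ that intervene throughout the diagrams above; by the coherence theorem for bicategories (Theorem \ref{alldiagscommute}), every diagram built from these and their inverses commutes, so the argument can be carried out as if $\ca{V}$-$\Mat$ were a strict 2-category, and the pasting manipulations go through unambiguously. In this way $(f^*Bf_*, \tilde M, \tilde\eta)$ is a monoid in $\ca{V}$-$\Mat(X,X)$, i.e.\ a $\ca{V}$-category with object-set $X$.
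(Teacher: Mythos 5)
Your construction of the multiplication $f^*\big(M\cdot(B\,\check\varepsilon\,B)\big)f_*$ and the unit $(f^*\eta f_*)\cdot\check\eta$, together with the verification via the triangle identities and the monoid axioms of $B$, is exactly the paper's own proof; your framing of it as the standard transport of a monad along an adjunction $f_*\dashv f^*$ is a correct and accurate gloss on what the paper does. No gaps.
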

\begin{proof}
The multiplication $M':f^*Bf_*f^*Bf_*\to f^*Bf_*$
is given by the composite 2-cell
\begin{displaymath}
\xymatrix @C=.4in @R=.05in
{&&& X\ar @/^/[dr]|-{\object@{|}}^-{f_*} &&&\\
&& Y\ar @/^/[ur]|-{\object@{|}}^-{f^*} 
\ar @/_3ex/[rr]|-{\object@{|}}_-{1_Y}
\rrtwocell<\omit>{\check{\varepsilon}} 
&& Y\ar @/^/[dr]|-{\object@{|}}^-B && \\
X\ar[r]|-{\object@{|}}^-{f_*}
&Y\ar @/^/[ur]^-B
\ar @/_6ex/[rrrr]|-{\object@{|}}_-B
& \rrtwocell<\omit>{<3>\;M} &&&
Y\ar[r]|-{\object@{|}}^-{f^*} & X}
\end{displaymath}
and the unit $\eta':1_X\to f^*Bf_*$ is given by 
the composite 2-cell
\begin{displaymath}
\xymatrix @R=.1in
{X\ar @/^3ex/[rrr]|-{\object@{|}}^-{1_X} 
\ar[dr]|-{\object@{|}}_-{f_*} &&& X \\
& Y\ar @/^2ex/[r]|-{\object@{|}}^-{1_Y}
\ar @/_2ex/[r]|-{\object@{|}}_-B 
\rtwocell<\omit>{\eta} 
\rtwocell<\omit>{<-5>\check{\eta}} &
Y,\ar[ur]|-{\object@{|}}_-{f^*} &}
\end{displaymath}
where $\check{\varepsilon}$ and $\check{\eta}$
are the counit and unit of the adjunction
$f_*\dashv f^*$ in $\ca{V}$-$\B{Mat}$, and $M$ and
$\eta$ the structure maps of the monoid $B$. 
Using pasting operations, 
the new multiplication and unit
can be expressed as 
\begin{gather*}
M'=f^*\big(M\cdot(B\check{\varepsilon}B)\big)f_*, \\
\eta'=(f^*\eta f_*)\cdot\check{\eta}.
\end{gather*}
The associativity
and unit axioms follow from the ones for
the multiplication and unit of the 
monoid$\SelectTips{eu}{10}\xymatrix @C=.2in
{B:Y\ar[r]|-{\object@{|}} &Y}$and the 
triangular identities for $\check{\eta}$ and 
$\check{\varepsilon}$.
\end{proof}
It is not hard to see that the diagrams
(\ref{functaxioms})
which a $\ca{V}$-functor $F=(\phi,f):(A,X)\to(B,Y)$ 
has to satisfy,
coincide with the ones that 
an arrow in $\Mon(\ca{V}$-$\B{Mat}(X,X))$
between the monoids $A$ and $f^*Bf_*$
has to satisfy. 
For example, associativity can be written,
using mates correspondence, as
\begin{displaymath}
\xymatrix @C=.4in @R=.4in
{X\ar[r]|-{\object@{|}}^-A
\ar[d]|-{\object@{|}}_-{f_*}
\rtwocell<\omit>{<5>\phi}
& X \ar@/^/[dr]|-{\object@{|}}^-{f_*}
\ar[rr]|-{\object@{|}}^-A
& \rtwocell<\omit>{<5>\phi}
& X \ar@/^/[dr]|-{\object@{|}}^-{f_*} &\\
Y \ar[r]|-{\object@{|}}_-B
\ar @/_6ex/[rrr]|-{\object@{|}}_-B
& Y \ar[u]|-{\object@{|}}_-{f^*}
\rtwocell<\omit>{<3.5>\;M}
\ar[r]|-{\object@{|}}_-{1_Y}
\rtwocell<\omit>{<-3>\check{\varepsilon}}
& Y \ar[r]|-{\object@{|}}_-B
& Y \ar[u]|-{\object@{|}}_-{f^*}
\ar[r]|-{\object@{|}}_-{1_Y}
\rtwocell<\omit>{<-3>\check{\varepsilon}} &Y}
\xymatrix @R=.2in{\hole \\ = \\ \hole}
\xymatrix @C=.4in @R=.3in
{& X \ar @/^/[dr]|-{\object@{|}}
^-{A} &&\\
X\ar[d]|-{\object@{|}}_-{f_*}
\ar @/^/[ru]|-{\object@{|}}^-A
\ar[rr]|-{\object@{|}}_-A
\rrtwocell<\omit>{<-3>\;M}
\rrtwocell<\omit>{<4>\phi} && X
\ar@/^/[dr]|-{\object@{|}}^-{f_*} &\\
Y \ar[rr]|-{\object@{|}}_-B && Y
\ar[u]|-{\object@{|}}_-{f^*}
\ar[r]|-{\object@{|}}_-{1_Y}
\rtwocell<\omit>{<-2>\check{\varepsilon}}
& Y}
\end{displaymath}
which implies the commutativity of the first diagram
in (\ref{monoidarrowaxioms}) for a monoid
morphism, taking into account the form of
multiplication $M'$ of $f^*Bf_*$.
Therefore, the following
characterization of the category of $\ca{V}$-categories
is established.
\begin{lem}\label{charactVCat}
The objects of $\ca{V}$-$\B{Cat}$ 
are pairs 
\begin{displaymath}
(A,X)\in
\Mon(\ca{V}\textrm{-}\B{Mat}(X,X))\times\B{Set} 
\end{displaymath}
and morphisms are pairs
$(\phi,f):(A,X)\to(B,Y)$ where
\begin{displaymath}
\begin{cases}
\phi:A\to f^*Bf_* &\textrm{in }\Mon(\ca{V}\textrm{-}\B{Mat}(X,X))\\
f:X\to Y & \textrm{in }\B{Set}.
\end{cases}
\end{displaymath}
\end{lem}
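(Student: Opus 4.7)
The plan is to verify the two parts of the characterization separately. For objects, the identification is essentially immediate from material preceding the lemma: a $\ca{V}$-category with set of objects $X$ is by definition a monad at $X$ in the bicategory $\ca{V}$-$\B{Mat}$, and by Remark \ref{monadsaremonoids} a monad at $X$ in any bicategory is the same data as a monoid in the endo-hom category $(\ca{V}\textrm{-}\B{Mat}(X,X),\circ,1_X)$. Hence $\ob(\ca{V}\textrm{-}\B{Cat})$ is in bijection with pairs $(A,X)\in\Mon(\ca{V}\textrm{-}\B{Mat}(X,X))\times\B{Set}$.

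For morphisms, I would start from the description of a $\ca{V}$-functor via Definition \ref{charactVGrph} together with the axioms (\ref{functaxioms}): a $\ca{V}$-functor amounts to a $\ca{V}$-graph morphism $(\phi,f)$ with $f:X\to Y$ in $\B{Set}$ and $\phi:A\to f^*Bf_*$ a $2$-cell in $\ca{V}\textrm{-}\B{Mat}(X,X)$, subject to the compatibility conditions with multiplication and unit. By Lemma \ref{B*monoid}, $f^*Bf_*$ carries a canonical monoid structure with multiplication $M'=f^*\bigl(M\cdot(B\check\varepsilon B)\bigr)f_*$ and unit $\eta'=(f^*\eta f_*)\cdot\check\eta$. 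The claim is then that (\ref{functaxioms}) is equivalent to $\phi$ being a monoid morphism $A\to f^*Bf_*$, i.e.\ to the commutativity of the diagrams (\ref{monoidarrowaxioms}) for this monoid structure.

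The key step is a mates translation. The axioms (\ref{functaxioms}) as displayed in the excerpt are phrased in terms of the mate $\hat\phi:f_*A\to Bf_*$ of $\phi$ obtained by pasting $\check\varepsilon$ on the right, whereas the monoid-morphism axioms for $\phi$ live in $\ca{V}\textrm{-}\B{Mat}(X,X)$. Pasting the unit and counit of $f_*\dashv f^*$ and invoking the triangular identities, one checks that the left-hand pasted diagrams in (\ref{functaxioms}) unpaste to $M'\cdot(\phi*\phi)$ (resp.\ $\eta'$), while the right-hand ones unpaste to $\phi\cdot M$ (resp.\ $\phi\cdot\eta$). The associativity direction has already been exhibited in the excerpt; the unit direction is analogous, and both reductions use only the explicit formulas for $M'$ and $\eta'$ and one triangular identity.

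It remains to check that compositions and identities match under this bijection. Given $(\phi,f):(A,X)\to(B,Y)$ and $(\psi,g):(B,Y)\to(C,Z)$, the composite $\ca{V}$-functor has base component $gf$ and first component
\[
A\xrightarrow{\ \phi\ }f^*Bf_*\xrightarrow{f^*\psi f_*}f^*g^*Cg_*f_*\xrightarrow{\ \cong\ }(gf)^*C(gf)_*,
\]
where the isomorphism is built from $\xi^{g,f}$ and $\zeta^{g,f}$ of Lemma \ref{isosofstars}; that this composite is again a monoid morphism into the monoid structure on $(gf)^*C(gf)_*$, and that composition is associative on the nose, follows from naturality of $\xi,\zeta$ together with Lemma \ref{corisos}. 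The identity $(i_A,\mathrm{id}_X)$ coming from (\ref{idX=1X}) is plainly a monoid morphism. The main obstacle is purely bookkeeping: keeping track of the coherence isomorphisms $\zeta,\xi,\check\eta,\check\varepsilon$ and the unitors of $\ca{V}\textrm{-}\B{Mat}$ so that the two composition laws are seen to agree, but no new idea is needed beyond the mates calculation above.
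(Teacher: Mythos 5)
Your proof is correct and follows essentially the same route as the paper: objects via the monad-in-$\ca{V}$-$\B{Mat}$ $=$ monoid-in-$\ca{V}$-$\B{Mat}(X,X)$ identification, and morphisms by using Lemma \ref{B*monoid} together with the mates correspondence to match the axioms (\ref{functaxioms}) with the monoid-morphism diagrams for $\phi:A\to f^*Bf_*$. Your additional verification that composition and identities agree (via Lemmas \ref{isosofstars} and \ref{corisos}) is left implicit in the paper but is a welcome completion of the bookkeeping.
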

As in the case of $\ca{V}$-$\B{Grph}$
in the previous section, the category $\ca{V}$-$\B{Cat}$
as presented in Chapter \ref{enrichment}
is in fact isomorphic with the category
described above, in the sense
that there is a bijection between objects (\emph{i.e.}
the identity)
and a bijection between arrows of these categories.

We already saw how $\ca{V}$-$\B{Cat}$ inherits
a (symmetric) monoidal structure from $\ca{V}$.
The tensor product of the 
$\ca{V}$-categories $\ca{A}_X$ and $\ca{B}_Y$
is defined to be the $\ca{V}$-graph 
$(\ca{A}\otimes\ca{B})_{X\times Y}$,
given by the family of objects 
$\{\ca{A}(z,x)\otimes\ca{B}(w,y)\}$
in $\ca{V}$ for all $x,z\in X$ and $y,w\in Y$, with
composition law and identities as given in 
Section \ref{basicdefienrichment}.

Similarly to the free monoid construction
on an object in a monoidal category $\ca{V}$,
briefly discussed in Section 
\ref{Categoriesofmonoidsandcomonoids},
we now proceed to the description of an endofunctor
on $\ca{V}$-$\B{Grph}$ inducing the 
`free $\ca{V}$-category' monad. The following proof 
can also be found in \cite{VarThrEnr,KellyLack}.
\begin{prop}\label{freeVcatfunctor}
Let $\ca{V}$ be a monoidal category with coproducts, 
such that $\otimes$ preserves them
on both sides.
The functor 
\begin{displaymath}
\tilde{S}:\ca{V}\textrm{-}\B{Cat}\to\ca{V}\textrm{-}\B{Grph}
\end{displaymath}
which forgets composition and identities
has a left adjoint $\tilde{L}$, which maps a 
$\ca{V}$-graph$\SelectTips{eu}{10}\xymatrix @C=.2in
{G:X\ar[r]|-{\object@{|}} & X}$to the geometric series
\begin{displaymath}
\SelectTips{eu}{10}\xymatrix @C=.3in
{\sum\limits_{n\in\caa{N}}{G^{\otimes n}}:X\ar[r]
|-{\object@{|}} & X.}
\end{displaymath}
\end{prop}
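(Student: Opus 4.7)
The plan is to exploit the fibrational characterizations of $\ca{V}$-$\B{Grph}$ and $\ca{V}$-$\B{Cat}$ over $\B{Set}$ given by Definition~\ref{charactVGrph} and Lemma~\ref{charactVCat}, combined with a fibrewise application of the classical free monoid construction. In these presentations, a morphism from $(G,X)$ to $(H,Y)$ (in either category) consists of a function $f:X\to Y$ together with a morphism $\phi\colon G\to f^*Hf_*$ in $\ca{V}\text{-}\B{Mat}(X,X)$, where the codomain $f^*Hf_*$ carries a canonical monoid structure whenever $H$ does, by Lemma~\ref{B*monoid}. Thus the only additional requirement distinguishing $\ca{V}$-$\B{Cat}$ from $\ca{V}$-$\B{Grph}$, in the fibre over $X$, is that $\phi$ be a monoid morphism in $\ca{V}\text{-}\B{Mat}(X,X)$.

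First I would verify that for each set $X$, the monoidal category $(\ca{V}\text{-}\B{Mat}(X,X),\circ,1_X)$ satisfies the hypotheses of Proposition~\ref{freemonoidprop}: by Proposition~\ref{propVMat}\emph{(i,ii)}, it has all colimits (in particular countable coproducts), and the tensor product $\circ$ preserves them on both sides. Hence the forgetful functor $\Mon(\ca{V}\text{-}\B{Mat}(X,X))\to\ca{V}\text{-}\B{Mat}(X,X)$ admits a left adjoint $\tilde{L}_X$ sending $G$ to the geometric series $\sum_{n\in\caa{N}}G^{\otimes n}$, with the usual monoid structure given by concatenation and inclusion of the $n=0$ summand.

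Next I would assemble these fibrewise adjoints into a total left adjoint $\tilde{L}\colon\ca{V}\text{-}\B{Grph}\to\ca{V}\text{-}\B{Cat}$. On objects, set $\tilde{L}(G,X)=(\sum_nG^{\otimes n},X)$. To obtain the universal property at the level of total categories, note that for a fixed function $f\colon X\to Y$ and any $\ca{V}$-category $(B,Y)$, the fibrewise adjunction yields a bijection
\begin{equation*}
\Mon(\ca{V}\text{-}\B{Mat}(X,X))\bigl(\textstyle\sum_nG^{\otimes n},\,f^*Bf_*\bigr)\;\cong\;\ca{V}\text{-}\B{Mat}(X,X)\bigl(G,\,f^*Bf_*\bigr),
\end{equation*}
where $f^*Bf_*$ is a monoid by Lemma~\ref{B*monoid}. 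Summing over all $f\in\B{Set}(X,Y)$ (or equivalently, taking the coproduct of these bijections indexed by $f$) gives
\begin{equation*}
\ca{V}\text{-}\B{Cat}\bigl(\tilde{L}(G,X),(B,Y)\bigr)\;\cong\;\ca{V}\text{-}\B{Grph}\bigl((G,X),\tilde{S}(B,Y)\bigr),
\end{equation*}
and this bijection is natural in $(B,Y)$ because the fibrewise one is natural in $f^*Bf_*$ and the assignment $B\mapsto f^*Bf_*$ is functorial. Functoriality of $\tilde{L}$ on $\ca{V}$-graph morphisms $(\phi,f)$ is then determined by the usual representability argument.

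I do not anticipate a serious obstacle here: the argument is essentially an assembly of well-known fibrewise data. The one point requiring modest care is checking that the bijection above is natural also in the first variable $(G,X)$—in particular that the unit $G\to\tilde{S}\tilde{L}(G,X)$ is natural with respect to morphisms $(\phi,f)$ whose base component $f$ is nontrivial. This reduces to the observation that the inclusion of $G$ as the $n=1$ summand of $\sum_nG^{\otimes n}$ is compatible with the reindexing $G\mapsto f^*Gf_*$, which is immediate since $f^*(-)f_*$ preserves coproducts. Alternatively, one can phrase the whole argument abstractly via Theorem~\ref{totaladjointthm2} applied to $\tilde{S}$ as a cartesian functor over $\B{Set}$ with base adjunction the identity, in which case no Beck--Chevalley condition arises and the total left adjoint is produced automatically from the fibrewise ones.
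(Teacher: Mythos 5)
Your argument is correct and is essentially the proof given in the paper: both rest on the fibrewise free monoid construction in $(\ca{V}\text{-}\B{Mat}(X,X),\circ,1_X)$ together with Lemma~\ref{B*monoid} giving $f^*Bf_*$ its monoid structure, so that a $\ca{V}$-graph morphism $(\phi,f)\colon(G,X)\to\tilde{S}(B,Y)$ extends uniquely through the free monoid on $G$. The paper packages this as the universal property of the unit $\tilde{\eta}\colon(G,X)\to\tilde{S}(LG,X)$ rather than as a disjoint-union-over-$f$ hom-set bijection, but these are the same argument.
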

\begin{proof}
Recall that by Proposition \ref{propVMat},
$\ca{V}$-$\B{Mat}(X,X)$ admits the same 
class of colimits as $\ca{V}$, 
and also $\otimes=\circ$
preserves colimits on both sides.
Hence, the forgetful functor $S$
from its category of monoids has a left adjoint,
namely the `free monoid' functor, as in
Proposition \ref{freemonoidprop}:
\begin{displaymath}
L:\xymatrix @R=.02in
{\ca{V}\textrm{-}\Mat(X,X)\ar[r] & 
\Mon(\ca{V}\textrm{-}\Mat(X,X))\\
G\ar@{|->}[r] &
\sum_{n\in\caa{N}}{G^n}.}
\end{displaymath}
By Lemma \ref{charactVCat},
we deduce that this geometric series
is in fact a $\ca{V}$-category
with set of objects $X$. 
We now claim that the mapping
\begin{equation}\label{deftildeL}
\tilde{L}:\xymatrix @R=.02in @C=.4in
{\ca{V}\textrm{-}\B{Grph}\ar[r] & 
\ca{V}\textrm{-}\B{Cat}\\
(G,X)\ar@{|->}[r] &
(LG,X)}
\end{equation}
induces a left adjoint of the forgetful functor 
$\tilde{S}$.
For that, it is enough to show that the $\ca{V}$-graph
morphism
$\tilde{\eta}:(G,X)\to\tilde{S}\tilde{L}(G,X)$
which is the identity function on objects and 
the injection 2-cell of the summand $G$ into the series,
has the following universal property:
if $(B,Y)$ is a $\ca{V}$-category and
$F$ is a $\ca{V}$-graph arrow from $(G,X)$
to its underlying 
$\ca{V}$-graph $\tilde{S}(B,Y)$, then there exists a unique
$\ca{V}$-functor $H:(LG,X)\to(B,Y)$ such that the diagram
\begin{equation}\label{univprop}
\xymatrix @R=.2in
{(G,X)\ar[rr]^{\tilde{\eta}} \ar[dr]_-F && 
\tilde{S}(\sum\limits_{n\in\caa{N}}{G^n},X)
\ar @{.>}[dl]^-{\tilde{S}H}\\
& \tilde{S}(B,Y) &}
\end{equation}
commutes. 

By Definition \ref{charactVGrph},
a $\ca{V}$-graph functor
$F$ can be seen as a pair $(\phi,f)$ where
$\phi:G\to f^*Bf_*$ is an arrow in
$\ca{V}$-$\B{Mat}(X,X)$,
and furthermore
Lemma \ref{B*monoid} ensures that
$f^*Bf_*$ obtains a monoid structure. 
Since $LG$ is the free
monoid on the object $G$ of $\ca{V}$-$\B{Mat}(X,X)$,
$\phi$ extends uniquely to a monoid morphism
$\chi:LG\to f^*Bf_*$ such that
the diagram
\begin{displaymath}
\xymatrix @R=.15in
{G\ar[rr]^{\eta} \ar[dr]_-{\phi} && 
\sum\limits_{n\in\caa{N}}{G^n}\ar @{.>}[dl]^-{S\chi}\\
& f^*Bf_* &}
\end{displaymath}
commutes in the category $\ca{V}$-$\B{Mat}(X,X)$,
where $\eta$ and $S$ are respectively
the unit and forgetful functor of the `free monoid'
adjunction $L\dashv S$.

By Lemma \ref{charactVCat}, this 2-cell 
$\chi:\sum_{n\in\caa{N}}{G^n}\Rightarrow f^*Bf_*$
in $\ca{V}$-$\Mat$, together with the function $f$,
determine
a $\ca{V}$-functor $H=(\chi,f):(LG,X)\to
(B,Y)$ satisfying
the universal property (\ref{univprop}). These data are 
sufficient to define an adjoint functor $\tilde{L}$
with object function (\ref{deftildeL}), thus
the `free $\ca{V}$-category' adjunction
\begin{displaymath}
\xymatrix @C=.5in
{\ca{V}\textrm{-}\B{Grph}
\ar@<+.8ex>[r]^-{\tilde{L}} 
 _-*-<5pt>{\text{\scriptsize{$\bot$}}} &
\ca{V}\textrm{-}\B{Cat}\ar@<+.8ex>[l]^-{\tilde{S}}}
\end{displaymath}
is established.
\end{proof}
The above result was also given earlier in
\cite[Proposition 2.2]{Wolff} but constructively,
in the sense that the explicit
description of the free $\ca{V}$-category along with
its composition and identities is provided, 
and the universal property is shown
without the use of $\ca{V}$-matrices.
As a result, in that plain context, just the
existence of coproducts in $\ca{V}$ suffices to establish
the free $\ca{V}$-category adjunction, without
requiring $\otimes$ to preserve them.
Also, as proved in detail in \cite{Wolff}
and later generalized in \cite{VarThrEnr}
for categories enriched in bicategories,
$\ca{V}$-$\B{Cat}$ has and the forgetful functor 
$\tilde{S}$ reflects split coequalizers when $\ca{V}$ is 
cocomplete. By Beck's
monadicity theorem, since $\tilde{S}$ also
reflects isomorphisms, we have the following well-known
result. 
\begin{prop}\label{VCatmonadic}
If $\ca{V}$ is a cocomplete monoidal
category (such that $\otimes$ preserves colimits on both
variables), the forgetful
$\tilde{S}:\ca{V}$-$\B{Cat}\to\ca{V}$-$\B{Grph}$
is monadic.
\end{prop}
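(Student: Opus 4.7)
The approach is Beck's (crude) monadicity theorem applied to the adjunction $\tilde{L} \dashv \tilde{S}$ of Proposition \ref{freeVcatfunctor}. Three properties need to be verified: $(a)$ $\tilde{S}$ reflects isomorphisms; $(b)$ $\ca{V}$-$\B{Cat}$ admits coequalizers of $\tilde{S}$-split parallel pairs; and $(c)$ $\tilde{S}$ preserves such coequalizers. Property $(a)$ is straightforward: a $\ca{V}$-functor $(\phi,f)$ whose underlying $\ca{V}$-graph morphism is invertible must have $f$ a bijection and $\phi$ an invertible 2-cell in $\ca{V}$-$\Mat$, so the inverse pair $(\phi^{-1},f^{-1})$ exists in $\ca{V}$-$\B{Grph}$; the two equations of (\ref{functaxioms}) for $(\phi,f)$ invert to give the same equations for $(\phi^{-1},f^{-1})$, so the inverse is itself a $\ca{V}$-functor.

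For $(b)$ and $(c)$, I would consider a parallel pair $F_1, F_2 : (A,X) \rightrightarrows (B,Y)$ in $\ca{V}$-$\B{Cat}$ whose image under $\tilde{S}$ admits a split coequalizer $Q = (\bar\psi, q)$ in $\ca{V}$-$\B{Grph}$. Following the construction in Proposition \ref{Vgrphcocomplete}, $Q$ decomposes as a split coequalizer $q:Y \to Z$ of the underlying functions in $\B{Set}$, together with a split coequalizer $\bar\psi: q_*Bq^* \to C$ in $\ca{V}$-$\Mat(Z,Z)$ assembled from the pushforwards of the data of $F_1, F_2$. The decisive observation is that split coequalizers are absolute, hence preserved by every functor, in particular by the endofunctors $C \circ -$ and $- \circ C$ on $\ca{V}$-$\Mat(Z,Z)$ and therefore by the twofold and threefold composition functors. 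Consequently the diagram
\begin{displaymath}
\xymatrix@C=.4in@R=.4in{q_*(B\circ B)q^* \ar@<+.5ex>[r]\ar@<-.5ex>[r] \ar[d]_-{q_*Mq^*} & C\circ C \ar@{-->}[d]^-{M_C} \\ q_*Bq^* \ar@<+.5ex>[r]\ar@<-.5ex>[r] & C}
\end{displaymath}
induces a unique multiplication $M_C$ making $\bar\psi$ a morphism of monoids in $\ca{V}$-$\Mat(Z,Z)$, and an analogous argument produces a unit $\eta_C: 1_Z \to C$; the associativity and identity axioms for $(C, M_C, \eta_C)$ then follow from those of $(B, M_B, \eta_B)$ by the universal property of the (absolute) threefold split coequalizer.

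Thus by Lemma \ref{charactVCat}, $(C,Z)$ becomes a $\ca{V}$-category and $Q$ promotes to a $\ca{V}$-functor. Any further $\ca{V}$-functor $(B,Y) \to (D,W)$ coequalising $F_1, F_2$ factors uniquely through $Q$ at the $\ca{V}$-graph level, and the factorisation is automatically a $\ca{V}$-functor since its compatibility with composition and units on $C$ is forced by the same absolute-descent argument. Hence $\tilde{S}$ in fact creates coequalizers of $\tilde{S}$-split pairs, which together with $(a)$ establishes monadicity via Beck's theorem. The main technical point, and the only place where care is required, is the simultaneous descent of the set-level and the $\ca{V}$-$\Mat$-level data along the change of objects $q: Y \to Z$; absoluteness of split coequalizers, combined with the cocontinuity of composition in $\ca{V}$-$\Mat(Z,Z)$ recorded in Proposition \ref{propVMat}, is precisely what makes this descent routine rather than delicate.
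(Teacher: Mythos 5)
Your proposal is correct and follows essentially the same route as the paper: Beck's monadicity theorem applied to the adjunction $\tilde{L}\dashv\tilde{S}$, using reflection of isomorphisms together with the lifting of $\tilde{S}$-split coequalizers, which works precisely because split coequalizers are absolute and hence preserved by horizontal composition in $\ca{V}$-$\Mat$. The paper itself only cites Wolff and its bicategorical generalization for the split-coequalizer argument that you spell out in detail.
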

Consequently, the category $\ca{V}$-$\B{Cat}$
is isomorphic 
to the category of $\tilde{S}\tilde{L}$-algebras on 
$\ca{V}$-$\B{Grph}$. As mentioned earlier, $\ca{V}$-$\B{Grph}$ is
complete when $\ca{V}$ is, thus
\begin{cor}\label{VCatcomplete}
The category $\ca{V}$-$\B{Cat}$ is complete when $\ca{V}$ is.
\end{cor}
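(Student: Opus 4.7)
The plan is to derive the result as a direct consequence of Proposition \ref{VCatmonadic} together with the analogous completeness result for $\ca{V}$-$\B{Grph}$ established earlier in the section. The key general fact to invoke is that any monadic functor creates limits; applied to $\tilde{S}\colon\ca{V}\text{-}\B{Cat}\to\ca{V}\text{-}\B{Grph}$, this immediately reduces the existence of limits in $\ca{V}$-$\B{Cat}$ to the existence of limits in $\ca{V}$-$\B{Grph}$.

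First I would recall that $\ca{V}$-$\B{Grph}$ is complete whenever $\ca{V}$ is: this was shown explicitly just before Proposition \ref{Vgrphcocomplete} by computing the limit of the underlying diagram of sets of objects in $\B{Set}$ and then, for each pair of elements in that limiting set, taking the limit of the induced diagram of hom-objects pointwise in $\ca{V}$, with projections assembled from the projections in $\B{Set}$ and in $\ca{V}$. Next, I would invoke Proposition \ref{VCatmonadic}, which, under the running hypotheses on $\ca{V}$ of the section (cocompleteness together with the preservation of colimits by $\otimes$ on both variables, required for the free $\ca{V}$-category monad to exist), guarantees that the forgetful $\tilde{S}$ is monadic. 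Since monadic functors create all limits existing in the base, every diagram $D\colon\ca{J}\to\ca{V}\text{-}\B{Cat}$ has a limit whose underlying $\ca{V}$-graph is the limit of $\tilde{S}\circ D$ in $\ca{V}$-$\B{Grph}$, equipped with the unique composition law and identity making the limit projections into $\ca{V}$-functors.

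If one prefers a direct construction, the same result can be obtained by hand: given the limiting $\ca{V}$-graph $(G,X)$ of $\tilde{S}\circ D$, the composition law $M\colon G\circ G\to G$ and unit $\eta\colon 1_X\to G$ are induced from the corresponding data in each $D_j$ by the universal property of the pointwise limits in $\ca{V}$, with the associativity and unit axioms following because they hold in every $D_j$ and limits in $\ca{V}$ reflect commutativity of diagrams. There is no real obstacle here; the only subtlety worth flagging is that the completeness hypothesis on $\ca{V}$ is the one literally stated, while the monadic route silently uses the standing cocompleteness assumptions on $\ca{V}$ needed to ensure monadicity of $\tilde{S}$ in the first place.
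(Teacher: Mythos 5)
Your proposal is correct and follows the paper's own route: the paper deduces the corollary precisely from Proposition \ref{VCatmonadic} (so that $\ca{V}$-$\B{Cat}$ is the category of $\tilde{S}\tilde{L}$-algebras on $\ca{V}$-$\B{Grph}$), the fact that monadic functors create limits, and the completeness of $\ca{V}$-$\B{Grph}$ established earlier in the section. Your closing remark about the standing cocompleteness hypotheses on $\ca{V}$ needed for monadicity of $\tilde{S}$ is accurate and matches the paper's implicit assumptions.
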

The fact that $\ca{V}$-$\B{Cat}$ also has all
colimits follows from a result 
by Linton in \cite{Linton}, which states that
if the category of algebras for a monad
has coequalizers of reflexive pairs and $\ca{A}$
has all small coproducts, then $\ca{A}^T$ has all small 
colimits. By Proposition \ref{Vgrphcocomplete}
$\ca{V}$-$\B{Grph}$ admits all colimits if $\ca{V}$ does, 
hence the following is true.
\begin{cor}\label{VCatcocomplete}
The category $\ca{V}$-$\B{Cat}$ is cocomplete when $\ca{V}$ is.
\end{cor}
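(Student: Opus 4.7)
The plan is to invoke Linton's theorem cited just before the corollary, which reduces cocompleteness of a category of algebras to the existence of small coproducts in the base together with reflexive coequalizers in the algebra category. Since by Proposition \ref{VCatmonadic} the forgetful $\tilde{S}:\ca{V}\text{-}\B{Cat}\to\ca{V}\text{-}\B{Grph}$ is monadic, $\ca{V}\text{-}\B{Cat}$ is equivalent to the Eilenberg--Moore category of the monad $\tilde{S}\tilde{L}$ on $\ca{V}\text{-}\B{Grph}$. By Proposition \ref{Vgrphcocomplete}, cocompleteness of $\ca{V}$ gives cocompleteness of $\ca{V}\text{-}\B{Grph}$, so in particular small coproducts are available in the base. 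Thus the only remaining task is to supply reflexive coequalizers in $\ca{V}\text{-}\B{Cat}$.

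To produce those, I would proceed as follows. Given a reflexive pair $(F,H):\ca{A}_X\rightrightarrows\ca{B}_Y$ in $\ca{V}\text{-}\B{Cat}$ with common section, first form its underlying reflexive pair of $\ca{V}$-graph morphisms and take its coequalizer $Q_q:\ca{B}_Y\to\ca{C}_Z$ in $\ca{V}\text{-}\B{Grph}$, which exists by Proposition \ref{Vgrphcocomplete}. Following the construction there, $Z=\mathrm{coeq}(f,h)$ in $\B{Set}$ with projection $q:Y\to Z$, and the underlying $\ca{V}$-matrix $C$ arises as a pointwise colimit in $\ca{V}\text{-}\Mat(Z,Z)$ of the diagram built from $q_*Bq^*$. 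The task is to lift the composition $M_B$ and unit $\eta_B$ of $\ca{B}$ to $\ca{C}$, exploiting that horizontal composition in $\ca{V}\text{-}\Mat$ preserves colimits in each variable (Proposition \ref{propVMat}(ii)): the two composites $q_*(B\circ B)q^*\Rightarrow C\circ C$ coming from the parallel $\ca{V}$-functors agree after composing with $q_*M_Bq^*$ because $(F,H)$ were $\ca{V}$-functors, so the universal property of the coequalizer in $\ca{V}\text{-}\Mat(Z,Z)$ yields a unique $M_C:C\circ C\Rightarrow C$, and similarly for $\eta_C$. The monad axioms on $\ca{C}$ follow by universality from those on $\ca{B}$, and the reflexivity of the pair is what ensures that the two candidate composites really do coincide on $q_*B\circ Bq^*$.

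Once $\ca{C}_Z$ is equipped with this $\ca{V}$-category structure, the $\ca{V}$-graph morphism $Q_q$ is automatically a $\ca{V}$-functor, and its universal property as coequalizer in $\ca{V}\text{-}\B{Cat}$ is inherited from the underlying $\ca{V}$-graph coequalizer together with the uniqueness of the lifted structure maps: any $\ca{V}$-functor out of $\ca{B}_Y$ coequalizing $F$ and $H$ factors uniquely through $Q_q$ as a $\ca{V}$-graph morphism, and the factorization automatically respects composition and identities because $q_*,q^*$ and horizontal composition preserve the relevant colimits. With coproducts in $\ca{V}\text{-}\B{Grph}$ and reflexive coequalizers in $\ca{V}\text{-}\B{Cat}$ established, Linton's theorem gives all small colimits in $\ca{V}\text{-}\B{Cat}$.

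The main obstacle in this plan is the verification that the lifted $M_C$ and $\eta_C$ really do satisfy associativity and unitality, since this requires commuting pointwise colimits past iterated horizontal composition in $\ca{V}\text{-}\Mat(Z,Z)$; this is exactly where the assumption that $\otimes$ preserves colimits in both variables, baked into Proposition \ref{propVMat}(ii), is indispensable. Reflexivity of the original pair is also crucial here: without a common section one would only obtain a well-defined action up to a discrepancy, and the coequalizer would fail to carry a canonical $\ca{V}$-category structure.
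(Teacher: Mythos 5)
Your proposal is correct and follows essentially the same route as the paper: both arguments rest on Linton's theorem together with the cocompleteness of $\ca{V}$-$\B{Grph}$ from Proposition \ref{Vgrphcocomplete} and the monadicity of $\tilde{S}$ from Proposition \ref{VCatmonadic}. The only difference is that the paper leaves the existence of reflexive coequalizers in $\ca{V}$-$\B{Cat}$ implicit (deferring to the cited sources), whereas you construct them explicitly; your construction is sound, and you correctly isolate the two ingredients that make it work, namely reflexivity of the pair and the fact that $\circ$ preserves colimits in each variable separately (Proposition \ref{propVMat}), which together let the horizontal composite preserve the reflexive coequalizer jointly so that $M_C$ and $\eta_C$ can be induced by universality.
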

Finally, $\ca{V}$-$\B{Cat}$ also inherits
local presentability from $\ca{V}$-$\B{Grph}$.
As shown in \cite{KellyLack}, the monad 
$\tilde{S}\tilde{L}$ is finitary. Thus
by a result of Gabriel and Ulmer \cite[Satz 10.3]{GabrielUlmer}
which states that if $\ca{A}$ is locally
presentable, then $\ca{A}^T$ for a finitary monad
is locally presentable, we obtain the following result.
\begin{thm*}~\cite[4.5]{KellyLack}
If $\ca{V}$ is a monoidal closed category whose
underlying ordinary category is locally 
$\lambda$-presentable,
then $\ca{V}$-$\B{Cat}$ is also
$\lambda$-presentable.
\end{thm*}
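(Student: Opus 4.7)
The plan is to apply the well-known Gabriel--Ulmer result that the category of algebras for a finitary monad on a locally presentable category is itself locally presentable. The two ingredients I need are already essentially in place: Proposition \ref{VGrphlocpresent} gives that $\ca{V}$-$\B{Grph}$ is locally $\lambda$-presentable whenever $\ca{V}$ is (and the monoidal closedness of $\ca{V}$ in particular forces $\ca{V}$ to be cocomplete with $\otimes$ preserving colimits in each variable, which is the setting of the free $\ca{V}$-category construction), and Proposition \ref{VCatmonadic} exhibits $\ca{V}$-$\B{Cat}$ as the category of algebras for the monad $T=\tilde{S}\tilde{L}$ on $\ca{V}$-$\B{Grph}$ arising from the free/forgetful adjunction of Proposition \ref{freeVcatfunctor}. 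The only thing left is to verify that $T$ is finitary.

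First I would recall from Proposition \ref{Vgrphcocomplete} the explicit construction of colimits in $\ca{V}$-$\B{Grph}$: given a diagram $(G_j,X_j)$ of $\ca{V}$-graphs, the colimit has underlying set $X=\colim_j X_j$ (computed in $\B{Set}$) and hom-matrix $G=\colim_j (\tau_j)_*G_j(\tau_j)^*$ computed in $\ca{V}\textrm{-}\Mat(X,X)$. When the diagram is $\lambda$-filtered, both colimits behave particularly well: filtered colimits in $\B{Set}$ are canonically described via representatives, and $\ca{V}\textrm{-}\Mat(X,X)=[X\times X,\ca{V}]$ has $\lambda$-filtered colimits computed pointwise in $\ca{V}$.

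Next I would prove that $\tilde{S}\tilde{L}$ preserves $\lambda$-filtered colimits by showing that each of its constituent pieces does. Recall $\tilde{L}(G,X)=(\sum_{n\in\caa{N}}G^{\otimes n},X)$ where $G^{\otimes n}$ is the $n$-fold composite in the bicategory $\ca{V}$-$\Mat$, whose components involve iterated tensor products in $\ca{V}$ and sums indexed by $X$. The tensor product of $\ca{V}$ preserves $\lambda$-filtered colimits on each side (as a left adjoint, since $\ca{V}$ is monoidal closed), so by Proposition \ref{propVMat}(ii) the composition $\otimes=\circ$ in $\ca{V}\textrm{-}\Mat(X,X)$ preserves $\lambda$-filtered colimits on each side as well; hence so does each finite tensor power $(-)^{\otimes n}$. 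Coproducts commute with filtered colimits in any category, so the geometric series $G\mapsto\sum_n G^{\otimes n}$ also preserves $\lambda$-filtered colimits. Combined with the pointwise description above, this gives that $\tilde{S}\tilde{L}$ is finitary — indeed, $\lambda$-accessible for every infinite regular $\lambda$.

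Finally, applying the Gabriel--Ulmer theorem (\cite[Satz 10.3]{GabrielUlmer}, as already cited in the treatment) to the finitary monad $T$ on the locally $\lambda$-presentable category $\ca{V}$-$\B{Grph}$, one concludes that $\ca{V}$-$\B{Cat}\cong(\ca{V}\textrm{-}\B{Grph})^T$ is locally $\lambda$-presentable. The main subtlety — and the step I would spend the most care on — is the bookkeeping needed to show that filtered colimits in $\ca{V}$-$\B{Grph}$, with their varying sets of objects $X_j$, really do interact with the bicategorical tensor powers $G^{\otimes n}$ in $\ca{V}$-$\Mat$ in the expected way; once one organizes the computation using the transport $(\tau_j)_*(-)(\tau_j)^*$ of Proposition \ref{Vgrphcocomplete} and the fact that the canonical isomorphisms $\zeta$, $\xi$ of Lemma \ref{isosofstars} identify these with the appropriate $\lambda$-filtered colimits in $\ca{V}$, the argument reduces to the two standard facts that $\otimes$ in $\ca{V}$ preserves filtered colimits and that $\lambda$-filtered colimits commute with $\lambda$-small coproducts.
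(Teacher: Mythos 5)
Your proposal follows exactly the route the paper takes: $\ca{V}$-$\B{Grph}$ is locally $\lambda$-presentable (Proposition \ref{VGrphlocpresent}), $\ca{V}$-$\B{Cat}$ is the category of algebras for the free-$\ca{V}$-category monad $\tilde{S}\tilde{L}$ (Proposition \ref{VCatmonadic}), and Gabriel--Ulmer then gives local presentability once the monad is known to be finitary --- a fact the paper simply cites from Kelly--Lack rather than proving. Your sketch of finitariness is sound in outline, and the ``bookkeeping'' you rightly flag (reconciling $(\tau_j)_*\circ G_j\circ(\tau_j)^*\circ(\tau_j)_*\circ G_j\circ(\tau_j)^*$ with $(\tau_j)_*\circ(G_j\circ G_j)\circ(\tau_j)^*$ over a filtered diagram, which requires inserting the unit $\check{\eta}$ and a finality argument rather than just the isomorphisms $\zeta$, $\xi$) is precisely the step carried out in the proof of Proposition \ref{VCocatlocpresent}.
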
 
We can now turn to the `dualization' of the concept of
a $\ca{V}$-category in the context of the 
bicategory $\ca{V}$-$\Mat$. Henceforth $\ca{V}$ is a monoidal category
with coproducts, such that the tensor product $\otimes$
preserves them on both entries. 
The definition below follows Definition \ref{comonadbicat}.
\begin{defi}\label{cocategory}
A (small) $\ca{V}$-\emph{cocategory} 
$\ca{C}$ is a comonad
in the bicategory $\ca{V}$-$\B{Mat}$. Thus it
consists of a set $X$ with 
an endoarrow$\SelectTips{eu}{10}
\xymatrix @C=.2in
{C:X\ar[r]|-{\object@{|}} & X,}$\emph{i.e.}
a $\ca{V}$-graph with set of objects
$\ob\ca{C}=X$,
equipped with two 2-cells,
the comultiplication and the counit
\begin{displaymath}
\xymatrix @R=.1in @C=.4in
{X \ar[dr]|-{\object@{|}}
_-{C} \ar @/^3ex/[rr]|-{\object@{|}}^-C
\rrtwocell<\omit>{<.5>\Delta} && X \\
 & X\ar[ur]|-{\object@{|}}_-C  &}
\quad
\textrm{and}
\quad
\xymatrix @C=.3in
{X \rrtwocell<\omit>{<0>\epsilon}
\ar @/^2.2ex/ [rr]|-{\object@{|}}^-{C}
\ar @/_2.2ex/ [rr]|-{\object@{|}}_-{1_X} && X}
\end{displaymath}
satisfying the following axioms:
\begin{gather*}
\xymatrix @R=.2in
{X\ar @/_/[dr]|-{\object@{|}}_-C
\ar @/^3ex/[drr]|-{\object@{|}}^-C
\ar @/^4ex/[rrr]|-{\object@{|}}^-C
\drrtwocell<\omit>{<+.3>\Delta}
&\drrtwocell<\omit>{<-1.3>\Delta} && X \\
& X\ar[r]|-{\object@{|}}_-C &
X\ar @/_/[ur]|-{\object@{|}}_-C &} 
\quad = \quad
\xymatrix @R=.2in
{X\ar @/_/[dr]|-{\object@{|}}_-C
\ar @/^4ex/[rrr]|-{\object@{|}}^-C
&&& X \\
\urrtwocell<\omit>{<-1.3>\Delta} & 
X\ar[r]|-{\object@{|}}_-C 
\ar @/^3ex/[urr]|-{\object@{|}}^-C 
\urrtwocell<\omit>{<+.3>\Delta} &
X,\ar @/_/[ur]|-{\object@{|}}_-C &} \\
\xymatrix @C=.5in @R=.2in
{X\drrtwocell<\omit>{<-2.3>\Delta}
\drtwocell<\omit>{<-0.4>\epsilon}
\ar @/_2ex/[dr]|-{\object@{|}}_-{1_X}
\ar @/^3ex/[dr]|-{\object@{|}}^-C 
\ar @/^4ex/[rr]|-{\object@{|}}^-C
&& X \\
& X \ar @/_/[ur]|-{\object@{|}}_-C &}
\xymatrix @C=.5in @R=.2in
{=\\
\hole}
\xymatrix @C=.5in @R=.2in
{X\rtwocell<\omit>{\;1_C}
\ar @/_2.3ex/[r]|-{\object@{|}}_-C
\ar @/^2.3ex/[r]|-{\object@{|}}^-C 
& X \\ \hole}
\xymatrix @C=.5in @R=.2in
{= \\
\hole}
\xymatrix @C=.5in @R=.2in
{X \ar @/_/[dr]|-{\object@{|}}_-C
\ar @/^4ex/[rr]|-{\object@{|}}^-C
&& X \\
\urrtwocell<\omit>{<-2.3>\Delta}
& X. \ar @/_2ex/[ur]|-{\object@{|}}_-{1_X} 
\ar @/^3ex/[ur]|-{\object@{|}}^-C 
\urtwocell<\omit>{<-0.4>\epsilon} &}
\end{gather*}
\end{defi}
In terms of components,
the \emph{cocomposition} of a $\ca{V}$-cocategory 
$\ca{C}$ is given by
\begin{displaymath}
\Delta_{x,z}:\ca{C}(x,z)\to\sum_{y\in X}
{\ca{C}(x,y)\otimes\ca{C}(y,x)}
\end{displaymath}
for any two objects $x,z\in X$,
and the \emph{coidentity elements} are given by
\begin{displaymath}
 \epsilon_{x,y}:\ca{C}(x,y)\to 1_X(x,y)\equiv
\begin{cases}
\ca{C}(x,x)\xrightarrow{\epsilon_{x,x}}I,\quad \mathrm{if }\;x=y\\
\ca{C}(x,y)\xrightarrow{\epsilon_{x,y}}0,\quad \mathrm{if }\;x\neq y
\end{cases}
\end{displaymath}
for all objects $x\in X$. The commutative diagrams expressing the 
coassociativity and counit axioms are
\begin{displaymath}
\xymatrix @C=.01in @R=.45in
{& \ca{C}(x,w)\ar[dl]_-{\Delta}
\ar[dr]^-{\Delta} &\\
\sum\limits_{z}{\ca{C}(x,z)\otimes\ca{C}(z,w)}
\ar[d]_-{\sum\limits_{z}{\Delta\otimes 1}} &&
\sum\limits_{y}{\ca{C}(x,y)\otimes\ca{C}(y,w)}
\ar[d]^-{\sum\limits_{y}{1\otimes\Delta}} \\
\sum\limits_{z}(\sum\limits_{y}{\ca{C}(x,y)\otimes\ca{C}(y,z)})
\otimes\ca{C}(z,w)\ar[rr]_-{\alpha}^-\cong &&
\sum\limits_{y}{\ca{C}(x,y)\otimes(\sum\limits_{z}{\ca{C}(y,z)\otimes
\ca{C}(y,w)})}}
\end{displaymath}
\begin{displaymath}
\xymatrix @C=.9in @R=.4in
{\sum\limits_{z}{\ca{C}(x,z)\otimes\ca{C}(z,y)}
\ar[d]_-{\sum\limits_{z}{\epsilon\otimes 1}} &
\ca{C}(x,y)\ar[dr]_-{\rho^{\text{-}1}}
\ar[dl]^-{\lambda^{\text{-}1}}
\ar[l]_-{\Delta}\ar[r]^-{\Delta} &
\sum\limits_{z}{\ca{C}(x,z)\otimes\ca{C}(z,y)}\ar[d]^-
{\sum\limits_{z}{1\otimes\epsilon}} \\
I\otimes\ca{C}(x,y) &&
\ca{C}(x,y)\otimes I}
\end{displaymath}
where $\alpha$ is the associator 
and $\lambda$, $\rho$ are the unitors of $\ca{V}$-$\Mat$.
The vertical arrows of the latter diagram  
are explicitely the unique
morphisms making the left and right
parts of the diagram commute:
\begin{displaymath}
\xymatrix @C=.25in @R=.2in
{& \sum\limits_{z}{C(x,z)\otimes\ca{C}(z,y)}
\ar @<-4ex>@{.>}[dd]_-{\sum\limits_{z}{\epsilon_{x,z}\otimes 1}}
\ar @<+4ex>@{.>}[dd]^-{\sum\limits_{z}{1\otimes\epsilon_{z,y}}}
&\\
\ca{C}(x,x)\otimes\ca{C}(x,y) \ar @{^{(}->}[ur]^-{i}
\ar[dr]_-{\epsilon_{x,x}\otimes1} &&
\ca{C}(x,y)\otimes\ca{C}(y,y) \ar @{_{(}->}[ul]_-{i}
\ar[dl]^-{1\otimes \epsilon_{y,y}} \\
& \qquad I\otimes\ca{C}(x,y)\qquad\ca{C}(x,y)\otimes I.\qquad &}
\end{displaymath}

As for comonads in any bicategory, a $\ca{V}$-cocategory
$\ca{C}$ with $\ob\ca{C}=X$ is the same as a comonoid
in the monoidal category $(\ca{V}$-$\B{Mat}(X,X),\circ,1_X)$.
Thus a one-object $\ca{V}$-cocategory
is the same as a comonoid in the monoidal category $\ca{V}$.
We denote such a $\ca{V}$-cocategory as $\ca{C}_X$ or $(C,X)$.
Analogously to $\ca{V}$-graphs and $\ca{V}$-categories,
the notation $(C,X)$ is preferred for the $\ca{V}$-matrices 
context, whereas $\ca{C}_X$ for the dual to the `classic
presentation' which basically corresponds to the componentwise 
version. The latter can evidently be expressed without the explicit
use of $\ca{V}$-matrices.

The next step is to define the appropriate morphisms
between $\ca{V}$-cocategories. For $\ca{V}$-graph 
arrows and $\ca{V}$-functors,
morphisms $F$ were initially defined in the standard way, \emph{i.e.}
consisting of certain arrows in $\ca{V}$ as in 
(\ref{defvgrapharrow}) and (\ref{classicVfunctor}).
Then, using the formulation in terms of 
$\ca{V}$-matrices, $F$ was expressed as a 
pair $(\phi,f)$, where $\phi$ is a 2-cell
in $\ca{V}$-$\Mat$ with components
\emph{isomorphic} arrows to the previous ones.
This led to the characterization of Definition 
\ref{charactVGrph}
for $\ca{V}$-$\B{Grph}$,
and allowed the $\ca{V}$-functor
axioms to be written in a colax monad functor 
style which resulted in characterization of 
Lemma \ref{charactVCat} for $\ca{V}$-$\B{Cat}$.
We similarly proceed for arrows for $\ca{V}$-cocategories.
\begin{defi}\label{cofunctor}
A $\ca{V}$-\emph{cofunctor} $F_f:\ca{C}_X\to\ca{D}_Y$
between two $\ca{V}$-cocategories 
is a morphism of $\ca{V}$-graphs,
consisting of a function $f:X\to Y$ between their sets
of objects and arrows in $\ca{V}$
\begin{equation}\label{Vcofunctarrows}
 F_{x,z}:\ca{C}(x,z)\to\ca{D}(fx,fz)
\end{equation}
for any two objects $x,z\in\ob\ca{C}$,
which satisfy the commutativity of
\begin{equation}\label{cofuncts}
\xymatrix @C=.03in @R=.1in
{\ca{C}(x,z)\ar[rrr]^-{\Delta^C_{x,z}}
\ar[dd]_-{F_{x,z}} &&&
\sum\limits_{y\in X}{\ca{C}(x,y)\otimes\ca{C}(y,z)}
\ar @{.>}[dd] \ar @/^2ex/[dr]^-{\sum\limits_{y}{F_{x,y}\otimes
F_{y,z}}} & \\
&&&& \sum\limits_{fy\in Y}{\ca{D}(fx,fy)\otimes\ca{D}(fy,fz)}
\ar @{>->}@/^3ex/[dl]^-{\iota} \\
\ca{D}(fx,fz)\ar[rrr]_-{\Delta^D_{fx,fz}} &&& 
\sum\limits_{w\in Y}{\ca{D}(fx,w)\otimes\ca{D}(w,fz)} &}
\end{equation}

\begin{displaymath}
\mathrm{and}\qquad\xymatrix @C=.7in @R=.3in
{\ca{C}(x,x)\ar[r]^-{\epsilon^C_{x,x}} 
\ar[d]_-{F_{x,x}} & I \\
\ca{D}(fx,fx).\ar[ur]_-{\epsilon^D_{fx,fx}} &}
\end{displaymath}
\end{defi}
The above commutative diagrams express
the compatibility with cocomposition and
coidentities.
Equivalently, we can view a $\ca{V}$-functor
as a pair 
$(\phi,f):(C,X)\to(D,Y)$ between two comonads
in $\ca{V}$-$\Mat$,
with $f:X\to Y$ a function and
$\phi$ a 2-cell $C\Rightarrow f^*Df_*$
which satisfies the equalities
\begin{align}\label{cofunctaxioms}
\xymatrix @C=.5in @R=.5in
{X \ar[r]|-{\object@{|}}^-C
\ar[d]|-{\object@{|}}_-{f_*}
\rrtwocell<\omit>{<-3>\Delta}
\drtwocell<\omit>{\hat{\phi}}
\ar @/^6ex/[rr]|-{\object@{|}}^-C
& X\ar[r]|-{\object@{|}}^-C
\ar[d]|-{\object@{|}}^-{f_*}
\drtwocell<\omit>{\hat{\phi}}
& X \ar[d]|-{\object@{|}}^-{f_*}\\
Y \ar[r]|-{\object@{|}}_-D
& Y \ar[r]|-{\object@{|}}_-D & Y} 
&\xymatrix @R=.2in{\hole \\ = \\ \hole}
\xymatrix @C=.5in @R=.3in
{X\ar[d]|-{\object@{|}}_-{f_*}
\ar[rr]|-{\object@{|}}^-C
\drrtwocell<\omit>{\hat{\phi}} && X
\ar[d]|-{\object@{|}}^-{f_*}\\
Y \rrtwocell<\omit>{<4>\Delta}
\ar @/_/[dr]|-{\object@{|}}_-D
\ar[rr]|-{\object@{|}}_-D && Y\\
& Y \ar @/_/[ur]|-{\object@{|}}_-D &} \\
\xymatrix @C=1in  @R=.4in
{X\rtwocell<\omit>{<-2>\epsilon}
\ar @/^5ex/[r]|-{\object@{|}}^-C
\ar[r]|-{\object@{|}}
_-{1_X}
\ar[d]|-{\object@{|}}_-{f_*}
^{\phantom{ab}\cong}
\ar @{.>}[dr]|-{\object@{|}}_-{f_*}
& X
\ar[d]|-{\object@{|}}^-{f_*}
_{\cong\phantom{ab}} \\
Y\ar[r]|-{\object@{|}}_-{1_Y}
 & Y}
&\xymatrix{ = \\ \hole}
\xymatrix @C=1in @R=.4in
{X\ar[r]|-{\object@{|}}
^-C \ar[d]|-{\object@{|}}
_-{f_*} & X \ar[d]|-{\object@{|}}
^-{f_*} \\
Y\ar @/_5ex/[r]|-{\object@{|}}
_-{1_Y} \rtwocell<\omit>{<2.5>\epsilon}
\ar[r]|-{\object@{|}}^-D 
\rtwocell<\omit>{<-4>\hat{\phi}} & Y}
\notag
\end{align}
for $\hat{\phi}:f_*C\Rightarrow Df_*$ the mate
of $\phi$ `on the one side'.
These two ways of defining a 
$\ca{V}$-cofunctor are equivalent in the sense
that there is a bijection between them. 
The components of $\hat{\phi}$ are given by
\begin{displaymath}
 \sum_{x'\in f^{\text{-}1}y}I\otimes C(x',x)
\to D(fx',fx)\otimes I
\end{displaymath}
which for fixed $x'$ are in bijection to 
(\ref{Vcofunctarrows}). The equalities 
(\ref{cofunctaxioms}) written in terms of components then
agree with the commutativity of 
(\ref{cofuncts}) up to appropriate tensoring with $I$.

It is not hard to see that $\ca{V}$-cofunctors compose,
also by viewing them as specific types of 
lax comonad functors dually to Remark \ref{Vfunct=monadopfunct}.
Therefore we obtain a category $\ca{V}$-$\B{Cocat}$ of 
$\ca{V}$-cocategories and $\ca{V}$-cofunctors.

Dually to Lemma \ref{B*monoid}, we have the following.
\begin{lem}\label{C*comonoid}
Let $(C,X)$ be a $\ca{V}$-cocategory.
If $f:X\to Y$ is a function, then the composite 
$\ca{V}$-matrix
\begin{displaymath}
\SelectTips{eu}{10}\xymatrix
{Y\ar[r]|-{\object@{|}}^-{f^*} &
X\ar[r]|-{\object@{|}}^-C &
X\ar[r]|-{\object@{|}}^-{f_*} &
Y}
\end{displaymath}
is a comonoid in $\ca{V}$-$\B{Mat}(Y,Y)$, which implies
that $(f_*Cf^*,Y)$ is also a $\ca{V}$-cocategory.
\end{lem}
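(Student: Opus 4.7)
My plan is to prove this as the formal dual of Lemma \ref{B*monoid}. The key observation is that, in any bicategory $\ca{K}$, a comonad in $\ca{K}$ is the same datum as a monad in the bicategory $\ca{K}^{\mathrm{co}}$ obtained by reversing 2-cells, and an adjunction $L \dashv R$ in $\ca{K}$ becomes $R \dashv L$ in $\ca{K}^{\mathrm{co}}$. Applied to our setting: $(C,X)$ is a monad in $(\ca{V}\textrm{-}\B{Mat})^{\mathrm{co}}$, and the adjunction $f_* \dashv f^*$ in $\ca{V}\textrm{-}\B{Mat}$ becomes $f^* \dashv f_*$ in $(\ca{V}\textrm{-}\B{Mat})^{\mathrm{co}}$. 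Lemma \ref{B*monoid}, interpreted in $(\ca{V}\textrm{-}\B{Mat})^{\mathrm{co}}$ with the function $f\colon X \to Y$, the monad $C$ on $X$, and the adjunction $f^* \dashv f_*$, produces a monad of the form ``$g^*(C)g_*$'' on $Y$, where in our new notation $g_* = f^*$ and $g^* = f_*$; the composite is therefore $f_* C f^*$, and being a monad in $(\ca{V}\textrm{-}\B{Mat})^{\mathrm{co}}(Y,Y)$ is exactly being a comonoid in $\ca{V}\textrm{-}\B{Mat}(Y,Y)$.

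Equivalently, and more concretely, I would exhibit the comultiplication and counit of $f_* C f^*$ directly, mirroring the construction of $f^* B f_*$ in Lemma \ref{B*monoid}. Define the comultiplication as the pasted 2-cell
\[
\Delta' \;=\; (f_*\, C\, \check\eta\, C\, f^*) \cdot (f_*\, \Delta\, f^*) \colon f_* C f^* \Rightarrow (f_* C f^*) \circ (f_* C f^*),
\]
where $\Delta\colon C \Rightarrow C \circ C$ is the comultiplication of $C$ and $\check\eta\colon 1_X \Rightarrow f^* \circ f_*$ is the unit of $f_* \dashv f^*$, inserted between the two copies of $C$ to produce the required $f^* f_*$ in the middle. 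Define the counit as
\[
\epsilon' \;=\; \check\varepsilon \cdot (f_*\, \epsilon\, f^*) \colon f_* C f^* \Rightarrow 1_Y,
\]
where $\epsilon \colon C \Rightarrow 1_X$ is the counit of $C$ and $\check\varepsilon\colon f_* \circ f^* \Rightarrow 1_Y$ is the counit of the adjunction. These are precisely dual to the multiplication $f^*\bigl(M \cdot (B\, \check\varepsilon\, B)\bigr)f_*$ and unit $(f^* \eta f_*) \cdot \check\eta$ of Lemma \ref{B*monoid}, with the roles of $\check\eta$ and $\check\varepsilon$ exchanged.

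The coassociativity of $\Delta'$ follows from the coassociativity of $\Delta$: computing $(1 \circ \Delta') \cdot \Delta'$ and $(\Delta' \circ 1) \cdot \Delta'$ by the interchange law, both sides reduce to the iterated comultiplication $\Delta^{(3)}\colon C \Rightarrow C \circ C \circ C$ whiskered by $f_*$ and $f^*$, combined with two insertions of $\check\eta$ in the two gaps, whose order is irrelevant by interchange. The counit axioms for $(\Delta',\epsilon')$ follow from the counit axioms of $(\Delta,\epsilon)$ combined with the triangle identities $(f^*\check\varepsilon)\cdot(\check\eta f^*) = 1_{f^*}$ and $(\check\varepsilon f_*)\cdot(f_*\check\eta) = 1_{f_*}$ of the adjunction $f_* \dashv f^*$; in the pasting diagram for $(1 \circ \epsilon') \cdot \Delta'$, the $\check\eta$ introduced by $\Delta'$ is immediately paired with the $\check\varepsilon$ inside $\epsilon'$, collapsing the adjunction data to an identity by the triangle identity, whereupon the counit axiom for $C$ yields the required right unitor. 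The main obstacle is purely bookkeeping with the associators and unitors of $\ca{V}\textrm{-}\B{Mat}$ so that domains and codomains match up; by coherence for bicategories (Theorem \ref{alldiagscommute}) these may be suppressed without loss. Finally, by Definition \ref{cocategory}, a comonoid structure on $f_* C f^*$ in $\ca{V}\textrm{-}\B{Mat}(Y,Y)$ is the same datum as a $\ca{V}$-cocategory structure on $(f_*Cf^*, Y)$, so the second assertion is immediate.
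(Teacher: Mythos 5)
Your proposal is correct and, in its concrete form, coincides exactly with the paper's own proof: the comultiplication $\Delta'=f_*\big((C\check{\eta}C)\cdot\Delta\big)f^*$ and counit $\epsilon'=\check{\varepsilon}\cdot(f_*\epsilon f^*)$ you write down are precisely the ones given there, with the axioms checked the same way via the comonoid axioms for $C$ and the triangle identities for $f_*\dashv f^*$. The formal-duality preamble (viewing this as Lemma \ref{B*monoid} in $(\ca{V}\textrm{-}\B{Mat})^{\mathrm{co}}$) is a sound extra observation consistent with the paper's remark that comonads are monads in $\ca{K}^{\mathrm{co}}$, but it does not change the substance of the argument.
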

\begin{proof}
The comultiplication $\Delta':f_*Cf^*\to f_*Cf^*f_*Cf^*$
and the counit $\epsilon':f_*Cf^*\to 1_X$
are given by the composites 
\begin{gather*}
\xymatrix @C=.4in @R=.05in
{\hole \\
Y \ar[r]|-{\object@{|}}^-{f^*}
& X\ar @/^6ex/[rrrr]|-{\object@{|}}^-C
\ar @/_/[dr]|-{\object@{|}}^-C
& \rrtwocell<\omit>{<-3>\Delta}
&&&
X\ar[r]|-{\object@{|}}^-{f_*} & Y \\
&& X\ar @/_/[dr]|-{\object@{|}}_-{f_*}
\ar @/^3ex/[rr]|-{\object@{|}}^-{1_X}
\rrtwocell<\omit>{\check{\eta}} &&
X\ar @/_/[ur]|-{\object@{|}}^-C && \\
&&&
Y\ar @/_/[ur]|-{\object@{|}}_-{f^*} &&&} \\
\xymatrix @R=.12in
{& X\ar @/_2ex/[r]|-{\object@{|}}_-{1_X}
\ar @/^2ex/[r]|-{\object@{|}}^-C 
\rtwocell<\omit>{\epsilon} 
\rtwocell<\omit>{<5.3>\check{\varepsilon}} &
X\ar[dr]|-{\object@{|}}^-{f_*} & \\
Y\ar @/_3ex/[rrr]|-{\object@{|}}_-{1_Y} 
\ar[ur]|-{\object@{|}}^-{f^*} &&& Y,}
\end{gather*}
where $\check{\varepsilon}$ and $\check{\eta}$
are the counit and unit of the adjunction
$f_*\dashv f^*$ in $\ca{V}$-$\B{Mat}$ and $\Delta$
and $\epsilon$ the comonoid structure maps of $C$. In terms
of pasting oparation, the new comultiplication
and counit can be written as 
\begin{gather*}
\Delta'=f_*\big((C\check{\eta}C)\cdot\Delta\big)f^*, \\
\epsilon'=\check{\varepsilon}\cdot(f_*\epsilon f^*).
\end{gather*}
The coassociativity
and counit axioms follow immediately from the axioms of
the comonoid$\SelectTips{eu}{10}\xymatrix @C=.2in
{C:X\ar[r]|-{\object@{|}} &X}$and the
the triangular identities for 
$\check{\varepsilon}$ and $\check{\eta}$.
\end{proof}
Once again, it can be deduced that the diagrams 
(\ref{cofunctaxioms}) a $\ca{V}$-cofunctor
$F:(C,X)\to(D,Y)$
has to satisfy coincide with the ones
for a comonoid arrow 
between $f_*Cf^*$ and $D$.
The following characterization is now established.
\begin{lem}\label{charactVCocat}
Objects in $\ca{V}$-$\B{Cocat}$ are pairs
$$(C,X)\in\Comon(\ca{V}\textrm{-}\B{Mat}(X,X))\times\B{Set}$$ 
and morphisms are pairs $(\psi,f):(C,X)\to(D,Y)$ where 
\begin{displaymath}
\begin{cases}
\psi:f_*Cf^*\to D &\textrm{in }\Comon(\ca{V}\textrm{-}\B{Mat}(Y,Y))\\
f:X\to Y & \textrm{in }\B{Set}.
\end{cases}
\end{displaymath}
\end{lem}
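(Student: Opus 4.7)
The plan is to mirror the strategy already used for $\ca{V}$-$\B{Cat}$ in Lemma \ref{charactVCat}, dualising each step. For the object part, I would simply invoke the general principle cited in Remark \ref{monadsaremonoids} (in its comonad form, already noted just after Definition \ref{comonadbicat}): a comonad on a 0-cell $X$ of a bicategory $\ca{K}$ is the same as a comonoid in the monoidal hom-category $(\ca{K}(X,X),\circ,1_X)$. Specialising to $\ca{K}=\ca{V}\text{-}\B{Mat}$, Definition \ref{cocategory} identifies $\ca{V}$-cocategories with object set $X$ with objects of $\Comon(\ca{V}\text{-}\B{Mat}(X,X))$, giving the stated description of objects on the nose.

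For morphisms I would start with the equivalent formulation of a $\ca{V}$-cofunctor as a pair $(\phi,f):(C,X)\to(D,Y)$ with $f:X\to Y$ a function and $\phi:C\Rightarrow f^*Df_*$ a 2-cell of $\ca{V}$-$\Mat$ satisfying (\ref{cofunctaxioms}). Using the adjunction $f_*\dashv f^*$ described in Section \ref{bicatVMat} together with Proposition \ref{mates}, I would then apply mates \emph{on both sides} to convert $\phi$ into a 2-cell $\psi:f_*Cf^*\Rightarrow D$, by pasting $\phi$ between two copies of the counit $\check{\varepsilon}:f_*f^*\Rightarrow 1_Y$. The standard bijectivity of the mate correspondence, applied twice, yields a bijection between 2-cells $\phi:C\Rightarrow f^*Df_*$ and 2-cells $\psi:f_*Cf^*\Rightarrow D$ in $\ca{V}$-$\Mat(Y,Y)$.

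What remains is to check that, under this bijection, the two diagrams of (\ref{cofunctaxioms}) (compatibility of $\phi$ with cocomultiplication and with counit) correspond exactly to the two diagrams expressing that $\psi$ is a morphism of comonoids from $f_*Cf^*$ (with the structure constructed in Lemma \ref{C*comonoid}) to $D$. For the comultiplication square, I would whisker the first equation of (\ref{cofunctaxioms}) on the left by $f_*$ and on the right by $f^*$, then paste a counit $\check{\varepsilon}$ into the middle region and use the explicit formula $\Delta'=f_*\bigl((C\check{\eta}C)\cdot\Delta\bigr)f^*$; a triangular-identity cleanup reduces this to the required equation $\Delta_D\cdot\psi = (\psi\circ\psi)\cdot\Delta'$. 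For the counit triangle, the formula $\epsilon'=\check{\varepsilon}\cdot(f_*\epsilon f^*)$ together with one triangular identity turns the second equation of (\ref{cofunctaxioms}) into $\epsilon_D\cdot\psi=\epsilon'$. Conversely, any comonoid morphism $\psi$ yields back a $\phi$ satisfying (\ref{cofunctaxioms}) by reversing the same pasting moves, and these two procedures are mutually inverse since mates are.

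The main obstacle is the pasting calculation in the third paragraph: the coassociativity compatibility involves two instances of $\psi$ composed in $\ca{V}$-$\Mat(Y,Y)$, so one has to carefully insert and then cancel pairs $\check{\eta},\check{\varepsilon}$ via the triangular identities in the right order. This is exactly the dual of the analogous step for $\ca{V}$-functors (the explicit diagram displayed just before Lemma \ref{charactVCat}), so the same argument, read in $\ca{V}\text{-}\Mat^{\mathrm{co}}$, applies without essential modification. The final identification of objects and arrows then matches the characterisation claimed in the lemma.
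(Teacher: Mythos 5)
Your proposal is correct and follows essentially the same route as the paper: objects are identified via the comonad-as-comonoid-in-$(\ca{K}(X,X),\circ,1_X)$ principle, and morphisms via the two-sided mate correspondence between $\phi:C\Rightarrow f^*Df_*$ and $\psi:f_*Cf^*\Rightarrow D$ under $f_*\dashv f^*$, checked against the comonoid structure of Lemma \ref{C*comonoid} by dualizing the pasting computation given for $\ca{V}$-$\B{Cat}$. The paper merely asserts this verification ("it can be deduced that the diagrams coincide"), whereas you spell out the triangular-identity bookkeeping, but the underlying argument is the same.
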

Notice how, out of the two equivalent
formulations for $\ca{V}$-graph morphisms of 
Definition \ref{charactVGrph}, $\ca{V}$-functors 
are expressed via pairs $(\phi,f)$ and 
$\ca{V}$-cofunctors are expressed via 
pairs $(\psi,f)$, where the 2-cells $\phi:G\Rightarrow f^*Hf_*$
and $\psi:f_*Gf^*\Rightarrow H$ 
are mates in $\ca{V}$-$\Mat$.

The category $\ca{V}$-$\B{Cocat}$ obtains a monoidal 
structure when $\ca{V}$
is symmetric mo\-noi\-dal. For two $\ca{V}$-cocategories 
$\ca{C}_X$ and $\ca{D}_Y$,
$\ca{C}\otimes\ca{D}$ is their tensor product 
as $\ca{V}$-graphs, \emph{i.e.}
has as set of objects the cartesian product $X\times Y$ and 
consists of the family of objects in $\ca{V}$
\begin{displaymath}
 (\ca{C}\otimes\ca{D})\big((z,w),(x,y)\big)=\ca{C}(z,x)\otimes\ca{D}(w,y).
\end{displaymath}
The cocomposition law is given by the composite
\begin{displaymath}
 \xymatrix @C=.7in @R=.2in
{\ca{C}(z,x)\otimes D(w,y)\ar@{-->}[r]
\ar@/_8ex/[ddr]_-{\Delta^C_{z,x}\otimes\Delta^D_{w,y}} &
\sum\limits_{(x',y')}{\ca{C}(z,x')\otimes
\ca{D}(w,y')\otimes\ca{C}(x',x)\otimes\ca{D}(y',y)} \\
& \sum\limits_{(x',y')}{\ca{C}(z,x')\otimes
\ca{C}(x',x)\otimes\ca{D}(w,y')\otimes\ca{D}(y',y)}\ar[u]_-s \\
& \sum\limits_{x'}{\ca{C}(z,x')\otimes\ca{C}(x',x)}\otimes
\sum\limits_{y'}{\ca{D}(w,y')\otimes\ca{D}(y',y)}
\ar[u]_-{\cong}}
\end{displaymath}
and the coidentity element is
\begin{displaymath}
\ca{C}(x,x)\otimes\ca{D}(y,y)\xrightarrow{\;\epsilon^C_{x,x}\otimes\epsilon^D_{y,y}\;}
I\otimes I\cong I.
\end{displaymath}
The unit for this tensor product is the unit $\ca{V}$-graph
$\ca{I}$ with obvious cocomposition and coidentities.
Similarly we can define the tensor product
of two $\ca{V}$-cofunctors between $\ca{V}$-cocategories,
and also symmetry is inherited,
hence $(\ca{V}\text{-}\B{Cocat},\otimes,\ca{I})$ is
a symmetric monoidal category. 

Dually to Proposition \ref{freeVcatfunctor},
we now construct the `cofree $\ca{V}$-cocategory'
functor using the cofree comonoid construction.
As discussed in Section 
\ref{Categoriesofmonoidsandcomonoids},
the existence of the cofree comonoid
usually requires more assumptions on $\ca{V}$
than the free monoid, and the following is no exception.
\begin{prop}\label{cofreeVcocatfunctor}
Suppose $\ca{V}$ is a locally presentable monoidal category,
such that $\otimes$ preserves colimits in both variables. 
Then, the evident 
forgetful functor
\begin{displaymath}
\tilde{U}:\ca{V}\textrm{-}\B{Cocat}\longrightarrow\ca{V}\textrm{-}\B{Grph}
\end{displaymath}
has a right adjoint $\tilde{R}$, which maps a $\ca{V}$-graph
$(G,Y)$ to the cofree comonoid $(RG,Y)$ on 
$G\in\ca{V}$-$\B{Mat}(Y,Y)$.
\end{prop}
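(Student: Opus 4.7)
The plan is to mimic the proof of Proposition \ref{freeVcatfunctor} in dual form, replacing the free monoid construction in $\ca{V}\text{-}\Mat(X,X)$ with the cofree comonoid construction, which under the hypotheses on $\ca{V}$ is available thanks to Corollary \ref{cofreecomonVMat}. Concretely, for each set $Y$, the locally presentable category $\ca{V}\text{-}\Mat(Y,Y)$ (Proposition \ref{propVMat}) has a monoidal structure via $\circ$ preserving colimits on both sides, so by Corollary \ref{cofreecomonVMat} the forgetful $U:\Comon(\ca{V}\text{-}\Mat(Y,Y))\to \ca{V}\text{-}\Mat(Y,Y)$ has a right adjoint $R$, the cofree comonoid functor, with counit $\varepsilon:UR\Rightarrow 1$.

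I would then define $\tilde{R}:\ca{V}\text{-}\B{Grph}\to\ca{V}\text{-}\B{Cocat}$ on objects by $\tilde{R}(G,Y)=(RG,Y)$, where by Lemma \ref{charactVCocat} the pair $(RG,Y)$ is a bona fide $\ca{V}$-cocategory. The counit $\tilde{\varepsilon}:\tilde{U}\tilde{R}(G,Y)\to(G,Y)$ is taken to be the pair $(\varepsilon_G,\mathrm{id}_Y)$, viewed as a $\ca{V}$-graph morphism with identity function on objects. I would then verify the required universal property: given any $\ca{V}$-cocategory $(C,X)$ and any $\ca{V}$-graph morphism $F=(\psi,f):(C,X)\to(G,Y)$ — where by Definition \ref{charactVGrph} (second form) $\psi:f_*Cf^*\to G$ is a 2-cell in $\ca{V}\text{-}\Mat(Y,Y)$ and $f:X\to Y$ a function — there is a unique $\ca{V}$-cofunctor $H:(C,X)\to(RG,Y)$ with $\tilde{U}H$ factoring $F$ through $\tilde{\varepsilon}$.

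The key observation is Lemma \ref{C*comonoid}: the object $f_*Cf^*$ inherits from the $\ca{V}$-cocategory $(C,X)$ the canonical structure of a comonoid in $\ca{V}\text{-}\Mat(Y,Y)$. Therefore the universal property of the cofree comonoid $RG$ produces a unique comonoid morphism $\chi:f_*Cf^*\to RG$ satisfying $\varepsilon_G\circ U\chi=\psi$. By Lemma \ref{charactVCocat}, the pair $(\chi,f)$ is precisely a $\ca{V}$-cofunctor $H:(C,X)\to(RG,Y)$, and unwinding the encoding via mates shows that $\tilde{\varepsilon}\circ\tilde{U}H=F$ as $\ca{V}$-graph morphisms. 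Uniqueness of $H$ follows from uniqueness of $\chi$, which in turn follows from the fact that distinct $\ca{V}$-cofunctors with the same underlying function $f$ correspond to distinct comonoid morphisms out of $f_*Cf^*$.

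The only step that requires genuine care — and which I expect to be the main (though modest) obstacle — is checking that the bijection between $\ca{V}$-graph morphisms $F$ and pairs $(\psi,f)$, and between $\ca{V}$-cofunctors $H$ and pairs $(\chi,f)$, is compatible in the sense that $F=\tilde{U}\tilde{R}H\circ\tilde{\varepsilon}$ on the graph side corresponds exactly to $\psi=\varepsilon_G\circ U\chi$ on the comonoid side. This amounts to a routine pasting computation in $\ca{V}\text{-}\Mat$ using the definition of the comonoid structure on $f_*Cf^*$ from Lemma \ref{C*comonoid} together with the triangle identities for the adjunction $f_*\dashv f^*$. Once this compatibility is verified, functoriality of $\tilde{R}$ and naturality of the resulting bijection $\ca{V}\text{-}\B{Cocat}((C,X),(RG,Y))\cong\ca{V}\text{-}\B{Grph}((C,X),(G,Y))$ are automatic, establishing the adjunction $\tilde{U}\dashv\tilde{R}$.
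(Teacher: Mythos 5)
Your proposal is correct and follows essentially the same route as the paper's own proof: define $\tilde{R}(G,Y)=(RG,Y)$ via the cofree comonoid functor from Corollary \ref{cofreecomonVMat}, use Lemma \ref{C*comonoid} to equip $f_*Cf^*$ with its comonoid structure, and extend $\psi$ uniquely to a comonoid morphism $\chi:f_*Cf^*\to RG$ which, by Lemma \ref{charactVCocat}, is the required $\ca{V}$-cofunctor. Your closing remark about verifying compatibility of the two encodings is a fair point of care that the paper treats as routine.
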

\begin{proof}
The forgetful functor $\tilde{U}$
maps any $\ca{V}$-cocategory $(C,X)$ to 
the `underlying' $\ca{V}$-graph $(UC,X)$,
where $U$ is the forgetful functor from
the category of comonoids
of the monoidal category 
($\ca{V}$-$\B{Mat}(Y,Y),\circ,1_Y)$. 
By Corollary \ref{cofreecomonVMat}, $U$
has a right adjoint
\begin{displaymath}
R:\ca{V}\textrm{-}\Mat(Y,Y)\longrightarrow
\Comon(\ca{V}\textrm{-}\Mat(Y,Y))
\end{displaymath}
namely the cofree comonoid functor.
By Lemma \ref{charactVCocat}, the pair 
$(RG,Y)$ where $RG$ is the cofree comonoid
on an endoarrow$\SelectTips{eu}{10}
\xymatrix @C=.2in{G:Y\ar[r]|-{\object@{|}} & Y}$is 
in fact a $\ca{V}$-cocategory 
with set of objects $Y$. 
We claim that the mapping
\begin{equation}\label{deftildeR}
\tilde{R}:\xymatrix @R=.02in
{\ca{V}\textrm{-}\B{Grph}\ar[r] & 
\ca{V}\textrm{-}\B{Cocat}\\
(G,Y)\ar@{|->}[r] &
(RG,Y)}
\end{equation}
gives rise to a right adjoint of the forgetful $\tilde{U}$.
It is enough to show that for 
$\varepsilon$ the counit of the cofree
comonoid adjunction $U\dashv R$,
the $\ca{V}$-graph arrow
$\tilde{\varepsilon}=(\varepsilon,\mathrm{id}_Y):\tilde{U}\tilde{R}(G,Y)
\to(G,Y)$
is universal. This means that for 
any $\ca{V}$-cocategory $\ca{C}_X$
and any $\ca{V}$-graph morphism $F$ from its underlying
$\ca{V}$-graph $\tilde{U}(C,X)$ to $(G,Y)$, there exists 
a unique $\ca{V}$-cofunctor $H:(C,X)\to(RG,Y)$
such that the diagram
\begin{equation}\label{thisuniversality}
\xymatrix @R=.35in
{\tilde{U}(RG,Y)\ar[rr]^-{\tilde{\varepsilon}} && (G,Y)\\
& \tilde{U}(C,X)\ar @{.>}[ul]^-{\tilde{U}H}
\ar[ur]_-F &}
\end{equation} 
commutes.

The $\ca{V}$-graph arrow $F$ can be seen as a pair
$(\psi,f)$ where $f:X\to Y$ is the function
on objects and $\psi:f_*Cf^*\to G$ is an arrow
in $\ca{V}$-$\B{Mat}(Y,Y)$.
However, by Lemma \ref{C*comonoid}
the composite $f_*Cf^*$ is
an object of $\Comon(\ca{V}$-$\B{Mat}(Y,Y))$, since $C$
is a comonoid itself. Due to $RG$
being the cofree comonoid on $G$, this $\psi$ extends
uniquely to a comonoid arrow $\chi:f_*Cf^*\to RG$
such that the diagram
\begin{displaymath}
\xymatrix @R=.35in
{RG \ar[rr]^-{\varepsilon} && G\\
& f_*Cf^* \ar[ur]_-{\psi}
\ar @{.>}[ul]^-{U\chi} &}
\end{displaymath}
commutes in $\ca{V}$-$\B{Mat}(Y,Y)$.
Then, by Lemma \ref{charactVCocat} 
this 2-cell $\chi$ in $\Comon(\ca{V}$-$\B{Mat}(Y,Y))$
along with the function $f:X\to Y$
determines a $\ca{V}$-cofunctor 
$H:(C,X)\to(RG,Y)$, which satisfies 
the commutativity of (\ref{thisuniversality}).
Therefore $\tilde{R}$ extends to a functor 
with mapping on objects as in (\ref{deftildeR}),
which establishes
the `cofree $\ca{V}$-cocategory' adjunction
$\tilde{U}\dashv\tilde{R}:\ca{V}\text{-}\B{Grph}
\to\ca{V}\text{-}\B{Cocat}$.
\end{proof}
At this point, properties of $\ca{V}$-$\B{Cocat}$
cease to be straightforward dualizations of the ones
of $\ca{V}$-$\B{Cat}$. As an example,
in order to deduce results such as comonadicity 
of $\ca{V}$-$\B{Cocat}$ over $\ca{V}$-$\B{Grph}$, 
we will later show that $\ca{V}$-$\B{Cocat}$
is locally presentable via a different method,
under the conditions for the existence of the 
cofree $\ca{V}$-cocategory functor $\tilde{R}$.
 
We close this section by the construction of colimits in 
$\ca{V}$-$\B{Cocat}$. In fact, this follows from
the construction of colimits in $\ca{V}$-$\B{Grph}$
in Proposition \ref{Vgrphcocomplete}, with 
an induced extra structure on the colimiting
cocone which amounts to
a colimit of $\ca{V}$-cocategories.
\begin{prop}\label{VCocatcocomplete}
Suppose that $\ca{V}$ is a locally presentable
monoidal category, such that $\otimes$ preserves colimits
in both variables. The category $\ca{V}$-$\B{Cocat}$ has
all small colimits.
\end{prop}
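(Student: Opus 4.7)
My plan is to mirror the construction of colimits in $\ca{V}$-$\B{Grph}$ from Proposition \ref{Vgrphcocomplete}, lifting it to carry a $\ca{V}$-cocategory structure via the comonoidal nature of the hom-categories of $\ca{V}$-$\B{Mat}$. Given a diagram $F:\ca{J}\to\ca{V}\text{-}\B{Cocat}$ with $F(j)=(C_j,X_j)$, I would first compose with the forgetful $\tilde{U}:\ca{V}\text{-}\B{Cocat}\to\ca{V}\text{-}\B{Grph}$ and apply Proposition \ref{Vgrphcocomplete} to obtain a colimiting cocone $\big((\lambda_j,\tau_j):(C_j,X_j)\to(G,X)\,|\,j\in\ca{J}\big)$ in $\ca{V}$-$\B{Grph}$, with $X=\colim_j X_j$ in $\B{Set}$ and $G=\colim_j K(j)$ in $\ca{V}\text{-}\Mat(X,X)$ for the diagram $K$ of (\ref{defKdiagram}).

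The core of the proof is to equip $(G,X)$ with a $\ca{V}$-cocategory structure. By Lemma \ref{C*comonoid}, every object $K(j)=(\tau_j)_*C_j(\tau_j)^*$ is a comonoid in $\ca{V}\text{-}\Mat(X,X)$. The key observation I would exploit is that for each $\theta:j\to k$ in $\ca{J}$, the transition $K(\theta)$ displayed in (\ref{Konarrows}) is itself a comonoid morphism: this relies on the functoriality of the assignment $g_*(-)g^*$ on comonoid morphisms (implicit in Lemma \ref{charactVCocat}, since $\psi_\theta$ is a comonoid morphism $(f_\theta)_*C_j(f_\theta)^*\to C_k$), combined with the compatibility of the canonical isomorphisms $\zeta,\xi$ of Lemma \ref{isosofstars} with the comultiplications and counits. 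Thus $K$ lifts to a diagram $\tilde{K}:\ca{J}\to\Comon(\ca{V}\text{-}\Mat(X,X))$. Under our hypotheses on $\ca{V}$, Corollary \ref{cofreecomonVMat} gives that the forgetful $\Comon(\ca{V}\text{-}\Mat(X,X))\to\ca{V}\text{-}\Mat(X,X)$ is comonadic, hence creates colimits. Therefore $G$ acquires a unique comonoid structure as the colimit of $\tilde{K}$, with each $\lambda_j$ becoming a comonoid morphism; by Lemma \ref{charactVCocat} this upgrades $(\lambda_j,\tau_j)$ to a $\ca{V}$-cofunctor.

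For the universal property, I would take any cocone $\big((\chi_j,f_j):(C_j,X_j)\to(D,Y)\,|\,j\in\ca{J}\big)$ in $\ca{V}$-$\B{Cocat}$ and extract a unique $\ca{V}$-graph morphism $(\mu,f):(G,X)\to(D,Y)$ from universality in $\ca{V}$-$\B{Grph}$. Presenting it as $\mu:f_*Gf^*\to D$ via Lemma \ref{charactVCocat}, what remains is to verify that $\mu$ is a comonoid morphism. Since horizontal composition with $f_*$ on the left and $f^*$ on the right preserves colimits by Proposition \ref{propVMat}(ii), applying $f_*(-)f^*$ to the colimiting cocone in $\Comon(\ca{V}\text{-}\Mat(X,X))$ yields a colimiting cocone in $\Comon(\ca{V}\text{-}\Mat(Y,Y))$ from $j\mapsto (f_j)_*C_j(f_j)^*$ to $f_*Gf^*$; then $\mu$ is the unique mediating map into $D$ in this category of comonoids, and hence is automatically a comonoid morphism. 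The main technical obstacle is the careful bookkeeping of the coherence isomorphisms $\zeta,\xi$ needed to verify that $K$ genuinely lifts to the category of comonoids and that the transferred cocones in $\Comon$ behave as claimed; once that is settled, everything else follows formally from comonadicity and colimit preservation.
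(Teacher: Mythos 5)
Your proposal is correct and follows essentially the same route as the paper: construct the colimit of underlying $\ca{V}$-graphs as in Proposition \ref{Vgrphcocomplete}, observe via Lemma \ref{C*comonoid} and the form of (\ref{Konarrows}) that the diagram $K$ lifts to $\Comon(\ca{V}\text{-}\Mat(X,X))$, and use comonadicity (Corollary \ref{cofreecomonVMat}) to create the comonoid structure on the colimit. The paper leaves the universal-property check implicit, whereas you spell it out; that is a harmless elaboration of the same argument.
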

\begin{proof}
Consider a diagram in $\ca{V}$-$\B{Cocat}$ given by
\begin{displaymath}
 D:
\xymatrix @R=.05in @C=.6in
{\ca{J}\ar[r] & \ca{V}\textrm{-}\B{Cocat} \\
j\ar@{|.>}[r]
\ar[dd]_-\theta & 
(C_j,X_j)\ar[dd]^-{(\psi_\theta,f_\theta)} \\
\hole \\
k\ar@{|.>}[r] & (C_k,X_k)}
\end{displaymath}
for a small category $\ca{J}$.
By Lemma \ref{charactVCocat},
$f_\theta:X_j\to X_k$ is a function
and $\psi_\theta$ is an arrow
$(f_\theta)_*C_j(f_\theta)^*\to C_k$
in $\Comon(\ca{V}$-$\Mat(X_k,X_k))$, i.e.
a 2-cell in $\ca{V}$-$\Mat$
\begin{displaymath}
 \xymatrix
{X_j\ar[r]|-{\object@{|}}
^-{C_j} & X_j \ar[d]|-{\object@{|}}
^-{(f_\theta)_*}\\
X_k\ar[u]|-{\object@{|}}
^-{(f_\theta)^*}\ar[r]|-{\object@{|}}_-{C_k}
\rtwocell<\omit>{<-4>\;\psi_\theta}
& X_k}
\end{displaymath}
satisfying the usual comonoid morphism properties.
We can first construct the colimit of 
the underlying $\ca{V}$-graphs of 
this diagram as in Proposition 
\ref{Vgrphcocomplete}. We then obtain a colimiting cocone
\begin{equation}\label{colimgraph}
 \big((C_j,X_j)\xrightarrow{\;(\lambda_j,\tau_j\;)}
(C,X)\,|\,j\in\ca{J}\big)
\end{equation}
in $\ca{V}$-$\B{Grph}$, where 
$(\tau_j:X_j\to X\,|\,j\in\ca{J})$
is the colimit of the sets of objects
of the $\ca{V}$-cocategories in $\B{Set}$,
and $(\lambda_j:(\tau_j)_*C_j(\tau_j)^*\to C\,|\,j\in\ca{J})$ 
is the colimiting cocone
of the diagram $K$ as in (\ref{defKdiagram})
in the cocomplete
$\ca{V}$-$\Mat(X,X)$.

Notice that $K:\ca{J}\to\ca{V}\textrm{-}\Mat(X,X)$
in fact lands inside $\Comon(\ca{V}$-$\Mat(X,X))$:
Lemma \ref{C*comonoid} ensures that $\ca{V}$-matrices
of the form $f_*Cf^*$ for any comonoid $C$
inherit a comonoid structure, and  also
the composite arrows (\ref{Konarrows})
where the middle 2-cell is now the comonoid arrow 
$\psi_\theta$ ensure that $K\theta$ are comonoid morphisms.
Since by Corollary \ref{cofreecomonVMat}
the category of comonoids
is comonadic over $\ca{V}\textrm{-}\Mat(X,X)$,
the respective forgetful functor creates all
colimits, therefore$\SelectTips{eu}{10}\xymatrix@C=.2in
{C:X\ar[r]|-{\object@{|}} & 
X}$obtains a unique
comonoid structure. Moreover, the legs of the cocone
\begin{displaymath}
\xymatrix
{X_j\ar[r]|-{\object@{|}}
^-{C_j} & X_j \ar[d]|-{\object@{|}}
^-{(\tau_j)_*}\\
X\ar[u]|-{\object@{|}}
^-{(\tau_j)^*}\ar[r]|-{\object@{|}}_-{C}
\rtwocell<\omit>{<-4>\lambda_j}
& X}
\end{displaymath}
are comonoid arrows, hence together with the 
functions $\tau_j$ they form $\ca{V}$-cofunctors.
Thus the colimit (\ref{colimgraph})
lifts in $\ca{V}$-$\B{Cocat}$.
\end{proof}

\section{Enrichment of $\ca{V}$-categories in $\ca{V}$-cocategories}
\label{enrichmentofVcatsinVcocats}

We now wish to extend the results
presented in Section \ref{Universalmeasuringcomonoid},
where the existence of the 
universal measuring comonoid and 
the induced enrichment of monoids in comonoids
were established.
Similarly to the previous development,
we aim to identify an \emph{action}
of the symmetric monoidal closed
category $\ca{V}$-$\B{Cocat}$ on the ordinary
category $\ca{V}$-$\B{Cat}$ (or better its opposite), with a
parametrized adjoint which will turn out to
be the `enriched-hom' functor of a 
($\ca{V}$-$\B{Cocat}$)-enriched category
with underlying category $\ca{V}$-$\B{Cat}$. The relevant
theory which underlies this process is contained
in Section \ref{actions}.

Suppose that $\ca{V}$ is a cocomplete symmetric monoidal closed 
category with products.
Recall that there 
exists a lax functor of bicategories
\begin{displaymath}
 \Hom:(\ca{V}\textrm{-}\Mat)^{\textrm{co}}
\times\ca{V}\textrm{-}\Mat\longrightarrow
\ca{V}\textrm{-}\Mat
\end{displaymath}
defined as in (\ref{defimportHom}).
Then the functor between the hom-categories (of endoarrows)
$\Hom_{(X,Y),(X,Y)}$ induces the internal hom 
$^g\Hom:\ca{V}\textrm{-}\B{Grph}^\op\times\ca{V}\textrm{-}\B{Grph}\to
\B{Grph}$ of $\ca{V}$-graphs
as described in Proposition
\ref{VGrphclosed}, via 
\begin{displaymath}
\Hom((G,X),(H,Y))(k,s):=
\prod_{x,x'\in X}{[G(x',x),H(kx',sx)]}
\end{displaymath}
for all $k,s\in Y^X$.
Moreover, by Lemma \ref{lemmonlaxfun},
every lax functor between bicategories
induces a functor between monoids of hom-categories
of endoarrows. For the lax functor $\Hom$,
we obtain
\begin{equation}\label{MonHom_}
\scriptstyle{\Mon(\Hom_{(X,Y),(X,Y)})}:\;
\scriptstyle{\Comon(\ca{V}\textrm{-}\Mat(X,X))^\op
\;\times\;\Mon(\ca{V}\textrm{-}\Mat(Y,Y))}\;\to\;
\scriptstyle{\Mon(\ca{V}\textrm{-}\Mat(Y^X,Y^X))}
\end{equation}
which is just the restriction of $\Hom_{(X,Y),(X,Y)}$
on the category
\begin{align*}
\scriptstyle{\Mon\big((\ca{V}\textrm{-}\Mat^\mathrm{co}\times\ca{V}\textrm{-}\Mat)
((X,Y),(X,Y))\big)}
&\scriptstyle{\cong\;\Mon\big(\ca{V}\textrm{-}\Mat(X,X)^\op
\times\ca{V}\textrm{-}\Mat(Y,Y)\big)} \\
&\scriptstyle{\cong\;\Mon\big(\ca{V}\textrm{-}\Mat(X,X)^\op\big)\times
\Mon\big(\ca{V}\textrm{-}\Mat(Y,Y)\big)} \\
&\scriptstyle{\cong\;\Comon\big(\ca{V}\textrm{-}\Mat(X,X)\big)^\op\times
\Mon\big(\ca{V}\textrm{-}\Mat(Y,Y)\big)}.
\end{align*}
Since a $\ca{V}$-cocategory $\ca{C}_X=(C,X)$
has the structure of a comonoid in the monoidal
$(\ca{V}$-$\Mat(X,X),\circ,1_X)$
and a $\ca{V}$-category $\ca{B}_Y=(B,Y)$
has the structure of a monoid in 
$(\ca{V}$-$\Mat(Y,Y),\circ,1_Y)$,
we deduce that $\Mon(\Hom_{(X,Y),(X,Y)})$
is in fact the object mapping of a functor
\begin{equation}\label{defK}
 K:\ca{V}\textrm{-}\B{Cocat}^\op\times\ca{V}\textrm{-}\B{Cat}
\longrightarrow\ca{V}\textrm{-}\B{Cat}
\end{equation}
which is the restriction of the functor $^g\Hom$
on the product of $\ca{V}$-cocategories and $\ca{V}$-categories.
This concretely means 
that whenever we have a $\ca{V}$-cocategory $\ca{C}_X$
and a $\ca{V}$-category $\ca{B}_Y$, the $\ca{V}$-graph
$K(\ca{C}_X,\ca{B}_Y)\equiv{\Hom(\ca{C},\ca{B})_{Y^X}}$ 
obtains the structure
of a $\ca{V}$-category. 

Explicitly, for each triple of functions $k,s,t\in Y^X$, the
composition law $M:K(\ca{C},\ca{B})(k,s)\otimes K(\ca{C},\ca{B})(s,t)
\to K(\ca{C},\ca{B})(k,t)$ for $\ca{K}(\ca{C},\ca{B})$ is
an arrow
\begin{displaymath}
\prod_{a,a}{[\ca{C}(a',a),\ca{B}(ka',sa)]}\otimes
\prod_{b,b'}{[\ca{C}(b',b),\ca{B}(sb',tb)]}\to
\prod_{c,c'}{[\ca{C}(c',c),\ca{B}(kc',tc)].}
\end{displaymath}
This is defined via its adjunct under the usual tensor-hom adjunction
\begin{displaymath}
\xymatrix @C=.5in
{\scriptstyle{\prod\limits_{a,a'}{[\ca{C}(a',a),\ca{B}(ka',sa)]}\otimes
\prod\limits_{b,b'}{[\ca{C}(b',b),\ca{B}(sb',tb)]}\otimes
\ca{C}(c',c)}\ar[d]_-{\scriptscriptstyle{1\otimes\Delta_{c',c}}}\ar@{-->}[r] & 
\scriptstyle{\ca{B}(kc',tc)}\\
\scriptstyle{\prod\limits_{a,a'}{[\ca{C}(a',a),\ca{B}(ka',sa)]}\otimes
\prod\limits_{b,b'}{[\ca{C}(b',b),\ca{B}(sb',tb)]}\otimes
\sum\limits_{c''}{\ca{C}(c',c'')\otimes\ca{C}(c'',c)}\ar[d]_-{\scriptscriptstyle{s}}} &\\
\scriptstyle{\sum\limits_{c''}{\prod\limits_{a,a'}{[\ca{C}(a',a),\ca{B}(ka',sa)]}\otimes
\ca{C}(c',c'')\otimes
\prod\limits_{b,b'}{[\ca{C}(b',b),B(sb',tb)]}\otimes
\ca{C}(c'',c)}} \ar[d]_-{\scriptscriptstyle{\pi_{c',c''}\otimes1\otimes\pi_{c'',c}\otimes1}} & \\
\scriptstyle{\sum\limits_{c''}{[\ca{C}(c',c''),\ca{B}(kc',sc'')]\otimes
\ca{C}(c',c'')\otimes[\ca{C}(c'',c),\ca{B}(sc'',tc)]\otimes
\ca{C}(c'',c)}}
\ar[r]_-{\scriptscriptstyle{\mathrm{ev}\otimes\mathrm{ev}}} &
\scriptstyle{\sum\limits_{c''}{\ca{B}(kc',sc'')\otimes\ca{B}(sc'',tc)}}
\ar[uuu]_-{\scriptscriptstyle{M_{kc',tc}}}}
\end{displaymath}
for fixed $c,c'$.
The identities for each object $s\in Y^X$ are arrows
\begin{equation}\label{identities}
\eta_k:I\to K(\ca{C},\ca{B})(k,k)=\prod_{a,a'\in X}{[\ca{C}(a',a),\ca{B}(sa',sa)]}
\end{equation}
which correspond uniquely for fixed $a=a'\in X$ 
to the composite
\begin{displaymath}
\xymatrix @R=.3in
{I\otimes\ca{C}(a,a)\ar @{-->}[rrr]
\ar @/_/[dr]_-{1\otimes\epsilon_{a,a}} &&& \ca{B}(sa,sa). \\
& I\otimes I\ar[r]_-{r_I} & I\ar @/_/[ur]_-{\eta_{sa,sa}} &}
\end{displaymath}
At the diagrams above, 
$\Delta$ and $\epsilon$ are the cocomposition and coidentites of $\ca{C}$
and $M,$ $\eta$ the composition and identities of $\ca{B}$.
For $a\neq a'$, 
the arrow (\ref{identities}) corresponds to 
\begin{displaymath}
 I\otimes\ca{C}(a',a)\xrightarrow{1\otimes\epsilon_{a',a}}0\xrightarrow{!}
\ca{B}(sa',sa).
\end{displaymath}
Moreover, it can be checked that
for a $\ca{V}$-cofunctor
$F_f:\ca{C}'_{X'}\to\ca{C}_X$ and a $\ca{V}$-functor
$G_g:\ca{B}_Y\to\ca{B}'_{Y'}$, the
$\ca{V}$-graph arrow
\begin{displaymath}
^g\Hom(F,G)_{g^f}:{^g\Hom}(\ca{C},\ca{B})_{Y^X}
\to{^g\Hom}(\ca{C}',\ca{B}')_{Y'^{X'}}
\end{displaymath}
as defined in (\ref{defHom(F,D)}) is in fact
a $\ca{V}$-functor between the $\ca{V}$-categories,
\emph{i.e.} respects the compositions and 
identities described above.
Therefore we deduce that the functor $K$ 
is well defined.
\begin{prop}\label{Kaction}
Suppose that $\ca{V}$ is a cocomplete symmetric monoidal closed
category with products. The functor $K$ (\ref{defK}) is an action, and so is 
its opposite functor
\begin{displaymath}
 K^\op:
\xymatrix @R=.02in 
{\ca{V}\text{-}\B{Cocat}\times
\ca{V}\text{-}\B{Cat}^\op\ar[r] &
\ca{V}\text{-}\B{Cat}^\op\quad \\
\qquad(\;\ca{C}_X\;,\;\ca{B}_Y\;)\qquad\ar@{|->}[r] & 
\Hom^\op(\ca{C},\ca{B})_{Y^X}.}
\end{displaymath}
\end{prop}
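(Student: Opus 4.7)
The plan is to deduce the action structure on $K$ from the action structure of the internal hom of $\ca{V}$-$\B{Grph}$, to which $K$ is a restriction, and then to obtain $K^{\op}$ as an action by the general observation that opposites of actions are actions.

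First, I would recall that, since $\ca{V}$ is a cocomplete symmetric monoidal closed category with products, Proposition~\ref{VGrphclosed} makes $\ca{V}$-$\B{Grph}$ into a symmetric monoidal closed category with internal hom $^g\Hom$. Consequently Lemma~\ref{inthomaction} (applied with base $\ca{V}$-$\B{Grph}$ in place of $\ca{V}$) endows the bifunctor
\[
{^g\Hom}:(\ca{V}\textrm{-}\B{Grph})^{\op}\times\ca{V}\textrm{-}\B{Grph}\longrightarrow\ca{V}\textrm{-}\B{Grph}
\]
with the structure of an action of $(\ca{V}\textrm{-}\B{Grph})^{\op}$ on $\ca{V}$-$\B{Grph}$, via canonical isomorphisms $\chi^{g}_{\ca{G},\ca{H},\ca{M}}\colon{^g\Hom}(\ca{G}\otimes\ca{H},\ca{M})\xrightarrow{\sim}{^g\Hom}(\ca{G},{^g\Hom}(\ca{H},\ca{M}))$ and $\nu^{g}_{\ca{M}}\colon{^g\Hom}(\ca{I},\ca{M})\xrightarrow{\sim}\ca{M}$ satisfying the axioms of Section~\ref{actions}.

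Next, I would observe that $K$ is literally the restriction of $^g\Hom$ along the forgetful functors $\ca{V}\textrm{-}\B{Cocat}\to\ca{V}\textrm{-}\B{Grph}$ and $\ca{V}\textrm{-}\B{Cat}\to\ca{V}\textrm{-}\B{Grph}$, and that both forgetful functors are strict symmetric monoidal (since the tensor products on $\ca{V}$-$\B{Cocat}$ and $\ca{V}$-$\B{Cat}$ were defined by inheriting the underlying graph structure and adding the induced (co)composition and (co)identities). Therefore, once $\ca{C}_{X},\ca{D}_{Y}$ are $\ca{V}$-cocategories and $\ca{B}_{Z}$ is a $\ca{V}$-category, both $K(\ca{C}\otimes\ca{D},\ca{B})$ and $K(\ca{C},K(\ca{D},\ca{B}))$ are $\ca{V}$-categories with underlying $\ca{V}$-graphs the two sides of $\chi^{g}$, and $K(\ca{I},\ca{B})$ has underlying graph the domain of $\nu^{g}$. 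I would then take the desired structure isomorphisms $\chi_{\ca{C},\ca{D},\ca{B}}$ and $\nu_{\ca{B}}$ to be $\chi^{g}$ and $\nu^{g}$ at these arguments, with the respective $\ca{V}$-category structure on the codomain.

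The main content of the argument—and the main obstacle—is checking that these candidate isomorphisms are in fact $\ca{V}$-functors, \emph{i.e.}\ that they intertwine the composition laws and identity elements of the $\ca{V}$-categories $K(\ca{C}\otimes\ca{D},\ca{B})$ and $K(\ca{C},K(\ca{D},\ca{B}))$, and of $K(\ca{I},\ca{B})$ and $\ca{B}$. This is a componentwise calculation: unfolding the composition law of $K(-,-)$ described right after (\ref{defK}), the composition on $K(\ca{C}\otimes\ca{D},\ca{B})$ is built from the cocomposition of $\ca{C}\otimes\ca{D}$, whereas that on $K(\ca{C},K(\ca{D},\ca{B}))$ is built iteratively from the cocompositions of $\ca{C}$ and of $\ca{D}$; the cocomposition of $\ca{C}\otimes\ca{D}$ is defined (see Section~\ref{VcatsandVcocats}) precisely so that after transposing under the various $-\otimes-\dashv[-,-]$ adjunctions in $\ca{V}$ and using the symmetry, the two match. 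The unit case is similar and easier, using the trivial cocomposition on $\ca{I}$ and the triangle for $r_I=l_I$. The action axioms (\ref{actiondiag}) for $(\chi,\nu)$ then follow: since the forgetful $\ca{V}\textrm{-}\B{Cat}\to\ca{V}\textrm{-}\B{Grph}$ is faithful, it suffices that the axioms hold at the level of underlying $\ca{V}$-graphs, where they are precisely those of the action $(^g\Hom,\chi^{g},\nu^{g})$.

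Finally, since $\ca{V}\textrm{-}\B{Cocat}$ is symmetric monoidal, so is its opposite, and the general remark in Section~\ref{actions} that the opposite of an action is again an action immediately yields that $K^{\op}\colon\ca{V}\textrm{-}\B{Cocat}\times\ca{V}\textrm{-}\B{Cat}^{\op}\to\ca{V}\textrm{-}\B{Cat}^{\op}$ is an action, with structure isomorphisms $\chi^{-1}$ and $\nu^{-1}$.
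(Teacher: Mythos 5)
Your proposal is correct and follows essentially the same route as the paper: both deduce the action structure from the internal-hom action on $\ca{V}$-$\B{Grph}$ (Proposition \ref{VGrphclosed} plus Lemma \ref{inthomaction}), restrict along the strict monoidal forgetful functors, and pass to $K^{\op}$ by the general fact that opposites of actions are actions. The only difference is presentational: you spell out the verification that $\chi^{g}$ and $\nu^{g}$ are $\ca{V}$-functors, whereas the paper compresses this by invoking conservativity of $\tilde{S}:\ca{V}\text{-}\B{Cat}\to\ca{V}\text{-}\B{Grph}$ to reflect the isomorphisms — a step that tacitly presupposes exactly the compatibility check you make explicit.
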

\begin{proof}
By Lemma \ref{inthomaction}, the internal hom functor in 
any symmetric monoidal closed category $\ca{V}$ 
constitutes an action of $\ca{V}^\op$ on $\ca{V}$. Thus
for the symmetric monoidal closed category
of $\ca{V}$-graphs, the functors 
\begin{displaymath}
 ^g\Hom:\ca{V}\textrm{-}\B{Grph}^\op\times
\ca{V}\textrm{-}\B{Grph}\to\ca{V}\textrm{-}\B{Grph}
\end{displaymath}
as well as ${^g\Hom}^\op$ are actions.
As stressed earlier, $K$ is the restriction of $^g\Hom$
on $\ca{V}$-$\B{Cocat}^\op\times\ca{V}$-$\B{Cat}$,
hence there exists isomorphisms 
\begin{align*}
 \Hom(\ca{C}\otimes\ca{D},\ca{A})
&\xrightarrow{\;\sim\;}\Hom(\ca{C},\Hom(\ca{D},\ca{A})) \\
\Hom(\ca{I},\ca{D})&\xrightarrow{\;\sim\;}\ca{D}
\end{align*}
for any $\ca{V}$-cocategories $\ca{C}_X$, $\ca{D}_Y$ and 
$\ca{V}$-category $\ca{A}_Z$, initially in $\ca{V}$-$\B{Grph}$.
Notice that $\otimes$
and $\ca{I}$ of the monoidal $\ca{V}$-$\B{Cocat}$ are inherited from 
$\ca{V}$-$\B{Grph}$, and $\Hom$ is the object function
of both $^g\Hom$ and $K$.

Since $\tilde{S}:\ca{V}\text{-}\B{Cat}\to\ca{V}\text{-}\B{Grph}$
is conservative, these isomorphisms are reflected into 
$\ca{V}$-$\B{Cat}$, and the coherence diagrams still commute.
Therefore $K$ is an action, and in particular its opposite functor
$K^\op$ is an action of the symmetric monoidal category
$\ca{V}$-$\B{Cocat}$ on the category $\ca{V}$-$\B{Cat}^\op$.
\end{proof}
What is left to show is that this action $K^\op$ 
has a parametrized adjoint, which will induce 
the enrichment of the category on
which the monoidal category acts. In order to prove the 
existence of the adjoint in question, we need 
some preliminary results which further clarify
the structure of $\ca{V}$-$\B{Cocat}$.

First of all, we can apply the techniques from
Propositions \ref{moncomonadm} and
\ref{comodlocpresent} regarding the 
expression of the categories $\Comon(\ca{V})$ and
$\Comod_\ca{V}(C)$ as an equifier,
so that we obtain the following result.
\begin{prop}\label{VCocatlocpresent}
 Suppose that $\ca{V}$ is a locally presentable
monoidal category, such that $(-\otimes-)$ preserves
colimits on both sides. Then, the category $\ca{V}$-$\B{Cocat}$
is a locally presentable category.
\end{prop}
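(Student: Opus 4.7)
The plan is to adapt the template used in Proposition \ref{moncomonadm} and refined in Proposition \ref{comodlocpresent}, namely to realise $\ca{V}$-$\B{Cocat}$ as an accessible subcategory of the locally presentable $\ca{V}$-$\B{Grph}$ (Proposition \ref{VGrphlocpresent}), cut out by inserters and equifiers of accessible functors, and then invoke Proposition \ref{VCocatcocomplete} for cocompleteness to promote accessibility to local presentability. The characterization of Lemma \ref{charactVCocat} makes this approach natural: a $\ca{V}$-cocategory is precisely a $\ca{V}$-graph equipped with a comultiplication and a counit satisfying coassociativity and counit axioms.

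More concretely, I would first construct two accessible endofunctors $F_1, F_2 \colon \ca{V}\text{-}\B{Grph} \to \ca{V}\text{-}\B{Grph}$ by $F_1(G,X) = (G \circ G, X)$ and $F_2(G,X) = (1_X, X)$, with action on a morphism $(\phi,f)\colon (G,X)\to (H,Y)$ given respectively by combining $\phi \circ \phi$ with the canonical map $f^*Hf_* f^* H f_* \to f^*(H \circ H)f_*$ arising from the counit $\check{\varepsilon}\colon f_*f^* \Rightarrow 1_Y$, and by whiskering the unit $\check{\eta}\colon 1_X \Rightarrow f^*f_*$ of the same adjunction with $f$. Both functors preserve sufficiently filtered colimits because composition in $\ca{V}$-$\Mat$ is cocontinuous on each side (Proposition \ref{propVMat}(ii)) and filtered colimits in $\ca{V}$-$\B{Grph}$ are constructed as in Proposition \ref{Vgrphcocomplete}.

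Next, I would form the simultaneous inserter of $(\mathrm{id}, F_1)$ and $(\mathrm{id}, F_2)$ over $\ca{V}$-$\B{Grph}$, obtaining the category of $\ca{V}$-graphs equipped with 2-cells $\Delta \colon G \to G \circ G$ and $\epsilon \colon G \to 1_X$ together with morphisms $(\phi,f)$ commuting strictly with these data, and then take the equifier of the pairs of natural transformations corresponding to the coassociativity pentagon and the two counit triangles of Definition \ref{cocategory}. By Lemma \ref{charactVCocat}, the resulting category is isomorphic to $\ca{V}$-$\B{Cocat}$. Since inserters and equifiers of accessible functors between accessible categories remain accessible, by the weighted-limit theorem of Makkai and Par\'e quoted in the discussion preceding Proposition \ref{moncomonadm}, $\ca{V}$-$\B{Cocat}$ is an accessible category; combined with the cocompleteness established in Proposition \ref{VCocatcocomplete}, this yields local presentability.

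The main obstacle I anticipate is the verification that $F_1$ and $F_2$ are genuinely accessible endofunctors of $\ca{V}$-$\B{Grph}$, rather than just fibrewise ones on each $\ca{V}$-$\Mat(X,X)$. On objects they fix the underlying set, but on morphisms with non-identity $f$ the definition crucially uses the adjunction $f_* \dashv f^*$ and the canonical isomorphisms $\zeta^{g,f}, \xi^{g,f}$ of Lemma \ref{isosofstars}, so both functoriality and preservation of $\lambda$-filtered colimits require a careful check against the explicit colimit formula of Proposition \ref{Vgrphcocomplete}, in particular verifying that, for a filtered cocone $\tau_j\colon X_j\to X$, the composite $\ca{V}$-matrices $(\tau_j)_* 1_{X_j}(\tau_j)^*$ have colimit $1_X$ and similarly for $G\circ G$. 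Once this accessibility check is in place, the remainder of the argument is a routine assembly of inserters and equifiers.
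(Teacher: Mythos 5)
Your proposal is correct and follows essentially the same route as the paper: the paper packages your $F_1$ and $F_2$ into the single endofunctor $F(G,X)=(G\circ G,X)\times(1_X,X)$ and works with $\Coalg F=\mathbf{Ins}(\mathrm{id},F)$ (which is exactly your simultaneous inserter), then carves out $\ca{V}$-$\B{Cocat}$ as an equifier of three pairs of natural transformations and concludes accessibility plus cocompleteness. The one step you flag but defer --- that $G\mapsto(G\circ G,X)$ and $G\mapsto(1_X,X)$ preserve filtered colimits of $\ca{V}$-graphs, in particular that inserting the unit $\check{\eta}\colon 1_{X_j}\Rightarrow(\tau_j)^*(\tau_j)_*$ does not change the colimit --- is precisely where the paper's proof does its real work, via a finality argument comparing two discrete opfibrations over the filtered index category.
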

\begin{proof}
 Define an endofunctor on the category of $\ca{V}$-graphs by
\begin{displaymath}
 F:\xymatrix @R=.07in @C=.5in
{\ca{V}\textrm{-}\B{Grph}\ar[r] & \ca{V}\textrm{-}\B{Grph}\quad \\
(G,X)\ar@{|.>}[r]\ar[dd]_-{(\psi,f)} & (G\circ G,X)\times(1_X,X)
\ar[dd]^-{F(\psi,f)} \\
\hole \\
(H,Y)\ar@{|.>}[r] & (H\circ H,Y)\times(1_Y,Y).}
\end{displaymath}
The mapping on arrows,
for a 2-cell $\psi:f_*Gf^*\Rightarrow H$, is explicitly
\begin{displaymath}
\xymatrix @C=.5in @R=.7in
{X\rrtwocell<\omit>{<6>\psi}
\ar[rr]|-{\object@{|}}^-G && X
\ar[d]|-{\object@{|}}_-{f_*}
\ar[r]|-{\object@{|}}^-{1_X}
\rtwocell<\omit>{<4>\check{\eta}}
& X\rtwocell<\omit>{<6>\psi}
\ar[r]|-{\object@{|}}^-G & X\ar[d]|-{\object@{|}}^-{f_*} \\
Y\ar[u]|-{\object@{|}}^-{f^*}\ar[rr]|-{\object@{|}}_-H &&
Y\ar@/_/[ur]|-{\object@{|}}_-{f^*} \ar[rr]|-{\object@{|}}_-H && Y}
\xymatrix @R=.1in{\hole \\ \times}
\xymatrix @C=.5in @R=.65in
{X\ar[rr]|-{\object@{|}}^-{1_X}
\ar[drr]|-{\object@{|}}^-{f_*}
 & \rtwocell<\omit>{<4>\cong} & X\ar[d]|-{\object@{|}}^-{f_*} \\
Y\ar[u]|-{\object@{|}}^-{f^*} \ar[rr]|-{\object@{|}}_-{1_Y}
\rtwocell<\omit>{<-4>{\check{\varepsilon}}} &&  Y}
\end{displaymath}
where the left unitor $\lambda$ of the bicategory $\ca{V}$-$\Mat$ is suppressed.

The category of coalgebras $\Coalg F$
for this endofunctor
has as objects $\ca{V}$-graphs $(C,X)$
equipped with a
morphism $\alpha:C\to C\circ C\times 1_X$, \emph{i.e.}
two $\ca{V}$-graph arrows 
\begin{displaymath}
 \alpha_1:(C,X)\to (C\circ C,X)\quad\textrm{and}\quad\alpha_2:(C,X)\to
(1_X,X).
\end{displaymath}
A morphism $(C,\alpha)\to(D,\beta)$
is a $\ca{V}$-graph morphism $(\psi,f):(C,X)\to(D,Y)$
which is compatible with $\alpha$ and $\beta$,
\emph{i.e.} satisfy the equalities
\begin{displaymath}
 \xymatrix @R=.6in
{X\ar@/^8ex/[rrrr]|-{\object@{|}}^-C
\rrtwocell<\omit>{<5>\psi}
\ar[rr]|-{\object@{|}}^-C &
\rrtwocell<\omit>{<-4>\;\alpha_1} & X
\rtwocell<\omit>{<3>\check{\eta}}
\ar[d]|-{\object@{|}}_-{f_*}
\ar[r]|-{\object@{|}}^-{1_X}
& X\rtwocell<\omit>{<5>\psi}
\ar[r]|-{\object@{|}}^-C & X\ar[d]|-{\object@{|}}^-{f_*} \\
Y\ar[u]|-{\object@{|}}^-{f^*}\ar[rr]|-{\object@{|}}_-D &&
Y\ar@/_/[ur]|-{\object@{|}}_-{f^*} \ar[rr]|-{\object@{|}}_-D && Y}
\xymatrix @R=.1in{\hole \\ = \\ \hole}
\xymatrix @R=.2in
{X \ar[rr]|-{\object@{|}}^-C
\rrtwocell<\omit>{<5>\psi}&& X
\ar[dd]|-{\object@{|}}^-{f_*}\\
& \\
Y \ar[uu]|-{\object@{|}}^-{f^*}
\rrtwocell<\omit>{<3>\beta_1}
\ar @/_/[dr]|-{\object@{|}}_-D
\ar[rr]|-{\object@{|}}^-D && Y\\
& Y \ar @/_/[ur]|-{\object@{|}}_-D &}
\end{displaymath}
\begin{displaymath}
\xymatrix @C=.6in @R=.6in
{X\rrtwocell<\omit>{<-2>\;\alpha_2}
\ar @/^5ex/[rr]|-{\object@{|}}^-C
\ar[rr]|-{\object@{|}}_-{1_X}
\ar @{.>}[drr]|-{\object@{|}}_-{f_*}
&& X\ar[d]|-{\object@{|}}^-{f_*}
_{\cong\phantom{ab}} \\
Y\ar[u]|-{\object@{|}}^-{f^*}
\rtwocell<\omit>{<-3.5>\check{\varepsilon}}
\ar[rr]|-{\object@{|}}_-{1_Y}
 && Y}
\xymatrix{ = \\ \hole}
\xymatrix @C=1.1in @R=.6in
{X\ar[r]|-{\object@{|}}
^-C  & X \ar[d]|-{\object@{|}}
^-{f_*} \\
Y\ar[u]|-{\object@{|}}
^-{f^*}
\ar @/_5ex/[r]|-{\object@{|}}
_-{1_Y} \rtwocell<\omit>{<2>\;\beta_2}
\ar[r]|-{\object@{|}}^-D 
\rtwocell<\omit>{<-5>\psi} & Y.}
\end{displaymath}
Notice that 
the category $\Coalg F$ 
contains $\ca{V}$-$\B{Cocat}$ as a 
full subcategory: the morphisms are precisely
the same, by comparing the above diagrams with 
(\ref{cofunctaxioms}) where $\hat{\phi}$ is a mate
of $\psi$, and objects are $\ca{V}$-graphs equipped with cocomposition
and coidentities arrows that don't necessarily satisfy 
coassociativity and counit axioms. 

Since $\ca{V}$-$\B{Cocat}$ is a cocomplete
category by Proposition \ref{VCocatcocomplete},
we claim that it is furthermore accessible,
thus a locally presentable category. It is enough to express
$\ca{V}$-$\B{Cocat}$ as an equifier of a family of 
pairs of natural transformations between
accessible functors, \emph{i.e.} functors
between accessible categories that 
preserve filtered colimits.

First of all, we have to show that the endofunctor
$F$ preserves all filtered colimits.
Take a colimiting cocone
\begin{displaymath}
 \big((G_j,X_j)\xrightarrow{\;(\lambda_j,\tau_j)\;}
(G,X)\,|\,j\in\ca{J}\big)
\end{displaymath}
in $\ca{V}$-$\B{Grph}$ for a diagram like 
(\ref{diagraminVgraph}) for a small filtered category
$\ca{J}$, constructed as in
Proposition \ref{Vgrphcocomplete}, \emph{i.e.}
$(\tau_j:X_j\to X)$ is a colimiting cocone in $\B{Set}$
and $(\lambda_j:(\tau_j)_*C_j(\tau_j)^*\to C)$ is a
colimiting cocone in $\ca{V}$-$\Mat(X,X)$. We require its image
under $F$
\begin{equation}\label{imageunderF}
 F(\lambda_j,\tau_j):
(G_j\circ G_j,X_j)\times(1_{X_j},X_j)\to
(G\circ G,X)\times(1_X,X)
\end{equation}
to be a colimiting cocone in $\ca{V}$-$\B{Grph}$.

For the first part of the diagram, 
we can immediately deduce that 
\begin{displaymath}
(\tau_j)_*\circ G_j\circ(\tau_j)^*\circ(\tau_j)_*\circ G_j\circ(\tau_j)^*
\xrightarrow{\;\lambda_j*\lambda_j\;}G\circ G
\end{displaymath}
is a colimit in $(\ca{V}$-$\Mat(X,X),\circ,1_X)$, as the 
tensor product (horizontal composite) of two colimiting cocones. We claim that 
pre-composing this with the unit 
\begin{displaymath}
1*\check{\eta}*1:(\tau_j)_*\circ G_j\circ1_{X_j}\circ G_j\circ(\tau_j)^*
\to(\tau_j)_*\circ G_j\circ(\tau_j)^*\circ(\tau_j)_*\circ G_j\circ(\tau_j)^*
\end{displaymath}
still gives a colimiting cocone. Indeed, if we take components in $\ca{V}$
of the respective 2-cells in $\ca{V}$-$\Mat$, this comes down to showing that 
the inclusion
\begin{displaymath}
 \sum^{\scriptscriptstyle{\stackrel{\tau_ju=x'}{\tau_jw=x}}}_{z\in X_j}
{G_j(u,z)\otimes G_j(z,w)}\hookrightarrow
\sum^{\scriptscriptstyle{\stackrel{\tau_ju=x'}{\tau_jw=x}}}_{\tau_ja=\tau_jb}
{G_j(u,a)\otimes G_j(b,w)}
\end{displaymath}
for any two fixed $x,x'\in X$, where $u,w,a,b\in X_j$, does not 
alter the colimit. One way of showing this is by considering
the following discrete opfibrations over the filtered shape 
$\ca{J}$:
\begin{align*}
\ca{L}&=\{(j,a,b)\,|\,j\in\ca{J},a,b\in X_j,\tau_ja=\tau_jb\} \\
\ca{M}&=\{(j,z)\,|\,j\in\ca{J},z\in X_j\}
\end{align*}
where for example the arrows $(j,a,b)\to(j',a',b')$ in $\ca{L}$ 
are determined by arrows $\theta:j\to j'$ 
in $\ca{J}$ such that $a'=f_\theta(a)$ and $b'=f_\theta(b)$
(the function $f_\theta:X_j\to X_{j'}$ 
is the image of the diagram (\ref{diagraminVgraph}) in $\B{Set}$).
We can now define diagrams of shape $\ca{L}$ and 
$\ca{M}$ in $\ca{V}$
\begin{displaymath}
 L:\xymatrix@R=.02in{\ca{L}\ar[r] & \ca{V}\qquad\qquad \\
(j,a,b)\ar@{|->}[r] & G_j(u,a)\otimes G_j(b,w)}\qquad
M:\xymatrix@R=.02in{\ca{M}\ar[r] & \ca{V}\qquad\qquad \\
(j,z)\ar@{|->}[r] & G_j(u,z)\otimes G_j(z,w)}
\end{displaymath}
and appropriately on morphisms. The colimits for these diagrams in 
$\ca{V}$, taking into account that the fibres 
are discrete categories, are
\begin{align*}
 \colim L & \cong\colim_j\sum_{\tau_ja=\tau_jb}
{G_j(u,a)\otimes G_j(b,w)} \\
\colim M & \cong\colim_j\sum_{z\in X_j}
{G_j(u,z)\otimes G_j(z,w)}.
\end{align*}
Finally, notice that there exists a functor 
$T:\ca{M}\to\ca{L}$ mapping each $(j,z)$ to $(j,z,z)$
and making the triangle
\begin{displaymath}
 \xymatrix
{\ca{M}\ar[rr]^-T\ar[dr]_-M && \ca{L}\ar[dl]^-L \\
&\ca{V}&}
\end{displaymath}
commute. Due to the construction of filtered colimits in $\B{Set}$,
it is not hard to show that the slice category $\big((j,z,w)\downarrow T\big)$
is non-empty and connected. Hence $T$ is a final 
functor and we can restrict the diagram on $\ca{L}$
to $\ca{M}$ without changing the colimit, as claimed.

For the second part of the diagram, it is enough to show that
\begin{displaymath}
\xymatrix @C=.5in @R=.25in
{\rrtwocell<\omit>{<5>\check{\varepsilon}}  &
X_j\ar[dr]|-{\object@{|}}^-{(\tau_j)_*} & \\
X\ar@/_2ex/[rr]|-{\object@{|}}_-{1_X} 
\ar[ur]|-{\object@{|}}^-{(\tau_j)^*} && Y}
\end{displaymath}
is a colimiting cocone in $\ca{V}$-$\Mat(X,X)$, for the diagram
mapping each $j$ to 
\begin{displaymath}
\xymatrix{X\ar[r]|-{\object@{|}}^-{(\tau_j)^*} & 
X_j\ar[r]|-{\object@{|}}^-{1_{X_j}} & 
X_j\ar[r]|-{\object@{|}}^-{(\tau_j)_*} & X} 
\end{displaymath}
as in (\ref{defK}). This can be established
by first verifying that $\check{\varepsilon}$ is a cocone, 
and then that it has the required universal property. 

We have thus shown that the cocone (\ref{imageunderF}) is indeed
colimiting, hence $F$ is a finitary functor as required.
This part of the proof is due to Ignacio Lopez Franco.

Since $\ca{V}$-$\B{Grph}$ is locally presentable
and the endofunctor $F$ preserves filtered colimits,
$\Coalg F$ is a locally presentable category 
by the basic facts for endofunctor coalgebra categories
in Section \ref{Categoriesofmonoidsandcomonoids}. Also
the forgetful functor 
$\overline{V}:\Coalg F\to\ca{V}$-$\B{Grph}$
creates all colimits. Now consider the following pairs of 
natural transformations
between functors from $\Coalg F$ to $\ca{V}$-$\B{Grph}$:
\begin{displaymath}
\phi^1,\psi^1:\overline{V}\Rightarrow FF\overline{V},\quad
\phi^2,\psi^2:\overline{V}\Rightarrow (-\circ 1_X)\overline{V},\quad
\phi^3,\psi^3:\overline{V}\Rightarrow \overline{V}(-\circ 1_X)
\end{displaymath}
given by the components
\begin{align*}
\phi^1_{(C,X)}:
\xymatrix @R=.1in
{X\ar @/_/[dr]|-{\object@{|}}_-C
\ar @/^3ex/[drr]|-{\object@{|}}^-C
\ar @/^4ex/[rrr]|-{\object@{|}}^-C
\drrtwocell<\omit>{<+.3>\;\alpha_1}
&\drrtwocell<\omit>{<-1.3>\;\alpha_1} && X, \\
& X\ar[r]|-{\object@{|}}_-C &
X\ar @/_/[ur]|-{\object@{|}}_-C &}&\quad
\psi^1_{(C,X)}:
\xymatrix @R=.1in
{X\ar @/_/[dr]|-{\object@{|}}_-C
\ar @/^4ex/[rrr]|-{\object@{|}}^-C
&&& X \\
\urrtwocell<\omit>{<-1.3>\;\alpha_1} & 
X\ar[r]|-{\object@{|}}_-C 
\ar @/^3ex/[urr]|-{\object@{|}}^-C 
\urrtwocell<\omit>{<+.3>\;\alpha_1} &
X\ar @/_/[ur]|-{\object@{|}}_-C &} \\
\phi^2_{(C,X)}:
\xymatrix @C=.6in @R=.05in
{X\drrtwocell<\omit>{<-2.3>\;\alpha_1}
\drtwocell<\omit>{<-0.4>\;\alpha_2}
\ar @/_2ex/[dr]|-{\object@{|}}_-{1_X}
\ar @/^3ex/[dr]|-{\object@{|}}^-C 
\ar @/^4ex/[rr]|-{\object@{|}}^-C
&& X, \\
& X \ar @/_/[ur]|-{\object@{|}}_-C &}&\quad
\psi^2_{(C,X)}:
\xymatrix @R=.05in @C=.6in
{X \ar@/_/[dr]|-{\object@{|}}
_-{1_X} \ar @/^3ex/[rr]|-{\object@{|}}^-C
\rrtwocell<\omit>{'\cong} && X \\
& X\ar@/_/[ur]|-{\object@{|}}_-{C}  &} \\
\phi^3_{(C,X)}:
\xymatrix @R=.05in @C=.6in
{X \ar @/_/[dr]|-{\object@{|}}_-C
\ar @/^4ex/[rr]|-{\object@{|}}^-C
&& X, \\
\urrtwocell<\omit>{<-2.3>\;\alpha_1}
& X \ar @/_2ex/[ur]|-{\object@{|}}_-{1_X} 
\ar @/^3ex/[ur]|-{\object@{|}}^-C 
\urtwocell<\omit>{<-0.4>\;\alpha_2} &}&\quad
\psi^3_{(C,X)}:
\xymatrix @R=.05in @C=.6in
{X \ar@/_/[dr]|-{\object@{|}}
_-{C} \ar @/^3ex/[rr]|-{\object@{|}}^-C
\rrtwocell<\omit>{'\cong} && X. \\
 & X\ar@/_/[ur]|-{\object@{|}}_-{1_X} &}
\end{align*}
It is now clear that the full
subcategory of $\Coalg F$ spanned by those objects
$(C,X)$ which satisfy $\phi^i_{(C,X)}=\psi^i_{(C,X)}$
is precisely the category of $\ca{V}$-cocategories,
\begin{displaymath}
 \B{Eq}((\phi^i,\psi^i)_{i=1,2,3})=\ca{V}\textrm{-}\B{Cocat}
\end{displaymath}
as in Definition \ref{cocategory}. Since all categories
and functors involved are accessible,
$\ca{V}$-$\B{Cocat}$ is accessible too.
\end{proof}
The fact that $\ca{V}$-$\B{Cocat}$ is a locally 
presentable category is very useful for the proof
of existence of various adjoints, as seen below.
\begin{prop}\label{VCocatcomonadic}
 Suppose $\ca{V}$ is a locally presentable monoidal category
 such that $\otimes$ preserves colimits in both entries.
 The forgetful functor $\tilde{U}:\ca{V}$-$\B{Cocat}\to
\ca{V}$-$\B{Grph}$ is comonadic.
\end{prop}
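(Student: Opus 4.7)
The strategy I propose is to verify the hypotheses of the (dual of the) Precise Comonadicity Theorem: $\tilde U$ is comonadic iff it has a right adjoint and creates equalizers of $\tilde U$-split pairs. The right adjoint $\tilde R$ has already been produced in Proposition \ref{cofreeVcocatfunctor}, so the entire work lies in the equalizer condition. The plan is to exploit the same machinery used for $\Comon(\ca{V})$ in the sketch of Proposition \ref{moncomonadm}, namely to factorize $\tilde U$ through the category of coalgebras for an auxiliary endofunctor on $\ca{V}\text{-}\B{Grph}$ which is already known to be comonadic in a transparent way.

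Concretely, I would take $F:\ca{V}\text{-}\B{Grph}\to\ca{V}\text{-}\B{Grph}$ to be the endofunctor $(G,X)\mapsto (G\circ G,X)\times(1_X,X)$ introduced in the proof of Proposition \ref{VCocatlocpresent}, and consider the commutative triangle
\begin{displaymath}
\xymatrix @C=.3in @R=.4in
{\ca{V}\text{-}\B{Cocat}\ar@{^(->}[rr]^-{\iota}\ar[dr]_-{\tilde U} && \Coalg F\ar[dl]^-{\overline V} \\
& \ca{V}\text{-}\B{Grph} &}
\end{displaymath}
where $\iota$ is the inclusion of the equifier $\B{Eq}((\phi^i,\psi^i)_{i=1,2,3})$ identified in Proposition \ref{VCocatlocpresent}. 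The leg $\overline V$ is comonadic by the standard facts about endofunctor coalgebras recalled in Section~\ref{Categoriesofmodulesandcomodules}: it has a right adjoint because $F$ preserves filtered colimits and $\ca{V}\text{-}\B{Grph}$ is locally presentable, and it creates all colimits plus those limits preserved by $F$, in particular all absolute equalizers.

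It then remains to check that the inclusion $\iota$ creates the equalizers of $\tilde U$-split pairs, for then the composite $\tilde U=\overline V\circ\iota$ creates them as well, and reflection of isomorphisms follows automatically. A $\tilde U$-split pair in $\ca{V}\text{-}\B{Cocat}$ is in particular a $\overline V$-split pair in $\Coalg F$, so its equalizer exists in $\Coalg F$ and is absolute. Because absolute equalizers are preserved by every functor, and the natural transformations $\phi^i,\psi^i$ whose equifier cuts out $\ca{V}\text{-}\B{Cocat}$ are built from functors that all preserve such equalizers, the absolute equalizer of a pair of $\ca{V}$-cofunctors lies inside the equifier, so $\iota$ creates it. The main obstacle I anticipate is precisely this last verification: checking carefully that an absolute equalizer of $\ca{V}$-cofunctors, taken a priori in $\Coalg F$, inherits the coassociativity and counit equalities and that the equalizing map is itself a $\ca{V}$-cofunctor. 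Once this is in place, Precise Comonadicity applies and yields the comonadicity of $\tilde U$.
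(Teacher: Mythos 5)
Your proposal follows essentially the same route as the paper: the right adjoint from Proposition \ref{cofreeVcocatfunctor}, the factorization $\tilde{U}=\overline{V}\circ\iota$ through $\Coalg F$ for the same endofunctor, comonadicity of $\overline{V}$, creation of the relevant equalizers by $\iota$, and Precise (Co)monadicity. The only difference is cosmetic: where you argue via absoluteness of the split equalizer downstairs and preservation by the equifier functors (note it is the $\overline{V}$-image, not the lifted equalizer in $\Coalg F$, that is literally split), the paper simply observes that both categories are complete and that the full inclusion $\iota$ preserves and reflects, hence creates, all limits.
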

\begin{proof}
 By Proposition \ref{cofreeVcocatfunctor}
the forgetful $\tilde{U}$ has a right adjoint, namely
the cofree $\ca{V}$-cocategory functor $\tilde{R}$.
By adjusting the arguments of Proposition
\ref{moncomonadm},
consider the following commutative triangle
\begin{displaymath}
\xymatrix @C=.7in @R=.5in
{\ca{V}\textrm{-}\B{Cocat}\ar[dr]_-{\tilde{U}}
\ar@{^(->}[r]^-{\iota} & \Coalg G 
\ar[d]^-{\overline{V}} \\
& \ca{V}\textrm{-}\B{Grph}}
\end{displaymath}
where the top functor is the inclusion of
the full subcategory in the functor coalgebra category 
as described above, and the respective forgetful functors
discard the structures maps $\alpha$
of the coalgebras.
We already know that 
$\Coalg F$ is comonadic over $\ca{V}$-$\B{Grph}$,
hence $\overline{V}$ creates equalizers of 
split pairs,
so it is enough to show that the inclusion $\iota$
also creates equalizers of
split pairs, since we already have $\tilde{U}\dashv\tilde{R}$.
Both $\ca{V}$-$\B{Cocat}$
and $\ca{V}$-$\B{Grph}$ are locally presentable categories
so in particular complete,
and it is easy to see that $\iota$ preserves and reflects,
thus creates, all limits. Hence $\tilde{U}$ satisfy the conditions of 
Precise Monadicity Theorem and the result follows.
\end{proof}
\begin{prop}\label{VCocatclosed}
 Suppose that $\ca{V}$ is a locally 
presentable symmetric monoidal closed category. Then the category
of $\ca{V}$-cocategories is symmetric monoidal closed as well.
\end{prop}
\begin{proof}
The symmetric monoidal structure of $\ca{V}$-$\B{Cocat}$
was described in the previous section
and is given by a functor of two variables
\begin{displaymath}
 -\otimes-:\ca{V}\text{-}\B{Cocat}^\op\times\ca{V}\text{-}\B{Cocat}
\to\ca{V}\text{-}\B{Cocat}.
\end{displaymath}
The functor $(-\otimes\ca{D}_Y)$ for a 
fixed $\ca{V}$-cocategory $\ca{D}_Y$
evidently has a right adjoint:
the following commutative diagram
\begin{displaymath}
 \xymatrix @C=.7in @R=.5in
{\ca{V}\textrm{-}\B{Cocat}\ar[r]^-{(-\otimes\ca{D}_Y)}
\ar[d]_-{\tilde{U}} & \ca{V}\textrm{-}\B{Cocat}\ar[d]^-{\tilde{U}} \\
\ca{V}\textrm{-}\B{Grph}\ar[r]_-{(-\otimes\tilde{U}\ca{D}_Y)} &
\ca{V}\textrm{-}\B{Grph}}
\end{displaymath}
shows it is cocontinuous, since 
the comonadic $\tilde{U}$ creates all colimits
and the bottom arrow preserves them by the adjunction
$(-\otimes\ca{G}_Y)\dashv{^g\Hom}(\ca{G}_Y,-)$
for any $\ca{V}$-graph $\ca{G}_Y$ (Proposition \ref{VGrphclosed}).
Also $\ca{V}$-$\B{Cocat}$ is a locally presentable category,
hence cocomplete with a small dense subcategory. 
Thus by Theorem \ref{Kelly} for example, we have an adjunction
\begin{equation}\label{adjunctionHOMg}
\xymatrix @C=.8in
{\ca{V}\textrm{-}\B{Cocat} \ar @<+.8ex>[r]^-
{-\otimes \ca{D}_Y}\ar@{}[r]|-{\bot}
& \ca{V}\textrm{-}\B{Cocat}\ar @<+.8ex>[l]^-
{^g\HOM(\ca{D}_Y,-)}}
\end{equation}
which exhibits the uniquely induced bifunctor
\begin{displaymath}
 ^g\HOM:\ca{V}\textrm{-}\B{Cocat}^\op\times
\ca{V}\textrm{-}\B{Cocat}\longrightarrow\ca{V}\textrm{-}\B{Cocat}
\end{displaymath}
as the internal hom of $\ca{V}$-$\B{Cocat}$.
\end{proof}
At this point, we possess all the necessary tools
in order to show the existence of an 
adjoint of the action $K^\op$ 
as outlined earlier, as well as demonstrate the
enrichment of $\ca{V}$-categories in $\ca{V}$-cocategories.
\begin{prop}\label{Texistence}
 The functor $K^\op:\ca{V}\textrm{-}\B{Cocat}\times
\ca{V}\textrm{-}\B{Cat}^\op\to
\ca{V}\textrm{-}\B{Cat}^\op$ has a parametrized adjoint
\begin{equation}\label{defT}
 T:\ca{V}\textrm{-}\B{Cat}^\op\times\ca{V}\textrm{-}\B{Cat}
\longrightarrow
\ca{V}\textrm{-}\B{Cocat},
\end{equation}
given by adjunctions $K(-,\ca{B}_Y)^\op\dashv T(-,\ca{B}_Y)$
for every $\ca{V}$-category $\ca{B}_Y$.
\end{prop}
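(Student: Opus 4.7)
The strategy mirrors that of Proposition \ref{measuringcomonoidprop}. By Theorem \ref{parametrizedadjunctions} on adjunctions with a parameter, it suffices to show that for each fixed $\ca{V}$-category $\ca{B}_Y$, the functor $K(-,\ca{B}_Y)^\op : \ca{V}\textrm{-}\B{Cocat} \to \ca{V}\textrm{-}\B{Cat}^\op$ admits a right adjoint; the bifunctor $T$ and its three-variable naturality will then be determined uniquely. Since $\ca{V}\textrm{-}\B{Cocat}$ is locally presentable by Proposition \ref{VCocatlocpresent}, hence cocomplete with a small dense subcategory, Theorem \ref{Kelly} reduces the task to verifying that $K(-,\ca{B}_Y)^\op$ is cocontinuous, equivalently that $K(-,\ca{B}_Y) : \ca{V}\textrm{-}\B{Cocat}^\op \to \ca{V}\textrm{-}\B{Cat}$ is continuous.

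To prove continuity, my plan is to exploit the commutative square
\begin{displaymath}
\xymatrix @C=.7in @R=.5in
{\ca{V}\textrm{-}\B{Cocat}^\op \ar[r]^-{K(-,\ca{B}_Y)} \ar[d]_-{\tilde{U}^\op} &
\ca{V}\textrm{-}\B{Cat} \ar[d]^-{\tilde{S}} \\
\ca{V}\textrm{-}\B{Grph}^\op \ar[r]_-{{^g\Hom}(-,\tilde{S}\ca{B}_Y)} &
\ca{V}\textrm{-}\B{Grph},}
\end{displaymath}
which commutes because $K$ was introduced in (\ref{defK}) precisely as the restriction of $^g\Hom$ along the forgetful functors from $\ca{V}\textrm{-}\B{Cocat}$ and $\ca{V}\textrm{-}\B{Cat}$ to $\ca{V}\textrm{-}\B{Grph}$. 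The right leg $\tilde{S}$ is monadic by Proposition \ref{VCatmonadic} and thus reflects limits; the left leg $\tilde{U}^\op$ is the opposite of the comonadic $\tilde{U}$ of Proposition \ref{VCocatcomonadic}, hence monadic, so it preserves limits. The bottom arrow is continuous: in the symmetric monoidal closed $\ca{V}\textrm{-}\B{Grph}$ (Proposition \ref{VGrphclosed}) one has the analogue of (\ref{adjunctionopinthom}), namely ${^g\Hom}(-,\ca{G})^\op \dashv {^g\Hom}(-,\ca{G})$, exhibiting ${^g\Hom}(-,\tilde{S}\ca{B}_Y)$ as a right adjoint. Chasing the square, $\tilde{S} \circ K(-,\ca{B}_Y)$ preserves limits, and since $\tilde{S}$ reflects them, so does $K(-,\ca{B}_Y)$.

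Once continuity is established, Theorem \ref{Kelly} produces the fibrewise right adjoint $T(-,\ca{B}_Y)$, and Theorem \ref{parametrizedadjunctions} assembles these into the bifunctor $T$ of (\ref{defT}). I expect the main technical obstacle to be, as so often, guaranteeing the hypotheses of the adjoint functor theorem: the crucial input is Proposition \ref{VCocatlocpresent}, whose proof (via expressing $\ca{V}\textrm{-}\B{Cocat}$ as an equifier inside a locally presentable category of functor coalgebras) is considerably more delicate than in the one-object case of Chapter \ref{enrichmentofmonsandmods}. Everything else is a routine transfer of the template of Section \ref{Universalmeasuringcomonoid} to the enriched-matrix setting, with the comonadicity of $\tilde{U}$ playing the role that comonadicity of $U\colon\Comon(\ca{V})\to\ca{V}$ played there.
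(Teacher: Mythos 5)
Your proposal is correct and follows essentially the same route as the paper's proof: local presentability of $\ca{V}$-$\B{Cocat}$ plus Kelly's adjoint functor theorem, with the required (co)continuity deduced from the same commutative square over $\ca{V}$-$\B{Grph}$ using monadicity of $\tilde{S}$, comonadicity of $\tilde{U}$, and the closed structure of $\ca{V}$-$\B{Grph}$. The only difference is cosmetic: you phrase the key step as continuity of $K(-,\ca{B}_Y)$ and reflection of limits by $\tilde{S}$, where the paper states it as cocontinuity of $K(-,\ca{B}_Y)^\op$ using that the legs create colimits.
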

\begin{proof}
By Proposition \ref{VCocatlocpresent}, the domain
$\ca{V}$-$\B{Cocat}$ of $K(-,\ca{B})^\op$
is locally presentable, hence cocomplete
with a small dense subcategory, namely the 
presentable objects. Now consider
the following diagram
\begin{displaymath}
\xymatrix @C=1in @R=.6in
 {\ca{V}\textrm{-}\B{Cocat}\ar[r]^-{K(-,\ca{B}_Y)^\op}
\ar[d]_-{\tilde{U}} & \ca{V}\textrm{-}\B{Cat}^\op \ar[d]^-{\tilde{S}} \\
\ca{V}\textrm{-}\B{Grph}\ar[r]_-{^g\Hom(-,\tilde{S}\ca{B}_Y)^\op} &
\ca{V}\textrm{-}\B{Grph}^\op}
\end{displaymath}
which commutes by definition of $K$,
and the left and right legs create all colimits by
Propositions \ref{VCatmonadic} and \ref{VCocatcomonadic}.
The bottom arrow preserves all colimits by 
$^g\Hom(-,\ca{G}_Y)^\op\dashv{^g\Hom(-,\ca{G}_Y)}$
for any internal hom functor in a monoidal closed category,
thus the functor $K(-,\ca{B})^\op$ is cocontinuous.
By Kelly's adjoint functor theorem \ref{Kelly},
there are adjunctions
\begin{displaymath}
 \xymatrix @C=.7in
{\ca{V}\textrm{-}\B{Cocat}\ar@<+.8ex>[r]^-{K(-,\ca{B}_Y)^\op}
\ar@{}[r]|-{\bot} & \ca{V}\textrm{-}\B{Cat}^\op\ar@<.8ex>[l]^-{T(-,\ca{B}_Y)}}
\end{displaymath}
for all $\ca{V}$-categories $\ca{B}_Y$.
This suffices to uniquely make $T$ into a functor
of two variables (\ref{defT}), which is by definition the parametrized
adjoint of $K^\op$.
\end{proof}
The functor $T$, which is a generalization of the universal
measuring comonoid functor $P$ (\ref{defP}) in
the `many-object' context,
is called \emph{generalized Sweedler hom}. Morever,
it can also be deduced that the functor
$K(\ca{C}_X,-)^\op$ has a right adjoint
for any $\ca{V}$-cocategory $\ca{C}_X$,
or equivalently its opposite functor has a left adjoint.
The following diagram
\begin{displaymath}
 \xymatrix @C=.9in @R=.6in
{\ca{V}\textrm{-}\B{Cat}\ar[r]^-{K(\ca{C}_X,-)} 
\ar[d]_-{\tilde{S}} & 
\ca{V}\textrm{-}\B{Cat}\ar[d]^-{\tilde{S}} \\
\ca{V}\textrm{-}\B{Grph}\ar[r]_-{^g\Hom(\tilde{U}\ca{C}_X,-)} &
\ca{V}\textrm{-}\B{Grph}}
\end{displaymath}
commutes, 
where $\tilde{S}$ is the monadic forgetful functor and the
locally presentable category $\ca{V}$-$\B{Cat}$
has all coequalizers. Thus
by Dubuc's Adjoint Triangle Theorem in \cite{AdjointTriangles},
the existence of a left
adjoint $(\ca{C}_X\otimes-)\dashv{^g\Hom(\ca{C}_X,-)}$
for any (underlying) $\ca{V}$-graph $\ca{C}_X$
in the symmetric monoidal closed $\ca{V}$-$\B{Grph}$
implies the existence of a left adjoint 
$(\ca{C}_X\triangleright-)$ of the top functor.
The induced functor of two variables
\begin{displaymath}
 \triangleright:\ca{V}\textrm{-}\B{Cocat}\times
\ca{V}\textrm{-}\B{Cat}\longrightarrow\ca{V}\textrm{-}\B{Cat}
\end{displaymath}
is called the \emph{generalized Sweedler product},
since it is an extension of the respective functor
(\ref{deftriangle}).

The conditions of Corollaries 
\ref{importcor1} and \ref{importcor2} are now satisfied,
for the symmetric monoidal category closed $\ca{V}$-$\B{Cocat}$
which acts on the opposite of the category $\ca{V}$-$\B{Cat}$
via the action $K^\op$.
\begin{thm}\label{VCatenrichedinVCocat}
Suppose $\ca{V}$ is a symmetric monoidal closed category 
which is locally presentable, and
$T$ is the generalized Sweedler hom functor.
\begin{enumerate}
\item The opposite category of $\ca{V}$-categories
$\ca{V}$-$\B{Cat}^\op$ is enriched in the category
of $\ca{V}$-cocategories $\ca{V}$-$\B{Cocat}$,
with hom-objects 
\begin{displaymath}
 \ca{V}\textrm{-}\B{Cat}^\op(\ca{A}_X,\ca{B}_Y)=T(\ca{B}_Y,\ca{A}_X)
\end{displaymath}
where the ($\ca{V}$-$\B{Cocat}$)-enriched category
with underlying category $\ca{V}$-$\B{Cat}^\op$ is denoted
by the same name.
\item The category $\ca{V}$-$\B{Cat}$
is a tensored and cotensored ($\ca{V}$-$\B{Cocat}$)-enriched category,
with hom-objects
\begin{displaymath}
 \ca{V}\textrm{-}\B{Cat}(\ca{A}_X,\ca{B}_Y)=T(\ca{A}_X,\ca{B}_Y),
\end{displaymath}
cotensor product $K(\ca{C},\ca{B})_{Y^Z}$ and tensor product
$\ca{C}_Z\triangleright\ca{A}_X$, for any 
$\ca{V}$-co\-ca\-te\-go\-ry $\ca{C}_Z$ and 
any $\ca{V}$-categories $\ca{A}_X,\ca{B}_Y$.
\end{enumerate}
\end{thm}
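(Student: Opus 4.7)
The proof plan is to assemble the pieces already established in the preceding propositions and feed them into the abstract machinery of Corollaries~\ref{importcor1} and~\ref{importcor2}, which convert an action of a monoidal category equipped with parametrized adjoints into an enrichment. The overall structure mirrors exactly the argument of Theorem~\ref{MonVenrichedinComonV} in Chapter~\ref{enrichmentofmonsandmods}, only lifted from the one-object (monoid/comonoid) case to the many-object ($\ca{V}$-category/$\ca{V}$-cocategory) case via the bicategory $\ca{V}$-$\B{Mat}$.

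First I would invoke Proposition~\ref{VCocatclosed} to know that $\ca{V}$-$\B{Cocat}$ is a symmetric monoidal closed category under the present hypotheses, and Proposition~\ref{Kaction} to know that
\[
K^{\op} : \ca{V}\text{-}\B{Cocat} \times \ca{V}\text{-}\B{Cat}^{\op} \longrightarrow \ca{V}\text{-}\B{Cat}^{\op}
\]
is an action of this symmetric monoidal closed category on the ordinary category $\ca{V}$-$\B{Cat}^{\op}$. Proposition~\ref{Texistence} then provides, for each $\ca{V}$-category $\ca{B}_Y$, a right adjoint $T(-,\ca{B}_Y)$ to the functor $K(-,\ca{B}_Y)^{\op}$, with the parametrized adjoint bifunctor $T$ as in~(\ref{defT}). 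This is exactly the data required by Corollary~\ref{importcor1}: an action $*$ of the monoidal closed $\ca{V}$-$\B{Cocat}$ on the category $\ca{B}^{\op}=\ca{V}$-$\B{Cat}^{\op}$, together with a right adjoint $P(-,B)=T(-,\ca{B}_Y)$ for each object $\ca{B}_Y$. That corollary directly yields part (1): $\ca{V}$-$\B{Cat}^{\op}$ is a tensored $\ca{V}$-$\B{Cocat}$-enriched category with hom-objects
\[
\ca{V}\text{-}\B{Cat}^{\op}(\ca{A}_X,\ca{B}_Y)=T(\ca{B}_Y,\ca{A}_X),
\]
the tensors being the objects $K^{\op}(\ca{C}_Z,\ca{A}_X)=K(\ca{C}_Z,\ca{A}_X)$.

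For part (2), since $\ca{V}$-$\B{Cocat}$ is symmetric monoidal, Corollary~\ref{importcor2} applies and the enrichment transfers to $\ca{V}$-$\B{Cat}=(\ca{V}\text{-}\B{Cat}^{\op})^{\op}$, with the same hom-objects (the symmetry of the base being used to form the opposite enriched category) — reindexed as $T(\ca{A}_X,\ca{B}_Y)$ — and with $K(\ca{C}_Z,\ca{B}_Y)_{Y^Z}$ appearing as the cotensor product. To obtain the \emph{tensored} part, I would invoke the second half of Corollary~\ref{importcor2}, which requires that $K^{\op}(\ca{C}_Z,-):\ca{V}\text{-}\B{Cat}^{\op}\to\ca{V}\text{-}\B{Cat}^{\op}$ admit a right adjoint for every $\ca{C}_Z$, equivalently that $K(\ca{C}_Z,-):\ca{V}\text{-}\B{Cat}\to\ca{V}\text{-}\B{Cat}$ admit a left adjoint. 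This is exactly the generalized Sweedler product $\ca{C}_Z\triangleright -$ constructed immediately after Proposition~\ref{Texistence}, by applying Dubuc's Adjoint Triangle Theorem to the commutative square relating $K(\ca{C}_Z,-)$ to the internal hom ${^g}\Hom(\tilde U\ca{C}_Z,-)$ of $\ca{V}$-$\B{Grph}$ along the monadic forgetful $\tilde S$.

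The main obstacle is not any single verification but the bookkeeping: one must keep straight (i) that $K^{\op}$ really is an action in the \emph{pseudo/coherent} sense required by the theory in Section~\ref{actions}, which in turn rests on the fact that the conservative monadic forgetful $\tilde S:\ca{V}\text{-}\B{Cat}\to\ca{V}\text{-}\B{Grph}$ reflects the coherence isomorphisms $\chi,\nu$ coming from ${^g}\Hom$ on $\ca{V}$-$\B{Grph}$; and (ii) that when passing from the enrichment of $\ca{V}$-$\B{Cat}^{\op}$ to the enrichment of $\ca{V}$-$\B{Cat}$ via symmetry, the hom-objects switch their two arguments correctly, so that $T(\ca{A}_X,\ca{B}_Y)$ (rather than $T(\ca{B}_Y,\ca{A}_X)$) serves as the enriched hom for $\ca{V}$-$\B{Cat}$. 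Once these points are noted, no further calculation is needed: the enrichment, tensors, and cotensors are all produced formally by the two corollaries, with $T$ supplying the hom-objects, $K$ the cotensors, and $\triangleright$ the tensors.
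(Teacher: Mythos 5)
Your proposal is correct and follows essentially the same route as the paper: the theorem is obtained by feeding Proposition~\ref{Kaction} (the action $K^\op$), Proposition~\ref{VCocatclosed} (symmetric monoidal closedness of $\ca{V}$-$\B{Cocat}$), Proposition~\ref{Texistence} (the parametrized adjoint $T$), and the Sweedler product $\triangleright$ obtained from Dubuc's Adjoint Triangle Theorem into Corollaries~\ref{importcor1} and~\ref{importcor2}. Your two bookkeeping caveats --- that $K^\op$ is a coherent action because the conservative monadic $\tilde{S}$ reflects the structure isomorphisms, and that the arguments of $T$ swap when passing from $\ca{V}$-$\B{Cat}^\op$ to $\ca{V}$-$\B{Cat}$ --- are exactly the points the paper relies on.
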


\section{Graphs, categories and cocategories as (op)fibrations}
\label{fibrationalview}

This section presents a different approach
to establishing the enrichment of $\ca{V}$-categories
in $\ca{V}$-cocategories. In the section above, the result
follows from the existence of an adjoint $T$ which
constitues the enriched hom-functor, as a straightforward
application of an adjoint functor theorem (Proposition
\ref{Texistence}). This is possible basically due to 
local presentability of $\ca{V}$-$\B{Cocat}$.
However, the categories $\ca{V}$-$\B{Grph}$, $\ca{V}$-$\B{Cat}$
and $\ca{V}$-$\B{Cocat}$ also have a structure which places them in 
a fibrational context,
allowing the application of the theory of 
fibrations of Chapter \ref{fibrations}. In particular,
we will show how we can alternatively obtain this adjoint $T$
as an application of Theorem \ref{totaladjointthm}
regarding adjunctions between fibrations.

First of all, we are going to exhibit in detail the 
fibrational structure of the categories involved,
a well-known fact at least for $\ca{V}$-categories over sets.
We initially assume that $\ca{V}$ is a cocomplete
monoidal category, such that the tensor product
preserves colimits on both sides.

\begin{prop}\label{VGrphbifibr}
The category $\ca{V}$-$\B{Grph}$ 
of small $\ca{V}$-graphs is a 
bifibration over $\B{Set}$.
\end{prop}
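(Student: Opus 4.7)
The plan is to exhibit an explicit split cleavage and opcleavage for the forgetful functor $Q:\ca{V}\textrm{-}\B{Grph}\to\B{Set}$, working systematically in the $\ca{V}$-matrix presentation of Definition \ref{charactVGrph}, where a $\ca{V}$-graph morphism $(G,X)\to(H,Y)$ is a pair $(\phi,f)$ with $f:X\to Y$ a function and $\phi:G\to f^*Hf_*$ a $2$-cell in $\ca{V}\textrm{-}\Mat(X,X)$ (equivalently, its mate $\psi:f_*Gf^*\to H$). The canonical isomorphisms $\xi^{g,f}:f^*g^*\cong(gf)^*$ and $\zeta^{g,f}:g_*f_*\cong(gf)_*$ from Lemma \ref{isosofstars}, together with their coherence recorded in Lemma \ref{corisos}, are the key technical ingredient.

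For the fibration structure, given $f:X\to Y$ and $(H,Y)$, I would define the cartesian lifting to be
\[
\Cart(f,(H,Y)):=(1_{f^*Hf_*},f):(f^*Hf_*,X)\longrightarrow(H,Y).
\]
To verify the universal property, take $(\theta,g):(K,Z)\to(H,Y)$ with $\theta:K\to g^*Hg_*$ and $h:Z\to X$ such that $fh=g$; I must produce a unique $(\psi,h):(K,Z)\to(f^*Hf_*,X)$ whose composite with the cartesian lifting is $(\theta,g)$. Using $\xi^{f,h}$ and $\zeta^{f,h}$ to obtain an isomorphism $h^*(f^*Hf_*)h_*\cong g^*Hg_*$, the only possible choice is $\psi$ defined as the composite of $\theta$ with this isomorphism. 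Existence is then immediate from unpacking the formula for composition in $\ca{V}\textrm{-}\B{Grph}$, and uniqueness follows from invertibility of the $\xi,\zeta$ components together with the fact that the $2$-cell part of the cartesian lifting is the identity.

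Dually, for the opfibration structure, the cocartesian lifting of $(G,X)$ along $f:X\to Y$ is
\[
\Cocart(f,(G,X)):=(1_{f_*Gf^*},f):(G,X)\longrightarrow(f_*Gf^*,Y),
\]
where $f_*Gf^*$ has components $(f_*Gf^*)(y',y)\cong\sum_{fx'=y',\,fx=y}G(x',x)$. The universal property follows by the symmetric argument using the mate formulation of $\ca{V}$-graph morphisms and the adjunction $f_*\dashv f^*$ in $\ca{V}$-$\Mat$. As a byproduct, since the chosen liftings are strictly compatible with composition of functions in $\B{Set}$ via Lemma \ref{corisos} (the cleavage arrows are identity $2$-cells and the isomorphisms $\xi,\zeta$ are those used in the composition formula of $\ca{V}$-$\B{Grph}$ itself), the cleavage and opcleavage are in fact splittings, so $Q$ is a split bifibration.

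The only real obstacle is bookkeeping: one must carefully check that the composition formula in $\ca{V}$-$\B{Grph}$, which is defined by pasting in $\ca{V}$-$\Mat$ together with the comparison isomorphisms $\xi$ and $\zeta$, interacts with the chosen identity $2$-cells so that the cartesian/cocartesian factorizations hold on the nose rather than merely up to vertical isomorphism. This is precisely where the coherence asserted in Lemma \ref{corisos} enters, and it is exactly the observation needed to confirm splitness. No further assumptions on $\ca{V}$ beyond those already in force in Section \ref{bicatVMat} (cocompleteness with $\otimes$ preserving colimits on both sides) are required, since the relevant composites $f^*Hf_*$ and $f_*Gf^*$ live in $\ca{V}\textrm{-}\Mat$ by construction.
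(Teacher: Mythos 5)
Your argument is correct as a proof that $Q:\ca{V}\textrm{-}\B{Grph}\to\B{Set}$ is a bifibration, but it takes a genuinely different route from the paper. You verify the universal property of the chosen liftings $(1_{f^*Hf_*},f)$ and $(1_{f_*Gf^*},f)$ directly, reducing everything to invertibility of $\xi^{g,f}$ and $\zeta^{g,f}$ against the composition formula of $\ca{V}$-$\B{Grph}$. The paper instead packages the reindexing assignments $f\mapsto(f^*\circ-\circ f_*)$ and $f\mapsto(f_*\circ-\circ f^*)$ into two pseudofunctors $\ps{M}:\B{Set}^{\op}\to\B{Cat}$ and $\ps{F}:\B{Set}\to\B{Cat}$ (with coherence data built from the same $\xi,\zeta$ and the unitors) and then invokes the Grothendieck equivalences of Theorems \ref{maintheoremfibr} and \ref{maintheoremopfibr}, identifying the resulting total categories with $\ca{V}$-$\B{Grph}$ via Definition \ref{charactVGrph}. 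The two proofs carry the same computational content; what the paper's version buys is the explicit indexed-category data, which is reused verbatim when restricting to monoids and comonoids to fibre $\ca{V}$-$\B{Cat}$ and $\ca{V}$-$\B{Cocat}$ over $\B{Set}$ in Propositions \ref{VCatfibred} and \ref{VCocatopfibred}, whereas yours is more self-contained. (The paper also observes that, once the fibration is in hand, the opfibration structure follows for free from Remark \ref{rmkforadjointintexingbifr}, since $(f_*\circ-\circ f^*)\dashv(f^*\circ-\circ f_*)$ by the mates bijection.)

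One caveat: your concluding claim that the cleavage and opcleavage are \emph{splittings} is not justified. Splitness requires $f^*g^*=(gf)^*$ and $\Cart(gf,A)=\Cart(f,A)\circ\Cart(g,f^*A)$ on the nose, but $f^*\circ(g^*\circ H\circ g_*)\circ f_*$ and $(gf)^*\circ H\circ(gf)_*$ are only canonically isomorphic (their components differ by extra tensor factors of $I$), and likewise $(\mathrm{id}_X)^*\circ H\circ(\mathrm{id}_X)_*\neq H$; this is exactly why $\ps{M}$ and $\ps{F}$ are pseudofunctors rather than strict functors. The coherence of Lemma \ref{corisos} gives a cloven bifibration, which is all the proposition asserts, so the main statement is unaffected.
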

\begin{proof}
Due to the correspondence
between fibrations and pseudofunctors
studied in Theorem \ref{maintheoremfibr},
it is enough to define certain 
indexed categories, \emph{i.e.} pseudofunctors 
$\ps{M}:\B{Set}^\mathrm{op}\to\B{Cat}$
and $\ps{F}:\B{Set}\to\B{Cat}$ which
give rise to a fibration and opfibration with
total category isomorphic to $\ca{V}$-$\B{Grph}$,
via the Grothendieck construction.

Define the pseudofunctor $\ps{M}$ as follows:
\begin{equation}\label{defM}
\ps{M}:
\xymatrix @R=.02in
{\B{Set}^\op\ar[r] & \B{Cat} \\
X\ar @{|.>}[r]
\ar [dd]_-f & \ca{V}\textrm{-}\Mat(X,X) \\
\hole \\
Y\ar @{|.>}[r] &
\ca{V}\textrm{-}\Mat(Y,Y),\ar[uu]_-{\ps{M}f}}
\end{equation}
where the functor $\ps{M}f$ is given by the mapping
\begin{displaymath}
\SelectTips{eu}{10}
\xymatrix{(Y\ar[r]|-{\object@{|}}^-H & Y)
\ar @{|.>}[r] & 
(X \ar[r]|-{\object@{|}}^-{f_*} & 
Y \ar[r]|-{\object@{|}}^-H &
Y\ar[r]|-{\object@{|}}^-{f^*} & X)}
\end{displaymath}
on objects and 
\begin{displaymath}
\SelectTips{eu}{10}
\xymatrix{(Y \ar @/^2ex/[r]|-{\object@{|}}^-H
\ar@/_2ex/[r]|-{\object@{|}}_-{H'}
\rtwocell<\omit>{\sigma} & Y)
\ar @{|.>}[r] & (X\ar[r]|-{\object@{|}}
^-{f_*} & Y \ar @/^2ex/[r]|-{\object@{|}}^-H
\ar@/_2ex/[r]|-{\object@{|}}_-{H'}
\rtwocell<\omit>{\sigma} & Y
\ar[r]|-{\object@{|}}^-{f^*} & X)}
\end{displaymath}
on arrows. In other words,
$\ps{M}f=(f^*\circ-\circ f_*)$ is the functor 
`pre-composition with $f_*$ and
post-composition with $f^*$', where the induced 
$\ca{V}$-matrices $f_*$ and 
$f^*$ are defined as in (\ref{f*}).
In terms of components, the family $\{H(y',y)\}_{y,y'\in Y}$
of objects in $\ca{V}$ which defines the $\ca{V}$-matrix $H$,
is mapped to the family
\begin{displaymath}
\{\big((\ps{M}f)H\big)(x',x)\}_{x,x'\in X}=
\{I\otimes H(fx',fx)\otimes I\}_{fx,fx'\in Y}
\end{displaymath}
and the family
$\{\sigma_{y',y}:H(y',y)\to H'(y',y)\}_{y',y}$
of arrows in $\ca{V}$ which define the 2-cell $\sigma$,
is mapped to the family
\begin{displaymath}
\big((\ps{M}f)\sigma\big)_{x',x}:
I\otimes H(fx',fx)\otimes I
\xrightarrow{\;1\otimes\sigma_{fx',fx}\otimes1\;} 
I\otimes H'(fx',fx)\otimes I
\end{displaymath}
for all $x',x\in X$.

In order to show that the above data determine
a pseudofunctor $\ps{M}$, we need the existence of 
certain natural isomorphisms
satisfying coherence conditions as in Definition \ref{laxfunctor}.
For every triple of 
sets $X,Y,Z$, there is a natural isomorphism
$\delta$ with components
\begin{displaymath}
\xymatrix @R=.04in
{& \ca{V}\text{-}\Mat(Y,Y)\ar@/^/[dr]^-{\ps{M}f} & \\
\ca{V}\text{-}\Mat(Z,Z)\ar@/^/[ur]^-{\ps{M}g}
\ar@/_3ex/[rr]_-{\ps{M}(g\circ f)}
\rrtwocell<\omit>{\quad\delta^{g,f}} && 
\ca{V}\text{-}\Mat(X,X)}
\end{displaymath}
for any $f:X\to Y$ and $g:Y\to Z$, satisfying the commutativity
of (\ref{laxcond1}). Explicitely, each $\delta^{g,f}$
has components, for each $\ca{V}$-matrix$\SelectTips{eu}{10}
\xymatrix@C=.2in{J:Z\ar[r]|-{\object@{|}} & Z,}$ the invertible arrows
\begin{displaymath}
\delta^{g,f}_J:(\ps{M}f\circ\ps{M}g)J
\xrightarrow{\;\sim\;}\ps{M}(g\circ f)J
\end{displaymath}
in $\ca{V}$-$\Mat(X,X)$
which are the composite 2-cells
\begin{equation}\label{deltaforM}
\xymatrix @R=.4in @C=.5in
{X\ar[r]|-{\object@{|}}^-{f_*}
\ar @/_2ex/[drr]|-{\object@{|}}_-{(gf)_*} &
Y\ar[r]|-{\object@{|}}^-{g_*} &
Z\ar[r]|-{\object@{|}}^-J
\ar @{=}[d] & 
Z\ar[r]|-{\object@{|}}^-{g^*}
\ar @{=}[d] &
Y\ar[r]|-{\object@{|}}^-{f^*} &
X \\
& \rtwocell<\omit>{<-4>\quad\zeta^{g,f}} & 
Z\ar[r]|-{\object@{|}}_-J
\rtwocell<\omit>{<-4>\;\;1_J} &
Z\rtwocell<\omit>{<-4>\quad\xi^{g,f}}
\ar @/_2ex/[urr]|-{\object@{|}}_-{(gf)^*} &&}
\end{equation}
where the isomorphisms
$\zeta$ and $\xi$ are
defined in Lemma \ref{isosofstars}.
This 2-isomorphism 
\begin{displaymath}
 \delta^{g,f}=\xi^{g,f}*1_J*\zeta^{g,f}
\end{displaymath}
is given by the family
of invertible arrows
\begin{displaymath}
(\delta^{g,f}_J)_{x',x}:
I\otimes I\otimes J(gfx',gfx)\otimes I\otimes I
\xrightarrow{\;r_I\otimes1\otimes r_I\;}I\otimes J(gfx',gfx)\otimes I 
\end{displaymath}
in $\ca{V}$, and the coherence axiom is satisfied by
the properties of $\xi$ and $\zeta$ (see
Lemma \ref{corisos}). 
Moreover, for any set $X$ there is a natural isomorphism
$\gamma$ with components the natural transformations
\begin{displaymath}
\xymatrix @C=.5in{\ca{V}\text{-}\B{Mat}(X,X)
\rrtwocell<5>^{\B{1}_{\ca{V}\text{-}\B{Mat}(X,X)}}
_{\ps{M}(\mathrm{id}_X)}
{\quad\gamma^X} && \ca{V}\textrm{-}\B{Mat}
(X,X)}
\end{displaymath}
where $\mathrm{id}_X$ is the identity function
on any set $X$ and $\B{1}$ is the identity functor.
Explicitly, $\gamma^X$ has as components 
invertible arrows in $\ca{V}$-$\Mat(X,X)$ 
\begin{displaymath}
\gamma^X_G:\B{1}_{\ca{V}\text{-}\B{Mat}(X,X)}G\xrightarrow{\;\sim\;}
\ps{M}(\mathrm{id}_X)G
\end{displaymath}
for any $\ca{V}$-matrix$\SelectTips{eu}{10}
\xymatrix@C=.2in{G:X\ar[r]|-{\object@{|}} & X,}$which 
are the composite 2-cells
\begin{equation}\label{gammaforM}
\gamma^X_G:
\xymatrix @R=.25in @C=.5in
{X\ar @/^5ex/[rrr]|-{\object@{|}}^-G
\ar @/_/[dr]|-{\object@{|}}_-{(\mathrm{id}_X)_*}
\rrtwocell<\omit>{\quad\rho_G^{-1}}
&&& X. \\
& X \ar[r]|-{\object@{|}}_-G
\ar @/^3ex/[urr]|-{\object@{|}}^-G
\rrtwocell<\omit>{<-3>{\quad{\lambda_G^{-1}}}}
& X \ar @/_/[ur]|-{\object@{|}}_-{(\mathrm{id}_X)^*} &}
\end{equation}
By recalling that $(\mathrm{id}_X)^*=
(\mathrm{id}_X)_*=1_X$ by (\ref{idX=1X}),
this isomorphism
\begin{displaymath}
 \gamma^X_G=(\lambda_G^{-1} 1_X)\cdot(\rho_G^{-1})
\end{displaymath}
consists of the family of invertible arrows
\begin{displaymath}
(\gamma^X_G)_{x',x}:G(x',x)\xrightarrow{l^{-1}}I\otimes G(x',x)
\xrightarrow{1\otimes r^{-1}}I\otimes G(x',x)\otimes I
\end{displaymath}
in $\ca{V}$. It can be verified that the axioms
(\ref{laxcond2}) are satisfied, therefore $\ps{M}$ is 
a pseudofunctor.

The Grothendieck category $\Gr{G}\ps{M}$
for this pseudofunctor
has as objects pairs $(G,X)$, where $X$ is
a set and $G$ is an object in the category 
$\ps{M}X=\ca{V}$-$\B{Mat}(X,X)$, and as arrows
$(\phi,f):(G,X)\to(H,Y)$ pairs
\begin{displaymath}
\begin{cases} 
G\xrightarrow{\phi}(\ps{M}f)H &\text{in }\ps{M}X\\
X\xrightarrow{f}Y &\text{in }\B{Set}
\end{cases}
\quad =\quad
\begin{cases} 
G\xrightarrow{\phi}f^*\circ H\circ f_* 
&\text{in }\ca{V}\textrm{-}\B{Mat}(X,X)\\
X\xrightarrow{f}Y &\text{in }\B{Set}.
\end{cases}
\end{displaymath}
By Definition \ref{charactVGrph}, this category
is isomorphic to $\ca{V}$-$\B{Grph}$, in 
the sense that there is a one-to-one correspondence
between the objects, which can actually be identified,
and the hom-sets.
Thus $\ps{M}$ gives rise to a fibration
$P_\ps{M}:\Gr{G}\ps{M}\to\B{Set}$ which is isomorphic to
the forgetful functor
$Q:\ca{V}\text{-}\B{Grph}\to\B{Set}$, \emph{i.e.}
\begin{displaymath}
 \xymatrix @C=.4in @R=.5in
{\Gr{G}\ps{M}\ar[rr]^-{\cong}
\ar[dr]_-{P_{\ps{M}}} && 
\ca{V}\text{-}\B{Grph}\ar[dl]^-{Q} \\
& \B{Set} &}
\end{displaymath}
commutes by definition of the functors involved,
hence $Q$ is a fibration.

Now, define a covariant indexed category
$\ps{F}$ as follows: 
\begin{equation}\label{defF}
\ps{F}:
\xymatrix @R=.02in
{\B{Set}\ar[r] & \B{Cat} \\
X\ar @{|.>}[r]
\ar [dd]_-f & \ca{V}\textrm{-}\Mat(X,X)\ar[dd]^-{\ps{F}f} \\
\hole \\
Y\ar @{|.>}[r] &
\ca{V}\textrm{-}\Mat(Y,Y)}
\end{equation}
where the mapping on objects is the same
as for the pseudofunctor $\ps{M}$ above,
and $\ps{F}f$ is the mapping 
\begin{displaymath}
\SelectTips{eu}{10}
\xymatrix{(X \ar @/^2ex/[r]|-{\object@{|}}^-G
\ar@/_2ex/[r]|-{\object@{|}}_-{G'}
\rtwocell<\omit>{\tau} & X)
\ar @{|.>}[r] & (Y\ar[r]|-{\object@{|}}
^-{f^*} & X \ar @/^2ex/[r]|-{\object@{|}}^-G
\ar@/_2ex/[r]|-{\object@{|}}_-{G'}
\rtwocell<\omit>{\tau} & X
\ar[r]|-{\object@{|}}^-{f_*} & Y)}
\end{displaymath}
on objects and on arrows, \emph{i.e.}
$\ps{F}f=(f_*\circ-\circ f^*$).
In terms of components, the family $\{G(x',x)\}_{x,x'\in X}$
of objects in $\ca{V}$ which define the $\ca{V}$-matrix
$G$, is mapped to the family
\begin{displaymath}
\{\ps{F}f(G)(y',y)\}_{y,y'\in Y}=
\{\sum_{\scriptscriptstyle{\stackrel{fx'=y'}{fx=y}}}I\otimes
G(x',x)\otimes I\}_{y,y'\in Y}
\end{displaymath}
and the family $\{\tau_{x,x'}:G(x',x)\to G'(x',x)\}_{x,x'}$ of arrows
in $\ca{V}$ which defines the 2-cell $\tau$,
is mapped to the family of arrows
\begin{displaymath}
\ps{F}f(\tau)_{y',y}:
\sum I\otimes G(x',x)\otimes I
\xrightarrow{\;\sum 1\otimes\sigma_{x',x}\otimes1\;}
\sum I\otimes G'(x',x)\otimes I,
\end{displaymath}
where the sums are over $x,x'$ such that $fx'=y'$, $fx=fy$,
based on the computations of Section 
\ref{bicatVMat}. Again, there exist natural isomorphisms
$\delta$, $\gamma$ with components
\begin{gather*}
\delta^{g,f}:\ps{F}g\circ\ps{F}f\Rightarrow\ps{F}(g\circ f):
\ca{V}\textrm{-}\B{Mat}(X,X)\to\ca{V}\textrm{-}\B{Mat}(Z,Z) \\
\gamma^X:
\B{1}_{\ca{V}\textrm{-}\Mat(X,X)}\Rightarrow
\ps{F}(\mathrm{id}_X):
\ca{V}\textrm{-}\Mat(X,X)\to
\ca{V}\textrm{-}\Mat(X,X)
\end{gather*}
which satisfy the properties
(\ref{laxcond1}) and (\ref{laxcond2}) 
from the definition of a pseudofunctor.
In fact, they are essentially the same as in the 
case of $\ps{M}$, \emph{i.e.} $\delta$ now has components
the invertible composite 2-cells
\begin{equation}\label{deltaforF}
\xymatrix @R=.002in{\hole \\
\delta^{f,g}_G:} 
\xymatrix @R=.01in @C=.5in
{& Y \ar @/^/[dr]|-{\object@{|}}^-{f^*}
&&& Y \ar @/^/[dr]|-{\object@{|}}^-{g_*} & \\
Z\ar @/^/[ur]|-{\object@{|}}^-{g^*}
\ar @/_3ex/[rr]|-{\object@{|}}_-{(gf)^*}
\rrtwocell<\omit>{\quad\xi^{g,f}} &&
X\ar @/^2ex/[r]|-{\object@{|}}^-G
\ar @/_2ex/[r]|-{\object@{|}}_-G 
\rtwocell<\omit>{\;1_G} & 
X\ar @/^/[ur]|-{\object@{|}}^-{f_*}
\rrtwocell<\omit>{\quad\zeta^{g,f}}
\ar @/_3ex/[rr]|-{\object@{|}}_-{(gf)_*} && Z,}
\end{equation}
which are formed like (\ref{deltaforM})
but composing with $\zeta$ and $\xi$ in the reverse order,
and $\gamma$ is the same as in (\ref{gammaforM}).
Therefore $\ps{F}$ is a pseudofunctor,
and by Theorem \ref{maintheoremopfibr} it gives 
rise to an opfibration
\begin{displaymath}
 U_{\ps{F}}:\Gr{G}\ps{F}\to\B{Set}.
\end{displaymath}
The Grothendieck category in this case coincides with
the isomorphic characterization of $\ca{V}$-$\B{Grph}$
in Definition \ref{charactVGrph}, with the
`second version' form of arrows.
Hence $U_{\ps{F}}$ is again isomorphic to
the forgetful
$Q:\ca{V}\textrm{-}\B{Grph}\to\B{Set}$,
endowing it with the structure of an opfibration. 
Thus $\ca{V}$-$\B{Grph}$ is 
a bifibration over $\B{Set}$.
\end{proof}
Notice that we could immediately deduce that the fibration
$Q$ is a bifibration
via Remark \ref{rmkforadjointintexingbifr}. The reindexing functor
$\ps{M}f=f_*\circ\text{-}\circ f^*$ does have a left adjoint
$f^*\circ\text{-}\circ f_*$, by the natural bijection between 2-cells
of the form (\ref{mates2cells}). 
We explicitly constructed the opfibration
above in order to employ it later.

An immediate consequence of viewing
the category of $\ca{V}$-graphs as a bifibration 
is that we can discuss fibred and opfibred
limits and colimits (see Section \ref{fibredadjunctions}).
\begin{cor}\label{Qfibredcomplete}
The bifibration $Q:\ca{V}$-$\B{Grph}\to\B{Set}$ has all fibred
limits and all opfibred colimits, 
when $\ca{V}$ is complete and cocomplete respectively.
\end{cor}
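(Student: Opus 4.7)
The plan is to invoke Proposition \ref{reindexcont} (and its opfibrational dual), which reduces fibred $\ca{J}$-completeness (resp.\ opfibred $\ca{J}$-cocompleteness) of a bifibration to two conditions: each fibre must admit $\ca{J}$-limits (resp.\ colimits), and the cartesian (resp.\ cocartesian) reindexing functors must preserve them.

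First I would establish that the fibres are suitably (co)complete. The fibre of $Q$ over a set $X$ is, by the construction in Proposition \ref{VGrphbifibr}, the category $\ca{V}\textrm{-}\B{Mat}(X,X) = \ca{V}^{X\times X}$. Proposition \ref{propVMat}$(i)$ then gives that this fibre is cocomplete when $\ca{V}$ is, and admits all limits which exist in $\ca{V}$, since both are formed pointwise.

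Next I would verify preservation of (co)limits by the reindexing functors. The proof of Proposition \ref{VGrphbifibr} identifies the cartesian reindexing along $f:X\to Y$ with $\ps{M}f = f^{*}\circ(-)\circ f_{*}$ and the cocartesian reindexing with $\ps{F}f = f_{*}\circ(-)\circ f^{*}$. The adjunction $f_{*}\dashv f^{*}$ in $\ca{V}\textrm{-}\B{Mat}$ (which is in fact what makes $Q$ into a bifibration, cf.\ Remark \ref{rmkforadjointintexingbifr}) immediately yields an adjunction $\ps{F}f \dashv \ps{M}f$ between the fibres, by whiskering with the appropriate arguments. Consequently $\ps{M}f$ is a right adjoint and so preserves all limits that exist in its fibre, while $\ps{F}f$ is a left adjoint and so preserves all colimits.

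Combining these two steps with Proposition \ref{reindexcont} (and its dual) gives the conclusion. There is no real obstacle here, but the one subtle point worth highlighting is that preservation of fibrewise (co)limits by reindexing is essentially automatic from the adjoint pair structure on the fibres; one does not need to argue directly from the explicit formulas $(\ps{M}f H)(x',x)\cong H(fx',fx)$ and $(\ps{F}f G)(y',y)\cong\sum_{fx'=y',\,fx=y}G(x',x)$, although either expression confirms the conclusion independently (the former via pointwise evaluation composed with $I\otimes -\otimes I\cong\mathrm{id}$, the latter via $\otimes$ preserving colimits on both sides together with the same unit isomorphisms).
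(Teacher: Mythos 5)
Your proof is correct, but it takes a genuinely different route from the paper's. The paper argues ``from the top'': it invokes Corollary \ref{AhasPpreserves} (and its dual), using that the base $\B{Set}$ is complete and cocomplete, that the total category $\ca{V}$-$\B{Grph}$ is (co)complete when $\ca{V}$ is (the explicit constructions of limits in Section \ref{Vgraphs} and of colimits in Proposition \ref{Vgrphcocomplete}), and that $Q$ strictly preserves these (co)limits by construction. You instead argue ``from the fibres'' via Proposition \ref{reindexcont}: the fibres $\ca{V}\textrm{-}\B{Mat}(X,X)=\ca{V}^{X\times X}$ are pointwise (co)complete, and the reindexing functors preserve the relevant (co)limits because $\ps{F}f=f_*\circ(-)\circ f^*$ is left adjoint to $\ps{M}f=f^*\circ(-)\circ f_*$ (this adjunction is exactly the mates bijection the paper records around (\ref{mates2cells}) and in the remark following Proposition \ref{VGrphbifibr}, so your appeal to it is legitimate). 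Your approach is the more economical one here: it does not presuppose the global (co)completeness of the total category, and indeed, run through the converse direction of Corollary \ref{AhasPpreserves}, it would \emph{recover} that (co)completeness. Note also that the paper, immediately after the corollary, uses the corollary together with Proposition \ref{reindexcont} to \emph{deduce} that the reindexing functors are (co)continuous, whereas you establish that fact directly from the adjunction and feed it in as input --- so your argument reverses the logical flow of the surrounding text, which is harmless but worth being aware of if you cite these results in sequence.
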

\begin{proof}
By Corollary \ref{AhasPpreserves} and its dual, 
an (op)fibration with (co)complete
base category has all (op)fibred (co)limits if and only if the 
total category has all (co)limits and the fibration strictly preserves them.
In this case, the base of the bifibration is the complete
and cocomplete category of sets, 
and since the total category $\ca{V}$-$\B{Grph}$ 
is (co)complete when $\ca{V}$ is
and the forgetful functor $Q$
preserves limits and colimits `on the nose'
by construction, the result follows.
\end{proof}
Moreover, by Proposition \ref{reindexcont} 
we can now deduce that the reindexing functors
$(f^*\circ\text{-}\circ f_*)$ and $(f_*\circ\text{-}\circ f^*)$
preserve all limits and colimits between the complete and 
cocomplete fibres $\ca{V}$-$\Mat(X,X)$. 
The latter was evident by Proposition \ref{propVMat}.
 
The construction of the two pseudofunctors
$\ps{M}$ and $\ps{F}$ which exhibit $\ca{V}$-$\B{Grph}$
as a bifibred category over $\B{Set}$ clarify the way
in which the categories $\ca{V}$-$\B{Cat}$ and $\ca{V}$-$\B{Cocat}$
are themselves fibred and opfibred respectively over $\B{Set}$.
\begin{prop}\label{VCatfibred}
The category $\ca{V}$-$\B{Cat}$ of small
$\ca{V}$-categories is a fibration over $\B{Set}$.
\end{prop}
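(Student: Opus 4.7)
The plan is to adapt the construction of Proposition \ref{VGrphbifibr} by restricting the pseudofunctor $\ps{M}: \B{Set}^{\op} \to \B{Cat}$ defined in (\ref{defM}) to monoid objects in each fibre. Concretely, I will define a new pseudofunctor
\begin{displaymath}
\widetilde{\ps{M}}: \B{Set}^{\op} \longrightarrow \B{Cat},\qquad
X \mapsto \Mon\big(\ca{V}\textrm{-}\B{Mat}(X,X)\big),
\end{displaymath}
sending a function $f:X\to Y$ to the functor $\widetilde{\ps{M}}f = f^{\ast}\circ -\circ f_{\ast}$ from $\Mon(\ca{V}\textrm{-}\B{Mat}(Y,Y))$ to $\Mon(\ca{V}\textrm{-}\B{Mat}(X,X))$, and apply the Grothendieck construction of Theorem \ref{maintheoremfibr} to recover $\ca{V}\textrm{-}\B{Cat}$.

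First I would verify that the reindexing is well-defined on monoids and their morphisms. The fact that $f^{\ast}Bf_{\ast}$ inherits a monoid structure from a monoid $B$ is precisely Lemma \ref{B*monoid}, where the new multiplication and unit are built from those of $B$ together with the counit $\check{\varepsilon}$ and unit $\check{\eta}$ of $f_{\ast}\dashv f^{\ast}$. For morphisms, if $\phi: B \to B'$ is a monoid morphism in $\ca{V}$-$\B{Mat}(Y,Y)$, then $f^{\ast}\phi f_{\ast}: f^{\ast}Bf_{\ast} \to f^{\ast}B'f_{\ast}$ is again a monoid morphism, since whiskering with $f_{\ast}$ and $f^{\ast}$ is functorial and the structure 2-cells on each side are built uniformly from $\phi$. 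Thus $\widetilde{\ps{M}}f$ is a well-defined functor between categories of monoids.

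Next I would promote the coherence isomorphisms $\delta^{g,f}$ and $\gamma^{X}$ from the proof of Proposition \ref{VGrphbifibr}, given by (\ref{deltaforM}) and (\ref{gammaforM}), to the monoid level. Their components live in $\ca{V}$-$\B{Mat}(X,X)$ and are built out of the natural isomorphisms $\zeta^{g,f}$, $\xi^{g,f}$ of Lemma \ref{isosofstars} together with unitors of the bicategory; the point to check is that these components are in fact morphisms in $\Mon(\ca{V}\textrm{-}\B{Mat}(X,X))$. This follows from the compatibilities of Lemma \ref{corisos} together with the standard monoid-morphism conditions for coherence isomorphisms in a monoidal bicategory, since the induced monoid structures on $f^{\ast}g^{\ast}Jg_{\ast}f_{\ast}$ and $(gf)^{\ast}J(gf)_{\ast}$ are obtained from the same pasting of $M_{J}$, $\eta_{J}$, and triangular identities. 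The coherence axioms (\ref{laxcond1}) and (\ref{laxcond2}) for $\widetilde{\ps{M}}$ are inherited from those already verified for $\ps{M}$, since the forgetful functor $\Mon(\ca{V}\textrm{-}\B{Mat}(X,X)) \to \ca{V}\textrm{-}\B{Mat}(X,X)$ is conservative.

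Finally, I would identify the Grothendieck category $\Gr{G}\widetilde{\ps{M}}$ with $\ca{V}\textrm{-}\B{Cat}$. Objects of $\Gr{G}\widetilde{\ps{M}}$ are pairs $(A,X)$ with $A$ a monoid in $\ca{V}$-$\B{Mat}(X,X)$, which by Remark \ref{monadsaremonoids} (applied in $\ca{V}$-$\B{Mat}$) are exactly $\ca{V}$-categories $(A,X)$; morphisms are pairs $(\phi,f):(A,X)\to(B,Y)$ with $\phi: A \to f^{\ast}Bf_{\ast}$ a monoid morphism and $f: X \to Y$ a function, which is precisely the characterization of $\ca{V}$-functors given by Lemma \ref{charactVCat}. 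Hence the induced fibration $P_{\widetilde{\ps{M}}}$ fits into a commutative triangle with the forgetful $\tilde{Q}: \ca{V}\textrm{-}\B{Cat} \to \B{Set}$ and an isomorphism of categories on top, exhibiting $\tilde{Q}$ as a fibration. The main obstacle is the bookkeeping of step two, namely verifying that the coherence 2-cells $\delta$ and $\gamma$ of $\ps{M}$ really do lift to monoid morphisms; once this is established, the rest is a direct translation between the monadic and fibrational pictures already set up.
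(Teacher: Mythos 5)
Your proposal is correct and follows essentially the same route as the paper's proof: restrict the pseudofunctor $\ps{M}$ of Proposition \ref{VGrphbifibr} to categories of monoids using Lemma \ref{B*monoid}, check that the coherence isomorphisms $\delta^{g,f}$ and $\gamma^X$ are monoid morphisms, and identify the resulting Grothendieck category with $\ca{V}$-$\B{Cat}$ via Lemma \ref{charactVCat}. (One minor point: inheriting the coherence equations (\ref{laxcond1}), (\ref{laxcond2}) from $\ps{M}$ uses \emph{faithfulness}, not conservativity, of the forgetful functor $\Mon(\ca{V}\textrm{-}\B{Mat}(X,X))\to\ca{V}\textrm{-}\B{Mat}(X,X)$, since these are equations between 2-cells.)
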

\begin{proof}
Similarly to the above proof,
it will suffice to construct an indexed category
$\ps{L}:\B{Set}^\mathrm{op}\to\B{Cat}$ such that
the category $\ca{V}$-$\B{Cat}$ is isomorphic
to the Grothendieck category of the fibration 
$P_\ps{L}$.

Define the pseudofunctor $\ps{L}$ as follows: a 
set $X$ is mapped to the category 
\begin{displaymath}
\ps{L}X=\Mon(\ca{V}\textrm{-}\Mat(X,X))
\end{displaymath}
of monoids of the monoidal category 
of endoarrows $(\ca{V}\text{-}\Mat(X,X),\circ,1_X)$, 
and a function between sets
$f:X\to Y$ is mapped contravariantly
to the functor
\begin{displaymath}
\ps{L}f:
\xymatrix@R=.02in
{\Mon(\ca{V}\textrm{-}\Mat(Y,Y))\ar[r] &
\Mon(\ca{V}\textrm{-}\Mat(X,X)) \\
(B,\mu,\eta)\ar@{|.>}[r]\ar[dd]_-{\sigma} & 
(f^*Bf_*,\mu',\eta')\ar[dd]^-{f^*\sigma f_*} \\
\hole \\
(E,\mu,\eta)\ar@{|.>}[r] & (f^*Ef_*,\mu',\eta').}
\end{displaymath}
As described in detail in Lemma \ref{B*monoid},
the induced monoid $f^*Bf_*$ has multiplication
$\mu'=f^*[\mu\cdot(B\check{\epsilon}B)]f_*$
and unit $\eta'=(f^*\eta f_*)\cdot\check{\eta}$,
where $\check{\epsilon}$ and $\check{\eta}$ are the counit
and unit of the adjunction $f_*\dashv f^*$,
and also $(f^*\sigma f_*)$ can easily be checked to commute
with the appropriate monoid structure maps. 
Evidently, this functor
$\ps{L}f$ is just $\ps{M}f=(f^*\circ-\circ f_*)$ 
defined in (\ref{defM}), restricted between the respective
categories of monoids.

Again, we need to identify natural transformations
$\gamma$ and $\delta$ satisfying certain coherence
axioms, for $\ps{L}$ to be a pseudofunctor according
to Definition \ref{laxfunctor}.
In this case, these will have components
natural isomorphisms
\begin{gather*}
\delta^{g,f}:\ps{L}f\circ\ps{L}g\Rightarrow\ps{L}(g\circ f):
\Mon(\ca{V}\textrm{-}\B{Mat}(Z,Z))\to\Mon(\ca{V}\textrm{-}\B{Mat}(X,X)) \\
\gamma^X:
\B{1}_{\Mon(\ca{V}\textrm{-}\Mat(X,X))}\Rightarrow
\ps{L}(\mathrm{id}_X):
\Mon(\ca{V}\textrm{-}\Mat(X,X))\to
\Mon(\ca{V}\textrm{-}\Mat(X,X))
\end{gather*}
for $X\xrightarrow{f}Y\xrightarrow{g}Z$ in $\B{Set}$,
where $\mathrm{id}_X$ is the identity function.
We can define $\delta^{g,f}$ and 
$\gamma^X$ to be natural transformations given exactly 
as the ones for the pseudofunctor $\ps{M}$,
as in (\ref{deltaforM}) and (\ref{gammaforM}).
The domains and codomains of these composite
2-cells are by default monoids
in the appropriate endoarrow hom-categories of $\ca{V}$-matrices,
and it can be verified via computations that
the invertible arrows
$\delta^{g,f}_J$ and $\gamma^X_A$ for 
monoids$\SelectTips{eu}{10}\xymatrix @C=.2in
{J:Z\ar[r]|-{\object@{|}} & Z}$and$\SelectTips{eu}{10}\xymatrix @C=.2in
{A:X\ar[r]|-{\object@{|}} & X}$commute
with the respective multiplications and units
of the monoids involved. Moreover,
the diagrams (\ref{laxcond1}, \ref{laxcond2}) commute
because they do for all $\ca{V}$-matrices, by
pseudofunctoriality of $\ps{M}$.
Therefore $\ps{L}$ is indeed a pseudofunctor.

If we construct the Grothendieck category
for $\ps{L}:\B{Set}^\op\to\B{Cat}$,
with objects pairs $(A,X)$ where $A\in\Mon(\ca{V}\text{-}
\Mat(X,X))$ for a set $X$, and arrows
$(A,X)\to (B,Y)$ pairs
\begin{displaymath}
\begin{cases} 
A\xrightarrow{\phi}f^*Bf_* 
&\text{in }\Mon(\ca{V}\textrm{-}\Mat(X,X))\\
X\xrightarrow{f}Y &\text{in }\B{Set},
\end{cases}
\end{displaymath}
it is evident by Lemma \ref{charactVCat}
that $\Gr{G}\ps{L}\cong\ca{V}\text{-}\B{Cat}$.
Moreover, both forgetful functors to $\B{Set}$
have the same effect on objects and on arrows,
namely separating the set-part of the data.
Hence
\begin{displaymath}
 P:\ca{V}\text{-}\B{Cat}\longrightarrow\B{Set}
\end{displaymath}
is a fibration, isomorphic to $P_\ps{L}$
arising via the Grothendieck construction.
\end{proof}
\begin{cor}\label{Pfibredcomplete}
 The fibration $P:\ca{V}$-$\B{Cat}\to\B{Set}$ has
all fibred limits when $\ca{V}$ is complete.
\end{cor}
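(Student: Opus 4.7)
The plan is to apply Corollary~\ref{AhasPpreserves} directly, which reduces the existence of all fibred limits for the fibration $P$ to two conditions: that the total category $\ca{V}$-$\B{Cat}$ is complete, and that $P$ strictly preserves (chosen) limits. The base $\B{Set}$ is of course complete, so the hypothesis of that corollary is met.

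For the first condition, completeness of $\ca{V}$-$\B{Cat}$ when $\ca{V}$ is complete is exactly Corollary~\ref{VCatcomplete}. For the second condition, I would factor the forgetful $P:\ca{V}\text{-}\B{Cat}\to\B{Set}$ through the category of $\ca{V}$-graphs as $P=Q\circ\tilde{S}$, where $\tilde{S}:\ca{V}\text{-}\B{Cat}\to\ca{V}\text{-}\B{Grph}$ is monadic by Proposition~\ref{VCatmonadic} and $Q:\ca{V}\text{-}\B{Grph}\to\B{Set}$ is the forgetful functor from Proposition~\ref{VGrphbifibr}. Being monadic, $\tilde{S}$ creates (and in particular strictly preserves) all limits; and by the explicit pointwise construction of limits in $\ca{V}$-$\B{Grph}$ described just before Proposition~\ref{Vgrphcocomplete}, the set-of-objects of a limit $\ca{V}$-graph is on the nose the limit of the sets of objects in $\B{Set}$, so $Q$ strictly preserves limits. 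Composing, $P$ strictly preserves limits, as required.

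An equivalent route, worth noting as a sanity check, is via Proposition~\ref{reindexcont}: it suffices to show each fibre $\ps{L}X=\Mon(\ca{V}\text{-}\Mat(X,X))$ is complete and that every reindexing $\ps{L}f=f^*\circ-\circ f_*$ preserves limits. The former follows because $\ca{V}\text{-}\Mat(X,X)$ inherits all limits from $\ca{V}$ by Proposition~\ref{propVMat}(i), and monoids in any monoidal category are monadic over the underlying category with a limit-creating forgetful functor. The latter holds because $f^*\circ-\circ f_*$ is computed pointwise from $\otimes$ with fixed factors of $I$ and reindexing by $f$ at the level of indexing sets, and this operation preserves limits in $\ca{V}\text{-}\Mat(X,X)$ (limits commuting with limits, plus the pointwise nature of limits in the functor category $\ca{V}^{X\times X}$).

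I do not anticipate any serious obstacle: all the hard work has already been done in assembling the fibrational picture (Proposition~\ref{VCatfibred}), establishing completeness (Corollary~\ref{VCatcomplete}), and proving the general criterion (Corollary~\ref{AhasPpreserves}). The only point to keep in mind is the distinction between \emph{preserving} and \emph{strictly preserving} chosen limits; this is unproblematic here because the canonical choice of limits in $\ca{V}$-$\B{Cat}$ is the one lifted through $\tilde{S}$ from $\ca{V}$-$\B{Grph}$, whose canonical limits project identically to the canonical limits in $\B{Set}$.
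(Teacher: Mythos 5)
Your proposal is correct and follows essentially the same route as the paper: apply Corollary~\ref{AhasPpreserves}, invoke Corollary~\ref{VCatcomplete} for completeness of $\ca{V}$-$\B{Cat}$, and observe that the underlying set of a limit of $\ca{V}$-categories is on the nose the limit of the underlying sets (the paper states this last point directly, while you justify it by factoring $P$ through the monadic $\tilde{S}$ and the pointwise limits in $\ca{V}$-$\B{Grph}$). The alternative fibrewise check via Proposition~\ref{reindexcont} is a nice addition but not needed.
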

\begin{proof}
Since the fibration $P$ has as base category 
the complete category $\B{Set}$, in order
for $P$ to have all fibred limits it suffices for 
$\ca{V}$-$\B{Cat}$ to be complete
and for the forgetful $P$ to preserve all limits strictly,
again by Corollary \ref{AhasPpreserves}.
Corollary \ref{VCatcomplete} ensures that $\ca{V}$-$\B{Cat}$
has all limits and since a limit of $\ca{V}$-graphs
has as underlying set precisely 
the limit of the sets, the result follows.
\end{proof}
Finally, in order to establish
that $\ca{V}$-$\B{Cocat}$ is opfibred over $\B{Set}$,
we are going to use the pseudofunctor
$\ps{F}$ defined as in ($\ref{defF}$).
\begin{prop}\label{VCocatopfibred}
The category $\ca{V}$-$\B{Cocat}$ of small
$\ca{V}$-cocategories is an 
opfibration over $\B{Set}$.
\end{prop}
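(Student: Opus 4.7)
The plan is to mirror the approach of Propositions \ref{VGrphbifibr} and \ref{VCatfibred}: construct a pseudofunctor $\ps{N}:\B{Set}\to\B{Cat}$ whose Grothendieck category is isomorphic to $\ca{V}$-$\B{Cocat}$ over $\B{Set}$, so that the forgetful functor inherits the structure of an opfibration via Theorem \ref{maintheoremopfibr}. Since $\ca{V}$-cocategories are characterized by Lemma \ref{charactVCocat} as pairs $(C,X)$ with $C\in\Comon(\ca{V}\textrm{-}\B{Mat}(X,X))$, and morphisms as pairs $(\psi,f)$ with $\psi:f_*Cf^*\to D$ in $\Comon(\ca{V}\textrm{-}\B{Mat}(Y,Y))$, the natural move is to restrict $\ps{F}$ (\ref{defF}) to comonoids --- completely dually to how $\ps{L}$ in Proposition \ref{VCatfibred} was obtained as a restriction of $\ps{M}$ to monoids.

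Concretely, I would first define $\ps{N}$ on objects by $X\mapsto\Comon(\ca{V}\textrm{-}\B{Mat}(X,X))$, and on morphisms $f:X\to Y$ by the functor $\ps{N}f=(f_*\circ{-}\circ f^*)$; this lands in comonoids by Lemma \ref{C*comonoid}, and sends a comonoid morphism $\sigma:C\to C'$ to $f_*\sigma f^*$, which is readily checked to respect the induced cocomultiplications $\Delta'=f_*((C\check{\eta}C)\cdot\Delta)f^*$ and counits $\epsilon'=\check{\varepsilon}\cdot(f_*\epsilon f^*)$. Next, I would take the pseudofunctoriality constraints $\delta^{g,f}$ and $\gamma^X$ exactly as defined in (\ref{deltaforF}) and (\ref{gammaforM}) for $\ps{F}$. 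The main point to verify, as in the monoid case, is that these invertible 2-cells in $\ca{V}$-$\B{Mat}$ are actually morphisms in the comonoid categories, i.e.\ that the components $\delta^{g,f}_C$ and $\gamma^X_C$ commute with the cocomultiplications and counits built from the $\check{\eta},\check{\varepsilon}$ of the adjunctions $f_*\dashv f^*$ and $g_*\dashv g^*$. This is a direct computation relying on the triangular identities and the properties of $\zeta$ and $\xi$ in Lemmas \ref{isosofstars} and \ref{corisos}; the coherence axioms (\ref{laxcond1}), (\ref{laxcond2}) then transfer from pseudofunctoriality of $\ps{F}$, since the forgetful functor to $\ca{V}$-$\B{Mat}$ is conservative.

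Finally, applying the Grothendieck construction to $\ps{N}$ yields a split opfibration $U_\ps{N}:\Gr{G}\ps{N}\to\B{Set}$ whose objects are pairs $(C,X)$ with $C\in\Comon(\ca{V}\textrm{-}\B{Mat}(X,X))$ and whose morphisms $(C,X)\to(D,Y)$ are pairs consisting of a function $f:X\to Y$ and a comonoid morphism $f_*Cf^*\to D$. By Lemma \ref{charactVCocat} this is precisely $\ca{V}$-$\B{Cocat}$, and the projection $P_\ps{N}$ coincides with the forgetful functor, giving the commutative triangle
\begin{displaymath}
\xymatrix @C=.4in @R=.5in
{\Gr{G}\ps{N}\ar[rr]^-{\cong}\ar[dr]_-{U_\ps{N}} &&
\ca{V}\text{-}\B{Cocat}\ar[dl] \\
& \B{Set}. &}
\end{displaymath}
Thus the forgetful $\ca{V}\textrm{-}\B{Cocat}\to\B{Set}$ is an opfibration. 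The main obstacle is only bureaucratic: carefully checking that $\delta^{g,f}$ and $\gamma^X$ are comonoid morphisms, i.e.\ that pre- and post-composing with $f_*$ and $f^*$ together with the adjunction unit/counit preserves all the structural 2-cells up to the required coherence; once this is pinned down, the rest of the proof is essentially dual to Proposition \ref{VCatfibred}.
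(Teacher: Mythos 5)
Your proposal is correct and follows essentially the same route as the paper's own proof: the paper defines exactly this pseudofunctor (named $\ps{K}$ there), obtained by restricting $\ps{F}f=(f_*\circ-\circ f^*)$ to comonoid categories via Lemma \ref{C*comonoid}, reuses the constraints (\ref{deltaforF}) and (\ref{gammaforM}), and identifies the Grothendieck category with $\ca{V}$-$\B{Cocat}$ through Lemma \ref{charactVCocat}. The only point you flag as the "main obstacle" — checking that $\delta^{g,f}$ and $\gamma^X$ are comonoid morphisms — is likewise asserted in the paper with the same justification.
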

\begin{proof}
We will once more 
construct a covariant indexed category
$\ps{K}:\B{Set}\to\B{Cat}$,
for which the Grothendieck
construction gives a category
isomorphic to $\ca{V}$-$\B{Cocat}$
along with the forgetful functor to sets, mapping
every $\ca{V}$-cocategory to its set of objects.

Define $\ps{K}$ as follows:
a set $X$ is mapped to
the category of comonoids in the monoidal category 
($\ca{V}$-$\B{Mat}(X,X),\circ,1_X$),
and a function $f:X\to Y$ is mapped to the functor
\begin{displaymath}
\ps{K}f:\Comon(\ca{V}\textrm{-}\B{Mat}(X,X))\to
\Comon(\ca{V}\textrm{-}\B{Mat}(Y,Y))
\end{displaymath}
which precomposes with $f^*$ and postcomposes
with $f_*$ both $\ca{V}$-matrices and 2-cells.
Explicitly, the functor $\ps{K}f$
is defined on objects by
\begin{displaymath}
\xymatrix{(C,\Delta,\epsilon)\ar @{|.>}[r] &
(f_*Cf^*,\Delta',\epsilon')}
\end{displaymath}
where $\Delta'=f_*[(C\check{\eta}C)\cdot\Delta]f^*$
and $\epsilon'=\check{\epsilon}\cdot(f_*\epsilon f^*)$
as described in detail in Lemma \ref{C*comonoid},
and on arrows
\begin{displaymath}
\xymatrix
{(C\ar @{=>}[r]^-\tau & D) \ar @{|.>}[r] & (f_*Cf^*
\ar @{=>}[r]^-{f_*\tau f^*} & f_*Df^*)}
\end{displaymath}
where $f_*\tau f^*$ can easily be verified to 
commute with the respective
counits and comultiplications.
Again, notice that $\ps{K}f$
is in fact the restriction of $\ps{F}f$ (\ref{defF})
to the categories of comonoids. 
The above mappings define a pseudofunctor $\ps{K}$,
since the two natural transformations 
$\gamma$ and $\delta$ in this case,
with components
natural isomorphisms
\begin{displaymath}
 \xymatrix @C=.4in
{\Comon(\ca{V}\textrm{-}\B{Mat}(X,X))\ar @/^3ex/[rr]^-{\ps{K}g\circ\ps{K}f}
\ar @/_3ex/[rr]_-{\ps{K}(g\circ f)} \rrtwocell<\omit>{\quad\delta^{g,f}} &&
\Comon(\ca{V}\textrm{-}\B{Mat}(Z,Z))}
\end{displaymath}
\begin{displaymath}
\xymatrix @C=.4in
{\Comon(\ca{V}\textrm{-}\B{Mat}(X,X))
\ar @/^3ex/[rr]^-{\B{1}_{\Comon(\ca{V}\textrm{-}\B{Mat}(X,X))}}
\ar @/_3ex/[rr]_-{\ps{K}(\;\mathrm{id}_X)}
\rrtwocell<\omit>{\;\gamma^X} &&
\Comon(\ca{V}\textrm{-}\B{Mat}(X,X))}
\end{displaymath}
consist of the invertible composite 2-cells
as in (\ref{deltaforF}) and (\ref{gammaforM})
for the pseudofunctor $\ps{F}$.
Their domains and codomains are 
by construction comonoids in the appropriate
categories of $\ca{V}$-matrices, and they satisfy
the properties of comonoid morphisms.
Hence $\delta$ and
$\gamma$ are well-defined, and the diagrams
(\ref{laxcond1}, \ref{laxcond2}) commute by 
pseudofunctoriality of $\ps{F}$.

The Grothendieck category $\Gr{G}\ps{K}$
for this pseudofunctor
has as objects pairs $(C,X)$ where
$C\in\Comon(\ca{V}\text{-}\Mat(X,X))$ for a set $X$,
and as arrows $(C,X)\to(D,Y)$ pairs
\begin{displaymath}
 \begin{cases} 
f_*Cf^*\xrightarrow{\psi}D 
&\text{in }\Comon(\ca{V}\textrm{-}\Mat(Y,Y))\\
X\xrightarrow{f}Y &\text{in }\B{Set}.
\end{cases}
\end{displaymath}
By Lemma \ref{charactVCocat}, this is isomorphic 
to the category $\ca{V}$-$\B{Cocat}$.
As a result, the forgetful functor
\begin{displaymath}
 W:\ca{V}\text{-}\B{Cocat}\longrightarrow\B{Set}
\end{displaymath}
is an opfibration, isomorphic to $U_\ps{K}$
arising via the Grothendieck construction
since they have the same effect on objects
and on morphisms.
\end{proof}
\begin{cor}\label{Wopfibredcocomplete}
The opfibration $W:\ca{V}$-$\B{Cocat}\to\B{Set}$ has 
all opfibred colimits, when $\ca{V}$ is locally presentable.
\end{cor}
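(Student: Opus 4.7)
The plan is to apply the dual of Corollary \ref{AhasPpreserves} to the opfibration $W:\ca{V}\text{-}\B{Cocat}\to\B{Set}$ established in Proposition \ref{VCocatopfibred}. This result says that an opfibration with cocomplete base has all opfibred colimits if and only if the total category is cocomplete and the opfibration strictly preserves colimits. So the task reduces to verifying three items: (i) $\B{Set}$ is cocomplete; (ii) $\ca{V}\text{-}\B{Cocat}$ is cocomplete; (iii) $W$ strictly preserves colimits. Item (i) is standard, and item (ii) is precisely Proposition \ref{VCocatcocomplete}, whose hypotheses (locally presentable $\ca{V}$ with $\otimes$ preserving colimits on both sides) are in force here; in particular the monoidal closed setting that underpins this chapter ensures the side condition on $\otimes$.

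For item (iii), I would recall the explicit construction of colimits in $\ca{V}\text{-}\B{Cocat}$ given in Proposition \ref{VCocatcocomplete}. For a diagram $D:\ca{J}\to\ca{V}\text{-}\B{Cocat}$, the colimit is produced in two stages: first one forms the colimiting cocone $(\tau_j:X_j\to X)$ of the underlying sets of objects in $\B{Set}$, and then the comonoid colimit in $\ca{V}\text{-}\Mat(X,X)$ (which exists and is created from $\ca{V}\text{-}\Mat(X,X)$ via the comonadic forgetful of Corollary \ref{cofreecomonVMat}) supplies the hom-$\ca{V}$-matrix $C$ together with its comonoid structure. The resulting $\ca{V}$-cocategory $(C,X)$ satisfies $W(C,X)=X$ by definition of $W$, and the legs of the colimiting cocone are $(\lambda_j,\tau_j)$ whose $W$-image is exactly $(\tau_j)$. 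Hence $W$ sends the chosen colimit of $D$ to the chosen colimit of $WD$ on the nose, establishing strict preservation.

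Combining these observations, the dual of Corollary \ref{AhasPpreserves} delivers the conclusion. I do not anticipate a genuine obstacle: the key point is only the observation that in the construction of Proposition \ref{VCocatcocomplete} the set of objects is built first and independently, so that strict preservation of colimits by $W$ is automatic from the construction rather than requiring any further argument. The only conceptual caveat is making explicit that the standing assumption ``$\ca{V}$ locally presentable'' in the statement is to be read together with the side hypothesis of Proposition \ref{VCocatcocomplete} (preservation of colimits by $\otimes$), consistently with the convention used throughout this chapter.
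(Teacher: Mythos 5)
Your proposal is correct and follows essentially the same route as the paper: cocompleteness of $\B{Set}$, cocompleteness of $\ca{V}$-$\B{Cocat}$ from Proposition \ref{VCocatcocomplete} with strict preservation of colimits by $W$ built into that construction (the object-set colimit is formed first in $\B{Set}$), and then the dual of Corollary \ref{AhasPpreserves}. Your explicit unpacking of why preservation is strict, and the remark about the standing side hypothesis on $\otimes$, are just slightly more detailed versions of what the paper leaves implicit.
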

\begin{proof}
 The base category of this opfibration
is again the cocomplete category of sets,
and also the total category $\ca{V}$-$\B{Cocat}$
has all colimits which by construction are
strictly preserved by the forgetful functor, 
see Proposition
\ref{VCocatcocomplete}. Therefore by the dual of
Corollary \ref{AhasPpreserves}, the opfibration $W$ has
all opfibred colimits.
\end{proof}
\begin{rmk*}
For the definition of the two pseudofunctors 
which give rise to $\ca{V}$-categories 
and $\ca{V}$-cocategories as their Grothendieck categories, 
the functors $\ps{M}f=f^*\circ-\circ f_*$ 
and $\ps{F}f=f_*\circ-\circ f^*$ as in 
(\ref{defM}), (\ref{defF}) were employed. Lemmas
\ref{B*monoid} and \ref{C*comonoid} suggested already
that these two functors may `lift' to the 
respective categories of monoids and comonoids.
This can be further clarified if we observe that 
both these functors have the structure
of a lax/colax monoidal functor 
respectively, between
the monoidal hom-categories of endomorphisms
in $\ca{V}$-$\Mat$.
For example, for a function $f:X\to Y$
and two $\ca{V}$-matrices$\SelectTips{eu}{10}\xymatrix @C=.2in
{B,B':Y\ar[r]|-{\object@{|}} & Y,}$the lax
monoidal structure map 
\begin{displaymath}
\phi_{B,B'}: f^*\circ B\circ f_*\circ f^*\circ B'\circ f_*
\Rightarrow f^*\circ B\circ B'\circ f_*
\end{displaymath}
of $\ps{M}f$ has components
the composite 2-cells
\begin{displaymath}
 \xymatrix @R=.1in 
{&&& X\ar@/^/[dr]|-{\object@{|}}^-{f_*} &&& \\
X\ar[r]|-{\object@{|}}^-{f_*} & Y\ar[r]|-{\object@{|}}^-{B'}
& Y\ar@/^/[ur]|-{\object@{|}}^-{f^*}
\ar[rr]|-{\object@{|}}_-{1_Y} 
\rrtwocell<\omit>{<-2>\check{\varepsilon}}
\ar@/_5ex/[rrr]|-{\object@{|}}_-B 
& \rtwocell<\omit>{<2>\stackrel{\;\rho}{\cong}} &
Y\ar[r]|-{\object@{|}}^-B & Y\ar[r]|-{\object@{|}}^-{f^*} &X.}
\end{displaymath}
Similarly for $\phi_0$,
and also for the functor $\ps{F}f$.
Therefore, these lax/colax monoidal functors induce functors
between the categories of monoids and comonoids
of $\ca{V}$-$\Mat(X,X)$ in a straightforward way, as in (\ref{MonF}). 
\end{rmk*}
The fibre categories for the bifibration, fibration and
opfibration $Q,\;P$ and $W$ respectively are
\begin{align*}
\ca{V}\text{-}\B{Grph}_X&=\ca{V}\text{-}\Mat(X,X) \\
\ca{V}\text{-}\B{Cat}_X&=\Mon(\ca{V}\text{-}\Mat(X,X)) \\
\ca{V}\text{-}\B{Cocat}_X&=\Comon(\ca{V}\text{-}\Mat(X,X)).
\end{align*}
Notice that, even if the total categories 
of $\ca{V}$-categories and 
$\ca{V}$-cocategories have a monoidal structure
as seen in Section \ref{VcatsandVcocats},
their fibres are not monoidal categories,
since the monoidal $(\ca{V}$-$\Mat(X,X),\circ,1_X)$ fails
to be symmetric or braided.

We now turn back to the 
primary question of the existence of a
right adjoint for the functor
\begin{displaymath}
 K(-,\ca{B}_Y)^\op:\ca{V}\text{-}\B{Cocat}\longrightarrow
\ca{V}\text{-}\B{Cat}^\op
\end{displaymath}
coming from $K$ (\ref{defK}), which
in reality is the internal hom
functor $^g\Hom$ of the monoidal closed category
of small $\ca{V}$-graphs
restricted on $\ca{V}$-cocategories and $\ca{V}$-categories,
as explained in detail in Section 
\ref{enrichmentofVcatsinVcocats}.
The plan is to now obtain this adjoint
via the theory of fibrations and in particular 
from Theorem \ref{totaladjointthm}, and then
the enrichment of $\ca{V}$-categories
in $\ca{V}$-cocategories will follow in the exact same way
as in the end of previous section.
\begin{lem}\label{partialKopfibred1cell}
The diagram
\begin{displaymath}
\xymatrix @C=.8in @R=.4in
{\ca{V}\textrm{-}\B{Cocat}
\ar[r]^-{K(-,(B,Y))^\mathrm{op}}
\ar[d]_-{W} & \ca{V}\textrm{-}\B{Cat}^\mathrm{op}
\ar[d]^-{P^\mathrm{op}} \\
\B{Set}\ar[r]_-{{Y^{(-)}}^\op} & \B{Set}^\mathrm{op}}
\end{displaymath}
exhibits $\big(K(-,(B,Y))^\op,{Y^{(-)}}^\op\big)$ as 
an opfibred 1-cell between 
the opfibrations $W$ and $P^\mathrm{op}$.
\end{lem}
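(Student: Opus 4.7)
My plan is to verify the two defining properties of an opfibred 1-cell: that the displayed square commutes, and that $K(-,(B,Y))^\op$ preserves cocartesian arrows. Commutativity is essentially by definition: on objects, $W(C,X) = X$ while $K((C,X),(B,Y)) = \Hom(\ca{C},\ca{B})_{Y^X}$ sits over $Y^X \in \B{Set}^\op$ via $P^\op$; on a $\ca{V}$-cofunctor $F_f:(C,X)\to(C',X')$, the induced $\ca{V}$-functor $K(F_f,1_{(B,Y)})$ has underlying function $1^f = Y^f:Y^{X'}\to Y^X$ by the description of $^g\Hom$ on morphisms in Proposition \ref{VGrphclosed}, which matches $(Y^{(-)})^\op$ applied to $f$.

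The substantive step is preservation of cocartesian arrows. By Proposition \ref{VCocatopfibred} and the canonical liftings (\ref{canoncart}), a cocartesian arrow in $W$ over $f:X\to X'$ has the form $\alpha = (1_{f_*Cf^*},f):(C,X)\to(f_*Cf^*,X')$. Its image in $\ca{V}\text{-}\B{Cat}^\op$ corresponds (in $\ca{V}\text{-}\B{Cat}$) to the $\ca{V}$-functor
\[
K(\alpha,1_{(B,Y)}):\Hom(f_*Cf^*,\ca{B})_{Y^{X'}}\longrightarrow\Hom(\ca{C},\ca{B})_{Y^X}
\]
with underlying function $Y^f$. Preservation of cocartesian arrows in $\ca{V}\text{-}\B{Cat}^\op$ under $P^\op$ is equivalent to showing that $K(\alpha,1_{(B,Y)})$ is cartesian in $P:\ca{V}\text{-}\B{Cat}\to\B{Set}$ over $Y^f$, i.e.\ agrees (up to the unique vertical isomorphism) with the canonical cartesian lifting $(Y^f)^*\Hom(\ca{C},\ca{B})(Y^f)_*\to\Hom(\ca{C},\ca{B})$.

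The matching of $\ca{V}$-matrices is a direct computation using (\ref{Homobjects}) and the formulas for pre/post-composition with $f_*,f^*$: for $q',k'\in Y^{X'}$,
\[
\Hom(f_*Cf^*,B)(q',k') = \prod_{z',x'\in X'}\Bigl[\sum_{\substack{fz=z'\\ fx=x'}}I\otimes C(z,x)\otimes I,\;B(q'z',k'x')\Bigr]
\]
\[
\cong \prod_{z,x\in X}[C(z,x),B(q'fz,k'fx)] = \Hom(\ca{C},\ca{B})(q'\circ f,k'\circ f),
\]
which is exactly $(Y^f)^*\Hom(\ca{C},\ca{B})(Y^f)_*(q',k')$ up to the canonical unit isomorphisms, since $Y^f(q') = q'\circ f$. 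Under this identification the function part is already $Y^f$, so $K(\alpha,1_{(B,Y)})$ coincides with the canonical cartesian lifting up to a unique vertical isomorphism.

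The main (minor) obstacle is checking that the $\ca{V}$-category structure on $\Hom(f_*Cf^*,B)$ transports across the above isomorphism to that of $(Y^f)^*\Hom(\ca{C},\ca{B})(Y^f)_*$ — equivalently, that the natural isomorphism is compatible with the composition laws $M$ and identities $\eta_k$ of Section \ref{enrichmentofVcatsinVcocats}. This follows because both composition laws are ultimately assembled, via the usual transpose under $-\otimes- \dashv [-,-]$, from the comultiplications (which on $f_*Cf^*$ factor through those of $C$ by Lemma \ref{C*comonoid}) and from evaluation in $\ca{V}$; the isomorphism above is built from the universal properties of sums, products, and unit constraints, all of which commute with these assembling arrows. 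Once this compatibility is recorded, $K(\alpha,1_{(B,Y)})$ is recognised as a cartesian arrow, completing the proof that the pair is an opfibred 1-cell.
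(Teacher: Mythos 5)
Your proposal is correct and follows essentially the same route as the paper's proof: commutativity by construction, then reducing cocartesianness to comparing $K$ applied to the canonical cocartesian lifting with the canonical cartesian lifting of $\Hom(\ca{C},\ca{B})$ along $Y^f$, via the isomorphism $\prod_{z',x'}[\sum I\otimes C\otimes I,\,B]\cong I\otimes\prod_{x,x'}[C(x',x),B(-,-)]\otimes I$ coming from $\otimes$ preserving sums and $[-,A]$ turning colimits into limits. Your final remark on transporting the $\ca{V}$-category structure across this isomorphism is a point the paper leaves implicit, so including it only strengthens the argument.
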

\begin{proof}
It is straightforward to verify that the above diagram
commutes, since the set of objects
of the internal hom is by construction the
exponential of the underlying sets of objects
of the $\ca{V}$-cocategory and
the $\ca{V}$-category, and similarly for morphisms
(see Proposition \ref{VGrphclosed}).
It remains to show that $K(-,(B,Y))^\op$
is a cocartesian functor, or equivalently
that the contravariant
$K(-,(B,Y))$ maps cocartesian liftings
to cartesian liftings. 

Using the canonical choice of cocartesian liftings
for any opfibration obtained via the Grothendieck 
construction (see Theorem \ref{maintheoremfibr}),
consider a cocartesian lifting of 
$(C,X)$ along the function $f:X\to Z$ with
respect to the opfibration 
$W:\ca{V}$-$\B{Cocat}\to\B{Set}$:
\begin{displaymath}
\xymatrix @C=.3in
{C\ar[rrr]^-{1_{f_*Cf^*}}
\ar @{.>}[d] &&& f_*Cf^* \ar @{.>}[d] &\textrm{in }\ca{V}\text{-}\B{Cocat} \\
X\ar[rrr]_-{f} &&& Z & \textrm{in }\B{Set}.}
\end{displaymath}
Notice that the pair notation
for objects in the total category
is suppressed, since the respective set
of objects of the $\ca{V}$-cocategories is clear 
from the picture. The image of this arrow
under $K(-,(B,Y))$ gives 
\begin{displaymath}
\xymatrix @C=.3in
{\Hom((f_*Cf^*,Z),(B,Y)) \ar[rrr]^-{[[1_{f_*Cf^*},1_B]]}
\ar @{.>}[d] &&& \Hom((C,X),(B,Y)) \ar @{.>}[d] 
&\textrm{in }\ca{V}\text{-}\B{Cat} \\
Y^Z\ar[rrr]_-{Y^f} &&& Y^X & \textrm{in }\B{Set}}
\end{displaymath}
by definition of the functor $^g\Hom$, and the 2-cell
in $\Mon(\ca{V}$-$\Mat(Y^Z,Y^Z))$
\begin{displaymath}
 \xymatrix @C=1.3in @R=.6in
{Y^Z\ar[r]|-{\object@{|}}^-{\Hom(f_*Cf^*,B)}
\ar[d]|-{\object@{|}}_-{(Y^f)_*}
\rtwocell<\omit>{<7>\qquad\qquad[[1_{f_*Cf^*},1_B]]} & 
Y^Z \\
Y^X\ar[r]|-{\object@{|}}_-{\Hom(C,B)} & 
Y^X\ar[u]|-{\object@{|}}_-{(Y^f)^*}}
\end{displaymath}
as in (\ref{def[[]]}) explicitly consists of arrows 
$[[1_{f_*Cf^*},1_B]]_{k,s}$
\begin{displaymath}
 \prod_{z,z'}{[\sum_{\scriptscriptstyle{\stackrel{fx=z}{fx'=z'}}}I\otimes
C(x',x)\otimes I,B(kz',sz)]}\to 
I\otimes\prod_{x,x'}{[C(x',x),B(kfx',sfx)]}\otimes I
\end{displaymath}
in $\ca{V}$ for all $k,s\in Y^Z$. On the other hand,
the canonical cartesian lifting of
$(\Hom(C,B),Y^X)$ along the function $Y^f$
with respect to the fibration $P:\ca{V}$-$\B{Cat}\to\B{Set}$
is
\begin{displaymath}
\xymatrix @C=.4in
{(Y^f)^*\Hom(C,B)(Y^f)_*
\ar[rrr]^-{1_{(Y^f)^*\Hom(C,B)(Y^f)_*}}
\ar @{.>}[d] &&& \Hom(C,B) \ar @{.>}[d]\\
Y^Z\ar[rrr]_-{Y^f} &&& Y^X.}
\end{displaymath}
By comparing this cartesian arrow with the image
under $K(-,(B,Y))$ above, it is enough to 
show that $[[1_{f^*Cf_*},1_B]]$
is isomorphic to the identity arrow
in the fibre 
$\ca{V}$-$\B{Cat}_{Y^Z}=\Mon(\ca{V}$-$\Mat(Y^Z,Y^Z))$.
We have 
natural isomorphisms
\begin{align*}
 \prod_{z,z'}{[\sum_{\scriptscriptstyle{\stackrel{fx=z}{fx'=z'}}}I\otimes
C(x',x)\otimes I,B(kz',sz)]}&\cong
\prod_{z,z'}{\prod_{\scriptscriptstyle{\stackrel{fx=z}{fx'=z'}}}{[I\otimes
C(x',x)\otimes I,B(kz',sz)]}} \\
 \cong\prod_{x',x}{[I\otimes
C(x',x)\otimes I,B(kfx',sfx)]}&\cong 
I\otimes\prod_{x,x'}{[C(x',x),B(kfx',sfx)]}\otimes I
\end{align*}
since sum commutes with $\otimes$ and $[-,A]$
maps colimits to limits for any monoidal closed category $\ca{V}$.
By applying $r$ and $l$ to 
move the $I$'s appropriately, we 
deduce that the result holds.
\end{proof}
\begin{lem}\label{lemmaspecialadjun}
Suppose that $\ca{V}$ is a locally presentable
symmetric monoidal closed category, and 
$\varepsilon$ is the counit of the exponential
adjunction
\begin{equation}\label{exponentialadjunction}
\xymatrix @C=.5in
{\B{Set} \ar @<+.8ex>[r]^-{{Y^{(-)}}^\op}
\ar@{}[r]|-{\bot} & 
\B{Set}^\op.\ar @<+.8ex>[l]^-{Y^{(-)}}}
\end{equation}
For any $\ca{V}$-category $\ca{B}_Y$ and any set $Z$, 
the composite functor
\begin{displaymath}
 \ca{V}\textrm{-}\B{Cocat}_{Y^Z}
\xrightarrow{\;K(-,\ca{B}_Y)^\op\;}
\ca{V}\textrm{-}\B{Cat}_{Y^{Y^Z}}^\mathrm{op}
\xrightarrow{\quad(\varepsilon_Z)_!\quad}
\ca{V}\textrm{-}\B{Cat}_Z^\mathrm{op}
\end{displaymath}
has a right adjoint $T_0(-,\ca{B}_Y)$.
\end{lem}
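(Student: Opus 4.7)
The plan is to apply Kelly's adjoint functor theorem (Theorem \ref{Kelly}) to the composite functor in question. We need to verify two things: that the domain $\ca{V}\textrm{-}\B{Cocat}_{Y^Z}$ is cocomplete and has a small dense subcategory, and that the composite is cocontinuous. Once both are established, the right adjoint $T_0(-,\ca{B}_Y)$ exists immediately.

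For the first requirement, recall from Proposition \ref{propVMat} that the hom-category $\ca{V}\textrm{-}\Mat(Y^Z,Y^Z) = [Y^Z \times Y^Z, \ca{V}]$ is locally presentable when $\ca{V}$ is, and that its monoidal product $\circ$ preserves colimits on both sides. The non-symmetric version of Proposition \ref{moncomonadm} (which the text explicitly notes remains valid, since the symmetry hypothesis can be traded for bilateral preservation of colimits by $\otimes$) then gives that $\ca{V}\textrm{-}\B{Cocat}_{Y^Z} = \Comon(\ca{V}\textrm{-}\Mat(Y^Z,Y^Z))$ is locally presentable. In particular, it is cocomplete and admits a small dense subcategory, namely a chosen set of representatives of the $\lambda$-presentable objects.

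For cocontinuity of the composite, I treat each factor separately. The second factor $(\varepsilon_Z)_!$ is the reindexing of the opfibration $P^\op$ along a morphism in $\B{Set}^\op$, and as such is a left adjoint to the corresponding restriction functor $(\varepsilon_Z)^*$ for $P$ (dually to Remark \ref{rmkforadjointintexingbifr} applied to $P^\op$ between its fibres); hence it preserves colimits. For the first factor, cocontinuity of $K(-,\ca{B}_Y)^\op$ between fibres amounts to showing that $K(-,\ca{B}_Y)$ sends colimits in $\ca{V}\textrm{-}\B{Cocat}_{Y^Z}$ to limits in $\ca{V}\textrm{-}\B{Cat}_{Y^{Y^Z}}$. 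Colimits in $\ca{V}\textrm{-}\B{Cocat}_{Y^Z}$ are created by the comonadic forgetful to $\ca{V}\textrm{-}\Mat(Y^Z,Y^Z)$ (where they are pointwise in $\ca{V}$), and limits in $\ca{V}\textrm{-}\B{Cat}_{Y^{Y^Z}}$ are created by the monadic forgetful to $\ca{V}\textrm{-}\Mat(Y^{Y^Z},Y^{Y^Z})$. Hence it suffices to check the underlying fact that $\Hom(-,B)$ as defined by (\ref{Homobjects}) converts pointwise colimits to pointwise limits, which follows because each $[-,B(qz,kx)]\colon \ca{V}^\op \to \ca{V}$ is a right adjoint by (\ref{adjunctionopinthom}) and small products commute with limits in $\ca{V}$.

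The main technical obstacle is this verification of cocontinuity of $K(-,\ca{B}_Y)$ within fibres, because the forgetful functors to $\ca{V}\textrm{-}\Mat$ at the source and target lie in different hom-categories of $\ca{V}\textrm{-}\Mat$, so the argument cannot be stated as a single commutative square but must be run pointwise in $\ca{V}$. Once these steps are in place, Theorem \ref{Kelly} produces the right adjoint $T_0(-,\ca{B}_Y)\colon \ca{V}\textrm{-}\B{Cat}_Z^\op \to \ca{V}\textrm{-}\B{Cocat}_{Y^Z}$, which is exactly the fibrewise ingredient required to invoke Theorem \ref{totaladjointthm} and so obtain the generalized Sweedler hom $T$ between total categories.
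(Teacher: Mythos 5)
Your overall strategy is the one the paper uses: establish that $\ca{V}\textrm{-}\B{Cocat}_{Y^Z}=\Comon(\ca{V}\textrm{-}\Mat(Y^Z,Y^Z))$ is locally presentable (the paper cites Corollary \ref{cofreecomonVMat}, which is exactly your non-symmetric reading of Proposition \ref{moncomonadm}), show both factors of the composite are cocontinuous, and conclude by Theorem \ref{Kelly}. Your treatment of the first factor is also essentially the paper's: the paper packages the pointwise computation into the commutative square relating $U$ and $S^\op$ to the cocontinuous internal hom $^g\Hom(-,\ca{B})^\op$ of $\ca{V}$-$\B{Grph}$ restricted between fibres, whereas you unwind it to the statement that $\Hom(-,B)$ turns pointwise colimits into pointwise limits; these are the same argument, and incidentally the paper does state it as a single commutative square even though the source and target hom-categories differ.

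The one step that does not hold up as written is your justification that $(\varepsilon_Z)_!$ is cocontinuous. The reindexing functor of the opfibration $P^\op$ along $\varepsilon_Z$ is by definition $\big((\bar{e})^*\big)^\op$, where $\bar{e}:Z\to Y^{Y^Z}$ is the corresponding function in $\B{Set}$ and $(\bar{e})^*$ is the reindexing of the fibration $P$; it is not automatically "a left adjoint to $(\varepsilon_Z)^*$". For it to be a left adjoint at all, $(\bar{e})^*=\ps{L}\bar{e}$ would need a left adjoint, i.e.\ $P:\ca{V}\textrm{-}\B{Cat}\to\B{Set}$ would have to be a bifibration — and unlike $Q:\ca{V}\textrm{-}\B{Grph}\to\B{Set}$, this is neither obvious nor established anywhere in the paper (the mate $f_*\circ-\circ f^*$ of the reindexing does not carry monoids to monoids, so the graph-level left adjoint does not restrict). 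Invoking Remark \ref{rmkforadjointintexingbifr} here is circular: that remark characterizes bifibrations, it does not show $P$ is one. The conclusion you want is nevertheless true, and the paper's repair is short: $P$ has all fibred limits by Corollary \ref{Pfibredcomplete}, so by Proposition \ref{reindexcont} every reindexing functor $(\bar{e})^*$ of $P$ preserves limits between the fibres, and hence its opposite $(\varepsilon_Z)_!$ preserves colimits. With that substitution your proof goes through and coincides with the paper's.
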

\begin{proof} 
We can rewrite the above composite as
\begin{displaymath}
 \xymatrix @C=1.2in @R=.5in
{\Comon(\ca{V}\text{-}\Mat(Y^Z,Y^Z))
\ar@{-->}[dr]
\ar[r]^-{\Mon(\Hom(-,(B,Y)))^\op} &
\Mon(\ca{V}\text{-}\Mat({Y^Y}^Z,{Y^Y}^Z))^\op
\ar[d]^-{\ps{L}\varepsilon_Z} \\
& \Mon(\ca{V}\text{-}\Mat(Z,Z))^\op}
\end{displaymath}
where the top functor was already given by (\ref{MonHom_})
but is now viewed as the induced `functor between the fibres'
from $K(-,\ca{B})$, as in (\ref{inducedfunfibres}). 
By Corollary \ref{cofreecomonVMat},
the category of comonoids 
$\Comon(\ca{V}\text{-}\Mat(Y^Z,Y^Z))$
of the locally presentable monoidal
category $\ca{V}\text{-}\Mat(Y^Z,Y^Z)$
is also locally presentable. As such,
it is in particular cocomplete
and has a small dense subcategory.
Moreover, the following commutative diagram
\begin{displaymath}
\xymatrix @C=1.3in
{\Comon(\ca{V}\textrm{-}\B{Mat}(X,X))
\ar[r]^-{\Mon(\Hom_X(-,(B,Y))^\op} \ar[d]_-U &
\Mon(\ca{V}\textrm{-}\B{Mat}(Y^X,Y^X))^\op \ar[d]^-{S^\op} \\
\ca{V}\textrm{-}\B{Mat}(X,X) 
\ar[r]_-{\Hom_X(-,(B,Y))^\op} &
\ca{V}\textrm{-}\B{Mat}(Y^X,Y^X)^\op}
\end{displaymath}
for a fixed $\ca{V}$-category $(B,Y)$
shows that the top arrow $K_X(-,(B,Y))$ is cocontinuous
for any set $X$. 
This is the case because the functors
$U$ and $S^\op$ are comonadic
by Corollary \ref{cofreecomonVMat}
and the bottom arrow is the cocontinuous
internal hom $^g\Hom(-,B)^\op$
restricted between the cocomplete fibres.
Finally, Proposition \ref{reindexcont}
ensures that all reindexing functors
for the fibration $P$ are continuous, 
since $P:\ca{V}\text{-}\B{Cat}\to\B{Set}$ 
has all fibred limits by
Corollary \ref{Pfibredcomplete}.
So the ones for the opfibration
$P^\op$ are cocontinuous, and in particular so is
$(\varepsilon_Z)_!$.
Thus, by Kelly's theorem \ref{Kelly},
the composite functor 
$(\varepsilon_Z)_!\circ K_{Y^Z}(-,\ca{B}_Y)$
has a right adjoint
\begin{displaymath}
\xymatrix @C=1in
{\ca{V}\textrm{-}\B{Cocat}_{Y^Z}
\ar @<+.8ex>[r]^-{(\varepsilon_Z)_!\circ
K(-,\ca{B}_Y)^\op}\ar@{}[r]|-\bot
& \ca{V}\textrm{-}\B{Cat}_Z^\op.
\ar @<+.8ex>[l]^-{T_0(-,\ca{B}_Y)}}
\end{displaymath}
\end{proof}
At this point, all the assumptions of Lemma
\ref{totaladjointlem} are satisfied,
so we can apply it in this setting to 
obtain the enriched hom-functor $T$, evidently
isomorphic to (\ref{defT}) of the previous section.
\begin{prop}\label{propexistenceS2}
The functor between the total categories
\begin{displaymath}
K^\op:\ca{V}\textrm{-}\B{Cocat}
\times\ca{V}\textrm{-}\B{Cat}^\op\to
\ca{V}\textrm{-}\B{Cat}^\op
\end{displaymath}
has a parametrized
adjoint
\begin{displaymath}
 T:\ca{V}\textrm{-}\B{Cat}^\mathrm{op}\times
\ca{V}\textrm{-}\B{Cat}\longrightarrow
\ca{V}\textrm{-}\B{Cocat}
\end{displaymath}
which makes the following diagram serially commute:
\begin{displaymath}
 \xymatrix @R=.5in @C=.8in 
{ \ca{V}\textrm{-}\B{Cocat} \ar @<+.8ex>[r]^-{K(-,\ca{B}_Y)^\mathrm{op}}
\ar@{}[r]|-\bot \ar[d]_-{W} &
\ca{V}\textrm{-}\B{Cat}^\mathrm{op}\ar @<+.8ex>[l]^-{T(-,\ca{B}_Y)}
\ar[d]^-{P^\mathrm{op}} \\ 
\B{Set} \ar@<+.8ex>[r]^-{{Y^{(-)}}^\mathrm{op}} 
\ar@{}[r]|-\bot &
\B{Set}^\mathrm{op}. \ar @<+.8ex>[l]^-{Y^{(-)}}}
\end{displaymath}
\end{prop}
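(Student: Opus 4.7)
The proof will be essentially an assembly of the three preparatory results already established, followed by an application of the general machinery of Theorem \ref{totaladjointthm} from Part I. The key observation is that the setup is precisely calibrated for Lemma \ref{totaladjointlem}: we have an opfibred 1-cell, an adjunction between the bases, and a fibrewise right adjoint to the composite with the cocartesian lifting along the counit.

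Concretely, I would proceed as follows. First, recall from Lemma \ref{partialKopfibred1cell} that for every fixed $\ca{V}$-category $\ca{B}_Y$, the pair $(K(-,\ca{B}_Y)^\op, {Y^{(-)}}^\op)$ forms an opfibred 1-cell between the opfibrations $W:\ca{V}\text{-}\B{Cocat}\to\B{Set}$ and $P^\op:\ca{V}\text{-}\B{Cat}^\op\to\B{Set}^\op$. Next, the standard exponential adjunction ${Y^{(-)}}^\op \dashv Y^{(-)}$ of (\ref{exponentialadjunction}) supplies the needed adjunction between the base categories. Finally, Lemma \ref{lemmaspecialadjun} establishes that for each set $Z$, the composite functor $(\varepsilon_Z)_!\circ K_{Y^Z}(-,\ca{B}_Y)^\op$ between the fibres admits a right adjoint $T_0(-,\ca{B}_Y)$. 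These three ingredients are exactly the hypotheses of Lemma \ref{totaladjointlem}, which then produces a functor $T(-,\ca{B}_Y):\ca{V}\text{-}\B{Cat}^\op\to\ca{V}\text{-}\B{Cocat}$ between the total categories, right adjoint to $K(-,\ca{B}_Y)^\op$, such that the pair $(T(-,\ca{B}_Y),Y^{(-)})$ is a morphism in $\B{Cat}^\B{2}$ and the adjunction $(K(-,\ca{B}_Y)^\op,{Y^{(-)}}^\op)\dashv(T(-,\ca{B}_Y),Y^{(-)})$ lives in $\B{Cat}^\B{2}$. This gives the serially commutative diagram of the statement.

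To upgrade the object-assignment $\ca{A}_X\mapsto T(\ca{A}_X,\ca{B}_Y)$ to a genuine functor of two variables $T:\ca{V}\text{-}\B{Cat}^\op\times\ca{V}\text{-}\B{Cat}\to\ca{V}\text{-}\B{Cocat}$, I would invoke Theorem \ref{parametrizedadjunctions} (adjunctions with a parameter). Naturality of the hom-set bijection
\[
\ca{V}\text{-}\B{Cat}^\op\bigl(K(\ca{C}_X,\ca{B}_Y)^\op,\ca{A}_Z\bigr)\cong\ca{V}\text{-}\B{Cocat}\bigl(\ca{C}_X,T(\ca{A}_Z,\ca{B}_Y)\bigr)
\]
in $\ca{C}_X$ and $\ca{A}_Z$ is immediate from the construction; the parametrized adjoint theorem then determines a unique functorial dependence on $\ca{B}_Y$ making the bijection natural in all three variables. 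Equivalently, $T$ is the parametrized right adjoint of the bifunctor $K^\op$, which is well-defined because $K^\op$ is a functor of two variables whose partial left adjoint in the first variable exists for each fixed second argument.

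The main technical obstacle (modulo the substantial work already invested in the preparatory lemmas) is really just the verification in Lemma \ref{partialKopfibred1cell} that $K(-,\ca{B}_Y)^\op$ is cocartesian with respect to $W$ and $P^\op$; once that is granted, together with the cocontinuity arguments of Lemma \ref{lemmaspecialadjun} (which in turn rely on the comonadicity established in Corollary \ref{cofreecomonVMat} and on local presentability of $\ca{V}\text{-}\B{Cocat}$ from Proposition \ref{VCocatlocpresent}), the present proposition follows by a direct citation of Lemma \ref{totaladjointlem} and Theorem \ref{parametrizedadjunctions}. Thus the whole argument amounts essentially to observing that the abstract machinery of Chapter \ref{fibrations} fits this situation precisely, with no additional estimates required.
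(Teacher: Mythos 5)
Your proposal is correct and follows essentially the same route as the paper: it assembles Lemma \ref{partialKopfibred1cell}, the exponential adjunction (\ref{exponentialadjunction}), and Lemma \ref{lemmaspecialadjun}, then applies the machinery of Lemma \ref{totaladjointlem}/Theorem \ref{totaladjointthm} to obtain the total right adjoint with $(W,P^\op)$ a map of adjunctions, and finally uses adjunctions with a parameter to make $T$ a bifunctor. No gaps.
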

\begin{proof}
By Lemma \ref{partialKopfibred1cell},
we have an opfibred 1-cell
$(K(-,\ca{B}_Y)^\op,{Y^{(-)}}^\op)$
between the opfibrations
$W:\ca{V}$-$\B{Cocat}\to\B{Set}$ and
$P^\op:\ca{V}$-$\B{Cat}^\op\to\B{Set}^\op$.
Also there is an adjunction ${Y^{(-)}}^\op\dashv Y^{(-)}$
between the base categories,
since the exponential is the internal hom
in the cartesian monoidal closed $\B{Set}$.
Lastly, by Lemma \ref{lemmaspecialadjun} the composite
functor between the fibre categories
\begin{displaymath}
 K_{Y^Z}(-,\ca{B}_Y)\circ(\varepsilon_Z)_!:
\ca{V}\textrm{-}\B{Cocat}_{Y^Z}\longrightarrow
\ca{V}\textrm{-}\B{Cat}_Z^\op
\end{displaymath}
has a right adjoint
$T_Z(-,\ca{B}_Y)$ for any fixed set $Z$. 

Therefore,
by Theorem \ref{totaladjointthm}
the functor $K(-,\ca{B}_Y)^\op$ has a right adjoint 
$T(-,\ca{B}_Y)$ between
the total categories
\begin{equation}\label{adjunctionKSpartial}
 \xymatrix @C=.8in
{\ca{V}\textrm{-}\B{Cocat} \ar @<+.8ex>[r]^-{K(-,\ca{B}_Y)^\op}
\ar@{}[r]|-\bot &
\ca{V}\textrm{-}\B{Cat}^\op,\ar @<+.8ex>[l]^-{T(-,\ca{B}_Y)}}
\end{equation}
with $(K(-,\ca{B}_Y)^\op,{Y^{(-)}}^\op)\dashv 
(T(-,\ca{B}_Y),Y^{(-)})$ in $\B{Cat}^\B{2}$,
\emph{i.e.} ($W,P^\op$) is a map of adjunctions. 
The adjunction (\ref{adjunctionKSpartial})
for any $\ca{V}$-category $\ca{B}_Y$
makes $T$ into a functor of two variables 
such that the natural isomorphism
of the adjunction is natural in all three variables,
\emph{i.e.} $T$ is the parametrized adjoint of $K^\op$.
\end{proof}
Notice that the above proof
of existence of the adjoint $T$
between the total categories
automatically provides us with the underlying
set of objects of the $\ca{V}$-cocategory
$T(\ca{A}_X,\ca{B}_Y)$, namely
$Y^X$. On the contrary, Proposition \ref{Texistence}
did not establish this piece of data in a straightforward
way. We could also explicitly construct
$T$ on arrows, using the formulas provided in 
Section \ref{fibredadjunctions}.

\section{$\ca{V}$-modules and $\ca{V}$-comodules}\label{globalVmodsVcomods}

In these last two sections of the chapter,
the aim is to generalize the existence of
the universal measuring comodule,
which induces an enrichment of the global
category of modules in the 
global category of comodules as seen in Section
\ref{Universalmeasuringcomodule}.
This follows the idea
of the ($\ca{V}$-$\B{Cocat}$)-enrichment of 
$\ca{V}$-$\B{Cat}$ as the many-object
generalization of the enrichment of 
monoids in comonoids in $\ca{V}$ of Section
\ref{Universalmeasuringcomonoid}.

We are going to closely follow the development
of the previous chapter
in defining the global category of $\ca{V}$-enriched
modules and the global category of 
$\ca{V}$-enriched comodules. On that level,
by employing the theory of fibrations and
opfibrations once again, we will determine
the objects that induce the enrichment in question.

In Section \ref{Vbimodulesandmodules}, a brief account
of the bicategory of $\ca{V}$-bimodules
was given, with emphasis on the one-sided
modules of $\ca{V}$-categories. In the
current setting of the bicategory of $\ca{V}$-matrices, 
we can reformulate Definition
\ref{leftAmodule} of a left $\ca{A}$-module
for a $\ca{V}$-category $\ca{A}$
in a way that will clarify
how $\ca{V}$-modules are a special case of 
modules for a monad in a bicategory as in Section \ref{monadsinbicats}.
Motivated by Remark \ref{modswithdom1},
we are here interested in
categories of modules in the bicategory $\ca{V}$-$\Mat$
with fixed domain 
the singleton set $1=\{*\}$, \emph{i.e.} the initial object
in $\B{Set}$. The monads in this bicategory
are of course $\ca{V}$-categories$\SelectTips{eu}{10}\xymatrix@C=.2in
{A:X\ar[r]|-{\object@{|}} & X.}$

For the following definitions, the assumptions on
$\ca{V}$ are initially the ones 
required for the formation of $\ca{V}$-$\Mat$, \emph{i.e.}
existence of sums which are preserved by
the tensor product on both sides. 
\begin{defi}\label{leftcaAmodule}
 The category of \emph{left $\ca{A}$-modules}
for a $\ca{V}$-category $\ca{A}_X$, \emph{i.e}
a monad $(A,X)$, is the category
of left $A$-modules with domain the singleton set 
in the bicategory $\ca{V}$-$\Mat$, \emph{i.e.}
the category of Eilenberg-Moore algebras 
for the (ordinary) monad
`post-composition with $A$' on the hom-category
$\ca{V}\text{-}\Mat(1,X)$
\begin{displaymath}
 \ca{V}\text{-}_\ca{A}\Mod=
\ca{V}\text{-}\Mat(1,X)^{\ca{V}\text{-}\Mat(1,A)}.
\end{displaymath}
Explicitly, the objects are $\ca{V}$-matrices$\SelectTips{eu}{10}
\xymatrix@C=.2in
{\Psi:1\ar[r]|-{\object@{|}} & X}$given by a family 
$\{\Psi(x)\}_{x\in X}$ of objects
in $\ca{V}$, equipped with an action
$\mu:A\circ\Psi\Rightarrow\Psi$ with components
\begin{displaymath}
 \mu_x:\sum_{x'\in X}{A(x,x')\otimes\Psi(x')\to\Psi(x)}
\end{displaymath}
such that the diagrams
\begin{gather*}
\xymatrix @C=.05in
{\sum\limits_{x''}{(\sum\limits_{x'}{A(x,x')\otimes A(x',x'')})\otimes\Psi(x'')}
\ar[rr]^-{\alpha}\ar[d]_-{\sum{M_{x,x''}\otimes1}} &&
\sum\limits_{x'}{A(x,x')\otimes(\sum\limits_{x''}{A(x',x'')\otimes\Psi(x'')})}
\ar[d]^-{\sum{1\otimes\mu_{x'}}}\\ 
\sum\limits_{x''}{A(x,x'')\otimes\Psi(x'')}\ar[dr]_{\mu_x} &&
\sum\limits_{x'}{A(x,x')\otimes\Psi(x')}\ar[dl]^-{\mu_x}\\ 
& \Psi(x), &} \\
\xymatrix
{\sum\limits_{x\in X}{A(x,x)\otimes\Psi(x)}\ar[rr]^-{\mu_x} && \Psi(x) \\
& I\otimes\Psi(x)\ar[ul]^-{\eta_x\otimes1}\ar[ur]_-{\lambda} &}\quad\quad
\end{gather*}
commute. $M$ and $\eta$ are the composition law and identities
for $\ca{A}$, and $\alpha$, $\lambda$ are the associator
and left unitor of the bicategory $\ca{V}$-$\Mat$.
Morphisms between two left $\ca{A}$-modules
$\Psi$ and $\Psi'$ are 2-cells
$\sigma:\Psi\Rightarrow\Psi'$ in $\ca{V}$-$\Mat$
compatible with the actions, \emph{i.e.} 
families of arrows
\begin{displaymath}
 \sigma_x:\Psi(x)\to\Psi'(x)
\end{displaymath}
in $\ca{V}$ for all $x\in X$,
making the diagram
\begin{displaymath}
 \xymatrix @C=.6in @R=.45in
{\sum\limits_{x'}{A(x,x')\otimes\Psi(x')}
\ar[r]^-{\mu^{\Psi}_x}\ar[d]_-{\sum{1\otimes\sigma_{x'}}} &
\Psi(x)\ar[d]^-{\sigma_{x}} \\
\sum\limits_{x'}{A(x,x')\otimes\Psi'(x')}\ar[r]_-{\mu^{\Psi'}_x} &
\Psi'(x)}
\end{displaymath}
commute.
\end{defi}
This is essentially
Definition \ref{leftAmodule},
with a slight variation in the notation
due to the different convention used for
composition of $\ca{V}$-matrices. 
It directly follows from Definition 
\ref{lefttmodules} for $\ca{K}=\ca{V}\text{-}\Mat$,
where the axioms (\ref{axiomslefttmodule}, 
\ref{axiomlefttmodulemorphism})
for the appropriate 2-cells 
\begin{displaymath}
 \xymatrix @R=.02in @C=.5in
 {& X\ar@/^/[dr]|-{\object@{|}}^-A & \\
 1\ar@/^/[ur]|-{\object@{|}}^-{\Psi}
 \ar@/_3ex/[rr]|-{\object@{|}}_-{\Psi}
 \rrtwocell<\omit>{\mu} && X,} \qquad 
 \xymatrix @R=.02in @C=.7in
{\hole \\
1\ar@/^3ex/[r]|-{\object@{|}}^-\Psi
\ar@/_3ex/[r]|-{\object@{|}}_-{\Psi'}
\rtwocell<\omit>{\sigma} & X}
\end{displaymath}
expressing the action and left $t$-modules
morphisms, coincide with the above diagrams
for their components in $\ca{V}$.
Notice also how in
Section \ref{Vbimodulesandmodules},
a left $\ca{A}$-module was denoted 
by$\SelectTips{eu}{10}\xymatrix@C=.2in
{\Psi:\ca{A}\ar[r]|-{\object@{|}} & \ca{I},}$
not to be confused with the actual 
$\ca{V}$-matrix$\SelectTips{eu}{10}\xymatrix@C=.2in
{\Psi:1\ar[r]|-{\object@{|}} & X}$ which encodes
its data, where $\ca{I}$ is the unit category
and 1 is the singleton set.

Similarly, we can define the category of \emph{right
$\ca{B}$-modules} for a $\ca{V}$-category $\ca{B}_Y$, 
\emph{i.e.}
a monad$\SelectTips{eu}{10}\xymatrix@C=.2in
{B:Y\ar[r]|-{\object@{|}} & Y,}$to be the 
category of right $B$-modules with codomain $1$
\begin{displaymath}
 \ca{V}\text{-}\Mod_\ca{B}\equiv\ca{V}\text{-}
\Mat(Y,1)^{\ca{V}\text{-}\Mat(B,1)}
\end{displaymath}
and also the more general category 
of $(\ca{A}_X,\ca{B}_Y)$-bimodules
as the category of algebras for the monad
`pre-composition with $B$ and post-composition
with $A$'
\begin{displaymath}
 \ca{V}\text{-}_\ca{A}\Mod_\ca{B}\equiv
\ca{V}\text{-}\Mat(Y,X)^{\ca{V}\text{-}\Mat(B,A)}
\end{displaymath}
which gives the hom-category
of a bicategory of $\ca{V}$-enriched
bimodules $\ca{V}$-$\B{BMod}$. This way of presenting
of enriched bimodules is also included in \cite{VarThrEnr}.
We note that this bicategorical structure
as well as the one that the enriched bicomodules later
possibly form are not central for the current development.

In a completely dual way, 
we now proceed to the study of the notion
of a $\ca{V}$-enriched comodule for a 
$\ca{V}$-cocategory.
The definitions of the
various cases of comodules for comonads in 
bicategories can again be found in 
Section \ref{monadsinbicats}, and in particular
for $\ca{K}=\ca{V}\text{-}\Mat$, a comonad
is a $\ca{V}$-cocategory$\SelectTips{eu}{10}\xymatrix@C=.2in
{C:X\ar[r]|-{\object@{|}} & X.}$
\begin{defi}\label{leftCcomodule}
The category of \emph{left $\ca{C}$-comodules}
for a $\ca{V}$-cocategory $(C,X)$
is the category of left $C$-comodules
with fixed domain the singleton set in 
the bicategory $\ca{V}$-$\Mat$
$$\ca{V}\text{-}_\ca{C}\Comod=
\ca{V}\text{-}\Mat(1,X)^{\ca{V}\text{-}\Mat(1,C)}.$$
Objects are $\ca{V}$-matrices$\SelectTips{eu}{10}\xymatrix@C=.2in
{\Phi:1\ar[r]|-{\object@{|}} & X}$given by a 
family of objects $\{\Phi(x)\}_{x\in X}$ in $\ca{V}$,
equipped with the coaction $\delta:C\circ\Phi\Rightarrow\Phi$,
a 2-cell in $\ca{V}$-$\Mat$
with components
\begin{displaymath}
 \delta_x:\Phi(x)\to\sum\limits_{x'\in X}{C(x,x')\otimes\Phi(x')}
\end{displaymath}
satisfying the commutativity of the 
following diagrams:
\begin{displaymath}
\xymatrix @C=.05in
{& \Phi(x)\ar[dl]_-{\delta_x}
\ar[dr]^-{\delta_x} &\\
\sum\limits_{x''}{C(x,x'')\otimes\Phi(x'')}
\ar[d]_-{\sum{\Delta_{x,x''}\otimes1}} &&
\sum\limits_{x'}{C(x,x')\otimes\Phi(x')}
\ar[d]^-{\sum{1\otimes\delta_{x'}}} \\
\sum\limits_{x''}(\sum\limits_{x'}{C(x,x')\otimes C(x',x'')})
\otimes\Phi(x'')\ar[rr]^-{\alpha} &&
\sum\limits_{x'}{C(x,x')\otimes(\sum\limits_{x''}{C(x',x'')\otimes
\Phi(x'')}}),}
\end{displaymath}
\begin{displaymath}
\quad\quad\xymatrix
{\Phi(x)\ar[rr]^-{\delta_x}
\ar[dr]_-{\lambda^{-1}} && 
\sum\limits_{x}{C(x,x)\otimes\Phi(x)}
\ar[dl]^-{\epsilon_x\otimes1} \\
& I\otimes\Phi(x). &}
\end{displaymath}
$\Delta$ and $\epsilon$ are the cocomposition 
law and coidentities for $\ca{C}$. 
Morphisms between two left $\ca{C}$-comodules
$\Phi$ and $\Phi'$ are 2-cells
$\tau:\Phi\Rightarrow\Phi'$ in $\ca{V}$-$\Mat$
which are compatible with the coactions, \emph{i.e.}
families of arrows 
\begin{displaymath}
 \tau_x:\Phi(x)\to\Phi'(x)
\end{displaymath}
in $\ca{V}$ for all $x\in X$, which satisfy
the commutativity of
\begin{displaymath}
 \xymatrix @C=.6in
{\Phi(x)\ar[r]^-{\delta^{\Phi}_x}\ar[d]_-{\tau_x} &
\sum\limits_{x'}{C(x,x')\otimes\Phi(x')}
\ar[d]^-{\sum{1\otimes\tau_{x'}}} \\
\Phi'(x)\ar[r]_-{\delta^{\Phi'}_x} &
\sum\limits_{x'}{C(x,x')\otimes\Phi'(x')}.}
\end{displaymath}
\end{defi}
In an analogous way, we can define the 
category of \emph{right $\ca{D}_Y$-comodules}
for a $\ca{V}$-cocategory
to be the category of right $D$-comodules
with codomain 1
\begin{displaymath}
 \ca{V}\text{-}\Comod_\ca{D}=
 \ca{V}\text{-}\Mat(Y,1)^{\ca{V}\text{-}\Mat(D,1)}
\end{displaymath}
and also more generally the category
of \emph{left $\ca{C}_X$/right $\ca{D}_Y$-bicomodules}
as the category of coalgebras for the monad 
`pre-composition with $D$ and post-composition with $C$'
\begin{displaymath}
 \ca{V}\text{-}_\ca{C}\Mod_\ca{D}=
 \ca{V}\text{-}\Mat(Y,X)^{\ca{V}\text{-}\Mat(D,C)}.
\end{displaymath}

By Proposition \ref{propVMat},
the hom-categories $\ca{V}$-$\Mat(X,Y)=\ca{V}^{Y\times X}$ 
of the bicategory $\ca{V}$-$\Mat$
have various useful 
properties, which may be transferred to
the categories defined above. For example, 
$\ca{V}$-$_\ca{A}\Mod$ and $\ca{V}$-$_\ca{C}\Comod$
which are monadic and comonadic by definition,
have all limits/colimits that $\ca{V}$ has,
and those colimits/limits that are preserved by the
monad/comonad. Also, they inherit local presentability,
as explained below.
\begin{prop}\label{propfibresVmodcomod}
Suppose $\ca{V}$ is a cocomplete monoidal category
such that the tensor product preserves colimits in both variables.
\begin{enumerate}
\item The category of left $\ca{A}$-modules 
for a $\ca{V}$-category
$\ca{A}_X$ is cocomplete and locally presentable 
when $\ca{V}$ is.
\item The category of left $\ca{C}$-comodules
for a $\ca{V}$-cocategory $\ca{C}_X$ is cocomplete
and locally presentable when $\ca{V}$ is.
\end{enumerate}
\end{prop}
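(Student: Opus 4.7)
The plan is to treat each fibre category as a category of (co)algebras for a well-behaved (co)monad on the cocomplete, locally presentable category $\ca{V}\textrm{-}\Mat(1,X) \cong \ca{V}^X$, and then invoke Theorem \ref{Monadiccomonadicpresentability}. By Definitions \ref{leftcaAmodule} and \ref{leftCcomodule}, we already have
\[
\ca{V}\textrm{-}{_\ca{A}}\Mod = \ca{V}\textrm{-}\Mat(1,X)^{\ca{V}\textrm{-}\Mat(1,A)}, \qquad
\ca{V}\textrm{-}{_\ca{C}}\Comod = \ca{V}\textrm{-}\Mat(1,X)^{\ca{V}\textrm{-}\Mat(1,C)},
\]
where $\ca{V}\textrm{-}\Mat(1,A) = A \circ (-)$ is a monad on $\ca{V}\textrm{-}\Mat(1,X)$ and $\ca{V}\textrm{-}\Mat(1,C) = C \circ (-)$ is a comonad on the same hom-category.

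First I would note that $\ca{V}\textrm{-}\Mat(1,X) \cong \ca{V}^X$ is locally presentable whenever $\ca{V}$ is, since it is the functor category from a discrete category; this is the one-sided analogue of Proposition \ref{propVMat}(iii). Next I would verify that both endofunctors $A \circ (-)$ and $C \circ (-)$ on $\ca{V}^X$ are finitary. Indeed, by the formula (\ref{horizontalcompositionVmatrices}) for horizontal composition and our standing assumption that $\otimes$ preserves colimits on both sides, composition of $\ca{V}$-matrices preserves colimits in either variable (this is exactly the argument in Proposition \ref{propVMat}(ii) applied with one index set being the singleton). In particular $A \circ (-)$ and $C \circ (-)$ preserve filtered colimits, making the induced monad and comonad finitary.

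With these ingredients, Theorem \ref{Monadiccomonadicpresentability} applied to the locally presentable $\ca{V}^X$ directly yields that the category of algebras $\ca{V}\textrm{-}{_\ca{A}}\Mod$ and the category of coalgebras $\ca{V}\textrm{-}{_\ca{C}}\Comod$ are both locally presentable. Since every locally presentable category is in particular cocomplete, cocompleteness in both cases follows for free. For extra safety on the module side, one can also note that cocompleteness can be recovered via Linton's result invoked earlier in the text, using that $\ca{V}^X$ has small coproducts and $\ca{V}\textrm{-}{_\ca{A}}\Mod$ has reflexive coequalizers (created from the cocomplete base by the finitary monadicity argument).

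I do not expect serious obstacles here: once one recognises the fibres as (co)algebras for a finitary (co)monad on $\ca{V}^X$, the result is a direct application of machinery already assembled in Sections \ref{Categoriesofmodulesandcomodules} and \ref{bicatVMat}. The only point requiring a bit of care is confirming finitariness of $A \circ (-)$ and $C \circ (-)$, which is immediate from preservation of colimits by $\otimes$; this mirrors exactly the proof pattern used in Proposition \ref{moncomonadm} and Proposition \ref{comodlocpresent} for the one-object case, and no new equifier argument is required because the fibres are already expressed as genuine Eilenberg--Moore (co)algebra categories rather than as subcategories of functor (co)algebras.
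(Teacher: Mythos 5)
Your proposal is correct and follows essentially the same route as the paper: identify the fibres as Eilenberg--Moore (co)algebras for the finitary (co)monad $A\circ(-)$ resp.\ $C\circ(-)$ on the locally presentable $\ca{V}\textrm{-}\Mat(1,X)\cong\ca{V}^X$, finitariness coming from $\otimes$ preserving colimits, and then apply Theorem \ref{Monadiccomonadicpresentability}. The only cosmetic difference is that the paper obtains cocompleteness of the comodule category directly from the comonadic forgetful functor creating colimits (so that it holds under mere cocompleteness of $\ca{V}$, not only in the locally presentable case), whereas you deduce it from local presentability; your Linton remark covers the analogous point on the module side.
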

\begin{proof}

$(1)$ The ordinary monad $\ca{V}$-$\Mat(1,A)$
which post-composes every 
$\ca{V}$-matrix$\SelectTips{eu}{10}\xymatrix@C=.2in
{S:1\ar[r]|-{\object@{|}} &X}$with the 
monad$\SelectTips{eu}{10}\xymatrix@C=.2in
{A:X\ar[r]|-{\object@{|}} &X}$preserves 
colimits, since composition of $\ca{V}$-matrices
commutes with all colimits in general. 

In particular,
$A\circ-$ preserves filtered colimits,
therefore $\ca{V}\text{-}_\ca{A}\Mod$ is finitary monadic
over $\ca{V}$-$\Mat(1,X)$, which is
locally presentable when $\ca{V}$ is.
By Theorem \ref{Monadiccomonadicpresentability},
categories of finitary algebras of 
locally presentable
categories are also locally presentable, hence the 
result follows.

$(2)$ The category $\ca{V}\text{-}_\ca{C}\Comod$ has all
colimits since they are created from those in 
the cocomplete $\ca{V}$-$\Mat(1,X)$.
The endofunctor 
\begin{displaymath}
 F_C:
 \ca{V}\text{-}\Mat(1,X)\xrightarrow{C\circ-} 
 \ca{V}\text{-}\Mat(1,X)
\end{displaymath}
which gives rise to that comonad is again finitary,
so for a locally presentable 
$\ca{V}$, Theorem \ref{Monadiccomonadicpresentability} applies.
\end{proof}
\begin{rmk*}\hfill

$(i)$ We can also express the axioms which
define the objects and the arrows in $\ca{V}$-$_\ca{C}\Comod$
by the diagrams
\begin{displaymath}
\xymatrix
{\Phi\ar@{=>}[r]^-\alpha \ar@{=>}[d]_-\alpha & 
C\circ\Phi\ar@{=>}[d]^-{1\circ\alpha} \\
C\circ\Phi\ar@{=>}[r]_-{\Delta\circ1} &
C\circ C\circ\Phi,}\quad
\xymatrix @C=.2in
{\Phi\ar@{=>}[rr]^-\alpha\ar@{=>}[dr]_-{1_\Phi} &&
C\circ\Phi\ar@{=>}[dl]^-{\epsilon\circ1} \\
& \Phi &}
\end{displaymath}
for a $\ca{V}$-matrix $\Phi$ with domain $1$ 
equipped with $\alpha:\Phi\Rightarrow C\circ\Phi$,
and
\begin{displaymath}
 \xymatrix
 {\Phi\ar@{=>}[r]^-\alpha\ar@{=>}[d]_-k &
 C\circ\Phi\ar@{=>}[d]^-{1\circ k} \\
 \Psi\ar@{=>}[r]_-\beta & C\circ\Psi}
\end{displaymath}
for a 2-cell $k:\Phi\Rightarrow\Psi$. This
could create the impression that $\ca{V}$-$_\ca{C}\Comod$
is an ordinary category of comodules for
a comonoid, here $C\in\Comon(\ca{V}\text{-}\Mat(X,X))$,
in some monoidal category. However, that 
would require everything to take place in the context of 
the fixed monoidal category $(\ca{V}\text{-}\Mat(X,X),\circ,1_X)$,
therefore the comodules category would be
\begin{displaymath}
\Comod_{\ca{V}\text{-}\Mat(X,X)}(C)=
\ca{V}\text{-}\Mat(X,X)^{\ca{V}\text{-}\Mat(X,C)}
\end{displaymath}
by Proposition \ref{modulesmonadic}. 
In our terminology, this is the category of left $C$-comodules with fixed 
domain $X$ in the
bicategory $\ca{V}$-$\Mat$, rather than just
the ones with domain $1=\{*\}$, like $\ca{V}$-$_\ca{C}\Comod$
was defined. The same applies to the categories of modules for
a $\ca{V}$-category $A\in\Mon(\ca{V}\text{-}\Mat(X,X))$.

From this point of view, we could 
formulate all the above
definitions in a more abstract way:
left $\ca{A}_X$-modules could be
$\ca{V}$-matrices$\SelectTips{eu}{10}\xymatrix@=.2in
{\Psi:Y\ar[r]|-{\object@{|}} & X}$with
arbitrary domain set $Y$, given by a family of objects 
$\{\Psi(x,y)\}_{(x,y)\in X\times Y}$ in $\ca{V}$
and a left action from $A$
given by arrows
\begin{displaymath}
\mu_{x.y}:\sum\limits_{x'\in X}A(x,x')\otimes\Psi(x',y)\to\Psi(x,y)
\end{displaymath}
satisfying appropriate axioms. This is also how $\ca{V}$-bimodules
are defined. Nevertheless, for the 
purposes of this thesis we are interested in
$\ca{V}$-modules/comodules given by families indexed
only over the set of objects of the underlying 
$\ca{V}$-category/co\-ca\-te\-go\-ry.

$(ii)$ Notice that establishing local presentability
for particular categories of interest has been of varied difficulty,
depending on their further structure. For example, for the categories
$\Comod_\ca{V}(C)$ (Proposition \ref{comodlocpresent})
and $\ca{V}$-$_\ca{C}\Comod$ the result was straightforward
because they were both evidently finitary comonadic over locally presentable
categories. On the other hand, for $\Comon(\ca{V})$ and $\ca{V}$-$\B{Cocat}$
we first had to verify local presentability
(Propositions \ref{moncomonadm} and \ref{VCocatlocpresent}),
and comonadicity followed afterwards. Notably,
expressing a category as an equifier of a family of natural transformations
of accessible functors between accessible categories has been 
the underlying key technique in all cases.
\end{rmk*}
We now consider global categories
of enriched modules and comodules,
\emph{i.e.} (left) $\ca{V}$-modules and (left)
$\ca{V}$-comodules for which
the $\ca{V}$-category and $\ca{V}$-cocategory
which acts or co-acts is not fixed as above,
but varies. The definitions below are motivated by the 
concepts in Section \ref{globalcats}.
\begin{defi}\label{globalcatVmods}
The \emph{global category of left $\ca{V}$-modules}
$\ca{V}$-$\Mod$ is defined as follows.
Objects are left $\ca{A}$-modules $\Psi$
for an arbitrary $\ca{V}$-category $\ca{A}_X$, denoted
by $\Psi_\ca{A}$, and a
morphism $\kappa_F:\Psi_\ca{A}\to\Xi_\ca{B}$ between
a $\ca{A}_X$-module $\Psi$ and a $\ca{B}_Y$-module $\Xi$ 
consists of a $\ca{V}$-functor 
$F_f:\ca{A}_X\longrightarrow\ca{B}_Y$
and a family of arrows in $\ca{V}$
$\kappa_x:\Psi(x)\longrightarrow\Xi(fx)$
for all objects $x\in X$ of $\ca{A}$,
such that the diagram
\begin{equation}\label{Vmodmaps}
 \xymatrix @C=.6in @R=.6in
 {\sum\limits_{x'}{A(x,x')\otimes\Psi(x')}
 \ar[rr]^-{\mu^{\Psi}_x} \ar[d]_-{\sum{1\otimes \kappa_x}} &&
 \Psi(x)\ar[d]^-{\kappa_x} \\
 \sum\limits_{x'}{A(x,x')\otimes\Xi(fx')} 
 \ar[r]_-{\sum{F_{x,x'}\otimes1}} & 
 \sum\limits_{x'}{B(fx,fx')\otimes\Xi(fx')} 
 \ar[r]_-{\mu^{\Xi}_{fx}} & \Xi(fx)}
\end{equation}
commutes. The arrows $\mu^\Psi$ and $\mu^\Xi$ are the 
left $\ca{A}$ and $\ca{B}$ actions
on $\Psi$ and $\Xi$ respectively.

Dually, the \emph{global category 
of left $\ca{V}$-comodules} $\ca{V}$-$\Comod$
has as objects left $\ca{C}$-comodules
for an arbitrary $\ca{V}$-cocategory $\ca{C}_X$,
denoted by $\Phi_\ca{C}$,
and a morphism $s_G:\Phi_\ca{C}\to\Omega_\ca{D}$
consists of a $\ca{V}$-cofunctor
$G_g:\ca{C}_X\to\ca{D}_Y$
and a family of arrows in $\ca{V}$
$\nu_x:\Phi(x)\to\Omega(gx)$
for all $x\in X$, such that the diagram
\begin{equation}\label{VComodmaps}
 \xymatrix @C=.6in @R=.2in
 {\Phi(x)\ar[r]^-{\delta^\Phi_x} \ar[dd]_-{\nu_x} &
 \sum\limits_{x'}{C(x,x')\otimes\Phi(x')}
 \ar[r]^-{\sum{G_{x,x'}\otimes1}} & 
 \sum\limits_{x'}{D(gx,gx')\otimes\Phi(x')}
 \ar[d]^-{\sum{1\otimes\nu_x}} \\
 && \sum\limits_{x'}{D(gx,gx')\otimes\Omega(gx')}
 \ar@{>->}[d]^-{\iota} \\
 \Omega(gx)\ar[rr]_-{\delta^\Omega_{gx}} &&
 \sum\limits_{y\in Y}{D(gx,y)\otimes\Omega(y)}}
\end{equation}
commutes. The arrows $\delta^\Phi$ and $\delta^\Omega$
are the corresponding coactions, and $\iota$ is the inclusion
into a larger sum.
\end{defi}
Notice the similarities between the diagrams (\ref{Vmodmaps}),
(\ref{VComodmaps})
that morphisms between $\ca{V}$-modules and $\ca{V}$-comodules
over different $\ca{V}$-categories and $\ca{V}$-cocategories
have to satisfy, with the respective diagrams
from Definition \ref{defComod}. This was of course 
expected, since $\ca{V}$-$\Mod$ and $\ca{V}$-$\Comod$
are to be thought of as the many-object generalizations
of the global categories $\Mod$ and $\Comod$.

Both global categories of $\ca{V}$-enriched 
modules and comodules have the structure of a 
(symmetric) monoidal category,
when $\ca{V}$ is symmetric monoidal. For 
a left $\ca{A}_X$-module $\Psi$ and 
a left $\ca{B}_Y$-module $\Xi$, their tensor 
product is a $\ca{V}$-matrix
\begin{equation}\label{tensorofVmods}
 \Psi\otimes\Xi:\SelectTips{eu}{10}\xymatrix
{1\ar[r]|-{\object@{|}} & X\times Y}
\end{equation}
given by the family of objects in $\ca{V}$
\begin{displaymath}
(\Psi\otimes\Xi)(x,y):=\Psi(x)\otimes\Xi(y)
\end{displaymath}
equipped with a left $(\ca{A}\otimes\ca{B})_{X\times Y}$ 
action (since $\ca{V}$-$\B{Cat}$ is monoidal)
a 2-cell $\mu:(\ca{A}\otimes\ca{B})\circ(\Psi\otimes\Xi)
\Rightarrow\Psi\otimes\Xi$, with components arrows in $\ca{V}$
\begin{displaymath}
 \mu_{(x,y)}:\sum\limits_{(x',y')\in X\times Y}
(\ca{A}\otimes\ca{B})((x,y),(x',y'))\otimes(\Psi\otimes\Xi)
(x',y')\to(\Psi\otimes\Xi)(x,y)
\end{displaymath}
which are explicitly the composites
\begin{displaymath}
 \xymatrix @C=.7in
{A(x,x')\otimes B(y,y')\otimes\Psi(x')\otimes\Xi(y') 
\ar[r]^-{1\otimes s\otimes1}
\ar@/_/@{-->}[dr] & 
A(x,x')\otimes\Psi(x')\otimes B(y,y')\otimes\Xi(y')
\ar[d]^-{\mu^\Psi_{x}\otimes\mu^\Xi_{y}} \\
& \Psi(x)\otimes\Xi(y)}
\end{displaymath}
for all $x,x'\in X$ and $y,y'\in Y$. The axioms 
for an $\ca{A}\otimes\ca{B}$-action are satisfied
by the axioms for $\mu^\Psi$ and $\mu^\Xi$.
Dually, if $\Phi$ is a left $\ca{C}_X$-comodule
and $\Omega$ is a left $\ca{D}_Y$-module, 
their tensor product is a $\ca{V}$-matrix 
$\Phi\otimes\Omega$ as
(\ref{tensorofVmods}) given by
$(\Phi\otimes\Omega)(x,y)=\Phi(x)\otimes\Omega(y)$,
with left $(\ca{C}\otimes\ca{D})_{X\times Y}$-action
consisting of the composite arrows
\begin{displaymath}
\xymatrix @C=1.2in
{\Phi(x)\otimes\Omega(y)\ar[r]^-{\delta^\Phi_x\otimes\delta^\Omega_y}
\ar@/_/@{-->}[dr] & 
\sum\limits_{x'\in X}C(x,x')\otimes\Phi(x')\otimes
\sum\limits_{y'\in Y}D(y,y')\otimes\Omega(y')
\ar[d]^-{1\otimes s\otimes1} \\
& \sum\limits_{\scriptscriptstyle{\stackrel{x'\in X}{y'\in Y}}}
C(x,x')\otimes D(y,y')\otimes\Phi(x')\otimes\Omega(y').}
\end{displaymath}
Notice that the right arrow incorporates an isomorphism due to
$\otimes$ preserving sums. It is not hard to check that 
we can extend the definition of a tensor product to
$\ca{V}$-module and comodule morphisms, and also symmetry
from $\ca{V}$ is clearly inherited. The monoidal unit in 
both cases is again the unit $\ca{V}$-matrix$\SelectTips{eu}{10}
\xymatrix@C=.2in{\ca{I}:1\ar[r]|-{\object@{|}} & 1},$ with
trivial $\ca{I}$-action from the unit $\ca{V}$-(co)category.

There are obvious forgetful functors from these
global categories to $\ca{V}$-categories
and $\ca{V}$-cocategories
\begin{gather*}
 N:\ca{V}\text{-}\Mod\to\ca{V}\text{-}\B{Cat} \\
 H:\ca{V}\text{-}\Comod\to\ca{V}\text{-}\B{Cocat}
\end{gather*}
which map any left $\ca{A}$-module $\Psi_A$ and $\ca{C}$-comodule
$\Phi_\ca{C}$ to the $\ca{V}$-category $\ca{A}$ and $\ca{V}$-cocategory
$\ca{C}$ respectively, and the morphisms to the underlying
$\ca{V}$-functor and $\ca{V}$-cofunctor. These functors
will turn out to be a fibration and an opfibration,
allowing us to once again employ Theorem \ref{totaladjointthm}
regarding adjunctions between fibrations, in order to establish an 
enrichment of $\ca{V}$-$\Mod$ in $\ca{V}$-$\Comod$.

Similarly to the $\ca{V}$-categories and $\ca{V}$-cocategories
development,
we will first formulate isomorphic characterizations
of these two categories which will clarify the fibrational
and opfibrational structure later. Lemmas 
\ref{charactVCat} and \ref{charactVCocat}
justify the form of the $\ca{V}$-functors and $\ca{V}$-cofunctors
used below.
\begin{lem}\label{Xi*module}
Suppose that$\SelectTips{eu}{10}\xymatrix@C=.2in
{\Xi:1\ar[r]|-{\object@{|}} &Y}$is a left $\ca{B}$-module
and $F:(A,X)\xrightarrow{(\phi,f)}(B,Y)$ is a $\ca{V}$-functor. 
Then, the composite
$\ca{V}$-matrix
\begin{displaymath}
\SelectTips{eu}{10}\xymatrix @C=.3in
{1\ar[r]|-{\object@{|}}^-{\Xi} &
Y\ar[r]|-{\object@{|}}^-{f^*} &
X}
\end{displaymath}
has the structure of a left $\ca{A}$-module.
Moreover, this mapping gives rise to a functor
\begin{displaymath}
 (f^*\circ-):\ca{V}\text{-}_\ca{B}\Mod\longrightarrow
\ca{V}\text{-}_\ca{A}\Mod.
\end{displaymath}
\end{lem}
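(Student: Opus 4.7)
The plan is to build the $\ca{A}$-action on $f^{*}\circ\Xi$ by pasting, exploiting the two equivalent presentations of a $\ca{V}$-functor from Lemma \ref{charactVCat}, and then verify the module axioms by a mates-style bookkeeping argument that mirrors the classical restriction-of-scalars construction of Section \ref{Categoriesofmodulesandcomodules}.

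First I would define the structural 2-cell. Writing $F=(\phi,f)$ so that $\phi:A\Rightarrow f^{*}Bf_{*}$ in $\ca{V}\text{-}\Mat(X,X)$, and using the counit $\check{\varepsilon}:f_{*}f^{*}\Rightarrow 1_{Y}$ of the adjunction $f_{*}\dashv f^{*}$ together with the left $\ca{B}$-action $\mu^{\Xi}:B\circ\Xi\Rightarrow\Xi$, I set the candidate action $\mu':A\circ f^{*}\Xi\Rightarrow f^{*}\Xi$ to be the pasted composite
\begin{displaymath}
A\circ f^{*}\Xi\;\xRightarrow{\;\phi*1\;}\;f^{*}Bf_{*}f^{*}\Xi\;\xRightarrow{\;1*\check{\varepsilon}*1\;}\;f^{*}B\Xi\;\xRightarrow{\;1*\mu^{\Xi}\;}\;f^{*}\Xi,
\end{displaymath}
which is the $\ca{V}\text{-}\Mat$ analogue of the diagonal arrow appearing in (\ref{lala}). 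This is the natural candidate because $f^{*}B\Xi$ is the only way to glue an $A$-action, expressed through $\phi$, onto an object that already carries a $B$-action via $\Xi$.

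Next I would verify the two Eilenberg--Moore axioms of Definition \ref{leftcaAmodule}. For associativity, I would paste $A\circ A\circ f^{*}\Xi$ two ways and reduce both to the same 2-cell $f^{*}\Xi$: on one side, first apply the multiplication $M^{A}$ of $\ca{A}$ and then $\mu'$; on the other, apply $\mu'$ twice. The equality reduces, by the monoid-morphism property of $\phi$ with respect to $M^{A}$ and $M^{B}$ (the first equality of (\ref{functaxioms}), read through the mates correspondence), combined with a triangular identity for $\check{\varepsilon}$ and one of the triangular identities $(f_{*}f^{*}\check{\varepsilon})(f_{*}\check{\eta}f^{*})=1$, to the associativity axiom of $\mu^{\Xi}$. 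The unit axiom is analogous, using that $\phi$ preserves the unit (the second equality of (\ref{functaxioms})) together with the unit axiom of $\mu^{\Xi}$ and the unitors $\lambda,\rho$ of $\ca{V}\text{-}\Mat$. This part is bureaucratic pasting but is formally identical to the proof of Lemma \ref{B*monoid}.

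Finally, for the functor statement, given a morphism $\tau:\Xi\Rightarrow\Xi'$ of left $\ca{B}$-modules, define $(f^{*}\circ-)(\tau)$ to be the whiskered 2-cell $1_{f^{*}}*\tau:f^{*}\Xi\Rightarrow f^{*}\Xi'$. Compatibility with the induced $\ca{A}$-actions follows by pasting and then invoking the single square that expresses compatibility of $\tau$ with $\mu^{\Xi}$ and $\mu^{\Xi'}$; preservation of identities and composition is immediate from functoriality of horizontal composition in $\ca{V}\text{-}\Mat$. The main obstacle, as usual in such a proof, is simply keeping the many pasting diagrams organised; the genuinely content-bearing step is the associativity check, where one must be careful that the \emph{single} application of $\check{\varepsilon}$ appearing in $\mu'$ interacts correctly with the \emph{two} copies present when $\mu'$ is pasted with itself, and this is precisely where the monoid-morphism axiom for $\phi$ together with a triangular identity reconciles the two expressions.
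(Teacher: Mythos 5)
Your proposal is correct and takes essentially the same route as the paper's proof: the action you define --- $\phi*1$ followed by $1*\check{\varepsilon}*1$ and then $1*\mu^{\Xi}$ --- is precisely the paper's pasted composite $(f^{*}\mu)\cdot(\hat{\phi}\,\Xi)$ with the mate of $\phi$ written out explicitly, and the module axioms are verified by the same reduction to the $\ca{V}$-functor axioms (\ref{functaxioms}), the triangle identities for $f_{*}\dashv f^{*}$, and the axioms of the $\ca{B}$-action. The functoriality of $(f^{*}\circ-)$ via whiskering is likewise identical to the paper's argument.
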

\begin{proof}
The induced left $\ca{A}$-action $\mu'$ on 
$f^*\Xi$ is the composite 2-cell
\begin{displaymath}
\xymatrix @C=.6in @R=.5in
{ & Y\ar[r]|-{\object@{|}}^-{f^*} \ar@/_/[dr]|-{\object@{|}}_-B &
X \ar@/^/[dr]|-{\object@{|}}^-A &\\
1 \rrtwocell<\omit>{<-3>\mu}
\ar@/^/[ur]|-{\object@{|}}^-\Xi 
\ar@/_/[rr]|-{\object@{|}}_-\Xi &
\rrtwocell<\omit>{<-5>\hat{\phi}} & 
Y\ar[r]|-{\object@{|}}_-{f^*} & X}
\end{displaymath}
where $\hat{\phi}:f_*A\Rightarrow Bf_*$ 
corresponds bijectively to $\phi:A\Rightarrow f^*Bf_*$
via mates.
In terms of pasting operations, this is the composite
2-cell
\begin{displaymath}
\mu':\xymatrix
{Af^*\Xi\ar@{=>}[r]^-{\hat{\phi}\Xi} & 
f^*B\Xi\ar@{=>}[r]^-{f^*\mu} & 
f^*\Xi.}
\end{displaymath}
The fact that $\mu'$ 
satisfies the axioms for an $A$-action
for a monad$\SelectTips{eu}{10}\xymatrix@C=.2in
{A:X\ar[r]|-{\object@{|}} &X}$follows from the axioms
of the $\ca{V}$-functor $F=(\phi,f)$
and the left $B$-action $\mu$ on $\Xi$.
Also, it is easy to check 
that if $\sigma:\Xi\to\Xi'$ is a left $\ca{B}$-module
morphism, then
\begin{displaymath}
 \xymatrix @C=.5in
{1\ar@/^3ex/[r]|-{\object@{|}}^-\Xi 
\ar@/_3ex/[r]|-{\object@{|}}_-{\Xi'}
\rtwocell<\omit>{\sigma} & 
Y\ar[r]|-{\object@{|}}^-{f^*} & X}
\end{displaymath}
is a left $\ca{A}$-module morphism. 
In terms of components, the family 
$\{\Xi(y)\}_{y\in Y}$
of objects in $\ca{V}$ is mapped to the 
family
\begin{displaymath}
 \{(f^*\circ\Xi)(x)\}_{x\in X}=\{I\otimes\Xi(fx)\}_{x\in X}
\end{displaymath}
and the family $\sigma_y:\Xi(y)\to\Xi'(y)$
of arrows in $\ca{V}$ is mapped to
\begin{displaymath}
 (f^*\sigma)_x:I\otimes\Xi(fx)\xrightarrow{\;1\otimes\sigma_{fx}\;}
I\otimes\Xi'(fx).
\end{displaymath}
Compatibility with composition and identities for 
this functor follow
from properties of vertical and horizontal 
composition of 2-cells.
\end{proof}
Notice that the above lemma, like other results
of this section, does not only hold for 
left modules with fixed domain the singleton set 1, but
for modules with arbitrary domain. Similarly, for
right modules with fixed codomain, if we replace
$(f^*\circ\text{-})$ with $(\text{-}\circ f_*)$ we get
an analogous functor. Dually, we can consider
left $\ca{V}$-comodules.
\begin{lem}\label{Phi*comodule}
 If$\SelectTips{eu}{10}\xymatrix@C=.2in
{\Phi:1\ar[r]|-{\object@{|}} &X}$is a left
$\ca{C}$-comodule and $G:(C,X)\xrightarrow{(\psi,g)}(D,Y)$
is a $\ca{V}$-cofunctor, the composite $\ca{V}$-matrix
\begin{displaymath}
\SelectTips{eu}{10}\xymatrix @C=.3in
{1\ar[r]|-{\object@{|}}^-{\Phi} &
X\ar[r]|-{\object@{|}}^-{g_*} &
Y}
\end{displaymath}
obtains the structure of a left $\ca{D}$-comodule. This 
mapping gives rise to a functor
\begin{displaymath}
 (g_*\circ -):\ca{V}\text{-}_\ca{C}\Comod\longrightarrow
\ca{V}\text{-}_\ca{D}\Comod.
\end{displaymath}
\end{lem}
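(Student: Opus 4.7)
The proof should proceed by dualizing Lemma \ref{Xi*module} in the obvious way, with the direction of all coactions reversed and the role of $f_*, f^*$ swapped accordingly. First, I would recall the mates correspondence $\psi: g_*Cg^* \to D$ versus $\hat{\psi}: g_*C \Rightarrow Dg_*$, obtained by pasting with the counit $\check{\varepsilon}$ of $g_* \dashv g^*$. This 2-cell $\hat{\psi}$ is the dual counterpart of $\hat{\phi}$ used in Lemma \ref{Xi*module}, and it is precisely the piece of data the $\ca{V}$-cofunctor axioms (\ref{cofunctaxioms}) impose compatibility conditions upon.

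Next, I would define the induced left $\ca{D}$-coaction $\delta'$ on $g_*\Phi$ as the pasted composite
\begin{displaymath}
\xymatrix @R=.05in @C=.6in
{1 \ar@/^/[rr]|-{\object@{|}}^-\Phi \ar@/_3ex/[rr]|-{\object@{|}}_-\Phi
\rrtwocell<\omit>{<-2>\delta}
&& X \ar@/^/[dr]|-{\object@{|}}^-{g_*} & \\
&& \rrtwocell<\omit>{<-4>\hat{\psi}}
X \ar@/_/[ur]|-{\object@{|}}_-C \ar[rr]|-{\object@{|}}_-{g_*} && Y \ar@/_/[ul]|-{\object@{|}}_-D
}
\end{displaymath}
which in pasting notation reads $\delta' = (\hat{\psi}\Phi) \cdot (g_*\delta)$. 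Its components in $\ca{V}$ are obtained by expanding the $\ca{V}$-matrices: for each $x \in X$ the arrow
\begin{displaymath}
\Phi(x) \xrightarrow{\delta_x} \sum_{x'} C(x,x') \otimes \Phi(x') \xrightarrow{\sum G_{x,x'} \otimes 1} \sum_{x'} D(gx,gx') \otimes \Phi(x') \xrightarrow{\iota} \sum_{y \in Y} D(gx,y) \otimes (g_*\Phi)(y)
\end{displaymath}
provides the $gx$-component of $\delta'$, up to the usual isomorphisms involving $I$'s.

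Then I would verify the coassociativity and counit axioms for $\delta'$. Both follow from pasting manipulations: the coassociativity axiom reduces, via the coassociativity of $\delta$ and the cocomposition compatibility diagram of $G$ (the first equality in (\ref{cofunctaxioms})), to an identity of 2-cells between $g_*\Phi$ and $DDg_*\Phi$; the counit axiom follows analogously from the counit axiom for $\Phi$ and the coidentity compatibility of $G$ (the second equality in (\ref{cofunctaxioms})), combined with the triangular identities for $g_* \dashv g^*$. For the functoriality part, if $\tau:\Phi \Rightarrow \Phi'$ is a morphism of left $\ca{C}$-comodules, I would show that $g_*\tau : g_*\Phi \Rightarrow g_*\Phi'$ commutes with the new coactions by whiskering the compatibility diagram of $\tau$ with $g_*$ and $\hat{\psi}$, so that the mapping $(g_* \circ -)$ is well-defined on morphisms; compatibility with composition and identities is immediate from the interchange law.

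The main obstacle, though routine, is keeping track of which mate of $\psi$ to use and verifying that the cofunctor axioms as expressed via $\hat{\psi}$ are the correct form to make the coassociativity diagram close up; concretely, one must check that the equalities in (\ref{cofunctaxioms}) translate under mates $\psi \leftrightarrow \hat{\psi}$ precisely into the diagrams needed to paste $\delta'$ twice and reduce to a single application of $\Delta$ followed by $\hat{\psi}$. Once the mates are set up consistently, everything reduces to a diagram chase entirely dual to that of Lemma \ref{Xi*module}.
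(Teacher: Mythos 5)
Your proposal is correct and follows essentially the same route as the paper: the coaction on $g_*\Phi$ is defined as the pasted composite $(\hat{\psi}\Phi)\cdot(g_*\delta)$ using the mate $\hat{\psi}:g_*C\Rightarrow Dg_*$ of the cofunctor datum, the comodule axioms are deduced from those of $\delta$ and the cofunctor axioms (\ref{cofunctaxioms}), and functoriality is obtained by whiskering comodule morphisms with $g_*$. The only cosmetic discrepancy is that passing from $\psi:g_*Cg^*\Rightarrow D$ to $\hat{\psi}$ uses the unit $\check{\eta}$ of $g_*\dashv g^*$ rather than the counit, but this does not affect the argument.
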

\begin{proof}
 The induced $\ca{D}$-coaction $\delta'$
on $g_*\Phi$ is the composite 2-cell
\begin{displaymath}
 \xymatrix @C=.6in @R=.5in
{1\ar[rr]|-{\object@{|}}^-{\Phi}
\rrtwocell<\omit>{<4>\delta}
\ar@/_/[dr]|-{\object@{|}}_-{\Phi} &
\rrtwocell<\omit>{<5>\hat{\psi}} &
X\ar[r]|-{\object@{|}}^-{g_*} & Y \\
& X\ar@/^/[ur]|-{\object@{|}}_-C 
\ar[r]|-{\object@{|}}_-{g_*} & 
Y\ar@/_/[ur]|-{\object@{|}}_-D &}
\end{displaymath}
where again $\hat{\psi}$ is the mate of 
$\psi$ `on the one side'. This is the 
pasted composite
\begin{displaymath}
\delta':
\xymatrix
{g_*\Phi\ar@{=>}[r]^-{g_*\delta} & 
g_*C\Phi\ar@{=>}[r]^-{\hat{\psi}\Phi} & 
Dg_*\Phi},
\end{displaymath}
and the $D$-coaction axioms are satisfied
by the axioms for $\delta$ and the $\ca{V}$-cofunctor
$G=(\psi,g)$.
Moreover, if $\tau:\Phi\to\Phi'$ is a left 
$\ca{C}$-comodule morphism, post-composing
it with $g_*$
produces a 2-cell which satisfies the axioms for a left 
$\ca{D}$-comodule. 
In terms of components,
the functor $(g_*\circ-)$
maps the family $\{\Phi(x)\}_{x\in X}$
of objects in $\ca{V}$ to
\begin{displaymath}
\{(g_*\circ\Phi)(y)\}_{y\in Y}=
\{\sum\limits_{y=fx}{I\otimes\Phi(x)\}_{y\in Y}}
\end{displaymath}
and the family $\tau_x:\Phi(x)\to\Phi(x')$
of arrows in $\ca{V}$ to
\begin{displaymath}
 (g_*\tau)_y:\sum\limits_{y=fx}{I\otimes\Phi(x)}
\xrightarrow{\;\sum{1\otimes\tau_x}\;}
\sum\limits_{y=fx}{I\otimes\Phi'(x)}.
\end{displaymath}
This mapping is a functor since it preserves
composition and identities for evident reasons.
\end{proof}
We can now give the following characterizations
of the global
categories of $\ca{V}$-modules and $\ca{V}$-comodules.
\begin{lem}\label{charactVMod}
 The objects of $\ca{V}$-$\Mod$ are pairs
$(\Psi,\ca{A}_X)\in\ca{V}\text{-}_\ca{A}\Mod\times\ca{V}\text{-}\B{Cat}$
and morphisms are (in bijection with) pairs $(\kappa,F_f):(\Psi,\ca{A}_X)\to(\Xi,\ca{B}_Y)$
where
\begin{displaymath}
\begin{cases}
\Psi\xrightarrow{\;\kappa\;}f^*\circ\Xi &\textrm{in }\ca{V}\text{-}_\ca{A}\Mod\\
F:(A,X)\xrightarrow{(\phi,f)}(B,Y) & \textrm{in }\ca{V}\text{-}\B{Cat}.
\end{cases}
\end{displaymath}
\end{lem}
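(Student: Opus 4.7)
The approach is to mimic the method used for Lemmas \ref{charactVCat} and \ref{charactVCocat}, leveraging Lemma \ref{Xi*module} in the same way those characterizations leveraged Lemmas \ref{B*monoid} and \ref{C*comonoid}. The idea is that a morphism $\kappa_F:\Psi_\ca{A}\to\Xi_\ca{B}$ in the sense of Definition \ref{globalcatVmods} should correspond bijectively to a pair $(\kappa,F)$ where $F$ is a $\ca{V}$-functor and $\kappa$ is a left $\ca{A}$-module map into the module $f^*\!\circ\!\Xi$ constructed from $\Xi$ via restriction along $F$.

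First I would fix the object part of the correspondence, which is tautological: Definition \ref{globalcatVmods} already presents an object of $\ca{V}$-$\Mod$ as a left $\ca{A}_X$-module $\Psi$ for some $\ca{V}$-category $\ca{A}_X$, and this is precisely a pair $(\Psi,\ca{A}_X)\in\ca{V}\text{-}_\ca{A}\Mod\times\ca{V}\text{-}\B{Cat}$. Next, given an arrow $\kappa_F=(\kappa_\bullet,F_f):\Psi_\ca{A}\to\Xi_\ca{B}$ as in Definition \ref{globalcatVmods}, I would use Lemma \ref{Xi*module} to upgrade $f^*\!\circ\!\Xi$ to a left $\ca{A}$-module (where $F=(\phi,f)$ is the $\ca{V}$-functor part of the data). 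The components $(f^*\Xi)(x)=I\otimes\Xi(fx)$ are, under the left unitor $l$ of $\ca{V}$, in natural bijection with $\Xi(fx)$; consequently the ob$\ca{A}$-indexed family $\kappa_x:\Psi(x)\to\Xi(fx)$ is in bijection with a 2-cell $\kappa:\Psi\Rightarrow f^*\!\circ\!\Xi$ in $\ca{V}\text{-}\Mat(1,X)$, namely the family whose components are $\Psi(x)\xrightarrow{\kappa_x}\Xi(fx)\xrightarrow{l^{-1}}I\otimes\Xi(fx)$.

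The heart of the proof is verifying that the compatibility diagram (\ref{Vmodmaps}) for $\kappa_F$ to be a morphism in $\ca{V}$-$\Mod$ coincides, up to the canonical unitor isomorphisms, with the axiom that $\kappa:\Psi\to f^*\!\circ\!\Xi$ be a morphism of left $\ca{A}$-modules. Unpacking the induced $\ca{A}$-action $\mu'=(f^*\mu^{\Xi})\cdot(\hat{\phi}\Xi)$ on $f^*\Xi$ given by Lemma \ref{Xi*module} and expressing everything componentwise, the square from Definition \ref{leftcaAmodule} for $\kappa$ against $\mu^{\Psi}$ and $\mu'$ becomes exactly diagram (\ref{Vmodmaps}) with the arrows $\hat{\phi}_{x,x'}:A(x,x')\to B(fx,fx')\otimes I$ playing the role of $F_{x,x'}$ (again modulo the canonical tensoring-with-$I$). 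This is formally the same manipulation as the one producing the functoriality equalities (\ref{functaxioms}) for $\ca{V}$-$\B{Cat}$, transported to the module setting; the mate correspondence $\phi\leftrightarrow\hat{\phi}$ and the triangle identities for $f_*\dashv f^*$ in $\ca{V}$-$\Mat$ are precisely what make the two conditions match.

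Finally, I would check that composition and identities are respected, so that the bijection extends to an isomorphism of categories. The main obstacle is really the bookkeeping in step three: one must carefully translate between the `componentwise' form of (\ref{Vmodmaps}) and the `2-cell' form of a module morphism in $\ca{V}\text{-}_\ca{A}\Mod$ using the specific induced action from Lemma \ref{Xi*module}, ensuring that the unitors of $\ca{V}$ are absorbed correctly. Once this verification is done, the stated characterization is immediate, and an entirely dual argument (using Lemma \ref{Phi*comodule} in place of Lemma \ref{Xi*module}, and the mate $\hat{\psi}$ instead of $\hat{\phi}$) will give the corresponding characterization of $\ca{V}$-$\Comod$ with morphisms $(\nu,G_g)$ where $\nu:g_*\!\circ\!\Phi\to\Omega$ lives in $\ca{V}\text{-}_\ca{D}\Comod$.
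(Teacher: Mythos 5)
Your proposal is correct and follows essentially the same route as the paper: the object part is tautological, Lemma \ref{Xi*module} supplies the induced $\ca{A}$-action on $f^*\circ\Xi$, and the morphism correspondence is established by translating the module-morphism condition for $\kappa:\Psi\to f^*\circ\Xi$ into components and observing that it coincides with diagram (\ref{Vmodmaps}) up to tensoring with $I$ via the unitors, with the mate $\hat{\phi}$ playing the role of $F_{x,x'}$. The paper treats this equally briefly, recording the pasting-diagram form of the condition and noting the componentwise agreement, and then dualizes exactly as you describe to obtain Lemma \ref{charactVComod}.
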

Evidently the objects of this description are 
exactly the same as in Definition 
\ref{globalcatVmods}, whereas the morphisms 
satisfy
\begin{displaymath}
 \xymatrix @C=.5in @R=.2in
{ && X\ar@/^/[dr]|-{\object@{|}}^-A & \\ 
1 \rrtwocell<\omit>{<-3>\kappa}
\ar@/^3ex/[urr]|-{\object@{|}}^-\Psi
\ar@/_/[r]|-{\object@{|}}_-\Xi 
\ar@/_4ex/[drr]|-{\object@{|}}_-\Xi & 
Y \rrtwocell<\omit>{\hat{\phi}}
\ar@/_/[ur]|-{\object@{|}}_-{f^*}
\ar@/_/[dr]|-{\object@{|}}_-B && X \\
\rrtwocell<\omit>{<-2>\mu} && 
Y\ar@/_/[ur]|-{\object@{|}}_-{f^*}}
\quad 
\xymatrix @C=.5in @R=.2in
{\hole \\
=}
\quad
\xymatrix @C=.5in @R=.2in
{& X\ar@/^/[dr]|-{\object@{|}}^-A & \\
1\rrtwocell<\omit>{<-2.5>\mu} 
\ar@/^/[ur]|-{\object@{|}}^-\Psi
\ar@/_/[rr]|-{\object@{|}}^-\Psi
\ar@/_/[dr]|-{\object@{|}}_-\Xi
\rrtwocell<\omit>{<3.5>\kappa} && X. \\
& Y\ar@/_/[ur]|-{\object@{|}}_-{f^*} &}
\end{displaymath}
where the multiplication of $f^*\circ\Xi$ is given 
by Lemma \ref{Xi*module}. If translated in terms of components
$\kappa_x:\Psi(x)\to I\otimes\Xi(fx)$,
the above is equivalent to the commutative diagram
(\ref{Vmodmaps}), again `up to tensoring with $I$ 
in the left'. This implies that there is a 
bijection between
these two forms of the morphisms.
\begin{lem}\label{charactVComod}
 The objects of $\ca{V}$-$\Comod$ are pairs
$(\Phi,\ca{C}_X)\in\ca{V}\text{-}_\ca{C}\Comod\times\ca{V}\text{-}\B{Cocat}$
and morphisms are pairs $(\nu,G_g):(\Phi,\ca{C}_X)\to(\Omega,\ca{D}_Y)$ where
\begin{displaymath}
\begin{cases}
g_*\circ\Phi\xrightarrow{\;\nu\;}\Omega &\textrm{in }\ca{V}\text{-}_\ca{C}\Comod\\
G:(C,X)\xrightarrow{(\psi,g)}(D,Y) & \textrm{in }\ca{V}\text{-}\B{Cocat}.
\end{cases}
\end{displaymath}
\end{lem}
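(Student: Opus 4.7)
The strategy is to dualize the proof of Lemma \ref{charactVMod} throughout, replacing the role of $\ca{V}$-categories, modules, and the functor $(f^*\circ-)$ with $\ca{V}$-cocategories, comodules, and the functor $(g_*\circ-)$ of Lemma \ref{Phi*comodule}. First, I would observe that the objects agree on the nose: a pair $(\Phi,\ca{C}_X)$ with $\Phi\in\ca{V}\text{-}_\ca{C}\Comod$ is precisely an object of $\ca{V}\text{-}\Comod$ in the sense of Definition \ref{globalcatVmods}, since $\Phi$ is a $\ca{V}$-matrix $1\to X$ endowed with a $\ca{C}$-coaction, which is exactly an object of the fibre category. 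The content of the lemma is therefore entirely about morphisms.

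For the morphism bijection, I would start with a morphism $s_G=(\nu_x,G_g):\Phi_\ca{C}\to\Omega_\ca{D}$ in the sense of Definition \ref{globalcatVmods}, consisting of the $\ca{V}$-cofunctor $G:(C,X)\xrightarrow{(\psi,g)}(D,Y)$ and the family of arrows $\nu_x:\Phi(x)\to\Omega(gx)$ satisfying the commutative diagram (\ref{VComodmaps}). By Lemma \ref{Phi*comodule}, the composite $g_*\circ\Phi$ carries a canonical $\ca{D}$-comodule structure given by the pasted composite $\delta'=(\hat{\psi}\Phi)\cdot(g_*\delta)$. I would then package the family $\nu_x$ into a 2-cell $\nu:g_*\circ\Phi\Rightarrow\Omega$ in $\ca{V}$-$\Mat(1,Y)$ whose components, evaluated at $y\in Y$, are the coproduct arrows
\[
\sum_{gx=y}I\otimes\Phi(x)\longrightarrow\Omega(y)
\]
determined (up to the left unitor $l$) by the original $\nu_x$ for each $x\in g^{-1}(y)$. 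Conversely, any such 2-cell determines the family $\nu_x$ uniquely for each $x$ by restriction to the summand indexed by $x$. This gives a bijection of underlying data, and it only remains to verify that the compatibility axioms match.

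The key step, and the main technical obstacle, is to verify that the pasting diagram expressing compatibility of $\nu$ with the coactions — namely the equality of $\delta^\Omega\cdot\nu$ with $(D\nu)\cdot\delta'=(D\nu)\cdot(\hat{\psi}\Phi)\cdot(g_*\delta)$ in $\ca{V}$-$\Mat(1,Y)$ — translates exactly to the commutativity of (\ref{VComodmaps}) when unpacked into components in $\ca{V}$. Here one must carefully track the mate correspondence $\psi\leftrightarrow\hat{\psi}$ under $g_*\dashv g^*$, insert the identifications $I\otimes A\cong A$ via the unitors, and use the fact that $\otimes$ commutes with sums to identify
\[
\sum_{x'\in X}D(gx,gx')\otimes\Phi(x')\;\subseteq\;\sum_{y'\in Y}D(gx,y')\otimes\Omega(y')
\]
with the appropriate subsum together with the inclusion $\iota$ appearing in (\ref{VComodmaps}). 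As in the $\ca{V}$-$\Mod$ case, the two diagrams agree up to tensoring with $I$ and reindexing the outer coproducts by $y=gx'$, so the bijection of the underlying data actually preserves the axioms. Finally, I would remark that this bijection is functorial: composition and identities in the two presentations correspond, using Lemma \ref{corisos} and the functoriality of $(g_*\circ-)$ from Lemma \ref{Phi*comodule}, so the characterization upgrades to an isomorphism of categories.
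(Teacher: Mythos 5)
Your proposal is correct and follows essentially the same route as the paper, which states this lemma as the evident dual of Lemma \ref{charactVMod} and verifies the morphism bijection there by unpacking the pasting equality into components ``up to tensoring with $I$'' and using that $\otimes$ commutes with sums. Your extra care with the inclusion $\iota$, the mate $\psi\leftrightarrow\hat{\psi}$, and the closing functoriality remark only makes explicit what the paper leaves implicit.
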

We are now in position to illustrate
the fibrational and opfibrational structure
of the categories of enriched modules
and comodules. Similarly to Section
\ref{fibrationalview}, the idea
is to define appropriate pseudofunctors,
which will then give rise via
the Grothendieck construction to
(op)fibrations isomorphic to the
forgetful functors $N$ and $T$. The 
fibre categories will evidently be the categories
of left modules/comodules for a fixed
$\ca{V}$-category/cocategory.
\begin{prop}\label{VModfibred}
The global category of $\ca{V}$-modules
$\ca{V}$-$\Mod$ is fibred over the 
category of $\ca{V}$-categories $\ca{V}$-$\B{Cat}$.
\end{prop}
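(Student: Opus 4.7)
The plan is to mimic the strategy of Propositions \ref{VCatfibred} and \ref{VCocatopfibred}, by exhibiting an indexed category $\ps{N}:(\ca{V}\textrm{-}\B{Cat})^{\op}\to\B{Cat}$ whose Grothendieck category, together with its canonical projection, is isomorphic to the forgetful functor $N:\ca{V}\textrm{-}\Mod\to\ca{V}\textrm{-}\B{Cat}$. On objects, set $\ps{N}\ca{A}_X=\ca{V}\textrm{-}{_\ca{A}\Mod}$, the fibre category of left $\ca{A}$-modules. On morphisms, a $\ca{V}$-functor $F=(\phi,f):(A,X)\to(B,Y)$ is sent to the functor
\[
\ps{N}F:\ca{V}\textrm{-}{_\ca{B}\Mod}\longrightarrow\ca{V}\textrm{-}{_\ca{A}\Mod}
\]
obtained as the composite $(f^*\circ-)$ of Lemma \ref{Xi*module}, where the induced left $\ca{A}$-action on $f^*\circ\Xi$ uses the mate $\hat{\phi}:f_*A\Rightarrow Bf_*$ of the $\ca{V}$-functor's structural 2-cell. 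The key observation here is that Lemma \ref{Xi*module} already supplies this functor and verifies the module axioms, so what remains for this step is essentially book-keeping.

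Next, I would equip $\ps{N}$ with natural isomorphisms
\[
\delta^{G,F}:\ps{N}F\circ\ps{N}G\xRightarrow{\;\sim\;}\ps{N}(G\circ F),\qquad
\gamma^\ca{A}:1_{\ca{V}\textrm{-}{_\ca{A}\Mod}}\xRightarrow{\;\sim\;}\ps{N}(1_\ca{A}),
\]
for composable $\ca{V}$-functors $\ca{A}_X\xrightarrow{F}\ca{B}_Y\xrightarrow{G}\ca{C}_Z$. These components should be obtained by post-composing the underlying left $\ca{B}$ or $\ca{C}$-module with the invertible 2-cells $\xi^{g,f}$ (from Lemma \ref{isosofstars}) and $\lambda^{-1}$ already used in the pseudofunctor $\ps{M}$ of Proposition \ref{VGrphbifibr}; that is, the associator/unitor data of the bicategory $\ca{V}\textrm{-}\Mat$ restricted to modules. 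The coherence axioms (\ref{laxcond1}), (\ref{laxcond2}) then reduce to the analogous axioms already verified for $\ps{M}$, together with the compatibility of $\xi$ with the respective module actions, which follows from the naturality of mates and Lemma \ref{corisos}.

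Applying the Grothendieck construction of Theorem \ref{maintheoremfibr} to $\ps{N}$ yields a cloven fibration $P_\ps{N}:\Gr{G}\ps{N}\to\ca{V}\textrm{-}\B{Cat}$. Its objects are pairs $(\Psi,\ca{A}_X)$ with $\Psi$ a left $\ca{A}$-module, and its morphisms are pairs $(\kappa,F_f):(\Psi,\ca{A}_X)\to(\Xi,\ca{B}_Y)$ where $\kappa:\Psi\to f^*\circ\Xi$ is a left $\ca{A}$-module morphism and $F_f$ a $\ca{V}$-functor. By Lemma \ref{charactVMod}, this is precisely the data defining $\ca{V}\textrm{-}\Mod$, and the two forgetful functors to $\ca{V}\textrm{-}\B{Cat}$ agree on objects and morphisms. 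Hence the commutative triangle
\[
\xymatrix @C=.4in @R=.4in
{\Gr{G}\ps{N}\ar[rr]^-{\cong}\ar[dr]_-{P_\ps{N}} && \ca{V}\textrm{-}\Mod\ar[dl]^-N \\
& \ca{V}\textrm{-}\B{Cat} &}
\]
exhibits $N$ as a fibration.

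The main obstacle is in the second paragraph: ensuring that $\delta$ and $\gamma$ are genuine \emph{module} morphisms, i.e.\ that the isomorphisms $\xi^{g,f}$ and $\lambda^{-1}$, which were known to be 2-isomorphisms in $\ca{V}\textrm{-}\Mat$, are compatible with the induced $\ca{A}$-actions on the relevant composites. This is a diagram chase using the coherence of mates and the fact that $\hat{\phi}$ commutes with composition and identities (the two $\ca{V}$-functor axioms), and it is entirely parallel to the calculation already implicit in Lemma \ref{charactVCat}. No new conditions on $\ca{V}$ beyond those already assumed for the formation of $\ca{V}\textrm{-}\Mat$ should be needed.
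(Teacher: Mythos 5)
Your proposal is correct and follows essentially the same route as the paper's proof: the same pseudofunctor $\ca{A}_X\mapsto\ca{V}\textrm{-}{_\ca{A}\Mod}$ with reindexing $(f^*\circ-)$ from Lemma \ref{Xi*module}, the same coherence isomorphisms built from $\xi^{g,f}$ and $\lambda^{-1}$, and the same identification of the Grothendieck category with $\ca{V}\textrm{-}\Mod$ via Lemma \ref{charactVMod}. The only difference is one of emphasis: the paper observes that the components of $\delta$ and $\gamma$ commute with the induced actions trivially (they are just unit-constraint rearrangements of $I$'s), whereas you flag this as the main remaining check.
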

\begin{proof}
 Define an indexed category $\ps{H}$ as follows:
\begin{displaymath}
\ps{H}:
\xymatrix @R=.05in @C=.5in
{\ca{V}\text{-}\B{Cat}^\op \ar[r] & \B{Cat} \\
(A,X)\ar @{|.>}[r]
\ar [dd]_-{(\phi,f)} & \ca{V}\textrm{-}_\ca{A}\Mod \\
\hole \\
(B,Y)\ar @{|.>}[r] &
\ca{V}\textrm{-}_\ca{B}\Mod\ar[uu]_-{\ps{H}(\phi,f)}}
\end{displaymath}
where $\ps{H}(\phi,f)=(f^*\circ\text{-})$
as described in Lemma \ref{Xi*module}, \emph{i.e.}
post-composition with the $\ca{V}$-matrix 
$f^*$ induced from the object 
mapping $f$ of the $\ca{V}$-functor.
For any two composable $\ca{V}$-functors 
$F_f:(A,X)\to(B,Y)$
and $G_g:(B,Y)\to(E,Z)$, there is a natural
isomorphism 
\begin{displaymath}
\xymatrix @R=.04in @C=.5in
{& \ca{V}\text{-}_\ca{B}\Mod\ar@/^/[dr]^-{\ps{H}F} & \\
\ca{V}\text{-}_\ca{E}\Mod\ar@/^/[ur]^-{\ps{H}G}
\ar@/_3ex/[rr]_-{\ps{H}(G\circ F)}
\rrtwocell<\omit>{\quad\delta^{F,G}} && 
\ca{V}\text{-}_\ca{A}\Mod}
\end{displaymath}
with components invertible arrows in $_\ca{A}\Mod$
\begin{displaymath}
 \xymatrix @R=.02in
{\hole \\
\delta^{G,F}_{\Psi}:}
\xymatrix @R=.05in @C=.5in
{&& Y\ar@/^/[dr]|-{\object@{|}}^-{f^*} & \\
1\ar[r]|-{\object@{|}}^-\Psi & 
Z\ar@/^/[ur]|-{\object@{|}}^-{g^*} 
\ar@/_4ex/[rr]|-{\object@{|}}_-{(gf)^*} 
\rrtwocell<\omit>{\quad\xi^{g,f}} && X}
\end{displaymath}
where $\xi$ is like in (\ref{zeta}).
These 2-cells consist of families of isomorphisms in 
$\ca{V}$
\begin{displaymath}
(\delta^{G,F}_{\Psi})_x\;:I\otimes I\otimes\Psi(gfx)
\xrightarrow{\;r_I\otimes1\;}I\otimes\Psi(gfx)
\end{displaymath}
which trivially commute with the induced $\ca{A}$-actions
of the modules $f^*g^*\Psi$ and $(gf)^*\Psi$.
Also, for any $\ca{V}$-category $(A,X)$, there is
a natural isomorphism 
\begin{displaymath}
\xymatrix @C=.5in
{\ca{V}\text{-}_\ca{A}\Mod
\rrtwocell<\omit>{\quad\gamma^A}
\ar@/^4ex/[rr]^-{\B{1}_{\ca{V}\text{-}{_\ca{A}\Mod}}}
\ar@/_4ex/[rr]_-{\ps{H}(\B{1}_\ca{A})}
 && \ca{V}\textrm{-}_\ca{A}\Mod}
\end{displaymath}
with components invertible arrows 
\begin{displaymath}
\gamma^A_{\Psi}:
\xymatrix@R=.02in @C=.5in
{1\ar@/^3ex/[rr]|-{\object@{|}}^-\Psi 
\ar@/_/[dr]|-{\object@{|}}_-\Psi 
\rrtwocell<\omit>{\quad\lambda^{-1}} && X \\
& X\ar@/_/[ur]|-{\object@{|}}_-{1_X} &}
\end{displaymath}
where $(\mathrm{id}_X)^*=1_X$
is the underlying function
of the identity functor $\B{1_\ca{A}}$ and
$\lambda$ is the left unitor of 
the bicategory $\ca{V}$-$\Mat$, thus
consist of isomorphisms
\begin{displaymath}
(\gamma^A_{\Psi})_x\;:\Psi(x)\xrightarrow{\;l^{-1}\;}
I\otimes\Psi(x) ,
\end{displaymath}
again trivially being left $\ca{A}$-module
morphisms.
The natural transformations $\delta$ and 
$\gamma$ with components the above isomorphisms
can be verified to satisfy the conditions
\ref{laxcond1} and \ref{laxcond2}, therefore $\ps{H}$
is a well-defined pseudofunctor.

By Theorem \ref{maintheoremfibr}, the Grothendieck
category $\Gr{G}\ps{H}$ has as objects pairs
$(\Psi,\ca{A}_X)$ where $\ca{A}_X$ is in $\ca{V}$-$\B{Cat}$
and $\Psi$ is in $\ca{V}\text{-}_\ca{A}\Mod$, and 
morphisms $(\Psi,\ca{A}_X)\to(\Xi,\ca{B}_Y)$
are pairs
\begin{displaymath}
\begin{cases}
\Psi\to(\ps{H}F)\Xi &\textrm{in }\ps{H}\ca{B}_Y\\
F:(A,X)\to(B,Y) &\textrm{in }\ca{V}\text{-}\B{Cat}
\end{cases}
\end{displaymath}
which, by definition of the functor $\ps{H}F$, 
coincide with the isomorphic formulation of 
left $\ca{V}$-module morphisms as in Lemma
\ref{charactVMod}, hence
$\Gr{G}\ps{H}\cong\ca{V}\text{-}\Mod.$
Moreover, the forgetful functor 
$N:\ca{V}\text{-}\Mod\to\ca{V}\text{-}\B{Cat}$
which keeps the $\ca{V}$-category and $\ca{V}$-functor
part of structure, has essentially the same 
effect as the fibration
\begin{displaymath}
 P_{\ps{H}}:\Gr{G}\ps{H}\longrightarrow\ca{V}\text{-}\B{Cat}
\end{displaymath}
so $N\cong P_{\ps{H}}$ exhibits $N$ as a fibration itself.
\end{proof}
\begin{prop}\label{VComodopfibred}
 The global category of (left) $\ca{V}$-comodules
$\ca{V}$-$\Comod$ is opfibred over 
the category of $\ca{V}$-cocategories $\ca{V}$-$\B{Cocat}$.
\end{prop}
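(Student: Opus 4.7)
The proof will proceed by dualizing the argument of Proposition \ref{VModfibred}, constructing a covariant pseudofunctor and applying the Grothendieck construction of Theorem \ref{maintheoremopfibr} to identify the opfibration $U_\ps{F}$ with the forgetful functor $H:\ca{V}\text{-}\Comod\to\ca{V}\text{-}\B{Cocat}$.

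First, I will define an indexed category
\begin{displaymath}
\ps{F}:\ca{V}\text{-}\B{Cocat}\longrightarrow\B{Cat}
\end{displaymath}
by sending a $\ca{V}$-cocategory $\ca{C}_X$ to its category of left comodules $\ca{V}\text{-}_\ca{C}\Comod$ and a $\ca{V}$-cofunctor $G_g:(C,X)\to(D,Y)$ to the corestriction functor $\ps{F}G=(g_*\circ -):\ca{V}\text{-}_\ca{C}\Comod\to\ca{V}\text{-}_\ca{D}\Comod$, which is well-defined by Lemma \ref{Phi*comodule}.

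Next, for composable $\ca{V}$-cofunctors $G_g:(C,X)\to(D,Y)$ and $K_k:(D,Y)\to(E,Z)$, I will exhibit a natural isomorphism $q^{G,K}:\ps{F}K\circ\ps{F}G\xRightarrow{\sim}\ps{F}(K\circ G)$ with components invertible left $\ca{E}$-comodule morphisms
\begin{displaymath}
q^{G,K}_\Phi:k_*\circ g_*\circ\Phi\xrightarrow{\;\zeta^{k,g}*1_\Phi\;}(kg)_*\circ\Phi,
\end{displaymath}
using the isomorphism $\zeta$ of Lemma \ref{isosofstars}. Dually, for each $\ca{V}$-cocategory $(C,X)$, I will use the right unitor $\rho$ of $\ca{V}$-$\Mat$ to build a natural isomorphism $p^C:\ps{F}(\B{1}_\ca{C})\xRightarrow{\sim}\B{1}_{\ca{V}\text{-}_\ca{C}\Comod}$ with components $p^C_\Phi:(\mathrm{id}_X)_*\circ\Phi\xrightarrow{\rho_\Phi}\Phi$, recalling from (\ref{idX=1X}) that $(\mathrm{id}_X)_*=1_X$. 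That these components are comodule morphisms is routine, and the coherence axioms dual to (\ref{laxcond1}) and (\ref{laxcond2}) follow from Lemma \ref{corisos} together with the coherence of the bicategory $\ca{V}$-$\Mat$, exactly as in the construction of $\ps{F}$ in (\ref{defF}) and $\ps{K}$ in Proposition \ref{VCocatopfibred}, restricted to comodules.

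Finally, I will apply Theorem \ref{maintheoremopfibr} to obtain an opfibration $U_\ps{F}:\Gr{G}\ps{F}\to\ca{V}\text{-}\B{Cocat}$. The Grothendieck category has as objects pairs $(\Phi,\ca{C}_X)$ with $\Phi\in\ca{V}\text{-}_\ca{C}\Comod$, and morphisms $(\Phi,\ca{C}_X)\to(\Omega,\ca{D}_Y)$ consisting of pairs
\begin{displaymath}
\begin{cases}
(\ps{F}G)\Phi=g_*\circ\Phi\xrightarrow{\;\nu\;}\Omega & \textrm{in }\ca{V}\text{-}_\ca{D}\Comod\\
G:(C,X)\xrightarrow{(\psi,g)}(D,Y) & \textrm{in }\ca{V}\text{-}\B{Cocat},
\end{cases}
\end{displaymath}
which by Lemma \ref{charactVComod} coincides with the isomorphic presentation of $\ca{V}$-$\Comod$ arrows. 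Hence $\Gr{G}\ps{F}\cong\ca{V}\text{-}\Comod$ and the forgetful functor $H$ is isomorphic to the projection $U_\ps{F}$, inheriting the opfibration structure. The main technical point, rather than an obstacle, is the careful bookkeeping verifying that the $\zeta$'s and $\rho$'s assemble into natural isomorphisms compatible with the $\ca{D}$- and $\ca{C}$-coactions, which reduces to Lemma \ref{corisos} and the standard coherence in $\ca{V}$-$\Mat$.
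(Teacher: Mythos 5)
Your proposal is correct and follows essentially the same route as the paper: a covariant pseudofunctor $\ca{V}\text{-}\B{Cocat}\to\B{Cat}$ sending $\ca{C}_X$ to $\ca{V}\text{-}_\ca{C}\Comod$ and a cofunctor to $(g_*\circ-)$ via Lemma \ref{Phi*comodule}, with structure isomorphisms built from $\zeta$ of Lemma \ref{isosofstars} and a unitor, coherence via Lemma \ref{corisos}, and the identification $\Gr{G}\ps{F}\cong\ca{V}\text{-}\Comod$ through Lemma \ref{charactVComod}. One small slip: since the identity $1_X=(\mathrm{id}_X)_*$ is post-composed onto $\Phi:1\to X$, the relevant constraint is the left unitor $\lambda_\Phi:1_X\circ\Phi\cong\Phi$ rather than $\rho_\Phi$, but this does not affect the argument.
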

\begin{proof}
 Define a (covariant) indexed category as follows:
\begin{displaymath}
\ps{S}:
\xymatrix @R=.02in @C=.5in
{\ca{V}\text{-}\B{Cocat} \ar[r] & \B{Cat} \\
(C,X)\ar @{|.>}[r]
\ar[dd]_-{(\psi,f)} & 
\ca{V}\textrm{-}_\ca{C}\Comod \ar[dd]^-{\ps{S}(\psi,f)}\\
\hole \\
(D,Y)\ar @{|.>}[r] &
\ca{V}\textrm{-}_\ca{D}\Comod}
\end{displaymath}
where $\ps{S}(\psi,f)=(f_*\circ\text{-})$ as in Lemma 
\ref{Phi*comodule}.
For any two composable $\ca{V}$-cofunctors
$F_f:(C,X)\to(D,Y)$ and $G_g:(D,Y)\to(E,Z)$,
we have a natural isomorphism $\delta^{G,F}:\ps{S}G\circ\ps{S}F\Rightarrow
\ps{S}(G\circ F)$ with components the composite 2-cells
\begin{displaymath}
\xymatrix @R=.02in
{\hole \\
\delta^{G,F}_{\Phi}:}
\xymatrix @R=.02in @C=.5in
{&& Y\ar@/^/[dr]|-{\object@{|}}^-{g_*} & \\
1\ar[r]|-{\object@{|}}^-\Phi & 
X\ar@/^/[ur]|-{\object@{|}}^-{f_*} 
\ar@/_3ex/[rr]|-{\object@{|}}_-{(gf)_*} 
\rrtwocell<\omit>{\quad\zeta^{g,f}} && Z}
\end{displaymath}
in $\ca{V}\text{-}_\ca{E}\Comod$, consisting of the families of arrows
in $\ca{V}$
\begin{displaymath}
 (\delta^{G,F}_\Phi)_z\;:\sum\limits_{\scriptscriptstyle{\stackrel{z=gy}{y=fx}}}
{I\otimes I\otimes\Phi(x)}\xrightarrow{\;\sum{r_I\otimes1}\;}
\sum\limits_{z=gfx}{I\otimes\Phi(x)}
\end{displaymath}
which trivially commute with the respective $\ca{E}$-coactions.
Moreover, for any $\ca{V}$-cocategory $(\ca{C},X)$, we have
a natural isomorphism $\gamma^C:\B{1}_{\ca{V}\text{-}{_\ca{C}\Comod}}
\Rightarrow \ps{S}(\B{1}_\ca{C})$
with components the same invertible arrows $\lambda^{\text{-}1}$
as in the previous proof.
The natural isomorphisms $\delta$ and $\gamma$ 
can be checked to satisfy
the appropriate axioms \ref{laxcond1} and \ref{laxcond2}, so
$\ps{S}$ is a well-defined pseudofunctor. Via Grothendieck construction, 
it gives rise to an opfibration
\begin{displaymath}
 U_{\ps{S}}:\Gr{G}\ps{S}\longrightarrow\ca{V}\text{-}\B{Cocat}
\end{displaymath}
which maps a pair $(\Psi,\ca{C}_X)$ where 
$\Psi\in\ca{V}\text{-}_\ca{C}\Comod$ to its $\ca{V}$-cocategory 
$\ca{C}_X$, and
\begin{displaymath}
 \begin{cases}
(\ps{S}F)\Phi\to\Omega &\textrm{in }\ps{S}\ca{C}_X\\
F:(C,X)\to(D,Y) & \textrm{in }\ca{V}\text{-}\B{Cocat}
\end{cases}
\end{displaymath}
to the $\ca{V}$-functor $F$.
By Lemma \ref{charactVComod} it is now 
evident that $U_\ps{S}\cong H$,
hence the forgetful functor
$H:\ca{V}\text{-}\Comod\to\ca{V}\text{-}\B{Cocat}$
is an opfibration.
\end{proof}
\begin{cor}\label{VComodcomplete}
The opfibration $H$ has all opfibred colimits, hence
$\ca{V}$-$\Comod$ has all colimits and $H$ 
strictly preserves them.
\end{cor}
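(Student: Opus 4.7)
My plan is to apply the dual of Proposition~\ref{reindexcont} to establish that $H$ has all opfibred colimits, and then invoke the dual of Corollary~\ref{AhasPpreserves} to extract the stated consequences about the total category $\ca{V}$-$\Comod$ and strict preservation.

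First I would verify the two ingredients required by the dual of Proposition~\ref{reindexcont} applied to the opfibration $H \cong U_{\ps{S}}$ constructed in Proposition~\ref{VComodopfibred}. The fibres are the categories $\ca{V}$-$_\ca{C}\Comod$ of left $\ca{C}$-comodules, and these are cocomplete by part~(2) of Proposition~\ref{propfibresVmodcomod}. It remains to show that for every $\ca{V}$-cofunctor $G = (\psi,g) : (C,X) \to (D,Y)$, the reindexing functor $\ps{S}(\psi,g) = (g_* \circ -) : \ca{V}\text{-}_\ca{C}\Comod \to \ca{V}\text{-}_\ca{D}\Comod$ of Lemma~\ref{Phi*comodule} preserves colimits.

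The key step, which I expect to be the main technical point, is this cocontinuity of the reindexing functors. I would argue it by considering the commutative square
\begin{displaymath}
\xymatrix @C=.5in @R=.4in
{\ca{V}\text{-}_\ca{C}\Comod \ar[r]^-{g_* \circ -} \ar[d] &
 \ca{V}\text{-}_\ca{D}\Comod \ar[d] \\
 \ca{V}\text{-}\Mat(1,X) \ar[r]_-{g_* \circ -} & \ca{V}\text{-}\Mat(1,Y)}
\end{displaymath}
whose vertical arrows are the (comonadic) forgetful functors of Definition~\ref{leftCcomodule}, which create all colimits by Proposition~\ref{propfibresVmodcomod}. The bottom arrow is cocontinuous because horizontal composition in the bicategory $\ca{V}$-$\Mat$ preserves colimits on both entries (this was established in part~(ii) of Proposition~\ref{propVMat} for endohom-categories, but the same pointwise argument applies to composition $\ca{V}\text{-}\Mat(1,X) \to \ca{V}\text{-}\Mat(1,Y)$). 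Hence for a diagram $D : \ca{J} \to \ca{V}\text{-}_\ca{C}\Comod$, its colimit is computed in $\ca{V}\text{-}\Mat(1,X)$, pushed forward by $g_* \circ -$ to a colimit in $\ca{V}\text{-}\Mat(1,Y)$, and this lifts uniquely to $\ca{V}\text{-}_\ca{D}\Comod$ since the right-hand forgetful creates colimits. The resulting object agrees with $g_*\circ\colim D$ with its induced $\ca{D}$-coaction, giving the required colimit.

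Having established both conditions, the dual of Proposition~\ref{reindexcont} yields that $H$ has all opfibred colimits. Finally, since the base $\ca{V}$-$\B{Cocat}$ is itself cocomplete by Proposition~\ref{VCocatcocomplete}, the dual of Corollary~\ref{AhasPpreserves} applies and gives the equivalence between $H$ having opfibred colimits and $\ca{V}$-$\Comod$ being cocomplete with $H$ strictly cocontinuous. This delivers the ``hence'' clause of the statement and completes the proof.
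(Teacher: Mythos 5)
Your proposal is correct and follows essentially the same route as the paper: cocomplete fibres plus cocontinuous reindexing functors $(g_*\circ -)$ give opfibred colimits via (the dual of) Proposition~\ref{reindexcont}, and then (the dual of) Corollary~\ref{AhasPpreserves} together with cocompleteness of $\ca{V}$-$\B{Cocat}$ yields the conclusion. The only difference is that you spell out the cocontinuity of the reindexing functors through the comonadic forgetful functors creating colimits, where the paper simply notes that composition of $\ca{V}$-matrices always preserves colimits; your elaboration is a correct justification of that remark.
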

\begin{proof}
 The fibre categories of the opfibration $H$ are 
the cocomplete categories
$\ca{V}\text{-}_\ca{C}\Comod$ for each $\ca{V}$-cocategory
$\ca{C}_X$, and the reindexing functors $(f_*\circ\text{-})$
for any $\ca{V}$-cofunctor $F_f$ preserve colimits
(as composition of $\ca{V}$-matrices always does). Therefore,
Proposition \ref{reindexcont} ensures that $H$ is opfibred cocomplete,
so by Corollary \ref{AhasPpreserves} and cocompleteness
of $\ca{V}$-$\B{Cocat}$, the result follows.
\end{proof}
\begin{rmk*}
In this section, emphasis was given to the study of 
left-sided $\ca{V}$-modules and $\ca{V}$-comodules, whereas
in Section \ref{globalcats}
where the `one-object case' global categories $\Mod$ and $\Comod$
where defined, the distinction between 
left and right was mostly omitted due to
symmetry in $\ca{V}$. In fact, in a very similar manner 
we could have defined \emph{global categories of right $\ca{V}$-modules} and 
\emph{$\ca{V}$-comodules}. Then by slightly changing the 
reindexing functors (replacing post- with pre-composition, 
and lower with upper stars), we would end up with a 
fibrational characterization as above. 

However, in this case
there does not exist an isomorphism between
right and left enriched modules 
and comodules as before, which would allow us to
regard the different (fibre and total) categories
as essentially the same. Explicitly, for $\Mod_\ca{V}(A)$
with $\ca{V}$ symmetric, a left $A$-module
$(M,\mu)$ for a monoid $A$ always gives rise to a right $A$-action $\mu'$ on $M$ via
\begin{displaymath}
 \xymatrix @C=.8in
{A\otimes M\ar[r]^-{\mu} & M \\
M\otimes A\ar[ur]_-{\mu'}\ar[u]^-{\lcong}_-{s} &}
\end{displaymath}
and all appropriate axioms are satisfied.
On the other hand, the left $\ca{A}$-action for a 
$\ca{V}$-category $\ca{A}_X$ on a $\ca{V}$-module $\Psi$
is given by arrows in $\ca{V}$
\begin{displaymath}
 \ca{A}(x,x')\otimes\Psi(x')\to\Psi(x)
\end{displaymath}
for all $x,x'\in X$, which are 
not in bijective correspondence with arrows which
would define a right $\ca{A}$-action on $\Psi$,
of the form
\begin{displaymath}
 \Psi(x')\otimes\ca{A}(x',x)\to\Psi(x)
\end{displaymath}
for all $x,x'$, even if $\ca{V}$ is symmetric.
This is because the elements of the indexing set of the
family of objects of $\Psi$ in the formula
would agree with the second, rather than the first entry of 
the hom-sets of $\ca{A}$ in the above formula.
\end{rmk*}

\section{Enrichment of $\ca{V}$-modules in $\ca{V}$-comodules}

Similarly to Sections \ref{Universalmeasuringcomodule}
and \ref{enrichmentofVcatsinVcocats}, we are now going 
to work our way through the data which induce
an enrichment of the global category of enriched
modules $\ca{V}$-$\Mod$ in the global category
of enriched comodules $\ca{V}$-$\Comod$.

Suppose that $\ca{V}$ is a symmetric monoidal
closed category, with products and coproducts. 
Recall that the lax functor 
$\Hom:\ca{V}\text{-}\Mat^{\mathrm{co}}\times\ca{V}\text{-}\Mat\to
\ca{V}\text{-}\Mat$ as in (\ref{defimportHom})
provides a functor 
between the hom-categories
\begin{displaymath}
 \Hom_{(Y,W),(X,Z)}:\ca{V}\text{-}\Mat(Z,X)^\op\times
\ca{V}\text{-}\Mat(W,Y)\to\ca{V}\text{-}\Mat(W^Z,Y^X)
\end{displaymath}
which maps a pair of $\ca{V}$-matrices$\SelectTips{eu}{10}\xymatrix @C=.2in
{(S:Z\ar[r]|-{\object@{|}} & X,}\SelectTips{eu}{10}\xymatrix @C=.2in
{T:W\ar[r]|-{\object@{|}} & Y)}$to 
$\Hom(S,T)$ given 
by the family of objects in $\ca{V}$
\begin{displaymath}
 \Hom(S,T)(k,m)=\prod\limits_{\scriptscriptstyle{\stackrel{x\in X}{z\in Z}}}
{[S(x,z),T(mx,kz)]}
\end{displaymath}
for all $k\in W^Z$ and $m\in Y^X$. 
Moreover, in Section \ref{enrichmentofVcatsinVcocats}
we made use of the induced functor
$\Mon(\Hom_{(X,Y),(X,Y)})$ as in (\ref{MonHom_}),
between the categories of comonoids and monoids of the
endoarrow hom-category. This gave rise to the functor
\begin{displaymath}
 K:\ca{V}\text{-}\B{Cocat}^\op\times\ca{V}\text{-}\B{Cat}\to
\ca{V}\text{-}\B{Cat}
\end{displaymath}
between $\ca{V}$-(co)categories,
\emph{i.e.} the $\ca{V}$-matrix $K(\ca{C}_X,\ca{B}_Y)=
\Hom(\ca{C},\ca{B})_{Y^X}$
obtains the structure a $\ca{V}$-category.

Now, by Proposition
\ref{laxfunctorbetweenmodules},
we know that for any
lax functor $\ps{F}$ between bicategories $\ca{K},\ca{L}$
and any monad $t$ in $\ca{K}$, there is an induced
functor $\Mod(\ps{F}_{A,B})$
between the category of left $t$-modules
in $\ca{K}$ and left $\ps{F}t$-modules in $\ca{L}$.
If we apply this in the current setting, the induced
functor is $\Mod(\Hom_{(Y,W),(X,Z)})$\vspace{.05in}
\begin{displaymath}
\scriptstyle{\big(\ca{V}\text{-}
\Mat(Z,X)^{\ca{V}\text{-}\Mat(Z,C)}\big)^\op\times
\ca{V}\text{-}\Mat(W,Y)^{\ca{V}\text{-}\Mat(W,B)}\longrightarrow
\ca{V}\text{-}\Mat(W^Z,Y^X)^{\ca{V}\text{-}\Mat(W^Z,\Hom(C,B))}}\vspace{.05in}
\end{displaymath}
for $(C,X)$ a $\ca{V}$-cocategory and $(B,Y)$ 
a $\ca{V}$-category, for any sets $X,Y,Z,W$. 
This is the case,
because a monad in the domain category
of the lax functor $\Hom$ is a pair $(C,B)$
where $C$ is a monad in 
$\ca{V}\text{-}\Mat^{\mathrm{co}}$,
\emph{i.e.} a comonad in $\ca{V}$-$\Mat$, 
and $B$ is a monad in $\ca{V}$-$\Mat$. Also the domain
of the above induced functor is isomorphic to\vspace{.05in}
\begin{displaymath}
\Big((\ca{V}\text{-}\Mat^{\mathrm{co}}\times\ca{V}\text{-}\Mat)((Z,W),(X,Y))\Big)
^{(\ca{V}\text{-}\Mat^{\mathrm{co}}\times\ca{V}\text{-}\Mat)((Z,W),(C,B))}
\end{displaymath}
since $\ca{V}$-$\Mat^\mathrm{co}(Z,X)=\ca{V}$-$\Mat(Z,X)^\op$
and the category of algebras for the monad (in fact, opposite comonad) 
$\ca{V}$-$\Mat(Z,C)^\op$ on this category is 
precisely the opposite category of coalgebras
\begin{displaymath}
 \big(\ca{V}\text{-}\Mat(Z,X)^{\ca{V}\text{-}\Mat(Z,C)}\big)^\op.
\end{displaymath}
In particular, if we choose $Z\text{=}W\text{=}1$
to be the singleton set,
we obtain the functor\vspace{.05in}
\begin{displaymath}
\scriptstyle{\Mod(\Hom_{(1,1),(X,Y)})}:
\scriptstyle{\big(\ca{V}\text{-}
\Mat(1,X)^{(\text{-}\circ C)}\big)^\op\times
\ca{V}\text{-}\Mat(1,Y)^{(\text{-}\circ B)}}\to
\scriptstyle{\ca{V}\text{-}\Mat(1,Y^X)
^{(\text{-}\circ\Hom(C,B))}}\vspace{.05in}
\end{displaymath}
where the `pre-composition' monads and comonads
are just the endofunctors
$\ca{V}$-$\Mat(1,C)$, $\ca{V}$-$\Mat(1,B)$ and 
$\ca{V}$-$\Mat(1,\Hom(C,B))$ respectively. 
We denote this
functor by
\begin{displaymath}
\bar{K}_{(X,Y)}:
\xymatrix @R=.05in @C=.5in
{\ca{V}\text{-}_\ca{C}\Comod^\op\times
\ca{V}\text{-}_\ca{B}\Mod\ar[r] &\ca{V}\text{-}_{\Hom(\ca{C},\ca{B})}\Mod \\
\qquad(\;\Phi\;,\;\Psi\;)\ar@{|->}[r] & \Hom(\Phi,\Psi)}
\end{displaymath}
using Definitions \ref{leftcaAmodule} and \ref{leftCcomodule}
for the categories involved. This concretely means that
whenever $\Phi$ is a left $\ca{C}_X$-comodule and 
$\Psi$ is a left $\ca{B}_Y$-module, 
the $\ca{V}$-matrix 
\begin{displaymath}
\SelectTips{eu}{10}\xymatrix@C=.5in
{\Hom(\Phi_\ca{C},\Psi_\ca{B}):1\ar[r]|-{\object@{|}} & Y^X} 
\end{displaymath}
obtains
the structure of a left $\Hom(\ca{C},\ca{B})$-module,
where $\SelectTips{eu}{10}\xymatrix@C=.2in
{\Hom(\ca{C},\ca{B}):Y^X\ar[r]|-{\object@{|}} & Y^X}$is a monad
in $\ca{V}$-$\Mat$ as mentioned above. Explicitly,
the left $\Hom(\ca{C},\ca{B})$-action
\begin{displaymath}
 \mu_{s}:\sum_{t\in Y^X}{\Hom(\ca{C},\ca{B})(s,t)\otimes\Hom(\Phi,\Psi)(t)
\to\Hom(\Phi,\Psi)(s)}
\end{displaymath}
for all $s\in Y^X$
is given by a family of arrows in $\ca{V}$
\begin{displaymath}
 \sum\limits_{t\in Y^X}{\prod\limits_{a,a'\in X}{[\ca{C}(a',a),\ca{B}(sa',ta)]}\otimes
\prod\limits_{b\in X}{[\Phi(b),\Psi(tb)]}}\to
\prod\limits_{c\in X}{[\Phi(c),\Psi(sc)]}
\end{displaymath}
which, for fixed $t\in Y^X$ and $c\in X$,
corresponds bijectively under the usual tensor-hom adjunction to
the composite

\begin{displaymath}
\xymatrix @C=1in @R=.3in
{\scriptstyle{\prod_{a,a'}{[\ca{C}(a',a),\ca{B}(sa',ta)]}\otimes
\prod_{b}{[\Phi(b),\Psi(tb)]}\otimes\Phi(c)}
\ar@{-->}[r] \ar[d]_-{\scriptscriptstyle{1\otimes\delta_c}} &
\scriptstyle{\Psi(sc)} \\
\scriptstyle{\prod_{a,a'}{[\ca{C}(a',a),\ca{B}(sa',ta)]}\otimes
\prod_{b}{[\Phi(b),\Psi(tb)]}\otimes
\sum_{c'}{\ca{C}(c,c')\otimes\Phi(c')}}
\ar[d]_-{\scriptscriptstyle{\lcong}} & \\
\scriptstyle{\sum_{c'}\prod_{b}{[\Phi(b),\Psi(tb)]}\otimes
\ca{C}(c,c')\otimes\prod_{b}{[\Phi(b),\Psi(tb)]}\otimes
\Phi(c')}\ar[d]_-{\scriptscriptstyle{\sum\pi_{c,c'}\otimes1\otimes\pi_{c'}\otimes1}} & \\
\scriptstyle{\sum_{c'}[\ca{C}(c,c'),\ca{B}(sc,tc')]\otimes\ca{C}(c,c')\otimes
[\Phi(c'),\Psi(tc')]\otimes\Phi(c')}
\ar[r]_-{\scriptscriptstyle{\sum\mathrm{ev}\otimes\mathrm{ev}}} & 
\scriptstyle{\sum_{c'}\ca{B}(sc,tc')\otimes\Psi(tc')}
\ar[uuu]_-{\scriptscriptstyle{\mu_{sc}}}}
\end{displaymath}
where $\delta$ is the left $\ca{C}$-coaction on $\Phi$ and 
$\mu$ is the left $\ca{A}$-action on $\Phi$.
Notice that for this formula to work, both the $\ca{V}$-module
and the $\ca{V}$-comodule have to be left-sided. 
Also, by Proposition \ref{laxfunctorbetweenmodules} again, 
this induced functor between the categories of modules
is by construction such that the diagram
\begin{equation}\label{ModHomcocontinuous}
 \xymatrix@C=.8in
{\ca{V}\text{-}_\ca{C}\Comod^\op\times
\ca{V}\text{-}_\ca{B}\Mod\ar[r]^-{\bar{K}_{(X,Y)}}\ar[d] & 
\ca{V}\text{-}_{\Hom(\ca{C},\ca{B})}\Mod \ar[d] \\
\ca{V}\text{-}\Mat(1,X)^\op\times\ca{V}\text{-}\Mat(1,Y)
\ar[r]_-{\Hom_{(1,1),(X,Y)}} & \ca{V}\text{-}\Mat(1,Y^X).}
\end{equation}
commutes. The left and right arrows are the respective
monadic forgetful functors from the categories of algebras
to the base categories, for $\ca{C}_X$ a $\ca{V}$-cocategory and 
$\ca{B}_Y$ a $\ca{V}$-category.

As done earlier for the functor $K$ (\ref{defK}),
we can now define a functor between the 
global categories of left $\ca{V}$-modules
and $\ca{V}$-comodules
\begin{equation}\label{defbarK}
 \bar{K}:\ca{V}\text{-}\Comod^\op\times\ca{V}\text{-}\Mod
\longrightarrow\ca{V}\text{-}\Mod
\end{equation}
given by $\bar{K}_{(X,Y)}$ on objects.
For any left $\ca{V}$-module morphism 
$\kappa_F:\Psi_\ca{B}\to\Psi'_\ca{B'}$ and 
left $\ca{V}$-comodule morphism 
$\nu_G:\Phi'_\ca{C'}\to\Phi_\ca{C}$
as in Definition \ref{globalcatVmods}, 
define a morphism
\begin{displaymath}
 \bar{K}(\nu,\kappa):\Hom(\Phi,\Psi)_{\Hom(\ca{C},\ca{B})}\longrightarrow
\Hom(\Phi',\Psi')_{\Hom(\ca{C}',\ca{B}')}
\end{displaymath}
in the global category $\ca{V}$-$\Mod$ as follows: 
it consists of 
the $\ca{V}$-functor 
\begin{displaymath}
 K(G,F)_{f^g}:\Hom(\ca{C},\ca{B})_{Y^X}\to\Hom(\ca{C}',\ca{B}')_{Y'^{X'}}
\end{displaymath}
between the $\ca{V}$-categories which act on the $\ca{V}$-modules,
and the family of arrows
\begin{displaymath}
\scriptstyle{\bar{K}(\nu,\kappa)_s:\Hom(\Phi,\Psi)(s)\longrightarrow
\Hom(\Phi',\Psi')(f^g(s))\equiv\prod\limits_{x}[\Phi(x),\Psi(sx)]\longrightarrow
\prod\limits_{x'}[\Phi'(x'),\Psi'(fsgx')]}
\end{displaymath}
which correspond uniquely, for a fixed $x'\in X$,
to the composite morphism
\begin{displaymath}
 \xymatrix @C=.35in @R=.4in
{\prod_{x\in X}[\Phi(x),\Psi(sx)]\otimes\Phi'(x')
\ar@{-->}[rr] \ar[d]_-{\nu_{x'}}
 && \Psi'(fsgx') \\
\prod_{x\in X}[\Phi(x),\Psi(sx)]\otimes\Phi(gx')
\ar[r]_-{\pi_{gx'}\otimes1} & 
[\Phi(gx'),\Psi(sgx')]\otimes\Phi(gx') \ar[r]_-{\mathrm{ev}} & 
\Psi(sgx')\ar[u]_-{\kappa_{sgx'}}}
\end{displaymath}
under the tensor-hom
adjunction in the monoidal closed $\ca{V}$.
It can be verified via computations that 
these arrows satisfy the commutativity of
(\ref{Vmodmaps}) thus $\bar{K}(\nu,\kappa)$ is a 
well-defined $\ca{V}$-module morphism.

Following the same approach as for 
earlier results, 
we would now like to exhibit this functor $\bar{K}$
as an action, whose adjoint will induce the 
suggested enrichment. Before we
continue in this direction, we introduce
a category whose properties will further clarify
the current setting. In fact, the following structure 
serves very similar
purposes as $\ca{V}$-graphs, which were used
as the `base case' for $\ca{V}$-$\B{Cat}$
and $\ca{V}$-$\B{Cocat}$. If we conceive of
$\ca{V}$-$\B{Grph}$ as
the category of all 
endo-1-cells of the bicategory $\ca{V}$-$\Mat$,
the following is the category of all
1-cells with fixed domain the singleton set $1$.

Consider a category $\ca{C}$ with objects all $\ca{V}$-matrices
of the form$\SelectTips{eu}{10}\xymatrix@C=.2in
{S:1\ar[r]|-{\object@{|}} & X}$for any set $X$, \emph{i.e.}
families of objects $\{S(x)\}_{x\in X}$ in $\ca{V}$, where
a morphism $\nu$ from $S$ with codomain $X$ to $T$ with 
codomain $Y$ 
\begin{displaymath}
 \nu_f:\SelectTips{eu}{10}\xymatrix
{(1\ar[r]|-{\object@{|}}^S & X})\to
\SelectTips{eu}{10}\xymatrix
{(1\ar[r]|-{\object@{|}}^T & Y})
\end{displaymath}
consists of a function $f:X\to Y$ and 
arrows $\nu_x:S(x)\to T(fx)$ in $\ca{V}$
for all $x\in X$. Moreover, this category is
in fact bifibred over $\B{Set}$, with reindexing 
functors those used in Propositions \ref{VModfibred}
and \ref{VComodopfibred}. However, this fact is 
not fundamental at this point
since the (op)fibrations $N$ and $H$ have already been
established, so details are not provided.

Under this section's assumptions on $\ca{V}$,
the category $\ca{C}$ is a symmetric monoidal
category, with the family of objects in $\ca{V}$
\begin{displaymath}
 \{(S\otimes T)(x,y)\}_{(x,y)\in X\times Y}=
\{S(x)\otimes T(x)\}_{\scriptscriptstyle{\stackrel{x\in X}{y\in Y}}}
\end{displaymath}
determining
the tensor product$\SelectTips{eu}{10}\xymatrix@C=.2in
{S\otimes T:1\ar[r]|-{\object@{|}} & X\times Y}$of 
$\ca{V}$-matrices $S$ and $T$ with codomains $X$ and $Y$
accordingly. In a sense, this is 
where the tensor products of $\ca{V}$-$\Mod$ and
$\ca{V}$-$\Comod$ come from. Moreover,
$\ca{C}$ is a monoidal closed category:
for all $\ca{V}$-matrices $S$, $T$ and $R$ 
with codomains $X$, $Y$ and $Z$ respectively, 
there is a bijective correspondence between arrows
\begin{displaymath}
\xymatrix @R=.02in @C=.15in
{\qquad\quad (S\otimes T)_{X\times Y} \ar[rrr] &&& R_Z\phantom{ABCDE} 
&\mathrm{in}\;\ca{C}\\ 
\ar@{-}[rrr] &&& \\  
\qquad\qquad S_X \ar[rrr] &&& \Hom(T,R)_{Z^Y}\phantom{ABCDEFGH} 
& \mathrm{in}\;\ca{C}} 
\end{displaymath}
where $\Hom(T,R)$ is the mapping on objects
of the functor $\Hom_{(1,1),(Y,Z)}$ as in (\ref{Hom_}).
Indeed, any arrow $\kappa:S\otimes T\to M$ in $\ca{C}$,
given by a function $f:X\times Y\to Z$ and arrows
$\kappa_{(x,y)}:S(x)\otimes T(y)\to R(f(x,y))$ in $\ca{V}$
corresponds bijectively, under the tensor-hom
adjunction in $\ca{V}$, to 
\begin{displaymath}
 S(x)\to[T(y),R(f(x,y))]
\end{displaymath}
for all $x\in X$, $y\in Y$. Having in mind
that by cartesian closedness in $\B{Set}$,
$f(x,y)=\bar{f}_x(y)$ for the corresponding
function $\bar{f}:X\to Z^Y$, the above is a family of arrows
\begin{displaymath}
 S(x)\to\prod_{y\in Y}[T(x),R(\bar{f}_xy)]
\end{displaymath}
for all $x\in X$, which together with $\bar{f}$
uniquely determine an arrow 
$S\to\Hom(T,R)$ in $\ca{C}$ as expected.
This is natural in $S$, therefore 
$\Hom_{(1,1),(Y,Z)}(T,-)$ is the object function
of a right adjoint of $-\otimes T$ 
which induces a functor of two variables
\begin{displaymath}
 ^c\Hom(-,-):\ca{C}^\op\times\ca{C}\longrightarrow\ca{C}
\end{displaymath}
namely the internal hom of $\ca{C}$. This is obviously
very similar to the proof of Proposition \ref{VGrphclosed}.
It is also evident that $\ca{V}$ has all small limits, 
and the proof is almost identical with that of completence of 
$\ca{V}$-$\B{Grph}$ in Section \ref{Vgraphs}.

Notice that the global categories $\ca{V}$-$\Mod$ and 
$\ca{V}$-$\Comod$ are (non-full) subcategories of this $\ca{C}$,
like $\ca{V}$-$\B{Cat}$ and $\ca{V}$-$\B{Cocat}$ 
were subcategories of $\ca{V}$-$\B{Grph}$. Their objects 
are objects of $\ca{C}$ with extra structure. In particular,
the functor $\bar{K}$ defined earlier is a restriction
of $^c\Hom(-,-)$ to the appropriate subcategory of $\ca{C}^\op\times\ca{C}$.

We are now going to employ this category $\ca{C}$ in order
to obtain comonadicity of $\ca{V}$-$\Comod$ and monadicity 
of $\ca{V}$-$\Mod$ over appropriate categories, similarly 
to Propositions \ref{Comodcomonadic} and \ref{Modmonadic}
of the previous chapter.
\begin{prop}\label{monadicoverpullbackcat}
The global category of $\ca{V}$-modules is monadic over
the pullback category
$\ca{C}\times_{\scriptscriptstyle{\B{Set}}}\ca{V}\text{-}\B{Cat}$ 
and the global
category of $\ca{V}$-comodules is 
comonadic over the pullback category
$\ca{C}\times_{\scriptscriptstyle{\B{Set}}}\ca{V}\text{-}\B{Cocat}$.
\end{prop}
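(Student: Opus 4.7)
The plan is to imitate the strategy of Proposition \ref{Comodcomonadic} in this many-object setting: construct the relevant adjoints explicitly and then exhibit each global category as the category of (co)algebras for the induced (co)monad, bypassing the need to verify Beck's conditions directly. Because the two halves are formally dual, I will detail the comodule side; the module side is the evident analogue.

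First, I will build a right adjoint $R$ to the forgetful functor
$F: \ca{V}\text{-}\Comod \to \ca{C}\times_{\B{Set}}\ca{V}\text{-}\B{Cocat}$. Given $(S_X, \ca{C}_X)$ in the pullback (so $S$ is a $\ca{V}$-matrix from $1$ to $X$ and $\ca{C}_X$ is a $\ca{V}$-cocategory with the same object set), I set $R(S_X, \ca{C}_X) := (C \circ S)_\ca{C}$, the \emph{cofree} left $\ca{C}$-comodule on $S$, with coaction $\Delta \circ 1_S : C \circ S \Rightarrow C\circ C \circ S$. The counit at $(S_X, \ca{C}_X)$ is given by the pair whose $\ca{V}$-cofunctor part is $1_\ca{C}$ and whose $\ca{V}$-matrix part is $(\epsilon \circ 1_S) \cdot \lambda_S^{-1}: C \circ S \Rightarrow 1_X \circ S \cong S$; the unit at a $\ca{C}'$-comodule $\Phi_{\ca{C}'}$ has $\ca{V}$-cofunctor part $1_{\ca{C}'}$ and $\ca{V}$-matrix part the coaction $\delta_\Phi: \Phi \Rightarrow C' \circ \Phi$ itself. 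The triangle identities then unpack into the coassociativity and counit axioms of Definition \ref{leftCcomodule}. Naturality of $R$ on morphisms follows from the explicit description of $\ca{V}$-cofunctors $(\psi, g): \ca{C}' \to \ca{C}$ in terms of 2-cells $g_*Cg^* \Rightarrow C$ (Lemma \ref{charactVCocat}).

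Next, I will identify the induced comonad $T = FR$ on the pullback category: it sends $(S_X, \ca{C}_X)$ to $(C\circ S, \ca{C})$ with comultiplication and counit inherited from $\Delta$ and $\epsilon$, and the identity $\ca{V}$-cofunctor component. A $T$-coalgebra structure on $(\Phi_X, \ca{C}_X)$ is a morphism $(\Phi, \ca{C}) \to (C\circ \Phi, \ca{C})$ in the pullback lying over $\mathrm{id}_X$, i.e.\ purely a 2-cell $\delta: \Phi \Rightarrow C \circ \Phi$ in $\ca{V}\text{-}\Mat(1,X)$, and the $T$-coalgebra axioms transcribe \emph{verbatim} into the two commuting diagrams of Definition \ref{leftCcomodule}. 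A $T$-coalgebra morphism $(\Phi, \ca{C}) \to (\Phi', \ca{C}')$ is a pair consisting of a $\ca{V}$-cofunctor $G_g: \ca{C} \to \ca{C}'$ and a 2-cell $g_* \circ \Phi \Rightarrow \Phi'$ compatible with the coactions — which is precisely the data of a morphism in $\ca{V}\text{-}\Comod$ under the characterization of Lemma \ref{charactVComod}. Thus the canonical comparison functor is an isomorphism of categories, yielding the comonadicity of $F$.

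The module case is handled dually by taking the left adjoint $L(S_X, \ca{A}_X) := (A \circ S)_\ca{A}$, the free left $\ca{A}$-module on $S$, with action $M \circ 1_S$; the induced monad $FL$ has $\ca{V}$-$\Mod$ as its Eilenberg--Moore category by the analogous bookkeeping. The main technical point — and where I expect the most care to be needed — is verifying that on morphisms of the pullback, the (co)free construction interacts correctly with the relevant reindexing 2-cells $\hat{\phi}: f_*A \Rightarrow Bf_*$ and $\hat{\psi}: f_*C \Rightarrow Df_*$ coming from the $\ca{V}$-(co)functor parts. In practice this reduces to straightforward diagram pasting in $\ca{V}\text{-}\Mat$, using associativity of horizontal composition and the mate correspondence used throughout Section~\ref{VcatsandVcocats}, but it must be carried out with some care to ensure the comparison functors are identity-on-underlying-data, hence isomorphisms rather than merely equivalences.
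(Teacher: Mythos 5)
Your proposal is correct and follows essentially the same strategy as the paper: construct the (co)free adjoint $(S_X,\ca{A}_X)\mapsto (A\circ S)_\ca{A}$ (resp.\ $(C\circ S)_\ca{C}$) explicitly and identify the Eilenberg--Moore category of the induced (co)monad with the global category directly, rather than checking Beck's conditions. The only difference is that you detail the comodule half where the paper details the module half and dualizes; note also a harmless slip in the order of vertical composition in your counit, which should read $\lambda_S\cdot(\epsilon\circ 1_S)$.
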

\begin{proof}
 Consider the functor
\begin{displaymath}
U:\xymatrix @R=.05in @C=.7in
{\ca{V}\text{-}\Mod\ar[r] & 
\ca{C}\times_{\scriptscriptstyle{\B{Set}}}\ca{V}\text{-}\B{Cat} \\
\Psi_\ca{A}\ar@{.>}[r]\ar[dd]_-{\kappa_F} & 
(\Psi_X\;,\;\ca{A}_X)\qquad\ar[dd]^-{(\kappa_f,F_f)} \\
\hole \\
\Xi_\ca{B}\ar@{.>}[r] & (\Xi_Y\;,\;\ca{B}_Y)\qquad}
\end{displaymath}
which `separates' the $\ca{V}$-matrix with domain $1$
from the $\ca{V}$-category which acts on it. This 
is well-defined: the pullback category is formed as in
\begin{displaymath}
\xymatrix @C=.5in @R=.5in
{\ca{C}\times_{\B{Set}}\ca{V}\text{-}\B{Cat}
\pullbackcorner[ul]
\ar[r]\ar[d] & \ca{V}\text{-}\B{Cat} \ar[d] \\
\ca{C}\ar[r] & \B{Set},}
\end{displaymath}
where the right edge is the fibration $P$
which maps any $\ca{V}$-category to its 
set of objects, and the bottom edge
is the bifibration which maps a $\ca{V}$-matrix with 
fixed domain $1$ to its codomain.
We will now construct a left adjoint to $U$, and the 
category of algebras for the induced monad will
turn out to be $\ca{V}$-$\Mod$.
Define
\begin{displaymath}
G:\xymatrix @R=.05in @C=.7in
{\ca{C}\times_{\scriptscriptstyle{\B{Set}}}\ca{V}\text{-}\B{Cat}\ar[r] & 
\ca{V}\text{-}\Mod \\
(S_X,\ca{A}_X)\ar@{.>}[r]\ar[dd]_-{(\nu_f,F_f)} & 
(A\circ S)_\ca{A}\ar[dd] \\
\hole \\
(T_Y,\ca{B}_Y)\ar@{.>}[r] & (B\circ T)_\ca{B}}
\end{displaymath}
where the composite $\ca{V}$-matrix$\SelectTips{eu}{10}
\xymatrix@C=.2in{1\ar[r]|-{\object@{|}}^-S & X
\ar[r]|-{\object@{|}}^-A & X}$obtains
a left $\ca{A}$-action via multiplication of 
the monad $A$, and the image of the morphism between the two
left $\ca{V}$-modules consists of the $\ca{V}$-functor
$F_f:\ca{A}_X\to\ca{B}_Y$ and the family 
$\kappa_x:(A\circ S)(x)\to(B\circ T)(fx)$
of the composite arrows in $\ca{V}$
\begin{displaymath}
\xymatrix @C=.7in
{\sum\limits_{x'\in X} A(x,x')\otimes S(x') 
\ar@{-->}[r]
\ar[d]_-{\sum F_{x,x'}\otimes\nu_{x'}} &
\sum\limits_{y'\in Y} B(fx,y')\otimes T(y'). \\
\sum\limits_{x'\in X} B(fx,fx')\otimes S(fx')
\ar@{^(->}@/_3ex/[ur]_-{\iota} &}
\end{displaymath}
The above is possible only
because $\nu$ and $F$ have the 
same `underlying function' $f$ between the 
`underlying sets' $X$ and $Y$ of the enriched modules
and the enriched categories, since they determine
an arrow in the specific pullback
category. This morphism $\kappa_F$ commutes with 
the $\ca{A}$-action and $\ca{B}$-action on
$(A\circ S)$ and $(B\circ T)$ respectively, since 
$\ca{F}_f$ respects the composition laws of $\ca{A}$ and 
$\ca{B}$ which induce the actions. 
Now, there is a bijective 
correspondence between the hom-sets
\begin{displaymath}
 \ca{V}\text{-}\Mod(G(S,\ca{A}_X),\Xi_\ca{B})\cong
(\ca{C}\times_{\scriptscriptstyle{\B{Set}}}
\ca{V}\text{-}\B{Cat})((S,\ca{A}_X),U(\Xi,\ca{B}_Y))
\end{displaymath}
for any left $\ca{B}_Y$-module $\Xi$, 
$\ca{V}$-matrix$\SelectTips{eu}{10}
\xymatrix@C=.2in{S:1\ar[r]|-{\object@{|}} & X}$and
$\ca{V}$-category $\ca{A}_X$, as follows.

$(i)$ Given a left $\ca{V}$-module morphism
$\kappa_F:(A\circ S)_\ca{A}\to\Xi_\ca{B}$ with $\ca{V}$-functor
$F_f$ and arrows 
$\kappa_x:\sum_{x'} A(x,x')\otimes S(x')\to\Xi(fx)$
in $\ca{V}$, we can form a pair of morphisms
$(\nu_f:S\to\Xi,F_f)$ in the pullback category,
where $\nu$ in $\ca{C}$ is given by the function $f:X\to Y$
and the composite arrows in $\ca{V}$
\begin{displaymath}
 \nu_x:S(x)\cong I\otimes S(x)\xrightarrow{\eta_x\otimes1}
A(x,x)\otimes S(x)\xrightarrow{\iota}\sum_{x'\in X}
A(x,x')\otimes S(x')\xrightarrow{\kappa_x}\Xi(fx)
\end{displaymath}
where $\eta$ is the unit of the monad $(A,X)$.

$(ii)$ Given a pair of morphisms
$(\sigma_f,F_f)$ in the pullback,
where 
\begin{displaymath}
 \sigma:\SelectTips{eu}{10}\xymatrix@C=.2in
{(1\ar[r]|-{\object@{|}}^S & X})\to
\SelectTips{eu}{10}\xymatrix@C=.2in
{(1\ar[r]|-{\object@{|}}^\Xi & Y})
\end{displaymath}
with function $f:X\to Y$ and arrows
$\sigma_x:S(x)\to\Xi(fx)$ in $\ca{V}$ is 
a morphism in $\ca{C}$,
and $F_f:\ca{A}_X\to\ca{B}_Y$ is a $\ca{V}$-functor,
we can form a left $\ca{V}$-module morphism
$(A\circ S)_\ca{A}\to\Xi_\ca{B}$ with the same
$\ca{V}$-functor $F_f$ and family of arrows
\begin{displaymath}
 \sum_{x\in x}A(x,x')\otimes S(x')
\xrightarrow{\sum F_{x,x'}\otimes\sigma_{x'}}
\sum_{x'\in X} B(fx,fx')\otimes\Xi(fx')
\xrightarrow{\mu^{\Xi}_{fx}}\Xi(fx).
\end{displaymath}

These two directions are inverse to each other,
due to properties of the arrows involved,
and also the bijection is natural, thus 
we established an adjunction
\begin{displaymath}
\xymatrix @C=.7in
 {\ca{C}\times_{\scriptscriptstyle{\B{Set}}}
\ca{V}\text{-}\B{Cat}\ar@<+.8ex>[r]^-G
\ar@{}[r]|-{\bot} & 
\ca{V}\text{-}\Mod\ar@<+.8ex>[l]^-U}
\end{displaymath}
which gives rise to a monad $(GU,U\varepsilon_G,\eta)$
on $\ca{C}\times_{\scriptscriptstyle{\B{Set}}}
\ca{V}\text{-}\B{Cat}$. The $GU$-algebras are precisely
left $\ca{V}$-modules, since by definition they
are objects in $\ca{V}\text{-}_\ca{A}\Mod$ for 
each different $\ca{V}$-category $\ca{A}$,
and the diagram that a morphism between
$GU$-algebras has to satisfy coincides
with (\ref{Vmodmaps}). Thus 
\begin{displaymath}
\big(\ca{C}\times_{\scriptscriptstyle{\B{Set}}}
\ca{V}\text{-}\B{Cat}\big)^{GU}\cong\ca{V}\text{-}\Mod.
\end{displaymath}
Dually, we can show that the forgetful functor
\begin{displaymath}
\xymatrix @R=.05in @C=.7in
{\ca{V}\text{-}\Comod\ar[r] & 
\ca{C}\times_{\scriptscriptstyle{\B{Set}}}\ca{V}\text{-}\B{Cocat} \\
\Phi_\ca{C}\ar@{.>}[r]\ar[dd]_-{\nu_G} & 
(\Phi\;,\;\ca{C}_X)\qquad\ar[dd]^-{(\nu,G_g)} \\
\hole \\
\Omega_\ca{D}\ar@{.>}[r] & (\Omega\;,\;\ca{D}_Y)\qquad}
\end{displaymath}
has a right adjoint, such that the induced 
comonad on 
$\ca{C}\times_{\scriptscriptstyle{\B{Set}}}\ca{V}\text{-}\B{Cocat}$
is essentially the same as the global category of $\ca{V}$-comodules,
hence $\ca{V}$-$\Comod$ is comonadic over the pullback category.
\end{proof}
The above proposition leads to a better
understanding of the structure and properties of the 
global categories. 
For example, $\ca{V}$-$\Mod$ inherits completeness from 
the pullback category $\ca{C}\times_{\scriptscriptstyle{\B{Set}}}
\ca{V}\text{-}\B{Cat}$ when $\ca{V}$ is complete,
and the forgetful functor to $\ca{V}$-$\B{Cat}$ strictly preserves
all limits by construction. Hence by Corollary \ref{AhasPpreserves},
the fibration $N$ of Proposition \ref{VModfibred} has all 
fibred limits, and Proposition \ref{reindexcont}
implies that the reindexing functors 
\begin{equation}\label{continuousreindexfunctors}
(F_f)^*=(f^*\circ\text{-}):\ca{V}\text{-}_\ca{B}\Mod\to\ca{V}\text{-}_\ca{A}\Mod
\end{equation}
for a $\ca{V}$-functor $F_f:\ca{A}_X\to\ca{B}_Y$
preserve limits between the complete fibre categories.

As a further application, the functor
$\bar{K}$ (\ref{defbarK}) between the global categories 
turns out to be an action, in essence because
the functors $K$ and $^c\Hom$ are actions. 
\begin{prop}\label{barKaction}
The functor $\bar{K}$ between the global categories
of $\ca{V}$-modules and $\ca{V}$-comodules is an action,
hence its opposite functor
\begin{displaymath}
 \bar{K}^\op:\ca{V}\text{-}\Comod\times\ca{V}\text{-}\Mod^\op
\to\ca{V}\text{-}\Mod^\op
\end{displaymath}
is an action of the symmetric monoidal category 
$\ca{V}\text{-}\Comod$ on the (ordinary) category
$\ca{V}\text{-}\Mod^\op$.
\end{prop}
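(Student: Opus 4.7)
The plan is to mimic the strategy used in Proposition \ref{Kaction}: exhibit $\bar{K}$ as a restriction of an already-known action, and then transport the action isomorphisms along a conservative functor so that the coherence axioms come for free.

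The ambient action will be the internal hom of the symmetric monoidal closed category $\ca{C}$ of $\ca{V}$-matrices with domain $1$, introduced just before the statement. By Lemma \ref{inthomaction} applied to $\ca{C}$, the functor
\[
{^c\Hom}:\ca{C}^\op\times\ca{C}\longrightarrow\ca{C}
\]
is an action of $\ca{C}^\op$ on $\ca{C}$, equipped with the canonical natural isomorphisms
$\chi:{^c\Hom}(S\otimes T,R)\xrightarrow{\sim}{^c\Hom}(S,{^c\Hom}(T,R))$ and $\nu:{^c\Hom}(\ca{I},R)\xrightarrow{\sim}R$, satisfying the diagrams (\ref{actiondiag}). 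By construction of $\bar{K}$ via the lax functor $\Hom$ of (\ref{defimportHom}) and Proposition \ref{laxfunctorbetweenmodules}, the square
\[
\xymatrix @C=.8in @R=.4in
{\ca{V}\text{-}\Comod^\op\times\ca{V}\text{-}\Mod\ar[r]^-{\bar{K}}\ar[d] &
\ca{V}\text{-}\Mod\ar[d] \\
\ca{C}^\op\times\ca{C}\ar[r]_-{^c\Hom} & \ca{C}}
\]
commutes, where the vertical arrows forget the (co)action structure on the underlying $\ca{V}$-matrix with domain $1$. In other words, $\bar{K}$ is literally a restriction of ${^c\Hom}$ on objects and morphisms.

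First I would check that the tensor product and unit of $\ca{V}\text{-}\Comod$ agree, under the forgetful functor into $\ca{C}$, with the tensor product and unit of $\ca{C}$; this is immediate from the definition of $\otimes$ in $\ca{V}\text{-}\Comod$ in the previous section, together with $\ca{I}$ being common to both. Next I would show that for a left $\ca{C}$-comodule $\Phi$, a left $\ca{D}$-comodule $\Phi'$ and a left $\ca{B}$-module $\Psi$, the canonical isomorphism
\[
\chi_{\Phi,\Phi',\Psi}:{^c\Hom}(\Phi\otimes\Phi',\Psi)\xrightarrow{\sim}{^c\Hom}(\Phi,{^c\Hom}(\Phi',\Psi))
\]
in $\ca{C}$ is a morphism of left $\Hom(\ca{C}\otimes\ca{D},\ca{B})$-modules, where both sides carry the $\Hom(\ca{C}\otimes\ca{D},\ca{B})$-action transported along the corresponding isomorphism of $\ca{V}$-categories $\Hom(\ca{C}\otimes\ca{D},\ca{B})\cong\Hom(\ca{C},\Hom(\ca{D},\ca{B}))$ guaranteed by Proposition \ref{Kaction}. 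Similarly $\nu_\Psi:{^c\Hom}(\ca{I},\Psi)\xrightarrow{\sim}\Psi$ should be compatible with the isomorphism $\Hom(\ca{I},\ca{B})\cong\ca{B}$ in $\ca{V}\text{-}\B{Cat}$.

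The main obstacle, which I expect to be mostly bookkeeping rather than a conceptual difficulty, is checking that these underlying $\ca{C}$-isomorphisms commute with the $\Hom$-module actions; this involves unravelling the action formula written out after diagram (\ref{ModHomcocontinuous}), together with the compatibility of the $\ca{C}$-coactions on $\Phi\otimes\Phi'$ with the comonoid structure on $\Hom(\ca{C},\ca{B})$ induced from $\ca{C}$ and $\ca{D}$. Crucially, once this compatibility is established, the coherence axioms (\ref{actiondiag}) for $\chi$ and $\nu$ as an action of $\ca{V}\text{-}\Comod^\op$ on $\ca{V}\text{-}\Mod$ need not be re-verified: the monadic forgetful functor $\ca{V}\text{-}\Mod\to\ca{C}\times_{\B{Set}}\ca{V}\text{-}\B{Cat}$ of Proposition \ref{monadicoverpullbackcat} is conservative, and composing it with the projection to $\ca{C}$ is conservative enough to reflect the commutativity of the pentagon and triangle axioms from the corresponding diagrams for ${^c\Hom}$ in $\ca{C}$. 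This is the exact same reflection argument used in the last lines of the proof of Proposition \ref{Kaction}. Finally, passing to opposites preserves actions, giving the stated action of the symmetric monoidal $\ca{V}\text{-}\Comod$ on $\ca{V}\text{-}\Mod^\op$.
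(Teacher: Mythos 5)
Your proposal is correct and follows essentially the same route as the paper: both exhibit $\bar{K}$ as a restriction of the internal-hom action ${^c\Hom}$ of $\ca{C}$ paired with the action $K$ from Proposition \ref{Kaction}, and then use the monadic (hence conservative) forgetful functor of Proposition \ref{monadicoverpullbackcat} to lift the structure isomorphisms and reflect the coherence diagrams (\ref{actiondiag}). The only difference is presentational — you flag explicitly the bookkeeping check that the $\ca{C}$-isomorphisms are module morphisms, which the paper passes over more quickly.
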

\begin{proof}
As seen in Section \ref{actions}, we need
natural isomorphisms with components
$\bar{K}(\Phi_\ca{C}\otimes\Omega_\ca{D},\Psi_\ca{A})\xrightarrow{\sim}
\bar{K}(\Phi_\ca{C},\bar{K}(\Omega_\ca{D},\Psi_\ca{A}))$ and 
$\bar{K}(\B{1},\Psi_\ca{A})\xrightarrow{\sim}\Psi_\ca{A}$
for $\ca{V}$-comodules $\Phi_\ca{C}$, $\Omega_\ca{D}$ and 
$\ca{V}$-modules $\Psi_\ca{A}$ in the global
category $\ca{V}$-$\Mod$.
By definition of the functor $\bar{K}$, these 
are in fact of the form
\begin{align*}
 \Hom(\Phi\otimes\Omega,\Psi)
_{{\Hom(\ca{C}\otimes\ca{D},\ca{A})}_{Z^{X\times Y}}}&\cong
\Hom(\Phi,\Hom(\Omega,\Psi))
_{{\Hom(\ca{C},\Hom(\ca{D},\ca{A}))}_{Z^{Y^X}}} \\
\Hom(\B{1},\Psi)_{{\Hom(\ca{I},\ca{A})}_{X^1}}&\cong
\Psi_{\ca{A}_X}
\end{align*}
where $\Hom$ is given
by the product (\ref{Homobjects}) in $\ca{V}$.
Now, the functors 
\begin{align*}
 K:\ca{V}\text{-}\B{Cocat}^\op\times\ca{V}\text{-}\B{Cat}
&\to\ca{V}\text{-}\B{Cat} \\
^c\Hom(-,-):\ca{C}^\op\times\ca{C}&\to\ca{C}
\end{align*}
are actions, the former by Proposition \ref{Kaction} and the latters 
as the internal hom of $\ca{C}$.
Thus we have isomorphisms
\begin{align*}
 \Hom(\ca{C}\otimes\ca{D},\ca{A})&\cong\Hom(\ca{C},\Hom(\ca{D},\ca{A})),\;\;
\Hom(\ca{I},\ca{A})\cong\ca{A}\;\textrm{ in }\;\ca{V}\text{-}\B{Cat} \\
\Hom(\Phi\otimes\Omega,\Psi)&\cong\Hom(\Phi,\Hom(\Omega,\Psi)),\;\;
\Hom(\B{1},\Psi)\cong\Psi\;\textrm{ in }\;\ca{C}
\end{align*}
for the two actions (notice they have the same mapping on objects).
If we place these in pairs, they form natural isomorphisms in the 
pullback category $\ca{C}\times_{\scriptscriptstyle{\B{Set}}}\ca{V}\text{-}\B{Cat}$
for the chosen (co)modules over (co)categories.
Since the forgetful functor from $\ca{V}\text{-}\Mod$ is monadic, it
reflects all isomorphisms so these pairs lift to the required 
invertible arrows in $\ca{V}$-$\Mod$.
Moreover, the diagrams (\ref{actiondiag})
commute because they do for all objects
of $\ca{C}$ and the arrows involved are in $\ca{V}$-$\Mod$.
\end{proof}
We aim to establish
an enrichment of $\ca{V}$-$\Mod$ in $\ca{V}$-$\Comod$
by employing the theory of actions, and in particular
Theorem \ref{actionenrich}.
This process is in line with the ones which
led to the enrichment of $\Mon(\ca{V})$ in $\Comon(\ca{V})$
in Section \ref{Universalmeasuringcomonoid}, of
$\Mod$ in $\Comod$ in Section 
\ref{Universalmeasuringcomodule} and 
of $\ca{V}$-$\B{Cat}$ in $\ca{V}$-$\B{Cocat}$
in Section \ref{enrichmentofVcatsinVcocats}.
Therefore, we need to show the existence of a parametrized adjoint 
of the action bifunctor $\bar{K}$, which will be 
the enriched hom functor of the ($\ca{V}$-$\Comod$)-enriched
category with underlying category $\ca{V}$-$\Mod$.
The theory of fibrations and opfibrations will be again
of central importance, and so we 
begin with some lemmas which are helpful
for the application of the main Theorem 
\ref{totaladjointthm}.
\begin{lem}\label{lemmaopfibred1cell2}
 The diagram
\begin{displaymath}
\xymatrix @C=1in @R=.55in
{\ca{V}\textrm{-}\Comod
\ar[r]^-{\bar{K}(-,\Psi_{\ca{B}})^\op}
\ar[d]_-H & \ca{V}\textrm{-}\Mod^\op
\ar[d]^-{N^\op} \\
\ca{V}\text{-}\B{Cocat}\ar[r]_-{K(-,\ca{B}_Y)^\op} & 
\ca{V}\text{-}\B{Cat}^\op}
\end{displaymath}
exhibits the pair of functors 
$(\bar{K}(-,\Psi_{\ca{B}})^\op,K(-,\ca{B}_Y)^\op)$
as an opfibred 1-cell between the opfibrations
$H$ and $N^\op$.
\end{lem}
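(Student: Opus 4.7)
The plan is to verify two things: first, that the square commutes up to equality on objects and on morphisms, and second, that the top functor $\bar{K}(-,\Psi_\ca{B})^\op$ sends cocartesian morphisms of $H$ to cocartesian morphisms of $N^\op$, equivalently that $\bar{K}(-,\Psi_\ca{B})$ preserves cartesian morphisms into the fibration $N$. Commutativity on objects is immediate from the definition of $\bar{K}$: for a left $\ca{C}_X$-comodule $\Phi_\ca{C}$, the value $\bar{K}(\Phi_\ca{C},\Psi_\ca{B}) = \Hom(\Phi,\Psi)_{\Hom(\ca{C},\ca{B})}$ lives over $\Hom(\ca{C},\ca{B}) = K(\ca{C}_X,\ca{B}_Y)$, exactly what $K(-,\ca{B}_Y)^\op \circ H$ produces. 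The corresponding compatibility on morphisms follows from the construction of $\bar{K}$ on $(\nu,\kappa)$, whose underlying $\ca{V}$-functor component is precisely $K(G_g, 1_{\ca{B}_Y})$ when $\kappa=1$.

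For the second part, I would use the canonical cleavage of $H$ obtained from the Grothendieck construction applied to $\ps{S}$ in Proposition \ref{VComodopfibred}: a cocartesian lifting of $\Phi_\ca{C}$ along a $\ca{V}$-cofunctor $G_g:\ca{C}_X \to \ca{D}_Y$ has the form $(1_{g_*\circ\Phi}, G_g): \Phi_\ca{C} \to (g_*\circ\Phi)_\ca{D}$. Its image under $\bar{K}(-,\Psi_\ca{B})$ is a morphism $\Hom(g_*\Phi,\Psi)_{\Hom(\ca{D},\ca{B})} \to \Hom(\Phi,\Psi)_{\Hom(\ca{C},\ca{B})}$ in $\ca{V}\text{-}\Mod$, lying over the $\ca{V}$-functor $K(G_g,1_{\ca{B}_Y})$, whose underlying function is $Y^g:Y^Y \to Y^X$. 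On the other hand, by the description of $N$ via $\ps{H}$ in Proposition \ref{VModfibred}, the canonical cartesian lifting of $\Hom(\Phi,\Psi)_{\Hom(\ca{C},\ca{B})}$ along $K(G_g,1_{\ca{B}_Y})$ is the identity arrow on $(Y^g)^*\Hom(\Phi,\Psi)$ in the fibre $\ca{V}\text{-}_{\Hom(\ca{D},\ca{B})}\Mod$.

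To conclude, I would exhibit an isomorphism of left $\Hom(\ca{D},\ca{B})$-modules between $\Hom(g_*\Phi,\Psi)$ and $(Y^g)^*\Hom(\Phi,\Psi)$. At the level of the underlying $\ca{V}$-matrices with domain $1$ and codomain $Y^Y$, for each $t\in Y^Y$ the computation
\begin{displaymath}
\Hom(g_*\Phi,\Psi)(t) = \prod_{y\in Y}\bigl[\textstyle\sum_{y=gx} I\otimes \Phi(x),\, \Psi(ty)\bigr]
\cong \prod_{y\in Y}\prod_{y=gx}[I\otimes \Phi(x),\Psi(ty)]
\cong \prod_{x\in X}[\Phi(x),\Psi(tgx)]
\end{displaymath}
and the parallel computation of $(Y^g)^*\Hom(\Phi,\Psi)(t) = I \otimes \prod_{x\in X}[\Phi(x),\Psi(tgx)]$ give isomorphic objects of $\ca{V}$, using monoidal closedness ($[\sum,-]\cong\prod[-,-]$) together with the unit constraints $l,r$ and $\zeta,\xi$ from Lemma \ref{isosofstars}. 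This mirrors exactly the argument of Lemma \ref{partialKopfibred1cell}.

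The main obstacle I expect is verifying that the $\ca{V}$-matrix isomorphism constructed above lifts to an isomorphism in the fibre category of left $\Hom(\ca{D},\ca{B})$-modules, that is, respects the respective module structures. The action on $\Hom(g_*\Phi,\Psi)$ is the one provided by $\Mod(\Hom_{(1,1),(Y,Y)})$ applied to the comodule $g_*\Phi$, whereas the action on $(Y^g)^*\Hom(\Phi,\Psi)$ arises from reindexing along $K(G_g,1_{\ca{B}_Y})$ using Lemma \ref{Xi*module}. Compatibility should follow from naturality of $\delta$ and $\gamma$ in the lax functor $\Hom$ together with the cofunctor axioms for $G_g$; alternatively, since the forgetful functor $\ca{V}\text{-}_{\Hom(\ca{D},\ca{B})}\Mod \to \ca{V}\text{-}\Mat(1,Y^Y)$ is monadic and hence conservative, one reduces to checking a diagram entirely at the level of $\ca{V}$-matrices, which is a routine (though bulky) instance of the tensor-hom adjunction together with the commutativity of (\ref{ModHomcocontinuous}) restricted appropriately. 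The splitness of the cleavages moreover ensures that $\bar{K}(-,\Psi_\ca{B})^\op$ preserves the chosen cocartesian liftings on the nose.
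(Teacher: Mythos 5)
Your proposal follows essentially the same route as the paper's proof: commutativity is read off from the definition of $\bar{K}$, and cocartesianness is checked by comparing the image of the canonical cocartesian lifting of $H$ with the canonical cartesian lifting of $N$, exhibiting the vertical isomorphism $\Hom(g_*\Phi,\Psi)\cong (Y^g)^*\Hom(\Phi,\Psi)$ via $[\textstyle\sum,-]\cong\prod[-,-]$ and the unit constraints, exactly as in the paper. The only caveat is your closing claim that the chosen liftings are preserved \emph{on the nose}: the comparison isomorphism involves the unit constraint $l$ of $\ca{V}$ and is therefore not an identity, but this is harmless since an opfibred 1-cell only requires cocartesian arrows to be sent to cocartesian arrows, which holds up to composition with a vertical isomorphism.
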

\begin{proof}
The fact that this diagram commutes can be easily verified. For
example, we already know that 
\begin{gather*}
K(\ca{C}_X,\ca{B}_Y)=\Hom_{(X,Y),(X,Y)}(\ca{C},\ca{B})_{Y^X}\;\textrm{ and } \\
\bar{K}(\Phi_\ca{C},\Psi_\ca{B})=
\Hom_{(1,1),(X,Y)}(\Phi,\Psi)_{K(\ca{C}_X,\ca{B}_Y)}
\end{gather*}
by definition of the two functors, which clearly implies
that the $\ca{V}$-category action on some
$\bar{K}(\Phi,\Psi)$ is precisely $K(\ca{C},\ca{B})$
for the $\ca{V}$-cocategory and $\ca{V}$-category 
which act on the initial $\ca{V}$-comodule and module.

We now have to show that the functor 
$\bar{K}(-,\Psi_\ca{B})^\op$ is cocartesian, \emph{i.e.}
maps a cocartesian lifting in $\ca{V}$-$\Comod$ to 
a cartesian lifting in $\ca{V}$-$\Mod$, since it is
contravariant. By Proposition \ref{VComodopfibred},
we know that $H$ is isomorphic to the opfibration
which arose via the Grothendieck construction
on the pseudofunctor $\ps{S}$, hence the
canonical cocartesian lifting
$\Cocart(F_f,\Phi_\ca{C}):\Phi_\ca{C}\to(F_!\Phi)_\ca{D}$
is
\begin{equation}\label{thiscocartesianlifting}
\xymatrix @C=.4in
{\Phi\ar[rrr]^-{1_{f_*\Phi}}
\ar @{.>}[d] &&& f_*\Phi 
\ar @{.>}[d] &\textrm{in }\ca{V}\text{-}\Comod \\
\ca{C}_X\ar[rrr]_-{F_f} &&& \ca{D}_Z & 
\textrm{in }\ca{V}\text{-}\B{Cocat}}
\end{equation}
since $\ps{S}(F_f)=(f_*\circ-)$ is the reindexing
functor. Notice that the pair notation
of objects and arrows of the Grothendieck category
is again dropped, because it is clear
from the diagram where each element is 
mapped via the opfibration. 

If we apply the functor 
$\bar{K}(-,\Psi_\ca{B})$, we get the arrow
$\bar{K}((1_{f_*\Phi},F_f),1)$
with domain $\Hom((f_*\Phi)_\ca{D},\Psi_\ca{B})$,
whereas the canonical cartesian lifting 
of $\Hom(\Phi_\ca{C},\Psi_\ca{B})$ along
the $\ca{V}$-functor $K(F,1)$ is 
\begin{displaymath}
\xymatrix @C=.3in @R=.41in
{(Y^f)^*\Hom(\Phi,\Psi)
\ar[rrr]^-{1_{\scriptscriptstyle{(Y^f)^*\Hom(\Phi,\Psi)}}}
\ar @{.>}[d] &&& \Hom(\Phi,\Psi) 
\ar @{.>}[d] & \textrm{in }\ca{V}\text{-}\Mod \\
\Hom(\ca{D},\ca{B})_{Y^Z}\ar[rrr]_-{K(F_f,1)} &&&
\Hom(\ca{C},\ca{B})_{Y^X} & 
\textrm{in }\ca{V}\text{-}\B{Cat}.}
\end{displaymath}
This is the case because by Proposition \ref{VModfibred}, 
the reindexing
functor of the isomorphic fibration coming from the 
pseudofunctor $\ps{H}$ is $\ps{H}(G_g)=(g^*\circ-)$.

For the image of (\ref{thiscocartesianlifting})
under $\bar{K}(-,\Psi)$
to be a cartesian arrow then, we have to show
that the canonical arrow between
the domains of the two arrows in $\ca{V}$-$\Mod$
is a vertical isomorphism.
By definition of the operations involved,
the domain of the canonical
cartesian lifting is a family of objects in $\ca{V}$
\begin{displaymath}
 (Y^f)^*\Hom(\Phi_\ca{C},\Psi_\ca{D})(k)=
I\otimes\prod\limits_{x\in X}[\Phi(x),\Psi(kfx)]
\end{displaymath}
for all $k\in Y^Z$, and the domain of the image of 
the cocartesian arrow in $\ca{V}$-$\Comod$
\begin{gather*}
 \Hom(f_*\Phi,\Psi)(k)=\prod\limits_{z\in Z}[(f_*\Phi)(z),\Psi(kz)]=
\prod_{z}[\sum_{x\in f^{\text{-}1}z}I\otimes\Phi(x),\Psi(kz)] \\
\qquad\qquad\qquad\cong\prod_{\stackrel{z\in Z}{\scriptscriptstyle{z\text{=}fx}}}
[I\otimes\Phi(x),\Psi(kz)]=
\prod_{x}[I\otimes\Phi(x),\Psi(kfx)]
\end{gather*}
since the internal hom
maps colimits to limits on the first variable. 
Thus the isomorphism is
\begin{displaymath}
 \prod_{x}[I\otimes\Phi(x),\Psi(kfx)]\xrightarrow{\;\prod[l,1]\;}
\prod_{x}[\Phi(x),\Psi(kfx)]\xrightarrow{\;l^{\text{-}1}\;}
I\otimes\prod_{x}[\Phi(x),\Psi(kfx)]
\end{displaymath}
for $l$ the left unit constraint of $\ca{V}$,
thus $\bar{K}(-,\Psi)^\op$ is a cocartesian functor.
\end{proof}
\begin{lem}\label{compositehasadjoint}
Suppose $\Psi_\ca{B}$ is a $\ca{V}$-module 
and $\ca{A}_Z$, $\ca{B}_Y$ are $\ca{V}$-categories.
If $\tilde{\varepsilon}$ is the counit of the adjunction
\begin{displaymath}
 \xymatrix @C=.7in
{\ca{V}\textrm{-}\B{Cocat}\ar@<+.8ex>[r]^-{K(-,\ca{B}_Y)^\op}
\ar@{}[r]|-{\bot} & \ca{V}\textrm{-}\B{Cat}^\op\ar@<.8ex>[l]^-{T(-,\ca{B}_Y)}}
\end{displaymath}
which defines the generalized Sweedler hom functor $T$,
the composite functor\vspace{.08in}
\begin{equation}\label{thatcomposite}
 \ca{V}\text{-}\Comod_{\scriptscriptstyle{T(\ca{A},\ca{B})}}
\xrightarrow{\;\bar{K}(-,\Psi)^\op\;}\ca{V}
\text{-}\Mod^\op_{\scriptscriptstyle{K(T(\ca{A},\ca{B}),\ca{B})}}
\xrightarrow{\;(\tilde{\varepsilon}_\ca{A})_!\;}\ca{V}\text{-}\Mod^\op_{\scriptscriptstyle{\ca{A}}}
\end{equation}
has a right adjoint $\bar{T}_0(-,\Psi_\ca{B})$.
\end{lem}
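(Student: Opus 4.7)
The plan is to follow the same pattern as Lemma \ref{lemmaspecialadjun}, applying Kelly's adjoint functor theorem (Theorem \ref{Kelly}) to the composite (\ref{thatcomposite}) viewed as a single cocontinuous functor with locally presentable domain. First I would note that the domain $\ca{V}\text{-}\Comod_{T(\ca{A},\ca{B})} = \ca{V}\text{-}_{T(\ca{A},\ca{B})}\Comod$ is locally presentable by Proposition \ref{propfibresVmodcomod}(2), and hence in particular cocomplete with a small dense subcategory (the $\lambda$-presentable objects). It then suffices to establish cocontinuity of the composite, which splits into two independent cocontinuity checks, one for each factor.

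For the second factor $(\tilde{\varepsilon}_\ca{A})_!$, the reasoning is purely fibrational: after Proposition \ref{monadicoverpullbackcat} we observed that $\ca{V}$-$\Mod$ inherits completeness from the pullback category $\ca{C}\times_{\scriptscriptstyle{\B{Set}}}\ca{V}\text{-}\B{Cat}$ and that the fibration $N$ strictly preserves these limits, so Corollary \ref{AhasPpreserves} gives $N$ all fibred limits, and then Proposition \ref{reindexcont} yields that each reindexing functor $(F_f)^*$ of $N$ is continuous between the complete fibres (this is precisely (\ref{continuousreindexfunctors})). Passing to the opposite opfibration $N^\op$, the reindexing functors $F_!$ become cocontinuous, and $(\tilde{\varepsilon}_\ca{A})_!$ is one of them.

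For the first factor, I would use the commutative square (\ref{ModHomcocontinuous}), specialized so that $\ca{C}$ is the $\ca{V}$-cocategory $T(\ca{A},\ca{B})$ acting on an arbitrary $\ca{V}$-comodule in the fibre. The vertical forgetful functors are respectively comonadic (from $\ca{V}\text{-}_{T(\ca{A},\ca{B})}\Comod$) and monadic (from $\ca{V}\text{-}_{K(T(\ca{A},\ca{B}),\ca{B})}\Mod$), so they create, in particular detect, colimits. The bottom arrow $\Hom_{(1,1),(X,Y)}(-,\Psi)^\op$ is precisely the internal hom in the first variable of the symmetric monoidal closed category $\ca{C}$ described just before Proposition \ref{monadicoverpullbackcat}, and by the standard adjunction $^c\Hom(-,\Psi)^\op \dashv {^c\Hom}(-,\Psi)$ in a symmetric monoidal closed category (the analogue of (\ref{adjunctionopinthom}) for $\ca{C}$), it is a left adjoint and therefore cocontinuous. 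Chasing through the commutative square then shows the restriction of $\bar{K}(-,\Psi)^\op$ to the fibre is cocontinuous as required.

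Combining these two pieces, the composite (\ref{thatcomposite}) is cocontinuous with locally presentable domain, so Theorem \ref{Kelly} produces the desired right adjoint $\bar{T}_0(-,\Psi_\ca{B})$. The main subtlety, and essentially the only nontrivial step, is identifying the bottom functor in (\ref{ModHomcocontinuous}) with $^c\Hom(-,\Psi)^\op$ in the auxiliary category $\ca{C}$ and invoking its symmetric closed structure to conclude it is cocontinuous; once this is observed, everything else is a direct application of results already in place.
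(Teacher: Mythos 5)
Your proposal is correct and follows essentially the same route as the paper: the author likewise views (\ref{thatcomposite}) as a composite of fibre functors, deduces cocontinuity of the $\Mod(\Hom)$ factor from the commutative square (\ref{ModHomcocontinuous}) with its (co)monadic verticals and closed-structure bottom arrow, deduces cocontinuity of the reindexing $(\tilde{\varepsilon}_\ca{A})_!$ from (\ref{continuousreindexfunctors}), and concludes via local presentability of $\ca{V}\text{-}_{T(\ca{A},\ca{B})}\Comod$ and Theorem \ref{Kelly}. The only cosmetic difference is that you make explicit the identification of the bottom arrow with $^c\Hom(-,\Psi)^\op$ in the auxiliary category $\ca{C}$, which the paper leaves implicit.
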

\begin{proof}
By Proposition \ref{Texistence}, the functor $T$ was 
defined as the parametrized adjoint of $K^\op$
and was retrieved by Proposition \ref{propexistenceS2}, 
where it was also shown that the underlying set of objects of 
the $\ca{V}$-cocategory $T(\ca{A}_X,\ca{B}_Y)$ is $Y^X$.
The composite in question consists of functors between fibre
categories, and we can view as 
\begin{displaymath}
 \xymatrix @C=1.5in @R=.5in
{\ca{V}\text{-}_{\scriptscriptstyle{S(\ca{A},\ca{B})}}\Comod
\ar@{-->}[dr] \ar[r]^-{\Mod(\Hom_{(1,1),(Y^Z,Y)})^\op} & 
\ca{V}\text{-}_{\scriptscriptstyle{K(S(\ca{A},\ca{B}),\ca{B})}}\Mod^\op
\ar[d]^-{\ps{H}^\op(\tilde{\varepsilon}_{\varepsilon})} \\
& \ca{V}\text{-}_{\scriptscriptstyle{\ca{A}}}\Mod^\op}
\end{displaymath}
where $\varepsilon:{Y^Y}^Z\to Z$ is the counit of the exponential
adjunction (\ref{exponentialadjunction}).
The functor $\Mod(\Hom)$
is continuous by the commutative diagram 
(\ref{ModHomcocontinuous}) for a fixed variable, and so is
the reindexing 
functor $(\tilde{\varepsilon}_\varepsilon)^*$ as in 
(\ref{continuousreindexfunctors}). 
Therefore
the above composite of the opposite functors is cocontinuous.
Since $\ca{V}\text{-}_{\scriptscriptstyle{T(\ca{A},\ca{B})}}\Comod$
is a locally presentable category by Proposition 
\ref{propfibresVmodcomod}, it is cocomplete
and it has a small dense subcategory. Thus, the cocontinuous
composite (\ref{thatcomposite}) has a right adjoint.
\end{proof}
All conditions of Lemma \ref{totaladjointlem}
are now satisfied, hence the existence of 
a pa\-ra\-me\-tri\-zed adjoint of $\bar{K}$ (more precisely, 
of its opposite functor) can be established 
as follows.
\begin{prop}\label{barKhasadjoint}
The functor $\bar{K}^\op:\ca{V}\text{-}\Comod\times
\ca{V}\text{-}\Mod^\op\to\ca{V}\text{-}\Mod^\op$
has a parametrized adjoint 
\begin{equation}\label{defbarS}
 \bar{T}:\ca{V}\text{-}\Mod^\op\times\ca{V}\text{-}\Mod
 \to\ca{V}\text{-}\Comod
\end{equation}
which makes the following diagram of categories and 
functors serially commute:
\begin{equation}\label{thissquare2}
 \xymatrix @R=.65in @C=1in 
{\ca{V}\text{-}\Comod \ar @<+.8ex>[r]^-{\bar{K}(-,\Psi_\ca{B})^\mathrm{op}}
\ar@{}[r]|-\bot \ar[d]_-H &
\ca{V}\text{-}\Mod^\op\ar @<+.8ex>[l]^-{\bar{T}(-,\Psi_\ca{B})}
\ar[d]^-{N^\op} \\ 
\ca{V}\text{-}\B{Cocat} \ar@<+.8ex>[r]^-{K(-,\ca{B}_Y)^\op} 
\ar@{}[r]|-\bot &
\ca{V}\text{-}\Mod^\op. \ar @<+.8ex>[l]^-{T(-,\ca{B}_Y)}}
\end{equation}
\end{prop}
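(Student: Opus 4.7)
The plan is to deploy Theorem \ref{totaladjointthm} to the opfibred 1-cell already identified in Lemma \ref{lemmaopfibred1cell2}, combined with the base adjunction of Proposition \ref{Texistence} and the fibrewise right adjoint of Lemma \ref{compositehasadjoint}. First, I would observe that all three hypotheses of Theorem \ref{totaladjointthm} are in place for fixed $\Psi_\ca{B}$: the pair $(\bar{K}(-,\Psi_\ca{B})^\op, K(-,\ca{B}_Y)^\op)$ is an opfibred 1-cell between the opfibrations $H$ and $N^\op$; there is an adjunction $K(-,\ca{B}_Y)^\op \dashv T(-,\ca{B}_Y)$ on the bases with counit $\tilde{\varepsilon}$; and for each $\ca{V}$-category $\ca{A}_X$ the composite functor between the fibres
\[
 \ca{V}\text{-}_{T(\ca{A},\ca{B})}\Comod \xrightarrow{\bar{K}(-,\Psi)^\op_{T(\ca{A},\ca{B})}} \ca{V}\text{-}_{K(T(\ca{A},\ca{B}),\ca{B})}\Mod^\op \xrightarrow{(\tilde{\varepsilon}_\ca{A})_!} \ca{V}\text{-}_\ca{A}\Mod^\op
\]
admits a right adjoint, as verified in Lemma \ref{compositehasadjoint}.

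Second, Theorem \ref{totaladjointthm} then immediately yields a right adjoint $\bar{T}(-,\Psi_\ca{B}): \ca{V}\text{-}\Mod^\op \to \ca{V}\text{-}\Comod$ of $\bar{K}(-,\Psi_\ca{B})^\op$ between the total categories, and furthermore certifies that the square (\ref{thissquare2}) serially commutes, giving an adjunction of the pair of 1-cells in $\B{Cat}^\B{2}$. By unfolding the construction exactly as in the proof of Lemma \ref{totaladjointlem}, one reads off that $\bar{T}(\Xi_\ca{A},\Psi_\ca{B})$ is a $\ca{V}$-comodule over the generalized Sweedler hom $T(\ca{A}_X,\ca{B}_Y)$ with underlying $\ca{V}$-matrix indexed by $Y^X$.

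Third, to upgrade the family of adjunctions indexed by $\Psi_\ca{B}$ into a genuine parametrized adjoint, I would invoke the `adjunctions with a parameter' Theorem \ref{parametrizedadjunctions}. By construction, the natural bijection
\[
 \ca{V}\text{-}\Mod^\op(\bar{K}(\Phi_\ca{C},\Psi_\ca{B})^\op,\Xi_\ca{A}) \;\cong\; \ca{V}\text{-}\Comod(\Phi_\ca{C},\bar{T}(\Xi_\ca{A},\Psi_\ca{B}))
\]
is natural in $\Phi$ and $\Xi$; the parameter theorem then uniquely equips the assignment $(\Xi_\ca{A},\Psi_\ca{B}) \mapsto \bar{T}(\Xi_\ca{A},\Psi_\ca{B})$ with the structure of a bifunctor (\ref{defbarS}) for which this isomorphism becomes natural in all three variables, realizing $\bar{T}$ as the parametrized adjoint of $\bar{K}^\op$.

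The main obstacle has in fact been largely discharged by the preparatory work: the two non-trivial verifications are cocartesianness of $\bar{K}(-,\Psi_\ca{B})^\op$ (Lemma \ref{lemmaopfibred1cell2}) and cocontinuity of the fibrewise composite on the locally presentable $\ca{V}\text{-}_{T(\ca{A},\ca{B})}\Comod$ so that Kelly's adjoint functor theorem applies (Lemma \ref{compositehasadjoint}). With these secured, what remains is essentially bookkeeping within the machinery of Chapter \ref{fibrations}. Note also that only the first half of Theorem \ref{totaladjointthm} is needed here, since the proposition merely asserts the existence of a parametrized adjoint between the total categories and not, for instance, that $\bar{T}$ is itself cocartesian or that the whole diagram lifts to an adjunction in $\B{OpFib}$; the stronger statement would require checking invertibility of the mate (\ref{KGh*nu}), which is not claimed.
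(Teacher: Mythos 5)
Your proposal is correct and follows essentially the same route as the paper: Lemma \ref{lemmaopfibred1cell2} for the opfibred 1-cell, the base adjunction defining $T$, Lemma \ref{compositehasadjoint} for the fibrewise right adjoints, Theorem \ref{totaladjointthm} for the total adjoint and the adjunction in $\B{Cat}^\B{2}$, and the parametrized-adjunction argument to assemble $\bar{T}$ into a bifunctor. Your closing observation that only the first half of Theorem \ref{totaladjointthm} is needed (no cocartesianness of $\bar{T}$ is claimed) accurately reflects what the paper does.
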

\begin{proof}
We have an opfibred 1-cell 
$(\bar{K}(-,\Psi_{\ca{B}})^\op,K(-,\ca{B}_Y)^\op)$
between the opfibrations $H$ and $N^\op$
by Lemma \ref{lemmaopfibred1cell2}, and an 
adjunction (\ref{adjunctionKSpartial})
between the base categories of 
the opfibrations.
Also, by Lemma \ref{compositehasadjoint},
we have an adjunction
\begin{displaymath}
\xymatrix @C=1.3in
{\ca{V}\text{-}_{T(\ca{A},\ca{B})}\Comod
\ar @<+.8ex>[r]^-{(\tilde{\varepsilon}_\ca{A})_!\circ
\bar{K}^\op_{\scriptscriptstyle{T(\ca{A},\ca{B})}}(-,\Psi)}
\ar@{}[r]|-\bot
& \ca{V}\text{-}_\ca{A}\Mod^\op
\ar @<+.8ex>[l]^-{\bar{T}_\ca{A}(-,\Psi)}}
\end{displaymath}
for any $\ca{V}$-category $\ca{A}$.
By Theorem \ref{totaladjointthm}, these data suffice
for the existence of a right adjoint
\begin{displaymath}
 \bar{T}(-,\Psi_\ca{B}):\ca{V}\text{-}\Comod\to
 \ca{V}\text{-}\Mod^\op
\end{displaymath}
of the functor $\bar{K}^\op(-,\Psi_\ca{B})$
between the total categories of the 
opfibrations, with $\bar{T}_\ca{A}(-,\Psi)$
its mapping on objects.
By construction of this adjoint,
the opfibrations $H$ and $N^\op$ constitute
a map of adjunctions, thus (\ref{thissquare2}) 
is an adjunction in $\B{Cat}^2$.
Moreover, since we have adjunctions 
$\bar{K}^\op(-,\Psi)\dashv\bar{T}(-,\Psi)$
for all left $\ca{B}_Y$-modules
$\Psi$, there is a unique way to make $\bar{T}$
into a functor of two variables as in (\ref{defbarS}).
This determines a parametrized adjoint of $\bar{K}^\op$
and the proof is complete.
\end{proof}
Notice that by construction of $\bar{T}$, the 
$\ca{V}$-comodule $\bar{T}(\Omega_\ca{A},\Psi_\ca{B})$
is a $\ca{V}$-matrix with codomain the set
$Y^X$, and a left $T(\ca{A}_X,\ca{B}_Y)$-action.
This object evidently generalizes the universal measuring
comodule of Proposition \ref{propmeasuringcomodule}.

Using a similar series of arguments, we can also 
deduce that the global category of enriched
comodules is a monoidal closed category,
under assumptions which allow the category
of $\ca{V}$-cocategories to be monoidal closed.
\begin{prop}\label{VModclosed}
Suppose that $\ca{V}$ is a locally presentable
symmetric monoidal closed category.
The global category of left $\ca{V}$-comodules
$\ca{V}$-$\Comod$ is a monoidal closed category too.
\end{prop}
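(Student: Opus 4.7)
The plan is to closely mirror the proof of Proposition \ref{Comodclosed}, replacing the one-object setting with the enriched setting and using the fibrational machinery of Theorem \ref{totaladjointthm} in place of the simpler Kelly adjoint functor theorem. Concretely, I want to exhibit, for each fixed $\ca{V}$-comodule $\Phi_\ca{C}$, a right adjoint $\overline{\HOM}(\Phi_\ca{C},-)$ of the tensor functor $(-\otimes\Phi_\ca{C}):\ca{V}\text{-}\Comod\to\ca{V}\text{-}\Comod$, and then invoke `adjunctions with a parameter' (Theorem \ref{parametrizedadjunctions}) to assemble these into a bifunctor $\overline{\HOM}:\ca{V}\text{-}\Comod^\op\times\ca{V}\text{-}\Comod\to\ca{V}\text{-}\Comod$ serving as the internal hom.

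First I would show that the square
\begin{displaymath}
\xymatrix @C=.5in @R=.45in
{\ca{V}\text{-}\Comod\ar[r]^-{(-\otimes\Phi_\ca{C})}\ar[d]_-H & \ca{V}\text{-}\Comod\ar[d]^-H \\
\ca{V}\text{-}\B{Cocat}\ar[r]_-{(-\otimes\ca{C}_X)} & \ca{V}\text{-}\B{Cocat}}
\end{displaymath}
is an opfibred $1$-cell between the opfibration $H$ of Proposition \ref{VComodopfibred} and itself. Commutativity is immediate from the definition of the tensor product in $\ca{V}\text{-}\Comod$, and the verification that $(-\otimes\Phi_\ca{C})$ is cocartesian is a direct transcription of the argument for (\ref{otimesopfibred1cell}): the canonical cocartesian lifting of $\Omega$ along a $\ca{V}$-cofunctor $G_g$ tensored on the right with $\Phi_\ca{C}$ lifts canonically to the cocartesian lifting of $\Omega\otimes\Phi$ along $g\times1_X$, since the $\ca{V}$-matrix data on both sides coincide. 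Second, by Proposition \ref{VCocatclosed}, the base functor $(-\otimes\ca{C}_X)$ has a right adjoint $^g\HOM(\ca{C}_X,-)$, so the adjunction between bases is in place.

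Third, and this is the main obstacle, I need to show that for every $\ca{V}$-cocategory $\ca{D}$ the composite on fibres
\begin{displaymath}
\ca{V}\text{-}_{^g\HOM(\ca{C},\ca{D})}\Comod\xrightarrow{(-\otimes\Phi_\ca{C})}\ca{V}\text{-}_{^g\HOM(\ca{C},\ca{D})\otimes\ca{C}}\Comod\xrightarrow{(\varepsilon_\ca{D})_!}\ca{V}\text{-}_\ca{D}\Comod
\end{displaymath}
admits a right adjoint, where $\varepsilon$ is the counit of the adjunction (\ref{adjunctionHOMg}). The fibres are locally presentable by Proposition \ref{propfibresVmodcomod}, hence cocomplete with small dense subcategories, so by Theorem \ref{Kelly} it suffices to show that the composite is cocontinuous. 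Reindexing functors in the opfibration $H$ always preserve colimits by Corollary \ref{VComodcomplete} together with Proposition \ref{reindexcont}, so the real content is in cocontinuity of $(-\otimes\Phi_\ca{C})$. For this I would invoke the commutative diagram
\begin{displaymath}
\xymatrix @C=.6in @R=.45in
{\ca{V}\text{-}\Comod\ar[r]^-{(-\otimes\Phi_\ca{C})}\ar[d] & \ca{V}\text{-}\Comod\ar[d] \\
\ca{C}\times_{\B{Set}}\ca{V}\text{-}\B{Cocat}\ar[r]_-{(-\otimes\Phi)\times(-\otimes\ca{C})} & \ca{C}\times_{\B{Set}}\ca{V}\text{-}\B{Cocat}}
\end{displaymath}
where the vertical legs are comonadic by Proposition \ref{monadicoverpullbackcat} and thus create all colimits, and the bottom arrow is cocontinuous since $\ca{C}$ and $\ca{V}\text{-}\B{Cocat}$ are both monoidal closed (the latter by Proposition \ref{VCocatclosed}, the former as established in the discussion preceding Proposition \ref{monadicoverpullbackcat}). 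The subtle point to verify carefully is that the tensor product of $\ca{V}$-comodules, whose formula involves a symmetry instance and the colimit-preservation of $\otimes$ on both variables of $\ca{V}$, really does send pointwise colimits in the pullback category to pointwise colimits; this is the step most likely to hide an unexpected coherence obligation.

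Finally, with the fibrewise adjoints in hand, Theorem \ref{totaladjointthm} produces a right adjoint $\overline{\HOM}(\Phi_\ca{C},-):\ca{V}\text{-}\Comod\to\ca{V}\text{-}\Comod$ to $(-\otimes\Phi_\ca{C})$ between the total categories, lying over the adjunction $(-\otimes\ca{C}_X)\dashv{^g\HOM(\ca{C}_X,-)}$ in $\ca{V}\text{-}\B{Cocat}$. In particular $\overline{\HOM}(\Phi_\ca{C},\Omega_\ca{D})$ is automatically a $^g\HOM(\ca{C}_X,\ca{D}_Y)$-comodule, which is the expected `many-object' analogue of Proposition \ref{Comodclosed}. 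Applying Theorem \ref{parametrizedadjunctions} to the resulting family of adjunctions yields the bifunctor $\overline{\HOM}$ with a natural isomorphism $\ca{V}\text{-}\Comod(\Omega\otimes\Phi,\Xi)\cong\ca{V}\text{-}\Comod(\Omega,\overline{\HOM}(\Phi,\Xi))$ in all three variables, completing the proof that $\ca{V}\text{-}\Comod$ is symmetric monoidal closed.
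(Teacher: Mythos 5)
Your proposal is correct and follows essentially the same route as the paper's own proof: the same opfibred $1$-cell $(-\otimes\Phi_\ca{C},-\otimes\ca{C}_X)$ over the base adjunction from Proposition \ref{VCocatclosed}, the same fibrewise right adjoint obtained from Theorem \ref{Kelly} via local presentability of the fibres and cocontinuity read off the comonadic square over $\ca{C}\times_{\B{Set}}\ca{V}\text{-}\B{Cocat}$, and the same assembly into a total adjoint via Theorem \ref{totaladjointthm} and a bifunctor via parametrized adjunctions. The only cosmetic difference is that the paper justifies cocontinuity of the reindexing functor $(\bar{\varepsilon}_\ca{D})_!$ directly from the fact that it is composition of $\ca{V}$-matrices rather than via Corollary \ref{VComodcomplete}.
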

\begin{proof}
We saw in the previous section how the global
categories of modules and comodules are (symmetric) monoidal
when $\ca{V}$ is. We are now going to use Lemma
\ref{totaladjointlem} once again, in order 
to obtain a right adjoint for the
tensor product endofunctor 
$-\otimes\Phi_\ca{C}$ on $\ca{V}$-$\Comod$.

By Proposition \ref{VCocatclosed}, the 
category $\ca{V}$-$\B{Cocat}$ is also a 
symmetric monoidal closed category when $\ca{V}$ 
is, and its internal hom is denoted by 
$^g\HOM$. Hence there is a square
\begin{equation}\label{squaresquare}
 \xymatrix @C=.6in @R=.45in
{\ca{V}\text{-}\Comod\ar[r]^-{-\otimes\Phi_\ca{C}}
\ar[d]_-H & \ca{V}\text{-}\Comod\ar[d]^-H \\
\ca{V}\text{-}\B{Cocat}\ar[r]_-{-\otimes\ca{C}_X} &
\ca{V}\text{-}\B{Cocat}}
\end{equation}
which commutes by definition of the monoidal
structure
of $\ca{V}$-$\Comod$, and also an adjunction
between the base categories
\begin{displaymath}
 \xymatrix @C=1in 
{\ca{V}\text{-}\B{Cocat}\ar@<+.8ex>[r]^-{(-\otimes\ca{C}_X)}
\ar@{}[r]|-\bot & 
\ca{V}\text{-}\B{Cocat}\ar@<+.8ex>[l]^-{^g\HOM(\ca{C}_X,-)}}
\end{displaymath}
as in (\ref{adjunctionHOMg}).
Moreover, the functor $(-\otimes\Phi_\ca{C})$ is cocartesian: 
it maps a cocartesian lifting to the top arrow
of the triangle
\begin{displaymath}
\xymatrix @C=.15in @R=.3in
{\Omega\ar[rr]^-{\Cocart(F,\Omega)}
\ar@{.}[dd] && F_!\Omega\ar@{.}[dd] &&
\Omega\otimes\Phi\ar[rr]^-{\Cocart(F,\Omega)\otimes1}
\ar[drr]_-{\scriptscriptstyle{\Cocart(F\otimes1,\Omega\otimes\Phi)\;\;}}
\ar@{.}[dd] && F_!\Omega\otimes\Phi 
& \textrm{in }\ca{V}\text{-}\Comod \\
&&& {\mapsto} &&& (F\otimes1)_!(\Omega\otimes\Phi)\ar@{-->}[u]_-{\exists!\gamma}
\ar@{.}[d]\\
\ca{D}_Y\ar[rr]_-{F_f} && \ca{E}_Z && (\ca{D}\otimes\ca{C})_{Y\times X}
\ar[rr]_-{(F\otimes1)_{f\times1}} && (\ca{E}\otimes\ca{C})_{Z\times X} &
\textrm{in }\ca{V}\text{-}\B{Cocat}}
\end{displaymath}
for any left $\ca{D}_Y$-comodule $\Omega$.
By Proposition 
\ref{VComodopfibred},
the reindexing functor $(F_f)_!$ for a $\ca{V}$-cofunctor
with underlying function on objects $f$ is given
by post-composition
with the induced $\ca{V}$-matrix $f^*$, \emph{i.e.}
$(F_f)_!=(f_*\circ\text{-}).$ Now, the two $\ca{V}$-matrices
\begin{displaymath}
 (f_*\circ\Omega)\otimes\Phi,\;(f\times1)_*\circ(\Omega\otimes\Phi):
\SelectTips{eu}{10}\xymatrix
{1\ar[r]|-{\object@{|}} & Z\times X}
\end{displaymath}
in $\ca{V}\text{-}_{\ca{E}\otimes\ca{C}}\Comod$
are isomorphic: they are given by the families of objects in $\ca{V}$
\begin{gather*}
 \big((f^*\circ\Omega)\otimes\Phi\big)(z,x)=
(f^*\circ\Omega)(z)\otimes\Phi(x)=
\Big(\sum_{z=fy}I\otimes\Omega(y)\Big)
\otimes\Phi(x) \\
(f\times1)^*\circ(\Omega\otimes\Phi)(z,x)=
\sum_{z=fy}I\otimes(\Omega\otimes\Phi)(y,x)=
\sum_{z=fy}I\otimes\big(\Omega(y)\otimes\Phi(x)\big) 
\end{gather*}
so the isomorphism between them
is given by the fact
that $\otimes$ commutes with sums. Furthermore
the above triangle commutes,
so the square (\ref{squaresquare}) exhibits
$(-\otimes\Phi_\ca{C},-\otimes\ca{C}_X)$
as an opfibred 1-cell between $H$ and $H$.
Finally, if $\bar{\varepsilon}$ is the counit of the adjunction
(\ref{adjunctionHOMg}) which
defines the internal hom $^g\HOM$ for
$\ca{V}$-cocategories,
the composite functor
between the fibres
\begin{displaymath}
 \ca{V}\text{-}\Comod_{^g\HOM(\ca{C},\ca{D})}
\xrightarrow{\;-\otimes\Phi_\ca{C}\;}
\ca{V}\text{-}\Comod_{^g\HOM(\ca{C},\ca{D})\otimes\ca{C}}
\xrightarrow{\;(\bar{\varepsilon}_\ca{D})_!\;}
\ca{V}\text{-}\Comod_\ca{D}
\end{displaymath}
has a right adjoint, call it $^g\overline{\HOM}_\ca{D}(\Phi_\ca{C},-)$.
This is because the category of left $^g\HOM(\ca{C},\ca{D})$-comodules
is locally presentable by Proposition 
\ref{propfibresVmodcomod}, $(\bar{\varepsilon}_\ca{D})_!$ is cocontinuous
because it is composition of $\ca{V}$-matrices, and
$(-\otimes\Phi_\ca{C})$ is cocontinuous
by the commutative diagram
\begin{displaymath}
\xymatrix @C=.9in
{\ca{V}\text{-}\Comod\ar[r]^-{-\otimes\Phi_\ca{C}}
\ar[d] & \ca{V}\text{-}\Comod\ar[d] \\
\ca{C}\times_{\scriptscriptstyle{\B{Set}}}\ca{V}\text{-}\B{Cocat}
\ar[r]_-{(-\otimes\Phi)\times(-\otimes\ca{C}_X)} &
\ca{C}\times_{\scriptscriptstyle{\B{Set}}}\ca{V}\text{-}\B{Cocat}.}
\end{displaymath}
Therefore we have an adjunction
$(-\otimes\Phi_\ca{C})\dashv {^g\overline{\HOM}(\Phi_\ca{C},-)}$
between the total categories
for all $\ca{V}$-comodules $\Phi_\ca{C}$, exhibiting the 
induced bifunctor
\begin{displaymath}
 ^g\overline{\HOM}:\ca{V}\text{-}\Comod^\op\times\ca{V}\text{-}\Comod
\to\ca{V}\text{-}\Comod
\end{displaymath}
as the internal hom of $\ca{V}$-$\Comod$. Also, 
${^g}\overline{\HOM}(\Phi_\ca{C},\Omega_\ca{D})$ is
a ${^g}\HOM(\ca{C},\ca{D})$-comodule.
\end{proof}
Consequently, we can now apply Corollaries
\ref{importcor1} and \ref{importcor2} for the action
$\bar{K}^\op$ of the symmetric monoidal closed
category $\ca{V}$-$\Comod$ on the ordinary category
$\ca{V}$-$\Mod^\op$ and obtain the pursued enrichment.
\begin{thm}\label{VModenrichedinVComod}
 Suppose that $\ca{V}$ is a locally presentable,
symmetric monoidal closed category. 
\begin{enumerate}
 \item The opposite of the global category
of left $\ca{V}$-modules $\ca{V}$-$\Mod^\op$
is enriched in the global category of 
left $\ca{V}$-comodules $\ca{V}$-$\Comod$,
with hom-objects 
\begin{displaymath}
 \ca{V}\text{-}\Mod^\op(\Psi_\ca{A},\Xi_\ca{B})=
\bar{T}(\Xi,\Psi)_{T(\ca{B},\ca{A})}
\end{displaymath}
where the ($\ca{V}\text{-}\Comod$)-enriched category
is denoted by the same name.
\item The global category of left $\ca{V}$-modules
$\ca{V}$-$\Mod$ is a cotensored ($\ca{V}$-$\Comod$)-enriched
category, with hom-objects
\begin{displaymath}
 \ca{V}\text{-}\Mod(\Psi_\ca{A},\Xi_\ca{B})=
\bar{T}(\Psi,\Xi)_{T(\ca{A},\ca{B})}
\end{displaymath}
and cotensor product $\bar{K}(\Phi,\Xi)_{K(\ca{C},\ca{B})}$
for any $\ca{V}$-modules $\Psi_\ca{A},\;\Xi_\ca{B}$ and 
$\ca{V}$-comodules $\Phi_\ca{C}$.
\end{enumerate}
\end{thm}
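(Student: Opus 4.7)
The plan is to invoke Corollaries \ref{importcor1} and \ref{importcor2} directly, since all the necessary hypotheses have already been verified in the preceding three propositions. Specifically, Proposition \ref{barKaction} shows that $\bar{K}^\op$ is an action of $\ca{V}\text{-}\Comod$ on $\ca{V}\text{-}\Mod^\op$; Proposition \ref{barKhasadjoint} constructs the parametrized right adjoint $\bar{T}$, giving adjunctions $\bar{K}^\op(-,\Psi_\ca{B}) \dashv \bar{T}(-,\Psi_\ca{B})$ for every $\Psi_\ca{B}$; and Proposition \ref{VModclosed} establishes that $\ca{V}\text{-}\Comod$ is a symmetric monoidal closed category. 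Setting $\ca{B} = \ca{V}\text{-}\Mod$ (so that $\ca{B}^\op = \ca{V}\text{-}\Mod^\op$), taking $\ca{V}\text{-}\Comod$ as the base monoidal category, $\bar{K}^\op$ as the action, and $\bar{T}$ as the parametrized adjoint, places the present situation exactly in the framework of the two corollaries.

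For part (1), Corollary \ref{importcor1} immediately produces a tensored $(\ca{V}\text{-}\Comod)$-enriched category with underlying ordinary category $\ca{V}\text{-}\Mod^\op$; following the corollary's convention $\ca{B}^\op(A,B) = P(B,A)$, the hom-objects are $\bar{T}(\Xi_\ca{B},\Psi_\ca{A})$, which by the commutativity of the adjunction square in Proposition \ref{barKhasadjoint} sit above $T(\ca{B},\ca{A})$ under the opfibration $H$, matching the stated formula. For part (2), since $\ca{V}\text{-}\Comod$ is moreover symmetric monoidal closed, Corollary \ref{importcor2} promotes this to a cotensored $(\ca{V}\text{-}\Comod)$-enrichment on the opposite category $(\ca{V}\text{-}\Mod^\op)^\op = \ca{V}\text{-}\Mod$, with hom-objects $\bar{T}(\Psi_\ca{A},\Xi_\ca{B})$ (the swap in argument order exactly reverses the one of part (1)) and with cotensor product provided by the action itself, namely $\bar{K}^\op(\Phi_\ca{C},\Xi_\ca{B}) = \bar{K}(\Phi_\ca{C},\Xi_\ca{B})$, which lives in the fibre over $K(\ca{C},\ca{B})$ as claimed.

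The only verification that deserves explicit mention is that the underlying ordinary category of each enriched structure is indeed $\ca{V}\text{-}\Mod^\op$ (respectively $\ca{V}\text{-}\Mod$); this follows from the natural isomorphism $\ca{V}\text{-}\Comod(\ca{I},\bar{T}(\Psi,\Xi)) \cong \ca{V}\text{-}\Mod^\op(\bar{K}^\op(\ca{I},\Psi),\Xi) \cong \ca{V}\text{-}\Mod^\op(\Psi,\Xi)$, using that the action of the monoidal unit is the identity up to isomorphism. No serious obstacle remains, and the result is essentially a formal transcription of the general action-to-enrichment machinery of Section \ref{actions} to the present setting. The genuine categorical work, namely defining $\bar{K}$ as a restriction of the lax functor $\Hom$ between bicategories of $\ca{V}$-matrices, constructing $\bar{T}$ via Theorem \ref{totaladjointthm} on total adjoints between opfibrations over $\ca{V}\text{-}\B{Cocat}$, and establishing monoidal closedness of $\ca{V}\text{-}\Comod$ by a parallel fibred-adjunction argument, has already been carried out in the preceding propositions, so the theorem follows without further difficulty.
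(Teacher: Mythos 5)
Your proposal is correct and follows exactly the paper's own route: the paper deduces the theorem in one line by applying Corollaries \ref{importcor1} and \ref{importcor2} to the action $\bar{K}^\op$ of the symmetric monoidal closed $\ca{V}$-$\Comod$ on $\ca{V}$-$\Mod^\op$, with parametrized adjoint $\bar{T}$, exactly as you do. Your additional checks (the fibre of $\bar{T}(\Xi,\Psi)$ over $T(\ca{B},\ca{A})$ via the adjunction square, and the identification of the underlying category via the unit isomorphism) are consistent with, and slightly more explicit than, what the paper records.
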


\chapter{An Abstract Framework}\label{abstractframework}
This last chapter is 
an attempt to exhibit some underlying motives
of certain techniques used in the previous sections, 
and discuss possible generalizations of 
processes which resulted in the main theorems
of the thesis. The previous 
chapter had as its clear goal to generalize the results
of Chapter \ref{enrichmentofmonsandmods} 
in the next level of
`many-object' (co)monoids and (co)modules, 
namely $\ca{V}$-(co)categories and $\ca{V}$-(co)modules. 
The thorough
investigation of this development reveals 
an intrinsic pattern of how the categories
involved are expected to behave.
 
In the first section, the aim is to 
state and justify a definition of the notion of enriched fibration. 
More precisely, we would like to be able to characterize
a (plain) fibration as being enriched in another,
special kind of fibration, serving similar purposes as the monoidal
base of usual enrichment of categories.
There are two things that would incorporate the success
of such a definition, in the frame of this thesis: firstly, 
the carefully examined cases of monoids/modules,
enriched categories/enriched modules and dual structures 
should constitute examples of it, and secondly 
there should be a theorem which, under certain 
assumptions, would ensure the existence of an 
enriched fibration.

A first formal definition in this conceptual
direction
was given in \cite{GouzouGrunig}, called `a fibration
relative to $\ca{A}$', where $\ca{A}$ was 
fibred over a monoidal category in an appropriate sense. 
As mentioned in the introduction, 
Shulman in \cite{Enrichedindexedcats}
develops a theory of `enriched indexed categories', \emph{i.e.}
categories which are simultaneously indexed over a base category
$\caa{S}$ with finite products, and also enriched in an 
$\caa{S}$-indexed monoidal category. 
The definition of an indexed $\ca{V}$-category was also 
given independently by Bunge in \cite{Bunge}. 
The main issue is that even 
if we herein employ the same notion of a \emph{monoidal 
fibration} (Definition \ref{monoidalfibration}),
Bunge's and Shulman's approach only concerns enrichment in fibrations 
strictly over cartesian monoidal bases, which is not 
the chosen monoidal structure of, say, 
$\Comon(\ca{V})$ and $\ca{V}$-$\B{Cocat}$.
Moreover, the notion of an enriched indexed category refers only
to a fibration enriched in another fibration over the same base, 
approximately depicted as
\begin{displaymath}
\xymatrix @C=.8in @R=.5in
{\ca{A}\ar@{-->}[r]^-{\mathrm{enriched}} 
\ar[dr]_-{\mathrm{fibred}} & 
\ca{V} \ar[d]^-{\mathrm{fibred}} \\
& \caa{S}.} 
\end{displaymath}
In our examples,
this is certainly not the case:
we seek for enrichments between both the total
and the base categories of the two fibrations
involved.

In the second section of this chapter, the aim is to give 
an approximate description of a way in which
the central results of this thesis 
fit into the theory of double categories. The motivation for 
this approach is that in the bicategory $\ca{V}$-$\Mat$,
fundamental for the development of the previous 
chapter, the functions $f$ between the sets
and especially the $\ca{V}$-matrices
$f_*$, $f^*$ induced by them were of importance
for our constructions. This belongs to a variety
of examples of bicategorical structures, 
where in fact two natural kinds of morphisms exist, 
typically some complicated ones (like $\ca{V}$-matrices
between sets in our case) comprising the bicategory, 
and some more elementary ones which are discarded
but in fact important.
Therefore, having everything encompassed in a double category
provides a conceptual advantage. 
Often, there is a lifting property which turns a vertical 
1-cell into a horizontal 1-cell as in 
our situation, and this corresponds to the 
concept of a \emph{fibrant double category}.

Due to the lack of machinery for dealing with double 
categories comparatively to bicategories or 
2-categories, recently there has been some serious activity 
regarding the more systematic study and development 
of the theory of double categories. The exposition in this
chapter is not meant to be a significant step in 
this direction, not being as rigorous
or detailed as such an attempt deserves. Rather it introduces
certain notions which might be of use to 
further research on the topic. Categories of monoids (or monads)
in double categories have been methodically studied in 
\cite{Monadsindoublecats}. In the current treatment, they are 
combined with notions of comonoids, modules
and comodules in double categories in order to exhibit
a framework for the existence and properties 
of specific categories we dealt with in 
earlier chapters.

Various important facts about double categories such as 
detailed definitions for double functors and double natural transformations,
monoidal structure, coherence for pseudo double categories and
numerous examples can be found in the references provided in the 
introduction added to the ones mentioned later.
\nocite{Doubleadjunctionsandfreemonads,Monadsindoublecats}
The explicit definition of a \emph{monoidal bicategory}
can be found in \cite{Carmody}, or in \cite{CoherenceTricats}
as a one-object tricategory.

\section{Enriched fibrations}

Chapters \ref{enrichmentofmonsandmods} and \ref{VCatsVCocats}
were devoted to the establishment of the enrichment
of certain, mostly well-studied categories like $\Mon(\ca{V})$,
$\Mod$, $\ca{V}$-$\B{Cat}$ and $\ca{V}$-$\Mod$, in their dual-flavored
monoidal categories $\Comon(\ca{V})$, $\Comod$, $\ca{V}$-$\B{Cocat}$ 
and $\ca{V}$-$\Comod$. Such enrichments were in fact combined, 
in a very natural way,
with the theory of fibrations and opfibrations. 
The very adjunctions inducing enriched hom-functors 
often employed results regarding fibred functors,
implying a strong relation between the two notions. 
Below we graphically 
summarize the results of the two previous chapters.
The monoidal category $\ca{V}$ is required to be 
a locally presentable,
symmetric monoidal closed category.

The category of monoids is 
enriched in the (symmetric monoidal closed)
category of comonoids in $\ca{V}$, with
enriched hom-functor the Sweedler hom 
$$P:\Mon(\ca{V})^\op\times\Mon(\ca{V})\to\Comon(\ca{V})$$
which is the parametrized adjoint of the opposite 
of the restricted
internal hom
\begin{displaymath}
H:\xymatrix @R=.05in
{\Comon(\ca{V})^\op
\times\Mon(\ca{V})\ar[r]
&\Mon(\ca{V})\\
\qquad\;(\;C\;,\;A\;)\;\ar @{|->}[r] & [C,A]}
\end{displaymath}
by Proposition \ref{measuringcomonoidprop} and 
Theorem \ref{MonVenrichedinComonV}. 
Moreover, 
the global category of modules is enriched in the 
(symmetric monoidal closed) global category of 
comodules in $\ca{V}$, with enriched hom-functor the
universal measuring comodule functor
$$Q:\Mod^\op\times\Mod\to\Comod$$
which is the parametrized adjoint of the opposite
of the further restricted
\begin{displaymath}
\bar{H}:
\xymatrix @R=0.05in
{\Comod^\op\times\Mod\ar[r] & \Mod\qquad \\
\qquad(\;X_C\;,\;M_A\;)\ar @{|->}[r] & 
[X,M]_{[C,A]}}
\end{displaymath}
by Proposition \ref{propmeasuringcomodule} and 
Theorem \ref{ModenrichedinComod}.
The diagram
\begin{equation}\label{hugediag1}
\xymatrix @R=.85in @C=1.2in
{\Mod^\mathrm{op}
\ar @/^/[r]^-{Q(-,N_B)}
\ar@{}[r]|-{\top} \ar[d]_-{G^\op} &
\Comod
\ar @/^/[l]^-{\bar{H}(-,N_B)^\mathrm{op}}
\ar[d]^-V \\ 
\Mon(\ca{V})^\mathrm{op}\ar @/^/[r]^-{P(-,B)} 
\ar@{}[r]|-{\top} &
\Comon(\ca{V}) \ar @/^/[l]^-{H(-,B)^\mathrm{op}}}
\end{equation}
which describes the above situation is in fact
an adjunction in the 2-category $\B{Cat}^{\B{2}}$. 

The category of $\ca{V}$-enriched categories is 
enriched in the (symmetric monoidal closed)
category of $\ca{V}$-enriched cocategories, with enriched hom-functor
the generalized Sweedler hom
\begin{displaymath}
T:\ca{V}\text{-}\B{Cat}^\op\times\ca{V}\text{-}\B{Cat}
\to\ca{V}\text{-}\B{Cocat}
\end{displaymath}
which is the parametrized adjoint of the opposite of
the internal hom as $\ca{V}$-graphs
\begin{displaymath}
 K:\xymatrix@R=.05in
{\ca{V}\text{-}\B{Cocat}^\op\times\ca{V}\text{-}\B{Cat}
\ar[r] & \ca{V}\text{-}\B{Cat} \\
\qquad(\;\ca{C}_X\;,\;\ca{B}_Y\;)\ar@{|->}[r] & 
\Hom(\ca{C},\ca{B})_{Y^X}}
\end{displaymath}
defined by $\Hom(\ca{C},\ca{B})(k,s)=\prod_{x',x}
[\ca{C}(x',x),\ca{B}(kx',sx)]$,
by Proposition \ref{propexistenceS2} and Theorem \ref{VCatenrichedinVCocat}.
Moreover, the global category of $\ca{V}$-enriched modules
is enriched in the (symmetric monoidal closed) global category
of $\ca{V}$-enriched comodules,
with enriched hom-functor
\begin{displaymath}
\bar{T}:\ca{V}\text{-}\Mod^\op\times\ca{V}\text{-}\Mod
\to\ca{V}\text{-}\Comod
\end{displaymath}
which is the parametrized adjoint of the opposite of 
\begin{displaymath}
\bar{K}:\xymatrix @R=.05in
{\ca{V}\text{-}\Comod^\op\times\ca{V}\text{-}\Mod
\ar[r] & \ca{V}\text{-}\Mod\qquad\\
\qquad(\;\Phi_\ca{C}\;,\;\Psi_\ca{B}\;)\ar@{|->}[r] & 
\Hom(\Phi,\Psi)_{\Hom(\ca{C},\ca{B})}}
\end{displaymath}
where $\Hom(\Phi,\Psi)(t)=\prod_{x}
[\Phi(x),\Psi(tx)]$,
by Proposition \ref{barKhasadjoint} and Theorem
\ref{VModenrichedinVComod}.
The diagram
\begin{equation}\label{hugediag2}
\xymatrix @R=.77in @C=1.2in 
{\ca{V}\text{-}\Mod^\op\ar@/^/[r]^-{\bar{T}(-,\Psi_\ca{B})}
\ar[d]_-{N^\op} \ar@{}[r]|-{\top} &
\ca{V}\text{-}\Comod \ar@/^/[l]^-{\bar{K}(-,\Psi_\ca{B})^\mathrm{op}}
\ar[d]^-H \\ 
\ca{V}\textrm{-}\B{Cat}^\mathrm{op}\ar @/^/[r]^-{T(-,\ca{B}_Y)}
\ar[d]_-{P^\op}\ar@{}[r]|-{\top} &
\ca{V}\textrm{-}\B{Cocat}\ar @/^/[l]^-{K(-,\ca{B}_Y)^\mathrm{op}}
\ar[d]^-W \\ 
\B{Set}^\mathrm{op}\ar @/^/[r]^-{Y^{(-)}}\ar@{}[r]|-{\top} &
\B{Set} \ar @/^/[l]^-{{Y^{(-)}}^\mathrm{op}}}
\end{equation}
depicts the above situation.

An appropriate enriched fibration notion would successfully
encapsulate the rich structure of the above situations. 
Intuitively, we are looking for a definition 
which would ensure that the opfibration
$G^\op$ is enriched in the opfibration $V$, 
and that the opfibrations $N^\op$ and $P^\op$ 
are enriched in the opfibrations $H$ and $W$ respectively.

Because of the nature of our examples, 
it is now evident that we are unable to employ the definitions
and theory of \cite{Enrichedindexedcats}. As mentioned earlier, 
the numerous examples
therein restrict to fibrations
(or indexed categories) over monoidal
categories with tensor product the cartesian product. However,
in the diagrams (\ref{hugediag1}) 
and (\ref{hugediag2}) the base categories (except $\B{Set}$)
of the fibrations which we intend to use as base for enrichment
are not viewed as cartesian monoidal categories.
Moreover, and perhaps more importantly, 
the indexed enrichment (over the same base category)
as stated in \cite[Definition 4.1]{Enrichedindexedcats}
is conceived as `fibrewise' enrichments
between the fibres of the total categories, plus 
some preservation of the enriched structure
via the reindexing functors. Apart from the absence of a monoidal
structure on the fibre categories here, like $\Comod_\ca{V}(C)$,
the fact that we require an enrichment
between the (distinct) base categories 
of the fibrations makes a great difference.

Therefore, we are going to explore a new approach to this
problem. The basic idea is to shift 
Theorem \ref{actionenrich}
from the context of categories to the context of fibrations. 
The reason for doing so is that this result provides an 
enrichment of an ordinary category
in a monoidal category when certain conditions are satisfied, which can
be rephrased if we replace categories by fibrations. 
This becomes clearer in the light of the following remarks 
(see also Remark \ref{rmkpseudoaction}(ii)).
\begin{itemize}
\item A monoidal category $(\ca{V},\otimes,I,a,l,r)$
is a \emph{pseudomonoid} in the cartesian monoidal
2-category $(\B{Cat},\times,\B{1})$.
\item An action $*$ of a monoidal category $\ca{V}$ 
on an ordinary category $\ca{A}$ is
a \emph{pseudoaction} of a pseudomonoid on 
an object of $(\B{Cat},\times,\B{1})$.
\item A $\ca{V}$-representation $(\ca{A},*)$, \emph{i.e.}
an ordinary category on which $\ca{V}$ acts, is a
\emph{pseudomodule} for the pseudomonoid $\ca{V}$ 
in $(\B{Cat},\times,\B{1})$.
\end{itemize}
Theorem \ref{actionenrich} and its following comments
in fact give the one direction of the 
equivalence
\begin{displaymath}
 \ca{V}\text{-}\mathrm{Rep}_{\textrm{cl}}\simeq
\ca{V}\text{-}\B{Cat}_{\otimes}
\end{displaymath}
on the level of objects 
between \emph{closed} $\ca{V}$-representations 
(\emph{i.e.} equipped with a pa\-ra\-me\-tri\-zed
adjoint) and tensored $\ca{V}$-categories
for $\ca{V}$ a monoidal closed
category. This equivalence is in fact a special case of 
the more general \cite[Theorem 3.7]{enrthrvar}.
We would now like to produce an adjusted version of this,
moving from $(\B{Cat},\times,\B{1})$ to the monoidal 
2-category $(\B{Fib},\times,1_{\B{1}})$, where
$1_{\B{1}}$ is the identity functor on the 
terminal category. Indeed, the 2-functor
\begin{displaymath}
 \times:\B{Cat}\times\B{Cat}\to\B{Cat}
\end{displaymath}
which is the cartesian 2-monoidal structure 
on $\B{Cat}$, induces 
a monoidal structure on the 2-category $\B{Cat}^\B{2}$
which restricts to the sub-2-category $\B{Fib}$,
since the cartesian product of two fibrations is still 
a fibration.

\nocite{Mccruddencoalgebroids}
Initially, we would like to identify the pseudomonoids
in this monoidal 2-category, which will be the analogue
of monoidal categories.
The concept of a pseudomonoid was formally defined
in \cite{Monoidalbicats&hopfalgebroids}, and 
the more general pseudomonad viewpoint can be found
in \cite{Marmolejopseudomonads,LackPseudomonads}.
As an idea, a \emph{tensor object}
in \cite{BraidedTensorCats} already captures the required
structure. By applying this definition 
in the 2-category of fibrations, 
fibred 1-cells and fibred 2-cells,
a \emph{monoidal fibration} is a fibration 
$T:\ca{V}\to\caa{W}$ with arrows $M:T\times T\to T$,
$\eta:1\to T$ equipped with natural isomorphisms
\begin{displaymath}
 \xymatrix @R=.5in
{T\times T\times T\ar[r]^-{M\times 1} \ar[d]_-{1\times M}
\drtwocell<\omit>{'{\stackrel{a}{\cong}}} & 
T\times T \ar[d]^-M \\
T\times T\ar[r]_-M & T}\qquad
\xymatrix @R=.5in
{1\times T \drtwocell<\omit>{'{\stackrel{l}{\cong}}}
\ar@/_3ex/[dr]\ar[r]^-{\eta\times1}
& T\times T\drtwocell<\omit>{'{\stackrel{r}{\cong}}}
\ar[d]_-M & T\times1\ar[l]_-{1\times\eta}
\ar@/^3ex/[dl]\\ 
& T &}
\end{displaymath}
satisfying certain coherence conditions.
More explicitly, there are fibred 1-cells
$M=(M_\ca{V},M_\caa{W})$, $\eta=(I_\ca{V},I_\caa{W})$ displayed 
by the commutative squares
\begin{equation}\label{multunitmonoidalfibr}
 \xymatrix
{\ca{V}\times\ca{V}\ar[r]^-{M_\ca{V}}
\ar[d]_-{T\times T} & \ca{V}\ar[d]^-T \\
\caa{W}\times\caa{W}\ar[r]_-{M_{\caa{W}}} & 
\caa{W}}\qquad\mathrm{and}\qquad
\xymatrix 
{\B{1}\ar[r]^-{I_\ca{V}} \ar[d]_-{1} & 
\ca{V}\ar[d]^-{T} \\
\B{1}\ar[r]_-{I_{\caa{W}}} & \caa{W}}
\end{equation}
where the functors $M_\ca{V}$ and $I_\ca{V}$ are 
cartesian, and 
invertible fibred 2-cells $a=(a^\ca{V},a^\caa{W})$,
$r=(r^\ca{V},r^\caa{W})$, $l=(l^\ca{V},l^\caa{W})$
displayed as
\begin{displaymath}
 \xymatrix @C=1.2in @R=.6in
{\ca{V}\times\ca{V}\times\ca{V}\rtwocell^{M(M\times1)}
_{M(1\times M)}{\;a^\ca{V}}
\ar[d]_-{T\times T\times T} & \ca{V}\ar[d]^-T \\
\caa{W}\times\caa{W}\times\caa{W}
\rtwocell^{M(M\times1)}
_{M(1\times M)}{\;a^\caa{W}} & \caa{W}}
\end{displaymath}
\begin{displaymath}
  \xymatrix @C=.9in @R=.4in
{\ca{V}\times1\rtwocell^{M(1\times I)}
_{\sim}{\;\;r^\ca{V}}
\ar[d]_-{T\times1} & \ca{V}\ar[d]^-T \\
\caa{W}\times1\rtwocell^{M(1\times I)}
_{\sim}{\;\;r^\caa{W}} & \caa{W}}\qquad
\xymatrix @C=.8in @R=.4in
{1\times\ca{V}\rtwocell^{M(I\times1)}
_{\sim}{\;l^\ca{V}}
\ar[d]_-{1\times T} & \ca{V}\ar[d]^-T \\
1\times\caa{W}\rtwocell^{M(I\times1)}
_{\sim}{\;l^\caa{W}} & \caa{W}.}
\end{displaymath}
Recall that the natural isomorphisms 
 $a^\ca{V},r^\ca{V},l^\ca{V}$ lie 
above $a^\caa{W},r^\caa{W},l^\caa{W}$,
by definitions in Section \ref{fibrationsbasicdefinitions}.
The axioms that these data are required to satisfy 
turn out to give the usual axioms which make 
$(\ca{V},M_\ca{V},I_\ca{V})$ and $(\caa{W},M_\caa{W},I_\caa{W})$ 
into monoidal categories, 
with associativity, left and right unit constraints 
$a,r,l$ respectively. This is due to the fact that
the functors $dom,\;cod:\B{Fib}\to\B{Cat}$ 
are strict monoidal functors. In other 
words, the
equality of pasted diagrams
of 2-cells in $\B{Fib}$ breaks down into
equalities for the two natural transformations
it consists of.

Moreover, the strict 
commutativity of the diagrams (\ref{multunitmonoidalfibr})
imply that $T$
preserves the tensor product and the 
unit object
between $\ca{V}$ and $\caa{W}$ on the nose, \emph{i.e.}
\begin{displaymath}
 TA\otimes_\caa{W} TB=T(A\otimes_\ca{V} B),\quad
I_\caa{W}=T(I_\ca{V})
\end{displaymath}
if we denote $M=\otimes$.
Along with the last conditions
that $T(a^\ca{V})=a^\caa{W}$, 
$T(l^\ca{V})=l^\caa{W}$ and $T(r^\ca{V})=r^\caa{W}$, 
these data define a strict monoidal structure on the 
functor $T$.
Therefore we obtain the following definition, which coincides
with \cite[12.1]{Framedbicats}.
\begin{defi}\label{monoidalfibration}
A \emph{monoidal fibration}
is a fibration $T:\ca{V}\to\caa{W}$ such that
\begin{enumerate}[(i)]
\item $\ca{V}$ and $\caa{W}$ are monoidal categories,
\item $T$ is a strict monoidal functor,
\item the tensor product $\otimes_\ca{V}$ of $\ca{V}$
preserves cartesian arrows.
\end{enumerate}
\end{defi}
In a dual way, we can define a 
\emph{monoidal opfibration} to be an opfibration
which is a strict monoidal functor, where the tensor
product of the total category preserves cocartesian arrows. 
Also, if $\ca{V}$ and $\ca{W}$ are 
symmetric monoidal categories and $T$ is a symmetric strict monoidal
functor, call $T$ a \emph{symmetric monoidal fibration}.

We are now going to describe a pseudoaction of 
a pseudomonoid in $\B{Fib}$,
and what it means for a fibration to be a 
pseudomodule for a monoidal fibration $T$. 
For a general 2-category or 
bicategory, the idea of a \emph{pseudomodule}
can be found in similar contexts in 
\cite{Marmolejopseudomonads,LackPseudomonads}
(called (pseudo)algebra for a 
pseudomonad).
Conceptually, as was the case for modules
for monoids in a monoidal category, it arises as 
a pseudoalgebra for the pseudomonad $(M\otimes-)$
in our monoidal bicategory,
where $M$ is a fixed pseudomonoid.

In our case, a \emph{pseudoaction} of a monoidal
fibration $T:\ca{V}\to\caa{W}$ on an ordinary fibration 
$P:\ca{A}\to\caa{X}$ is a fibred 1-cell
$\mu=(\mu^\ca{A},\mu^\caa{X}):T\times P\to P$ displayed by
the commutative
\begin{equation}\label{pseudoaction}
\xymatrix @C=.6in
{\ca{V}\times\ca{A}\ar[r]^-{\mu^\ca{A}}
\ar[d]_-{T\times P} & \ca{A}\ar[d]^-P \\
\caa{W}\times\caa{X}\ar[r]_-{\mu^{\caa{X}}} & 
\caa{X}}
\end{equation}
where $\mu^\ca{A}$ is a cartesian functor, equipped
with natural isomorphisms 
\begin{displaymath}
 \xymatrix @C=.5in
{T\times T\times P\ar[r]^{M\times 1} \ar[d]_-{1\times\mu}
\drtwocell<\omit>{'{\stackrel{\chi}{\cong}}} & 
T\times P \ar[d]^-\mu \\
T\times P\ar[r]_-\mu & T}\qquad
\xymatrix @C=.5in
{1\times P \drtwocell<\omit>{'{\stackrel{\nu}{\cong}}}
\ar@/_3ex/[dr]_-{\sim}\ar[r]^-{\eta\times1}
& T\times P\ar[d]^-\mu \\ 
& P}
\end{displaymath}
in $\B{Fib}$. These are invertible
fibred natural transformations 
$\chi=(\chi^\ca{A},\chi^\caa{X})$,
$\nu=(\nu^\ca{A},\nu^\caa{X})$ represented by
\begin{displaymath}
\xymatrix @C=.4in @R=.003in
{& \ca{V}\times\ca{A}\ar@/^/[dr]^{\mu} & \\
\ca{V}\times\ca{V}\times\ca{A}\ar@/^/[ur]^-{M\times1}
\ar@/_/[dr]_-{1\times\mu}
\rrtwocell<\omit>{\;\;\chi^\ca{A}}
\ar[ddddd]_-{T\times T\times P} && \ca{A}\ar[ddddd]^-P \\
& \ca{V}\times\ca{A} \ar@/_/[ur]_-{\mu} & \\
\hole \\
\hole \\
& \caa{W}\times\caa{X} \ar@/^/[dr]^{\mu} & \\
\caa{W}\times\caa{W}\times\caa{X}
\ar@/^/[ur]^-{M\times1} \ar@/_/[dr]_-{1\times\mu}
\rrtwocell<\omit>{\;\;\chi^\caa{X}} && \caa{X}\\
& \caa{W}\times\caa{X} \ar@/_/[ur]_-{\mu} &}
\qquad
\xymatrix @C=.4in @R=.003in
{& \ca{V}\times\ca{A}\ar@/^/[dr]^{\mu} & \\
1\times\ca{A}\ar@/^/[ur]^-{I\times1}
\ar@/_3ex/[rr]_-{\sim}
\rrtwocell<\omit>{\;\;\nu^\ca{A}}
\ar[ddddd]_-{1\times P} && \ca{A}\ar[ddddd]^-P \\
\hole \\
\hole \\
\hole \\
& \caa{W}\times\caa{X} \ar@/^/[dr]^{\mu} & \\
1\times\caa{X}\ar@/^/[ur]^-{I\times1} 
\ar@/_3ex/[rr]_-{\sim}
\rrtwocell<\omit>{\;\;\nu^\caa{X}} && \caa{X}}
\end{displaymath}
where $\chi^\ca{A},\nu^\ca{A}$ are above 
$\chi^\caa{X},\nu^\caa{X}$ with respect to the 
appropriate fibrations. These data are subject to 
certain axioms, which in fact again
split up in two sets of commutative diagrams, 
for the components of the two natural isomorphisms that 
the fibred 2-cells $\chi$ and $\nu$ consist of. The resulting
diagrams coincide with the ones for an action 
of a monoidal category (\ref{actiondiag}).
\begin{defi}\label{Trepresentation}
 The fibration $P:\ca{A}\to\caa{X}$ is a \emph{$T$-representation}
(or a \emph{$T$-module})
for a monoidal fibration $T:\ca{V}\to\caa{W}$,
when it is equipped with a $T$-pseudoaction $\mu=(\mu^\ca{A},\mu^\caa{X})$. 
This amounts to two actions 
\begin{gather*}
 \mu^\ca{A}=*:\ca{V}\times\ca{A}\longrightarrow\ca{A} \\
\mu^\caa{X}=\diamond:\caa{W}\times\caa{X}\longrightarrow\caa{X}
\end{gather*}
of the monoidal categories $\ca{V}$, $\caa{W}$ on the 
categories $\ca{A}$ and $\caa{X}$ respectively,
such that $\mu^\ca{A}$ preserves cartesian arrows and 
$P\chi^\ca{A}_{XYA}=\chi^\caa{X}_{(TX)(TY)(PA)}$,
$P\nu^\ca{A}_A=\nu^\caa{X}_{PA}$
for all $X,Y\in\ca{V}$ and $A\in\ca{A}$.
\end{defi}
The last two relations are easy to verify 
in specific examples. In greater detail, 
the commutative diagram (\ref{pseudoaction}) representing
the pseudoaction implies that 
\begin{displaymath}
 P(X*A)=TX\diamond PA
\end{displaymath}
for any $X\in\ca{V}$, $A\in\ca{A}$, hence 
the isomorphisms
$\chi^\ca{A}_{XYA}:X*(Y*A)\cong(X\otimes_\ca{V} Y)*A$
lie above certain isomorphisms in $\caa{X}$
\begin{displaymath}
 P\chi^\ca{A}_{XYA}:TX\diamond(TY\diamond PA)\xrightarrow{\;\sim\;}
(TX\otimes_\caa{W} TY)\diamond PA
\end{displaymath}
in $\caa{W}$, since $T$ is strict monoidal.
Similarly, $\nu^\ca{A}_A:I*A\cong A$ is mapped,
under $P$, to 
\begin{displaymath}
 P\nu^\ca{A}_A:I_\caa{X}\diamond PA\xrightarrow{\;\sim\;}
PA
\end{displaymath}
since $P(I_\ca{V}*A)=T(I_\ca{V})\diamond PA=I_\caa{W}\diamond PA$
by strict monoidality of $T$ again. These isomorphisms
then are required to coincide with the components of 
structure isomorphisms
$\chi^{\caa{X}}$ and $\nu^{\caa{X}}$ of the $\caa{W}$-representation
$\caa{X}$.

The last step in order to get a clear picture
of how a modified correspondence between representations
of a monoidal fibration and enriched fibrations would work, 
is to introduce a notion of a parametrized adjunction 
in $\B{Fib}$.
For that, we first re-formulate the `adjunctions with 
a parameter' Theorem \ref{parametrizedadjunctions} 
in the context of $\B{Cat}^\B{2}$.
Even though the abstract definition of an adjunction
applies to any 2-categorical, or bicategorical, setting
as in Definition \ref{adjunction2cat}, for its appropriate
parametrized version
we need the 0-cells of our 2-category to be category-like
themselves. Intuitively,
in such cases, if we have a 1-cell with domain 
a product of two objects $t:A\times B\to C$, we 
are able to consider a 1-cell
$t_a:B\to C$ by fixing 
an `element' of one of the 0-cells,
$a$ in $A$.
\begin{thm}[Adjunctions with a parameter in $\B{Cat}^\B{2}$]
\label{parameteradjunctionCat2}
 Suppose we have a morphism $(F,G)$ of two variables
in $[\B{2},\B{Cat}]$, given by a commutative square
of categories and functors
\begin{equation}\label{FG}
 \xymatrix @C=.6in @R=.4in
{\ca{A}\times\ca{B}\ar[r]^-F
\ar[d]_-{H\times J} & \ca{C}\ar[d]^-K \\
\caa{X}\times\caa{Y}\ar[r]_-G & \caa{Z}.}
\end{equation}
Assume that, for every $B\in\ca{B}$ and $Y\in\caa{Y}$, 
there exist adjunctions $F(-,B)\dashv R(B,-)$ and
$G(-,Y)\dashv S(Y,-),$ such that 
the `partial' morphism $(F(-,B),G(-,JB))$
has a 
right adjoint $(R(B,-),S(JB,-))$ in $\B{Cat}^\B{2}$.
This is represented by
the diagram
\begin{equation}\label{adjunction[2,Cat]}
 \xymatrix @C=.9in @R=.5in
{\ca{A}\ar[d]_-H\ar@<+.8ex>[r]^-{F(-,B)}
\ar@{}[r]|-{\bot} &
\ca{C}\ar[d]^-K\ar@<+.8ex>[l]^-{R(B,-)} \\
\caa{X}\ar@<+.8ex>[r]^-{G(-,JB)}\ar@{}[r]|-{\bot} & 
\caa{Z}\ar@<+.8ex>[l]^-{S(JB,-)}}
\end{equation}
where both squares of left and right adjoints respectively
commute, and $(H,K)$ is a map of adjunctions.
Then, there is a unique way to define a morphism
of two variables 
\begin{equation}\label{defRS}
 \xymatrix @C=.6in @R=.4in
{\ca{B}^\op\times\ca{C}\ar[r]^-R
\ar[d]_-{J^\op\times K} & \ca{A}\ar[d]^-H \\
\caa{Y}^\op\times\caa{Z}\ar[r]_-S & \caa{X}}
\end{equation}
in $\B{Cat}^2$,
for which the natural isomorphisms
\begin{gather*}
 \ca{C}(F(A,B),C)\cong\ca{A}(A,R(B,C)) \\
\caa{Z}(G(X,Y),Z)\cong\caa{X}(X,S(Y,Z))
\end{gather*}
are natural in all three variables.
\end{thm}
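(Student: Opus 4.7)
My plan is to bootstrap the classical parametrized adjunction theorem (Theorem \ref{parametrizedadjunctions}) \emph{twice}, once to the top edge and once to the bottom edge of (\ref{FG}), and then to verify that the two resulting bifunctors automatically fit together into a morphism of $\B{Cat}^\B{2}$. The uniqueness clause of that theorem will simultaneously supply uniqueness for~$(R,S)$.

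First I would apply Theorem \ref{parametrizedadjunctions} to $F:\ca{A}\times\ca{B}\to\ca{C}$: the family of adjunctions $F(-,B)\dashv R(B,-)$ determines uniquely the action of $R$ on an arrow $h:B\to B'$ via a diagram of the form (\ref{parameterdefinining}), so that $R:\ca{B}^{\op}\times\ca{C}\to\ca{A}$ becomes a bifunctor and the hom-set isomorphism is natural in all three variables $A,B,C$. Exactly analogously, $G(-,Y)\dashv S(Y,-)$ yields a bifunctor $S:\caa{Y}^{\op}\times\caa{Z}\to\caa{X}$ with a hom-set isomorphism natural in $X,Y,Z$. This already takes care of the two upper horizontal edges of (\ref{defRS}).

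The remaining task is to check that the square (\ref{defRS}) strictly commutes, that is, $H\circ R=S\circ(J^{\op}\times K)$. On objects this is immediate from the hypothesis that each triangle of right adjoints in (\ref{adjunction[2,Cat]}) commutes. Functoriality splits the check on arrows into two cases, corresponding to morphisms of the shape $(1,f)$ and $(h,1)$. The case $(1,f)$ is precisely the statement that $(H,K)$ is a map of adjunctions for each fixed $B$, so that $H\circ R(B,-)=S(JB,-)\circ K$ as functors, hence also on the arrow $f$. The substantive case is $(h,1)$: I need $HR(h,C)=S(Jh,KC)$ in $\caa{X}$, for every $h:B\to B'$ and $C\in\ca{C}$.

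For this case I would exploit the explicit description of $R(h,C)$ as the transpose, under $F(-,B)\dashv R(B,-)$, of
\[
F(R(B',C),B)\xrightarrow{F(1,h)}F(R(B',C),B')\xrightarrow{\varepsilon'_C}C,
\]
and similarly for $S(Jh,KC)$. Applying $H$ and using the two identities $K\varepsilon'=\varepsilon'K$ and $KF=G(H\times J)$ (which are consequences of $(H,K)$ being a map of adjunctions from $F(-,B')\dashv R(B',-)$ to $G(-,JB')\dashv S(JB',-)$, together with strict commutativity of (\ref{FG})), I can rewrite $HR(h,C)$ as the transpose under $G(-,JB)\dashv S(JB,-)$ of the analogous composite built from $G(1,Jh)$ and the counit of the bottom adjunction. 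The uniqueness clause in Theorem \ref{parametrizedadjunctions} applied to $G$ then forces $HR(h,C)=S(Jh,KC)$, which is the desired equality. I anticipate this transposition argument to be the main technical point: it is a routine but careful mates-style diagram chase, where one must keep straight which unit/counit is being used at each step. Finally, uniqueness of the pair $(R,S)$ as a morphism in $\B{Cat}^\B{2}$ follows from the uniqueness of each component separately, already guaranteed by Theorem \ref{parametrizedadjunctions}.
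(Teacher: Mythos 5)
Your proposal is correct and follows essentially the same route as the paper: both obtain $R$ and $S$ from two applications of the classical parametrized adjunction theorem, reduce commutativity of (\ref{defRS}) to the case of arrows $(h,1)$, and settle that case by applying $H$ to the explicit transpose formula (\ref{parameterdefinining}) for $R(h,C)$ and using that the unit and counit of $F(-,B)\dashv R(B,-)$ lie above those of $G(-,JB)\dashv S(JB,-)$. The paper's proof is merely a terser version of the same diagram chase you describe.
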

\begin{proof}
The result is straightforward from the theory of
parametrized
adjunctions between categories. 
The fact that $(R(B,-),S(JB,-))$ is
an arrow in $\B{Cat}^\B{2}$ for all $B$'s, ensures that the 
diagram (\ref{defRS}) commutes on the second
variable, and also on 
the first variable on objects, since 
$HR(B,C)=S(JB,KC)$. On arrows, commutativity follows
from the 
unique way of defining $R(h,1)$ and 
$S(Jh,1)$ for any $h:B\to B'$ under these assumptions,
given by (\ref{parameterdefinining}). 
More explicitly, it is enough to consider 
the image of $R(h,1)$ under $H$
and use the fact that the unit and counit of 
$F(-,B)\dashv R(B,-)$ are above the unit and 
counit of $G(-,JB)\dashv S(JB,-)$ with respect to
the fibrations $H$ and $K$. 
\end{proof}
We call $(S,R)$ the \emph{parametrized adjoint} of 
$(F,G)$ in $[\B{2},\B{Cat}]$. If we started with 
a morphism of two variables in $\B{Fib}\subset\B{Cat}^2$, 
\emph{i.e.} a fibred 1-cell $(F,G)$ depicted as 
(\ref{FG}), what would 
change in the above statement is that the diagram
(\ref{adjunction[2,Cat]}) would be required to 
be a general fibred adjunction as in 
Definition \ref{generalfibredadjunction}, \emph{i.e.}
the partial right adjoint $R(B,-)$ to be a
cartesian functor itself. 
However, notice that by Lemma \ref{Winskellemma},
right adjoints always preserve cartesian arrows in $\B{Cat}^2$,
therefore we do not need to request this as an extra condition.
The pair $(S,R)$ is then called the 
\emph{fibred parametrized adjoint}
of $(F,G)$. On the other hand, in the context of 
$\B{OpFib}$, for the concept of an \emph{opfibred parametrized
adjoint} we request both $F$ and $R(B,-)$ to be 
cocartesian.

We are now able to propose a definition of an enriched 
fibration, based on the evidence provided above. The 
theorem that follows justifies this statement, 
in the sense that it completes our initial goal: to generalize
Theorem \ref{actionenrich} from $\B{Cat}$ to $\B{Fib}$, in order 
to establish an enrichment on the level of 0-cells
of these 2-categories.
\begin{defi}[Enriched Fibration]\label{enrichedfibration}
Suppose $T:\ca{V}\to\caa{W}$ is a monoidal
fibration. We say that an (ordinary)
fibration $P:\ca{A}\to\caa{X}$
is \emph{enriched} in $T$ when the following
conditions are satisfied:
\begin{itemize}
 \item the total category $\ca{A}$ is enriched in the total
monoidal $\ca{V}$ and the 
base category $\caa{X}$ is enriched in the base 
monoidal $\caa{W}$,
in such a way that 
\begin{equation}\label{enrichedfibrationhom}
\xymatrix @C=.8in @R=.5in
{\ca{A}^\op\times\ca{A}\ar[r]^-{\ca{A}(-,-)}
\ar[d]_-{P^\op\times P} & \ca{V}\ar[d]^-T \\
\caa{X}^\op\times\caa{X}\ar[r]_-{\caa{X}(-,-)} &
\caa{W}}
\end{equation}
commutes;
\item the composition law and the identities of the 
enrichments are compatible, in the 
sense that
\begin{align}\label{compositionidentitiesabove}
TM^{\ca{A}}_{A,B,C}&=M^{\caa{X}}_{PA,PB,PC} \\
Tj^\ca{A}_A&=j^{\caa{X}}_{PA};\notag
\end{align}
\item the partial functor $\ca{A}(A,-)$ is cartesian.
\end{itemize}
\end{defi}
It does not seem completely natural to ask for cartesianness of the 
enriched hom-functor between the total categories only on the second
variable. However
this condition is the only one with real effect, since the 
functor $\ca{A}(-,A):\ca{A}^\op\to\caa{X}$ 
goes from the total category 
of an opfibration to the total category of a fibration.
We accordingly have the notion of an \emph{enriched opfibration}.

The compatibility of the composition and identities 
of the two enrichments only says that if we take the 
image of the arrows
\begin{align*}
 M^{\ca{A}}_{A,B,C}&:\ca{A}(B,C)\otimes_\ca{V}\ca{A}(A,B)\to\ca{A}(A,C) \\
j^\ca{A}_A&:I_\ca{V}\to\ca{A}(A,A)
\end{align*}
in $\ca{A}$ under the (monoidal) fibration $T$, we obtain the actual
\begin{align*}
 M^{\caa{X}}_{PA,PB,PC}&:\caa{X}(PB,PC)\otimes_\caa{W}\caa{X}(PA,PB)\to\caa{X}(PA,PC) \\
j^{\caa{X}}_{PA}&:I_\caa{W}\to\caa{X}(PA,PA)
\end{align*}
where the domains and codomains already coincide by 
strict monoidality of $T$ and the commutativity of (\ref{enrichedfibrationhom}).

Notice that in the above definition, there exists 
the usual abuse of notation, where the same name is given to the 
enriched categories and their underlying ordinary categories. 
If we wanted to be more rigorous, 
we should denote the categories with the additional
enriched structure differently, for example $\B{A}$
and $\B{X}$. In that case the `enriched hom-functor'
(\ref{enrichedfibrationhom}),
analogous to (\ref{enrichedhomfunctor}) for enrichment 
in $\B{Cat}$, would be written as
\begin{displaymath}
\xymatrix @C=.6in
{\ca{A}^\op\times\ca{A}\ar[r]^-{\B{A}(-,-)}
\ar[d]_-{P^\op\times P} & \ca{V}\ar[d]^-T \\
\caa{X}^\op\times\caa{X}\ar[r]_-{\B{X}(-,-)} &
\caa{W}}
\end{displaymath}
and its partial 1-cell $(\B{A}(A,-),\B{X}(PA,-))$
is required to be a fibred 1-cell.
\begin{rmk}
When an ordinary fibration $P:\ca{A}\to\caa{X}$ is enriched in 
a monoidal fibration $T:\ca{V}\to\caa{W}$, the latter 
has a strict monoidal structure 
hence by Proposition \ref{changeofbase} we can make the $\ca{V}$-category
$\ca{A}$ into a $\caa{W}$-enriched $\tilde{T}\ca{A}$, 
with the same set of objects $\ob\ca{A}$ and hom-objects 
$T\ca{A}(A,B)=\caa{X}(PA,PB)$.

Then, the ordinary functor $P$ 
can be viewed as a $\caa{W}$-enriched functor between the 
$\caa{W}$-categories $\tilde{T}\ca{A}$ and $\caa{X}$: on 
objects it is the function $\ob P:\ob\ca{A}\to\ob\caa{X}$
and on hom-objects it is the identity arrow
$T\ca{A}(A,B)\xrightarrow{\;=\;}\caa{X}(PA,PB)$. The compatibility
with the composition and the identities of the enriched categories,
expressed by the commutativity of the diagrams (\ref{Venrichedfunctordiagrams}),
is ensured by the relations (\ref{compositionidentitiesabove}).
\end{rmk}
After a closer comparison between our Definition \ref{enrichedfibration}
of an enriched fibration, and Shulman's \cite[Definition 4.1]{Enrichedindexedcats}
of an indexed $\ca{V}$-category, we conclude that even if 
there are conceptual similarities, our definition
cannot even restrict in a straightforward way 
to the case of fibrations over the same base:
the monoidal category $\caa{W}$ is not 
in principle enriched over itself, and certainly 
not via an identity functor.
For a more accurate description of the similarities
and differences of the two approaches to the subject,
a detailed exposition of the ideas and theory 
in \cite{Enrichedindexedcats} would be needed, but 
this would go beyond the scope of this thesis.

We now proceed to a result which asserts that to give 
a fibration and an action $(*,\diamond)$
of a monoidal fibration $T$ with a 
fibred parametrized adjoint, is to give a $T$-enriched
fibration.
\begin{thm}\label{thmactionenrichedfibration}
 Suppose that $T:\ca{V}\to\caa{W}$ is a monoidal fibration,
which acts on an (ordinary) fibration $P:\ca{A}\to\caa{X}$
via the fibred 1-cell
\begin{displaymath}
 \xymatrix @C=.6in
{\ca{V}\times\ca{A}\ar[r]^-{*}\ar[d]_-{T\times P} & 
\ca{A}\ar[d]^-P \\
\caa{W}\times\caa{X}\ar[r]_-{\diamond} & \caa{X}.}
\end{displaymath}
If this action has a parametrized adjoint 
$(R,S):P^\op\times P\to T$
in $\B{Fib}$, then we can enrich the fibration 
$P$ in the monoidal fibration $T$.
\end{thm}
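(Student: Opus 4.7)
The strategy is to apply Theorem \ref{actionenrich} at two levels simultaneously---once on the total categories, and once on the base categories---using the two ordinary adjunctions packaged inside the fibred parametrized adjunction $(R,S)$. The monoidal fibration $T$ will then automatically encode the coherence between these two enrichments, yielding an enrichment in the sense of Definition \ref{enrichedfibration}.

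Concretely, I would first apply Theorem \ref{actionenrich} to the action $*:\ca{V}\times\ca{A}\to\ca{A}$ of the monoidal category $\ca{V}$ on $\ca{A}$, using the parametrized adjoint $R:\ca{A}^{\op}\times\ca{A}\to\ca{V}$ given by the adjunctions $(-*A)\dashv R(A,-)$. This furnishes a $\ca{V}$-enrichment of $\ca{A}$ with hom-objects $\ca{A}(A,B):=R(A,B)$, composition $M^{\ca{A}}_{A,B,C}$ defined by (\ref{compositionunderadjunction}), and identities $j^{\ca{A}}_A$ defined by (\ref{identityunderadjunction}). Applying the same theorem to $\diamond:\caa{W}\times\caa{X}\to\caa{X}$ and the parametrized adjoint $S:\caa{X}^{\op}\times\caa{X}\to\caa{W}$ yields a $\caa{W}$-enrichment of $\caa{X}$ with hom-objects $\caa{X}(X,Y):=S(X,Y)$, composition $M^{\caa{X}}$ and identities $j^{\caa{X}}$ defined by the analogous composites using $\chi^{\caa{X}}$, $\nu^{\caa{X}}$ and the counit $\varepsilon'$ of $(-\diamond PA)\dashv S(PA,-)$. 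Commutativity of (\ref{enrichedfibrationhom}) is immediate from the fact that $(R,S)$ is a morphism in $\B{Cat}^{\B{2}}$ (as provided by Theorem \ref{parameteradjunctionCat2}): explicitly, $TR(A,B)=S(PA,PB)$ both on objects and on arrows. The final condition, that $\ca{A}(A,-)=R(A,-)$ is cartesian, is exactly the hypothesis that $(R,S)$ is a \emph{fibred} parametrized adjoint---equivalently, it comes for free from Lemma \ref{Winskellemma}, since right adjoints in $\B{Cat}^{\B{2}}$ always preserve cartesian arrows.

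The heart of the proof, and the step that will require the most care, is the compatibility (\ref{compositionidentitiesabove}) between the composition laws and identities of the two enrichments. To show $TM^{\ca{A}}=M^{\caa{X}}$, I would argue that $TM^{\ca{A}}_{A,B,C}$ is an arrow in $\caa{W}$ which, under the adjunction $(-\diamond PA)\dashv S(PA,-)$, corresponds to a certain composite in $\caa{X}$. Applying $P$ to the defining composite (\ref{compositionunderadjunction}) for $M^{\ca{A}}$, and using in turn (i) strict monoidality of $T$, which gives $T(R(B,C)\otimes_{\ca{V}}R(A,B))=S(PB,PC)\otimes_{\caa{W}}S(PA,PB)$, (ii) the equality $P(X*A)=TX\diamond PA$ coming from commutativity of the action square, (iii) the identities $P\chi^{\ca{A}}=\chi^{\caa{X}}$ from Definition \ref{Trepresentation}, and (iv) the fact that $(P,T)$ is a map of adjunctions, so that the counit $\varepsilon^{A}_{B}:R(A,B)*A\to B$ of $*\dashv R(A,-)$ is sent by $P$ to the counit $\varepsilon'^{PA}_{PB}:S(PA,PB)\diamond PA\to PB$ of $\diamond\dashv S(PA,-)$, one identifies this composite precisely with the composite in $\caa{X}$ defining $M^{\caa{X}}_{PA,PB,PC}$. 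Uniqueness of transposes under the adjunction then forces $TM^{\ca{A}}=M^{\caa{X}}$. The argument for $Tj^{\ca{A}}_A=j^{\caa{X}}_{PA}$ is an entirely parallel but shorter translation using $P\nu^{\ca{A}}_A=\nu^{\caa{X}}_{PA}$ and (\ref{identityunderadjunction}).

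The main obstacle is indeed this last verification: one must patiently trace through the adjoint transposition, keeping careful bookkeeping of which morphisms live above which and exploiting both the strict monoidality of $T$ and the above/below compatibilities $P\chi^{\ca{A}}=\chi^{\caa{X}}$, $P\nu^{\ca{A}}=\nu^{\caa{X}}$. Nothing beyond naturality, functoriality, and the universal properties of the two adjunctions is actually required, but the bookkeeping is where all the content of the theorem resides; every other clause of Definition \ref{enrichedfibration} follows essentially immediately from Theorem \ref{actionenrich} applied twice together with the hypothesis that the parametrized adjoint lives in $\B{Fib}$ rather than merely in $\B{Cat}^{\B{2}}$.
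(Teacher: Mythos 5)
Your proposal is correct and follows essentially the same route as the paper's proof: apply Theorem \ref{actionenrich} twice (to $*$ with $R$ and to $\diamond$ with $S$), read off commutativity of the hom-square and cartesianness of $R(A,-)$ from the fibred parametrized adjunction, and verify the compatibility (\ref{compositionidentitiesabove}) by transposing under the two adjunctions using strict monoidality of $T$, the compatibility of the actions from Definition \ref{Trepresentation}, and the fact that $(P,T)$ is a map of adjunctions. Your write-up is, if anything, slightly more explicit about the bookkeeping in the final compatibility check than the paper's own proof.
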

\begin{proof}
 By Definition \ref{Trepresentation}, the $T$-action on $P$ consists of 
two actions $*$ and $\diamond$ of the monoidal categories
$\ca{V}$ and $\caa{W}$ on the ordinary categories $\ca{A}$ and $\caa{X}$ 
respectively. Moreover, by Theorem \ref{parameteradjunctionCat2}, 
we have two pairs of adjunctions 
\begin{equation}\label{twoadjunctions}
\xymatrix @C=.5in
{\ca{A}\ar@<+.8ex>[r]^-{-*A}
\ar@{}[r]|-\bot & \ca{V}\ar@<+.8ex>[l]^-{\bar{R}(A,-)}}
\quad\mathrm{and}\quad
\xymatrix @C=.5in
{\caa{X}\ar@<+.8ex>[r]^-{-\diamond X}
\ar@{}[r]|-\bot & \caa{W}\ar@<+.8ex>[l]^-{R(X,-)}}
\end{equation}
 for all $A\in\ca{A}$ and $X\in\caa{X}$. By Theorem 
\ref{actionenrich}, there exists a $\ca{V}$-category with underlying
category $\ca{A}$ and hom-objects $\bar{R}(A,B)$ and also
a $\caa{W}$-category with underlying category $\caa{X}$ 
and hom-objects $R(X,Y)$. 
By the definition of fibred parametrized adjoints, we have that 
$(\bar{R},R)$ is a 1-cell in $\B{Cat}^2$ and moreover
$(\bar{R}(A,-),R(PA,-))$ is a 1-cell in $\B{Fib}$.

Lastly, we need to show that the composition and identity laws
of the enrichments are compatible as in (\ref{compositionidentitiesabove}). 
By computing
the adjuncts of $M^\ca{A}_{A,B,C}$ and $j^\ca{A}_A$ under
$(-*A)\dashv\bar{R}(A,-)$ which are given 
explicitly by the arrows (\ref{compositionunderadjunction})
and (\ref{identityunderadjunction}) and taking their images under $T$, 
it can be seen that they bijectively correspond to the morphisms
$M^\caa{X}_{PA,PB,PC}$ and $j^\caa{X}_{PA}$ under the adjunction
$(-\diamond X)\dashv R(X,-)$. For this, we use 
that $(P,T)$ is a map between the adjunctions (\ref{twoadjunctions}),
$T$ is a strict monoidal functor and that 
the actions $*$ and $\diamond$ are compatible, in the sense of 
the definition of a $T$-representation.
\end{proof}
Clearly, there is a dual version of the above, characterizing
the enrichment of an opfibration in a monoidal opfibration.
In order for our examples to fit in this theory, we 
also need the notion of a fibration enriched in an opfibration
and its dual. 
\begin{defi}
Suppose that $T:\ca{V}\to\caa{W}$ is a symmetric monoidal opfibration.
We say that a fibration $P:\ca{A}\to\caa{X}$ is enriched in $T$
if the opfibration $P^\op:\ca{A}^\op\to\caa{X}^\op$ is 
an enriched $T$-opfibration. 
\end{defi}
We can now apply Theorem \ref{thmactionenrichedfibration}
to obtain an enrichment of the fibration $G:\Mod\to\Mon(\ca{V})$
in the monoidal opfibration $V:\Comod\to\Comon(\ca{V})$. First of all, 
$V$ is a strict monoidal functor (\ref{strictmonoidalVG})
and $\otimes:\Comod\times\Comod\to\Comod$ preserves cocartesian 
arrows on the nose (see proof of Proposition \ref{Comodclosed}), thus 
$V$ is indeed a monoidal opfibration. Then, by Definition 
\ref{Trepresentation} we have an action 
of $V$ on $G^\op$, given by the actions $H^\op$ of $\Comon(\ca{V})$
on $\Mon(\ca{V})^\op$ and $\bar{H}^\op$ of $\Comod$
on $\Mod^\op$ as in (\ref{Hopaction}) and (\ref{barHopaction}).
The compatibility conditions between
these two actions hold and 
$\bar{H}^\op$ strictly preserves cocartesian liftings
(see Section \ref{globalcats}). Finally, 
there is evidence that the universal measuring
comodule functor $Q$ preserves cocartesian liftings
on the first variable, which would make $(Q,P)$ into
an opfibred parametrized adjoint for the action $(\bar{H}^\op,H^\op)$.
We can thus enrich $G^\op$ in $V$.
\begin{prop}
If $Q(-,N_B)$ is cocartesian, the fibration $G:\Mod\to\Mon(\ca{V})$ is 
enriched in the monoidal opfibration $V:\Comod\to\Comon(\ca{V})$.
\end{prop}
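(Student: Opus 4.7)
The plan is to verify the hypotheses of the (dual of) Theorem \ref{thmactionenrichedfibration}, applied to the opfibration $G^{\op}:\Mod^{\op}\to\Mon(\ca{V})^{\op}$ and the monoidal opfibration $V:\Comod\to\Comon(\ca{V})$. Once this is done, the enrichment of $G^{\op}$ in $V$ is, by definition, an enrichment of $G$ in $V$.

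First, I would confirm that $V$ is a monoidal opfibration in the sense of Definition \ref{monoidalfibration}: Proposition \ref{VComodopfibred}'s analogue in Chapter \ref{enrichmentofmonsandmods} gives the opfibration structure, equation (\ref{strictmonoidalVG}) shows that $V$ is strict symmetric monoidal, and the discussion immediately following Proposition \ref{Comodcomonadic} (together with the argument used in Proposition \ref{Comodclosed}) shows that $\otimes:\Comod\times\Comod\to\Comod$ preserves cocartesian liftings on the nose. Next, I would assemble the $V$-action on $G^{\op}$. On the base, the action $H^{\op}:\Comon(\ca{V})\times\Mon(\ca{V})^{\op}\to\Mon(\ca{V})^{\op}$ from (\ref{Hopaction}) is available, and on the total category the action $\bar{H}^{\op}:\Comod\times\Mod^{\op}\to\Mod^{\op}$ from (\ref{barHopaction}) is the lift. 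Commutativity with the opfibrations, i.e.\ $G^{\op}\bar{H}^{\op}=H^{\op}(V\times G^{\op})$, is by construction of $\bar{H}$ in (\ref{barHHfibred1cell}); and the verification at the end of Section \ref{globalcats} that $(\bar{H},H)$ is a split fibred $1$-cell means dually that $\bar{H}^{\op}$ preserves cocartesian arrows strictly, so $(\bar{H}^{\op},H^{\op})$ is an opfibred $1$-cell. The coherence isomorphisms $\chi,\nu$ of the $V$-pseudoaction are inherited from those making $\bar{H}^{\op}$ and $H^{\op}$ into honest actions, and they lie above each other because the monadic forgetful functor $G$ reflects this.

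Then I would check the parametrized adjoint condition. Proposition \ref{propmeasuringcomodule} produces, for each $N_B$, the adjunction $\bar{H}(-,N_B)^{\op}\dashv Q(-,N_B)$, and Proposition \ref{measuringcomonoidprop} the adjunction $H(-,B)^{\op}\dashv P(-,B)$; diagram (\ref{hugediag1}) shows that these assemble into an adjunction in $\B{Cat}^{\B{2}}$, so $(G^{\op},V)$ is a map of adjunctions in the sense needed by Theorem \ref{parameteradjunctionCat2}. To upgrade this to an opfibred parametrized adjunction, the extra condition is exactly that the right adjoint $Q(-,N_B)$ be cocartesian — and this is the hypothesis of the proposition. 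Applying the dual of Theorem \ref{thmactionenrichedfibration} yields the enrichment, with hom-objects $Q(M,N)_{P(A,B)}$ on the total side and $P(A,B)$ on the base side; the fact that the enriched hom-functor squares fit into (\ref{enrichedfibrationhom}) is precisely the commutativity statement of Theorem \ref{ModenrichedinComod}.

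The only nontrivial step, and the main obstacle, is the hypothesis itself: in this setup the fibrewise right adjoints $Q_{(-)}(-,N_B)$ automatically exist by the arguments of Proposition \ref{propmeasuringcomodule}, but showing that the total right adjoint $Q(-,N_B)$ transports cocartesian liftings requires the Beck--Chevalley mate of the canonical isomorphism of Theorem \ref{totaladjointthm} (opfibration version) to be invertible. Since this is precisely what the hypothesis postulates, the remainder of the proof reduces to the bookkeeping above; the compatibility of compositions and identities (\ref{compositionidentitiesabove}) then comes for free from the computation at the end of the proof of Theorem \ref{thmactionenrichedfibration}, using that $V$ is strict monoidal and that $(G^{\op},V)$ maps the adjunctions (\ref{twoadjunctions}) to one another.
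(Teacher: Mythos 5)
Your proposal is correct and follows essentially the same route as the paper: verify that $V$ is a monoidal opfibration via (\ref{strictmonoidalVG}) and the proof of Proposition \ref{Comodclosed}, assemble the $V$-action on $G^{\op}$ from $H^{\op}$ and $\bar{H}^{\op}$ with $\bar{H}^{\op}$ strictly cocartesian, and then use the hypothesis on $Q(-,N_B)$ to make $(Q,P)$ an opfibred parametrized adjoint so that (the dual of) Theorem \ref{thmactionenrichedfibration} applies. The extra detail you supply about the Beck--Chevalley mate from Theorem \ref{totaladjointthm} is consistent with, and slightly more explicit than, what the paper records.
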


Of course, it would as well suffice to 
verify the conditions of Definition \ref{enrichedfibration}
for this particular case, in order to obtain the above result.

At this moment, similar complications arise for the proof 
that the generalized Sweedler hom functor $T(-,\ca{B}_Y)$
and the functor $\bar{T}(-,\Psi_\ca{B})$ between $\ca{V}$-modules 
and $\ca{V}$-comodules preserve cartesian liftings. 
As a result, we also cannot claim the enrichment of the 
fibrations $N$ and $P$ in the 
monoidal opfibrations $H$ and $W$ as
in (\ref{hugediag2}) unless this condition is satisfied
(like the above proposition), even though the remaining conditions
hold. We aim to verify these properties with future work.

\section{Double categorical and bicategorical setting}\label{doublecatssetting}
\nocite{Gurskitricats}

We are now interested in generalizing the above development,
starting with an arbitrary bicategory or even a double category
in place of $\ca{V}$-$\Mat$. 
The fact that Chapter \ref{VCatsVCocats} is centered around
the bicategory of $\ca{V}$-matrices
and Chapter \ref{enrichmentofmonsandmods} 
addresses the one-object bicategory case are indicative
of such an extension.
So the driving question of this 
section is to determine what kind of structure a bicategory
$\ca{K}$ should have, in order to recapture the main results
of the previous two chapters.

There are two functors of bicategories
which are fundamental for our purposes. Firstly,
a homomorphism (pseudofunctor)
\begin{displaymath}
 \otimes:\ca{K}\times\ca{K}\longrightarrow\ca{K}
\end{displaymath}
which will be part of a monoidal structure on our bicategory,
and also a lax functor 
\begin{displaymath}
 H:\ca{K}^{\mathrm{co}}\times\ca{K}\longrightarrow\ca{K}
\end{displaymath}
which under circumstances, will lead to enrichment relations
between total categories of 
certain fibrations and opfibrations. 
The above functors of bicategories
provide (ordinary) functors
\begin{align}\label{functorsbetweenhomcats}
 \otimes_{(A,B),(C,D)}&:\ca{K}(A,C)\times\ca{K}(B,D)\to
\ca{K}(A\otimes B,C\otimes D) \\
H_{(A,B),(C,D)}&:\ca{K}(A,C)^\op\times\ca{K}(B,D)\to
\ca{K}(H(A,B),H(C,D)) \notag
\end{align}
between the hom-categories. Moreover, as seen in
Lemma \ref{lemmonlaxfun}, any lax 
functor of bicategories induces a functor between
the categories of monoids of endoarrow hom-categories
with horizontal composition. Here they
produce
\begin{align*}
 \Mon(\otimes_{(A,B)})&:\Mon\ca{K}(A,A)\times\Mon\ca{K}(B,B)\to
\Mon\ca{K}(A\otimes B,A\otimes B) \\
\Mon(H_{(A,B)})&:\Comon\ca{K}(A,A)^\op\times\Mon\ca{K}(B,B)\to
\Mon\ca{K}(H(A,B),H(A,B)).
\end{align*}
These functors are just restrictions of (\ref{functorsbetweenhomcats})
on the appropriate categories, which in fact turn out to be 
fibres of total categories, crucial for the development.
Since $\otimes$ is a pseudofunctor, \emph{i.e.}
also colax with respect to the horizontal composition, 
there is also an induced functor 
\begin{displaymath}
 \Comon(\otimes_{(A,B)}):\Comon\ca{K}(A,A)\times\Comon\ca{K}(B,B)\to
\Comon\ca{K}(A\otimes B,A\otimes B).
\end{displaymath}
Under certain conditions, these functors `between the fibres' 
induce total functors which give rise to specific structures
of importance.

For $\ca{K}=\ca{V}\text{-}\Mat$ for example, these categories
are $\Mon\ca{K}(A,A)=\ca{V}\text{-}\B{Cat}_A$ and 
$\Comon\ca{K}(A,A)=\ca{V}\text{-}\B{Cocat}_A$
for fixed sets of objects $A$.
The bicategory of $\ca{V}$-matrices is in fact 
a monoidal bicategory with tensor product as in 
(\ref{monoidalVMat}) which 
induces the monoidal structure of the total
categories $\ca{V}$-$\B{Cat}$ and $\ca{V}$-$\B{Cocat}$.
Also, the lax functor $H=\Hom:(\ca{V}\textrm{-}\Mat)^{\textrm{co}}
\times\ca{V}\textrm{-}\Mat\to\ca{V}\textrm{-}\Mat$ 
defined as in (\ref{defimportHom})
gives rise to the functor $K$,
whose adjoint induces the enrichment stated by Theorem 
\ref{VCatenrichedinVCocat}.

Furthermore, by Proposition \ref{laxfunctorbetweenmodules}
the lax functors $\otimes$ and $H$ induce 
\begin{align*}
\ca{K}(A,C)^{\ca{K}(A,t)}
\times\ca{K}(B,D)^{\ca{K}(B,s)}&\to
\ca{K}(A\otimes B,C\otimes D)^{\ca{K}(A\otimes B,t\otimes s)} \\
{\ca{K}(A,C)^{\ca{K}(A,u)}}^\op\times
\ca{K}(B,D)^{\ca{K}(B,s)}&\to
\ca{K}(H(A,B),H(C,D))^{\ca{K}(H(A,B),H(u,s))}
\end{align*}
between the categories of left modules and comodules
with fixed domains,
for monads $t:C\to C$, $s:D\to D$ and comonad
$u:C\to C$ in $\ca{K}$. These can also be written as 
\begin{align*}
\Mod(\otimes_{(A,B),(C,D)})&: {^A_t}\Mod\times {^B_s}\Mod
\longrightarrow
{^{A\otimes B}_{t\otimes s}}\Mod \\
\Mod(H_{(A,B),(C,D)})&: {^A_u}\Comod^\op\times {^B_s}\Mod
\longrightarrow
{^{H(A,B)}_{H(u,s)}}\Mod
\end{align*}
by Definitions \ref{lefttmodules}, \ref{rightucomodules}.
Again, since $\otimes$ is a homomorphism of bicategories,
it also induces
\begin{displaymath}
 \Comod(\otimes_{(A,B),(C,D)}): {^A_u}\Comod\times {^B_v}\Comod
\longrightarrow
{^{A\otimes B}_{u\otimes v}}\Comod
\end{displaymath}
between the categories of comodules.
These functors between the fibres of the global
categories are expected to give
the monoidal structures to modules and comodules, and
the enrichment of modules in comodules respectively.
For the bicategory $\ca{V}$-$\B{Mat}$,
the monoidal structures of $\ca{V}$-$\Mod$ and $\ca{V}$-$\Comod$
as well as Theorem \ref{VModenrichedinVComod} 
are obtained by employing instances of the above functors.

In order to identify suitable assumptions
on the bicategory $\ca{K}$,
we are going to employ the theory of double
categories. This turns out to be 
an appropriate theoretical framework leading 
to enriched fibrations as discussed in last section,
because it provides with a better understanding of 
the nature of the categories appearing in our examples.
We largely follow the approach of \cite{ConstrSymMonBicats}, 
where a method for constructing (symmetric) monoidal
bicategories from (symmetric) monoidal
double categories which satisfy a 
lifting condition is described. This process
allows us to reduce a lengthy and demanding task
of verifying the coherence conditions of monoidal structure 
on a bicategory into a much more concise and 
speedy procedure, essentially involving a pair 
of ordinary monoidal categories.
\begin{defi}
 A \emph{(pseudo) double category} $\caa{D}$
consists of a category of objects $\caa{D}_0$ and
a category of arrows $\caa{D}_1$, with structure
functors 
\begin{displaymath}
 \B{1}:\caa{D}_0\to\caa{D}_1,\quad 
\Gr{s},\Gr{t}:\caa{D}_1\rightrightarrows\caa{D}_0,\quad
\odot:\caa{D}_1{\times_{\caa{D}_0}}\caa{D}_1\to\caa{D}_1
\end{displaymath}
such that
$\Gr{s}(1_A)$=$\Gr{t}(1_A)$=$A,\;\Gr{s}(M\odot N)$=$\Gr{s}(N),\;
\Gr{t}(M\odot N)$=$\Gr{t}(M)$
for all $A\in\ob\caa{D}_0$, $M,N\in\ob\caa{D}_1$,
equipped with natural isomorphisms
\begin{align*}
 \alpha:(M\odot N)\odot P&\xrightarrow{\;\sim\;}
M\odot(N\odot P) \\
\lambda:1_{\Gr{s}(M)}\odot M&\xrightarrow{\;\sim\;}M \\
\rho:M\odot1_{\Gr{t}(M)}&\xrightarrow{\;\sim\;}M
\end{align*}
in $\caa{D}_1$ for all $M,N,E\in\ob\caa{D}_1$, such that 
$\Gr{t}(\alpha),\Gr{s}(\alpha),\Gr{t}(\lambda),\Gr{s}(\lambda),
\Gr{t}(\rho),\Gr{s}(\rho)$ are all
identities, and satisfying the usual coherence conditions
(as for a bicategory).
\end{defi}
The objects of $\caa{D}_0$ are called \emph{0-cells}
and the morphisms of $\caa{D}_0$ are called 
\emph{1-morphisms} or \emph{vertical 1-cells},
denoted as $f:A\to B$. The objects of $\caa{D}_1$ are 
the \emph{(horizontal) 1-cells}, denoted as$\SelectTips{eu}{10}
\xymatrix@C=.2in{M:A\ar[r]
\ar@{}[r]|-{\scriptstyle{\bullet}} & B}$
where $\Gr{s}(M)=A$ is the source and $\Gr{t}(M)=B$ the 
target of $M$. The morphisms of $\caa{D}_1$ are the 
\emph{2-morphisms}, denoted as squares
\begin{displaymath}
\xymatrix
{A\ar[r]^-M\ar@{}[r]|-{\scriptstyle{\bullet}} 
\rtwocell<\omit>{<4>\alpha} \ar[d]_-f & B\ar[d]^-g \\
C\ar[r]_-N\ar@{}[r]|-{\scriptstyle{\bullet}} & D}
\end{displaymath}
or $^f\alpha^g:M\Rightarrow N$, 
where $\Gr{s}(\alpha)=f$ and $\Gr{t}(\alpha)=g$.
The composition of vertical 1-cells  
and the vertical composition of 2-morphisms
are strictly associative since $\caa{D}_0$ and 
$\caa{D}_1$ are categories, whereas 
horizontal composition of horizontal
1-cells and 2-morphisms is associative up to isomorphism
due to the isomorphisms $a_{M,N,P}$. These 
are respectively written as
\begin{displaymath}
 \xymatrix @C=.25in @R=.25in
{A\ar[r]^-M\ar@{}[r]|-{\scriptstyle{\bullet}} \ar[d]_-f
\rtwocell<\omit>{<3>\alpha} & B\ar[d]^-g \\
C\ar[r]^-N\ar@{}[r]|-{\scriptstyle{\bullet}} \ar[d]_-h
\rtwocell<\omit>{<3>\beta} & D\ar[d]^-k \\
E\ar[r]_-P\ar@{}[r]|-{\scriptstyle{\bullet}} & F}
\xymatrix{\hole \\ = \\ \hole}
\xymatrix @C=.25in @R=.25in
{A\ar[r]^-M\ar@{}[r]|-{\scriptstyle{\bullet}} \ar[dd]_-{hf} &
B\ar[dd]^-{kg} \\
\qquad\color{white}{C} \rtwocell<\omit>{\;\beta\alpha} & \\
E\ar[r]_-P\ar@{}[r]|-{\scriptstyle{\bullet}} & F,}\quad
 \xymatrix @C=.08in @R=.08in
{&& \\
A\ar[rr]^-M\ar@{}[rr]|-{\scriptstyle{\bullet}} \ar[dd]_-f
\rrtwocell<\omit>{<4>\alpha} && B\ar[rr]^-N
\ar@{}[rr]|-{\scriptstyle{\bullet}} \ar[dd]_-g \rrtwocell
<\omit>{<4>\beta} && C\ar[dd]^-h \\
&&& \\
D\ar[rr]_-P\ar@{}[rr]|-{\scriptstyle{\bullet}} && E\ar[rr]_-K
\ar@{}[rr]|-{\scriptstyle{\bullet}}  && F \\
&&}
\xymatrix @C=.08in @R=.08in
{ \\
\hole \\
= \\ }
\xymatrix @C=.08in @R=.08in
{&& \\
A\ar[rrr]^-{N\odot M}\ar@{}[rrr]|-{\scriptstyle{\bullet}}
\ar[dd]_-f & \rtwocell<\omit>{<4>{\quad\beta\odot\alpha}} && 
C\ar[dd]^-h \\
&&& \\
D\ar[rrr]_-{K\odot P}\ar@{}[rrr]|-{\scriptstyle{\bullet}} &&& F. \\
&&} 
\end{displaymath}
The vertical identity 1-cell $\mathrm{id}_A:A\to A$ for any object $A$
and the identity 2-morphism $1_M$
for any 1-cell $M$ make the vertical compositions also strictly unital. 
Also, the horizontal unit 1-cell$\SelectTips{eu}{10}\xymatrix@C=.2in
{1_A:A\ar[r]\ar@{}[r]|-{\scriptstyle{\bullet}} & A}$for every object 
$A$ and the horizontal unit 2-morphism $1_f$ 
for any 1-morphism $f:A\to B$ make the horizontal
compositions unital up to isomorphism. The 
identity 2-morphisms are denoted by
\begin{displaymath}
 \xymatrix
{A\ar[r]^-M\ar@{}[r]|-{\scriptstyle{\bullet}} \ar[d]_-{\mathrm{id}_A}
\rtwocell<\omit>{<4>{\;\;1_M}} & B \ar[d]^-{\mathrm{id}_B} \\
A\ar[r]_-M\ar@{}[r]|-{\scriptstyle{\bullet}} & B} \qquad
 \xymatrix
{A\ar[r]^-{1_A}\ar@{}[r]|-{\scriptstyle{\bullet}} \ar[d]_-f
\rtwocell<\omit>{<4>{\;\;1_f}} & A \ar[d]^-f \\
B\ar[r]_-{1_B}\ar@{}[r]|-{\scriptstyle{\bullet}} & B}
\end{displaymath}
and in particular $1_{1_A}=1_{\mathrm{id}_A}$. 
Functoriality of the 
horizontal composition $\odot$ results in the
relation $1_N\odot1_M=1_{N\odot M}$ and 
the interchange law which the two different
compositions obey:
\begin{displaymath}
 (\beta'\beta)\odot(\alpha'\alpha)=
(\beta'\odot\alpha')(\beta\odot\alpha).
\end{displaymath}

The \emph{opposite double category} $\caa{D}^\op$
is the double category with vertical category 
$\caa{D}_0^\op$ and horizontal category $\caa{D}_1^\op$.
There also exist the \emph{horizontally opposite}
double category $\caa{D}^\mathrm{hop}$ and 
\emph{vertically opposite} double category 
$\caa{D}^\mathrm{vop}$, where the horizontal and 
vertical categories respectively are the opposite
ones.

A 2-morphism with identity source and target 1-morphisms, 
like $a,l,r$ above, is called  \emph{globular}.
Evidently, for every double
category $\caa{D}$ there is a corresponding bicategory
denoted by $\ca{H}(\caa{D})$ or just $\ca{D}$, 
called its \emph{horizontal bicategory}. It 
consists of the objects, (horizontal) 1-cells
and globular 2-morphisms. In a sense, this comes
from discarding the vertical structure
of the double category. 

Many well-known bicategories arise as the horizontal bicategories
of specific double categories.
For example, consider the double category
$\ca{V}$-$\MMat$:
the category of objects is
$\ca{V}$-$\MMat_0$=$\B{Set}$, 
and the category of arrows $\ca{V}$-$\MMat_1$
consists of $\ca{V}$-matrices$\SelectTips{eu}{10}
\xymatrix@C=.2in
{S:X\ar[r]|-{\object@{|}} & Y}$as 1-cells, and 2-morphisms
$^f\alpha^g:S\Rightarrow T$
given by families of arrows 
\begin{displaymath}
\alpha_{y,x}:S(y,x)\to T(gy,fx)
\end{displaymath}
in $\ca{V}$ for all $x\in X$ and $y\in Y$. The 
structure functor
$\B{1}$ gives the identity $\ca{V}$-matrix$\SelectTips{eu}{10}
\xymatrix@C=.2in
{1_X:X\ar[r]|-{\object@{|}} & X}$for all sets $X$
and the unit 2-morphism $1_f$ with components
arrows
\begin{displaymath}
 (1_f)_{x',x}:1_X(x',x)\to1_X(x',x)\equiv
\begin{cases}
 I\xrightarrow{1_I}I, &\textrm{ if}\;x=x' \\
0\to 0, &\textrm{ if}\;x\neq x'.
\end{cases}
\end{displaymath}
The source and target functors give the evident
sets and functions, and the functor
\begin{displaymath}
 \odot:\ca{V}\text{-}\MMat_1{\times_{\ca{V}\text{-}\MMat_0}}
\ca{V}\text{-}\MMat_1\to\ca{V}\text{-}\MMat_1
\end{displaymath}
is given by the usual composition of $\ca{V}$-matrices
as in (\ref{horizontalcompositionVmatrices})
on objects, and on 2-morphisms 
$^f(\beta\odot\alpha)^g:T\circ S\Rightarrow
T'\circ S'$ is given by the composite arrows
\begin{displaymath}
\xymatrix @C=1in @R=.25in
{\sum_{y} T(z,y)\otimes S(y,x)
\ar[r]^-{\sum\beta_{z,y}\otimes\alpha_{y,z}}
\ar@/_3ex/@{-->}[dr] &
\sum_{y}T'(hz,gy)\otimes S'(gy,fx)
\ar@{_(->}[d]^-{\iota} \\
& \sum_{y'}T'(hz,y')\otimes S'(y',fx)}
\end{displaymath}
in $\ca{V}$, for all $x\in X$ and $z\in Z$. Notice how
this generalizes the operation 
(\ref{horizontalcompositionVmatricearrows}) 
between $\ca{V}$-matrices
of different domain and codomain.
Compatibility conditions of source and target 
functors with composition can be easily checked, and 
the globular 2-isomorphisms are the ones
described in Section \ref{bicatVMat}. 
Of course, its horizontal bicategory
$\ca{H}(\ca{V}\text{-}\MMat)$ is precisely
the bicategory $\ca{V}$-$\Mat$.
\begin{defi}
For $\caa{D}$ and $\caa{E}$ (pseudo)
double categories, a 
\emph{pseudo double functor} $F:\caa{D}\to\caa{E}$
consists of functors $F_0:\caa{D}_0\to\caa{E}_0$ and 
$F_1:\caa{D}_1\to\caa{E}_1$ between the categories
of objects and arrows, such that 
$\Gr{s}\circ F_1=F_0\circ\Gr{s}$ 
and $\Gr{t}\circ F_1=F_0\circ\Gr{t}$, and natural 
transformations $F_{\odot}$, $F_U$ 
with components
globular isomorphisms $F_1M\odot F_1N\xrightarrow{\sim}F_1(M\odot N)$
and $1_{F_0A}\xrightarrow{\sim}F_1(1_A)$ respectively, 
which satisfy the usual coherence axioms
for a pseudofunctor.
\end{defi}
We also have notions of \emph{lax} and \emph{colax double functors}
between pseudo double categories, where the natural
transformations $F_{\odot}$ and $F_U$ have components 
globular 2-morphisms in one of the two possible
directions respectively. The explicit definitions can be 
found in the appendix of \cite{Limitsindoublecats} or 
\cite{Adjointfordoublecats}. In
particular, naturality of $F_\odot$ in this context means
the following: for any composable 2-morphisms 
$^f\alpha^g:M\Rightarrow M'$ and $^g\beta^h:N\Rightarrow N'$
in $\caa{D},$
the components of $F_\odot$ satisfy 
\begin{equation}\label{naturalityFodot}
 \xymatrix @C=.5in
{F_0A\ar[r]^-{F_1M}\ar@{}[r]|-{\scriptstyle{\bullet}}\rtwocell<\omit>{<5>\;\;F_1\alpha}
\ar[d]_-{F_0f} & 
F_0B\ar[r]^-{F_1N}\ar@{}[r]|-{\scriptstyle{\bullet}}\rtwocell<\omit>{<5>\;\;F_1\beta}
\ar[d]^-{F_0g} & F_0C\ar[d]^-{F_0h} \\
F_0A'\ar[r]_-{F_1M'}\ar@{}[r]|-{\scriptstyle{\bullet}}\rrtwocell<\omit>{<5>\;F_\odot}
\ar@{=}[d] & F_0B'\ar[r]_-{F_1N'}\ar@{}[r]|-{\scriptstyle{\bullet}} &
F_0C'\ar@{=}[d] \\
F_0A'\ar[rr]_-{F_1(N'\odot M')}\ar@{}[rr]|-{\scriptstyle{\bullet}} && F_0C'}
\quad\xymatrix{\hole \\ \mathrm{=}}\quad
\xymatrix @C=.5in
{F_0A\ar[r]^-{F_1M}\ar@{}[r]|-{\scriptstyle{\bullet}}\rrtwocell<\omit>{<5>\;F_\odot}
\ar@{=}[d] & 
F_0B\ar[r]^-{F_1N}\ar@{}[r]|-{\scriptstyle{\bullet}} & F_0C\ar@{=}[d] \\
F_0A\ar[rr]_-{F_1(N\odot M)}\ar@{}[rr]|-{\scriptstyle{\bullet}}
\rrtwocell<\omit>{<5>\qquad\; F_1(\beta\odot\alpha)}
\ar[d]_-{F_0f} && F_0C\ar[d]^-{F_0h} \\
F_0A'\ar[rr]_-{F_1(N'\odot M')}\ar@{}[rr]|-{\scriptstyle{\bullet}} && F_0C'.}
\end{equation}

Whenever we have a pseudo double functor $F:\caa{D}\to\caa{E},$
there is an induced pseudofunctor between
the respective horizontal bicategories
\begin{displaymath}
 \ca{H}F:\ca{H}(\caa{D})\to\ca{H}(\caa{E})
\end{displaymath}
which consists of the following data: 

$\cdot$ for each 0-cell $A\in\caa{D}_0$
in the bicategory $\ca{H}(\caa{D})$, a 0-cell $F_0A\in\caa{E}_0$
in the bicategory $\ca{H}(\caa{E})$;

$\cdot$ for each two 0-cells $A,B\in\caa{D}_0$, a functor
\begin{displaymath}
 \ca{H}F_{A,B}:\ca{H}(\caa{D})(A,B)\to\ca{H}(\caa{E})(F_0A,F_0B)
\end{displaymath}
which maps a horizontal 1-cell$\SelectTips{eu}{10}
\xymatrix@C=.2in{M:A\ar[r]\ar@{}[r]|-{\scriptstyle{\bullet}}
 & B}$to the 1-cell$\SelectTips{eu}{10}\xymatrix@C=.2in
{F_1M:F_0A\ar[r]\ar@{}[r]|-{\scriptstyle{\bullet}} & F_0B}$and
\begin{displaymath}
 \xymatrix
{A\ar[r]^-M\ar@{}[r]|-{\scriptstyle{\bullet}} \ar[d]_-{\mathrm{id_A}}
\rtwocell<\omit>{<4>\alpha} & B\ar[d]^-{\mathrm{id_B}} \\
A\ar[r]_-N\ar@{}[r]|-{\scriptstyle{\bullet}} & B}\;
\xymatrix @R=.1in
{\hole \\ \mapsto }\;
\xymatrix @C=.5in
{F_0A\ar[r]^-{F_1M}\ar@{}[r]|-{\scriptstyle{\bullet}} \ar[d]_-{\mathrm{id_{(F_0A)}}}
\rtwocell<\omit>{<4>\quad F_1\alpha} & F_0B\ar[d]^-{\mathrm{id_{(F_0B)}}} \\
F_0A\ar[r]_-{F_1N}\ar@{}[r]|-{\scriptstyle{\bullet}} & F_0B}
\end{displaymath}
using functoriality of $F_0$ and compatibility of $F_0$ and $F_1$
with sources and targets;

$\cdot$ for every triple of 0-cells $A,B,C$,
a natural isomorphism with components invertible
arrows 
\begin{displaymath}
 \delta^{N,M}:F_1N\odot F_1M\xrightarrow{\;\sim\;}F_1(N\odot M)
\end{displaymath}
for$\SelectTips{eu}{10}\xymatrix@C=.2in
{M:A\ar[r]\ar@{}[r]|-{\scriptstyle{\bullet}} & B}$and$\SelectTips{eu}{10}
\xymatrix@C=.2in
{N:B\ar[r]\ar@{}[r]|-{\scriptstyle{\bullet}} & C,}$ given by
$F_\odot$;

$\cdot$ for every 0-cell $A$,
a natural isomorphism with components invertible
\begin{displaymath}
 \gamma^A:1_{F_0A}\xrightarrow{\;\sim\;}F_1(1_A)
\end{displaymath}
given by $F_U$.

The coherence axioms are satisfied by definition
of the pseudo double functor. Similarly we get 
(co)lax functors between bicategories
from (co)lax double functors.
This is indicative of the way that 
structure may be inherited from a pseudo double category to
its horizontal bicategory. From now on, the adjective `pseudo'
will be dropped whenever it is clearly implied.

The formal definition of a 
monoidal double category can be found in 
\cite{ConstrSymMonBicats} and is omitted here.
Notice that in \cite{Adjointfordoublecats}
for example, the tensor product $\otimes$ as below
is required to be a colax double functor rather than pseudo
double. If we unpack the definition, we get the following simplified 
description.
\begin{defi}\label{monoidaldoublecategory}
 A \emph{monoidal double category} is a double category $\caa{D}$
equipped with (pseudo) double functors
\begin{displaymath}
\otimes:\caa{D}\times\caa{D}\to\caa{D}\quad
\mathrm{and}\quad 
\B{I}:\B{1}\to\caa{D}, 
\end{displaymath}
such that
$(\caa{D}_0,\otimes_0,I)$ and 
$(\caa{D}_1,\otimes_1,1_I)$ are monoidal categories
with$\SelectTips{eu}{10}\xymatrix@C=.2in
{1_I:I\ar[r]\ar@{}[r]|-{\scriptstyle{\bullet}} & I}$for 
$I=\B{I}(*),$ 
the functors $\Gr{s},\Gr{t}$ are strict monoidal and preserve
associativity and unit constraints, and there exist 
globular isomorphisms
\begin{align*}
(M\otimes_1 N)\odot(M'\otimes_1 N')&\cong
(M\odot M')\otimes_1(N\odot N') \\
1_{(A\otimes_0 B)}&\cong
1_A\otimes_1 1_B
\end{align*}
subject to coherence conditions.
\end{defi}
For example, consider the double category 
$\ca{V}$-$\MMat$ where both categories of objects and arrows are
monoidal categories. Indeed, 
$(\B{Set},\times,\{*\})$ is cartesian monoidal
and $\ca{V}\text{-}\MMat_1$ has tensor product
\begin{equation}\label{VMMat1monoidal}
 \otimes:\xymatrix @C=1.2in
{\ca{V}\text{-}\MMat_1\times\ca{V}\text{-}\MMat_1
\ar[r] & \ca{V}\text{-}\MMat_1}\phantom{ABC}
\end{equation}\vspace{-0.2in}
\begin{displaymath}
 \xymatrix @C=.025in
{(X\ar[rrr]|-{\object@{|}}^S\ar[d]_-f
&\rtwocell<\omit>{<4>{\alpha}}&& Y\ar[d]^-g
& , & Z\ar[rrr]^-T|-{\object@{|}}\ar[d]_-h
&\rtwocell<\omit>{<4>\beta}&& W)\ar[d]^-k
\ar@{|.>}[rrrr] &&&& X\times Z\ar[rrr]^-{S\otimes T}|-{\object@{|}}
\ar[d]_-{f\times h} &\rtwocell<\omit>{<4>\quad\alpha\otimes\beta}
&& Y\times W\ar[d]^-{g\times k} \\
(X'\ar[rrr]_-{S'}|-{\object@{|}} &&& Y' & , &
Z'\ar[rrr]_-{T'}|-{\object@{|}} &&& W')
\ar@{|.>}[rrrr] &&&& X'\times Z'\ar[rrr]_-{S'\otimes T'}|-{\object@{|}} 
&&& Y'\times W'} 
\end{displaymath}
given by the families $(S\otimes T)((y,w),(x,z)):=S(y,x)\otimes T(w,z)$ 
of objects in $\ca{V}$ and 
\begin{displaymath}
 (\alpha\otimes\beta)_{(y,w),(x,z)}:=
S(y,x)\otimes T(w,z)\xrightarrow{\alpha_{y,x}\otimes\beta_{w,z}}S'(gy,fx)\otimes T'(kw,hz)
\end{displaymath}
of arrows in $\ca{V}$, and monoidal unit the
$\ca{V}$-matrix
$\SelectTips{eu}{10}\xymatrix@C=.2in
{\ca{I}:\{*\}\ar[r]|-{\object@{|}} & \{*\}}$with
$\ca{I}(*,*)=I_\ca{V}$. The conditions for $\Gr{s}$ and $\Gr{t}$
are satisfied, and the natural isomorphisms come down
to combinations of associativity and unit constraints
of $\ca{V}$ and the fact that the tensor product in $\ca{V}$
commutes with sums.
\begin{prop}\label{doubleVMatmonoidal}
The pseudo double category $\ca{V}$-$\MMat$
is monoidal.
\end{prop}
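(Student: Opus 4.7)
The plan is to verify in turn each clause of Definition \ref{monoidaldoublecategory}, organising the work so that the bulk of it reduces to properties of the symmetric monoidal closed category $\ca{V}$ and the fact that $\otimes$ in $\ca{V}$ preserves colimits on both sides. First I would confirm that $(\ca{V}\text{-}\MMat_0,\times,\{*\})=(\B{Set},\times,\{*\})$ is (symmetric) cartesian monoidal, which is standard. Next I would check that the data in (\ref{VMMat1monoidal}) assemble into a monoidal structure on $\ca{V}\text{-}\MMat_1$: functoriality of $\otimes$ on 2-morphisms follows pointwise from functoriality of $\otimes$ in $\ca{V}$, and the associator and unitors are induced componentwise by the associator and unitors of $\ca{V}$, where the identifications
\begin{displaymath}
((S\otimes T)\otimes R)((y,w,u),(x,z,v)) = (S(y,x)\otimes T(w,z))\otimes R(u,v)
\end{displaymath}
and the analogous ones for $\lambda$ and $\rho$ make the triangle and pentagon for $\ca{V}\text{-}\MMat_1$ reduce to those for $\ca{V}$.

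The third step is to verify that $\Gr{s}$ and $\Gr{t}$ are strict monoidal and that they carry the associator and unitors of $\ca{V}\text{-}\MMat_1$ onto those of $\B{Set}$. By construction, $\Gr{s}(S\otimes T)=X\times Z=\Gr{s}S\times\Gr{s}T$, $\Gr{s}(\ca{I})=\{*\}$, and similarly for $\Gr{t}$; moreover the components $\alpha_{(y,w),(x,z)}$ of the associator of $\ca{V}\text{-}\MMat_1$ lie over the pair $(\mathrm{id},\mathrm{id})$ in the vertical direction, so $\Gr{s}$ and $\Gr{t}$ send them to the cartesian associator in $\B{Set}$. The same argument applies to the unitors.

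The crux of the proof, and the main potential obstacle, is the construction of the globular isomorphisms
\begin{displaymath}
\kappa_{M,N,M',N'}\colon (M\otimes N)\odot(M'\otimes N')\xrightarrow{\;\sim\;}(M\odot M')\otimes(N\odot N'),\qquad 1_{A\otimes B}\cong 1_A\otimes 1_B
\end{displaymath}
and the verification of their coherence. Here I would use that, for composable $\ca{V}$-matrices and pairs $((y,w),(x,z))$, both sides are canonically isomorphic to
\begin{displaymath}
\sum_{(y',w')}\bigl(M(y,y')\otimes N(w,w')\bigr)\otimes\bigl(M'(y',x)\otimes N'(w',z)\bigr),
\end{displaymath}
the required $\kappa$ being the composite of the symmetry $s$ of $\ca{V}$ in the middle two factors with the isomorphism that commutes sums past the tensor product. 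The unit isomorphism $1_{A\otimes B}\cong 1_A\otimes 1_B$ is built in the same way, using $r_I^{-1}\colon I\xrightarrow{\sim}I\otimes I$ on the diagonal and the unique map $0\to 0\otimes 0$ off it. Globularity in both cases is clear because every component is a vertical identity.

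The hard part will be the coherence conditions from Definition \ref{monoidaldoublecategory}: the compatibility of $\kappa$ with the associators $\alpha_\odot$ and $\alpha_\otimes$ of the two composition operations, and with the unitors on both sides. I would discharge these by pointwise computation, reducing them to instances of Mac Lane's coherence theorem for the symmetric monoidal $\ca{V}$, together with naturality of $s$ and compatibility of $\otimes$ with sums. The compatibility of $\kappa$ with the identity isomorphism $1_{A\otimes B}\cong 1_A\otimes 1_B$ under horizontal unitors follows from the same coherence for $l_I=r_I$ in $\ca{V}$. Once these diagrams are checked componentwise, no further argument is needed, and $\ca{V}\text{-}\MMat$ acquires the structure of a monoidal (pseudo) double category.
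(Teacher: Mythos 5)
Your proposal is correct and follows essentially the same route as the paper, which likewise verifies Definition \ref{monoidaldoublecategory} directly by taking the cartesian structure on $\B{Set}$, the componentwise tensor (\ref{VMMat1monoidal}) on $\ca{V}$-$\MMat_1$, and globular interchange and unit isomorphisms assembled from the constraints of $\ca{V}$ together with the commutation of $\otimes$ with sums. Your explicit appeal to the symmetry $s$ of $\ca{V}$ for the middle-four interchange $(M\otimes N)\odot(M'\otimes N')\cong(M\odot M')\otimes(N\odot N')$ is a point the paper's one-line justification glosses over but which is genuinely required, so your account is, if anything, slightly more careful on that step.
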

What is further required to obtain a monoidal structure
on the horizontal bicategory of a monoidal double 
category is a way of turning vertical 1-morphisms into 
horizontal 1-cells. The links between vertical and horizontal
1-cells in a double category have been studied
by various authors, and the terminology used
below can be found in 
\cite{Adjointfordoublecats,Framedbicats,Thespanconstruction}.
\begin{defi}\label{deficompconj}
 Let $\caa{D}$ be a double category and $f:A\to B$
a vertical 1-morphism. A \emph{companion} of $f$
is a horizontal 1-cell$\SelectTips{eu}{10}\xymatrix@C=.2in
{\hat{f}:A\ar[r]\ar@{}[r]|-{\scriptstyle{\bullet}} & B}$together with
2-morphisms
\begin{displaymath}
 \xymatrix
{A\ar[r]^-{\hat{f}}\ar@{}[r]|-{\scriptstyle{\bullet}} 
\rtwocell<\omit>{<4>\;p_1} \ar[d]_-f & B\ar[d]^-{\mathrm{id}_B} \\
B\ar[r]_-{1_B}\ar@{}[r]|-{\scriptstyle{\bullet}} & B} 
\quad\xymatrix@R=.05in{\hole \\
\textrm{and}}\quad
 \xymatrix
{A\ar[r]^-{1_A}\ar@{}[r]|-{\scriptstyle{\bullet}} 
\rtwocell<\omit>{<4>\;p_2} \ar[d]_-{\mathrm{id}_A} & 
A\ar[d]^-{f} \\
A\ar[r]_-{\hat{f}}\ar@{}[r]|-{\scriptstyle{\bullet}} & B}
\end{displaymath}
such that $p_1p_2=1_f$ and $p_1\odot p_2\cong1_{\hat{f}}$.
Dually, a \emph{conjoint} of 
$f$ is a horizontal 1-cell$\SelectTips{eu}{10}\xymatrix@C=.2in
{\check{f}:B\ar[r]\ar@{}[r]|-{\scriptstyle{\bullet}} & A}$together with 2-morphisms
\begin{displaymath}
 \xymatrix
{B\ar[r]^-{\check{f}}\ar@{}[r]|-{\scriptstyle{\bullet}} 
\rtwocell<\omit>{<4>\;q_1} \ar[d]_-{\mathrm{id}_B} & A\ar[d]^-f \\
B\ar[r]_-{1_B}\ar@{}[r]|-{\scriptstyle{\bullet}} & B} \quad\xymatrix@R=.05in{\hole \\
\textrm{and}}\quad
 \xymatrix
{A\ar[r]^-{1_A}\ar@{}[r]|-{\scriptstyle{\bullet}} 
\rtwocell<\omit>{<4>\;q_2} \ar[d]_-{f} & 
A\ar[d]^-{\mathrm{id}_A} \\
B\ar[r]_-{\check{f}}\ar@{}[r]|-{\scriptstyle{\bullet}} & A}
\end{displaymath}
such that $q_1q_2=1_f$ and $q_2\odot q_1\cong 1_{\check{f}}$.
\end{defi}
The ideas which led to the above definitions go
back to \cite{Doublegroupoidsandcrossedmodules},
where a \emph{connection} on a double category
corresponds to a strictly functorial choice
of a companion for each vertical arrow.
Now, a \emph{fibrant double category} (\cite[Definition 3.4]{ConstrSymMonBicats})
is a double category
for which every vertical 1-morphism has a companion
and a conjoint (called \emph{framed bicategory} in \cite{Framedbicats}).
Many important properties for fibrant double categories can be 
obtained just from the definitions.
For example, companions
and conjoints of a specific 1-morphism are essentially unique
(up to unique globular isomorphism), 
and  $\hat{g}\odot\hat{f}$,
$\check{g}\odot\check{f}$ are the companion and the conjoint of $gf$.

The significance of these notions is clear in 
the context of our primary example, the double category $\ca{V}$-$\MMat$.
The companion of a function $f:X\to Y$ is the 
$\ca{V}$-matrix$\SelectTips{eu}{10}\xymatrix@C=.2in
{f_*:X\ar[r]|-{\object@{|}} & Y}$and its conjoint is 
$\ca{V}$-matrix$\SelectTips{eu}{10}\xymatrix@C=.2in
{f^*:Y\ar[r]|-{\object@{|}} & X,}$as defined in 
(\ref{f*}).
Properties of these 
$\ca{V}$-matrices, such as the adjunction $f^*\dashv f_*$
in the horizontal bicategory $\ca{V}$-$\Mat$ or
Lemmas \ref{isosofstars} and \ref{corisos}, are in fact
true in the general setting of any fibrant double category.
\emph{I.e.} for any vertical 1-morphism $f$
in $\caa{D}$, we have an adjunction 
$\hat{f}\dashv\check{f}$ in $\ca{H}(\caa{D})$.

Another important example of a fibrant double category is 
the one with horizontal bicategory $\ca{V}$-$\B{BMod}$ (or 
$\ca{V}$-$\B{Prof}$) of enriched bimodules, as briefly described
in Section \ref{Vbimodulesandmodules}. In particular, 
the companion and conjoint for each $\ca{V}$-functor (which
are the vertical 1-morphisms) are given by the 
`representable' profunctors as in (\ref{Fstars}).

The main Theorem 5.1 in \cite{ConstrSymMonBicats}
asserts that the horizontal bicategory of a 
fibrant monoidal double category inherits a 
monoidal structure. Explicitly, it consists of
the induced pseudofunctor of bicategories 
$\ca{H}(\otimes):\ca{H}(\caa{D})\times\ca{H}(\caa{D})\to
\ca{H}(\caa{D})$
and the monoidal unit $1_I$ of $\caa{D}_1$.
In particular, the double category 
of $\ca{V}$-matrices is a fibrant monoidal
double category, hence the result follows
for its horizontal bicategory 
$\ca{H}(\ca{V}\text{-}\MMat)$.
\begin{prop}\label{bicatVMatmonoidal}
 The bicategory $\ca{V}$-$\Mat$ of $\ca{V}$-matrices is 
a monoidal bicategory.
\end{prop}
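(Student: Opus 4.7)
The strategy is to deduce the result as an immediate consequence of the two facts already collected just before the statement, namely Proposition \ref{doubleVMatmonoidal} asserting that the pseudo double category $\ca{V}$-$\MMat$ is monoidal, together with the cited Theorem 5.1 of \cite{ConstrSymMonBicats}, which transports a monoidal structure from any fibrant monoidal double category to its horizontal bicategory. Thus, after Proposition \ref{doubleVMatmonoidal}, the only remaining ingredient is to check that $\ca{V}$-$\MMat$ is fibrant, and this is precisely the content of the paragraph immediately preceding the statement: we need to exhibit a companion and a conjoint for every vertical 1-morphism, i.e.\ for every function $f:X\to Y$ in $\B{Set}=\ca{V}\text{-}\MMat_0$.

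First I would recall that the candidates for companion and conjoint are the $\ca{V}$-matrices $f_*$ and $f^*$ defined in (\ref{f*}). To check that $f_*$ is a companion of $f$ in the sense of Definition \ref{deficompconj}, I would produce the two required 2-morphisms $p_1:f_*\Rightarrow 1_B$ over $(f,\mathrm{id}_B)$ and $p_2:1_A\Rightarrow f_*$ over $(\mathrm{id}_A,f)$, with components the identity $I\to I$ on the summands where the source and target indices agree via $f$, and the unique map from the initial object $0$ otherwise. The identities $p_1p_2=1_f$ and $p_1\odot p_2\cong 1_{\hat f}$ are routine pointwise checks in $\ca{V}$, using only the unit constraints $l,r$ and that $\otimes$ preserves the initial object on either side. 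Dually, the conjoint structure on $f^*$ is obtained by exchanging the roles of source and target, with 2-morphisms $q_1,q_2$ defined analogously; this is consistent with the well-known adjunction $f_*\dashv f^*$ in $\ca{V}$-$\Mat$ already exploited throughout Section \ref{bicatVMat}.

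With fibrancy established, the cited general theorem applies: the induced pseudofunctor $\ca{H}(\otimes):\ca{V}\text{-}\Mat\times\ca{V}\text{-}\Mat\to\ca{V}\text{-}\Mat$ obtained from the double functor $\otimes$ of Proposition \ref{doubleVMatmonoidal}, together with the unit horizontal 1-cell $1_I=\ca{I}$ and the coherence isomorphisms inherited from the globular isomorphisms of Definition \ref{monoidaldoublecategory}, assemble into a monoidal bicategory structure on $\ca{V}$-$\Mat$. Concretely, the tensor product on 0-cells and 1-cells is that of (\ref{monoidalVMat}), and the associator and unitors arise from the globular isomorphisms $(M\otimes_1 N)\odot(M'\otimes_1 N')\cong(M\odot M')\otimes_1(N\odot N')$ and $1_{A\otimes_0 B}\cong 1_A\otimes_1 1_B$ in $\ca{V}$-$\MMat$.

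I expect the main obstacle to be purely notational rather than conceptual: verifying that the companion/conjoint equations $p_1\odot p_2\cong 1_{\hat f}$ and $q_2\odot q_1\cong 1_{\check f}$ hold on the nose in $\ca{V}$-$\MMat$ requires tracking the canonical isomorphisms coming from $\otimes$ commuting with sums and from the unit constraints of $\ca{V}$, but no substantive new ideas; once this is done the coherence hexagons and pentagons for the monoidal bicategory follow automatically from the cited general theorem, and the lengthy direct verification of all tricategorical axioms is avoided.
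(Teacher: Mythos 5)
Your proposal is correct and follows essentially the same route as the paper: the result is obtained by combining Proposition \ref{doubleVMatmonoidal} with the fibrancy of $\ca{V}$-$\MMat$ (companions $f_*$ and conjoints $f^*$ for each function $f$, as already noted in the text) and then invoking Theorem 5.1 of \cite{ConstrSymMonBicats}. The paper leaves the companion/conjoint verifications implicit, whereas you spell them out, but there is no difference in substance.
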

The monoidal unit is the unit 
$\ca{V}$-matrix $\ca{I}$ and the induced tensor
product pseudofunctor
$\otimes:\ca{V}\text{-}\Mat\times\ca{V}\text{-}\Mat\to\ca{V}\text{-}\Mat$
maps two sets $X,Y$ to their cartesian product
$X\times Y$, and the functor
\begin{displaymath}
\otimes_{(X,Y),(Z,W)}:\ca{V}\text{-}\Mat(X,Z)\times
\ca{V}\text{-}\Mat(Y,W)\to\ca{V}\text{-}\Mat(X\times Y, Z\times W),
\end{displaymath}
is defined as in (\ref{VMMat1monoidal}), for 2-morphisms
with domain and codomain the identity vertical 1-morphisms.

We are now in position to examine how
constructions and results of the previous chapter 
may fit in the general frame of any fibrant double category.
As up to this point, our presentation aims to 
sketch the main ideas rather than rigorously 
establish a theory.

Suppose $\caa{D}$ is an arbitrary 
fibrant double category, with no monoidal structure to 
begin with.
Define the category $\caa{D}_1^\bullet$ to be
the (non-full) subcategory of $\caa{D}_1$ of
all horizontal endo-1-cells and 2-morphisms
with the same source and target. Explicitly, objects
are all 1-cells of the form$\SelectTips{eu}{10}\xymatrix@C=.2in
{M:A\ar[r]\ar@{}[r]|-{\scriptstyle{\bullet}} & A}$and arrows
are of the form
\begin{displaymath}
\xymatrix
{A\ar[r]^-M\ar@{}[r]|-{\scriptstyle{\bullet}} 
\rtwocell<\omit>{<4>\alpha} \ar[d]_-f & A\ar[d]^-f \\
B\ar[r]_-N\ar@{}[r]|-{\scriptstyle{\bullet}} & B}
\end{displaymath}
denoted by $\alpha_f:M_A\to N_B$.
In \cite{Monadsindoublecats}, this category
coincides with the vertical 1-category of the 
double category $\caa{E}\B{nd}(\caa{D})$
of (horizontal) endomorphisms,
horizontal endomorphism maps, vertical endomorphism
maps and endomorphism squares
in $\caa{D}$.

This definition is motivated by the fact that 
$\ca{V}\text{-}\MMat_1^\bullet=\ca{V}\text{-}\B{Grph}$:
objects are $\ca{V}$-graphs, \emph{i.e.}
endo-$\ca{V}$-matrices$\SelectTips{eu}{10}\xymatrix@C=.2in
{G:X\ar[r]|-{\object@{|}} & X}$given by 
objects $\{G(x',x)\}$ in $\ca{V}$, and 
arrows $\alpha_f:G_X\to H_Y$
are $\ca{V}$-graph morphisms, \emph{i.e.} a function
$f:X\to Y$ and arrows $\alpha_{x',x}:G(x',x)\to H(fx',fx)$
in $\ca{V}$. In the view of \cite[Remark 2.5]{Monadsindoublecats},
this is analogous to the fact that 
the category $\B{Grph}_\ca{E}$ of graphs
and graph morphisms internal to a finitely complete
$\ca{E}$ is identified with the category of endomorphisms
and vertical endomorphism maps in the 
double category $\caa{S}\B{pan}_\ca{E}$.
\begin{prop}\label{D_1^.bifibred}
 Suppose $\caa{D}$ is a fibrant double category. The category
$\caa{D}_1^\bullet$ is bifibred over $\caa{D}_0$.
\end{prop}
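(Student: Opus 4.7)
The plan is to define the evident functor $\Pi:\caa{D}_1^\bullet\to\caa{D}_0$ sending $M_A\mapsto A$ and $\alpha_f\mapsto f$, and construct cartesian and cocartesian liftings directly from the fibrant structure of $\caa{D}$. The construction is dictated by the example $\caa{D}=\ca{V}\text{-}\MMat$, where Proposition \ref{VGrphbifibr} exhibited the reindexing functors as pre- and post-composition with the companion $f_*=\hat{f}$ and conjoint $f^*=\check{f}$ of a function $f$; thus in the general setting, the cartesian lifting of an endo-1-cell$\SelectTips{eu}{10}\xymatrix@C=.2in{N:B\ar[r]\ar@{}[r]|-{\scriptstyle{\bullet}} & B}$along a vertical 1-morphism $f:A\to B$ should be
\begin{displaymath}
f^{\bullet}N\;:=\;\check{f}\odot N\odot\hat{f}:\SelectTips{eu}{10}\xymatrix@C=.2in{A\ar[r]\ar@{}[r]|-{\scriptstyle{\bullet}} & A.}
\end{displaymath}

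First I would construct the cartesian 2-morphism $\Cart(f,N):f^{\bullet}N\Rightarrow N$ explicitly, as the horizontal composite $q_1\odot 1_N\odot p_1$ of the structural 2-morphisms of Definition \ref{deficompconj} associated to $f$, postcomposed vertically with the globular unitor isomorphism $1_B\odot N\odot 1_B\cong N$. By inspecting the source and target verticals of $p_1$, $1_N$ and $q_1$, both the left and right vertical 1-morphisms of this composite are $f$, so $\Cart(f,N)$ is indeed an arrow of $\caa{D}_1^\bullet$ lying over $f$ with codomain $N_B$.

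Next I would verify the universal property. Given a 2-morphism $\theta_g:M_C\to N_B$ in $\caa{D}_1^\bullet$ and a factorisation $g=f\cdot h$ in $\caa{D}_0$, the candidate unique factoriser $\psi_h:M_C\to(f^{\bullet}N)_A$ is constructed by whiskering $\theta$ horizontally with $p_2$ and $q_2$ on the appropriate sides (together with unitors), so that $\psi$ has left and right verticals both equal to $h$. The identities $p_1\cdot p_2=1_f$, $q_1\cdot q_2=1_f$ from Definition \ref{deficompconj}, together with the fact that the other compositions $p_1\odot p_2\cong 1_{\hat f}$ and $q_2\odot q_1\cong 1_{\check f}$ hold up to globular isomorphism, yield both the existence of the factorisation $\Cart(f,N)\cdot\psi=\theta$ and its uniqueness. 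This is the standard bijection between 2-morphisms $M\Rightarrow N$ over $fh$ and 2-morphisms $M\Rightarrow\check f\odot N\odot\hat f$ over $h$ induced by the companion/conjoint pair; cf.\ the analogous argument in \cite{Framedbicats} which realises $\caa{D}_1$ as fibred over $\caa{D}_0\times\caa{D}_0$ via $(\Gr{s},\Gr{t})$. Dually, the cocartesian lifting of $M_A$ along $f:A\to B$ is
\begin{displaymath}
f_{\bullet}M\;:=\;\hat f\odot M\odot\check f,
\end{displaymath}
equipped with the 2-morphism $M\Rightarrow f_{\bullet}M$ built from $p_2\odot 1_M\odot q_2$ and a unitor, whose universal property is checked in exactly the mirror-image way using $p_1$ and $q_1$.

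The main obstacle in this plan is not conceptual but bookkeeping: one must track the left and right vertical 1-morphisms of every horizontal composite of 2-morphisms to ensure that all candidate arrows actually live in the subcategory $\caa{D}_1^\bullet$ (both verticals coincide) rather than merely in $\caa{D}_1$, and that the canonical globular isomorphisms $\alpha,\lambda,\rho$ of the pseudo double category are inserted so that sources and targets match. This is precisely the distinction between being fibred over $\caa{D}_0$ and being fibred over $\caa{D}_0\times\caa{D}_0$, and it is the reason the hypothesis that $\caa{D}$ is fibrant (both companions and conjoints exist) is used symmetrically in the argument.
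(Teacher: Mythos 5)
Your proposal is correct, and it reaches the result by a somewhat different route than the paper. The paper's own proof packages the argument through the Grothendieck construction: it defines pseudofunctors $\ps{M}:\caa{D}_0^\op\to\B{Cat}$ and $\ps{F}:\caa{D}_0\to\B{Cat}$ sending $A\mapsto\ca{H}(\caa{D})(A,A)$ and $f\mapsto(\check{f}\odot\text{-}\odot\hat{f})$, resp.\ $(\hat{f}\odot\text{-}\odot\check{f})$, mirroring Proposition \ref{VGrphbifibr}, and then exhibits an isomorphism $\Gr{G}\ps{M}\cong\caa{D}_1^\bullet$ via exactly the bijection $\alpha\mapsto (q_2\odot\alpha\odot p_2)$, $\beta\mapsto(q_1\odot 1_N\odot p_1)\beta$ that underlies your universal-property check; it also observes that once one of the two structures is in place, the bifibration follows at once from the adjunction $(\check{f}\odot\text{-}\odot\hat{f})\vdash(\hat{f}\odot\text{-}\odot\check{f})$ together with Remark \ref{rmkforadjointintexingbifr}. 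You instead verify the cartesian and cocartesian lifting properties directly from $p_1,p_2,q_1,q_2$; this is precisely the ``alternative'' argument the paper itself sketches (and spells out later for $\Mon(\caa{D})$ in Proposition \ref{MonComonfibred}), so the two proofs use identical data and differ only in packaging. The indexed-category route buys reusability of the earlier $\ca{V}$-$\MMat$ proofs and makes the fibres and reindexing functors explicit for later use; your direct route is more self-contained and makes the universal property transparent. One small point of care in your factorisation step: the whiskers producing $\psi_h$ over $h$ are not $p_2$ and $q_2$ alone but the vertical composites $p_2\cdot 1_h$ and $q_2\cdot 1_h$ (so that the outer verticals come out as $h$ rather than $\id_A$); this is exactly the bookkeeping you flag at the end, and it does go through.
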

\begin{proof}
We can easily adjust a series of previous relevant proofs,
in order to construct pseudofunctors whose 
Grothendieck construction
gives rise to a fibration and an opfibration,
isomorphic to the evident
forgetful functor $\caa{D}_1^\bullet\to\caa{D}_0$ 
mapping $G_X$ to $X$ and $\alpha_f$ to $f$.
Like Proposition \ref{VGrphbifibr},
the respective pseudofunctors are
\begin{equation}\label{pseudofunctorsdoublebifibration}
 \ps{M}:
\xymatrix @R=.02in
{\caa{D}_0^\op\ar[r] & \B{Cat}, \\
A\ar @{|.>}[r]
\ar [dd]_-f & \ca{H}(\caa{D})(A,A) \\
\hole \\
B\ar @{|.>}[r] &
\ca{H}(\caa{D})(B,B)\ar[uu]_-{(\check{f}\odot\text{-}\odot\hat{f})}}
\qquad
\ps{F}:
\xymatrix @R=.02in
{\caa{D}_0\ar[r] & \B{Cat} \\
A\ar @{|.>}[r]
\ar [dd]_-f & \ca{H}(\caa{D})(A,A)
\ar[dd]^-{(\hat{f}\odot\text{-}\odot\check{f})} \\
\hole \\
B\ar @{|.>}[r] &
\ca{H}(\caa{D})(B,B).}
\end{equation}
We can illustrate the isomorphism between,
for example, the Grothendieck category 
$\Gr{G}\ps{M}$ and $\caa{D}_1^\bullet$,
just by employing companions 
and conjoints. The objects are 
the same (horizontal endo-1-cells),
and there is a bijective correspondence between 
the morphisms: given an arrow $\alpha_f$
in $\caa{D}_1^\bullet$,
we obtain a composite 2-cell
\begin{displaymath}
\xymatrix @R=.25in
{A\ar[r]^-M\ar@{}[r]|-{\scriptstyle{\bullet}}
\rtwocell<\omit>{<3.5>\alpha} \ar[d]_-f & A\ar[d]^-f \\
B\ar[r]_-N\ar@{}[r]|-{\scriptstyle{\bullet}} & B}
\quad\xymatrix@R=.1in
{\hole \\ \mapsto }\quad
 \xymatrix @C=.5in @R=.25in
{A\ar[d]_-{\mathrm{id}_A} \ar[r]^-{1_A}\ar@{}[r]|-{\scriptstyle{\bullet}}
\rtwocell<\omit>{<3.5>\;p_2} &
 A\ar[r]^-M\ar@{}[r]|-{\scriptstyle{\bullet}} \ar[d]^-f 
\rtwocell<\omit>{<3.5>\alpha} & 
A\ar[d]_-f\ar[r]^-{1_A}\ar@{}[r]|-{\scriptstyle{\bullet}}
\rtwocell<\omit>{<3.5>\;q_2} & 
A\ar[d]^{\mathrm{id}_A} \\
A\ar[r]_-{\hat{f}}\ar@{}[r]|-{\scriptstyle{\bullet}} & 
B\ar[r]_-N\ar@{}[r]|-{\scriptstyle{\bullet}} & 
B\ar[r]_-{\check{f}}\ar@{}[r]|-{\scriptstyle{\bullet}} & B}
\end{displaymath}
which is a morphism in $\Gr{G}\ps{M}$.
This assignment is an isomorphism,
with inverse mapping $\beta\mapsto
(q_1\odot 1_N\odot p_1)\beta$ for
some $\beta:M\Rightarrow\check{f}\circ N\circ\hat{f}$
in $\ca{H}(\caa{D})(A,A)$.

Similarly $\Gr{G}\ps{F}\cong\caa{D}_1^\bullet$,
but we can also deduce that $\caa{D}_1^\bullet$ is a bifibration
by Remark \ref{rmkforadjointintexingbifr}, since we have an 
adjunction $(\check{f}\odot\text{-}\odot\hat{f})\vdash
(\hat{f}\odot\text{-}\odot\check{f})$ for all $f$.
\end{proof}
Even though the above result was independently established
as a generalization of earlier proofs, the fibration part 
was also included in \cite[Proposition 3.3]{Monadsindoublecats}.
We now proceed to the definitions of structures
in arbitrary double categories, which are 
fundamental for the formalization of our examples.

A \emph{monoid} in a double category $\caa{D}$ is 
an endo-1-cell$\SelectTips{eu}{10}\xymatrix@C=.2in
{M:A\ar[r]\ar@{}[r]|-{\scriptstyle{\bullet}} & A}$, \emph{i.e.}
an object in $\caa{D}_1^\bullet$, equipped with 
globular 2-morphisms
\begin{displaymath}
\xymatrix @C=.4in @R=.4in
{A\ar[r]^-M\ar@{}[r]|-{\scriptstyle{\bullet}}  
\rrtwocell<\omit>{<4.5>m} \ar[d]_-{\mathrm{id_A}} 
& A\ar[r]^-M\ar@{}[r]|-{\scriptstyle{\bullet}} & A\ar[d]^-{\mathrm{id}_A} \\
A\ar[rr]_-M\ar@{}[rr]|-{\scriptstyle{\bullet}}  && A,}\qquad
\xymatrix @C=.4in @R=.4in
{A\ar[r]^-{1_A}\ar@{}[r]|-{\scriptstyle{\bullet}}  
\rtwocell<\omit>{<4.5>\eta} \ar[d]_-{\mathrm{id_A}} 
& A\ar[d]^-{\mathrm{id}_A} \\
A\ar[r]_-M\ar@{}[r]|-{\scriptstyle{\bullet}}  & A}
\end{displaymath}
satisfying the usual associativity and unit laws.
In fact, this is the same as a 
monad in its horizontal bicategory $\ca{H}(\caa{D})$.
A \emph{monoid homomorphism}
consists of an arrow $\alpha_f:M_A\to N_B$ in $\caa{D}_1^\bullet$
which respects multiplication and unit:
\begin{displaymath}
\xymatrix@C=.3in @R=.2in
{A\ar[r]^-M\ar@{}[r]|-{\scriptstyle{\bullet}}\ar[d]_-f
\rtwocell<\omit>{<3>\alpha} & 
A\ar[r]^-M\ar@{}[r]|-{\scriptstyle{\bullet}}\ar[d]^-f
\rtwocell<\omit>{<3>\alpha} &
A\ar[d]^-f \\
B\ar[r]_-N\ar@{}[r]|-{\scriptstyle{\bullet}}\rrtwocell<\omit>{<3>m}
\ar@{=}[d] &
B\ar[r]_-N\ar@{}[r]|-{\scriptstyle{\bullet}}
& B \ar@{=}[d] \\
A\ar[rr]_-N\ar@{}[rr]|-{\scriptstyle{\bullet}}
&& A}
\xymatrix@R=.2in{\hole \\ =}
\xymatrix@C=.3in @R=.2in
{A\ar[r]^-M\ar@{}[r]|-{\scriptstyle{\bullet}}
\rrtwocell<\omit>{<3>m}\ar@{=}[d] & 
A\ar[r]^-M\ar@{}[r]|-{\scriptstyle{\bullet}} 
& A\ar@{=}[d] \\
A\ar[rr]_-M\ar@{}[rr]|-{\scriptstyle{\bullet}}\ar[d]_-f
\rrtwocell<\omit>{<4>\alpha} && A\ar[d]^-f \\
B\ar[rr]_-N\ar@{}[rr]|-{\scriptstyle{\bullet}}
 && B,}
\qquad
\xymatrix@C=.3in @R=.2in
{A\ar[r]^-{1_A}\ar@{}[r]|-{\scriptstyle{\bullet}}\ar@{=}[d]
\rtwocell<\omit>{<3>\eta} & A\ar@{=}[d] \\
A\ar[r]_-M\ar@{}[r]|-{\scriptstyle{\bullet}}\ar[d]_-f
\rtwocell<\omit>{<3.5>\alpha} &
A\ar[d]^-f \\
B\ar[r]_-N\ar@{}[r]|-{\scriptstyle{\bullet}} & B}
\xymatrix@R=.2in{\hole \\ =}
\xymatrix@C=.3in @R=.2in
{A\ar[r]^-{1_A}\ar@{}[r]|-{\scriptstyle{\bullet}}\ar[d]_-f
\rtwocell<\omit>{<3>1_f} & A\ar[d]^-f \\
B\ar[r]_-{1_B}\ar@{}[r]|-{\scriptstyle{\bullet}}\ar@{=}[d]
\rtwocell<\omit>{<3.5>\eta} &
B\ar@{=}[d] \\
B\ar[r]_-N\ar@{}[r]|-{\scriptstyle{\bullet}} & B.}
\end{displaymath}
We obtain a category $\Mon(\caa{D})$, 
which is a non-full subcategory of $\caa{D}_1$. 

These definitions
can be found in \cite{Framedbicats} for fibrant double
categories, and in \cite{Monadsindoublecats}
as \emph{monads} and \emph{vertical monad maps} 
in a double category $\caa{D}$. In the terminology
of the latter, 
$\Mon(\caa{D})$ is in fact the vertical 
category of $\caa{M}\B{nd}(\caa{D})$, the double
category of monads, horizontal and vertical monad maps
and monad squares.
\begin{rmk*}
Considering monads in a double category rather than 
in a bicategory or 2-category presents certain advantages.
For example, $\ca{V}$-$\B{Cat}$
is precisely $\Mon(\ca{V}\text{-}\MMat)$: 
objects are monads$\SelectTips{eu}{10}\xymatrix@C=.2in
{A:X\ar[r]|-{\object@{|}} & X}$in the horizontal
bicategory $\ca{H}(\ca{V}$-$\MMat)$, and morphisms 
are $\ca{V}$-graph morphisms (\emph{i.e.} in 
$\ca{V}\text{-}\MMat_1^\bullet$) which respect the appropriate
structure. 

It was noted in Remark \ref{Vfunct=monadopfunct} that
even if objects of $\ca{V}$-$\B{Mat}$
are monads in the 
bicategory of $\ca{V}$-matrices,
$\ca{V}$-functors do not correspond bijectively 
to monad (op)functors in 
$\ca{V}$-$\Mat$. So, 
in order to fully describe $\ca{V}$-$\B{Cat}$ 
as in Lemma \ref{charactVCat}, 
we had to provide isomorphic characterizations for $\ca{V}$-functors. 
Now things are much 
clearer: we are able to recapture the whole category 
as the category of monoids in a double category,
since a $\ca{V}$-functor properly matches the notion of 
a monoid morphism in $\ca{V}$-$\MMat$.
\end{rmk*}

Dually, we can define a category
$\Comon(\caa{D})$ for any double
category. Objects are \emph{comonoids}
in $\caa{D}$, \emph{i.e.} horizontal 
endo-1-cells$\SelectTips{eu}{10}\xymatrix@C=.2in
{C:A\ar[r]\ar@{}[r]|-{\scriptstyle{\bullet}} & A}$equipped with 
globular 1-morphisms
\begin{displaymath}
\xymatrix @C=.4in @R=.4in
{A\ar[rr]^-C\ar@{}[rr]|-{\scriptstyle{\bullet}} \ar[d]_-{\mathrm{id_A}} 
&& A\ar[d]^-{\mathrm{id}_A} \\
A\ar[r]_-C\ar@{}[r]|-{\scriptstyle{\bullet}}  
\rrtwocell<\omit>{<-4>\Delta}  
& A\ar[r]_-C\ar@{}[r]|-{\scriptstyle{\bullet}} & A,}\qquad
\xymatrix @C=.4in @R=.4in
{A\ar[r]^-C\ar@{}[r]|-{\scriptstyle{\bullet}}\ar[d]_-{\mathrm{id_A}}   
& A\ar[d]^-{\mathrm{id}_A} \\
A\ar[r]_-C\ar@{}[r]|-{\scriptstyle{\bullet}}  
\rtwocell<\omit>{<-4>\epsilon} 
& A}
\end{displaymath}
satisfying the usual coassociativity 
and counit axioms for a comonad in 
the horizontal bicategory $\ca{H}(\caa{D})$.
Morphisms are \emph{comonoid homomorphisms},
\emph{i.e.} $\alpha_f:C_A\to D_B$
in $\caa{D}_1^\bullet$
satisfying dual axioms to the monoid ones.
Notice that $\Mon(\caa{D}^\op)=\Comon(\caa{D})^\op$.

For the double category 
$\caa{D}=\ca{V}\text{-}\MMat$,
the above exactly describe 
the category of $\ca{V}$-cocategories
as in the Definition \ref{cocategory}, thus
$\Comon(\ca{V}\text{-}\MMat)=\ca{V}\text{-}\B{Cocat}$.
This is again conceptually simpler 
and more straightforward than the isomorphic
characterization of $\ca{V}$-$\B{Cocat}$ as in 
Lemma \ref{charactVCocat}.
\begin{prop}\label{MonComonfibred}
Let $\caa{D}$ be a fibrant double category.
The forgetful functors 
\begin{displaymath}
 \Mon(\caa{D})\to\caa{D}_0\;\textrm{ and }\;\Comon(\caa{D})\to\caa{D}_0
\end{displaymath}
which map a
horizontal endo-1-cell to its object and a 2-morphism to 
its vertical 1-morphism, 
are a fibration and an opfibration
respectively.
\end{prop}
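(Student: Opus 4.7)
The plan is to follow the same strategy as in Propositions \ref{VCatfibred} and \ref{VCocatopfibred}, generalizing from the concrete case $\caa{D}=\ca{V}\text{-}\MMat$ to an arbitrary fibrant double category by replacing $f_*$, $f^*$ with the companion $\hat{f}$ and conjoint $\check{f}$ of a vertical 1-morphism $f$. That is, I will construct pseudofunctors
\begin{displaymath}
\ps{L}:\caa{D}_0^{\op}\longrightarrow\B{Cat},\qquad \ps{K}:\caa{D}_0\longrightarrow\B{Cat}
\end{displaymath}
sending an object $A$ to $\Mon(\ca{H}(\caa{D})(A,A))$ and $\Comon(\ca{H}(\caa{D})(A,A))$ respectively, and a vertical 1-morphism $f:A\to B$ to the restrictions of $\ps{M}f=(\check{f}\odot-\odot\hat{f})$ and $\ps{F}f=(\hat{f}\odot-\odot\check{f})$ from (\ref{pseudofunctorsdoublebifibration}). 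The Grothendieck constructions $P_\ps{L}$ and $U_\ps{K}$ will then be isomorphic to the forgetful functors in question, via the same argument used to identify $\Gr{G}\ps{M}\cong\caa{D}_1^{\bullet}$ in Proposition \ref{D_1^.bifibred}, but now restricted to the (co)monoid data.

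The first key step is showing that $\ps{L}f$ and $\ps{K}f$ are well defined on monoids and comonoids, generalizing Lemmas \ref{B*monoid} and \ref{C*comonoid}. For any monoid $M$ in $\ca{H}(\caa{D})(B,B)$ with multiplication $m$ and unit $\eta$, the horizontal 1-cell $\check{f}\odot M\odot\hat{f}$ inherits a monoid structure whose multiplication is the pasted composite of $m$ with the counit $\check{\varepsilon}:\hat{f}\odot\check{f}\Rightarrow 1_B$ of the adjunction $\hat{f}\dashv\check{f}$ in $\ca{H}(\caa{D})$, and whose unit is the pasted composite of $\eta$ with the unit $\check{\eta}:1_A\Rightarrow\check{f}\odot\hat{f}$. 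Dually, for a comonoid $C$ the 1-cell $\hat{f}\odot C\odot\check{f}$ acquires a comonoid structure via $\check{\eta}$ and $\check{\varepsilon}$ pasted in the opposite order. Associativity, coassociativity, and (co)unit axioms follow from those of $M$, $C$ together with the triangle identities. Moreover, a (co)monoid homomorphism $\alpha_f$ in $\caa{D}_1^{\bullet}$ is sent by these functors to a (co)monoid homomorphism, by naturality of horizontal composition of 2-morphisms with $\hat{f}$ and $\check{f}$.

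The second step is to equip $\ps{L}$ and $\ps{K}$ with the coherence isomorphisms $\delta^{g,f}$ and $\gamma^A$ of a pseudofunctor; these are inherited from the corresponding data of $\ps{M}$ and $\ps{F}$ in Proposition \ref{D_1^.bifibred}, using the essentially unique isomorphisms $\widehat{gf}\cong\hat{g}\odot\hat{f}$ and $\widecheck{gf}\cong\check{f}\odot\check{g}$ between companions and conjoints of composable vertical morphisms. One has to verify that these invertible components are themselves (co)monoid morphisms, but this is automatic because $\delta^{g,f}$ and $\gamma^A$ are globular, hence their source and target (co)multiplications and (co)units coincide after transport across them. The pentagon and triangle axioms then hold since they already hold for $\ps{M}$ and $\ps{F}$, and the forgetful functor from (co)monoids reflects the required equalities.

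Finally, the Grothendieck category $\Gr{G}\ps{L}$ has as objects pairs $(M,A)$ with $M$ a monoid in $\ca{H}(\caa{D})(A,A)$, and as morphisms $(M,A)\to(N,B)$ pairs consisting of $f:A\to B$ together with a monoid morphism $M\to\check{f}\odot N\odot\hat{f}$. Using the bijection from the proof of Proposition \ref{D_1^.bifibred} (pasting with $p_2$ on the left and $q_2$ on the right, with inverse obtained from $p_1$ and $q_1$), this is naturally equivalent to the data of a monoid homomorphism in $\Mon(\caa{D})$; naturality of this bijection with respect to the (co)monoid axioms is the only delicate verification and is where the main work lies, but it reduces to the interchange law and the companion/conjoint equations $p_1p_2=1_f$, $q_1q_2=1_f$. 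We conclude $\Mon(\caa{D})\cong\Gr{G}\ps{L}$ over $\caa{D}_0$, so the forgetful functor is a fibration; the opfibration statement for $\Comon(\caa{D})\cong\Gr{G}\ps{K}$ follows dually, replacing $\ps{M}$ with $\ps{F}$ throughout, or equivalently by applying the fibration case to $\caa{D}^{\op}$.
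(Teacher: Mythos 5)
Your proposal is correct and takes essentially the same route as the paper, which likewise obtains the result by restricting the pseudofunctors of Proposition \ref{D_1^.bifibred} to the categories of monoids and comonoids of the endo-hom-categories (the paper additionally records a second, more direct argument that simply exhibits the cartesian lifting of a monoid $N$ along $f$ as the pasting of $p_1$, $1_N$ and $q_1$, and dually the cocartesian lifting $p_2\odot 1_C\odot q_2$ for comonoids). The one place where your write-up is too quick is the claim that the components of $\delta^{g,f}$ and $\gamma^A$ are (co)monoid morphisms ``automatically because they are globular'' --- globularity alone does not give this, and a short computation comparing the unit and counit of the adjunction for the companion and conjoint of $gf$ with the pastings of those for $f$ and $g$ is still required, exactly as in the $\ca{V}$-matrix case.
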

\begin{proof}
We can again directly generalize
Propositions \ref{VCatfibred} and \ref{VCocatopfibred} by 
restricting (\ref{pseudofunctorsdoublebifibration})
to the categories $\Mon(\ca{H}(\caa{D})(A,A))$ and 
$\Comon(\ca{H}(\caa{D})(A,A))$ respectively.

Alternatively, we can exhibit 
the cartesian lifting of a monoid$\SelectTips{eu}{10}\xymatrix@C=.2in
{N:B\ar[r]\ar@{}[r]|-{\scriptstyle{\bullet}} & B}$
\begin{displaymath}
 \xymatrix @C=.5in @C=.4in
{\check{f}\odot N\odot\hat{f} \ar[rr]^-{\Cart(f,N)}
\ar @{.>}[d] && 
N\ar @{.>}[d] & \textrm{in }\Mon(\caa{D}) \\
A\ar[rr]^-f && B & \textrm{in }\caa{D}_0}
\end{displaymath}
along a 1-morphism $f$ to be the 2-morphism
\begin{displaymath}
\xymatrix @C=.5in @R=.5in
{A\ar[d]_-f \ar[r]^-{\hat{f}}
\ar@{}[r]|-{\scriptstyle{\bullet}}
\rtwocell<\omit>{<5>\;p_1} &
 B\ar[r]^-N\ar@{}[r]|-{\scriptstyle{\bullet}} \ar@{=}[d] 
\rtwocell<\omit>{<5>\;1_N} & 
B\ar@{=}[d]\ar[r]^-{\check{f}}
\ar@{}[r]|-{\scriptstyle{\bullet}}
\rtwocell<\omit>{<5>\;q_1} & 
A\ar[d]^-f \\
B\ar[r]_-{1_B}\ar@{}[r]|-{\scriptstyle{\bullet}} & 
B\ar[r]_-N\ar@{}[r]|-{\scriptstyle{\bullet}} & 
B\ar[r]_-{1_B}\ar@{}[r]|-{\scriptstyle{\bullet}} & B.}
\end{displaymath}
The universal property can be easily verified
using the properties of companions and conjoints. 
Similarly, we can provide the cocartesian liftings
\begin{equation}\label{cocartliftComonD}
\Cocart(f,C):C\Rightarrow\hat{f}\odot C\odot \check{f}\equiv 
p_2\odot 1_C\odot q_2
\end{equation}
for the forgetful $\Comon(\caa{D})\to\caa{D}_0$.
\end{proof}
In the proof of \cite[Proposition 3.3]{Monadsindoublecats},
the new multiplication and unit 
of $(\check{f}\odot N\odot\hat{f})$
for a monoid $N$ is explicitly stated, 
and an analogous version for the comultiplication 
and counit of $(\hat{f}\odot C\odot\check{f})$
for a comonoid can be written. Essentially, they are 
the same as the ones of 
Lemmas \ref{B*monoid} and \ref{C*comonoid}
for the particular case of $\ca{V}$-categories and
$\ca{V}$-cocategories.

We now proceed with the appropriate concepts of modules
and comodules in double categories, and the 
(op)fibrations they form over $\Mon(\caa{D})$ and 
$\Comon(\caa{D})$. 

A \emph{(left) $M$-module} for a 
monoid$\SelectTips{eu}{10}\xymatrix@C=.2in
{M:A\ar[r]\ar@{}[r]|-{\scriptstyle{\bullet}} & A}$in 
a double category $\caa{D}$
is a horizontal 1-cell$\SelectTips{eu}{10}\xymatrix@C=.2in
{\Psi:Z\ar[r]\ar@{}[r]|-{\scriptstyle{\bullet}} & A}$with  
specified target $A$, equipped with a globular 2-morphism
\begin{displaymath}
\xymatrix @C=.4in @R=.4in
{Z\ar[r]^-\Psi\ar@{}[r]|-{\scriptstyle{\bullet}} 
\rrtwocell<\omit>{<4>\mu} \ar@{=}[d] & 
A\ar[r]^-M\ar@{}[r]|-{\scriptstyle{\bullet}} & A\ar@{=}[d] \\
Z\ar[rr]_-\Psi\ar@{}[rr]|-{\scriptstyle{\bullet}} && A}
\end{displaymath}
called the \emph{action}, which satisfies the usual compatibility
axioms with the multiplication and unit of the monoid $M$.
In fact this coincides with the concept of a left $M$-module
for a monad $M$ in the horizontal 
bicategory $\ca{H}(\caa{D})$.

A \emph{(left) module homomorphism}
between a left $M$-module $\Psi$ and a left $N$-module $\Xi$
consists of a monoid map $\alpha_f$ from $M$ to $N$
along with a 2-morphism
\begin{displaymath}
\xymatrix @C=.4in @R=.4in
{Z\ar[r]^-\Psi\ar@{}[r]|-{\scriptstyle{\bullet}} 
\rtwocell<\omit>{<4>\beta} \ar[d]_-k & A\ar[d]^-f \\
W\ar[r]_-\Xi\ar@{}[r]|-{\scriptstyle{\bullet}} & B}
\end{displaymath}
with specified target $f$, which satisfies the equality
\begin{displaymath}
 \xymatrix
{Z\ar[r]^-\Psi\ar@{}[r]|-{\scriptstyle{\bullet}}\ar@{=}[d]
\rrtwocell<\omit>{<4>\mu} & A\ar[r]^-M
\ar@{}[r]|-{\scriptstyle{\bullet}} & A\ar@{=}[d] \\
Z\ar[rr]_-\Psi\ar@{}[rr]|-{\scriptstyle{\bullet}}\ar[d]_-k
\rrtwocell<\omit>{<4>\beta} &&
A\ar[d]^-f \\
W\ar[rr]_-{\Xi}\ar@{}[rr]|-{\scriptstyle{\bullet}} && B}
\xymatrix{\hole \\ =}
\xymatrix
{Z\ar[r]^-\Psi\ar@{}[r]|-{\scriptstyle{\bullet}}\ar[d]_-k
\rtwocell<\omit>{<4>\beta} & 
A\ar[r]^-M\ar@{}[r]|-{\scriptstyle{\bullet}}\ar[d]^-f
\rtwocell<\omit>{<4>\alpha} & A\ar[d]^-f \\
W\ar[r]_-\Xi\ar@{}[r]|-{\scriptstyle{\bullet}}\ar@{=}[d]
\rrtwocell<\omit>{<4>\mu} & B\ar[r]_-N\ar@{}[r]|-{\scriptstyle{\bullet}} &
B\ar@{=}[d] \\
W\ar[rr]_-\Xi\ar@{}[rr]|-{\scriptstyle{\bullet}} && B.}
\end{displaymath}
Denote the category of (left) modules and 
module homomorphisms as $\Mod(\caa{D})$.

There are certain 
subcategories of importance to us:
we can consider all left modules with fixed source
$Z$ and arrows $^k\beta^f$
with $k=\id_Z$ which form a category ${^Z}\Mod(\caa{D})$;
we can also consider the category 
${_M}\Mod(\caa{D})$ of all left $M$-modules
and module homomorphisms $^k\beta^f$ with $f=\id_A$; 
finally we have the 
category ${^Z_M}\Mod(\caa{D})$ of all $M$-modules
with source $Z$ and globular 2-morphisms. 
As expected, the latter is the category ${^Z_M}\Mod(\ca{H}(\caa{D}))
=\ca{H}(\caa{D})(Z,A)^{\ca{H}(\caa{D})(Z,M)}$ as in Definition
\ref{lefttmodules}.

We can dualize the above definitions to obtain the 
category $\Comod(\caa{D})$ of \emph{(left) comodules}
and \emph{comodule homomorphisms} for any double
category $\caa{D}$. Explicitly, for 
a comonoid$\SelectTips{eu}{10}\xymatrix@C=.2in
{C:A\ar[r]\ar@{}[r]|-{\scriptstyle{\bullet}} & A}$in $\caa{D}$, 
a left $C$-comodule 
is a horizontal 1-cell$\SelectTips{eu}{10}\xymatrix@C=.2in
{\Phi:W\ar[r]\ar@{}[r]|-{\scriptstyle{\bullet}} & A}$with
target $A$, equipped with 
a globular 2-morphism
\begin{displaymath}
\xymatrix @C=.4in @R=.4in
{W\ar@{=}[d]
\ar[rr]^-\Phi\ar@{}[rr]|-{\scriptstyle{\bullet}}
\rrtwocell<\omit>{<4>\delta} && 
A\ar@{=}[d] \\
W\ar[r]_-\Phi\ar@{}[r]|-{\scriptstyle{\bullet}} & 
A\ar[r]_-C\ar@{}[r]|-{\scriptstyle{\bullet}} & A}
\end{displaymath}
called the \emph{coaction}, compatible
with the comultiplication and counit 
of the comonoid $C$. A comodule homomorphism
between a $C$-comodule $\Phi$ and a $D$-comodule
$\Omega$ consists of a comonoid map
$\alpha_f$ between $C$ and $D$ and a 2-morphism
$^k\beta^f:\Phi\Rightarrow\Omega$
which respects the coactions. 
Notice how for both module and comodule maps, the 
target agrees with the source (and target)
of the (co)monoid map, 
\emph{i.e}. $\Gr{t}(\beta)=\Gr{s}(\alpha)$. 

Once again,
we have the subcategories $^W\Comod(\caa{D})$ of 
left comodules with fixed source $W$, $_C\Comod(\caa{D})$
of left $C$-comodules for a fixed comonoid $C$, and 
the category of left $C$-comodules with fixed target $W$
\begin{displaymath}
 ^W_C\Comod(\caa{D}):={^W_C}\Comod(\ca{H}(\caa{D}))=
\ca{H}(\caa{D})(W,A)^{\ca{H}(\caa{D})(W,C)}.
\end{displaymath}

We could appropriately define categories of 
\emph{right modules} and \emph{comodules} in a 
double category $\caa{D}$, as well as \emph{bimodules}
and \emph{bicomodules}. 
In fact, bimodules between monoids 
are the horizontal 1-cells 
for a double category $\caa{M}\B{od}(\caa{D})$ 
studied in \cite{Framedbicats},
in the context of fibrant 
double categories.
According to the notation
followed in this thesis though, $\Mod$ corresponds only
to one-sided modules and $\B{BMod}$ to two-sided.

Motivated by Section \ref{globalVmodsVcomods},
we now focus on ${^Z}\Mod(\caa{D})$ and 
${^W}\Comod(\caa{D})$.
Explicitly, for $\caa{D}$=$\ca{V}\text{-}\MMat$
the categories $^1\Mod(\ca{V}\text{-}\MMat)$ 
and $^1\Comod(\ca{V}\text{-}\MMat)$ are precisely
the global categories $\ca{V}$-$\Mod$ and $\ca{V}$-$\Comod$,
where $1={\{*\}}$ is the singleton. 
Whenever appropriate, we will 
briefly remark what the results for the more general
categories of modules and comodules would look like.
\begin{prop}\label{ModComodfibred}
 Suppose $\caa{D}$ is a fibrant double category. The 
categories $^Z\Mod(\caa{D})$ and $^W\Comod(\caa{D})$ are 
fibred and opfibred respectively over $\Mon(\caa{D})$ and 
$\Comon(\caa{D})$, for any 0-cells $Z$ and $W$.
\end{prop}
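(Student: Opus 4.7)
The plan is to generalize the strategy of Propositions \ref{VModfibred} and \ref{VComodopfibred}, constructing pseudofunctors over $\Mon(\caa{D})^{\op}$ and $\Comon(\caa{D})$ whose Grothendieck constructions give categories isomorphic to $^Z\Mod(\caa{D})$ and $^W\Comod(\caa{D})$ together with their forgetful functors. The role played by the induced $\ca{V}$-matrices $f_*,\,f^*$ of a function $f:X\to Y$ in the earlier proofs is now played by the companion $\hat f$ and conjoint $\check f$ of a vertical 1-morphism $f:A\to B$ in the fibrant double category $\caa{D}$.

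First, I would generalize Lemmas \ref{Xi*module} and \ref{Phi*comodule} to this abstract setting. Given a monoid morphism $\alpha_f:M_A\to N_B$ in $\caa{D}$ and a left $N$-module $\Xi:Z\to B$, the horizontal 1-cell $\check f\odot\Xi:Z\to A$ inherits a left $M$-action from the composite
\[
M\odot\check f\odot\Xi\xRightarrow{\hat\alpha\,\odot\,1}\check f\odot N\odot\Xi\xRightarrow{1\,\odot\,\mu}\check f\odot\Xi,
\]
where $\hat\alpha:M\odot\check f\Rightarrow\check f\odot N$ is the mate of $\alpha$ under the adjunction $\hat f\dashv\check f$ in $\ca H(\caa D)$; the $M$-module axioms follow from the monoid morphism axioms for $\alpha_f$ and the properties of companion/conjoint units and counits. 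One then checks that a globular homomorphism $\Xi\Rightarrow\Xi'$ of $N$-modules yields an $M$-module homomorphism after pre-composing with $\check f$, giving a functor $(\check f\odot-):{^Z_N}\Mod(\caa D)\to{^Z_M}\Mod(\caa D)$. The comodule analogue is entirely dual, using $(\hat f\odot-)$.

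Next, these functors assemble into pseudofunctors
\[
\ps H:\Mon(\caa D)^{\op}\longrightarrow\B{Cat},\qquad \ps S:\Comon(\caa D)\longrightarrow\B{Cat}
\]
on objects by $\ps HM={^Z_M}\Mod(\caa D)$ and $\ps SC={^W_C}\Comod(\caa D)$. The required coherence isomorphisms $\delta^{\alpha,\beta}$ and $\gamma^M$ come from the canonical isomorphisms $\check g\odot\check f\cong\widecheck{gf}$ (respectively $\hat f\odot\hat g\cong\widehat{gf}$) for composable vertical 1-morphisms, together with the unitors $\lambda,\rho$ of $\caa D_1$; coherence then reduces to the standard coherence for companion/conjoint pastings, which is ensured by fibrancy of $\caa D$. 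Via the Grothendieck construction of Theorem \ref{maintheoremfibr} (and its dual), $\ps H$ and $\ps S$ yield a fibration $P_{\ps H}$ and an opfibration $U_{\ps S}$; the identification
\[
\Gr G\ps H\cong{^Z}\Mod(\caa D),\qquad \Gr G\ps S\cong{^W}\Comod(\caa D)
\]
over $\Mon(\caa D)$ and $\Comon(\caa D)$ respectively proceeds exactly as in the $\ca V$-$\Mat$ case, by factoring a morphism of modules $(^k\beta^f)$ with $k=\id_Z$ through the cartesian lifting, and dually.

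The main obstacle is bookkeeping: verifying that $\check f\odot\Xi$ actually carries an $M$-action satisfying the module axioms requires careful pasting with the coherence 2-isomorphisms of $\caa D$, and that the coherence isomorphisms $\delta,\gamma$ of the pseudofunctors satisfy (\ref{laxcond1}) and (\ref{laxcond2}) requires invoking the coherence for companions and conjoints (Lemma \ref{corisos} at the level of $\ca V$-$\Mat$). Alternatively, and more concisely, one can avoid constructing $\ps H$ and $\ps S$ explicitly by producing directly the (co)cartesian liftings: for $\alpha_f:M\to N$ in $\Mon(\caa D)$ and an $N$-module $\Xi$ with source $Z$, the lifting
\[
\Cart(\alpha_f,\Xi):\check f\odot\Xi\;\Longrightarrow\;\Xi
\]
is the 2-morphism obtained by pasting the companion-conjoint cell $q_1:\check f\Rightarrow 1_B$ above $f$ with $1_\Xi$, generalizing \eqref{cocartliftComonD}; its universal property follows from the characterization of the mates correspondence in $\caa D$. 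Dually for the cocartesian lifting in $^W\Comod(\caa D)$. This second route sidesteps the pseudofunctoriality verification and makes the forgetful functors manifestly a fibration and an opfibration.
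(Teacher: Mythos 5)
Your proposal is correct and follows essentially the same route as the paper: the paper likewise generalizes Propositions \ref{VModfibred} and \ref{VComodopfibred} by taking the reindexing functors to be $(\check f\odot-)$ and $(\hat f\odot-)$, with the induced $M$-action on $\check f\odot\Xi$ obtained by pasting the conjoint cells $q_1,q_2$ with $\alpha$ and $\mu$ exactly as your mate $\hat\alpha$ encodes, and it also records your alternative shortcut of exhibiting the (co)cartesian liftings $\Cart(\Psi,\alpha_f)=\lambda\cdot(q_1\odot 1_\Psi)$ directly.
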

\begin{proof}
Analogously to Propositions 
\ref{VModfibred} and \ref{VComodopfibred},
the indexed categories which give rise to the fibration
and opfibration in this case are 
\begin{displaymath}
\ps{H}:
\xymatrix @R=.02in
{\Mon(\caa{D})^\op\ar[r] & \B{Cat}, \\
M\ar @{|.>}[r]
\ar [dd]_-{\alpha_f} & ^Z_M\Mod(\caa{D}) \\
\hole \\
N\ar @{|.>}[r] &
^Z_N\Mod(\caa{D})
\ar[uu]_-{(\check{f}\odot\text{-})}}
\qquad
\ps{S}:
\xymatrix @R=.02in
{\Comon(\caa{D})\ar[r] & \B{Cat} \\
C\ar @{|.>}[r]
\ar [dd]_-{\alpha_f} & ^W_C\Comod(\caa{D})
\ar[dd]^-{(\hat{f}\odot\text{-})} \\
\hole \\
D\ar @{|.>}[r] &
^W_D\Comod(\caa{D}).}
\end{displaymath}
As an illustration, if$\SelectTips{eu}{10}\xymatrix@C=.2in
{\Psi:Z\ar[r]\ar@{}[r]|-{\scriptstyle{\bullet}} & B}$is a left 
$N$-module, then
$\SelectTips{eu}{10}\xymatrix@C=.2in
{\check{f}\odot\Psi:Z\ar[r]\ar@{}[r]|-{\scriptstyle{\bullet}} & B
\ar[r]\ar@{}[r]|-{\scriptstyle{\bullet}} & A}$
obtains the structure of a left $M$-module, via the action
\begin{displaymath}
 \xymatrix @C=.3in @R=.25in
{Z\ar[r]^-\Psi\ar@{}[r]|-{\scriptstyle{\bullet}}\rtwocell<\omit>{<3.7>\;1_\Psi}\ar@{=}[d] & 
B\ar[r]^-{\check{f}}\ar@{}[r]|-{\scriptstyle{\bullet}}\rtwocell<\omit>{<3.7>\;1_{\check{f}}}\ar@{=}[d] &
A\ar[rr]^-M\ar@{}[rr]|-{\scriptstyle{\bullet}}\rrtwocell<\omit>{<3.7>\;\;\lambda^{\text{-}1}}\ar@{=}[d] && 
A\ar@{=}[d] \\
Z\ar[r]_-\Psi\ar@{}[r]|-{\scriptstyle{\bullet}}\rtwocell<\omit>{<3.7>\;1_\Psi}\ar@{=}[d] & 
B\ar[r]_-{\check{f}}\ar@{}[r]|-{\scriptstyle{\bullet}}\rtwocell<\omit>{<3.7>q_1}\ar@{=}[d] & 
A\ar[r]_-M\ar@{}[r]|-{\scriptstyle{\bullet}}\rtwocell<\omit>{<3.7>\alpha}\ar[d]^-f & 
A\ar[r]_-{1_A}\ar@{}[r]|-{\scriptstyle{\bullet}}\rtwocell<\omit>{<3.7>q_2}\ar[d]^-f & 
A\ar@{=}[d] \\
Z\ar[r]_-\Psi\ar@{}[r]|-{\scriptstyle{\bullet}}\rtwocell<\omit>{<3.7>\;1_\Psi}\ar@{=}[d] & 
B\ar[r]_-{1_B}\ar@{}[r]|-{\scriptstyle{\bullet}}\rrtwocell<\omit>{<3.7>\rho}\ar@{=}[d] & 
B\ar[r]_-N\ar@{}[r]|-{\scriptstyle{\bullet}} & 
B\ar[r]_-{\check{f}}\ar@{}[r]|-{\scriptstyle{\bullet}}\rtwocell<\omit>{<3.7>\;1_{\check{f}}}\ar@{=}[d] &
A\ar@{=}[d] \\
Z\ar[r]_-\Psi\ar@{}[r]|-{\scriptstyle{\bullet}}\ar@{=}[d] & 
B\ar[rr]_-N\ar@{}[rr]|-{\scriptstyle{\bullet}}\rtwocell<\omit>{<3.7>\mu} &&
B\ar[r]_-{\check{f}}\ar@{}[r]|-{\scriptstyle{\bullet}}\rtwocell<\omit>{<3.7>\;1_{\check{f}}}\ar@{=}[d] & 
A\ar@{=}[d] \\
Z\ar[rrr]_-\Psi\ar@{}[rrr]|-{\scriptstyle{\bullet}} &&& 
B\ar[r]_-{\check{f}}\ar@{}[r]|-{\scriptstyle{\bullet}} & A}.
\end{displaymath}
This essentially generalizes Lemma \ref{Xi*module}, which 
is clearer if we suppress the 
natural isomorphisms $\lambda$, $\rho$ of the pseudo
double category.
In a dual way, we can determine the induced $D$-coaction
on a composite horizontal 1-cell
$\SelectTips{eu}{10}\xymatrix@C=.2in
{\hat{f}\odot\Phi:Z\ar[r]\ar@{}[r]|-{\scriptstyle{\bullet}} & 
A\ar[r]\ar@{}[r]|-{\scriptstyle{\bullet}} & B}$
for a left $C$-module $\Phi$, adjusting Lemma 
\ref{Phi*comodule}.

Alternatively, we can
deduce that the forgetful functors
$ {^Z}\Mod(\caa{D})\to\Mon(\caa{D})$ and
${^W}\Comod(\caa{D})\to\Comon(\caa{D})$
are a fibration and opfibration
respectively, by exhibiting the (co)cartesian arrows. 
For any left $N$-module $\Psi$ and any monoid homomorphism
$\alpha_f:M\to N$, the required cartesian lifting
$\Cart(\Psi,\alpha_f):\check{f}\odot\Psi\to\Psi$ in 
${^Z}\Mod(\caa{D})$ is the left-module morphism
\begin{displaymath}
 \xymatrix @C=.3in @R=.3in
{Z\ar[r]^-\Psi\ar@{}[r]|-{\scriptstyle{\bullet}}\rtwocell<\omit>{<4>\;1_\Psi}
\ar@{=}[d] & B\ar[r]^-{\check{f}}\ar@{}[r]|-{\scriptstyle{\bullet}}
\rtwocell<\omit>{<4>q_1}\ar@{=}[d] & A\ar[d]^-f \\
Z\ar[r]_-\Psi\ar@{}[r]|-{\scriptstyle{\bullet}}\rrtwocell<\omit>{<4>\lambda}
\ar@{=}[d] & B\ar[r]_-{1_B}\ar@{}[r]|-{\scriptstyle{\bullet}} & B\ar@{=}[d] \\
Z\ar[rr]_-\Psi\ar@{}[rr]|-{\scriptstyle{\bullet}} && B.}
\end{displaymath}
The universal property is easily checked
by the relations between $q_1$ and $q_2$, and similarly
we can write the cocartesian liftings for the second
forgetful functor.
\end{proof}
We could also establish a fibration $\Mod(\caa{D})\to\Mon(\caa{D})$
and an opfibration $\Comod(\caa{D})\to\Comon(\caa{D})$
for the categories of left modules and comodules with arbitrary sources. 
The fibre categories
would then be $_M\Mod(\caa{D})$ and $_C\Comod(\caa{D})$
respectively, and the reindexing functors the same as above.
\begin{rmk*}
Consider the categories $^X\caa{D}_1$ for 
any 0-cell $X$, of horizontal 
1-cells with domain $X$ and 2-morphisms with source
$\id_X$. We can generalize Proposition 
\ref{monadicoverpullbackcat} and deduce that 
${^Z}\Mod(\caa{D})$
is monadic over the pullback category 
$^Z\caa{D}_1\times_{\caa{D}_0}\Mon(\caa{D})$, 
and ${^W}\Comod(\caa{D})$
is comonadic over $^W\caa{D}_1\times_{\caa{D}_0}\Comon(\caa{D})$.
This further clarifies the structure and properties of 
these categories. Similarly for (co)modules of 
arbitrary domain, if we replace $^X\caa{D}_1$ by plain $\caa{D}_1$.
\end{rmk*}
We have so far totally recovered the fibrational view 
of Sections \ref{fibrationalview} and 
\ref{globalVmodsVcomods} in the abstract framework of 
fibrant double categories. 
As remarked earlier, the definitions of $\Mon(\ca{V}\text{-}\MMat)$
and $\Comon(\ca{V}\text{-}\MMat)$
wholly encapsulate the categories $\ca{V}$-$\B{Cat}$
and $\ca{V}$-$\B{Cocat}$,
and the same applies to the 
categories $\ca{V}$-$\Mod$ 
and $\ca{V}$-$\Comod$ which are identified with
$\Mod(\ca{V}\text{-}\MMat)$
and $\Comod(\ca{V}\text{-}\MMat)$.
We now turn to the issue of enrichment 
between those categories.

In order to generalize the main results of 
the previous chapter in the monoidal double 
categorical context, we require
the existence of the following functors
(compare also with the beginning of this section): a pseudo
double functor 
\begin{equation}\label{defotimes}
 \otimes:\caa{D}\times\caa{D}\longrightarrow\caa{D}
\end{equation}
which constitutes the tensor product of 
the double category,
and a lax double functor
\begin{equation}\label{Hdouble}
 H:\caa{D}^\op\times\caa{D}\longrightarrow\caa{D}
\end{equation}
with the property that $H_0$ gives a monoidal
closed structure on $(\caa{D}_0,\otimes_0,I)$ 
and $H_1$
a monoidal closed structure on $(\caa{D}_1,\otimes_1,1_I)$.

We could assume that the extra structure given 
by this lax double functor $H$ 
makes $\caa{D}$
into a \emph{monoidal closed double category}. 
However, this seems to not be
the case, even if there is
an analogy with  
Definition \ref{monoidaldoublecategory}
of a monoidal double category, where
the pseudo double functor 
$\otimes=(\otimes_0,\otimes_1)$
induces monoidal structures 
to the vertical and horizontal categories $\caa{D}_0$ 
and $\caa{D}_1$.
 
In \cite{Adjointfordoublecats}, a 
(weakly) monoidal closed pseudo double
category $\caa{D}$ is a monoidal
double category
such that each pseudo double functor 
$(\text{-}\otimes D):\caa{D}\to\caa{D}$
has a lax right adjoint, call it $\Hom^\caa{D}$.
Notice that in fact, $(\text{-}\otimes D)=
(\text{-}\otimes_0D,\text{-}\otimes_11_D)$.
This falls into the more general case of 
\emph{pseudo/lax adjunction} between
pseudo double categories
as described in \cite[3.2]{Adjointfordoublecats},
whereas double adjunctions are also
studied in \cite{Doubleadjunctionsandfreemonads}
in detail.
Explicitly, it consists of two ordinary adjunctions
\begin{displaymath}
\xymatrix @C=.7in
{\caa{D}_0\ar@<+.8ex>[r]^-{(-\otimes_0D)}
\ar@{}[r]|-{\bot} &
\caa{D}_0\ar@<+.8ex>[l]^-{\Hom^\caa{D}_0(D,-)}}, \qquad
\xymatrix @C=.6in
{\caa{D}_1\ar@<+.8ex>[r]^-{-\otimes_11_D)}
\ar@{}[r]|-{\bot} &
\caa{D}_1\ar@<+.8ex>[l]^-{\Hom^\caa{D}_1(1_D,-)}}
\end{displaymath}
for any 0-cell $D$ in $\caa{D}$,
with units and counits $\eta_{0,1},\varepsilon_{0,1}$
satisfying appropriate triangle identities, 
such that conditions 
expressing compatibility with the horizontal composition 
and identities are satisfied.
It immediately follows that $\caa{D}_0$ is a monoidal closed
category, but this cannot be deduced for $\caa{D}_1$
since $1_D$ is not an arbitrary horizontal 1-cell.

We call a monoidal pseudo double category equipped with 
a functor $H$ as in (\ref{Hdouble}) with such properties
a \emph{locally monoidal closed double category}.
The above arguments justify that a monoidal closed structure
on a double category does not imply a 
locally monoidal closed structure.

For example, consider the monoidal double
category $\caa{V}$-$\MMat$. The tensor product is given by 
$\otimes_0=\times$,
the cartesian monoidal structure in $\B{Set}$, 
and $\otimes_1$ defined as in (\ref{VMMat1monoidal}).
Moreover, if $\ca{V}$ is monoidal
closed and has products,
there is a lax double functor $H=(H_0,H_1)$ defined as follows.
On the vertical category, we have the exponentiation functor
\begin{displaymath}
 H_0:\B{Set}^\op\times\B{Set}\xrightarrow{\;(-)^{(-)}\;}\B{Set}
\end{displaymath}
which is the internal hom in $\B{Set}$. On the horizontal 
category
\begin{displaymath}
 H_1:\xymatrix @C=1in
{\ca{V}\text{-}\MMat_1^\op\times\ca{V}\text{-}\MMat_1
\ar[r] & \ca{V}\text{-}\MMat_1}\phantom{ABC}
\end{displaymath}\vspace{-0.2in}
\begin{displaymath}
 \xymatrix @C=.025in @R=.25in
{(X\ar[rrr]|-{\object@{|}}^S\ar[d]_-f
&\rtwocell<\omit>{<4>{\alpha}}&& Y\ar[d]^-g
& , & Z\ar[rrr]^-T|-{\object@{|}}\ar[d]_-h
&\rtwocell<\omit>{<4>\beta}&& W)\ar[d]^-k
\ar@{|.>}[rrr] &&& Z^X\ar[rrrr]^-{H_1(S,T)}|-{\object@{|}}
\ar[d]_-{h^f} &\rtwocell<\omit>{<4>\qquad H_1(\alpha,\beta)}
&&& W^Y\ar[d]^-{k^g} \\
(X'\ar[rrr]_-{S'}|-{\object@{|}} &&& Y' & , &
Z'\ar[rrr]_-{T'}|-{\object@{|}} &&& W')
\ar@{|.>}[rrr] &&& Z'^{X'}\ar[rrrr]_-{H_1(S',T')}|-{\object@{|}} 
&&&& {W'}^{Y'}}
\end{displaymath}
is defined on objects as 
$H_1(S,T)(m,n)=\prod_{(y,x)}[S(y,x),T(m(y),n(x))]$
for all $m\in W^Y$, $n\in Z^X$, and on arrows as 
\begin{align*}
 H_1(\alpha,\beta):H_1(S,T)(m,n)&\to H_1(S',T')(k^g(m),h^f(n)) \equiv \\
\prod_{\scriptscriptstyle{\stackrel{y\in Y}{x\in X}}} [S(y,x),T(m(y),n(x))]
&\to \prod_{\scriptscriptstyle{\stackrel{y'\in Y'}{x'\in X'}}}[S'(y',x'),
T'(kmg(y'),hnf(x'))]
\end{align*}
which corresponds under the adjunction 
$(\text{-}\otimes X)\dashv [X,-]$ in $\ca{V}$
for fixed $y',x'$ to the composite
\begin{displaymath}
\xymatrix @C=.8in
{\prod\limits_{\scriptscriptstyle{\stackrel{y\in Y}{x\in X}}}
[S(y,x),T(my,nx)]\otimes S'(y',x')\ar @{-->}[r]
\ar[d]_-{1\otimes \alpha_{y',x'}} & T'(kmgy',hnfx') \\
\prod\limits_{\scriptscriptstyle{\stackrel{y\in Y}{x\in X}}}
[S(y,x),T(my,nx)]\otimes S(gy',fx')\ar[d]_-{\pi_{gy',fx'}\otimes 1} & \\
[S(gy',fx'),T(mgy',nfx')]\otimes S(gy',fx')\ar[r]^-{\textrm{ev}}
& T(mgy',nfx').\ar[uu]_-{\beta_{mgy',nfx'}}}
\end{displaymath}
The globular transformations 
\begin{displaymath}
H_1(R,O)\odot H_1(S,T)\xrightarrow{\;\sim\;}H_1(R\odot S, O\odot T),\quad 
1_{H_0(X,Y)}\xrightarrow{\;\sim\;}H_1(1_X,1_Y)
\end{displaymath}
which make $H=(H_0,H_1)$ into a lax double functor
are as in (\ref{Homlaxfunctor1}), (\ref{Homlaxfunctor2}).
The functor $H_1$ constitutes a monoidal closed structure
for $(\ca{V}\text{-}\MMat_1,\otimes_1,1_I)$,
the proof being essential Proposition \ref{VGrphclosed} 
in the more general case of arbitrary horizontal 1-cells
and not only endoarrows like $\ca{V}$-graphs. 

For an arbitrary locally monoidal closed double category $\caa{D}$,
we now aim to investigate possible enrichment relations between 
the (op)fibrations of Propositions 
\ref{MonComonfibred} and \ref{ModComodfibred}. 
The following properties of double
functors resemble to properties of monoidal
functors studied in Chapter \ref{monoidalcategories}.
\begin{prop}\label{MonFdouble}
Any lax double functor $(F_0,F_1):\caa{D}\to\caa{E}$ 
induces an ordinary functor
\begin{displaymath}
 \Mon F:\Mon(\caa{D})\to\Mon(\caa{E})
\end{displaymath}
between the categories of monoids, which is 
$F_1$ restricted to $\Mon\caa{D}$. Dually,
any colax double functor induces
a functor between the categories of comonoids.
\end{prop}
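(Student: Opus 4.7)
The plan is to mimic the one-object bicategorical result of Lemma \ref{lemmonlaxfun} and the monoidal-category result (\ref{MonF}), lifted to the double-category setting. On objects, given a monoid $(M,m,\eta)$ in $\caa{D}$, where $M:A\to A$ is a horizontal endo-1-cell and $m,\eta$ are globular 2-morphisms, I would define $\Mon F(M)$ to be $F_1M:F_0A\to F_0A$, equipped with multiplication and unit the composite globular 2-morphisms
\begin{displaymath}
m':F_1M\odot F_1M\xrightarrow{\;F_\odot\;}F_1(M\odot M)\xrightarrow{\;F_1m\;}F_1M,
\end{displaymath}
\begin{displaymath}
\eta':1_{F_0A}\xrightarrow{\;F_U\;}F_1(1_A)\xrightarrow{\;F_1\eta\;}F_1M.
\end{displaymath}
These are globular because $\Gr{s},\Gr{t}$ of the lax structure 2-morphisms $F_\odot,F_U$ are identities (they are themselves globular) and $F_1$ sends globular 2-morphisms to globular ones.

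Next I would verify the associativity and unit axioms for $(F_1M,m',\eta')$. This is the direct analogue of the computation for lax monoidal functors in (\ref{MonF}): expand $m'\cdot(1\odot m')$ versus $m'\cdot(m'\odot1)$ and rewrite using the coherence axioms (\ref{laxcond1}) for $F_\odot$ relative to the associator $\alpha$ of $\caa{D}$, together with functoriality of $F_1$ applied to the associativity square for $m$. The two unit triangles similarly reduce to (\ref{laxcond2}) for $F_U$ combined with the unit axiom of $(M,m,\eta)$. No new ideas are needed here; it is exactly the pasting calculation that proves lax functors of bicategories preserve monads.

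On morphisms, for a monoid homomorphism $\alpha_f:M_A\to N_B$, I would set $\Mon F(\alpha_f):=F_1(\alpha_f)$ viewed as a 2-morphism from $F_1M$ to $F_1N$ with source and target $F_0f$. Compatibility with multiplication is the equality
\begin{displaymath}
m'_N\cdot\bigl(F_1\alpha\odot F_1\alpha\bigr)=F_1\alpha\cdot m'_M,
\end{displaymath}
which follows by substituting the definitions of $m'_M,m'_N$ and then using naturality of $F_\odot$ (in the form (\ref{naturalityFodot}) applied to the pair $(\alpha,\alpha)$) to slide $F_1\alpha\odot F_1\alpha$ past $F_\odot$, reducing the claim to $F_1$ applied to the monoid-morphism axiom for $\alpha$. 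Compatibility with units is the analogous one-step computation using naturality of $F_U$ and the unit axiom for $\alpha$. Functoriality of $\Mon F$ is immediate from functoriality of $F_1$ since the composition in $\Mon(\caa{D})$ is inherited from $\caa{D}_1$, and the square commutes automatically with the forgetful functor to $\caa{D}_0$ via $F_0$.

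The dual statement for $\Comon$ is obtained by the same argument applied to the colax double functor, using the 2-morphisms $F^\odot:F_1(N\odot M)\Rightarrow F_1N\odot F_1M$ and $F^U:F_1(1_A)\Rightarrow1_{F_0A}$ in the opposite direction, so that one defines the comultiplication and counit of $F_1C$ as $F^\odot\circ F_1\Delta$ and $F^U\circ F_1\epsilon$; alternatively, one invokes the identity $\Comon(\caa{D})=\Mon(\caa{D}^\op)^\op$ and the fact that a colax double functor $F:\caa{D}\to\caa{E}$ corresponds to a lax double functor $\caa{D}^\op\to\caa{E}^\op$. The only mild obstacle is bookkeeping: one must be attentive to the direction of $F_\odot$ versus $F^\odot$ and to the placement of globular isomorphisms $\alpha,\lambda,\rho$ when pasting, but nothing beyond careful diagram chasing is required.
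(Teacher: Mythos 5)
Your proposal is correct and follows essentially the same route as the paper's proof: the multiplication and unit of $F_1M$ are defined as $F_1m\cdot F_\odot$ and $F_1\eta\cdot F_U$, the monoid axioms follow from the coherence axioms for the lax structure, and compatibility of $F_1\alpha$ with the induced structure is exactly the naturality of $F_\odot$ and $F_U$ as in the paper. The only difference is presentational — you write the composites in-line and spell out the verifications that the paper leaves as pasting diagrams and a one-line remark.
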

\begin{rmk*}
Since monoids in a double category are 
monads in its horizontal bicategory and 
a lax double functor
induces a lax functor between the 
horizontal bicategories,
the above statement on the level of objects
coincides with 
Remark \ref{laxfunctorspreservemonads}. 
\end{rmk*}
\begin{proof}
A monoid$\SelectTips{eu}{10}\xymatrix@C=.2in
{M:A\ar[r]\ar@{}[r]|-{\scriptstyle{\bullet}} & A}$with $m:M\odot M\to M$
and $\eta:1_M\to M$ is mapped to$\SelectTips{eu}{10}\xymatrix@C=.2in
{F_1M:F_0A\ar[r]\ar@{}[r]|-{\scriptstyle{\bullet}} & F_0A}$with 
multiplication and unit
\begin{displaymath}
 \xymatrix
{F_0A\ar[r]^-{F_1M}\ar@{}[r]|-{\scriptstyle{\bullet}}\ar@{=}[d]\rrtwocell<\omit>{<5>\;F_\odot}
& F_0A\ar[r]^-{F_1M}\ar@{}[r]|-{\scriptstyle{\bullet}} & F_0A\ar@{=}[d] \\
F_0A\ar[rr]_-{F_1(M\odot M)}\ar@{}[rr]|-{\scriptstyle{\bullet}}\ar@{=}[d]\rrtwocell<\omit>{<5>\quad F_1m}
&& F_0A\ar@{=}[d] \\
F_0A\ar[rr]_-{F_1 M}\ar@{}[rr]|-{\scriptstyle{\bullet}} && F_0A}\quad
\xymatrix
{\hole \\ \mathrm{and}}
\quad
\xymatrix
{F_0A\ar[r]^-{F_1(1_A)}\ar@{}[r]|-{\scriptstyle{\bullet}}\ar@{=}[d]\rtwocell<\omit>{<5>\;F_U} &
F_0A\ar@{=}[d] \\
F_0A\ar[r]_-{1_{F_0A}}\ar@{}[r]|-{\scriptstyle{\bullet}}\ar@{=}[d]\rtwocell<\omit>{<5>\quad F_1\eta} &
F_0A\ar@{=}[d] \\
F_0A\ar[r]_-{F_1M}\ar@{}[r]|-{\scriptstyle{\bullet}} & F_0A}
\end{displaymath}
and the axioms follow from the axioms for $F_{\odot}$ and 
$F_U$. A monoid arrow $\alpha_f:M\to N$ is mapped to
\begin{displaymath}
 \xymatrix
{F_0A\ar[r]^-{F_1 M}\ar@{}[r]|-{\scriptstyle{\bullet}}\ar[d]_-{F_0f}\rtwocell<\omit>{<4>\;\; F_1\alpha} &
F_0A\ar[d]^-{F_0f} \\
F_0B\ar[r]_-{F_1 N}\ar@{}[r]|-{\scriptstyle{\bullet}} & F_0B}
\end{displaymath}
which respects multiplications and units by naturality 
of $F_{\odot}$ as in (\ref{naturalityFodot}) and $F_U$.
\end{proof}
\begin{prop}\label{ModFdouble}
 Any lax double functor $F:\caa{D}\to\caa{E}$ induces a functor
\begin{displaymath}
 ^Z\Mod F:{^Z}\Mod(\caa{D})\to{^{F_0Z}}\Mod(\caa{E})
\end{displaymath}
between the categories of modules, which
is a restriction of $F_1$. Dually, any colax
double functor $G$ induces a functor
\begin{displaymath}
  ^W\Comod G:{^W}\Comod(\caa{D})\to{^{G_0W}}\Comod(\caa{E}).
\end{displaymath}
\end{prop}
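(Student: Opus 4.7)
The plan is to mimic the proof of Proposition \ref{MonFdouble}, using that a lax double functor carries the structural data of a left module to data of a left module in the codomain, with axioms inherited from the coherence data of $F$. Throughout, the singleton-like condition that the source $Z$ is preserved by $F_0$ makes the constraint on the source of the action 2-morphisms work out correctly.

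First, given a left $M$-module $\Psi:Z\to A$ with action $\mu:M\odot\Psi\Rightarrow\Psi$, I would define an $F_1M$-action on the horizontal 1-cell $F_1\Psi:F_0Z\to F_0A$ as the composite globular 2-morphism
\begin{displaymath}
\xymatrix @C=.4in @R=.3in
{F_0Z\ar[r]^-{F_1\Psi}\ar@{}[r]|-{\scriptstyle{\bullet}}\ar@{=}[d]\rrtwocell<\omit>{<5>\;F_\odot}
& F_0A\ar[r]^-{F_1M}\ar@{}[r]|-{\scriptstyle{\bullet}} & F_0A\ar@{=}[d] \\
F_0Z\ar[rr]_-{F_1(M\odot\Psi)}\ar@{}[rr]|-{\scriptstyle{\bullet}}\ar@{=}[d]\rrtwocell<\omit>{<5>\quad F_1\mu}
&& F_0A\ar@{=}[d] \\
F_0Z\ar[rr]_-{F_1\Psi}\ar@{}[rr]|-{\scriptstyle{\bullet}} && F_0A.}
\end{displaymath}
Note that this makes sense because $F$ being a lax double functor implies that the underlying monoid $F_1M$ is obtained via Proposition \ref{MonFdouble}, and the diagram sits above $\id_{F_0Z}$ and $\id_{F_0A}$. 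The compatibility axioms of the action with the multiplication and unit of $F_1M$ now reduce, after expanding the multiplication and unit of $F_1M$ as in the proof of Proposition \ref{MonFdouble}, to pasted diagrams which commute by the coherence axioms for the lax structure transformations $F_\odot,F_U$ together with the original module axioms of $(\Psi,\mu)$.

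Next, for a morphism $^{\id_Z}\beta^f:\Psi\Rightarrow\Xi$ in $^Z\Mod(\caa{D})$ lying above a monoid homomorphism $\alpha_f:M\Rightarrow N$, I would define $^Z\Mod F(\beta)$ to be $F_1\beta$, whose source is $F_0(\id_Z)=\id_{F_0Z}$ and whose target is $F_0f$. Compatibility with the actions---that is, the equality of the two pasted diagrams in the definition of a module homomorphism---follows from applying $F_1$ to the corresponding equality for $\beta$ and using naturality of $F_\odot$ in the form (\ref{naturalityFodot}) together with the description of $F_1\alpha$ as a monoid homomorphism from Proposition \ref{MonFdouble}. Functoriality of $^Z\Mod F$ is then immediate from functoriality of $F_1$.

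The dual statement for a colax double functor $G:\caa{D}\to\caa{E}$ (equipped with structure transformations $G^\odot:G_1(N\odot M)\Rightarrow G_1N\odot G_1M$ and $G^U:G_1(1_A)\Rightarrow 1_{G_0A}$ in the opposite direction) is obtained by reversing the direction of the globular 2-morphisms: for a left $C$-comodule $\Phi$ with coaction $\delta:\Phi\Rightarrow C\odot\Phi$, the 1-cell $G_1\Phi$ acquires a $G_1C$-coaction via $G_1\delta$ followed by $G^\odot$, and a comodule homomorphism $\beta$ is sent to $G_1\beta$. The only subtle point, which is the place requiring the most care, is to check that the composites used for the new (co)action really satisfy the associativity/coassociativity and (co)unit laws; but this is a diagrammatic verification using only the defining coherence axioms of lax/colax double functors together with the (co)module axioms of the input, and presents no essential difficulty beyond bookkeeping.
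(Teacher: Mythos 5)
Your proposal is correct and matches the paper's proof essentially verbatim: the induced action on $F_1\Psi$ is exactly $F_1\mu$ pasted onto $F_\odot$, and the map on arrows is $F_1\beta$, with compatibility checked via naturality of $F_\odot$ as in (\ref{naturalityFodot}). The only cosmetic difference is that the paper obtains the object-level assignment by citing Proposition \ref{laxfunctorbetweenmodules} (since modules in $\caa{D}$ are modules for a monad in $\ca{H}(\caa{D})$ and $\ca{H}F$ is a lax functor of bicategories), whereas you verify the module axioms directly from the coherence of $F_\odot$ and $F_U$ --- the underlying computation is the same.
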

\begin{proof}
On the level of objects, Proposition 
\ref{laxfunctorbetweenmodules} gives functors
\begin{align*}
 {_M^Z}\Mod F&:{_M^Z}\Mod(\caa{D})\to{_{F_1M}^{F_0Z}}\Mod(\caa{E}) \\
 {_C^W}\Comod G&:{_C^W}\Comod(\caa{D})\to{_{G_1C}^{G_0W}}\Comod(\caa{E})
\end{align*}
since (co)modules for a (co)monoid in a double category
are (co)modules for a (co)monad
in its horizontal bicategory.
The $F_1M$-action on$\SelectTips{eu}{10}\xymatrix@C=.2in
{F_1\Psi:F_0Z\ar[r]\ar@{}[r]|-{\scriptstyle{\bullet}} & F_0A}$
for $(\Psi,\mu)$ a left $M$-module is just
\begin{displaymath}
 \xymatrix@R=.25in
{F_0Z\ar[r]^-{F_1\Psi}\ar@{}[r]|-{\scriptstyle{\bullet}}\rrtwocell<\omit>{<4>\;F_\odot}
\ar@{=}[d] & F_0A\ar[r]^-{F_1 M}\ar@{}[r]|-{\scriptstyle{\bullet}} & 
F_0A\ar@{=}[d] \\
F_0Z\ar[rr]_-{F_1(M\odot\Psi)}\ar@{}[rr]|-{\scriptstyle{\bullet}}\rrtwocell<\omit>{<4>\quad F_1\mu}
\ar@{=}[d] && F_0A\ar@{=}[d] \\
F_0Z\ar[rr]_-{F_1\Psi}\ar@{}[rr]|-{\scriptstyle{\bullet}} && F_0A.}
\end{displaymath}
On arrows, the fact that the image 
$^{\id_{F_0Z}}(F_1\beta)^{F_0f}:F_1\Psi\Rightarrow F_1\Xi$
of a left module morphism $\beta$
commutes with the induced actions on $F_1\Psi$, $F_1\Xi$ 
is easily verified,
by naturality of $F_\odot$ and axioms 
for $\beta$. 
\end{proof}
The functors $^Z\Mod F$ and $^W\Comod G$ are in fact 
special cases of the more general 
$\Mod F:\Mod(\caa{D})\to\Mod(\caa{E})$ and
$\Comod G:\Comod(\caa{D})\to\Comod(\caa{E})$,
between categories of (co)modules of arbitrary source, 
with a (co)action of any (co)monoid. 

Motivated by our original examples, 
we wish to employ functors between categories of 
modules with strictly the same domain. 
The following lemma
shows how under certain assumptions on $\caa{D}$
(but not in general), isomorphic 0-cells in 
$\caa{D}_0$ determine equivalent categories of modules
with such domains.
\begin{lem}\label{fibrantmodmonoidal}
Suppose $\caa{D}$ is a fibrant double category. If 
two objects $Z$ and $W$ are isomorphic 
in $\caa{D}_0$, there is 
an equivalence between the 
categories of (left) modules with fixed domain $Z$ 
and $W$, \emph{i.e.} $^Z\Mod(\caa{D})\simeq{^W\Mod(\caa{D})}$.
\end{lem}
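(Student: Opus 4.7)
The plan is to exploit fibrancy to transport modules along the conjoints of $\phi$ and $\phi^{-1}$. Since $\caa{D}$ is fibrant and $\phi:Z\xrightarrow{\sim}W$ has inverse $\phi^{-1}$, both arrows possess conjoints $\check{\phi}:W\to Z$ and $\check{\phi^{-1}}:Z\to W$ in $\caa{D}_1$. A standard consequence of uniqueness of conjoints up to globular isomorphism (equivalently, of uniqueness of right adjoints in the horizontal bicategory $\ca{H}(\caa{D})$, via $\hat{h}\dashv\check{h}$) is that $\check{(gf)}\cong\check{f}\odot\check{g}$ and $\check{(\mathrm{id}_X)}\cong 1_X$. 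Applying this to $\phi^{-1}\phi=\mathrm{id}_Z$ and $\phi\phi^{-1}=\mathrm{id}_W$ yields globular isomorphisms $\epsilon:\check{\phi}\odot\check{\phi^{-1}}\xrightarrow{\sim}1_Z$ and $\epsilon':\check{\phi^{-1}}\odot\check{\phi}\xrightarrow{\sim}1_W$, exhibiting $\check{\phi}$ and $\check{\phi^{-1}}$ as mutually quasi-inverse horizontal 1-cells in $\ca{H}(\caa{D})$.

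Next I define the two candidate functors. The functor $F:{^Z}\Mod(\caa{D})\to{^W}\Mod(\caa{D})$ sends a left $M$-module $\Psi:Z\to A$ with action $\mu:M\odot\Psi\Rightarrow\Psi$ to the horizontal 1-cell $\Psi\odot\check{\phi}:W\to A$, equipped with the induced action obtained by pasting the associator with $\mu\odot 1_{\check{\phi}}$; the module axioms for this new action follow from those of $\mu$ together with naturality and coherence of $\alpha,\lambda,\rho$. On a morphism $(\beta,\alpha_f)$ with $\Gr{s}(\beta)=\mathrm{id}_Z$, set $F(\beta,\alpha_f):=(\beta\odot 1_{\check{\phi}},\alpha_f)$; the source of the new 2-morphism is $\mathrm{id}_W$ by the source/target rules for $\odot$, and preservation of the induced actions is a direct instance of the interchange law. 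Functoriality of $F$ is immediate from functoriality of $\odot$. Symmetrically I define $G:{^W}\Mod(\caa{D})\to{^Z}\Mod(\caa{D})$ using $\check{\phi^{-1}}$ in place of $\check{\phi}$.

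To see that $F$ and $G$ are quasi-inverse, for each $\Psi\in{^Z}\Mod(\caa{D})$ I construct a globular isomorphism
\[
\eta_\Psi\colon GF(\Psi)=(\Psi\odot\check{\phi})\odot\check{\phi^{-1}}\xrightarrow{\alpha}\Psi\odot(\check{\phi}\odot\check{\phi^{-1}})\xrightarrow{1_\Psi\odot\epsilon}\Psi\odot 1_Z\xrightarrow{\rho}\Psi,
\]
whose naturality in $\Psi$ follows from that of $\alpha$, $\epsilon$ and $\rho$; the symmetric construction (with $\epsilon'$) gives $\varepsilon_\Xi\colon FG(\Xi)\xrightarrow{\sim}\Xi$ natural in $\Xi\in{^W}\Mod(\caa{D})$. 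Together, $\eta$ and $\varepsilon$ furnish the claimed equivalence.

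The only delicate step is verifying that $\eta_\Psi$ and $\varepsilon_\Xi$ are genuine module homomorphisms, i.e.\ that they intertwine the induced action on $GF(\Psi)$ with the original action on $\Psi$. Unpacking the recipes above, this reduces to a pasting equality between two expressions for the composite 2-morphism $M\odot(\Psi\odot\check{\phi})\odot\check{\phi^{-1}}\Rightarrow\Psi$, which follows directly from the coherence axioms of the pseudo double category $\caa{D}$ (naturality and the pentagon/triangle relations for $\alpha,\lambda,\rho$) together with the interchange law between vertical and horizontal composition. This calculation, while somewhat tedious, is routine; the whole result is a double-categorical incarnation of the fact that equivalent objects in a bicategory induce equivalent module categories.
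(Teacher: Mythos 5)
Your proof is correct and follows essentially the same route as the paper: transport a module $\Psi$ with source $Z$ by horizontally precomposing with a conjoint, and check that the round trip is naturally isomorphic to the identity using the globular isomorphisms supplied by fibrancy, then observe that the module structure and the homomorphism condition are carried along by functoriality of $\odot$ and interchange. The only immaterial difference is that the paper takes the quasi-inverse functor to be $(-\odot\hat{f})$ and obtains $\Psi\cong\Psi\odot\check{f}\odot\hat{f}$ from invertibility of the unit of the adjoint equivalence $\hat{f}\dashv\check{f}$, whereas you use $(-\odot\check{\phi^{-1}})$ and derive $\check{\phi}\odot\check{\phi^{-1}}\cong 1_Z$ from uniqueness of conjoints; since $\check{\phi^{-1}}\cong\hat{\phi}$ for an isomorphism $\phi$, these are the same construction.
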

\begin{proof}
Recall that for any isomorphism $f$
in $\caa{D}_0$, the adjunction $\hat{f}\dashv\check{f}$
in $\ca{H}(\caa{D})$ is an adjoint equivalence, and in 
particular the unit and counit $\check{\eta}$, $\check{\varepsilon}$
are isomorphisms (\cite[Lemma 3.21]{ConstrSymMonBicats}).

Denote by $f:Z\xrightarrow{\;\sim\;}W$ 
the vertical isomorphism between the 0-cells. 
The functor $(\text{-}\odot\check{f}):{^Z}\caa{D}_1\to{^W}\caa{D}_1$
between categories of horizontal 1-cells with
fixed domains and 2-morphisms with sources vertical identities,
has an inverse up to isomorphism, namely
the functor $(\text{-}\odot\hat{f})$.
For example, there is a natural isomorphism
\begin{displaymath}
 \xymatrix@R=.25in
{Z\ar[rrr]^-\Psi\ar@{}[rrr]|-{\scriptstyle{\bullet}}\ar@{=}[d] &
\rtwocell<\omit>{<4>\;\;\rho^{\text{-}1}} && A\ar@{=}[d] \\
Z\ar[rr]_-{1_Z}\ar@{}[rr]|-{\scriptstyle{\bullet}}\rrtwocell<\omit>{<4>\check{\eta}}
\ar@{=}[d] && Z\ar[r]_\Psi\ar@{}[r]|-{\scriptstyle{\bullet}}\ar@{=}[d]
\rtwocell<\omit>{<4>\;1_\Psi} & A\ar@{=}[d] \\
Z\ar[r]_-{\hat{f}}\ar@{}[r]|-{\scriptstyle{\bullet}} & 
W\ar[r]_{\check{f}}\ar@{}[r]|-{\scriptstyle{\bullet}} & 
Z\ar[r]_-\Psi\ar@{}[r]|-{\scriptstyle{\bullet}} & A}
\end{displaymath}
between $\Psi$ and $\Psi\odot\check{f}\odot\hat{f}$,
since in this case $\check{\eta}$ is invertible.
This equivalence in fact lifts to the categories
of horizontal 1-cells with the structure of a left $M$-module
for an arbitrary monoid $M$ in $\caa{D}$, \emph{i.e.}
$^{(\text{-})}\Mod(\caa{D})$.
\end{proof}
We can now apply the above results to the double functors
$\otimes$ (\ref{defotimes}) and $H$ (\ref{Hdouble})
for our fibrant locally monoidal closed double category $\caa{D}$.

Firstly, in any monoidal double category, the tensor product of
$(\caa{D}_1,\otimes_1,1_I)$ restricts to the category
$\caa{D}_1^\bullet$ of endo-1-cells,
therefore $(\caa{D}_1^\bullet,\otimes_1,1_I)$ is a monoidal
category itself. Then, by Proposition \ref{MonFdouble}, 
the pseudo double functor $\otimes$ induces (ordinary) functors 
\begin{align*}
 \Mon\otimes:&\Mon(\caa{D})\times\Mon(\caa{D})\to\Mon(\caa{D}) \\
\Comon\otimes:&\Comon(\caa{D})\times\Comon(\caa{D})\to\Comon(\caa{D}),
\end{align*}
given by $\otimes_1$ between the specific 
subcategories of $\caa{D}_1^\bullet$. The unit element is 
still $\SelectTips{eu}{10}\xymatrix@C=.2in
{1_I:I\ar[r]\ar@{}[r]|-{\scriptstyle{\bullet}} & I}$for $I$
the unit of $\caa{D}_0$.
\begin{prop}
 If $\caa{D}$ is a monoidal double category, then the
categories $\caa{D}_1^\bullet$, $\Mon(\caa{D})$ and $\Comon(\caa{D})$ 
inherit a monoidal structure from $\caa{D}_1$.
\end{prop}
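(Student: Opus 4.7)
The plan is to proceed in three stages, handling the endo-hom-category $\caa{D}_1^\bullet$ first and then using its monoidal structure together with the pseudo double functor $\otimes$ to lift a monoidal structure to the categories of (co)monoids.

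First I would establish the monoidal structure on $\caa{D}_1^\bullet$ by sheer restriction. Since Definition \ref{monoidaldoublecategory} requires $\Gr{s}, \Gr{t} : \caa{D}_1 \to \caa{D}_0$ to be strict monoidal, the tensor product $M \otimes_1 N$ of two endo-1-cells $M : A \to A$ and $N : B \to B$ is again an endo-1-cell on $A \otimes_0 B$, and the tensor of two 2-morphisms with equal source and target vertical 1-morphisms inherits that property. The unit $1_I$ is itself an endo-1-cell, and the components of the associator and unitors of $\caa{D}_1$ at endo-1-cells have equal source and target, given by the corresponding constraints in $\caa{D}_0$, hence lie in $\caa{D}_1^\bullet$. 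The pentagon and triangle axioms then restrict automatically.

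Next I would obtain the monoidal structure on $\Mon(\caa{D})$ and $\Comon(\caa{D})$. Since $\otimes : \caa{D} \times \caa{D} \to \caa{D}$ is a pseudo double functor, it is simultaneously lax and colax (with invertible structure 2-cells), so Proposition \ref{MonFdouble} and its dual yield composite functors
\begin{gather*}
\Mon(\caa{D}) \times \Mon(\caa{D}) \cong \Mon(\caa{D} \times \caa{D}) \xrightarrow{\Mon(\otimes)} \Mon(\caa{D}), \\
\Comon(\caa{D}) \times \Comon(\caa{D}) \cong \Comon(\caa{D} \times \caa{D}) \xrightarrow{\Comon(\otimes)} \Comon(\caa{D}).
\end{gather*}
Explicitly, the multiplication on a tensor $M \otimes N$ of monoids is the composite $(M \otimes N) \odot (M \otimes N) \xrightarrow{\sim} (M \odot M) \otimes (N \odot N) \xrightarrow{m \otimes m'} M \otimes N$ using the globular interchange isomorphism from Definition \ref{monoidaldoublecategory}, with unit $1_{A \otimes_0 B} \cong 1_A \otimes 1_B \xrightarrow{\eta \otimes \eta'} M \otimes N$. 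The comonoid case is strictly dual, using the inverse interchange isomorphism.

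Finally, the unit object in both $\Mon(\caa{D})$ and $\Comon(\caa{D})$ is $1_I$ equipped with the trivial (co)monoid structure coming from $1_I \odot 1_I \cong 1_{I \otimes I} = 1_I$. To lift the associator and unitors from $\caa{D}_1^\bullet$ to $\Mon(\caa{D})$ and $\Comon(\caa{D})$, one verifies that the corresponding globular constraints are (co)monoid homomorphisms with respect to the induced (co)multiplications; the pentagon and triangle axioms then follow from those of $\caa{D}_1^\bullet$. The main obstacle, such as it is, will be the bookkeeping needed to check that these coherence 2-isomorphisms are indeed (co)monoid morphisms: this amounts to exhibiting a diagram chase combining naturality of the interchange isomorphism with the coherence axioms of the pseudo double functor $\otimes$, and is the double-categorical analogue of the familiar fact that for a braided monoidal category $\ca{V}$ the category $\Mon(\ca{V})$ is itself monoidal (with the role of the braiding played here by the built-in interchange isomorphism of $\otimes$).
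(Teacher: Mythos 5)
Your proposal is correct and follows essentially the same route as the paper: the monoidal structure on $\caa{D}_1^\bullet$ is obtained by restriction using strict monoidality of $\Gr{s},\Gr{t}$, and the tensor products on $\Mon(\caa{D})$ and $\Comon(\caa{D})$ are induced from the pseudo double functor $\otimes$ via Proposition \ref{MonFdouble} and its dual, with unit $1_I$. The explicit multiplication via the interchange isomorphism and the check that the coherence constraints are (co)monoid morphisms are exactly the details the paper leaves implicit.
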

For the monoidal double category $\caa{D}=\ca{V}\text{-}\MMat$, 
this directly implies that the 
categories $\ca{V}$-$\B{Grph}$, $\ca{V}$-$\B{Cat}$ and $\ca{V}$-$\B{Cocat}$
obtain a monoidal structure essentially given by (\ref{VMMat1monoidal}),
which of course agrees with the previous chapter.

Furthermore, by Proposition \ref{ModFdouble} the tensor product
also gives rise to functors
\begin{align*}
 {^{(Z,Z')}}\Mod\otimes:& \;{^Z}\Mod(\caa{D})\times {^{Z'}}\Mod(\caa{D})
\to {^{Z\otimes_0Z'}}\Mod(\caa{D}) \\
{^{(W,W')}}\Comod\otimes:& \;{^W}\Comod(\caa{D})\times {^{W'}}\Comod(\caa{D})
\to {^{W\otimes_0W'}}\Comod(\caa{D}).
\end{align*}
For the general categories of (left) modules and 
comodules with arbitrary domain $\Mod(\caa{D})$
and $\Comod(\caa{D})$, these mappings 
turn out to induce monoidal structures 
with unit element $1_I$.
However, since we are here interested in 
categories with fixed domains and in particular 
${^I}\Mod(\caa{D})$ and ${^I}\Comod(\caa{D})$
because of our motivating example,
the following `modified' monoidal structure 
is essential.
\begin{lem}\label{tensorproductModD}
Suppose that $\caa{D}$ is a fibrant monoidal double 
category. The categories ${^I}\Mod(\caa{D})$ and 
${^I}\Comod(\caa{D})$ inherit a `tensor product'
functor from $\caa{D}_1$.
\end{lem}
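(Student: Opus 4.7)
The plan is to combine the generic ``tensor of modules with varying domain'' functor provided by Proposition \ref{ModFdouble} with the domain-transport equivalence of Lemma \ref{fibrantmodmonoidal}, using the fact that the monoidal unit $I$ of $\caa{D}_0$ is self-dual in the sense that $I\otimes_0 I\cong I$.

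First I would apply Proposition \ref{ModFdouble} to the pseudo (hence lax) double functor $\otimes\colon\caa{D}\times\caa{D}\to\caa{D}$ with $Z=Z'=I$ on the domain side. This yields a functor
\begin{displaymath}
{^{(I,I)}}\Mod\otimes\colon {^I}\Mod(\caa{D})\times {^I}\Mod(\caa{D})\longrightarrow {^{I\otimes_0 I}}\Mod(\caa{D}),
\end{displaymath}
which on objects sends a pair of left modules $\Psi$, $\Psi'$ (with targets some monoids $M$, $M'$) to the horizontal 1-cell $\Psi\otimes_1\Psi'$, equipped with the induced left action of the monoid $M\otimes_1 M'\in\Mon(\caa{D})$ coming from $\Mon\otimes$. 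On morphisms it is just $\otimes_1$ restricted appropriately. Dually, since $\otimes$ is pseudo (so in particular colax), the same construction produces a functor ${^{(I,I)}}\Comod\otimes\colon {^I}\Comod(\caa{D})\times {^I}\Comod(\caa{D})\to {^{I\otimes_0 I}}\Comod(\caa{D})$.

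Next, because $(\caa{D}_0,\otimes_0,I)$ is a monoidal category, the right unitor provides a vertical isomorphism $r_I\colon I\otimes_0 I\xrightarrow{\sim} I$ in $\caa{D}_0$. Fibrancy of $\caa{D}$ is then invoked through Lemma \ref{fibrantmodmonoidal}, which upgrades this vertical iso to an equivalence of categories of left modules with the respective fixed domains:
\begin{displaymath}
(r_I)_\ast\colon {^{I\otimes_0 I}}\Mod(\caa{D})\xrightarrow{\;\simeq\;}{^I}\Mod(\caa{D}),
\end{displaymath}
and analogously for comodules. Composing with the functors from the previous paragraph produces the desired tensor-product functors
\begin{displaymath}
\otimes\colon {^I}\Mod(\caa{D})\times {^I}\Mod(\caa{D})\longrightarrow {^I}\Mod(\caa{D}),\qquad
\otimes\colon {^I}\Comod(\caa{D})\times {^I}\Comod(\caa{D})\longrightarrow {^I}\Comod(\caa{D}),
\end{displaymath}
with unit the horizontal identity $1_I$ (which lives in both ${^I}\Mod(\caa{D})$ and ${^I}\Comod(\caa{D})$ with trivial structure coming from the monoid and comonoid structure on $1_I$ guaranteed by Definition \ref{monoidaldoublecategory}).

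The main thing to check, and where a little care is needed, is the verification that the transport $(r_I)_\ast$ really carries the $M\otimes_1 M'$-action (or coaction) assembled on $\Psi\otimes_1\Psi'$ along the composite $\widecheck{r_I}\odot(\Psi\otimes_1\Psi')$ to a coherent left action for the transported monoid along the same vertical iso; this follows from naturality of $F_\odot$ as in (\ref{naturalityFodot}) applied to $F=\otimes$, together with the compatibility between the equivalence of Lemma \ref{fibrantmodmonoidal} and the induced actions described at the level of total categories in the proof of Proposition \ref{ModComodfibred}. Since we are only asked for the existence of a functor --- the word ``tensor product'' being deliberately in scare quotes --- and not yet for associativity or unit constraints, the argument terminates here; establishing a full (symmetric) monoidal structure would require tracking the coherent isomorphisms $I\otimes_0 I\otimes_0 I\cong I$ and forms a separate matter, reducible by the same fibrancy-plus-Lemma-\ref{fibrantmodmonoidal} device to the monoidal coherence of $\otimes_1$ on $\caa{D}_1$.
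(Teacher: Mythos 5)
Your construction is exactly the paper's: apply the module functor induced by the pseudo double functor $\otimes$ (Proposition \ref{ModFdouble}) to land in ${^{I\otimes_0 I}}\Mod(\caa{D})$, then transport along the unitor $r_I\colon I\otimes_0 I\cong I$ via the equivalence $(\text{-}\odot\check{r_I})$ of Lemma \ref{fibrantmodmonoidal}, and dually for comodules. One caution: your parenthetical claim that $1_I$ serves as the unit is not justified — the paper explicitly remarks immediately after this lemma that $1_I$ fails to be a unit for $\tilde{\otimes}$ in general (there is no evident isomorphism $(\Psi\otimes_1 1_I)\odot\check{r_I}\cong\Psi$ unless, e.g., $q_2$ for $\check{r_I}$ is invertible, or the equivalence of Lemma \ref{fibrantmodmonoidal} is an isomorphism); since the lemma only asserts a ``tensor product'' functor, this does not affect the statement being proved, but the side-claim should be dropped.
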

\begin{proof}
 Since $\caa{D}_0$ is a monoidal category with $\otimes_0$,
there exists a vertical isomorphism $r^0_I\text{=}l^0_I:
I\otimes_0 I\xrightarrow{\;\sim\;}I$.
Hence, by Lemma \ref{fibrantmodmonoidal} we have an 
equivalence
\begin{equation}\label{equivI}
{^{I\otimes_0 I}}\Mod(\caa{D})\simeq{^I}\Mod(\caa{D})
\end{equation}
between the categories of left modules with domain $I\otimes_0 I$
and of those with domain $I$. We can thus 
define a composite functor
\begin{equation}\label{tildeotimes}
\tilde{\otimes}:{^I}\Mod(\caa{D})\times{^I}\Mod(\caa{D})
\xrightarrow{\;{^{(I,I)}}\Mod\otimes\;}
{^{I\otimes_0 I}}\Mod(\caa{D})\xrightarrow{\;\simeq\;}
{^I}\Mod(\caa{D})
\end{equation}
where the first functor is $\otimes_1$
and the second is the equivalence $(\text{-}\odot\check{r_I})$.
It can be checked that this composite is equipped with 
natural coherent 
isomorphisms $(\Psi\tilde{\otimes}\Xi)\tilde{\otimes}\Theta\cong
\Psi\tilde{\otimes}(\Xi\tilde{\otimes}\Theta)$, 
coming from the respective ones for $\otimes_1$.
Similarly, we can work out a tensor product for 
$^I\Comod(\caa{D})$, making use of the equivalence
$^{I\otimes_0I}\Comod(\caa{D})\simeq^I\Comod(\caa{D})$.
\end{proof}
Even though, intuitively, this functor should
give rise to a monoidal structure on ${^I}\Mod(\caa{D})$, 
the natural choice of$\SelectTips{eu}{10}\xymatrix@C=.2in
{1_I:I\ar[r]\ar@{}[r]|-{\scriptstyle{\bullet}} & I}$
does not serve as the monoidal unit for
$\tilde{\otimes}$ as in (\ref{tildeotimes}). 
This is due to the fact that there is not an evident 
isomorphism between
\begin{displaymath}
\xymatrix @C=.25in
 {I\ar[r]^-{\check{r_I}}\ar@{}[r]|-{\scriptstyle{\bullet}} & 
I\otimes_0 I\ar[rr]^-{\Psi\otimes_1 1_I}\ar@{}[rr]|-{\scriptstyle{\bullet}}
&& A\otimes I}\;\mathrm{and}\;
\xymatrix
{I\ar[r]^-{\Psi}\ar@{}[r]|-{\scriptstyle{\bullet}} & A}
\end{displaymath}
unless, for example, $q_2$ for the conjoint
$\check{r_I}$ is invertible.
However, 
when the equivalence (\ref{equivI}) is an isomorphism, 
we can deduce that $({^I}\Mod(\caa{D}),\tilde{\otimes},1_I)$ is
a monoidal category.
This is again motivated by $\caa{D}=\ca{V}\text{-}\MMat$, 
where $I\text{=}\{*\}$ is the singleton set.

Now consider the lax double functor 
$H:\caa{D}^\op\times\caa{D}\to\caa{D}$ 
on a locally monoidal closed double category $\caa{D}$.
First of all, it is easy to see that $H_1$
restricts to the subcategory $\caa{D}_1^\bullet$
of endo-1-cells. 
Also, the natural isomorphism
\begin{displaymath}
 \caa{D}_1(M\otimes_1 N,P)\cong\caa{D}_1(M,H_1(N,P))
\end{displaymath}
which defines the adjunction $(-\otimes_1 N)\dashv H_1(N,-)$
implies that $\caa{D}_1^\bullet$ is also a monoidal
closed category. For example, for $\caa{D}=\ca{V}\text{-}\MMat$
this gives the monoidal closed structure on $\ca{V}$-$\B{Grph}$.
Then, by Proposition \ref{MonFdouble} there is an 
induced ordinary functor
\begin{equation}\label{MonHdouble}
 \Mon H:\Comon(\caa{D})^\op\times\Mon(\caa{D})\to\Mon(\caa{D})
\end{equation}
which is $H_1$ on the category 
$\Mon(\caa{D}^\op\times\caa{D})\cong\Mon(\caa{D}^\op)\times\Mon(\caa{D})$.
It is now easy to verify that 
for any monoid$\SelectTips{eu}{10}\xymatrix@C=.2in
{M:A\ar[r]\ar@{}[r]|-{\scriptstyle{\bullet}} & A,}$
the diagram
\begin{displaymath}
 \xymatrix @C=.7in @R=.5in
{\Comon(\caa{D})^\op \ar[r]^-{H_1(-,M)}
\ar[d] & \Mon(\caa{D})\ar[d] \\
\caa{D}_0^\op\ar[r]_-{H_0(-,A)} & \caa{D}_0.}
\end{displaymath}
commutes. There is also an adjunction
between the base categories
\begin{displaymath}
 \xymatrix @C=.8in
{\caa{D}_0\ar@<+.8ex>[r]^-{H_0^\op(-,A)}
\ar@{}[r]|-{\bot} &
\caa{D}_0^\op\ar@<+.8ex>[l]^-{H_0(-,A)}}
\end{displaymath}
for the monoidal closed category $\caa{D}_0$. 
If $\caa{D}$ is moreover fibrant, 
the legs of the diagram are fibrations by Proposition
\ref{MonComonfibred}. Lastly, if $\caa{D}$ is  
\emph{symmetric} monoidal (for the explicit definition, 
see \cite{ConstrSymMonBicats}), the
internal homs $H_0$ and $H_1$ of the monoidal closed
categories $\caa{D}_0$ and $\caa{D}_1$ are actions of 
the monoidal $\caa{D}_0^\op$, $\caa{D}_1^\op$
on the ordinary $\caa{D}_0$, $\caa{D}_1$
by Lemma \ref{inthomaction}.
Subsequently $H_0^\op$ and
\begin{displaymath}
 H_1^\op:\caa{D}_1\times\caa{D}_1^\op
\to\caa{D}_1^\op
\end{displaymath}
are actions too. Then the opposite $\Mon H^\op$ 
of the induced functor between monoids as in 
(\ref{MonHdouble}) is an action of the monoidal category 
$\Comon(\caa{D})$ on the opposite category 
$\Mon(\caa{D})^\op$, since 
the forgetful $\Mon(\caa{D})\to\caa{D}_1$
reflects isomorphisms.

We can now combine the above
with Theorem \ref{thmactionenrichedfibration} 
of the previous section
to outline how we could obtain 
an enriched opfibration from the above data.
\begin{thm}\label{vaguethm1}
Suppose $\caa{D}$ is a fibrant symmetric locally monoidal closed
double category.

$(1)$ If $H_1^\op:\Comon(\caa{D})\times\Mon(\caa{D})^\op\to\Mon(\caa{D})^\op$
is cocartesian with a pa\-ra\-me\-tri\-zed adjoint $P$, the categories $\Mon(\caa{D})^\op$
and $\Mon(\caa{D})$ are enriched in $\Comon(\caa{D})$.

$(2)$ If furthermore 
\begin{displaymath}
\xymatrix @C=.7in @R=.4in
{\Comon(\caa{D})\ar@<.8ex>[r]^-{H_1^\op(-,M)}\ar[d]\ar@{}[r]|-\bot
& \Mon(\caa{D})^\op\ar[d]\ar@<.8ex>[l]^-{P(-,M)} \\
\caa{D}_0\ar@<.8ex>[r]^-{H_0^\op(-,A)}\ar@{}[r]|-\bot & 
\caa{D}_0^\op\ar@<.8ex>[l]^-{H_0(-,A)}}
\end{displaymath}
is a general opfibred adjunction
for any monoid$\SelectTips{eu}{10}\xymatrix@C=.2in
{M:A\ar[r]\ar@{}[r]|-{\scriptstyle{\bullet}} & A}$ in $\caa{D}$,
then the fibration $\Mon(\caa{D})\to\caa{D}_0$
is enriched in the monoidal opfibration
$\Comon(\caa{D})\to\caa{D}_0$.
\end{thm}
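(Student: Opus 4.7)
The plan is to assemble the ingredients of part (1) so that Corollaries \ref{importcor1} and \ref{importcor2} apply, and then to lift the resulting enrichment to the (op)fibred setting using Theorem \ref{thmactionenrichedfibration}. First, I would record the monoidal data already in place. Since $\caa{D}$ is symmetric monoidal, Proposition just before the theorem gives a symmetric monoidal structure on $\Comon(\caa{D})$, with tensor inherited from $\otimes_1$ and unit $1_I$. By hypothesis $H_1$ makes $\caa{D}_1^\bullet$ monoidal closed, and the induced $\Mon H$ of (\ref{MonHdouble}) is obtained from the lax monoidal functor $H_1$ by restriction; combined with the discussion directly preceding the theorem, this says that $\Mon H^\op\colon\Comon(\caa{D})\times\Mon(\caa{D})^\op\to\Mon(\caa{D})^\op$ is an action of the symmetric monoidal $\Comon(\caa{D})$ on the ordinary category $\Mon(\caa{D})^\op$, where the natural isomorphisms $H(C\otimes D,M)\cong H(C,H(D,M))$ and $H(1_I,M)\cong M$ from $H_1$ being monoidal closed lift to $\Mon(\caa{D})^\op$ because the forgetful functor $\Mon(\caa{D})\to\caa{D}_1^\bullet$ is conservative.

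For part (1), the hypothesis furnishes a parametrized adjoint $P\colon\Mon(\caa{D})^\op\times\Mon(\caa{D})\to\Comon(\caa{D})$ to this action, in particular adjunctions $H_1^\op(-,M)\dashv P(-,M)$ for every monoid $M$. Corollary \ref{importcor1} then supplies an enrichment of $\Mon(\caa{D})^\op$ in $\Comon(\caa{D})$ with hom-objects $\Mon(\caa{D})^\op(M,N)=P(N,M)$, and since $\Comon(\caa{D})$ is symmetric, the opposite enriched category yields an enrichment of $\Mon(\caa{D})$ itself with hom-objects $P(M,N)$ (this is a direct generalisation of Theorem \ref{MonVenrichedinComonV}, and the cotensor product is $H_1(C,M)$).

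For part (2), I would verify the hypotheses of Theorem \ref{thmactionenrichedfibration} applied in the dual (opfibration) form. By Proposition \ref{MonComonfibred} the forgetful $\Comon(\caa{D})\to\caa{D}_0$ is an opfibration and $\Mon(\caa{D})\to\caa{D}_0$ is a fibration; fibrancy of $\caa{D}$ together with $\otimes$ being a pseudo double functor, via (\ref{cocartliftComonD}), makes $\Comon(\caa{D})\to\caa{D}_0$ into a monoidal opfibration in the sense of Definition \ref{monoidalfibration} — strict monoidality is immediate because the forgetful functor is the restriction of $\Gr{s}=\Gr{t}$, and preservation of cocartesian liftings by $\otimes_1$ follows from the pseudo-double-functoriality axiom combining $\otimes$ with the companion/conjoint structure. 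The pair $(\Mon H^\op,H_0^\op)$, with commuting squares from $H$ being a double functor, forms a fibred $1$-cell between $\Mon(\caa{D})^\op$ and $\caa{D}_0^\op$ fibred over themselves; cocartesianness of $H_1^\op$ is precisely the remaining hypothesis. The assumed general opfibred adjunction $(H_1^\op(-,M),H_0^\op(-,A))\dashv(P(-,M),H_0(-,A))$ for every monoid $M$ asserts exactly that $(H_1^\op,H_0^\op)$ is a $T$-pseudoaction (with $T$ the monoidal opfibration $\Comon(\caa{D})\to\caa{D}_0$) on the opfibration $\Mon(\caa{D})^\op\to\caa{D}_0^\op$, and that this pseudoaction admits a parametrized adjoint in $\B{OpFib}$ in the sense of the opfibred analogue of Theorem \ref{parameteradjunctionCat2}. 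Invoking the opfibred version of Theorem \ref{thmactionenrichedfibration} then produces the desired enriched opfibration structure on $\Mon(\caa{D})^\op\to\caa{D}_0^\op$, which by Definition just before the theorem is exactly the statement that the fibration $\Mon(\caa{D})\to\caa{D}_0$ is enriched in the monoidal opfibration $\Comon(\caa{D})\to\caa{D}_0$.

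The main obstacle I anticipate is not the formal bookkeeping but verifying the two structural properties that the theorem quietly demands of $\caa{D}$: that $\otimes_1$ strictly preserves the $\odot$-composition and cocartesian liftings cleanly enough to make $\Comon(\caa{D})\to\caa{D}_0$ strict monoidal (not merely pseudo), and that the adjunction data $P$ respect the opfibrational structure so that $P(-,M)$ is itself cocartesian. In the motivating example $\caa{D}=\ca{V}\textrm{-}\MMat$ both hold by direct inspection, but in the abstract framework one must check them from fibrancy and the pseudo-double-functor axioms; this is where a subtlety could force $(\,\mathrm{id}_I,\mathrm{id}_I)$-strictification or an additional hypothesis on the interaction of $\otimes$ with companions and conjoints.
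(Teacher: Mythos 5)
Your proposal follows essentially the same route the paper takes: part (1) is the action $\Mon H^\op$ of the symmetric monoidal $\Comon(\caa{D})$ on $\Mon(\caa{D})^\op$ together with the parametrized adjoint $P$, fed into Theorem \ref{actionenrich} and its corollaries, and part (2) is the opfibred form of Theorem \ref{thmactionenrichedfibration} applied to the monoidal opfibration $\Comon(\caa{D})\to\caa{D}_0$ and the assumed general opfibred adjunctions. The structural caveats you flag at the end (strict monoidality of the forgetful opfibration, cocartesianness of $P(-,M)$) are precisely the points the paper itself leaves as assertions to "be verified," so your account is, if anything, slightly more candid than the original outline.
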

Notice that the forgetful $\Comon(\caa{D})\to\caa{D}_0$ is a monoidal
fibration for any fibrant monoidal double category $\caa{D}$:
by definition, 
the tensor product of two comonoids has source and target
the tensor product $\otimes_0$ of the 0-cells in $\caa{D}_0$,
and it can also be verified
that $\otimes_1$ preserves the cocartesian 
liftings (\ref{cocartliftComonD}) in $\Comon(\caa{D})$.
Moreover, for the existence of such an adjoint $P$ and 
the establishment of a parametrized adjunction in $\B{OpFib}$
we can evidently employ Lemma \ref{totaladjointlem}
and Theorem \ref{totaladjointthm} . 

We now shift to the level
of modules and comodules in a fibrant locally 
monoidal closed double category, 
still focusing on categories
of horizontal 1-cells with 
fixed domain $I$, the monoidal unit of $\caa{D}_0$.
By Proposition \ref{ModFdouble}, the lax double functor
$H$ gives rise to a functor
\begin{displaymath}
 ^{(Z,W)}\Mod H:\;{^Z}\Comod(\caa{D})^\op\times{^W}\Mod(\caa{D})
\to{^{H_0(Z,W)}}\Mod(\caa{D})
\end{displaymath}
which is $H_1$ on $^{(Z,W)}\Mod(\caa{D}^\op\times\caa{D})\cong
{^Z}\Mod(\caa{D}^\op)\times{^W}\Mod(\caa{D})$.
We now obtain a commutative diagram 
\begin{displaymath}
 \xymatrix @C=.3in @R=.4in
{^I\Comod(\caa{D})^\op\ar[rrr]^-{^{(I,I)}\Mod H(-,\Psi)}
\ar[d] &&& ^{H_0(I,I)}\Mod(\caa{D})\ar[r]^-{\simeq}\ar[d]
& ^I\Mod(\caa{D})\ar[dl] \\
\Comon(\caa{D})^\op\ar[rrr]_-{\Mon H(-,M)} &&&
\Mon(\caa{D})}
\end{displaymath}
for any left $M$-module $\Psi$, where by 
Lemma \ref{fibrantmodmonoidal}
the equivalence is the functor 
$(\text{-}\odot\check{g})$, for $g:H_0(I,I)\cong I$
the isomorphism in the monoidal closed category $\caa{D}_0$. 

The following roughly sketches how we can 
establish the enrichment of $^I\Mod(\caa{D})$
in $^I\Comod(\caa{D})$, as in our particular examples.
Notice that the modified tensor product of 
Lemma \ref{tensorproductModD} gives a
monoidal structure on $^I\Comod(\caa{D})$
only when the equivalence between $^I\Comod(\caa{D})$ and 
$^{I\otimes_0 I}\Comod(\caa{D})$ is actually an isomorphism.
\begin{thm}\label{vaguethm2}
Suppose that the assumptions of Theorem \ref{vaguethm1} hold, and 
also $^I\Comod(\caa{D})\cong{^{I\otimes_0 I}}\Comod(\caa{D})$. 
If the functor $\Mod H$ has a parametrized adjoint
$Q$ such that for any left $M$-module $\Psi$,
\begin{displaymath}
\xymatrix @C=.8in @R=.4in
{^I\Comod(\caa{D})\ar@<.8ex>[r]^-{H_1^\op(-,\Psi)\odot\check{g}}
\ar@{}[r]|-\bot\ar[d] &
^I\Mod(\caa{D})^\op \ar[d]\ar@<.8ex>[l]^-{\bar{Q}(-\odot\hat{g},\Psi)} \\
\Comon(\caa{D})\ar@<.8ex>[r]^-{H_1^\op(-,M)}\ar@{}[r]|-\bot
& \Mon(\caa{D})^\op\ar@<.8ex>[l]^-{Q(-,M)}}
\end{displaymath}
is a general opfibred adjunction,
then the fibration $^I\Mod(\caa{D})\to\Mon(\caa{D})$ 
is enriched in the monoidal opfibration 
$^I\Comod(\caa{D})\to\Comon(\caa{D})$.
\end{thm}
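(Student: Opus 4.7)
The proof strategy mirrors that of Theorem \ref{vaguethm1} one level up, combining the theory of actions of monoidal categories on ordinary categories (Theorem \ref{actionenrich}) with its fibrational upgrade (Theorem \ref{thmactionenrichedfibration}), but now applied to the module/comodule level rather than the monoid/comonoid level. The key conceptual move is to view the lax double functor $H$ as producing, after restriction to one-sided modules with fixed domain $I$ and a correction by the isomorphism $g\colon H_0(I,I)\stackrel{\sim}{\to} I$, an action of the symmetric monoidal closed category $({^I}\Comod(\caa{D}),\tilde{\otimes},1_I)$ on the opposite category ${^I}\Mod(\caa{D})^{\mathrm{op}}$, lying over the action already constructed at the base level in Theorem \ref{vaguethm1}.

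The plan proceeds in the following steps. First, I would verify that the forgetful functor ${^I}\Comod(\caa{D})\to\Comon(\caa{D})$ is a monoidal opfibration: Proposition \ref{ModComodfibred} gives the opfibration, and the tensor product of comodules of Lemma \ref{tensorproductModD}, which under the hypothesis ${^I}\Comod(\caa{D})\cong{^{I\otimes_0I}}\Comod(\caa{D})$ genuinely produces a monoidal structure with unit $1_I$, is by construction compatible with $\otimes_1$ on $\Comon(\caa{D})$, so that the forgetful functor becomes strictly monoidal and the tensor product preserves cocartesian liftings. Second, I would verify that the composite ${^{(I,I)}}\Mod H^{\mathrm{op}}$ followed by the equivalence $(-\odot\check{g})$ from Lemma \ref{fibrantmodmonoidal} defines an action: the action axioms follow by reflection from the isomorphisms provided by $H_1^{\mathrm{op}}$ being an action on $\caa{D}_1$ (which in turn uses symmetry of $\caa{D}$), together with coherence of the chosen equivalence; the forgetful monadic functor from ${^I}\Mod(\caa{D})$ to the pullback category reflects these isomorphisms. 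Third, the hypothesized general opfibred adjunction exhibits the pair $(H_1^{\mathrm{op}}(-,\Psi)\odot\check{g},\;H_1^{\mathrm{op}}(-,M))$ as a parametrized adjoint in $\B{OpFib}$, via an application of Theorem \ref{parameteradjunctionCat2} adapted to $\B{OpFib}$.

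Fourth, I would invoke the opfibred dual of Theorem \ref{thmactionenrichedfibration} to the monoidal opfibration $^I\Comod(\caa{D})\to\Comon(\caa{D})$ acting on the opfibration $({^I}\Mod(\caa{D}))^{\mathrm{op}}\to\Mon(\caa{D})^{\mathrm{op}}$: the parametrized adjoint $\bar{Q}$ supplies the enriched hom objects between ${^I}\Mod(\caa{D})^{\mathrm{op}}$ and its base, and the compatibility of the composition laws and identities of the two levels of enrichment follows from the fact that the unit and counit of the total adjunction lie above those of the base adjunction (from Theorem \ref{vaguethm1}) by the map-of-adjunctions structure, combined with strict monoidality of the forgetful functor. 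Passing from the enriched opfibration of the opposites to the enriched fibration of the originals is then a matter of unpacking Definition \ref{enrichedfibration}, noting that $\bar{T}(-,-)$ plays the role of the enriched hom functor with the required partial cartesianness following from the opfibred structure of its $\B{OpFib}$-adjunction.

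The main obstacle I anticipate is step two, verifying carefully that the twist by the equivalence $(-\odot\check{g})$ does not destroy the action axioms: unlike in the analogous step for Theorem \ref{vaguethm1}, here the monoidal unit of $^I\Comod(\caa{D})$ itself only behaves correctly because of the assumed strengthening of the equivalence (\ref{equivI}) to an isomorphism, and threading this through the coherence diagrams demands patience. A secondary, but related, delicate point will be to check that the mate condition of Theorem \ref{totaladjointthm} (the invertibility of $\omega$) holds automatically here from the stated hypothesis that the square is a general opfibred adjunction, so that cocartesianness of $\bar{Q}$ is built in and the enriched hom functor $\bar{T}$ lands in the appropriate fibre structure without further assumptions.
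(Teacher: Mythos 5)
The paper offers no proof of this statement: Theorem \ref{vaguethm2} is stated only as the culmination of the informal sketch preceding it, and the author explicitly says that these final two theorems ``are just an attempt to place the most significant results and concepts of this thesis in a framework where they may arise in a natural way'' and that in this section ``most definitions and proofs are only outlined.'' Your proposal is a faithful and somewhat more detailed elaboration of exactly the route the paper intends --- restrict the lax double functor $H$ to modules with domain $I$, correct by the conjoint $\check{g}$ of $H_0(I,I)\cong I$, verify the action axioms using the isomorphism hypothesis on the equivalence from Lemma \ref{fibrantmodmonoidal}, and feed the hypothesized general opfibred adjunction into the opfibred analogue of Theorem \ref{thmactionenrichedfibration} --- and the delicate points you flag (the coherence of the $\check{g}$-twist and the invertibility of the mate needed for cocartesianness of $\bar{Q}$) are precisely the ones the paper leaves unaddressed.
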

We should stress that the above two theorems are just
an attempt to place the most significant results
and concepts of this thesis in a framework
where they may arise in a natural way,
rather than of actual importance on 
their own right as mathematical statements.
What should be quite noticeable about this final section is that 
we are more interested in fitting this recurring duality
and enrichment picture into a general theory via fibrations, 
than determining the more technical specifications required for
the exact enrichments to appear, as was the focus in the 
previous two chapters. This explains why we have not
addressed particular issues, such as
existence of limits and colimits in the categories
involved, monadicity, continuity of the key functors,
cartesianness and fibrewise
limits as well as local presentability,
which were broadly studied previously.

Hence, the significance of this abstraction basically
lies in the clarification of a setting 
for an enriched fibration picture between categories
of a dual flavor, and moreover and perhaps most
importantly, the possibility of further applications
in the context of other double categories/bicategories. 
Regarding this
last aspect we should point out the following,
without proceeding into a more detailed description due
to the conceptual limits of this thesis.
In the context of a bicategory of
\emph{$\ca{V}$-symmetries}, following a similar process
we would possibly be able to establish 
enrichments of categories of $\ca{V}$-\emph{operads} in
$\ca{V}$-\emph{cooperads}, and $\ca{V}$-operad \emph{modules}
in $\ca{V}$-cooperad \emph{comodules}. Evidently, this indicates
the necessity of further work in this area.

\backmatter

\bibliographystyle{alpha}
\bibliography{myreferences}

\begin{thebibliography}{BCSW83}

\bibitem[AJ13]{AnelJoyal}
Matthieu Anel and Andr{\'e} Joyal.
\newblock Sweedler theory of (co)algebras and the bar-cobar constructions.
\newblock arXiv:1309.6952 [math.CT], 2013.

\bibitem[AM10]{Species}
Marcelo Aguiar and Swapneel Mahajan.
\newblock {\em Monoidal functors, species and {H}opf algebras}, volume~29 of
  {\em CRM Monograph Series}.
\newblock American Mathematical Society, Providence, RI, 2010.
\newblock With forewords by Kenneth Brown and Stephen Chase and Andr{\'e}
  Joyal.

\bibitem[AP03]{VarietiesCovarieties}
Ji{\v{r}}{\'{\i}} Ad{\'a}mek and Hans-E. Porst.
\newblock On varieties and covarieties in a category.
\newblock {\em Math. Structures Comput. Sci.}, 13(2):201--232, 2003.
\newblock Coalgebraic methods in computer science (Genova, 2001).

\bibitem[AR94]{LocallyPresentable}
Ji{\v{r}}{\'{\i}} Ad{\'a}mek and Ji{\v{r}}{\'{\i}} Rosick{\'y}.
\newblock {\em Locally presentable and accessible categories}, volume 189 of
  {\em London Mathematical Society Lecture Note Series}.
\newblock Cambridge University Press, Cambridge, 1994.

\bibitem[Bar74]{BarrCoalgebras}
Michael Barr.
\newblock Coalgebras over a commutative ring.
\newblock {\em J. Algebra}, 32(3):600--610, 1974.

\bibitem[Bat91]{MeasuringCoalgebras}
Marjorie Batchelor.
\newblock Measuring coalgebras, quantum group-like objects and noncommutative
  geometry.
\newblock In {\em Differential geometric methods in theoretical physics
  ({R}apallo, 1990)}, volume 375 of {\em Lecture Notes in Phys.}, pages 47--60.
  Springer, Berlin, 1991.

\bibitem[Bat94]{Differenceoperators}
Marjorie Batchelor.
\newblock Difference operators, measuring coalgebras, and quantum group-like
  objects.
\newblock {\em Adv. Math.}, 105(2):190--218, 1994.

\bibitem[Bat00]{Batchelor}
Marjorie Batchelor.
\newblock Measuring comodules---their applications.
\newblock {\em J. Geom. Phys.}, 36(3-4):251--269, 2000.

\bibitem[BCSW83]{VarThrEnr}
Renato Betti, Aurelio Carboni, Ross Street, and Robert Walters.
\newblock Variation through enrichment.
\newblock {\em J. Pure Appl. Algebra}, 29(2):109--127, 1983.

\bibitem[B{\'e}n67]{Benabou}
Jean B{\'e}nabou.
\newblock Introduction to bicategories.
\newblock In {\em Reports of the {M}idwest {C}ategory {S}eminar}, pages 1--77.
  Springer, Berlin, 1967.

\bibitem[B{\'e}n73]{Distributeurs}
J.~B{\'e}nabou.
\newblock {\em Les distributeurs}.
\newblock Rapport (Universit{\'e} catholique de Louvain (1970- ).33
  S{\'e}minaire de math{\'e}matique pure)). Institut de math{\'e}matique pure
  et appliqu{\'e}e, Universit{\'e} catholique de Louvain, 1973.

\bibitem[B{\'e}n85]{BenabouFibered}
Jean B{\'e}nabou.
\newblock Fibered categories and the foundations of naive category theory.
\newblock {\em J. Symbolic Logic}, 50(1):10--37, 1985.

\bibitem[BKP89]{2-dimmonadtheory}
R.~Blackwell, G.~M. Kelly, and A.~J. Power.
\newblock Two-dimensional monad theory.
\newblock {\em J. Pure Appl. Algebra}, 59(1):1--41, 1989.

\bibitem[BL85]{BlockLerouxcofreecoalgebras}
Richard~E. Block and Pierre Leroux.
\newblock Generalized dual coalgebras of algebras, with applications to cofree
  coalgebras.
\newblock {\em J. Pure Appl. Algebra}, 36(1):15--21, 1985.

\bibitem[BN96]{BaezNeuchl}
John~C. Baez and Martin Neuchl.
\newblock Higher-dimensional algebra. {I}. {B}raided monoidal {$2$}-categories.
\newblock {\em Adv. Math.}, 121(2):196--244, 1996.

\bibitem[Bor94a]{Handbook1}
Francis Borceux.
\newblock {\em Handbook of categorical algebra. 1}, volume~50 of {\em
  Encyclopedia of Mathematics and its Applications}.
\newblock Cambridge University Press, Cambridge, 1994.
\newblock Basic category theory.

\bibitem[Bor94b]{Handbook2}
Francis Borceux.
\newblock {\em Handbook of categorical algebra. 2}, volume~51 of {\em
  Encyclopedia of Mathematics and its Applications}.
\newblock Cambridge University Press, Cambridge, 1994.
\newblock Categories and structures.

\bibitem[BS76]{Doublegroupoidsandcrossedmodules}
Ronald Brown and Christopher~B. Spencer.
\newblock Double groupoids and crossed modules.
\newblock {\em Cahiers Topologie G\'eom. Diff\'erentielle}, 17(4):343--362,
  1976.

\bibitem[Bun13]{Bunge}
Marta Bunge.
\newblock Tightly bounded completions.
\newblock {\em Theory Appl. Categ.}, 28:No. 8, 213--240, 2013.

\bibitem[Car95]{Carmody}
Sean Carmody.
\newblock {\em Cobordism categories}.
\newblock PhD thesis, University of Cambridge, 1995.

\bibitem[CGR12]{Multivariableadjunctions}
Eugenia Cheng, Nick Gurski, and Emily Riehl.
\newblock Multivariableadjunctions and mates.
\newblock arXiv:1208.4520 [math.CT], 2012.

\bibitem[DNR01]{HopfAlg}
Sorin D{\u{a}}sc{\u{a}}lescu, Constantin N{\u{a}}st{\u{a}}sescu, and
  {\c{S}}erban Raianu.
\newblock {\em Hopf algebras}, volume 235 of {\em Monographs and Textbooks in
  Pure and Applied Mathematics}.
\newblock Marcel Dekker Inc., New York, 2001.
\newblock An introduction.

\bibitem[DPP10]{Thespanconstruction}
Robert Dawson, Robert Par{\'e}, and Dorette Pronk.
\newblock The span construction.
\newblock {\em Theory Appl. Categ.}, 24:No. 13, 302--377, 2010.

\bibitem[DS97]{Monoidalbicats&hopfalgebroids}
Brian Day and Ross Street.
\newblock Monoidal bicategories and {H}opf algebroids.
\newblock {\em Adv. Math.}, 129(1):99--157, 1997.

\bibitem[Dub68]{AdjointTriangles}
Eduardo Dubuc.
\newblock Adjoint triangles.
\newblock In {\em Reports of the {M}idwest {C}ategory {S}eminar, {II}}, pages
  69--91. Springer, Berlin, 1968.

\bibitem[Ehr63]{Ehresmanndouble}
Charles Ehresmann.
\newblock Cat\'egories structur\'ees.
\newblock {\em Ann. Sci. \'Ecole Norm. Sup. (3)}, 80:349--426, 1963.

\bibitem[EK66]{Closedcategories}
Samuel Eilenberg and G.~Max Kelly.
\newblock Closed categories.
\newblock In {\em Proc. {C}onf. {C}ategorical {A}lgebra ({L}a {J}olla,
  {C}alif., 1965)}, pages 421--562. Springer, New York, 1966.

\bibitem[FGK11]{Monadsindoublecats}
Thomas~M. Fiore, Nicola Gambino, and Joachim Kock.
\newblock Monads in double categories.
\newblock {\em J. Pure Appl. Algebra}, 215(6):1174--1197, 2011.

\bibitem[FGK12]{Doubleadjunctionsandfreemonads}
Thomas~M. Fiore, Nicola Gambino, and Joachim Kock.
\newblock Double adjunctions and free monads.
\newblock {\em Cah. Topol. G\'eom. Diff\'er. Cat\'eg.}, 53(4):242--306, 2012.

\bibitem[Fox93]{Foxcofreecoalgebras}
Thomas~F. Fox.
\newblock The construction of cofree coalgebras.
\newblock {\em J. Pure Appl. Algebra}, 84(2):191--198, 1993.

\bibitem[GG76]{GouzouGrunig}
M.F. Gouzou and R.~Grunig.
\newblock Fibration relatives.
\newblock 1976.

\bibitem[GJ14]{GambinoJoyal}
Nicola Gambino and Andr{\'e} Joyal.
\newblock On operads, bimodules and analytic functors.
\newblock arXiv:1405.7270 [math.CT], 2014.

\bibitem[GP97]{enrthrvar}
R.~Gordon and A.~J. Power.
\newblock Enrichment through variation.
\newblock {\em J. Pure Appl. Algebra}, 120(2):167--185, 1997.

\bibitem[GP99]{Limitsindoublecats}
Marco Grandis and Robert Par{\'e}.
\newblock Limits in double categories.
\newblock {\em Cahiers Topologie G\'eom. Diff\'erentielle Cat\'eg.},
  40(3):162--220, 1999.

\bibitem[GP04]{Adjointfordoublecats}
Marco Grandis and Robert Par{\'e}.
\newblock Adjoint for double categories. {A}ddenda to: ``{L}imits in double
  categories'' [{C}ah. {T}opol. {G}\'eom. {D}iff\'er. {C}at\'eg. {\bf 40}
  (1999), no. 3, 162--220; mr1716779].
\newblock {\em Cah. Topol. G\'eom. Diff\'er. Cat\'eg.}, 45(3):193--240, 2004.

\bibitem[GPS95]{CoherenceTricats}
R.~Gordon, A.~J. Power, and Ross Street.
\newblock Coherence for tricategories.
\newblock {\em Mem. Amer. Math. Soc.}, 117(558):vi+81, 1995.

\bibitem[Gra66]{Grayfibredandcofibred}
John~W. Gray.
\newblock Fibred and cofibred categories.
\newblock In {\em Proc. Conf. Categorical Algebra (La Jolla, Calif., 1965)},
  pages 21--83. Springer, New York, 1966.

\bibitem[Gra74]{GrayFormalCategoryTheory}
John~W. Gray.
\newblock {\em Formal category theory: adjointness for {$2$}-categories}.
\newblock Lecture Notes in Mathematics, Vol. 391. Springer-Verlag, Berlin,
  1974.

\bibitem[Gro61]{Grothendieckcategoriesfibrees}
Alexander Grothendieck.
\newblock Cat{\'e}gories fibr{\'e}es et descente, 1961.
\newblock Seminaire de g{\'e}ometrie alg{\'e}brique de l'Institut des Hautes
  {\'E}tudes Scientifiques (SGA 1), Paris.

\bibitem[GS13]{GarnerShulman}
Richard Garner and Michael Shulman.
\newblock Enriched categories as a free cocompletion.
\newblock arXiv:1301.3191 [math.CT], 2013.

\bibitem[GU71]{GabrielUlmer}
P.~Gabriel and F.~Ulmer.
\newblock {\em Lokal Pr{\"a}sentierbare Kategorien}, volume 221 of {\em Lecture
  Notes in Mathematics}.
\newblock Springer-Verlag, 1971.

\bibitem[Gur07]{Gurski}
Nick Gurski.
\newblock {\em An algebraic theory of tricategories}.
\newblock PhD thesis, University of Chicago, 2007.

\bibitem[Gur13]{Gurskitricats}
Nick Gurski.
\newblock {\em Coherence in three-dimensional category theory}, volume 201 of
  {\em Cambridge Tracts in Mathematics}.
\newblock Cambridge University Press, Cambridge, 2013.

\bibitem[Haz03]{Hazewinkelcofreecoalgebras}
Michiel Hazewinkel.
\newblock Cofree coalgebras and multivariable recursiveness.
\newblock {\em J. Pure Appl. Algebra}, 183(1-3):61--103, 2003.

\bibitem[Her93]{hermidaphd}
Claudio~Alberto Hermida.
\newblock {\em {Fibrations, logical predicates and indeterminates}}.
\newblock PhD thesis, University of Edinburgh, 1993.

\bibitem[Her94]{FibredAdjunctions}
Claudio Hermida.
\newblock On fibred adjunctions and completeness for fibred categories.
\newblock In {\em Recent trends in data type specification ({C}aldes de
  {M}alavella, 1992)}, volume 785 of {\em Lecture Notes in Comput. Sci.}, pages
  235--251. Springer, Berlin, 1994.

\bibitem[Jac99]{Jacobs}
Bart Jacobs.
\newblock {\em Categorical logic and type theory}, volume 141 of {\em Studies
  in Logic and the Foundations of Mathematics}.
\newblock North-Holland Publishing Co., Amsterdam, 1999.

\bibitem[JK02]{AnoteonActions}
G.~Janelidze and G.~M. Kelly.
\newblock A note on actions of a monoidal category.
\newblock {\em Theory Appl. Categ.}, 9:61--91, 2001/02.
\newblock CT2000 Conference (Como).

\bibitem[Joh02a]{Elephant1}
Peter~T. Johnstone.
\newblock {\em Sketches of an elephant: a topos theory compendium. {V}ol. 1},
  volume~43 of {\em Oxford Logic Guides}.
\newblock The Clarendon Press Oxford University Press, New York, 2002.

\bibitem[Joh02b]{Elephant2}
Peter~T. Johnstone.
\newblock {\em Sketches of an elephant: a topos theory compendium. {V}ol. 2},
  volume~44 of {\em Oxford Logic Guides}.
\newblock The Clarendon Press Oxford University Press, Oxford, 2002.

\bibitem[JS93]{BraidedTensorCats}
Andr{\'e} Joyal and Ross Street.
\newblock Braided tensor categories.
\newblock {\em Adv. Math.}, 102(1):20--78, 1993.

\bibitem[Kel64]{OnMacLanesconditions}
G.~M. Kelly.
\newblock On {M}ac{L}ane's conditions for coherence of natural associativities,
  commutativities, etc.
\newblock {\em J. Algebra}, 1:397--402, 1964.

\bibitem[Kel72]{AbstrApproachCoherence}
G.~M. Kelly.
\newblock An abstract approach to coherence.
\newblock In {\em Coherence in categories}, pages 106--147. Lecture Notes in
  Math., Vol. 281. Springer, Berlin, 1972.

\bibitem[Kel74a]{Doctrinal}
G.~M. Kelly.
\newblock Doctrinal adjunction.
\newblock In {\em Category {S}eminar ({P}roc. {S}em., {S}ydney, 1972/1973)},
  pages 257--280. Lecture Notes in Math., Vol. 420. Springer, Berlin, 1974.

\bibitem[Kel74b]{ClubsDoctrines}
G.~M. Kelly.
\newblock On clubs and doctrines.
\newblock In {\em Category {S}eminar ({P}roc. {S}em., {S}ydney, 1972/1973)},
  pages 181--256. Lecture Notes in Math., Vol. 420. Springer, Berlin, 1974.

\bibitem[Kel05]{Kelly}
G.~M. Kelly.
\newblock Basic concepts of enriched category theory.
\newblock {\em Repr. Theory Appl. Categ.}, (10):vi+137, 2005.
\newblock Reprint of the 1982 original [Cambridge Univ. Press, Cambridge;
  MR0651714].

\bibitem[Kel06]{Ainfinityalgebras}
Bernhard Keller.
\newblock {$A$}-infinity algebras, modules and functor categories.
\newblock In {\em Trends in representation theory of algebras and related
  topics}, volume 406 of {\em Contemp. Math.}, pages 67--93. Amer. Math. Soc.,
  Providence, RI, 2006.

\bibitem[KK13]{KockKock}
Anders Kock and Joachim Kock.
\newblock Local fibred right adjoints are polynomial.
\newblock {\em Math. Structures Comput. Sci.}, 23(1):131--141, 2013.

\bibitem[KL01]{KellyLack}
G.~M. Kelly and Stephen Lack.
\newblock V-{C}at is locally presentable or locally bounded if {V} is so.
\newblock {\em Theory Appl. Categ.}, 8:555--575, 2001.

\bibitem[KM07]{Equalizerscocompletecocategories}
Bernhard Keller and Oleksandr Manzyuk.
\newblock Equalizers in the category of cocomplete cocategories.
\newblock {\em J. Homotopy Relat. Struct.}, 2(1):85--97, 2007.

\bibitem[KS74]{Review}
G.~Kelly and Ross Street.
\newblock Review of the elements of 2-categories.
\newblock In Gregory Kelly, editor, {\em Category Seminar}, volume 420 of {\em
  Lecture Notes in Mathematics}, pages 75--103. Springer Berlin / Heidelberg,
  1974.
\newblock 10.1007/BFb0063101.

\bibitem[Lac00]{LackPseudomonads}
Stephen Lack.
\newblock A coherent approach to pseudomonads.
\newblock {\em Adv. Math.}, 152(2):179--202, 2000.

\bibitem[Lac10a]{2-catcompanion}
Stephen Lack.
\newblock A 2-categories companion.
\newblock In {\em Towards higher categories}, volume 152 of {\em IMA Vol. Math.
  Appl.}, pages 105--191. Springer, New York, 2010.

\bibitem[Lac10b]{Icons}
Stephen Lack.
\newblock Icons.
\newblock {\em Appl. Categ. Structures}, 18(3):289--307, 2010.

\bibitem[Lac10c]{FreeMonoids}
Stephen Lack.
\newblock Note on the construction of free monoids.
\newblock {\em Appl. Categ. Structures}, 18(1):17--29, 2010.

\bibitem[Law73]{Lawvereclosedcats}
F.~William Lawvere.
\newblock Metric spaces, generalized logic, and closed categories.
\newblock {\em Rend. Sem. Mat. Fis. Milano}, 43:135--166 (1974), 1973.

\bibitem[LF12]{Ignacionotes}
Ignacio Lopez~Franco.
\newblock 2-dimensional category theory, 2012.
\newblock Part III graduate course lecture notes, University of Cambridge.

\bibitem[Lin69]{Linton}
F.~E.~J. Linton.
\newblock Coequalizers in categories of algebras.
\newblock In {\em Sem. on {T}riples and {C}ategorical {H}omology {T}heory
  ({ETH}, {Z}\"urich, 1966/67)}, pages 75--90. Springer, Berlin, 1969.

\bibitem[LP08]{2nervesbicats}
Stephen Lack and Simona Paoli.
\newblock 2-nerves for bicategories.
\newblock {\em $K$-Theory}, 38(2):153--175, 2008.

\bibitem[Lyu03]{Ainfinitycategories}
Volodymyr Lyubashenko.
\newblock Category of {$A_\infty$}-categories.
\newblock {\em Homology Homotopy Appl.}, 5(1):1--48, 2003.

\bibitem[Mar97]{Marmolejopseudomonads}
F.~Marmolejo.
\newblock Doctrines whose structure forms a fully faithful adjoint string.
\newblock {\em Theory Appl. Categ.}, 3:No.\ 2, 24--44, 1997.

\bibitem[McC00a]{Mccruddencoalgebroids}
Paddy McCrudden.
\newblock Balanced coalgebroids.
\newblock {\em Theory Appl. Categ.}, 7:No. 6, 71--147, 2000.

\bibitem[McC00b]{Mccruddencoalgebroidsreps}
Paddy McCrudden.
\newblock Categories of representations of coalgebroids.
\newblock {\em Adv. Math.}, 154(2):299--332, 2000.

\bibitem[ML63]{natass&comm}
Saunders Mac~Lane.
\newblock Natural associativity and commutativity.
\newblock {\em Rice Univ. Studies}, 49(4):28--46, 1963.

\bibitem[ML98]{MacLane}
Saunders Mac~Lane.
\newblock {\em Categories for the working mathematician}, volume~5 of {\em
  Graduate Texts in Mathematics}.
\newblock Springer-Verlag, New York, second edition, 1998.

\bibitem[MLP85]{MacLane-Pare}
Saunders Mac~Lane and Robert Par{\'e}.
\newblock Coherence for bicategories and indexed categories.
\newblock {\em J. Pure Appl. Algebra}, 37(1):59--80, 1985.

\bibitem[MP89]{MakkaiPare}
Michael Makkai and Robert Par{\'e}.
\newblock {\em Accessible categories: the foundations of categorical model
  theory}, volume 104 of {\em Contemporary Mathematics}.
\newblock American Mathematical Society, Providence, RI, 1989.

\bibitem[Por06]{CoringsComod}
Hans-E. Porst.
\newblock On corings and comodules.
\newblock {\em Arch. Math. (Brno)}, 42(4):419--425, 2006.

\bibitem[Por08a]{AdjAlgCoalg}
Hans-E. Porst.
\newblock Dual adjunctions between algebras and coalgebras.
\newblock {\em Arab. J. Sci. Eng. Sect. C Theme Issues}, 33(2):407--411, 2008.

\bibitem[Por08b]{FundConstrCoalgCorComod}
Hans-E. Porst.
\newblock Fundamental constructions for coalgebras, corings, and comodules.
\newblock {\em Appl. Categ. Structures}, 16(1-2):223--238, 2008.

\bibitem[Por08c]{MonComonBimon}
Hans-E. Porst.
\newblock On categories of monoids, comonoids, and bimonoids.
\newblock {\em Quaest. Math.}, 31(2):127--139, 2008.

\bibitem[Pow89]{Ageneralcoherenceresult}
A.~J. Power.
\newblock A general coherence result.
\newblock {\em J. Pure Appl. Algebra}, 57(2):165--173, 1989.

\bibitem[Pow90]{Pastingtheorem}
A.~J. Power.
\newblock A {$2$}-categorical pasting theorem.
\newblock {\em J. Algebra}, 129(2):439--445, 1990.

\bibitem[Shu08]{Framedbicats}
Michael Shulman.
\newblock Framed bicategories and monoidal fibrations.
\newblock {\em Theory Appl. Categ.}, 20:No. 18, 650--738, 2008.

\bibitem[Shu10]{ConstrSymMonBicats}
Michael Shulman.
\newblock Constructing symmetric monoidal bicategories.
\newblock arXiv:1004.0993 [math.CT], 2010.

\bibitem[Shu13]{Enrichedindexedcats}
Michael Shulman.
\newblock Enriched indexed categories.
\newblock {\em Theory Appl. Categ.}, 28:616--696, 2013.

\bibitem[Str72]{FormalTheoryMonadsI}
Ross Street.
\newblock The formal theory of monads.
\newblock {\em J. Pure Appl. Algebra}, 2(2):149--168, 1972.

\bibitem[Str80]{FibrationsinBicats}
Ross Street.
\newblock Fibrations in bicategories.
\newblock {\em Cahiers Topologie G\'eom. Diff\'erentielle}, 21(2):111--160,
  1980.

\bibitem[Str96]{Categoricalstructures}
Ross Street.
\newblock Categorical structures.
\newblock In {\em Handbook of algebra, {V}ol.\ 1}, volume~1 of {\em Handb.
  Algebr.}, pages 529--577. Elsevier/North-Holland, Amsterdam, 1996.

\bibitem[Str07]{Quantum}
Ross Street.
\newblock {\em Quantum groups}, volume~19 of {\em Australian Mathematical
  Society Lecture Series}.
\newblock Cambridge University Press, Cambridge, 2007.
\newblock A path to current algebra.

\bibitem[Swe69]{Sweedler}
Moss~E. Sweedler.
\newblock {\em Hopf algebras}.
\newblock Mathematics Lecture Note Series. W. A. Benjamin, Inc., New York,
  1969.

\bibitem[Vas12]{mine}
Christina Vasilakopoulou.
\newblock Enrichment of categories of algebras and modules.
\newblock pre-print arXiv:1205.6450 [math.CT], 2012.

\bibitem[Web04]{WeberGenericMorphisms}
Mark Weber.
\newblock Generic morphisms, parametric representations and weakly cartesian
  monads.
\newblock {\em Theory Appl. Categ.}, 13:191--234, 2004.

\bibitem[Win90]{Winskel}
Glynn Winskel.
\newblock A compositional proof system on a category of labelled transition
  systems.
\newblock {\em Inform. and Comput.}, 87(1-2):2--57, 1990.

\bibitem[Wis75]{LinearReps}
Manfred~B. Wischnewsky.
\newblock On linear representations of affine groups. {I}.
\newblock {\em Pacific J. Math.}, 61(2):551--572, 1975.

\bibitem[Wol74]{Wolff}
Harvey Wolff.
\newblock {$V$}-cat and {$V$}-graph.
\newblock {\em J. Pure Appl. Algebra}, 4:123--135, 1974.

\end{thebibliography}

\end{document}